\DeclareMathAlphabet{\Ma}{U}{msa}{m}{n}
\DeclareMathAlphabet{\Mb}{U}{msb}{m}{n}
\DeclareMathAlphabet{\Meuf}{U}{euf}{m}{n}
\DeclareSymbolFont{ASMa}{U}{msa}{m}{n}
\DeclareSymbolFont{ASMb}{U}{msb}{m}{n}
\def\mr #1.{\mathrm{#1\,}}
\def\mrt #1.{\mathrm{\mbox{\tiny #1\,}}}
\def\mt #1.{{\mbox{\tiny $#1$}}}
\def\ms #1.{{\mbox{\small $#1$}}}
\def\ol #1.{\overline{#1}}
\def\N{\mathbb{N}}
\def\ol{\overline}
\def\1{\mathbbm 1}
\def\1{\mathbbm 1}
\def\ot #1.{{\got{#1}}}
\def\got#1{\Meuf{#1}}
\def\al #1.{{\mathcal{#1}}}
\theoremstyle{plain}            % body italics
\newtheorem{theorem}{Theorem}[section]
\newtheorem*{maintheorem*}{Main Theorem}
\newtheorem*{maintheorem.}{Main Theorem~1'}
\newtheorem{proposition}[theorem]{Proposition}
\newtheorem{lemma}[theorem]{Lemma}
\newtheorem{corollary}[theorem]{Corollary}
\newtheorem{blank}[theorem]{}
\theoremstyle{definition}       % body roman
\newtheorem{definition}[theorem]{Definition}
\theoremstyle{remark}
\newtheorem{remark}[theorem]{Remark}
\newtheorem{example}[theorem]{Example}
\newcommand{\mL}{\mathcal L}               % von Neumann algebra
\newcommand{\WA}{\mathrm{W}(A)}
\newcommand{\W}{\mathrm{W}}
\newcommand{\V}{\mathrm{V}}
\newcommand{\dt}{d_{\tau}}
\newcommand{\ups}{\upsilon}
\def\mr #1.{\mathrm{#1\,}}
\def\mrt #1.{\mathrm{\mbox{\tiny #1\,}}}
\def\mt #1.{{\mbox{\tiny $#1$}}}
\def\ms #1.{{\mbox{\small $#1$}}}
\def\ol #1.{\overline{#1}}
\def\N{\mathbb{N}}
\newfont{\Kcal}{cmsy6 scaled 1000}
\newfont{\Kgot}{eufm6 scaled 1000}
\def\Kbegin{\begin{equation} \left. \begin{array}{rcl}}
\def\Kend{\end{array} \right\} \end{equation}}
\DeclareMathSymbol{\hsemi}{\mathord}{ASMb}{"6E}
\newcommand{\semi}[2]{\mbox{$#1\kern.1em\hsemi\kern.1em#2$}}
\def\vplatz#1{{\rule{0mm}{#1}}}
\def\LA{\left\langle\bgroup}
\def\LE{\left[\bgroup}
\def\LG{\left\{\bgroup}
\def\LR{\left(\bgroup}
\def\RA{\egroup^{\rule{0mm}{0mm}}\right\rangle}
\def\RE{\egroup^{\rule{0mm}{2mm}}\right]}
\def\RG{\egroup^{\rule{0mm}{2mm}}\right\}}
\def\RR{\egroup^{\rule{0mm}{2mm}}\right)}
\def\Ldummy{\left.\bgroup}
\def\Rdummy{\egroup^{\rule{0mm}{2mm}}\right.}
\def\ccr#1{\mbox{{\rm CCR$\left({#1}^{\vplatz{1.5mm}}\right)$}}}
\def\ccr #1,#2.{\overline{\Delta(#1,\,#2)}}
\def\b #1.{{\bf #1}}
\def\cross#1.{\mathrel{\mathop{\times}\limits_{#1}}}
\def\N{\Mb{N}}
\def\wwh #1.{\widehat{#1}}
\def\wt #1.{\widetilde{#1}}
\def\cross #1.{\mathrel{\raise 3pt\hbox{$\mathop\times\limits_{#1}$}}}
\def\set #1,#2.{\left\{\,#1\;\bigm|\;#2\,\right\}}
\def\b #1.{{\bf #1}}
\def\ker{{\rm Ker}\,}
\def\ol #1.{\overline{#1}}
\def\rn#1.{\romannumeral{#1}}
\def\s #1.{_{\smash{\lower2pt\hbox{\mathsurround=0pt $\scriptstyle #1$}}\mathsurround=3pt}}
\def\bra #1,#2.{{\left\langle #1,\,#2\right\rangle_{\al A.}}}
\def\XP#1!{\renewcommand{\baselinestretch}{.7}\marginpar{{\footnotesize #1}\hfil}
\renewcommand{\baselinestretch}{1.5}}
\def\XB{\marginpar{
{\footnotesize\bf Change~starts----}\lower 11pt\hbox{\mathsurround=0pt$
\!\!\displaystyle{
\Bigg\downarrow}$\mathsurround=3pt}}}
\def\XE{\marginpar{{\footnotesize\bf Change~ends-----}\raise 10pt\hbox{\mathsurround=0pt$
\!\!\displaystyle{
\Bigg\downarrow}$\mathsurround=3pt}}}
\DeclareMathSymbol{\hsemi}{\mathord}{ASMb}{"6E}
\def\LA{\left\langle\bgroup}
\def\LE{\left[\bgroup}
\def\LG{\left\{\bgroup}
\def\LR{\left(\bgroup}
\def\RA{\egroup^{\rule{0mm}{0mm}}\right\rangle}
\def\RE{\egroup^{\rule{0mm}{2mm}}\right]}
\def\RG{\egroup^{\rule{0mm}{2mm}}\right\}}
\def\RR{\egroup^{\rule{0mm}{2mm}}\right)}
\def\Ldummy{\left.\bgroup}
\def\Rdummy{\egroup^{\rule{0mm}{2mm}}\right.}
\def\ccr#1{\mbox{{\rm CCR$\left({#1}^{\vplatz{1.5mm}}\right)$}}}
\newcommand{\Cu}{\mathrm{Cu}(A)}
\newcommand{\mK}{\mathcal{K}_A}
\newcommand{\M}{\mathcal{M}}
\newcommand{\CC}{\subset\subset}
\newcommand{\aff}{\mathrm{Aff}}
\newcommand{\laff}{\mathrm{LAff}}
\def\ccr #1,#2.{\overline{\Delta(#1,\,#2)}}
\def\b #1.{{\bf #1}}
\def\cross#1.{\mathrel{\mathop{\times}\limits_{#1}}}
\def\N{\Mb{N}}
\def\wwh #1.{\widehat{#1}}
\def\wt #1.{\widetilde{#1}}
\def\cross #1.{\mathrel{\raise 3pt\hbox{$\mathop\times\limits_{#1}$}}}
\def\set #1,#2.{\left\{\,#1\;\bigm|\;#2\,\right\}}
\def\b #1.{{\bf #1}}
\def\ker{{\rm Ker}\,}
\def\ol #1.{\overline{#1}}
\def\rn#1.{\romannumeral{#1}}
\def\s #1.{_{\smash{\lower2pt\hbox{\mathsurround=0pt $\scriptstyle #1$}}\mathsurround=3pt}}
\def\bra #1,#2.{{\left\langle #1,\,#2\right\rangle_{\al A.}}}
\def\XP#1!{\renewcommand{\baselinestretch}{.7}\marginpar{{\footnotesize #1}\hfil}
\renewcommand{\baselinestretch}{1.5}}
\def\XB{\marginpar{
{\footnotesize\bf Change~starts----}\lower
11pt\hbox{\mathsurround=0pt$ \!\!\displaystyle{
\Bigg\downarrow}$\mathsurround=3pt}}}
\def\XE{\marginpar{{\footnotesize\bf Change~ends-----}\raise 10pt\hbox{\mathsurround=0pt$
\!\!\displaystyle{ \Bigg\downarrow}$\mathsurround=3pt}}}
\newcommand{\T}{\mathrm{T}}
\newcommand{\Z}{\mathcal{Z}}
\title[$K$-Theory]{$K$-theory for operator algebras.
Classification of C$^*$-algebras.}
\author{Pere Ara}
\address{Department of Mathematics, Universitat Aut\`onoma de Barcelona, 08193 Bellaterra (Barcelona), Spain}
\email{para@mat.uab.cat}
\author{Francesc Perera}
\address{Department of Mathematics, Universitat Aut\`onoma de Barcelona, 08193 Bellaterra (Barcelona), Spain}
\email{perera@mat.uab.cat}
\author{Andrew S. Toms}
\address{Department of Mathematics, University of York, Toronto, Canada}
\email{atoms@mathstat.yorku.ca}
\date{\today}
\begin{document}
\maketitle
\tableofcontents
\begin{abstract} 
In this article we survey some of the recent goings-on in the classification programme of C$^*$-algebras, following the interesting link found between the Cuntz semigroup and the classical Elliott invariant and the fact that the Elliott conjecture does not hold at its boldest. We review the construction of this object both by means of positive elements and via its recent interpretation using countably generated Hilbert modules (due to Coward, Elliott and Ivanescu). The passage from one picture to another is presented with full, concise, proofs. We indicate the potential role of the Cuntz semigroup in future classification results, particularly for non-simple algebras.
\end{abstract}

\section{Introduction}

\vskip1cm

Classification is a central and recurring theme in Mathematics. It
has appeared throughout history in various guises, and the common
feature is usually the replacement of a complicated object with a
less complicated (or a computable) one, which is invariant under
isomorphism. In an ideal setting, we seek for a complete invariant,
i.e. one that completely captures the nature of isomorphism of our
original objects. Another part of our understanding asks for the
range this invariant has. These lecture notes aim at introducing the
classification problem for (nuclear) C$^*$-algebras, that has been
around for the last couple of decades, and has attracted attention
(and efforts) of many mathematicians.

The fact that the Elliott Conjecture, in its original format, has
dramatic counterexamples has had, as a consequence, a resurgence of
the programme in a different (i.e. new) light. This recasting of the
Conjecture is very intimately connected to the Cuntz semigroup. We
hope to convince the reader that this can be a useful tool in future
classification results, or that at least gives new conceptual
insight into the ones already existing. At some stage it might seem
that the new point of view diminishes the value of some of the
pioneering earlier work of ``classificators'' (i.e. those who, with
their bare hands, pushed their way to classify big classes of
algebras). This, however, is not the case as some of the results
(for example, to prove $\Z$-stability of certain classes of
algebras) necessitate the full force of previous classification
theorems.

In broad outline, the purpose of these notes is twofold. On the one hand, we give a self-contained
account on the basics of the Cuntz semigroup, a powerful device that strives to capture the nature of equivalence classes of positive elements. Having a technical essence, we find it useful to collect some of its various features under one roof, with proofs or sketches of proofs. This is covered in the first three sections below, of which the first one introduces the Cuntz semigroup following a more `classical' approach (see, e.g. \cite{Cu}, \cite{Rfunct}). 

In our journey we shall dwell on an important recent development of the theory, namely the introduction by Coward, Elliott and Ivanescu \cite{cei} of a stable version of the Cuntz semigroup, denoted $\Cu$, that has interesting connections with the theory of Hilbert modules (of which the basic prerequisites are given in the third section). As a result we may view the Cuntz semigroup as a continuous functor from C$^*$-algebras to a new category $\mathsf{Cu}$ of ordered semigroups. We provide full details.

On the other hand, we shall survey some of the most prominent applications of this invariant to the classification programme, with emphasis on its description in terms of K-Theory and traces for a wide class of algebras.

Portions of these notes are shamelessly based on \cite{bpt} and
\cite{pt} (and also \cite{ET} in part, which constitutes a very nice
survey of the whole story).
\vskip2cm

%%%%%%%%%%%%%%%%%%%%%%%%%%%%%%%%%%%%%%%%%%%%%%%%%%%%%%%%%%%%%%%%%%%%%%%%%%%%%
%%%%%%%%%%%%%%%%%%%%%%%%%%%%%%%%%%%%%%%%%%%%%%%%%%%%%%%%%%%%%%%%%%%%%%%%%%%%%

\section{The Cuntz semigroup: Definitions,
historical origin and technical devices}

\vskip1cm

\subsection{Introduction}

Our main aim here is to define the Cuntz semigroup, as it was
introduced historically. We also offer proofs of technical facts
which come as very useful when dealing with this object. These
appear in a rather scattered way through the literature, both in
space and time. Our approach is not linear, but tries to benefit
from more recent results that allow to give somewhat different
proofs, in a self-contained manner.

We start by giving the definition of Cuntz comparison and the Cuntz
semigroup $\WA$ for a C$^*$-algebra $A$, and we establish a few,
fundamental basic facts, that will be used over and over. We then
proceed to make an explicit connection with the usual Murray-von
Neumann comparison of projections $\V(A)$, and examine cases where
one can think of the Cuntz semigroup as a natural extension of
$\V(A)$. The relationship between projections and general positive
elements is also analysed, and in particular we prove a cancellation
theorem (due to R\o rdam and Winter).

The enveloping group of $\WA$, also known as $\mathrm{K}_0^*(A)$ is
related to the enveloping group of a certain subsemigroup of $\WA$,
that made of the so-called purely positive elements.

States on the semigroup, known as dimension functions, are of great
importance, in particular those that are lower semicontinuous. We
gather together some of the deep results that go back to the seminal
paper of Blackadar and Handelman, connecting quasi-traces and lower
semicontinuous dimension functions. Some details of the proofs are
also included. Partly for expository reasons, partly for its
relevance, and partly for its beauty, we shall also recover a
classical result due to Cuntz and Handelman, which relates stable
finiteness to the existence of dimension functions.

We close this section with the computation of $\WA$ in some cases,
notably those C$^*$-algebras that have the property of real rank
zero. In this situation, the computation can be done in terms of the
underlying projection semigroup in a rather neat way.

\subsection{C$^*$-algebras}\label{cstar}

\vspace{2mm}
\begin{definition}  A {\bf C$^*$-algebra} is a Banach algebra $A$ equipped with an involution satisfying the C$^*$-algebra identity:
\[
\Vert x^*x \Vert = \Vert x \Vert^2, \ \forall x \in A.
\]
\end{definition}
The norm on a C$^*$-algebra is completely determined by its
algebraic structure, and is unique.  {\bf Homomorphisms are assumed
to be $*$-preserving}, and are automatically contractive.  A theorem
of Gelfand, Naimark, and Segal (cf. \cite[Theorem 2.12]{ALP}) states
that every C$^*$-algebra is isometrically $*$-isomorphic to a closed
sub-$*$-algebra of the algebra of bounded linear operators on a
Hilbert space;  if the C$^*$-algebra is separable, then the Hilbert
space may be chosen to be separable.

\subsubsection{Units and approximate units} C$^*$-algebras need not
be unital.  If $A$ is not unital, then we may adjoin a unit to $A$
as follows: set $\tilde{A} = \mathbb{C}
\oplus A$, and equip $\tilde{A}$ with co-ordinatewise addition, multiplication given by
\[
(\lambda,a) \cdot (\gamma,b) = (\lambda \gamma, ab + \lambda b +
\gamma a),
\]
adjoint given by $(\lambda,a)^* = (\bar{\lambda},a^*)$, and norm
given by
\[
\Vert (\lambda,a) \Vert = \sup \{ \Vert \lambda b + ab \Vert \  | \
\Vert b \Vert = 1 \}.
\]

There is a notion of positivity for elements in a C$^*$-algebra,
giving rise to an order structure--see \cite{ALP}.

\begin{definition}
Let $A$ be a C$^*$-algebra.  An {\bf approximate unit} for $A$ is a
net $(h_\lambda)_{\lambda \in \Lambda}$ of elements of $A$ with the
following properties:
\[
0\leq  h_\lambda  \leq 1, \ \forall \lambda \in \Lambda; \ \ \ \Vert
h_\lambda x - x \Vert \to 0, \ \forall x \in A.
\]
If $h_\lambda
\leq h_\mu$ whenever $\lambda \leq \mu$, then we say that
$(h_\lambda)$ is {\bf increasing}.
\end{definition}

It turns out that every C$^*$-algebra has an increasing approximate
unit.

\subsection{Examples}\label{examples}

\subsubsection{Commutative C$^*$-algebras}  A theorem of Gelfand and
Naimark (cf. \cite[Theorem 2.13]{ALP}) asserts that a commutative
C$^*$-algebra $A$ is isomorphic to the algebra of continuous
functions on a locally compact Hausdorff space $X$ which vanish at
infinity, denoted by $\mathrm{C}_0(X)$;  the involution is given by
pointwise complex conjugation.  If $A$ is unital, then $X$ is
compact.  The correspondence of Gelfand and Naimark is in fact a
natural equivalence of categories.  In the unital case, for
instance, a continuous map $f:X \to Y$ between compact Hausdorff
spaces $X$ and $Y$ induces a homomorphism $\bar{f}:\mathrm{C}(Y) \to
\mathrm{C}(X)$ via the formula
\[
\bar{f}(a)(x) = a(f(x)), \ \forall x \in X,
\]
and every homomorphism from $\mathrm{C}(Y)$ to $\mathrm{C}(X)$
arises in this way.

\subsubsection{Finite-dimensional C$^*$-algebras}  The algebra
$B(\mathcal{H})$ of bounded linear operators on a Hilbert space
$\mathcal{H}$ is a C$^*$-algebra under the operator norm and the
usual adjoint.  If $\mathcal{H}$ is finite-dimensional, then
$B(\mathcal{H})$ is isomorphic to the algebra of $n \times n$
matrices over $\mathbb{C}$, with the adjoint operation being complex
conjugate transposition.  The latter object will be denoted by
$\mathrm{M}_n(\mathbb{C})$.  If a C$^*$-algebra $A$ is
finite-dimensional (as a vector space), then it is necessarily
isomorphic to a C$^*$-algebra of the form
\[
\mathrm{M}_{n_1}(\mathbb{C}) \oplus \mathrm{M}_{n_1}(\mathbb{C})
\oplus \cdots \oplus \mathrm{M}_{n_k}(\mathbb{C}).
\]

\subsubsection{Inductive limits}  For each $i \in \mathbb{N}$, let
$A_i$ be a C$^*$-algebra, and let $\phi_i:A_i \to A_{i+1}$ be a
homomorphism.  We refer to the sequence $(A_i,\phi_i)_{i \in
\mathbb{N}}$ as an inductive sequence.  There is a C$^*$-algebra $A$
and a sequence of maps $\gamma_i:A_i \to A$ satisfying the following
universal property:
\begin{enumerate}
\item[(i)] the diagram
\[
\xymatrix{
& A & \\
{A_i}\ar[ur]^{\gamma_i}\ar[rr]^{\phi_i} &&
{A_{i+1}}\ar[ul]_{\gamma_{i+1}} }
\]
commutes for each $i \in \mathbb{N}$;
\item[(ii)] if $B$ is a C$^*$-algebra and $(\eta_i)_{i \in \mathbb{N}}$ a sequence of homomorphisms such that $\eta_i:A_i \to B$ and
\[
\xymatrix{
& B & \\
{A_i}\ar[ur]^{\eta_i}\ar[rr]^{\phi_i} &&
{A_{i+1}}\ar[ul]_{\eta_{i+1}} }
\]
commutes for each $i \in \mathbb{N}$, then there is a homomorphism
$\eta:A \to B$ such that $\eta_i = \eta \circ \gamma_i$ for each $i
\in \mathbb{N}$.
\end{enumerate}
$A$ is called the {\bf inductive limit} of the inductive sequence
$(A_i,\phi_i)_{i \in \mathbb{N}}$, and we write $A = \lim_{i \to
\infty} (A_i, \phi_i)$.  It will be convenient to have the
abbreviation
\[
\phi_{i,j} := \phi_{j-1} \circ \phi_{j-2} \circ \cdots \circ \phi_i
\]
for any $j > i$.

The limit C$^*$-algebra $A$ can be described concretely.  Let
$A^\circ$ denote the set of all sequences
\[
\mathbf{a} = (\underbrace{0,\ldots,0}_{i-1 \ \mathrm{times}},a,
\phi_i(a), \phi_{i,i+1}(a), \ldots ),
\]
where $a \in A_i$ and $i$ ranges over $\mathbb{N}$.  Equip $A^\circ$
with the co-ordinatewise operations and the norm
\[
\Vert \mathbf{a} \Vert = \lim_{j \to \infty} \Vert \phi_{i,j}(a)
\Vert.
\]
$A$ is the completion of the pre-C$^*$-algebra $A^\circ$ in this
norm.

%%%%%%%%%%%%%%%%%%%%%%%%%%%%%%%%%%%%%%%%%%%%%%%%%%%%%%%%%%%%%%%%%%%%%%%%%%%%%
%%%%%%%%%%%%%%%%%%%%%%%%%%%%%%%%%%%%%%%%%%%%%%%%%%%%%%%%%%%%%%%%%%%%%%%%%%%%%
\subsection{The Cuntz semigroup}\label{SubSecCuntz}

\subsubsection{Cuntz comparison for general elements}

From here on we make the blanket assumption that all C$^*$-algebras
are separable unless otherwise stated or obviously false.

Let $A$ be a C$^*$-algebra, and let $\mathrm{M}_n(A)$ denote the $n
\times n$ matrices whose entries are elements of $A$.  If $A =
\mathbb{C}$, then we simply write $\mathrm{M}_n$. Let
$\mathrm{M}_{\infty}(A)$ denote the algebraic limit of the direct
system $(\mathrm{M}_n(A),\phi_n)$, where $\phi_n:\mathrm{M}_n(A) \to
\mathrm{M}_{n+1}(A)$ is given by
\[
a \mapsto \left( \begin{array}{cc} a & 0 \\ 0 & 0 \end{array}
\right).
\]
Let $\mathrm{M}_{\infty}(A)_+$ (resp. $\mathrm{M}_n(A)_+$) denote
the positive elements in $\mathrm{M}_{\infty}(A)$ (resp.
$\mathrm{M}_n(A)$). For positive elements $a$ and $b$ in
$\mathrm{M}_{\infty}(A)$, write $a\oplus b$ to denote the element
$\left( \begin{array}{cc} a & 0 \\ 0 & b \end{array} \right)$, which
is also positive in $\mathrm{M}_{\infty}(A)$. We shall as customary
use $\widetilde{A}$ to refer to the minimal unitization of $A$, and
$\mathcal{M}(A)$ will stand for the multiplier algebra of $A$.

\begin{definition}
Given $a,b \in \mathrm{M}_{\infty}(A)_+$, we say that $a$ is {\it
Cuntz subequivalent} to $b$ (written $a \precsim b$) if there is a
sequence $(v_n)_{n=1}^{\infty}$ of elements of
$\mathrm{M}_{\infty}(A)$ such that
\[
\Vert v_nbv_n^*-a\Vert \stackrel{n \to \infty}{\longrightarrow} 0.
\]
We say that $a$ and $b$ are {\it Cuntz equivalent} (written $a \sim
b$) if $a \precsim b$ and $b \precsim a$.
\end{definition}

That Cuntz equivalence is an equivalence relation is an exercise
that we leave to the reader. We write $\langle a \rangle$ for the
equivalence class of $a$.
\begin{definition}
The object
\[
\WA := \mathrm{M}_{\infty}(A)_+/ \sim
\]
will be called the {\it Cuntz semigroup} of $A$.
\end{definition}

Observe that $\WA$ becomes a positively ordered Abelian monoid (i.e.
$\langle 0\rangle\leq x$ for any $x\in\WA$) when equipped with the
operation
\[
\langle a \rangle + \langle b \rangle = \langle a \oplus b \rangle
\]
and the partial order
\[
\langle a \rangle \leq \langle b \rangle \Leftrightarrow a \precsim
b.
\]

Every Abelian semigroup $M$ can be ordered using the so-called
algebraic ordering, so that $x\leq y$ in $M$ if, by definition,
$x+z=y$ for some $z$ in $M$. The fact that $\mathrm{W}(A)$ is
positively ordered that the order given extends the algebraic
ordering, that is, if $\langle a\rangle +\langle c\rangle=\langle
b\rangle$, then $\langle a\rangle\leq \langle b\rangle$.

Given $a$ in $\mathrm{M}_{\infty}(A)_+$ and $\epsilon > 0$, we
denote by $(a-\epsilon)_+$ the element of $C^*(a)$ corresponding
(via the functional calculus) to the function
\[
f(t) = \mathrm{max}\{0,t-\epsilon\}, \ t \in \sigma(a).
\]
(Recall that $\sigma(a)$ denotes the spectrum of $a$.) By the
functional calculus, it follows in a straightforward manner that
$((a-\epsilon_1)_+-\epsilon_2)_+=(a-(\epsilon_1+\epsilon_2))_+$.

In order to gain some insight into the meaning of Cuntz
subequivalence we offer the following:

\begin{proposition}
\label{prop:commutative} Let $X$ be a compact Hausdorff space, and
let $f$, $g\in C(X)_+$. Then $f\precsim g$ if and only if
$\mathrm{supp}(f)\subseteq\mathrm{supp}(g)$.
\end{proposition}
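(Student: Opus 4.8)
The plan is to prove the two implications separately, reading $\mathrm{supp}(f)$ as the cozero set $\{x\in X:f(x)\neq 0\}$; with this interpretation the statement is correct, whereas taking the \emph{closure} of the cozero set would make it false (consider $f\equiv 1$ and $g(x)=x$ on $[0,1]$: the closures agree but $f\not\precsim g$). So I will phrase everything in terms of the sets $\{f\neq 0\}$ and $\{g\neq 0\}$.

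For the forward implication, suppose $f\precsim g$, witnessed by a sequence $v_n\in\mathrm{M}_\infty(C(X))$ with $\|v_ngv_n^*-f\|\to 0$. Fix any $x_0$ with $g(x_0)=0$. Viewing $g$ and $f$ inside some $\mathrm{M}_k(C(X))$ via the upper-left corner embedding, the matrix of $g$ evaluated at $x_0$ is the zero matrix, so $(v_ngv_n^*)(x_0)=v_n(x_0)\,0\,v_n(x_0)^*=0$ for every $n$, while $f$ evaluates to $\mathrm{diag}(f(x_0),0,\dots,0)$, of operator norm $|f(x_0)|$. Hence $\|v_ngv_n^*-f\|\geq |f(x_0)|$ for all $n$, forcing $f(x_0)=0$. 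Thus $g(x_0)=0\Rightarrow f(x_0)=0$, i.e. $\{f\neq 0\}\subseteq\{g\neq 0\}$. This direction is immediate and, crucially, is insensitive to the matrix size of the $v_n$.

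The reverse implication is the substantive one. Assuming $\{f\neq 0\}\subseteq\{g\neq 0\}$, I would first reduce to the truncations: since $|(f-\epsilon)_+(x)-f(x)|\le\epsilon$ pointwise, it suffices to produce, for each $\epsilon>0$, a single $v\in C(X)_+$ with $(f-\epsilon)_+=vgv^*$, for then setting $\epsilon=1/n$ gives $\|v_ngv_n^*-f\|\le 1/n\to 0$. The gain from truncating is geometric: writing $d:=(f-\epsilon)_+$, its cozero set $\{f>\epsilon\}$ has compact closure $C\subseteq\{f\ge\epsilon\}\subseteq\{f\neq 0\}\subseteq\{g\neq 0\}$, so $g\ge\delta>0$ on $C$ and one may legitimately ``divide by $g$'' there. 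To build $v$, I would invoke normality of $X$ (Urysohn) to choose $e\in C(X)$ with $0\le e\le 1$, $e\equiv 1$ on $C$ and $e\equiv 0$ on the closed set $\{g=0\}$. Then $g+(1-e)$ is strictly positive (it equals $1$ where $g=0$ and is $\ge g>0$ elsewhere), hence invertible; put $h=e\,(g+1-e)^{-1}\in C(X)_+$. One checks $hg\equiv 1$ on $C$, so $hgd=d$ on $\{d\neq 0\}\subseteq C$, while trivially $hgd=d=0$ off $\{f>\epsilon\}$; thus $hgd=d$ on all of $X$. As $C(X)$ is commutative, taking $v=(hd)^{1/2}\in C(X)_+$ yields $vgv^*=v^2g=(hd)g=hgd=d=(f-\epsilon)_+$, an exact equality.

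The main obstacle is exactly the continuity issue resolved in this last step: one cannot in general solve $f=hg$ with $h$ continuous, since $f/g$ may blow up near the boundary of $\{g\neq 0\}$ (e.g. $f(x)=x$, $g(x)=x^2$ on $[0,1]$). It is the combination of the $\epsilon$-truncation, which confines the support of $d$ to a compact subset of $\{g\neq 0\}$, with the regularization $g\mapsto g+(1-e)$ that tames this blow-up and delivers a genuinely continuous multiplier.
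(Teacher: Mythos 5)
Your proof is correct and follows essentially the same route as the paper's: for each $\epsilon>0$ both arguments exploit that the compact set where $f\geq\epsilon$ lies inside $\{g\neq 0\}$, hence $g\geq\delta>0$ there, and both use Urysohn's lemma to manufacture a continuous multiplier that inverts $g$ on that set. The differences are refinements of execution rather than of strategy: the paper defines its multiplier piecewise as $h/g$ on $\{g>\delta\}$ and $0$ elsewhere (leaving the boundary continuity check to the reader) and settles for the approximation $\Vert f-egf\Vert<\epsilon$, whereas your globally invertible regularization $g+(1-e)$ sidesteps any boundary issue and yields the exact equality $vgv^*=(f-\epsilon)_+$.
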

\begin{proof}
The ``only if'' direction is trivial. Assume then that
$\mathrm{supp}(f)\subseteq\mathrm{supp}(g)$, and we are to show that
$f\precsim g$. Given $\epsilon>0$, set $K=\{x\in X\mid
f(x)\geq\epsilon\}$, a compact subset of $X$. Necessarily then
$K\subset\mathrm{supp}(g)$. Since $g$ is continuous on $K$ and $K$
is compact, there is a positive $\delta$ such that $g>\delta$ on
$K$. Put $U=\{x\in X\mid g(x)>\delta\}$, which is open and contains
$K$. Use Urysohn's lemma to find a function $h$ such that $h_{|K}=1$
and $h_{|X\setminus U}=0$, and then consider the function $e$
defined as $e_{|U}=(\frac{h}{g})_{|U}$ and $e_{|X\setminus U}=0$.
Then one may check that $e$ is continuous and
\[
\Vert f-egf\Vert<\epsilon
\]
\end{proof}

The following corollary tells us that we are on the right track.

\begin{corollary}
For any {\rm C}$^*$-algebra $A$ and any $a\in A_+$, we have $a\sim
a^n$ ($n\in\mathbb{N}$). For any $a\in A$, we have $aa^*\sim a^*a$.
\end{corollary}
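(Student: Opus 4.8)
The plan is to treat the two statements separately, deducing the first from the commutative case (Proposition~\ref{prop:commutative}) and the second from a direct functional-calculus computation.

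For the claim $a\sim a^n$ with $a\in A_+$, I would pass to the abelian C$^*$-subalgebra $C^*(a)$. Since $a$ is positive, $C^*(a)\cong C_0(\sigma(a)\setminus\{0\})$, and working inside the unital algebra $C(\sigma(a))\subseteq\widetilde A$ generated by $a$ and the unit, the elements $a$ and $a^n$ correspond to the functions $t\mapsto t$ and $t\mapsto t^n$ on the compact set $X=\sigma(a)\subseteq[0,\|a\|]$. Because $t=0$ exactly when $t^n=0$, these two functions have the same support, so Proposition~\ref{prop:commutative} gives $a\sim a^n$. The only point needing care is that Cuntz subequivalence must be witnessed inside $A$ rather than $\widetilde A$: but the witness produced in the proof of Proposition~\ref{prop:commutative} vanishes wherever $g$ does, hence vanishes at $0$ and therefore lies in $C_0(\sigma(a)\setminus\{0\})=C^*(a)\subseteq A$. (Alternatively one sees $a^n\precsim a$ at once from $a^n=a^{(n-1)/2}\,a\,a^{(n-1)/2}$, and only the reverse subequivalence genuinely requires the tapering function built in the proposition.)

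For the claim $aa^*\sim a^*a$ with $a\in A$ arbitrary, the engine is the functional-calculus identity $a\,f(a^*a)=f(aa^*)\,a$, valid for every continuous $f$ on $[0,\infty)$ with $f(0)=0$; this follows by approximating $f$ by polynomials with zero constant term and using the elementary relation $a\,(a^*a)^k=(aa^*)^k\,a$. By symmetry it suffices to prove $aa^*\precsim a^*a$. Fix $\epsilon>0$, set $k_\epsilon(t)=t^{-1}(t-\epsilon)_+^{1/2}$ (a continuous function vanishing at $0$), and put $w_\epsilon=a\,k_\epsilon(a^*a)\in A$. A short computation using the identity above, namely $w_\epsilon(a^*a)w_\epsilon^*=a\,[t^{-1}(t-\epsilon)_+]_{t=a^*a}\,a^*=[(t-\epsilon)_+]_{t=aa^*}$, shows that $w_\epsilon(a^*a)w_\epsilon^*=(aa^*-\epsilon)_+$ exactly.

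Finally, since $\|(aa^*-\epsilon)_+-aa^*\|\le\epsilon$, taking $\epsilon=1/n$ and $v_n=w_{1/n}$ yields $\|v_n(a^*a)v_n^*-aa^*\|\to 0$, that is, $aa^*\precsim a^*a$; exchanging the roles of $a$ and $a^*$ gives $a^*a\precsim aa^*$, whence $aa^*\sim a^*a$. I expect the main obstacle to be the second part: securing the identity $a\,f(a^*a)=f(aa^*)\,a$ cleanly and, above all, choosing the tapering function $k_\epsilon$ so that the cross-terms collapse to \emph{exactly} $(aa^*-\epsilon)_+$ rather than merely approximately. The first part, by contrast, is essentially immediate once one invokes the commutative proposition and checks that the witness stays in $A$.
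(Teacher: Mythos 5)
Your proof of the first statement follows essentially the paper's own route: both invoke Proposition~\ref{prop:commutative} after observing that $a$ and $a^n$ have the same support on $\sigma(a)$, and your extra care about the witness lying in $C_0(\sigma(a)\setminus\{0\})=C^*(a)\subseteq A$ rather than merely in $\widetilde{A}$ tidies up a point the paper glosses over (the function $e$ built in that proposition is supported where $g>\delta$, hence vanishes near $0$, so it does land in $A$). For the second statement your route is genuinely different. The paper's argument is a two-line bootstrap off the first part: $aa^*\sim(aa^*)^2=a(a^*a)a^*\precsim a^*a$, where the last subequivalence is witnessed by the constant sequence $v_n=a$ in the very definition of $\precsim$, and symmetry finishes. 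You instead establish the intertwining identity $af(a^*a)=f(aa^*)a$ for continuous $f$ with $f(0)=0$ and manufacture explicit elements $w_\epsilon=a\,k_\epsilon(a^*a)\in A$ satisfying $w_\epsilon(a^*a)w_\epsilon^*=(aa^*-\epsilon)_+$ exactly; your computation is correct, since $k_\epsilon(t)^2t=t^{-1}(t-\epsilon)_+$ and the identity converts $a\,\bigl[t^{-1}(t-\epsilon)_+\bigr]_{t=a^*a}\,a^*$ into $\bigl[(t-\epsilon)_+\bigr]_{t=aa^*}$. What the paper's argument buys is economy: it needs nothing beyond the first part and the bare definition of Cuntz subequivalence. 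What yours buys is independence from the first part and a stronger, quantitative conclusion: an exact witness $(aa^*-\epsilon)_+=w_\epsilon(a^*a)w_\epsilon^*$ for every $\epsilon>0$, which is precisely the flavour of statement formalized later in Theorem~\ref{Kirchberg-Rordam} and Proposition~\ref{basics}, and your intertwining identity is the same device underlying Lemma~\ref{polardec}. Both proofs are correct.
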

\begin{proof}
The first part of the statement follows from Proposition
\ref{prop:commutative} above, by noticing that $a\in C^*(a)\cong
C^*(\sigma(a))$, and it is clear that $a$ and $a^n$ have the same
support.

Next, for any $a$ we have $aa^*\sim (aa^*)^2=aa^*aa^*\precsim a^*a$,
and by symmetry, $aa^*\sim a^*a$.
\end{proof}

We note below that the natural order on $\WA$ does not agree with
the algebraic order, except in trivial cases. Rather than being a
drawback, this makes it the more interesting.

\begin{example}
Consider $A=C(X)$, where $X$ is a compact Hausdorff space such that
$[0,1]\subseteq X\subset \mathbb R$, and take functions $f,g\in A$
defined as follows: $g$ is linear (increasing) in $[0,
\frac{1}{3}]$, $1$ in $[\frac{1}{3}, \frac{2}{3}]$, linear
(decreasing) in $[\frac{2}{3}, 1]$, and zero elsewhere; $f$ is
linear (increasing) in $[\frac{1}{3}, \frac{1}{2}]$, linear
(decreasing) in $[\frac{1}{2}, \frac{2}{3}]$, and zero elsewhere.

It is clear from Proposition \ref{prop:commutative} that $f\precsim
g$. If there is $y=\langle h\rangle \in \W(A)$, with $h\in
M_n(A)_+$, such that $\langle f\rangle +y=\langle g\rangle$, then we
have:
\[
\left(\begin{array}{cccc} f & 0 & \ldots & 0 \\0&h_{11}&\ldots&h_{1n}\\
\vdots&\vdots&\ddots&\vdots\\0&h_{n1}&\ldots&h_{nn}
\end{array}\right) \sim \left(\begin{array}{cc}g&0\\0&0
\end{array}\right)\,.
\]
Thus there exist functions ${\alpha}_0^k, {\alpha}_1^k,\ldots
,{\alpha}_n^k, \,\,k\in \mathbb N$ with
\[
\left(\begin{array}{ccc}{\alpha}_0^k&\ldots&0\\\vdots&\ddots&\vdots\\{\alpha}_n^k&\ldots&0
\end{array}\right) \left(\begin{array}{cc}g&0\\0&0 \end{array}\right) \left(\begin{array}{ccc}{\overline{{\alpha}_0^k}}&\ldots&{\overline{{\alpha}_n^k}}\\\vdots&\ddots&\vdots\\0&\ldots&0 \end{array}\right) \to
\left(\begin{array}{cccc}f&0&\ldots&0\\0&h_{11}&\ldots&h_{1n}\\
\vdots&\vdots&\ddots&\vdots\\0&h_{n1}&\ldots&h_{nn}
\end{array}\right)\,.
\]
This means that ${\alpha}_0^k\overline{{\alpha}_i^k}\to 0, \,\,
{\vert {\alpha}_0^k\vert }^2g\to f, \,\,
{\alpha}_i^k\overline{{\alpha}_j^k}g\to h_{ij}$ when $k\to \infty $.
Thus
\[
f=\lim {\vert {{\alpha}_0^k}\vert }^2g, \,\, \vert h_{ij} \vert
=\lim \vert {\alpha}_i^k{\alpha}_j^k\vert g
\]
and therefore
\[
f\vert h_{ij}\vert =\lim\limits_{k} g{\vert {{\alpha}_0^k}\vert
}^2\vert {\alpha}_i^k{\alpha}_j^k\vert g =\lim\limits_{k} (g\vert
{\alpha}_0^k{\alpha}_i^k\vert )(g\vert {\alpha}_0^k{\alpha}_j^k\vert
)=0\,.
\]
Thus $f\vert h_{ij}\vert =0$ for all $i,j$, and
$\mathrm{supp}(h_{ij})=\mathrm{supp}(\vert h_{ij}\vert )$, so that
\[
\mathrm{supp}(f)\sqcup \bigl( {\cup}_{ij}
\mathrm{supp}(h_{ij})\bigr) \subseteq \mathrm{supp}(g)\,.
\]

On the other hand, for $0\leq i,j\leq n$, there exist functions
$a_{ij}^k$ such that
\[
(a_{ij}^k)\left(\begin{array}{cc}f&0\\0&h\end{array}\right)(\overline{a_{ji}^k})
\to \left(\begin{array}{cc}g&0\\0&0\end{array}\right)\,.
\]
In particular, we have that
\[
{\vert a_{00}^k\vert}^2f+ \bigl( {\vert
a_{01}^k\vert}^2h_{11}+\ldots +\overline{a_{01}^k}
a_{0n}^kh_{n1}\bigr)+\ldots +\bigl( \overline{a_{0n}^k}
a_{01}^kh_{1n}+\ldots +{\vert a_{0n}^k\vert }^2h_{nn}\bigr) \to g
\]
as $k\to \infty$, whence $\mathrm{supp}(g)\subseteq\mathrm{supp}(f)
\sqcup \bigl( {\cup}_{ij} \mathrm{supp}(h_{ij})\bigr) $.

Now, as $\mathrm{supp}(g)$ is connected and $\mathrm{supp}(f)\neq
\emptyset $, we see that $\mathrm{supp}(f)=\mathrm{supp}(g)$, but by
construction $g\not\precsim f$.
\end{example}

\begin{lemma}
\label{lem:leq} If $0\leq a\leq b$, then we also have $a\precsim b$.
\end{lemma}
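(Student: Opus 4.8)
The plan is to exhibit an explicit sequence $(v_n)$ realizing the Cuntz subequivalence, using R\o rdam's standard regularization trick. Working inside the unital algebra $\mathrm{M}_\infty(\widetilde{A})$, I would set
\[
v_n := a^{1/2}\,(b+\tfrac1n)^{-1/2},
\]
where the inverse square root is formed via the functional calculus, which is legitimate since $b+\tfrac1n \geq \tfrac1n$ is invertible. Although $v_n$ is defined using the adjoined unit, it in fact lies in $\mathrm{M}_\infty(A)$: the factor $a^{1/2}$ belongs to $\mathrm{M}_\infty(A)$, which is an ideal of $\mathrm{M}_\infty(\widetilde{A})$, so the product stays in $\mathrm{M}_\infty(A)$ as the definition of $\precsim$ requires.

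The first computation is to simplify $v_n b v_n^*$. Since $b$ commutes with every element of $C^*(b)$, in particular with $(b+\tfrac1n)^{-1/2}$, one gets
\[
v_n b v_n^* = a^{1/2}\,b\,(b+\tfrac1n)^{-1}\,a^{1/2}.
\]
Because $1 - b(b+\tfrac1n)^{-1} = \tfrac1n (b+\tfrac1n)^{-1}$, subtracting this from $a = a^{1/2}\,a^{1/2}$ yields the clean identity
\[
a - v_n b v_n^* = \tfrac1n\, a^{1/2}\,(b+\tfrac1n)^{-1}\,a^{1/2}.
\]

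It then remains to show that the right-hand side vanishes in norm, and this is the only place where the hypothesis $a \leq b$ is used; I regard this norm estimate as the crux of the argument. The idea is to pass to the opposite side of the product via the C$^*$-identity: with $x = a^{1/2}(b+\tfrac1n)^{-1/2}$ one has $\Vert x x^* \Vert = \Vert x^* x \Vert$, so
\[
\big\Vert a^{1/2}(b+\tfrac1n)^{-1}a^{1/2}\big\Vert = \big\Vert (b+\tfrac1n)^{-1/2}\,a\,(b+\tfrac1n)^{-1/2}\big\Vert.
\]
Now $a \leq b \leq b + \tfrac1n$, and conjugating this inequality by the positive element $(b+\tfrac1n)^{-1/2}$ gives $(b+\tfrac1n)^{-1/2} a (b+\tfrac1n)^{-1/2} \leq (b+\tfrac1n)^{-1/2}(b+\tfrac1n)(b+\tfrac1n)^{-1/2} = 1$, so the displayed norm is at most $1$. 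Hence $\Vert a - v_n b v_n^* \Vert \leq \tfrac1n \to 0$, which is precisely $a \precsim b$. The only points needing care are the ideal membership of $v_n$ and the legitimacy of conjugating the order inequality by a positive element; everything else is routine functional calculus.
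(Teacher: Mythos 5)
Your proof is correct, and it takes a different route from the paper's. The paper disposes of the lemma by citing a result of Handelman: if $a^*a\leq b^*b$ then there is a sequence of contractions $(z_n)$ with $a=\lim_n z_n b$; applying this to $a^{1/2}=\lim_n z_n b^{1/2}$ gives $a=\lim_n z_n b z_n^*$. You instead construct the implementing sequence explicitly, $v_n=a^{1/2}(b+\tfrac1n)^{-1/2}$, and verify convergence by hand: the identity $a-v_nbv_n^*=\tfrac1n\,a^{1/2}(b+\tfrac1n)^{-1}a^{1/2}$ together with the flip $\Vert xx^*\Vert=\Vert x^*x\Vert$ and the conjugated inequality $(b+\tfrac1n)^{-1/2}a(b+\tfrac1n)^{-1/2}\leq 1$ gives $\Vert a-v_nbv_n^*\Vert\leq\tfrac1n$. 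All the steps check out, including the two points you flag as delicate (ideal membership of $v_n$ in $\mathrm{M}_\infty(A)$, and positivity of conjugation applied to $a\leq b+\tfrac1n$). In effect you have inlined the standard regularization argument that underlies Handelman's lemma itself, so your version is self-contained and even yields an explicit rate of convergence, whereas the paper's version is shorter but leans on an external reference whose full strength (one-sided approximation $a=\lim_n z_n b$ by contractions) is more than this lemma needs.
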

\begin{proof}
This follows, for example, quoting a lemma by Handelman
(see~\cite{han}), where it is shown that if $a^*a\leq b^*b$, there
is then a sequence $(z_n)$ of elements with $\Vert z_n\Vert \leq 1$
such that $a=\lim\limits_n z_nb$.

We apply this to write $a^{\frac{1}{2}}=\lim\limits
z_nb^{\frac{1}{2}}$, whence $a=\lim\limits_n z_nbz_n^*$.
\end{proof}

\begin{corollary}
If $a\in A_+$ and $\epsilon>0$, we have $(a-\epsilon)_+\precsim a$.
\end{corollary}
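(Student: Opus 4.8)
The plan is to reduce the statement to Lemma~\ref{lem:leq}, which guarantees $a' \precsim b'$ whenever $0 \le a' \le b'$. Thus the only thing I need to establish is the operator inequality $0 \le (a-\epsilon)_+ \le a$, after which a single invocation of the lemma finishes the argument.

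First I would observe that everything takes place inside the commutative C$^*$-algebra $C^*(a)$, where the continuous functional calculus identifies $a$ with the function $t \mapsto t$ on $\sigma(a) \subseteq [0,\infty)$ and $(a-\epsilon)_+$ with the function $f(t) = \mathrm{max}\{0,\, t-\epsilon\}$. Since positivity and order among self-adjoint elements of $C^*(a)$ correspond, under the functional calculus, to pointwise positivity and order of the representing functions on $\sigma(a)$, the desired operator inequality $0 \le (a-\epsilon)_+ \le a$ is equivalent to the pointwise inequality $0 \le \mathrm{max}\{0,\, t-\epsilon\} \le t$ for every $t \in \sigma(a)$.

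Next I would verify this pointwise inequality, which is immediate: for $t \le \epsilon$ one has $\mathrm{max}\{0,\, t-\epsilon\} = 0 \le t$, while for $t > \epsilon$ one has $\mathrm{max}\{0,\, t-\epsilon\} = t - \epsilon \le t$; in both cases the value is nonnegative since $t \ge 0$. Hence $0 \le (a-\epsilon)_+ \le a$, and applying Lemma~\ref{lem:leq} with $a' = (a-\epsilon)_+$ and $b' = a$ yields $(a-\epsilon)_+ \precsim a$.

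There is no genuine obstacle here; the one point requiring (standard) care is the order-preserving nature of the functional calculus, which lets a pointwise inequality of functions on the spectrum lift to an operator inequality. An essentially equivalent alternative would bypass Lemma~\ref{lem:leq} and instead apply Proposition~\ref{prop:commutative} directly inside $C^*(a) \cong C_0(\sigma(a) \setminus \{0\})$, observing that $\mathrm{supp}((a-\epsilon)_+) = \{t \in \sigma(a) : t > \epsilon\} \subseteq \mathrm{supp}(a)$; I would favor the route through Lemma~\ref{lem:leq}, as it is the shortest and keeps the corollary an immediate consequence of the preceding lemma.
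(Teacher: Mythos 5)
Your proof is correct and follows exactly the route the paper intends: the corollary is stated immediately after Lemma~\ref{lem:leq} precisely because $0 \le (a-\epsilon)_+ \le a$ (a pointwise inequality under the functional calculus) reduces it to that lemma. You have simply written out the details the paper leaves implicit.
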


The behaviour of Cuntz subequivalence under addition is good under
orthogonality conditions.
\begin{lemma}
\label{Cuntz} If $a$, $b\in A_+$, then $a+b\precsim a\oplus b$, and
if further $a\perp b$, then $a+b\sim a\oplus b$.
\end{lemma}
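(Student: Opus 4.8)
The plan is to establish the two relations $a+b\precsim a\oplus b$ and $a\oplus b\precsim a+b$ separately, in each case by writing down an explicit approximating sequence of the form $v_k(\cdot)v_k^*$ with $v_k$ built from the continuous functional calculus of $a$ and $b$. The single analytic tool I would use throughout is the family of continuous functions $g_k(t)=\bigl(t/(t+\tfrac1k)\bigr)^{1/2}$, which satisfy $g_k(0)=0$ — so that $g_k(a),g_k(b)\in A$ even when $A$ is non-unital — together with the uniform estimate
\[
0\le t-g_k(t)^2t=\tfrac1k\cdot\tfrac{t}{t+1/k}\le\tfrac1k\qquad(t\ge0),
\]
which by functional calculus gives $\|g_k(x)^2x-x\|\le\tfrac1k\to0$ for every $x\in A_+$.

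For the first relation I would take the row contraction $w_k=(g_k(a),g_k(b))$, regarded as an element of $\mathrm{M}_\infty(A)$. Then
\[
w_k\,(a\oplus b)\,w_k^*=g_k(a)\,a\,g_k(a)+g_k(b)\,b\,g_k(b)=g_k(a)^2a+g_k(b)^2b\longrightarrow a+b,
\]
using the estimate above on each summand. This already yields $a+b\precsim a\oplus b$, and it is worth emphasising that this direction needs no orthogonality at all (an approximate unit $(e_k)$ would serve equally well, via $e_kae_k+e_kbe_k\to a+b$).

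For the reverse relation, where the hypothesis $a\perp b$ is essential, I would instead use the column contraction $v_k=\begin{pmatrix}g_k(a)\\ g_k(b)\end{pmatrix}$, so that
\[
v_k\,(a+b)\,v_k^*=\begin{pmatrix} g_k(a)(a+b)g_k(a) & g_k(a)(a+b)g_k(b)\\ g_k(b)(a+b)g_k(a) & g_k(b)(a+b)g_k(b)\end{pmatrix}.
\]
The key point is that $ab=0$ (hence also $ba=0$) propagates through the functional calculus: for continuous $f,g$ with $f(0)=g(0)=0$ one has $f(a)g(b)=0$, because this holds for polynomials without constant term (every monomial carries a factor $ab$) and then survives passing to the norm closures $C^*(a)$ and $C^*(b)$; in other words these two subalgebras annihilate one another. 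Hence the off-diagonal entries vanish identically and the diagonal entries collapse to $g_k(a)^2a$ and $g_k(b)^2b$. The estimate from the first paragraph then gives
\[
v_k\,(a+b)\,v_k^*=\begin{pmatrix} g_k(a)^2a & 0\\ 0 & g_k(b)^2b\end{pmatrix}\longrightarrow\begin{pmatrix} a & 0\\ 0 & b\end{pmatrix}=a\oplus b,
\]
so that $a\oplus b\precsim a+b$; combined with the first relation this proves $a+b\sim a\oplus b$.

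The computations of the matrix entries and the spectral estimate are routine; the conceptual heart of the argument, and the only spot where orthogonality is used, is the implication $ab=0\Rightarrow f(a)g(b)=0$ that kills the cross terms in the reverse inequality. The point to stay alert to is non-unitality: every cutdown must arise from a function vanishing at $0$ in order to lie in $A$, which is precisely why the $g_k$ are chosen with $g_k(0)=0$ rather than approximating the constant function $1$.
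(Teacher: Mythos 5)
Your proof is correct, and it takes a genuinely different route from the paper's. The paper disposes of the lemma in two lines using the fact, recorded just beforehand as a corollary, that $xx^*\sim x^*x$ for any element $x$: taking the row $x=(a^{1/2}\;\; b^{1/2})$ gives $a+b=xx^*\sim x^*x$, where $x^*x$ is the $2\times 2$ matrix with diagonal entries $a,b$ and off-diagonal entries $a^{1/2}b^{1/2}$; this matrix is $\precsim a\oplus b$ in general (a step the paper leaves implicit---it follows, for instance, from $x^*x\leq 2(a\oplus b)$ together with Lemma~\ref{lem:leq} and the fact that $\lambda c\sim c$ for $\lambda>0$), and it is exactly equal to $a\oplus b$ when $a\perp b$, which settles the orthogonal case with no limiting argument at all. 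You instead verify both subequivalences directly from the definition, manufacturing explicit witnesses $w_k$ and $v_k$ by functional calculus of $a$ and $b$. Your route is longer but entirely self-contained: it needs neither the corollary $xx^*\sim x^*x$ nor Lemma~\ref{lem:leq}, it is scrupulous about non-unitality (the choice $g_k(0)=0$ keeps all cutdowns inside $A$), and it isolates cleanly the single point where orthogonality enters, namely the implication $ab=0\Rightarrow f(a)g(b)=0$ that kills the cross terms---the very same cross terms $a^{1/2}b^{1/2}$ that appear in the paper's matrix $x^*x$. So the two arguments are close in spirit (both reduce the orthogonal case to the vanishing of cross terms), but yours trades the slick $xx^*\sim x^*x$ trick for hands-on approximating sequences, making every step checkable from first principles, while the paper's buys brevity and, in the orthogonal case, an exact algebraic identity rather than a limit.
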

\begin{proof}
If $x=\left( \begin{array}{cc} a^{\frac{1}{2}} &
b^{\frac{1}{2}}\end{array}\right)$, we then have
\[
a+b=xx^*\sim x^*x=\left( \begin{array}{cc}  a &
b^{\frac{1}{2}}a^{\frac{1}{2}} \\ a^{\frac{1}{2}}b^{\frac{1}{2}} &
b\end{array}\right)\precsim a\oplus b\,,
\]
and if $a\perp b$, then the latter $\precsim$ is in fact an
equality.
\end{proof}

One of the main technical advantages of Cuntz comparison is that it
allows to decompose elements up to arbitrary approximations. The
theorem below (proved in~\cite{KR2}) follows this spirit. For the
proof we need some preliminary results.

\begin{lemma}
\label{Pedersen} Let $A$ be a {\rm C}$^*$-algebra, let $x$, $y\in A$
and $a\in A_+$. If $x^*x\leq a^{\alpha}$ and $yy^*\leq a^{\beta}$,
where $\alpha$, $\beta>0$ and $\alpha+\beta>1$, then there is an
element $u\in A$ such that
\[
u_n:=x\left(\frac{1}{n}+a\right)^{-\frac{1}{2}}y\stackrel{n \to
\infty}{\longrightarrow} u\,,
\]
and $\Vert u\Vert\leq \Vert a^{\frac{\alpha+\beta-1}{2}}\Vert$.
\end{lemma}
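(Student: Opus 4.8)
The plan is to prove that the sequence $(u_n)$ is Cauchy in $A$ and then read off the norm bound in the limit, \emph{without} invoking any factorisation of $x$ or $y$. Write $g_n(t) = (1/n+t)^{-1/2}$, a continuous function on $\sigma(a)\subseteq[0,\Vert a\Vert]$, so that $u_n = x\,g_n(a)\,y$ (the element $g_n(a)$ being computed in $\widetilde{A}$, while the products land in $A$). The engine of the proof is a single estimate: for every real-valued continuous function $c$ on $\sigma(a)$ one has
\[
\Vert x\,c(a)\,y\Vert \;\le\; \Vert a^{\alpha/2}\,c(a)\,a^{\beta/2}\Vert \;=\; \sup_{t\in\sigma(a)} \bigl| t^{(\alpha+\beta)/2}\,c(t)\bigr|.
\]
Granting this, everything follows by scalar analysis.

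To prove the estimate, set $z = c(a)\,y$ (legitimate since $c(a)=c(a)^*$). Conjugating $x^*x\le a^{\alpha}$ by $z$ gives $z^*x^*xz\le z^*a^{\alpha}z$, and since both sides are positive and the norm is monotone on positive elements,
\[
\Vert x\,c(a)\,y\Vert^2 = \Vert z^*x^*xz\Vert \le \Vert z^*a^{\alpha}z\Vert = \Vert a^{\alpha/2}\,c(a)\,y\Vert^2 .
\]
Now repeat the trick on the right-hand term with $z' = c(a)\,a^{\alpha/2}$: conjugating $yy^*\le a^{\beta}$ by $z'$ and using $\Vert ww^*\Vert=\Vert w\Vert^2$ yields
\[
\Vert a^{\alpha/2}\,c(a)\,y\Vert^2 = \Vert a^{\alpha/2}c(a)\,yy^*\,c(a)a^{\alpha/2}\Vert \le \Vert a^{\alpha/2}c(a)\,a^{\beta}\,c(a)a^{\alpha/2}\Vert = \Vert a^{\beta/2}\,c(a)\,a^{\alpha/2}\Vert^2 .
\]
Combining the two displays proves the estimate, the final equality being the computation $a^{\beta/2}c(a)a^{\alpha/2} = \bigl(t^{(\alpha+\beta)/2}c(t)\bigr)(a)$ via the functional calculus (all factors are functions of $a$).

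With the estimate in hand, apply it to $c = g_n - g_m$, which is real-valued, to get $\Vert u_n-u_m\Vert \le \sup_{t\in\sigma(a)}|\psi_n(t)-\psi_m(t)|$ where $\psi_n(t) = t^{(\alpha+\beta)/2}(1/n+t)^{-1/2}$. The functions $\psi_n$ are continuous on the compact set $[0,\Vert a\Vert]$, they increase pointwise in $n$ (because $(1/n+t)^{-1/2}$ does), and because $\alpha+\beta-1>0$ their pointwise limit $\psi(t)=t^{(\alpha+\beta-1)/2}$ is again continuous. Dini's theorem then upgrades this to uniform convergence, so $(\psi_n)$ is uniformly Cauchy and $\Vert u_n-u_m\Vert\to 0$; thus $u_n\to u$ for some $u\in A$. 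Finally, applying the estimate with $c=g_n$ gives $\Vert u_n\Vert \le \sup_t\psi_n(t) \le \sup_t\psi(t) = \Vert a^{(\alpha+\beta-1)/2}\Vert$, and letting $n\to\infty$ gives $\Vert u\Vert\le\Vert a^{(\alpha+\beta-1)/2}\Vert$.

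I expect the main obstacle to be the convergence rather than the norm bound: the clean two-step sandwich wants to exploit positivity of the middle factor, yet to establish the Cauchy property it must be fed the \emph{non-positive} difference $g_n(a)-g_m(a)$. The resolution is precisely that the estimate above holds for an arbitrary self-adjoint function of $a$, so positivity of the middle term is never actually used; what is needed instead is that the two conjugations make the exponents combine to $(\alpha+\beta)/2$, and that $\alpha+\beta>1$ renders $\psi$ continuous at $0$ — exactly the input that lets Dini convert pointwise monotone convergence into the uniform convergence driving the argument.
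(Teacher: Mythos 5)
Your proof is correct and takes essentially the same route as the paper's: your two-step conjugation estimate $\Vert x\,c(a)\,y\Vert \le \Vert a^{\alpha/2}c(a)a^{\beta/2}\Vert = \Vert a^{(\alpha+\beta)/2}c(a)\Vert$ is exactly the ``calculation using the C$^*$-equation and the inequalities involving $x^*x$ and $yy^*$'' that the paper invokes (with $c = g_n - g_m$, i.e.\ its $d_{nm}$) to obtain $\Vert u_n-u_m\Vert\le\Vert a^{(\alpha+\beta)/2}d_{nm}\Vert$, and the Dini step is identical. You have merely filled in the details the paper's outline leaves implicit.
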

\begin{proof} (Outline.)
Put
$d_{nm}=\left(\frac{1}{n}+a\right)^{-\frac{1}{2}}-\left(\frac{1}{m}+a\right)^{-\frac{1}{2}}$,
which is a self-adjoint element.

A calculation using the C$^*$-equation and the inequalities
involving $x^*x$ and $yy^*$ implies that $\Vert u_n-u_m\Vert\leq
\Vert a^{\frac{\alpha+\beta}{2}}d_{nm}\Vert$.

Now let $f_n(t):=\left(\frac{1}{n}+t\right)^{-\frac{1}{2}}$, a
continuous real-valued function defined on $\sigma(a)$. We have that
$(f_n)$ is an increasing sequence of functions that converges
pointwise to $f(t)=t^{\frac{\alpha+\beta-1}{2}}$, also continuous.
It follows from Dini's Theorem that this convergence is in fact
uniform, whence $d_{nm}a^{\frac{\alpha+\beta}{2}}\to 0$. This
implies that $(u_n)$ is a Cauchy sequence, so it has a limit $u$.
\end{proof}

\begin{proposition}
\label{Pedersen2} Let $A$ be a {\rm C}$^*$-algebra, and let $x$,
$a\in A$ with $a\geq 0$. If $x^*x\leq a$ and $0<\alpha<1/2$, there
exists $u$ in $A$ with $\Vert u\Vert\leq \Vert
a^{\frac{1}{2}-\alpha}\Vert$, and
\[
x=ua^{\alpha}\,.
\]
\end{proposition}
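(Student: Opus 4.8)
The plan is to obtain $u$ as a direct application of Lemma~\ref{Pedersen}, choosing the second factor so that its exponent recombines with $a^\alpha$ to reconstruct $x$. Concretely, I would set $y:=a^{\frac12-\alpha}$, which is a well-defined element of $A$ since the function $t\mapsto t^{\frac12-\alpha}$ vanishes at $0$ and $\frac12-\alpha>0$. Then $x^*x\le a=a^{1}$ and $yy^*=a^{1-2\alpha}$, so Lemma~\ref{Pedersen} applies with the exponent pair $(1,\,1-2\alpha)$: both the requirement $1-2\alpha>0$ and the requirement $1+(1-2\alpha)=2-2\alpha>1$ are exactly the hypothesis $\alpha<\frac12$. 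The lemma then produces $u\in A$ with
\[
u_n:=x\Bigl(\tfrac1n+a\Bigr)^{-\frac12}a^{\frac12-\alpha}\longrightarrow u,
\qquad
\|u\|\le\bigl\|a^{\frac{1+(1-2\alpha)-1}{2}}\bigr\|=\bigl\|a^{\frac12-\alpha}\bigr\|,
\]
which already delivers the desired norm bound.

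It remains to identify the limit, i.e. to show $ua^\alpha=x$. Multiplying on the right by $a^\alpha$ and using continuity of multiplication, $ua^\alpha=\lim_n u_na^\alpha$, while a one-line functional-calculus computation gives
\[
u_na^\alpha=x\Bigl(\tfrac1n+a\Bigr)^{-\frac12}a^{\frac12}.
\]
Thus the whole proof reduces to the single assertion that $x(\tfrac1n+a)^{-\frac12}a^{\frac12}\to x$, which is the step I expect to be the crux.

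To prove it, I would write $r_n:=1-(\tfrac1n+a)^{-\frac12}a^{\frac12}\in\widetilde{A}$, a self-adjoint function of $a$, so that $x-x(\tfrac1n+a)^{-\frac12}a^{\frac12}=xr_n$. Using the hypothesis $x^*x\le a$, the $C^*$-identity, and the fact that $r_n$ commutes with $a$,
\[
\|xr_n\|^2=\|r_nx^*x\,r_n\|\le\|r_na\,r_n\|=\|a^{\frac12}r_n\|^2=\sup_{t\in\sigma(a)}|\psi_n(t)|^2,
\]
where $\psi_n(t):=t^{\frac12}-t(\tfrac1n+t)^{-\frac12}$. Finally I would observe that $\psi_n\ge0$ on $\sigma(a)\subseteq[0,\infty)$ and $\psi_n\searrow0$ pointwise (for $t>0$ because $t^{\frac12}(\tfrac1n+t)^{-\frac12}\uparrow1$, and trivially $\psi_n(0)=0$). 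Since $\sigma(a)$ is compact and the $\psi_n$ are continuous, Dini's theorem upgrades this to uniform convergence, so $\|xr_n\|\to0$. Combined with $u_n\to u$, this yields $ua^\alpha=x$.

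The only genuine difficulty is the crux step $x(\tfrac1n+a)^{-\frac12}a^{\frac12}\to x$: unlike the scalar heuristic $u=xa^{-\alpha}$, here $a$ need not be invertible, so one must exploit $x^*x\le a$ to see that the approximate support projections of $a$ act as the identity on $x$ in the limit. Everything else is bookkeeping with the functional calculus, exactly parallel to the Dini argument already used in the proof of Lemma~\ref{Pedersen}.
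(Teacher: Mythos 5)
Your proposal is correct and follows essentially the same route as the paper's proof: the same choice $y=a^{\frac12-\alpha}$ feeding Lemma~\ref{Pedersen} with exponents $(1,1-2\alpha)$, and the same identification step via the element $1-\bigl(\tfrac1n+a\bigr)^{-\frac12}a^{\frac12}$, the C$^*$-identity with $x^*x\le a$, and Dini's theorem. In fact your computation $\Vert xr_n\Vert^2\le\Vert r_n a r_n\Vert=\Vert a^{\frac12}r_n\Vert^2$ is exactly the paper's (modulo a harmless typo there), so there is nothing to add.
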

\begin{proof} (Outline.)
Put $y=a^{\frac{1}{2}-\alpha}$. Then $yy^*=a^{1-2\alpha}$. We may
now apply Lemma~\ref{Pedersen} to conclude that there is $u\in A$
with
\[
u_n:=x\left(\frac{1}{n}+a\right)^{-\frac{1}{2}}a^{\frac{1}{2}-\alpha}\stackrel{n
\to \infty}{\longrightarrow} u\,,
\]
and $\Vert u\Vert\leq \Vert a^{\frac{1}{2}-\alpha}\Vert$.

Next, put
$b=1-\left(\frac{1}{n}+a\right)^{-\frac{1}{2}}a^{\frac{1}{2}}$ (in
$\widetilde{A}$). One checks that $\Vert
x-u_na^{\alpha}\Vert^2=\Vert xb\Vert^2\leq \Vert b^*ab\Vert =\Vert
a^{\frac{1}{2}}\Vert^2=\Vert
a^{\frac{1}{2}}-a\left(\frac{1}{n}+a\right)^{-\frac{1}{2}}\Vert^2\stackrel{n
\to \infty}{\longrightarrow}0$, using again Dini's Theorem.
\end{proof}

Notice that, if we put $\vert x\vert =(x^*x)^{\frac{1}{2}}$, the
previous proposition tells us that, given $0<\beta<1$, there is $u$
with $x=u\vert x\vert^{\beta}$, but of course we cannot always take
$\beta=1$. That requires, in general, the passage to the double
dual.

\begin{theorem}
\label{Kirchberg-Rordam}
{\rm (\cite[Lemma 2.2]{KR2})}
Let $A$ be a {\rm C}$^*$-algebra, and $a$, $b\in A_+$. Let
$\epsilon>0$, and suppose that
\[
\Vert a-b\Vert<\epsilon\,.
\]
Then there is a contraction $d$ in $A$ with $(a-\epsilon)_+=dbd^*$.
\end{theorem}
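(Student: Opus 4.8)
The plan is to produce $d$ as an honest norm-limit in $A$ of a regularised sandwich of the form $(c^{1/2}bc^{1/2})^{-1/2}$, where $c:=(a-\epsilon)_+$. First I would pass to the unitization $\widetilde{A}$ (the element $d$ will nonetheless lie in $A$) and extract the slack hidden in the strict inequality: since $a-b$ is self-adjoint with $\|a-b\|<\epsilon$, one has $a-b\le\|a-b\|\,1$, hence $a-\epsilon\,1\le b-\delta\,1$ with $\delta:=\epsilon-\|a-b\|>0$. Conjugating this operator inequality by $c^{1/2}$ and using the functional-calculus identity $c^{1/2}(a-\epsilon1)c^{1/2}=(a-\epsilon)_+^2=c^2$, I obtain the key estimate
\[
c^2+\delta\,c\ \le\ c^{1/2}b\,c^{1/2}\ =:\ M .
\]
In particular $M\ge c^2$ and $M\ge\delta c$; the first gives $M^{1/2}\ge c$ and the second gives $c\le\delta^{-1}M$, while both force $\ker M=\ker c$, so that $M$ and $c$ share one support projection $p$.

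Next I would define
\[
d\ :=\ \lim_{n\to\infty}c^{1/2}\Big(\tfrac1n+M\Big)^{-1/2}c^{1/2}.
\]
Formally $d=c^{1/2}M^{-1/2}c^{1/2}$, but $M^{-1/2}$ is unbounded, so the whole content is that this limit exists in norm and lies in $A$. This is exactly what Lemma~\ref{Pedersen} delivers: taking $x=y=c^{1/2}$ and base element $M$, the relation $c\le\delta^{-1}M$ lets me use the exponents $\alpha=\beta=1$ (rescaling $M$ by $\delta$ if one insists on constant $1$), so that $\alpha+\beta=2>1$ and the lemma returns a norm-convergent sequence with limit $d\in A$. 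Identifying this limit inside $A^{**}$ as $c^{1/2}M^{-1/2}c^{1/2}$ and using $M^{1/2}\ge c$ (whence $M^{-1/2}\le c^{-1}$ on the common support $p$), I get $0\le d\le c^{1/2}c^{-1}c^{1/2}=p\le 1$, so $d$ is a (positive) contraction.

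Finally I would verify the identity. Since $p$ is the common support projection, $p\,c^{1/2}=c^{1/2}$ and $M^{-1/2}MM^{-1/2}=p$, so, using $d=d^*$,
\[
dbd^*\ =\ c^{1/2}M^{-1/2}\big(c^{1/2}bc^{1/2}\big)M^{-1/2}c^{1/2}\ =\ c^{1/2}M^{-1/2}MM^{-1/2}c^{1/2}\ =\ c^{1/2}p\,c^{1/2}\ =\ c,
\]
which is the assertion $(a-\epsilon)_+=dbd^*$. The one genuine obstacle is the passage from $A^{**}$ back to $A$: the naive element $c^{1/2}M^{-1/2}c^{1/2}$ only manifestly lives in the double dual (this is precisely the ``$\beta=1$'' failure recorded in the remark after Proposition~\ref{Pedersen2}). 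The device that removes it is the strict hypothesis $\|a-b\|<\epsilon$: its slack $\delta>0$ is what promotes the exponent sum to $\alpha+\beta=2>1$ in Lemma~\ref{Pedersen}, turning a merely strong (double-dual) limit into a norm limit in $A$ and thereby yielding an exact — not approximate — factorisation.
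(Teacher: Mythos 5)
Your proof is correct, and it takes a genuinely different route from the paper's. The paper cannot exploit the strict inequality $\Vert a-b\Vert<\epsilon$ as directly as you do: it first replaces $b$ by $b_0=g_r(b)$, where $g_r(t)=\min(t,t^r)$ and $r>1$ is close to $1$, so as to have both $a-\epsilon_1\le b_0$ for some $\epsilon_1<\epsilon$ \emph{and} $b_0\le b^r$; it then manufactures $e\in C^*(a)$ with $e(a-\epsilon_1)e=(a-\epsilon)_+$, takes the polar decomposition of $x=b_0^{1/2}e$ in $A^{**}$, invokes Proposition~\ref{Pedersen2} to see that $y=v(a-\epsilon)_+^{1/2}$ lies in $A$ (with $y^*y=(a-\epsilon)_+$ and $yy^*\le b_0\le b^r$), and finally applies Lemma~\ref{Pedersen} with base $b^r$ and exponents $\alpha=1$, $\beta=(r-1)/r$ to divide $y^*$ by $b^{1/2}$. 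You instead convert the slack $\delta=\epsilon-\Vert a-b\Vert$ into the operator inequality $a-\epsilon 1\le b-\delta 1$ and compress into the hereditary algebra of $c=(a-\epsilon)_+$, obtaining $c^2+\delta c\le M:=c^{1/2}bc^{1/2}$; this single inequality feeds both hypotheses of Lemma~\ref{Pedersen} and yields the \emph{positive} contraction $d=c^{1/2}M^{-1/2}c^{1/2}$ with $dbd=c$. You thereby avoid the $g_r$ trick, the interpolating function $e$, the double-dual polar decomposition and Proposition~\ref{Pedersen2} altogether, at the price of losing the intermediate element $y$ with $y^*y=(a-\epsilon)_+$ and $yy^*\le b_0$, which the paper's route produces along the way and which is a useful factorization in its own right. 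Two refinements would make your write-up airtight, and both are polish rather than gaps. First, instead of rescaling $M$ by $\delta^{-1}$ (which turns the regularization parameters $\tfrac1n$ into $\tfrac{\delta}{n}$ and forces you to argue that the limit is parameter-independent), apply Lemma~\ref{Pedersen} with $x=y=\delta^{1/2}c^{1/2}$ and base $M$ itself: then $x^*x=yy^*=\delta c\le M$, so $\alpha=\beta=1$ verbatim and $d=\delta^{-1}\lim_n u_n\in A$. Second, the unbounded-operator steps in $A^{**}$ (the inequality $M^{-1/2}\le c^{-1}$ and the identity $M^{-1/2}MM^{-1/2}=p$) are cleanest when run through the regularized elements you already defined: $\Vert d_n\Vert\le 1$ follows from $c\le M^{1/2}$ (operator monotonicity of $t\mapsto t^{1/2}$ applied to $c^2\le M$) together with the functional calculus bound $t^{1/2}\bigl(\tfrac1n+t\bigr)^{-1/2}\le 1$, while $d_nbd_n=c^{1/2}M\bigl(\tfrac1n+M\bigr)^{-1}c^{1/2}$ satisfies
\[
\bigl\Vert c-d_nbd_n\bigr\Vert=\tfrac1n\bigl\Vert c^{1/2}\bigl(\tfrac1n+M\bigr)^{-1}c^{1/2}\bigr\Vert\le\tfrac{1}{n\delta}\longrightarrow 0
\]
by $c\le\delta^{-1}M$, so that $dbd=\lim_n d_nbd_n=c$ without ever mentioning generalized inverses.
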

\begin{proof}
Let $r>1$. Define $g_r\colon\mathbb{R}_+\to\mathbb{R}_+$ as
$g_r(t)=\min(t,t^r)$, so that $g_r(b)\to b$ as $r\to 1$.

Since $\Vert a-g_r(b)\Vert\leq \Vert a-b\Vert+\Vert b-g_r(b)\Vert$,
we may choose $r>1$ such that $\Vert
a-g_r(b)\Vert=\epsilon_1<\epsilon$ and also $\Vert
b-g_r(b)\Vert<\epsilon$.

Let $b_0=g_r(b)$. Then, since $a-b_0\leq \epsilon_1\cdot 1$, we have
$a-\epsilon_1\leq b_0$. Note that also $b_0\leq b^r$.

Define $e\in C^*(a)$ as $e(a)$, where $e$ is the continuous function
defined on $\sigma(a)$ as
$e(t)=\left(\frac{t-\epsilon}{t-\epsilon_1}\right)^{\frac{1}{2}}$ if
$t\geq\epsilon$, and $e(t)=0$ otherwise. Note that
\[
e(a-\epsilon_1)e=(a-\epsilon)_+\,,\text{ and }\Vert e\Vert\leq 1.
\]
From this it follows that $(a-\epsilon)_+=e(a-\epsilon_1)e\leq
eb_0e$.

Next, let $x=b_0^{\frac{1}{2}}e$, and let $x=v(x^*x)^{\frac{1}{2}}$
be its polar decomposition (with $v\in A^{**}$, so
$v^*x=(x^*x)^{\frac{1}{2}}$ and
$(v^*v)(x^*x)^{\frac{1}{2}}=(x^*x)^{\frac{1}{2}}$). Then
$x^*x=eb_0e$, and we claim that, since $(a-\epsilon)_+\leq x^*x$, we
have
\[
y:=v(a-\epsilon)_+^{\frac{1}{2}}\in A\,.
\]
Indeed, using Proposition~\ref{Pedersen2}, we may write
$(a-\epsilon)_+^{\frac{1}{2}}=(x^*x)^{\frac{1}{4}}u$ for some
element $u\in A$. Since we may also write
$(x^*x)^{\frac{1}{4}}=\lim\limits (x^*x)^{\frac{1}{2}}t_n$, for a
certain sequence $(t_n)$ (with $\Vert t_n\Vert\leq 1$), we have
\[
y=v(a-\epsilon)_+^{\frac{1}{2}}=v(x^*x)^{\frac{1}{4}}=\lim\limits
v(x^*x)^{\frac{1}{2}}t_n\in A\,.
\]
Now note that
\[
y^*y=(a-\epsilon)_+^{\frac{1}{2}}v^*v(a-\epsilon)_+^{\frac{1}{2}}=(a-\epsilon)_+^{\frac{1}{2}}\,,\text{
and}
\]
\[
yy^*=v(a-\epsilon)_+v^*\leq
vx^*xv^*=v(x^*x)^{\frac{1}{2}}(x^*x)^{\frac{1}{2}}v^*=xx^*=b_0^{\frac{1}{2}}e^2b_0^{\frac{1}{2}}\leq
b_0^{\frac{1}{2}}\,,
\]
as $\Vert e\Vert \leq 1$.

Put
$d_n:=y^*\left(\frac{1}{n}+b^r\right)^{-\frac{1}{2}}b^{\frac{r-1}{2}}$.
Since $yy^*\leq b_0\leq b^r$, we may apply Lemma~\ref{Pedersen}
(with $\alpha=1$ and $\beta=\frac{r-1}{r}$) to obtain that $(d_n)$
is a Cauchy sequence. Let $d$ be its limit. As in the proof of
Proposition~\ref{Pedersen2}, we have $db^{\frac{1}{2}}=y^*$.

Therefore $dbd^*=y^*y=(a-\epsilon)_+$.

% Also $b^{\frac{1}{2}}d^*db^{\frac{1}{2}}-b=yy^*-b\leq b_0-b$,
% whence $\Vert b^{\frac{1}{2}}d^*db^{\frac{1}{2}}-b\Vert\leq\Vert
% b_0-b\Vert<\epsilon$.
Finally
\[
d_n^*d_n\leq
b^{\frac{r-1}{2}}\left(\frac{1}{n}+b^r\right)^{-\frac{1}{2}}yy^*
\left(\frac{1}{n}+b^r\right)^{-\frac{1}{2}}b^{\frac{r-1}{2}}\leq
1\,,
\]
so $\Vert d_n\Vert\leq 1$, which implies that $d$ is a contraction.
\end{proof}

\begin{lemma}
\label{polardec} Let $a=v\vert a\vert$ be the polar decomposition of
an element $a$ in a {\rm C}$^*$-algebra $A$. If $f\in C(\sigma(\vert
a\vert))_+$ and $f$ vanishes on a neighbourhood of zero, then we
have $vf(\vert a\vert)v^*=f(\vert a^*\vert)$.
\end{lemma}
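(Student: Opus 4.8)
The plan is to work in the bidual $A^{**}$ (equivalently, in a faithful representation on a Hilbert space), where the partial isometry $v$ from the polar decomposition $a=v\lvert a\rvert$ lives, with $v^*v$ equal to the support projection of $\lvert a\rvert$ and $vv^*$ equal to the support projection of $\lvert a^*\rvert$. The target identity asserts that, although $v$ need not belong to $A$, the element $vf(\lvert a\rvert)v^*$ nonetheless coincides with $f(\lvert a^*\rvert)$, which does lie in $A$ because $f(0)=0$.

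First I would record the basic algebraic relations coming from $a=v\lvert a\rvert$, namely $a^*=\lvert a\rvert v^*$ and hence $aa^*=v\lvert a\rvert^2v^*$; since $\lvert a^*\rvert^2=aa^*$ this gives $\lvert a^*\rvert^2=v\lvert a\rvert^2v^*$. Multiplying on the right by $v$ and using that $v^*v$ is the support projection of $\lvert a\rvert$ (so that $\lvert a\rvert^2v^*v=\lvert a\rvert^2$) yields the key intertwining relation $v\lvert a\rvert^2=\lvert a^*\rvert^2v$.

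Next I would propagate this intertwining through the continuous functional calculus. By induction one gets $v\lvert a\rvert^{2n}=\lvert a^*\rvert^{2n}v$ for every $n$, hence $v\,p(\lvert a\rvert^2)=p(\lvert a^*\rvert^2)\,v$ for every polynomial $p$. Since the spectra of $\lvert a\rvert^2$ and $\lvert a^*\rvert^2$ are contained in a common compact interval, a Stone--Weierstrass/uniform-approximation argument upgrades this to $v\,g(\lvert a\rvert^2)=g(\lvert a^*\rvert^2)\,v$ for every continuous $g$, as norm limits commute with left and right multiplication by the bounded operator $v$. Choosing $g(s)=f(\sqrt{s})$, which is continuous because $f$ is continuous and vanishes near $0$, and noting that the nonzero parts of $\sigma(\lvert a\rvert)$ and $\sigma(\lvert a^*\rvert)$ coincide, I obtain $vf(\lvert a\rvert)=f(\lvert a^*\rvert)v$.

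Finally I would multiply on the right by $v^*$: this gives $vf(\lvert a\rvert)v^*=f(\lvert a^*\rvert)vv^*$, and since $vv^*$ is the support projection of $\lvert a^*\rvert$ while $f(0)=0$, the factor $vv^*$ acts as the identity on the range of $f(\lvert a^*\rvert)$, so $f(\lvert a^*\rvert)vv^*=f(\lvert a^*\rvert)$, which completes the argument. The main obstacle I anticipate is the careful bookkeeping of the support projections---ensuring at each stage that $v^*v$ (resp. $vv^*$) is absorbed on the correct side---together with the justification that the entire computation, carried out in $A^{**}$, lands back in $A$; the hypothesis that $f$ vanishes on a neighbourhood of $0$ is precisely what makes both the continuity of $g$ and the absorption of the support projections clean.
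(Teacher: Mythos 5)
Your proof is correct and takes essentially the same route as the paper's: both rest on the polar-decomposition identity $aa^*=v(a^*a)v^*$ (your intertwining relation $v\vert a\vert^2=\vert a^*\vert^2v$ is just its one-sided form) propagated through polynomial approximation in the functional calculus. The only difference is bookkeeping --- the paper conjugates by $v$ directly, absorbing $v^*v$ between factors of $(a^*a)^n$, whereas you intertwine first and absorb the support projection $vv^*$ at the very end, which makes explicit the support-projection steps the paper leaves implicit.
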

\begin{proof}
This follows from the functional calculus, noticing that
$v(a^*a)v^*=v\vert a\vert\vert a\vert v^*=aa^*$, whence
$v(a^*a)^nv^*=(aa^*)^n$ for any $n$. Therefore $v\vert a\vert
v^*=\vert a^*\vert$.
\end{proof}

Some of our results have the assumption that $A$ has moreover stable
rank one. The notion of stable rank was introduced, in the
topological setting, by Rieffel in \cite{Ri}, as a non-commutative
dimension theory, as in the commutative case is related to the
dimension of the underlying topological space. It was later
connected to the classical stable rank for general rings introduced
by Bass (see \cite{hv}). We shall be exclusively concerned here with
algebras that have \emph{stable rank one}, which by definition means
that the set of invertible elements is dense in $A$. We write
$\mathrm{sr}(A)=1$, as is customary, to mean that the stable rank of
$A$ is one. If $A$ is non-unital, then $A$ has stable rank one if,
by definition, its minimal unitization $\widetilde{A}$ has stable
rank one.

It is a theorem that the stable rank one condition implies that
projections cancel from direct sums. A weaker form of cancellation
is that of stable finiteness. We say that a C$^*$-algebra $A$ is
\emph{directly finite} if isometries are unitaries. In other words,
\[
xx^*=1 \implies x^*x=1\,.
\]
A C$^*$-algebra $A$ such that $M_n(A)$ is directly finite for all
$n\geq 1$ will be called \emph{stably finite}.  Since stable rank
one implies cancellation, it also implies stable finiteness.

We shall also need the result quoted below. Its proof will be
omitted, but can be found in~\cite[Corollary 8]{pedunitary}.

\begin{theorem}
\label{Pedunitary} Let $A$ be a {\rm C}$^*$-algebra with stable rank
one. If $a$ has polar decomposition $a=v\vert a\vert$, then given
$f\in C(\sigma(\vert a\vert))_+$ that vanishes on a neighbourhood of
zero, there is a unitary $u\in U(\widetilde{A})$ such that $vf(\vert
a\vert)=uf(\vert a\vert)$, and therefore $uf(\vert
a\vert)u^*=vf(\vert a\vert)v^*=f(\vert a^*\vert)$.
\end{theorem}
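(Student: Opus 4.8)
The plan is to combine three ingredients: the density of invertibles in $\widetilde{A}$ afforded by $\mathrm{sr}(A)=1$, the norm-continuity of $x\mapsto\vert x\vert$, and Lemma~\ref{polardec}. I would proceed in three stages: a preliminary reduction, an \emph{approximate} realisation of $v$ by unitaries, and finally the upgrade to the \emph{exact} identity $uf(\vert a\vert)=vf(\vert a\vert)$, which is the real obstacle.

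First, the reduction. Since $f$ vanishes on some interval $[0,\delta]$, the function $g(t):=f(t)/t$ (set to $0$ near $0$) is continuous on $\sigma(\vert a\vert)$, so $g(\vert a\vert)\in A$ and $vf(\vert a\vert)=v\vert a\vert g(\vert a\vert)=ag(\vert a\vert)\in A$; in particular the element we wish to hit already lies in $A$. Next I would fatten $f$: choose $f_1\in C(\sigma(\vert a\vert))_+$ with $f_1\equiv 1$ on $\{f>0\}$ and $f_1$ vanishing on a neighbourhood of $0$ (possible as $\{f>0\}$ is bounded away from $0$). Then for any $k$ supported in $\{f_1=1\}$ one has $f_1(\vert a\vert)k(\vert a\vert)=k(\vert a\vert)$, so a single unitary $u$ satisfying $uf_1(\vert a\vert)=vf_1(\vert a\vert)$ automatically yields $uk(\vert a\vert)=vk(\vert a\vert)$ for all such $k$. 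Applying this to $k=f$ and to $k=f^{1/2}$ (both positive, continuous, vanishing near $0$) gives $uf(\vert a\vert)=vf(\vert a\vert)$ and, writing $f=f^{1/2}f^{1/2}$, $uf(\vert a\vert)u^*=(uf^{1/2}(\vert a\vert))(uf^{1/2}(\vert a\vert))^*=(vf^{1/2}(\vert a\vert))(vf^{1/2}(\vert a\vert))^*=vf(\vert a\vert)v^*$, which equals $f(\vert a^*\vert)$ by Lemma~\ref{polardec}. Thus everything reduces to producing one unitary $u$ with $uf_1(\vert a\vert)=vf_1(\vert a\vert)$.

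For the approximate version, use $\mathrm{sr}(A)=1$ to pick invertibles $a_n\to a$ in $\widetilde{A}$ and set $u_n:=a_n\vert a_n\vert^{-1}\in U(\widetilde{A})$, genuine unitaries since each $a_n$ is invertible. As $a_n^*a_n\to a^*a$ and $t\mapsto t^{1/2}$ is continuous on bounded sets, $\vert a_n\vert\to\vert a\vert$, whence $u_n\vert a\vert=a_n+u_n(\vert a\vert-\vert a_n\vert)\to a=v\vert a\vert$. Multiplying on the right by $g_1(\vert a\vert)$, with $g_1(t)=f_1(t)/t$ bounded and continuous, yields $u_nf_1(\vert a\vert)=u_n\vert a\vert g_1(\vert a\vert)\to vf_1(\vert a\vert)$, so $v$ is realised by unitaries in the limit.

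The hard part is passing from this approximate statement to an exact one. My plan is a telescoping argument. Put $m:=f_1(\vert a\vert)$ and $b:=vm\in A$; note every $b_n:=u_nm$ has the \emph{same} modulus $\vert b_n\vert=m$ (as $u_n$ is unitary) and $b_n\to b$. Passing to a subsequence with $\sum_n\Vert b_{n+1}-b_n\Vert<\infty$, I would produce at each step a unitary $c_n\in U(\widetilde{A})$ close to $1$ with $c_nb_n=b_{n+1}$; then $U_n:=c_{n-1}\cdots c_1u_1$ converges in the (norm-closed) unitary group to some $u\in U(\widetilde{A})$ with $um=\lim_n b_n=b=vm$, the desired exact identity. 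The crux is the one-step correction: given two elements with equal modulus $m$ that are close in norm, find a near-identity unitary of $\widetilde{A}$ carrying one to the other. This is exactly where stable rank one is indispensable---it supplies the cancellation needed to extend the partial isometry carrying $b_n$ to $b_{n+1}$ across the spectral gap that $f_1$ opens up by vanishing near $0$, so that the correcting unitary may be taken in $\widetilde{A}$ (and close to $1$) rather than merely in $A^{**}$. I expect this one-step extension lemma, rather than any of the preceding functional calculus, to be the main obstacle; the identities of Lemma~\ref{polardec} then close out the stated equalities as above.
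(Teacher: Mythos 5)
Your preliminary reduction and your approximate step are both correct, and they are genuine ingredients of a real proof: $vf(\vert a\vert)=ag(\vert a\vert)\in A$, the fattening $f=f\cdot f_1$ so that one unitary suffices, and the unitaries $u_n=a_n\vert a_n\vert^{-1}$ coming from invertibles $a_n\to a$, which give $u_nf_1(\vert a\vert)\to vf_1(\vert a\vert)$. The problem is exactly where you say it is, and it is a genuine gap, not a routine detail: the ``one-step extension lemma'' (if $\vert b\vert=\vert b'\vert=m$ and $\Vert b-b'\Vert$ is small, then $b'=cb$ for a unitary $c\in U(\widetilde{A})$ with $\Vert c-1\Vert$ small, quantitatively enough to be summable along your subsequence) is never proved, and it is not an easy consequence of stable rank one. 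Writing $b=v_1m$ and $b'=v_2m$ in $A^{**}$, this lemma asks that the $A^{**}$-partial isometry $v_2v_1^*$ be replaced by a unitary of $\widetilde{A}$ --- which is precisely the kind of statement the theorem itself asserts, now in a relative form and with an added norm control on the unitary. Assuming it begs the question. Moreover, the heuristic you offer for it is faulty on two counts: $m=f_1(\vert a\vert)$ has no spectral gap at $0$ in general (if $\sigma(\vert a\vert)=[0,\Vert a\Vert]$, then by spectral mapping $\sigma(f_1(\vert a\vert))\supseteq[0,1]$, since $f_1$ must pass continuously from $0$ to $1$), and the ``cancellation'' that stable rank one provides concerns projections in matrices over $A$, whereas the support and range projections of $b_n$, $b_{n+1}$ that you would need to manipulate live only in $A^{**}$.

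For comparison: the paper does not prove Theorem \ref{Pedunitary} at all --- it explicitly omits the proof and cites \cite[Corollary 8]{pedunitary}. Pedersen's argument achieves exactness in one stroke rather than by an infinite telescoping correction: given a single invertible $b$ with $\Vert a-b\Vert\leq\epsilon$ and unitary polar decomposition $b=u\vert b\vert$, one shows that the element $u^*v(\vert a\vert-\epsilon)_+$ is positive; since its square equals $(\vert a\vert-\epsilon)_+^2$ (using $v^*v(\vert a\vert-\epsilon)_+=(\vert a\vert-\epsilon)_+$), uniqueness of positive square roots forces $u^*v(\vert a\vert-\epsilon)_+=(\vert a\vert-\epsilon)_+$, i.e.\ the unitary $u$ already implements $v$ exactly on the cut-down. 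The case of a general $f$ vanishing on $[0,\delta]$ then follows from the factorization $f(t)=(t-\delta/2)_+h(t)$ with $h$ continuous --- the same kind of reduction you perform with $f_1$. So the correct place to invest effort is not a limit of correcting unitaries but a positivity argument that upgrades the single approximating unitary to an exact one; everything after that (including the identity $uf(\vert a\vert)u^*=f(\vert a^*\vert)$ via Lemma \ref{polardec}) goes through as you wrote it.
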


\begin{lemma}
\label{Blackaddar} Let $A$ be a {\rm C}$^*$-algebra. Given $b\in
A_+$, the set $\{a\in A_+\mid a\precsim b\}$ is norm-closed.
\end{lemma}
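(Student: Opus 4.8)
The plan is to prove that the set is closed under norm-limits of sequences. So I take a sequence $(a_n)$ in $A_+$ with $a_n\precsim b$ for all $n$ and $a_n\to a$ in norm, and I aim to show $a\precsim b$. The first reduction I would make is to the claim that it suffices to prove $(a-\epsilon)_+\precsim b$ for every $\epsilon>0$. Indeed, for a positive element one has $\Vert (a-\epsilon)_+-a\Vert\leq\epsilon$; so if each $(a-\epsilon)_+$ is witnessed by some $w$ making $\Vert wbw^*-(a-\epsilon)_+\Vert$ small, then that same $w$ makes $\Vert wbw^*-a\Vert$ as small as we please, and letting $\epsilon\to0$ produces the approximating sequence required by the definition of $a\precsim b$.

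Now fix $\epsilon>0$. Since $a_n\to a$, I would choose $n$ large enough that $\Vert a-a_n\Vert<\epsilon$, and then apply Theorem~\ref{Kirchberg-Rordam} (with the roles of its ``$a$'' and ``$b$'' played by $a$ and $a_n$) to obtain a contraction $d\in A$ with $(a-\epsilon)_+=da_nd^*$. The point of this step is that it upgrades the mere inequality $\Vert a-a_n\Vert<\epsilon$ into an \emph{exact} identity expressing $(a-\epsilon)_+$ in terms of $a_n$. Next I would feed in the hypothesis $a_n\precsim b$: pick $(v_k)$ with $\Vert v_kbv_k^*-a_n\Vert\to0$. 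Conjugating by $d$ and using $\Vert d\Vert\leq1$ gives $\Vert (dv_k)b(dv_k)^*-da_nd^*\Vert\leq\Vert v_kbv_k^*-a_n\Vert\to0$, and since $da_nd^*=(a-\epsilon)_+$ this exhibits $(a-\epsilon)_+\precsim b$ with witnesses $w_k=dv_k$. As $\epsilon>0$ was arbitrary, the reduction above finishes the proof.

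The one genuinely delicate point, and the reason the lemma is not entirely formal, is the passage from an approximation to an exact relation: the definition of $\precsim$ only furnishes $v_kbv_k^*\to a_n$ rather than an equality, so one cannot simply substitute $a_n$ for the nearby element $a$ inside such a relation and expect the errors to stay uniformly controlled. Theorem~\ref{Kirchberg-Rordam} is precisely the device that resolves this, converting the norm-closeness $\Vert a-a_n\Vert<\epsilon$ into the honest equation $(a-\epsilon)_+=da_nd^*$ with a contraction $d$; everything else is a routine conjugation estimate. I would therefore regard correctly invoking Theorem~\ref{Kirchberg-Rordam}, together with the harmless reduction to the cut-downs $(a-\epsilon)_+$, as the crux of the argument, with no further obstacle expected.
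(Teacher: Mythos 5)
Your proof is correct: the reduction to the cut-downs $(a-\epsilon)_+$ is sound, Theorem~\ref{Kirchberg-Rordam} is available at this point in the paper (it is proved before this lemma, so there is no circularity), and the conjugation estimate $\Vert d(v_kbv_k^*-a_n)d^*\Vert\leq\Vert v_kbv_k^*-a_n\Vert$ is valid. But it takes a genuinely different, and heavier, route than the paper's. The paper's proof is a two-line triangle-inequality argument: choose $a_m$ with $\Vert a-a_m\Vert<\frac{1}{2n}$, then from $a_m\precsim b$ an element $x_n$ with $\Vert a_m-x_nbx_n^*\Vert<\frac{1}{2n}$, so that $\Vert a-x_nbx_n^*\Vert<\frac{1}{n}$; the sequence $(x_n)$ then witnesses $a\precsim b$ directly from the definition. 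This works precisely because Cuntz subequivalence is itself defined by approximation, so approximating the approximation costs nothing. That exposes the one misjudgement in your closing paragraph: you assert that one \emph{cannot} substitute the nearby element $a_n$ for $a$ inside the relation and keep the errors controlled, and that Theorem~\ref{Kirchberg-Rordam} is therefore essential. It is not --- that substitution is exactly what the paper does, with the errors controlled by the triangle inequality, so the lemma really is formal from the definition. What your route buys instead is an exact identity $(a-\epsilon)_+=da_nd^*$ rather than an approximation; that stronger conclusion is what is genuinely needed elsewhere (e.g.\ in the proof of Proposition~\ref{basics}), but for this lemma it is unnecessary machinery. One point in your favour: you prove your reduction step by hand rather than quoting Proposition~\ref{basics}, which would have been circular, since that proposition is itself proved using this lemma.
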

\begin{proof}
Suppose $a=\lim a_n$ with $a_n\precsim b$ for all $n$. Given
$n\in\mathbb{N}$, choose $a_m\in A$ with $m$ depending on $n$ such
that $\Vert a-a_m\Vert<\frac{1}{2n}$ and $x_n\in A$ such that $\Vert
a_m-x_nbx_n^*\Vert<\frac{1}{2n}$. Therefore
\[
\Vert a-x_nbx_n^*\Vert\leq\Vert a-a_m\Vert+\Vert
a_m-x_nbx_n^*\Vert<\frac{1}{n}\,.
\]
\end{proof}

In order to ease the notation, we will use in the sequel $A_a$ to
denote the hereditary C$^*$-algebra generated by a positive element
$a$ in $A$, that is, $A_a=\overline{aAa}$. Recall that, if $A$ is a
separable C$^*$-algebra, then all hereditary algebras are of this
form.

\begin{proposition}{\rm (\cite{KR},~\cite{Rfunct})}\label{basics}
Let $A$ be a C$^*$-algebra, and $a,b \in A_+$. The following
conditions are equivalent:
\begin{enumerate}[{\rm (i)}]
\item $a \precsim b$; \item for all $\epsilon > 0$,
$(a-\epsilon)_+ \precsim b$; \item for all $\epsilon > 0$, there
exists $\delta > 0$ such that $(a-\epsilon)_+ \precsim
(b-\delta)_+$.
\end{enumerate}
If moreover $\mathrm{sr}(A)=1$, then
\[
a\precsim b \text{ if and only if for every }\epsilon>0\,,\text{
there is }u\text{ in }U(A)\text{ such that }u^*(a-\epsilon)_+u\in
A_b\,.
\]
\end{proposition}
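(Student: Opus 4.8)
The plan is to prove the cycle (i)$\Rightarrow$(ii)$\Rightarrow$(iii)$\Rightarrow$(i), the engine throughout being Theorem~\ref{Kirchberg-Rordam}. For (i)$\Rightarrow$(ii), given $a\precsim b$ and $\epsilon>0$ I would choose $v$ with $\|vbv^*-a\|<\epsilon$ (possible since $v_nbv_n^*\to a$) and apply Theorem~\ref{Kirchberg-Rordam} to the pair $a,\,vbv^*$, obtaining a contraction $d$ with $(a-\epsilon)_+=d(vbv^*)d^*=(dv)b(dv)^*$, so that $(a-\epsilon)_+\precsim b$. For (ii)$\Rightarrow$(iii), I fix $\epsilon>0$ and write $(a-\epsilon)_+=((a-\tfrac\epsilon2)_+-\tfrac\epsilon2)_+$. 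By (ii) there is $v$ with $\|vbv^*-(a-\tfrac\epsilon2)_+\|<\tfrac\epsilon4$, and since $\|(b-\delta)_+-b\|\le\delta$ I may then pick $\delta>0$ small enough that $\|v(b-\delta)_+v^*-(a-\tfrac\epsilon2)_+\|<\tfrac\epsilon2$; Theorem~\ref{Kirchberg-Rordam} applied to $(a-\tfrac\epsilon2)_+$ and $v(b-\delta)_+v^*$ then gives $(a-\epsilon)_+\precsim(b-\delta)_+$. Finally, for (iii)$\Rightarrow$(i), since $(b-\delta)_+\precsim b$ (the corollary proved above), (iii) yields $(a-\epsilon)_+\precsim b$ for every $\epsilon>0$; as $(a-\epsilon)_+\to a$ in norm, the norm-closedness of $\{x\in A_+:x\precsim b\}$ from Lemma~\ref{Blackaddar} forces $a\precsim b$.

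For the stable-rank-one statement, the backward implication is the easier half. Since unitary conjugation preserves Cuntz equivalence (indeed $u^*(a-\epsilon)_+u\sim(a-\epsilon)_+$ because $yy^*\sim y^*y$), the hypothesis gives $(a-\epsilon)_+\sim x$ for $x:=u^*(a-\epsilon)_+u\in A_b$. I would then verify the standard fact that every positive $x\in A_b=\overline{bAb}$ satisfies $x\precsim b$: choosing an approximate unit $f_n(b)$ for $\overline{bAb}$ with $f_n$ continuous and $f_n(0)=0$, one has $x^{1/2}f_n(b)^2x^{1/2}\to x$, while each $x^{1/2}f_n(b)^2x^{1/2}\precsim f_n(b)^2\precsim b$ by Proposition~\ref{prop:commutative} applied inside $C^*(b)$; Lemma~\ref{Blackaddar} then gives $x\precsim b$. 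Hence $(a-\epsilon)_+\precsim b$ for all $\epsilon>0$, and the implication (ii)$\Rightarrow$(i) already established yields $a\precsim b$.

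The forward implication is the substantive one, and it is where $\mathrm{sr}(A)=1$ enters, through Theorem~\ref{Pedunitary}. Fix $\epsilon>0$. Arguing as in (i)$\Rightarrow$(ii) but with the cut $\epsilon/2$, Theorem~\ref{Kirchberg-Rordam} produces $s\in A$ with $(a-\tfrac\epsilon2)_+=sbs^*$ exactly. Set $c:=b^{1/2}s^*$, so that $c^*c=sbs^*=(a-\tfrac\epsilon2)_+$ while $cc^*=b^{1/2}(s^*s)b^{1/2}\in A_b$ (recall $\overline{b^{1/2}Ab^{1/2}}=A_b$); in particular $|c^*|=(cc^*)^{1/2}\in A_b$. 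Let $c=v|c|$ be the polar decomposition, so $|c|=(a-\tfrac\epsilon2)_+^{1/2}$. The key observation is that $(a-\epsilon)_+=f(|c|)^2$, where $f(t):=\big((t^2-\tfrac\epsilon2)_+\big)^{1/2}$ vanishes on the neighbourhood $[0,\sqrt{\epsilon/2}\,]$ of $0$. This is exactly the hypothesis Theorem~\ref{Pedunitary} requires: it supplies a unitary $u\in U(\widetilde A)$ (a unitary of $A$ itself when $A$ is unital) with $uf(|c|)u^*=f(|c^*|)$. Since $f(0)=0$ and $|c^*|\in A_b$, we get $f(|c^*|)\in A_b$, whence $u(a-\epsilon)_+u^*=\big(uf(|c|)u^*\big)^2=f(|c^*|)^2\in A_b$; relabelling $u$ as $u^*$ delivers $u^*(a-\epsilon)_+u\in A_b$.

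The hard part will be the vanishing-near-zero requirement in Theorem~\ref{Pedunitary}. One cannot transport $(a-\tfrac\epsilon2)_+=|c|^2$ directly by a unitary, because $t\mapsto t^2$ does not vanish near $0$ and the partial isometry $v$ of the polar decomposition need not belong to $\widetilde A$. The trick that unlocks the argument is precisely the passage from the cut at $\tfrac\epsilon2$ to the cut at $\epsilon$: it converts the relevant function of $|c|$ into one supported away from $0$, which is exactly what allows $v$ to be replaced by a genuine unitary. Keeping careful track of these two cut-offs, and of the fact that $cc^*$ (and hence $f(|c^*|)$) lands in the hereditary subalgebra $A_b$, is the delicate bookkeeping on which the proof turns.
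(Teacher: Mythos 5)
Your proposal is correct and follows essentially the same route as the paper: the cycle (i)$\Rightarrow$(ii)$\Rightarrow$(iii)$\Rightarrow$(i) via Theorem~\ref{Kirchberg-Rordam} and Lemma~\ref{Blackaddar}, and, for the stable rank one statement, the same two-cut trick (exact equality $(a-\tfrac\epsilon2)_+=sbs^*$, then transporting the function cut at $\epsilon$, which vanishes near zero, by the unitary supplied by Theorem~\ref{Pedunitary}), your $c=b^{1/2}s^*$ being precisely the adjoint of the paper's $z=xb^{1/2}$. The only differences are cosmetic: you work with $f(t)=\bigl((t^2-\tfrac\epsilon2)_+\bigr)^{1/2}$ and square afterwards, and you spell out the standard fact that a positive element of $A_b$ is Cuntz below $b$, which the paper uses without proof.
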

\begin{proof}
(i) $\implies$ (ii). There is by assumption a sequence $(x_n)$ such
that $a=\lim\limits_n x_n^*bx_n$. Given $\epsilon>0$, we find $n$
such that $\Vert a-x_n^*bx_n\Vert<\epsilon$. Thus
Theorem~\ref{Kirchberg-Rordam} implies that
$(a-\epsilon)_+=dx_n^*bx_nd^*$, for some $d$. Therefore
$(a-\epsilon)_+\precsim b$.

(ii) $\implies$ (iii). Given $\epsilon>0$, there is by (ii) an
element $x$ such that $\Vert
(a-\frac{\epsilon}{2})_+-xbx^*\Vert=\epsilon_1<\frac{\epsilon}{2}$.

Since $(b-\delta)_+$ is monotone increasing and converges to $b$ (in
norm) as $\delta\to 0$, we may choose
$\delta<\frac{\frac{\epsilon}{2}-\epsilon_1}{\Vert x\Vert^2}$.
Therefore
\[
\Vert (a-\frac{\epsilon}{2})_+-x(b-\delta)_+x^*\Vert\leq\Vert
(a-\frac{\epsilon}{2})_+-xbx^*\Vert+\Vert
xbx^*-x(b-\delta)_+x^*\Vert\leq \epsilon_1+\Vert x\Vert^2\delta
<\frac{\epsilon}{2}\,,
\]
so by the Theorem above $(a-\epsilon)_+=y(b-\delta)_+y^*\precsim
(b-\delta)_+$.

(iii) $\implies$ (i). By assumption we have that
$(a-\epsilon)_+\precsim b$ for all $\epsilon>0$, so
Lemma~\ref{Blackaddar} implies $a\precsim b$.

The ``if'' direction in the last part of our statement holds without
any stable rank conditions. Namely, assume that $\epsilon >0$ is
given, and that we can find a unitary $u$ such that
$u^*(a-\epsilon)_+u\in \overline{bAb}$. This implies that
$u^*(a-\epsilon)_+u\precsim b$, and so $(a-\epsilon)_+\precsim b$,
and condition (ii) is verified.

For the converse, assume that $A$ has stable rank one and that
$a\precsim b$. Given $\epsilon>0$, find an element $x$ such that
$(a-\frac{\epsilon}{2})_+=xbx^*=zz^*$, where $z=xb^{\frac{1}{2}}$.
If the polar decomposition of $z^*$ is $z^*=v\vert z^*\vert$, we
have by Theorem~\ref{Pedunitary} that there is a unitary $u$ with
\[
u\left(zz^*-\frac{\epsilon}{2}\right)_+u^*=v\left(zz^*-\frac{\epsilon}{2}\right)_+v^*=\left(z^*z-\frac{\epsilon}{2}\right)_+\,.
\]
Then,
\[
u(a-\epsilon)_+u^*=
u\left(\left(a-\frac{\epsilon}{2}\right)_+-\frac{\epsilon}{2}\right)_+u^*=u\left(zz^*-\frac{\epsilon}{2}\right)_+u^*=\left(z^*z-\frac{\epsilon}{2}\right)_+\,,
\]
and clearly the latter belongs to $A_b$ as
$z^*z=b^{\frac{1}{2}}xx^*b^{\frac{1}{2}}$ does.
\end{proof}

Note that, if $A_a\subseteq A_b$ for positive elements $a$ and $b$
in $A$, we have that $a\precsim b$ (by Proposition~\ref{basics}).

%%%%%%%%%%%%%%%%%%%%%%%%%%%%%%%%%%%%
\subsubsection{Comparison and Projections}
\label{projs}

When one has projections (i.e. self-adjoint idempotents) rather than
general positive elements, one recovers the usual comparison by
Murray and von Neumann. Recall that two projections $p$ and $q$ in a
C$^*$-algebra $A$ are equivalent if $p=vv^*$ and $q=v^*v$ for some
element $v\in A$ (necessarily a partial isometry). Classically, the
notation $p\sim q$ has been used to refer to this equivalence, and
this is not to be mistaken with the above $\sim$ defined for general
positive elements. Both notions will, however, agree for a
significant class of algebras as we shall see below.

%\begin{definition}
%\label{comparisonprojections} Let $p$ and $q$ be projections in a
%{\rm C}$^*$-algebra $A$. We say that $p$ is equivalent to $q$ in
%the sense of Murray and von Neumann if $p=vv^*$ and $v^*v=q$ for
%some element $v\in A$. In symbols, we sometimes write $p\sim q$
%(not to be mistaken with the above $\sim$).
%\end{definition}
%
%This relation is extended to all projections in
%$\mathrm{M}_{\infty}(A)$ in the natural way. If we denote $[p]$
%the equivalence class of a projection $p$, then the resulting
%quotient set of equivalence classes is denoted $\mathrm{V}(A)$,
%and becomes an abelian semigroup with neutral element $[0]$, and
%operation
%\[
%[p]+[q]=[p\oplus q]\,.
%\]
%The semigroup $\mathrm{V}(A)$ is usually referred as to the
%projection monoid of $A$, and the ensuing enveloping group is the
%group $\mathrm{K}_0(A)$.

Given projections $p$, $q$, we have $p\leq q$ (as positive elements)
if and only if $p=pq$. Recall that $p$ is subequivalent to $q$ if
$p$ is equivalent to $p'\leq q$, that is, $p=vv^*$, with $v^*v\leq
q$.
\begin{lemma}
For projections $p$ and $q$, we have that $p$ is subequivalent to
$q$ if and only if $p\precsim q$.
\end{lemma}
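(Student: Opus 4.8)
The plan is to prove the two implications separately, the substance being the deduction of Murray--von Neumann subequivalence from Cuntz subequivalence.

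For the easy ``only if'' direction, suppose $p$ is Murray--von Neumann subequivalent to $q$, so there is a partial isometry $v$ with $vv^*=p$ and $p':=v^*v\leq q$. Then $v=vp'$ and $p'q=p'$, so that $vq=vp'q=vp'=v$ and hence $p=vv^*=vqv^*$. This exhibits $p\precsim q$ via the constant sequence $v_n=v$, and no hypothesis beyond being projections is needed.

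For the ``if'' direction, assume $p\precsim q$. Since $p,q\in A$, compressing an approximating sequence to the upper-left corner produces $(x_n)$ in $A$ with $x_nqx_n^*\to p$; I would fix $x:=x_n$ with $\delta:=\Vert xqx^*-p\Vert<\tfrac12$ and set $b:=xqx^*\geq 0$. Because $\Vert b-p\Vert<\delta<\tfrac12$ and $p$ is a projection, $\sigma(b)\subseteq[0,\delta]\cup[1-\delta,1+\delta]$, so the spectral projection $r:=g(b)$, with $g$ continuous, equal to $0$ on $[0,\delta]$ and to $1$ on $[1-\delta,1+\delta]$, is a genuine projection in $C^*(b)\subseteq A$, and $\Vert r-p\Vert\leq\Vert r-b\Vert+\Vert b-p\Vert\leq 2\delta<1$. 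The standard fact that two projections at distance less than $1$ are unitarily, hence Murray--von Neumann, equivalent then gives $p\sim r$ in the Murray--von Neumann sense, so it remains to show that $r$ is Murray--von Neumann subequivalent to $q$.

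To do this I would pass to the polar decomposition. Put $z:=xq$, so $zz^*=b$ and $z^*z=qx^*xq\in A_q=\overline{qAq}$, and write $z=w\vert z\vert$ with $w\in A^{**}$. Writing $r=g(zz^*)=h(\vert z^*\vert)$ with $h(t):=g(t^2)$ vanishing near $0$, Lemma~\ref{polardec} yields $r=w\,h(\vert z\vert)\,w^*$. Set $s:=h(\vert z\vert)$; since $h(0)=0$ this lies in $C^*(z^*z)\subseteq A_q$, it is a projection, and as $q$ is a unit for $A_q$ we have $s=qs$, whence $s\leq q$. Finally $v:=ws$ lies in $A$: since $h$ vanishes near $0$ we may write $h(t)=t\,k(t)$ with $k$ continuous, so $ws=w\vert z\vert\,k(\vert z\vert)=z\,k(\vert z\vert)\in A$. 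One checks $vv^*=wsw^*=r$ and $v^*v=s\,w^*w\,s=s\leq q$, using $w^*ws=s$ because $s$ is supported away from $0$. Thus $r$, and therefore $p$, is Murray--von Neumann subequivalent to $q$, completing the proof.

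The main obstacle is this reverse direction, and within it the bookkeeping around the polar decomposition: $w$ lives only in the bidual $A^{**}$, so the crux is to guarantee that the candidate partial isometry $v=ws$ genuinely lies in $A$ (handled by the factorization $h(t)=t\,k(t)$, which absorbs $w\vert z\vert=z$) and that $s$ is an honest projection \emph{dominated} by $q$ rather than merely Cuntz-below it. The passage from the approximate Cuntz relation to an exact equivalence, via the spectral-gap projection $r$ of $b=xqx^*$ and the estimate $\Vert r-p\Vert<1$, is the conceptual heart that turns a norm-limit relation into an equivalence of projections.
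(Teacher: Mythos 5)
Your proof is correct, but it takes a genuinely different route from the paper's. The paper disposes of the hard direction in three lines by invoking the exact form of R{\o}rdam's lemma (Theorem~\ref{Kirchberg-Rordam}, via Proposition~\ref{basics}): from $p\precsim q$ one gets an \emph{exact} identity $(p-\epsilon)_+=xqx^*$, and since $p$ is a projection one has $(p-\epsilon)_+=(1-\epsilon)p$, so after rescaling $x$ this reads $p=xqx^*$ on the nose; then $w=xq$ is automatically a partial isometry (because $ww^*=p$ is a projection), and $w^*w=qx^*xq$ is a projection lying in $\overline{qAq}$, hence dominated by $q$ and equivalent to $p$. All the analytic work is thus outsourced to the previously proved Theorem~\ref{Kirchberg-Rordam}. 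You instead work with the approximation $\Vert xqx^*-p\Vert<\delta<\tfrac12$ directly: you manufacture a projection $r=g(b)$ near $p$ by functional calculus across the spectral gap of $b=xqx^*$, invoke the standard fact that projections at distance less than $1$ are equivalent, and then transport $r$ into $\overline{qAq}$ via the polar decomposition of $z=xq$ in $A^{**}$, using Lemma~\ref{polardec} and the factorization $h(t)=t\,k(t)$ to keep the partial isometry $v=ws$ inside $A$; all of these steps check out (note your identity $zz^*=b$ uses $q^2=q$, which is fine since $q$ is a projection). Your argument is longer but essentially self-contained: it is in effect a direct proof, in this special case, of the ``approximate implies exact'' phenomenon that Theorem~\ref{Kirchberg-Rordam} encodes in general, and it exposes the mechanism (spectral gap plus polar decomposition) by which an approximate Cuntz relation becomes an exact Murray--von Neumann one. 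The paper's argument buys brevity by exploiting the scalar identity $(p-\epsilon)_+=(1-\epsilon)p$, after which exactness comes for free; indeed, once you have $p=xqx^*$ exactly, your entire polar-decomposition bookkeeping collapses, since $xq$ is then already the required partial isometry.
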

\begin{proof}
It is clear that, if $p$ is subequivalent to $q$, then $p\precsim
q$. For the converse, if $p\precsim q$, then given $0<\epsilon<1$,
we have $(p-\epsilon)_+=xqx^*$, and $(p-\epsilon)_+=\lambda p$ for
some positive $\lambda$.

Therefore, changing notation we have $p=xqx^*$, which implies that
$qx^*xq\leq q$ is a projection equivalent to $p$.
\end{proof}
The argument in the lemma above is more general. Given $\epsilon>0$,
denote by $f_{\epsilon}(t)$ the real valued function defined as $0$
for $t\leq \epsilon/2$, as $1$ for $t\geq \epsilon$, and linear
elsewhere.

\begin{lemma}
\label{tonteria} If $p$ is a projection and $a$ a positive element,
and $p\precsim a$, then there is $\delta>0$ and a projection $q\leq
\lambda a$ ($\lambda$ a positive real) with $p\sim q$ and
$f_{\delta}(a)q=q$.
\end{lemma}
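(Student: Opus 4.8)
The plan is to imitate the proof of the preceding lemma essentially verbatim, the one genuinely new feature being that we must manufacture a \emph{spectral gap} $\delta$ in $a$ before extracting a projection. That gap is exactly what condition (iii) of Proposition~\ref{basics} supplies, and it is also what makes the cut-off function $f_\delta$ behave correctly at the end.

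First I would produce $\delta$ and reduce to the cut-down $(a-\delta)_+$. Since $p\precsim a$, applying Proposition~\ref{basics}, (i)$\Rightarrow$(iii), with $\epsilon=\tfrac12$ yields a $\delta>0$ such that $(p-\tfrac12)_+\precsim(a-\delta)_+$. Because $p$ is a projection, the functional calculus gives $(p-\tfrac12)_+=\tfrac12\,p$, a nonzero positive scalar multiple of $p$ and hence Cuntz equivalent to $p$; thus $p\precsim(a-\delta)_+$. This is the $\delta$ that will appear in the statement. Next, exactly as in the previous lemma, I would upgrade this subequivalence to an exact factorisation: from $p\precsim(a-\delta)_+$ there is a sequence $(y_n)$ with $\Vert y_n(a-\delta)_+y_n^*-p\Vert\to 0$; fixing one $n$ with this norm less than some $\epsilon'<1$ and applying Theorem~\ref{Kirchberg-Rordam} to $p$ and $y_n(a-\delta)_+y_n^*$ gives $(p-\epsilon')_+=x(a-\delta)_+x^*$ for a suitable $x$. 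Using once more that $(p-\epsilon')_+=(1-\epsilon')p$ and absorbing the scalar $1/\sqrt{1-\epsilon'}$ into $x$, I obtain the genuine equality $p=x(a-\delta)_+x^*$ (keeping the name $x$ after rescaling).

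Then I would read off the projection and verify the two conditions. Set $v=x(a-\delta)_+^{1/2}$, so that $vv^*=x(a-\delta)_+x^*=p$. Since $vv^*$ is a projection, $v$ is a partial isometry, so $q:=v^*v=(a-\delta)_+^{1/2}\,x^*x\,(a-\delta)_+^{1/2}$ is a projection with $p\sim q$ (the equivalence being implemented by $v$ itself). The remaining two requirements are immediate from this explicit formula. As $x^*x\leq\Vert x\Vert^2\cdot 1$, conjugating by $(a-\delta)_+^{1/2}$ gives $q\leq\Vert x\Vert^2(a-\delta)_+\leq\Vert x\Vert^2 a$, so $\lambda=\Vert x\Vert^2$ works. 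And since $f_\delta(t)\,(t-\delta)_+^{1/2}=(t-\delta)_+^{1/2}$ for every $t\geq 0$ (the factor $(t-\delta)_+^{1/2}$ vanishes unless $t>\delta$, where $f_\delta\equiv 1$), we get $f_\delta(a)\,(a-\delta)_+^{1/2}=(a-\delta)_+^{1/2}$ and therefore $f_\delta(a)q=q$.

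The only step requiring a little care is the passage from Cuntz subequivalence to the \emph{exact} equality $p=x(a-\delta)_+x^*$; this is precisely where Theorem~\ref{Kirchberg-Rordam} is indispensable and where choosing the cut level $\epsilon'<1$ (so that $(p-\epsilon')_+$ is a nonzero multiple of $p$) is what lets one recover $p$ on the nose. I do not anticipate a serious obstacle: everything else is functional calculus on $\sigma(a)$. The main discipline to observe is committing to one and the same $\delta$ throughout and noting that $f_\delta$ is tailored to exactly that gap, so that it acts as the identity on the support of $(a-\delta)_+^{1/2}$.
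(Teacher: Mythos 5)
Your proof is correct and takes essentially the same route as the paper's: both reduce to an exact factorisation $p = x(a-\delta)_+x^*$ (via the scalar identity $(p-\epsilon)_+=\lambda p$ for projections together with Theorem~\ref{Kirchberg-Rordam}), and then read off $q=(a-\delta)_+^{1/2}x^*x(a-\delta)_+^{1/2}$ with $\lambda=\Vert x\Vert^2$ and the $f_\delta$ condition from functional calculus. The only (harmless) differences are bookkeeping: the paper quotes Proposition~\ref{basics} as directly supplying the exact equality (which is really contained in its proof), whereas you re-derive it with one further application of Theorem~\ref{Kirchberg-Rordam}, and the paper shrinks to $\delta=\delta'/2$ where you correctly note that the same $\delta$ already makes $f_\delta(a)q=q$.
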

\begin{proof}
Pick $\epsilon>1$, so $(p-\epsilon)_+=\lambda'p$ for a positive
number $\lambda'$. Then there is by Proposition~\ref{basics} a
$\delta'>0$ and an element $x\in pA$ with
$\lambda'p=x(a-\delta')_+x^*$. Changing notation, we have
$p=x(a-\delta')_+x^*$. Thus
$q:=(a-\delta')_+^{\frac{1}{2}}x^*x(a-\delta')_+^{\frac{1}{2}} $ is
a projection equivalent to $p$ and $q\leq \Vert x\Vert^2a$. On the
other hand, it is clear by the definition of $q$ that we can choose
$\delta<\delta'$ (e.g. $\delta=\frac{\delta'}{2}$) such that
$f_{\delta}(a)q=q$.
\end{proof}

Recall that $\V(A)$ is used to denote the semigroup of Murray-von
Neumann equivalence classes of projections coming from
$M_{\infty}(A)$. If we use $[p]$ to denote the class of a projection
$p$,  we then have a natural map $\V(A)\to \W(A)$, given by
$[p]\mapsto\langle p\rangle$. This is easily seen to be a semigroup
morphism, and it will be injective in interesting cases. Note that
injectivity amounts exactly to the fact that Murray-von Neumann
equivalence and Cuntz equivalence agree on projections.

\begin{lemma}
If $A$ is stably finite (and, in particular, if it has stable rank
one), then the natural map $\V(A)\to \W(A)$ is injective.
\end{lemma}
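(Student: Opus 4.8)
The plan is to unwind what injectivity means and reduce everything to a Cantor--Schr\"oder--Bernstein argument for Murray--von Neumann subequivalence, with stable finiteness supplying the one piece of real content. As observed just before the statement, the map $\V(A)\to\W(A)$ is injective precisely when Murray--von Neumann equivalence and Cuntz equivalence agree on projections. So I would fix projections $p,q\in M_n(A)$ (for some $n$) with $\langle p\rangle=\langle q\rangle$, i.e.\ $p\precsim q$ and $q\precsim p$ in the Cuntz sense, and aim to deduce that $p$ and $q$ are Murray--von Neumann equivalent. Applying the lemma that, for projections, identifies Cuntz subequivalence $p\precsim q$ with Murray--von Neumann subequivalence, I immediately obtain projections $q_0\le q$ and $p_0\le p$ with $p\sim q_0$ and $q\sim p_0$ (Murray--von Neumann). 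The whole statement is thus the antisymmetry of Murray--von Neumann subequivalence, and this is exactly the point where stable finiteness must be used.

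The key input, which I would establish first, is that in a stably finite algebra every projection is \emph{finite}: $e\sim e'\le e$ forces $e'=e$. Suppose $e\in M_n(A)$ satisfies $e\sim f$ with $f\le e$, implemented by a partial isometry $v$ with $v^*v=e$ and $vv^*=f$. One checks directly that $ve=v$ (as $e$ is the initial projection) and, using $f\le e$ together with $fv=v$, that $ev=v$ as well. Consequently, in the unitization $\widetilde{M_n(A)}$ the element $x:=v+(1-e)$ satisfies $x^*x=1$ while $xx^*=1-(e-f)$. If $f\ne e$, then $e-f$ is a nonzero projection, so $x$ is an isometry that is not a unitary, contradicting the direct finiteness of $M_n(A)$ guaranteed by stable finiteness. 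Hence $f=e$, proving $e$ finite.

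With finiteness available, the Schr\"oder--Bernstein step is bookkeeping. Writing $w$ for the partial isometry implementing $q\sim p_0$ (so $w^*w=q$, $ww^*=p_0$), I restrict this equivalence to the subprojection $q_0\le q$: the element $wq_0$ is a partial isometry showing $q_0\sim p_{00}$, where $p_{00}:=wq_0w^*\le p_0\le p$. Chaining with $p\sim q_0$ gives $p\sim p_{00}\le p$, so finiteness of $p$ forces $p_{00}=p$; being squeezed as $p_{00}\le p_0\le p=p_{00}$, this also yields $p_0=p$, whence $q\sim p_0=p$, as desired. The main obstacle is genuinely the finiteness lemma: the whole argument rests on converting a hypothetical infinite projection into an honest non-unitary isometry so that the stable finiteness hypothesis can bite; once that conversion is in place, the manipulation of equivalence classes is routine.
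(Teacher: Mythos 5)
Your proof is correct and follows essentially the same route as the paper: both reduce, via the preceding lemma identifying Cuntz subequivalence with Murray--von Neumann subequivalence for projections, to the antisymmetry (Schr\"oder--Bernstein) statement, whose only real input is stable finiteness. The only difference is bookkeeping and explicitness --- the paper passes to orthogonal complements ($p\oplus p'\sim q$, $q\oplus q'\sim p$, hence $p\oplus p'\oplus q'\sim p$) and cancels in one stroke ``by stable finiteness,'' whereas you chain subprojections ($p\sim p_{00}\le p_0\le p$) and, usefully, spell out the step the paper leaves implicit: that stable finiteness makes every projection finite, via the isometry $v+(1-e)$ in the unitization.
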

\begin{proof}
It is enough to show this in the stably finite case. Suppose that we
are given projections $p$ and $q$ in $M_{\infty}(A)$ such that
$p\sim q$ (in $\W(A)$). Then $p\precsim q$ and $q\precsim p$. This
means there are projections $p'$ and $q'$ such that $p\oplus p'\sim
q$ and $q\oplus q'\sim p$ (in $\V(A)$). Thus
\[
p\oplus p'\oplus q'\sim q\oplus q'\sim p\,,
\]
and it follows from stable finiteness that $p'\oplus q'=0$, i.e.
$p'=q'=0$, so that $p$ and $q$ are Murray von Neumann equivalent.
\end{proof}

In the sequel, we will identify $\V(A)$ with its image inside
$\W(A)$ whenever $A$ is stably finite without further comment.

\begin{lemma}
\label{krordam2.8} Given a positive element $a$ and a projection
$p\in\mathcal{M}(A)$, we have
\[
a\precsim pap+(1-p)a(1-p)\,.
\]
\end{lemma}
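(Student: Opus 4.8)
The plan is to exhibit a single element $v\in M_2(A)$ whose two ``square'' rearrangements $v^*v$ and $vv^*$ realize, respectively, the orthogonal splitting of $a$ dictated by $p$ and the element $pap+(1-p)a(1-p)$. Writing $q=1-p$, I would set
\[
v=\left(\begin{array}{cc} pa^{\frac{1}{2}} & qa^{\frac{1}{2}} \\ 0 & 0\end{array}\right)\,.
\]
Although $p,q\in\mathcal{M}(A)$, each entry $pa^{\frac{1}{2}}$ and $qa^{\frac{1}{2}}$ lies in $A$ (a multiplier times an algebra element), so $v\in M_2(A)$ and all the comparisons below take place in $M_2(A)\subseteq M_\infty(A)$.

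The key computation, which I would carry out by two direct matrix multiplications, is that
\[
vv^*=\left(\begin{array}{cc} pap+qaq & 0 \\ 0 & 0\end{array}\right)\,,\qquad v^*v=\left(\begin{array}{cc} a^{\frac{1}{2}}pa^{\frac{1}{2}} & 0 \\ 0 & a^{\frac{1}{2}}qa^{\frac{1}{2}}\end{array}\right)\,.
\]
The decisive point is that the off-diagonal entries of $v^*v$ are $a^{\frac{1}{2}}pqa^{\frac{1}{2}}$ and $a^{\frac{1}{2}}qpa^{\frac{1}{2}}$, which vanish precisely because $pq=qp=0$; this is what turns $v^*v$ into the genuine direct sum $a^{\frac{1}{2}}pa^{\frac{1}{2}}\oplus a^{\frac{1}{2}}qa^{\frac{1}{2}}$. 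Invoking the already established identity $xx^*\sim x^*x$ applied to $x=v$, I then get
\[
pap+qaq\sim a^{\frac{1}{2}}pa^{\frac{1}{2}}\oplus a^{\frac{1}{2}}qa^{\frac{1}{2}}\,.
\]

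To close the argument I would compare $a$ with this direct sum. Since $p+q=1$, one has $a=a^{\frac{1}{2}}pa^{\frac{1}{2}}+a^{\frac{1}{2}}qa^{\frac{1}{2}}$, a sum of two positive elements, so Lemma~\ref{Cuntz} yields $a^{\frac{1}{2}}pa^{\frac{1}{2}}+a^{\frac{1}{2}}qa^{\frac{1}{2}}\precsim a^{\frac{1}{2}}pa^{\frac{1}{2}}\oplus a^{\frac{1}{2}}qa^{\frac{1}{2}}$. Chaining this with the equivalence above gives
\[
a\precsim a^{\frac{1}{2}}pa^{\frac{1}{2}}\oplus a^{\frac{1}{2}}qa^{\frac{1}{2}}\sim pap+qaq\,,
\]
which is the claim.

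Everything reduces to the two matrix multiplications together with the facts $xx^*\sim x^*x$ and Lemma~\ref{Cuntz}; in particular no stable rank or separability hypotheses enter. I expect the only real obstacle to be spotting the right $v$, that is, realizing that one should split $a^{\frac{1}{2}}$ (rather than $a$) via $p$ and pack the two pieces into a single row, so that the cross terms in $v^*v$ are killed by the orthogonality $pq=0$ and $v^*v$ becomes block-diagonal while $vv^*$ collapses to $pap+qaq$.
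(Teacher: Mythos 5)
Your proof is correct, but it takes a genuinely different route from the paper's. The paper's own argument is a two-line affair built on the reflection $s=p-(1-p)$: since $sas\geq 0$, one has $a\leq a+sas=2\bigl(pap+(1-p)a(1-p)\bigr)$, and since $2x\sim x$ for positive $x$, the claim follows at once from Lemma~\ref{lem:leq} (that $0\leq a\leq b$ implies $a\precsim b$). Your argument instead splits $a^{\frac{1}{2}}$ by $p$ and $q=1-p$, packs the two pieces into the row matrix $v$, and uses $vv^*\sim v^*v$ together with Lemma~\ref{Cuntz}; in effect you are re-running, for the decomposition $a=a^{\frac{1}{2}}pa^{\frac{1}{2}}+a^{\frac{1}{2}}qa^{\frac{1}{2}}$, the same row-matrix trick by which the paper proves Lemma~\ref{Cuntz} itself. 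What the paper's route buys is brevity and the avoidance of any matrix computation: everything is absorbed into the algebraic identity $a+sas=2(pap+qaq)$, at the price of having to guess the element $s$. What your route buys is a more constructive picture: it exhibits the intermediate equivalence $pap+qaq\sim a^{\frac{1}{2}}pa^{\frac{1}{2}}\oplus a^{\frac{1}{2}}qa^{\frac{1}{2}}$ explicitly (information the paper's proof does not provide), and it isolates the real mechanism, namely the orthogonality $pq=qp=0$, rather than an operator inequality. Both arguments are equally general: neither needs separability or stable rank, and both make sense for $p\in\mathcal{M}(A)$ because $pa^{\frac{1}{2}}$, respectively $sas$, lie in $A$. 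The small steps you leave implicit (that $x\oplus 0\sim x$ in $\W(A)$, and that the corollary $xx^*\sim x^*x$ applies to $v\in M_2(A)$ since Cuntz comparison is defined on $M_\infty(A)$) are harmless.
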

\begin{proof}
Let $s=p-(1-p)$. Since
\[
a\leq a+sas=2(pap+(1-p)a(1-p))\sim pap+(1-p)a(1-p)\,,
\]
the claim follows from Lemma \ref{lem:leq}.
\end{proof}

We have already noticed that the order in $\W(A)$ is not algebraic.
If we restrict to projections, since this is the Murray von Neumann
subequivalence, it \emph{is} algebraic. In fact, projections behave
well with respect to every other element as the following shows.

\begin{proposition}
{\rm (\cite[Proposition 2.2]{pt})}
\label{projcomplement} Let $A$ be a {\rm C}$^*$-algebra, and let
$a$, $p$ be positive elements in $M_{\infty}(A)$ with $p$ a
projection. If $p\precsim a$, then there is $b$ in $M_{\infty}(A)$
such that $p\oplus b\sim a$.
\end{proposition}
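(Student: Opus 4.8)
The plan is to pass, via Lemma~\ref{tonteria}, to a projection that is genuinely \emph{dominated} by $a$, and then to split $a$ by a block $LDL^{*}$ (Schur complement) factorisation relative to that projection. First I would improve the model: since $p\precsim a$ with $p$ a projection, Lemma~\ref{tonteria} yields $\delta>0$, a real $\lambda>0$ and a projection $q\in\mathrm{M}_{\infty}(A)$ with $p$ Murray--von Neumann equivalent to $q$ (so $\langle p\rangle=\langle q\rangle$), together with $q\leq\lambda a$ and $f_{\delta}(a)q=q$. The relation $q\leq\lambda a$ gives $a\geq\tfrac1\lambda q$, and compressing by $q$ yields $qaq\geq\tfrac1\lambda q$; hence $qaq$ is invertible in the (unital) corner $q\,\mathrm{M}_{\infty}(A)\,q$, and from $\tfrac1\lambda q\leq qaq\leq\Vert a\Vert q$ together with Lemma~\ref{lem:leq} one gets $qaq\sim q$. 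It therefore suffices to produce $b\in\mathrm{M}_{\infty}(A)_{+}$ with $a\sim q\oplus b$.

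One inequality is straightforward and is not where the difficulty lies. Taking provisionally $b=(1-q)a(1-q)$ (formed in $\mathrm{M}_{\infty}(\widetilde{A})$ but landing in $\mathrm{M}_{\infty}(A)$), Lemma~\ref{krordam2.8} gives $a\precsim qaq+(1-q)a(1-q)$; the two summands are orthogonal, so Lemma~\ref{Cuntz} converts the sum into $\oplus$, and $qaq\precsim q$ by Lemma~\ref{lem:leq}, whence $a\precsim q\oplus b$. The real content is the reverse inequality $q\oplus b\precsim a$, and here the main obstacle appears: since $q$ need not commute with $a$, the off-diagonal corners $qa(1-q)$ are genuinely nonzero, so a literal block-diagonalisation of $a$ fails, and $b=(1-q)a(1-q)$ need not be the correct complement.

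The key step, resolving this, is to Schur-complement $a$ \emph{itself} rather than a compression. Working in $\mathcal{B}:=\mathrm{M}_{\infty}(\widetilde{A})$, put
\[
N:=(1-q)aq\,(qaq)^{-1},\qquad L:=1+N .
\]
Because $(qaq)^{-1}\in q\mathcal{B}q$, one has $qN=0$ and $Nq=N$, so $N^{2}=0$ and $L$ is invertible with $L^{-1}=1-N$. A direct block $LDL^{*}$ computation then gives the identity
\[
a=L\bigl(qaq+S\bigr)L^{*},\qquad S:=(1-q)a(1-q)-(1-q)aq\,(qaq)^{-1}\,qa(1-q),
\]
where the Schur complement $S$ is positive (it is a diagonal block of $L^{-1}a(L^{-1})^{*}\geq 0$), lies in the corner $(1-q)\mathrm{M}_{\infty}(A)(1-q)$, and is orthogonal to $qaq$. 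Note $S$ is precisely $(1-q)a(1-q)$ corrected by the term that compensates the non-commutation.

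Finally I would assemble the pieces using that invertible congruence preserves Cuntz equivalence: for invertible $L$ and $x\geq0$ one has $LxL^{*}=(Lx^{1/2})(Lx^{1/2})^{*}\sim x^{1/2}L^{*}Lx^{1/2}\leq\Vert L^{*}L\Vert\,x\precsim x$, and symmetrically $x\precsim LxL^{*}$ via $L^{-1}$, so $LxL^{*}\sim x$. Applying this to the identity above gives $a\sim qaq+S$; since $qaq\perp S$, Lemma~\ref{Cuntz} yields $qaq+S\sim qaq\oplus S$, and because $qaq\sim q$ we conclude $a\sim q\oplus S\sim p\oplus S$. Thus $b:=S$ works. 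The only delicate point is the invertibility of $qaq$ in the corner, which is exactly what the domination $q\leq\lambda a$ from Lemma~\ref{tonteria} supplies; this is why the preliminary replacement of $p$ by $q$ is indispensable.
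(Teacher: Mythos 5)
Your argument is correct, but it reaches the conclusion by a genuinely different mechanism than the paper. The two proofs coincide in the first step: both invoke Lemma~\ref{tonteria} to trade $p$ for a Murray--von Neumann equivalent projection $q$ with $q\leq\lambda a$. After that, the paper simply rescales so that (say) $p\leq a$ and takes the \emph{naive} complement $b=(1-p)a(1-p)$: the subequivalence $a\precsim p\oplus b$ follows from Lemma~\ref{krordam2.8} together with $pap\leq \Vert a\Vert\, p\sim p$ and Lemma~\ref{Cuntz}, while the reverse subequivalence $p\oplus b\precsim a$ follows from the observation that both $p$ and $(1-p)a(1-p)$ lie in the hereditary subalgebra $A_a$ (indeed $p\leq a$, and $(1-p)a^{1/2}=a^{1/2}-pa^{1/2}\in A_a$), so their orthogonal sum lies in $A_a$ and is therefore $\precsim a$ by Lemma~\ref{Cuntz} and the remark following Proposition~\ref{basics}. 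In particular, your assertion that ``$b=(1-q)a(1-q)$ need not be the correct complement'' is mistaken: the naive complement does work, and the paper circumvents the nonzero off-diagonal corners not by block-diagonalising $a$ but by this hereditarity argument. Your proof instead repairs the block-diagonalisation: you invert $qaq$ in the corner $qM_{\infty}(\widetilde{A})q$ (this is where $q\leq\lambda a$ enters for you), form the Schur complement $S$ and the unipotent element $L=1+N$, verify the exact identity $a=L(qaq+S)L^{*}$ (which is correct, as are your claims that $S\geq 0$ and $S\in(1-q)M_{\infty}(A)(1-q)$), and then use that congruence by an invertible element preserves Cuntz equivalence, which you correctly deduce from $cc^{*}\sim c^{*}c$ and Lemma~\ref{lem:leq}. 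This yields $a\sim qaq+S\sim qaq\oplus S\sim p\oplus S$ in one stroke. What your route buys is an exact algebraic factorisation in place of two separate subequivalence arguments, together with a reusable general fact ($LxL^{*}\sim x$ for invertible $L$); what it costs is the passage to the unitisation and the corner inverse, both of which the paper's hereditary-subalgebra argument avoids. Both proofs are complete.
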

\begin{proof}
By Lemma~\ref{tonteria}, $p\sim q$ with $q\leq \lambda a$ for a
positive number $\lambda$. Trading $p$ with $q$ and ignoring
$\lambda$ (which is in fact irrelevant to Cuntz comparison), we may
assume that $p\leq a$.

Using Lemma~\ref{krordam2.8}, we have $a\precsim pap\oplus
(1-p)a(1-p)$. As we have $pap\leq \Vert a\Vert^2p\sim p$, we obtain
$a\precsim p\oplus (1-p)a(1-p)$. For the converse subequivalence,
note that both $p$ and $(1-p)a(1-p)$ belong to the hereditary
algebra generated by $a$. Indeed $p$ falls in as $p\leq a$, and
$(1-p)a^{\frac{1}{2}}=a^{\frac{1}{2}}-pa^{\frac{1}{2}}$ also falls
in, whence so does $(1-p)a(1-p)$.
\end{proof}

The following will be used a number of times:

\begin{proposition}
\label{prop:equivproj}
{\rm (\cite[Proposition 3.12]{perijm})}
Let $A$ be a unital {\em C}$^*$-algebra with stable rank one. Then,
for $a\in M_{\infty} (A)$, the following are equivalent:
\begin{enumerate}[{\rm (i)}]
\item $\langle a\rangle=\langle p\rangle$, for a projection $p$,
\item $0$ is an isolated point of $\sigma (a)$, or
$0\notin\sigma(a)$.
\end{enumerate}
\end{proposition}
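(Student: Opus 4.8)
We work inside $B:=M_n(A)$ for $n$ large enough that $a$ and $p$ lie in $B$; since $A$ is unital with $\mathrm{sr}(A)=1$, so is $B$, and in particular $B$ is stably finite. The plan rests on reformulating condition (ii). Writing $s(a)=\chi_{(0,\infty)}(a)$ for the support projection of $a$ (a priori only in $B^{**}$), condition (ii) holds precisely when $\chi_{(0,\infty)}$ is continuous on $\sigma(a)$, i.e. when $s(a)\in B$, i.e. when the hereditary subalgebra $A_a=\overline{aBa}$ is \emph{unital} with unit $s(a)$. So the whole statement reads: $\langle a\rangle=\langle p\rangle$ for some projection $p$ if and only if $A_a$ is unital.

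For (ii)$\Rightarrow$(i) no stable rank hypothesis is needed. If $0\notin\sigma(a)$ then $a$ is invertible in $B$, so $a\le\Vert a\Vert 1_B$ and $1_B\le\Vert a^{-1}\Vert a$ give $a\sim 1_B$, a projection. If $0$ is isolated in $\sigma(a)$, then $p:=s(a)$ is a genuine projection of $B$, and inside the commutative algebra $C^*(a)$ both $a$ and $p$ have the same support; Proposition~\ref{prop:commutative}, applied in $C^*(a)\cong C(\sigma(a)\setminus\{0\})$, yields $a\sim p$. Explicitly, $a\le\Vert a\Vert p$ gives $a\precsim p$, and $p\le\delta^{-1}a$ (with $\delta$ the gap width) gives $p\precsim a$ by Lemma~\ref{lem:leq}.

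For the substantial direction (i)$\Rightarrow$(ii) I would prove that $A_a$ is unital. Since $p\precsim a$, Lemma~\ref{tonteria} provides $\delta>0$, a positive scalar $\lambda$, and a projection $q$ with $p\sim q$, $q\le\lambda a$ and $f_\delta(a)q=q$: the inequality $q\le\lambda a$ places $q$ inside $A_a$, while $f_\delta(a)q=q$ records that $q$ is supported on $\{a>\delta/2\}$, i.e. $q\le\chi_{(\delta/2,\infty)}(a)$. Because $a\sim p\sim q$, we also have $a\precsim q$, and the stable rank one clause of Proposition~\ref{basics} now applies: for every $\epsilon>0$ there is a unitary $u_\epsilon\in B$ with $u_\epsilon^*(a-\epsilon)_+u_\epsilon\in qBq$, so that $(a-\epsilon)_+$ is supported under the projection $u_\epsilon q u_\epsilon^*\sim q$. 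The plan is to upgrade this family of $\epsilon$-confinements, together with the lower bound $q\le\chi_{(\delta/2,\infty)}(a)$ and cancellation of projections, into the single statement that $q$ is a unit for $A_a$, equivalently $s(a)=q\in B$. Concretely I would transport hereditary subalgebras: the unitary confinements embed $A_a$ into $qBq$ up to a unitary, the reverse subequivalence $q\precsim a$ gives the opposite embedding, and a routine intertwining produces a $*$-isomorphism $A_a\cong qBq$. As $qBq$ is unital, so is $A_a$, which is exactly condition (ii).

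The hard part is precisely this cancellation/transport step, and it is where stable rank one is indispensable. One is tempted to argue that the complement $c=(1-q)a(1-q)$ satisfies $\langle q\rangle+\langle c\rangle=\langle q\rangle$ (via Proposition~\ref{projcomplement}) and must therefore vanish; but $c$ need not dominate any nonzero projection, so stable finiteness alone—through injectivity of $\V(B)\to\W(B)$—does not immediately kill it, and one cannot pass to $B^{**}$ either, since a stably finite C$^*$-algebra (already the CAR algebra) can have a non-finite bidual. Thus the comparison must be carried out at the C$^*$-level: I would use the confinements of the $(a-\epsilon)_+$ from Proposition~\ref{basics} to realize each $(a-\epsilon)_+$ as a full positive element of the fixed corner $qBq$, and then combine the stability of these confinements as $\epsilon\downarrow 0$ with cancellation of projections to force $\chi_{(\epsilon,\infty)}(a)=q$ for all small $\epsilon$—equivalently, no spectrum of $a$ in some interval $(0,\delta')$. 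This is the one genuinely delicate point; everything else is bookkeeping with the functional calculus and the facts already established above.
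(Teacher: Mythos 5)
Your direction (ii)$\Rightarrow$(i) is fine, and your setup for (i)$\Rightarrow$(ii) — Lemma~\ref{tonteria} to get $q\sim p$ with $f_{\delta}(a)q=q$, then the stable rank one clause of Proposition~\ref{basics} to get unitaries $u_\epsilon$ with $u_\epsilon^*(a-\epsilon)_+u_\epsilon\in qBq$ — matches the paper exactly up to that point. But the proof stops there: the step you describe as ``upgrade this family of $\epsilon$-confinements \ldots into the single statement that $q$ is a unit for $A_a$'' is precisely the assertion to be proved, and neither of the two mechanisms you offer delivers it. The ``routine intertwining'' producing $A_a\cong qBq$ is not routine: the unitaries $u_\epsilon$ are unrelated to one another as $\epsilon\downarrow 0$ (Proposition~\ref{basics} gives no compatibility between them), so there is no limit to pass to; making such an intertwining work is essentially the theorem, proved much later in these notes via Hilbert modules, that Cuntz equivalence implies isomorphism of hereditary subalgebras under stable rank one. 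Likewise ``combine the stability of these confinements with cancellation of projections'' is not an argument, since — as you yourself observe in the preceding paragraph — the relevant complements need not dominate any projection, so there is nothing for cancellation to act on.

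The idea you are missing is that one should not try to build the unit of $A_a$ directly; one argues by contradiction using a \emph{single} unitary. Suppose $0$ is a non-isolated point of $\sigma(a)$. Then $f_{\delta}(a)$ is not a projection, and since $f_{\delta}(a)q=q$ forces $q=f_{\delta}(a)^{1/2}qf_{\delta}(a)^{1/2}\leq f_{\delta}(a)$, we get $q\leq f_{\delta}(a)$ with $q\neq f_{\delta}(a)$. Now pick one $\delta'<\delta/2$, so that $f_{\delta}(a)$ lies in the hereditary subalgebra generated by $(a-\delta')_+$, and one unitary $u$ with $u(a-\delta')_+u^*\in qAq$; then $uf_{\delta}(a)u^*\leq q$, hence
\[
uqu^*+u\bigl(f_{\delta}(a)-q\bigr)u^*\leq q
\]
with $u\bigl(f_{\delta}(a)-q\bigr)u^*$ positive and nonzero. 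Thus $uqu^*$ is a projection unitarily equivalent to $q$ and strictly below $q$, i.e.\ $q$ is infinite — contradicting the stable finiteness that stable rank one guarantees. This closes the argument in three lines, whereas your direct approach (forcing a spectral gap $\sigma(a)\cap(0,\delta')=\emptyset$) leaves the essential difficulty untouched.
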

\begin{proof}
(ii) $\implies $ (i) is clear.

(i) $\implies$ (ii). Suppose $a\sim p$, and that $0$ is a
non-isolated point of $\sigma (a)$. Using Lemma~\ref{tonteria}, find
a projection $q\sim p$ and $\delta>0$ with $f_{\delta}(a)q=q$.

Since $0$ is not isolated in $\sigma(a)$, we know $f_{\delta}(a)$ is
not a projection, so in particular $f_{\delta}(a)\neq q$. This tells
that
\[
q=f_{\delta}(a)^{\frac{1}{2}}qf_{\delta}(a)^{\frac{1}{2}}<
f_{\delta}(a)\,.
\]
Choose $0<\delta'<\frac{\delta}{2}$, so that $f_{\delta}(a)\leq
(a-\delta')_+$. Next, use that $a\precsim q$, so there is by
Proposition~\ref{basics}, a unitary $u$ with $u(a-\delta')_+u^*\in
qAq$, and so $uf_{\delta}(a)u^*\in qAq$. In particular, we have
$uf_{\delta}(a)u^*\leq uu^*\leq 1$, so
$uf_{\delta}(a)u^*=quf_{\delta}(a)u^*q\leq q$. But now
\[
uqu^*+u(f_{\delta}(a)-q)u^*\leq q\,,
\]
and $u(f_{\delta}(a)-q)u^*>0$, whence $uqu^*<q$. But this
contradicts the fact that $A$ has stable rank one, and in particular
is stably finite.
\end{proof}

The previous result motivates the following. Let $\W(A)_+$ denote
the subset of $\W(A)$ consisting of those classes which are
\emph{not} the classes of projections. If $a\in A_+$ and $\langle
a\rangle\in \W(A)_+$, then we will say that $a$ is \emph{purely
positive} and denote the set of such elements by $A_{++}$.

\begin{corollary}\label{varecov}
Let $A$ be a unital {\rm C}$^*$-algebra which is either simple or of
stable rank one. Then
\begin{enumerate}[{\rm (i)}]
\item $\W(A)_+$ is a semigroup, and is absorbing in the sense that
if one has $a \in W(A)$ and $b \in W(A)_+$, then $a + b \in W(A)_+$;
\item
\[
\V(A)=\{x\in \W(A)\mid \text{ if }x\leq y \text{ for }y\in
\W(A)\,,\,\text{then }x+z=y\text{ for some }z\in \W(A)\}
\]
\end{enumerate}
\end{corollary}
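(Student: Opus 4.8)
The plan is to prove the absorbing property of (i) first, to read off the semigroup statement as a special case, and then to reduce (ii) to (i) using only unitality and Proposition~\ref{projcomplement}. Note that absorbing indeed gives ``semigroup'': taking $a\in\W(A)_+\subseteq\W(A)$ and $b\in\W(A)_+$, absorbing yields $a+b\in\W(A)_+$, so $\W(A)_+$ is closed under addition. For (ii), the inclusion $\V(A)\subseteq\{\,x : x\le y\Rightarrow x+z=y\text{ for some }z\,\}$ is immediate from Proposition~\ref{projcomplement}: if $\langle p\rangle\le\langle d\rangle$ with $p$ a projection, then $p\precsim d$, so there is $b$ with $p\oplus b\sim d$, i.e. $\langle p\rangle+\langle b\rangle=\langle d\rangle$. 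For the reverse inclusion I would argue by contraposition. Any $x=\langle a\rangle$ with $a\in M_n(A)_+$ satisfies $a\le\Vert a\Vert 1_n$, hence $x\le n\langle 1_A\rangle\in\V(A)$ by unitality. If $x$ were purely positive, then absorbing (applied with the purely positive factor equal to $x$) would force $x+z\in\W(A)_+$ for \emph{every} $z$, so no $z$ could satisfy $x+z=n\langle 1_A\rangle$, because $n\langle 1_A\rangle$ is a projection class and $\V(A)\cap\W(A)_+=\emptyset$. Thus $x$ would fail the defining condition of the right-hand set, and the reverse inclusion follows.

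This reduces everything to the absorbing property, which I would phrase contrapositively: if $a\oplus b\sim p$ for a projection $p$, then $b$ is itself Cuntz-equivalent to a projection. In the stable rank one case this is immediate from Proposition~\ref{prop:equivproj}. Since $a\oplus b$ is block diagonal, $\sigma(a\oplus b)=\sigma(a)\cup\sigma(b)$; if $0$ is isolated in (or absent from) the left-hand side, the same holds for $\sigma(b)$, and Proposition~\ref{prop:equivproj} then makes $b$ a projection class. (In the opposite direction, $b$ being purely positive forces $0$ to be a non-isolated point of $\sigma(b)$, which is the elementary half of that proposition and needs no hypothesis on $A$.) First I would record this spectral computation and then combine it with the reduction above to finish the stable rank one case.

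The hard part will be the simple case, because Proposition~\ref{prop:equivproj} genuinely fails once stable rank one is dropped: in a purely infinite simple algebra such as $\mathcal{O}_2$ every nonzero positive element is a projection class (although there $\W(A)_+=\emptyset$ and the statement is vacuous). I would therefore replace ``projection class'' by the intrinsic notion of a \emph{compact} element of $\W(A)$, namely one with $\langle (c-\epsilon)_+\rangle=\langle c\rangle$ for some $\epsilon>0$, and aim to show that purely positive elements are non-compact while compactness of $a\oplus b$ forces compactness of $b$. Neither implication is formal, since $\W(A)$ has no cancellation, and this is exactly where simplicity must enter. Concretely I would split on whether $p$ is finite or infinite: in the finite corner $pM_\infty(A)p$ one has stable finiteness and can run a cancellation argument, together with the spectral decomposition of $b$ supplied by its being purely positive, to force $b$ into a projection class and obtain the contradiction; in the infinite case one must instead control the purely positive summand using comparison of full elements, valid in any simple algebra. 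Making the finite/infinite dichotomy precise and transferring compactness down to the summand $b$ is the real crux of the corollary; once (i) is secured in both cases, (ii) and the semigroup assertion are pure bookkeeping on top of Propositions~\ref{projcomplement} and~\ref{prop:equivproj}.
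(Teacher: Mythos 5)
Your stable rank one argument for (i) is precisely the paper's: the paper's entire proof of (i) is the observation that $\sigma(a\oplus b)$ contains $\sigma(a)\cup\sigma(b)$ followed by an appeal to Proposition~\ref{prop:equivproj} in both directions, exactly as you use it. Your proof of (ii) is a small but genuine streamlining: the paper re-runs the spectral argument (from $a\oplus b\sim p$ it transfers the spectral condition on $\sigma(p)$ to $\sigma(a)$ and applies Proposition~\ref{prop:equivproj} a second time), whereas you get the reverse inclusion purely formally from unitality ($x\le n\langle 1_A\rangle$), the absorbing property of (i), and the disjointness of $\V(A)$ and $\W(A)_+$; nothing is lost in doing so, and the $\V(A)\subseteq X$ inclusion via Proposition~\ref{projcomplement} is the same in both.

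The gap is the simple case of (i), which you leave as a sketch, and two things should be said about it. First, the paper has the very same gap: its proof consists of citing Proposition~\ref{prop:equivproj}, which, as you correctly observe via $\mathcal{O}_2$, is stated, proved (through the unitary implementation of Proposition~\ref{basics} and stable finiteness), and true only under stable rank one; your diagnosis here is sharper than the paper's write-up. Second, your proposed ``infinite branch'' cannot be made to work, because the statement is actually false for simple algebras that are not stably finite. If $M_\infty(A)$ contains an infinite projection $p$, simplicity makes $p$ properly infinite, so $2\langle p\rangle\le\langle p\rangle$, and fullness gives $x\le\langle p\rangle$ for every $x\in\W(A)$; hence $\langle p\rangle+x=\langle p\rangle$ for all $x$. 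So as soon as $\W(A)_+\ne\emptyset$, absorption fails with $a=\langle p\rangle$ and $b$ purely positive. R\o rdam's algebra \cite{R2}, simple and unital with both a finite and an infinite projection, realizes exactly this: positive elements of the finite corner whose spectrum has $0$ as a non-isolated point are purely positive (by the comparison argument below), yet are absorbed by $\langle 1\rangle$. The corollary is therefore only correct when ``simple'' is read as ``simple and stably finite'' (the purely infinite simple case being vacuous), which is how the paper itself quietly restates it later, just before Proposition~\ref{pureposcomparison}. In that stably finite case, your finite-corner idea closes with tools already at hand, and no dichotomy is needed: if $c\sim q$ with $q$ a projection and $0$ non-isolated in $\sigma(c)$, Lemma~\ref{tonteria} gives a projection $q'\sim q$ with $q'\le\lambda c$ and $f_{\delta}(c)q'=q'$; choosing $g$ supported in $(0,\delta/2)$ with $g(c)\ne 0$ yields $g(c)\perp q'$ and $q'+g(c)\precsim c\sim q'$, so $\langle q'\rangle+m\langle g(c)\rangle\le\langle q'\rangle$ for all $m$; simplicity makes $g(c)$ full, so $\langle q'\rangle\le m\langle g(c)\rangle$ for some $m$, whence $2\langle q'\rangle\le\langle q'\rangle$ and $q'$ is infinite, contradicting stable finiteness. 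That comparison step is the missing content of your sketch.
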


\begin{proof}
(i). Take $\langle a \rangle$, $\langle b \rangle \in \W(A)_+$ and
notice that the spectrum of $a \oplus b$ contains the union of the
spectra of $a$ and $b$, and then apply
Proposition~\ref{prop:equivproj}.

(ii). Set $X=\{x\in \W(A)\mid \text{ if }x\leq y \text{ for }y\in
\W(A)\,,\,\text{then }x+z=y\text{ for some }z\in \W(A)\}$. By
Proposition~\ref{projcomplement}, we already know that
$\V(A)\subseteq X$.

Conversely, if $\langle a\rangle\in X$, then we may find a
projection $p$ (in $M_{\infty}(A)$) such that $\langle
a\rangle\leq\langle p\rangle$. But then there is $b$ in
$M_{\infty}(A)$ for which $a\oplus b\sim p$. Since either
$0\notin\sigma (p)$ or $0$ is an isolated point in $\sigma(p)$, the
same will be true of $\sigma (a)$. Invoking
Proposition~\ref{prop:equivproj}, we find a projection $q$ such that
$q\sim a$, and so $\langle a\rangle\in \V(A)$.
\end{proof}

We close by proving a recent result due to R\o rdam and Winter,
which says that, if we have stable rank one, then we can cancel
projections
\begin{proposition}
\label{prop:rorwin} Let $A$ be a {\rm C}$^*$-algebra of stable rank
one and let $a$, $b$ be positive elements and $p$ a projection (all
in $M_{\infty}(A)$). If
\[
a\oplus p\precsim b\oplus p
\]
then $a\precsim b$.
\end{proposition}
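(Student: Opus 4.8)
The plan is to reduce, via the last characterisation in Proposition~\ref{basics}, to showing that $(a-\epsilon)_+\precsim b$ for every $\epsilon\in(0,1)$, and then to realise such a cut-down inside the hereditary subalgebra $A_{b\oplus p}$, where the extra summand $p$ can be rotated away by a unitary. Throughout I would work in $D:=M_n(A)$ for $n$ large enough to contain $a$, $b$ and $p$; since $\mathrm{sr}(A)=1$ forces $\mathrm{sr}(D)=1$, the stable rank one statements apply in $D$.

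First I fix $\epsilon\in(0,1)$. As $a\oplus p\precsim b\oplus p$, the stable rank one part of Proposition~\ref{basics} supplies a unitary $u\in U(\widetilde{D})$ with $u^*\big((a\oplus p)-\epsilon\big)_+u\in A_{b\oplus p}$. Because $0<\epsilon<1$ we have $\big((a\oplus p)-\epsilon\big)_+=(a-\epsilon)_+\oplus(1-\epsilon)p$, which is the orthogonal sum of $(a-\epsilon)_+\oplus 0$ and $0\oplus(1-\epsilon)p$. Conjugation by $u^*$ preserves positivity and orthogonality, so putting
\[
c:=u^*\big((a-\epsilon)_+\oplus 0\big)u,\qquad P_0:=u^*\big(0\oplus p\big)u,
\]
I obtain orthogonal positive elements with $c+(1-\epsilon)P_0\in A_{b\oplus p}$. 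Since $A_{b\oplus p}$ is hereditary and $0\le c\le c+(1-\epsilon)P_0$, both $c$ and the projection $P_0$ lie in $A_{b\oplus p}$; moreover $c\sim(a-\epsilon)_+$, and $P_0$ is (unitarily, hence Murray--von Neumann) equivalent to the projection $P:=0\oplus p\in A_{b\oplus p}$.

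The geometric heart of the argument is the identification of the corner of $A_{b\oplus p}$ orthogonal to $P$. Writing elements of $D=M_2(M_{\infty}(A))$ as $2\times 2$ matrices, a direct computation shows that $A_{b\oplus p}$ consists of the matrices whose $(1,1)$-entry lies in $A_b$, whose $(2,2)$-entry lies in $pM_{\infty}(A)p$, and whose off-diagonal entries lie in the corresponding closed subspaces; consequently $\{x\in (A_{b\oplus p})_+ : x\perp P\}=(A_b)_+$. I would then transport $c$ into this corner. The partial isometry $u^*P=P_0u^*P$ lies in $A_{b\oplus p}$ (hereditary subalgebras satisfy $HDH\subseteq H$) and implements a Murray--von Neumann equivalence between $P$ and $P_0$ inside $A_{b\oplus p}$; since stable rank one descends to the hereditary subalgebra $A_{b\oplus p}$, these equivalent projections are unitarily equivalent there, giving $v\in U(\widetilde{A_{b\oplus p}})$ with $v^*P_0v=P$. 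Conjugation by a unitary of $\widetilde{A_{b\oplus p}}$ leaves $A_{b\oplus p}$ invariant, so $c':=v^*cv\in A_{b\oplus p}$, and from $cP_0=0$ one gets $c'P=v^*cP_0v=0$, whence $c'\in(A_b)_+$ and $c'\precsim b$. As $c'\sim c\sim(a-\epsilon)_+$, this yields $(a-\epsilon)_+\precsim b$ for all $\epsilon>0$, and Proposition~\ref{basics} then gives $a\precsim b$.

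The step I expect to be the main obstacle is producing the rotation $v$ \emph{internal} to the hereditary subalgebra $A_{b\oplus p}$. The unitary $u$ furnished by Proposition~\ref{basics} conjugates $P$ to $P_0$ only in $\widetilde D$, and conjugating $c$ by it merely undoes the construction; likewise, conjugating $c$ by the partial isometry $u^*P$ kills it, since $c$ is supported orthogonally to $P_0$. One therefore genuinely needs a unitary that moves the whole complement, which is why Murray--von Neumann equivalence of $P$ and $P_0$ must be upgraded to unitary equivalence \emph{within} $A_{b\oplus p}$; this is precisely where cancellation of projections, i.e. the passage of stable rank one to the hereditary subalgebra, is used. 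By comparison, the matrix description of $A_{b\oplus p}$ and the orthogonal-corner identification are routine functional-calculus bookkeeping.
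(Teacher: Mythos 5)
Your proof is correct and is essentially the paper's own argument: both apply the stable-rank-one part of Proposition~\ref{basics} to move $(a-\epsilon)_+$ together with the projection into $A_{b\oplus p}$, upgrade the Murray--von Neumann equivalence of the two copies of the projection to unitary equivalence in the unitization of $A_{b\oplus p}$ (using that stable rank one passes to hereditary subalgebras), and then rotate the cut-down of $a$ into the corner of $A_{b\oplus p}$ orthogonal to the projection, which is exactly $A_b$. The only differences are cosmetic: the paper first replaces direct sums by orthogonal sums inside $A$ rather than working in $M_n(A)$, and it leaves implicit the partial-isometry and corner computations that you spell out.
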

\begin{proof}
We may assume that all elements belong to $A$ and that $a$ and $b$
are both orthogonal to $p$. Given $0<\epsilon<1$, there is by
Proposition~\ref{basics} (and taking into account that $p$ is a
projection) a unitary $u$ such that
\[
u((a-\epsilon)_++p)u^*\in A_{(b+p)}\,.
\]
Since also $A_{b+p}$ has stable rank one (being a hereditary
subalgebra of $A$) and $p$, $upu^*\in A_{b+p}$, we can find a
unitary $v$ (in its unitization) with $upu^*=vpv^*$.

Now $v^*u(a-\epsilon)_+u^*v\in A_{p+b}$ and is orthogonal to $p$,
whence $v^*u(a-\epsilon)_+u^*v\in (1-p)A_{p+b}(1-p)=A_b$.

This shows that $(a-\epsilon)_+\precsim b$ for any $\epsilon<1$, so
$a\precsim b$.
\end{proof}

\begin{theorem}
{\rm (\cite[Theorem 4.3]{rorwin})}
Let $A$ be a {\rm C}$^*$-algebra with stable rank one, and let $x$,
$y\in \W(A)$ be such that
\[
x+\langle c\rangle\leq y+\langle (c-\epsilon)_+\rangle
\]
for some $c\in M_{\infty}(A)_+$ and some $\epsilon>0$. Then $x\leq
y$.
\end{theorem}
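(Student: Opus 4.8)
The plan is to deduce $x\le y$ by cancelling the common summand coming from $c$, using the spectral gap between $c$ and $(c-\epsilon)_+$ to furnish the slack that ordinary cancellation lacks. In effect, this gap plays, for an arbitrary positive element, the role that idempotency of the projection $p$ played in Proposition~\ref{prop:rorwin}; the present statement is the ``soft'' generalization of that result, and I expect to run the very same scheme.

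First I would reduce. Writing $x=\langle a\rangle$, $y=\langle b\rangle$ and enlarging the matrix size, I may assume $a,b,c\in M_k(A)_+$ for a single $k$, noting that $M_k(A)$ again has stable rank one. By Proposition~\ref{basics}(ii) it suffices to prove $(a-\gamma)_+\precsim b$ for every $\gamma>0$; by Lemma~\ref{lem:leq} it is enough to do this for $\gamma\in(0,\epsilon)$, as larger cuts reduce to these. The hypothesis reads $a\oplus c\precsim b\oplus (c-\epsilon)_+$, so applying Proposition~\ref{basics}(i)$\Rightarrow$(ii) and computing the cut blockwise gives, for every $\gamma>0$,
\[
(a-\gamma)_+\oplus (c-\gamma)_+\;\precsim\;b\oplus (c-\epsilon)_+ .
\]

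Next I would invoke stable rank one exactly as in the proof of Proposition~\ref{prop:rorwin}. Fixing $\gamma\in(0,\epsilon)$ and applying the last part of Proposition~\ref{basics} (after absorbing a further small cut into $\gamma$), I obtain a unitary $u$ with
\[
u\big((a-\gamma)_+\oplus (c-\gamma)_+\big)u^*\in A_b\oplus A_{(c-\epsilon)_+}.
\]
Since $\gamma<\epsilon$, the summand $(c-\gamma)_+$ strictly dominates $(c-\epsilon)_+$: on the spectral region $\{\gamma<c\le\epsilon\}$ the element $(c-\epsilon)_+$ vanishes while $(c-\gamma)_+$ is bounded below. The scheme is then to identify, by a unitary, the high-spectrum part $\{c>\epsilon\}$ of the left $c$-summand with the copy of $(c-\epsilon)_+$ sitting in its own corner $A_{(c-\epsilon)_+}$ (which itself has stable rank one, being hereditary), using Theorem~\ref{Pedunitary} and Lemma~\ref{krordam2.8}, and to push the orthogonal remainder — the gap part of $(c-\gamma)_+$ together with $(a-\gamma)_+$ — into the complementary corner $A_b$, yielding $(a-\gamma)_+\precsim b$.

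The hard part is precisely this matching, i.e. cancelling the \emph{non-projection} summand $(c-\epsilon)_+$. Proposition~\ref{prop:rorwin} cannot be quoted verbatim: $c$ need not have $0$ isolated in its spectrum, so it produces no genuine spectral projection (Proposition~\ref{prop:equivproj}), and the clean unitary identification of two equivalent projections used there is unavailable. Instead the cancellation must be effected ``softly'', with the $\epsilon$-gap replacing the idempotency of $p$. Concretely I expect to decompose $(c-\gamma)_+$, via functional calculus and Lemma~\ref{Cuntz}, as a top part that still dominates $(c-\epsilon)_+$ plus an orthogonal gap part supported on $\{\gamma<c<\epsilon\}$; to match the common dominated copy of $(c-\epsilon)_+$ inside the stable-rank-one hereditary algebra $A_{b\oplus(c-\epsilon)_+}$ by a unitary obtained from Theorem~\ref{Pedunitary}; and to absorb the orthogonal gap remainder together with $(a-\gamma)_+$ into $A_b$. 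Letting $\gamma\downarrow 0$ then gives $a\precsim b$, that is, $x\le y$.
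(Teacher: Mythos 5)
Your reduction steps are fine (passing to $M_k(A)$, cutting down to $(a-\gamma)_+\oplus(c-\gamma)_+\precsim b\oplus(c-\epsilon)_+$), but the proof stops exactly where the theorem's real content begins: the ``soft matching'' you defer to the last paragraph is the whole difficulty, and the tools you cite do not deliver it. Two concrete problems. First, the last part of Proposition~\ref{basics} only gives a unitary $u$ with $u\bigl((a-\gamma)_+\oplus(c-\gamma)_+\bigr)u^*\in A_{b\oplus(c-\epsilon)_+}$; this hereditary subalgebra is \emph{not} $A_b\oplus A_{(c-\epsilon)_+}$ (it contains all the off-diagonal cross terms), so the block-corner structure your matching scheme rests on simply is not there. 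Second, the mechanism that makes Proposition~\ref{prop:rorwin} work is specific to projections: two equivalent projections in a stable rank one hereditary subalgebra are conjugate by a unitary of its unitization, and the complement of a projection is again a corner, $(1-p)A_{b+p}(1-p)=A_b$. For the non-projection element $(c-\epsilon)_+$ neither fact has an analogue: Cuntz-equivalent positive elements need not be unitarily conjugate (only their cut-downs are, approximately), and there is no corner complementary to $A_{(c-\epsilon)_+}$ inside $A_{b\oplus(c-\epsilon)_+}$ into which the ``gap part plus $(a-\gamma)_+$'' could be pushed. Theorem~\ref{Pedunitary} and Lemma~\ref{krordam2.8} give unitaries implementing polar decompositions and comparison with diagonal compressions, but neither produces the orthogonal splitting your plan needs, so the key step remains a hope rather than an argument.

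The paper's proof avoids soft cancellation altogether by using the $\epsilon$-gap to manufacture an honest projection, namely the unit. Let $h_\epsilon$ be the function equal to $1$ at $0$, zero on $[\epsilon,\infty)$, and linear in between. Then $h_\epsilon(c)\perp(c-\epsilon)_+$, while $c+h_\epsilon(c)$ is bounded below, hence invertible, so that $1\precsim c+h_\epsilon(c)\precsim c\oplus h_\epsilon(c)$ by Lemmas~\ref{lem:leq} and~\ref{Cuntz}; on the other side, orthogonality gives $(c-\epsilon)_+\oplus h_\epsilon(c)\sim(c-\epsilon)_++h_\epsilon(c)\precsim 1$. Therefore
\[
a\oplus 1\precsim a\oplus c\oplus h_\epsilon(c)\precsim b\oplus(c-\epsilon)_+\oplus h_\epsilon(c)\sim b\oplus\bigl((c-\epsilon)_++h_\epsilon(c)\bigr)\precsim b\oplus 1\,,
\]
and now Proposition~\ref{prop:rorwin} --- the projection cancellation you already trusted --- yields $a\precsim b$. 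So the gap between $c$ and $(c-\epsilon)_+$ is exploited not inside the hereditary algebra $A_{b\oplus(c-\epsilon)_+}$, but to replace $c$ by the unit projection on both sides of the inequality; that single trick is what your outline is missing.
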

\begin{proof}
As above we may assume that $x$, $y$ are represented by elements
$a$, $b$ in $A$ and that $c$ also belongs to $A$. We may also assume
$A$ unital.

Given $0<\epsilon$, define a function $h_{\epsilon}$ to be zero for
$t\geq \epsilon$, one at zero, and linear in $[0,\epsilon]$. By
construction $h_{\epsilon}(c)$ is orthogonal to $(c-\epsilon)_+$,
and $c+h_{\epsilon}(c)\geq 1$, hence it is invertible. Further, by
Lemma~\ref{Cuntz}, $c+h_{\epsilon}(c)\precsim c\oplus
h_{\epsilon}(c)$, and $(c-\epsilon)_+\oplus h_{\epsilon}(c)\sim
(c-\epsilon)_++ h_{\epsilon}(c)\leq 1$. Thus
\[
a\oplus 1\precsim a\oplus c\oplus h_{\epsilon}(c)\precsim b\oplus
(c-\epsilon)_+\oplus h_{\epsilon}(c)\sim b\oplus((c-\epsilon)_++
h_{\epsilon}(c)) \precsim b\oplus 1\,,
\]
and so $a\precsim b$ using Proposition~\ref{prop:rorwin}.
\end{proof}

As remarked in~\cite{rorwin}, it is not true that the Cuntz
semigroup is cancellative, even in the stable rank one case. We will
see later that $\W(A)$ is never cancellative for a wide class of
algebras of stable rank one.

\subsection{The group $\mathbf{\mathrm{K}_0^*}$}

The Grothendieck enveloping group of $\W(A)$ is denoted
$\mathrm{K}_0^*(A)$. Its structure has been previously analysed in
\cite{bh}, \cite{Cu}, \cite{han}, and~\cite{perijm}. As $\W(A)$
carries its own order coming from the Cuntz comparison relation, we
may in principle equip $\mathrm{K}_0^*(A)$ with two natural
(partial) orderings.

If $M$ is an Abelian semigroup with a partial order $\leq$ that
extends the algebraic order (as is the case with $\W(A)$), we use
$G(M)$ to denote its Grothendieck enveloping group. For convenience,
we recall its construction and the two orderings it may be given.

\begin{definition}
Define a congruence on $M$ by declaring $x\sim y$ if there is $z$ in
$M$ with $x+z=y+z$. Set $M_c=M/\sim$, and denote $[x]$ the
congruence class of $x$. The set $M_c$ becomes an abelian semigroup
under addition $[x]+[y]=[x+y]$. Adjoining formal inverses to the
elements of $M_c$ we obtain a group $G(M)$.
\end{definition}

There is a natural semigroup homomorphism $\gamma\colon M\to G(M)$,
referred as to the Grothendieck map. With this notation,
\[
G(M)=\{\gamma(a)-\gamma (b)\mid a, b\in M\}\,.
\]
We define the following cones:
\[
G(M)^+=\gamma (M)\,,
\]
and
\[
G(M)^{++}=\{\gamma(a)-\gamma(b)\mid a,\,b\in M\text{ and }b\leq
a\}\,.
\]
We shall also use $[a]=\gamma (a)$, for $a$ in $M$. Note that then
$[a]-[b]\leq [c]-[d]$ in $(G(M), G(M)^{++})$ if and only if
$a+d+e\leq b+c+e$ for some $e$ in $M$.

\begin{lemma}
If $M$ is partially ordered, so is $(G(M), G(M)^{++})$. Since
$G(M)^+\subseteq G(M)^{++}$ (and the inclusion may be strict), we
conclude that also $(G(M),G(M)^+)$ is partially ordered.
\end{lemma}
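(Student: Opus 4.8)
The plan is to invoke the standard characterization of orders on an abelian group by cones: a subset $P$ of an abelian group $G$ is the positive cone of a translation-invariant partial order (defined by $x\leq y \Leftrightarrow y-x\in P$) precisely when $P$ is a \emph{proper submonoid}, i.e. $0\in P$, $P+P\subseteq P$, and $P\cap(-P)=\{0\}$. I would verify these three properties directly for $P=G(M)^{++}$, and then deduce the assertion for $G(M)^+$ from the inclusion $G(M)^+\subseteq G(M)^{++}$. Throughout I use that the order $\leq$ on $M$ is compatible with addition (translation invariant) and extends the algebraic order, both of which are part of the standing hypothesis on $M$.

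The submonoid conditions for $G(M)^{++}$ are routine. Taking $a\leq a$ shows $0=[a]-[a]\in G(M)^{++}$; and if $[a]-[b],\,[a']-[b']\in G(M)^{++}$ with $b\leq a$ and $b'\leq a'$, then translation invariance gives $b+b'\leq a+a'$, so their sum $[a+a']-[b+b']$ lies in $G(M)^{++}$.

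The crux is properness (equivalently, antisymmetry): I must show that $g,-g\in G(M)^{++}$ force $g=0$. Write $g=[a]-[b]$ with $b\leq a$, and $-g=[c]-[d]$ with $d\leq c$, so that $g=[d]-[c]$ and hence $[a]+[c]=[b]+[d]$. Since $M_c$ is cancellative and embeds in $G(M)$, this equality descends to $[a+c]=[b+d]$ in $M_c$, which by the definition of the congruence yields an $e\in M$ with
\[
a+c+e=b+d+e.
\]
Adding $c+e$ to $b\leq a$ gives $b+c+e\leq a+c+e$, while adding $b+e$ to $d\leq c$ gives $b+d+e\leq b+c+e$; together with the displayed equality,
\[
a+c+e=b+d+e\leq b+c+e\leq a+c+e,
\]
so antisymmetry of $\leq$ on $M$ forces $a+c+e=b+c+e$. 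Thus $a\sim b$ (witnessed by $c+e$), i.e. $[a]=[b]$ and $g=0$. This sandwiching argument, converting one semigroup equality and two inequalities into the congruence $a\sim b$, is the only place the hypotheses (cancellativity of $M_c$ and antisymmetry of $\leq$ on $M$) are really used, and I expect it to be the sole genuine obstacle.

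For the order given by $G(M)^+=\gamma(M)$ I would first record the inclusion $G(M)^+\subseteq G(M)^{++}$: any $\gamma(a)$ can be written $\gamma(a)=[2a]-[a]$, and $a\leq 2a$ holds in the algebraic order, hence in $\leq$, so $\gamma(a)\in G(M)^{++}$. Granting this, properness is inherited, since $G(M)^+\cap(-G(M)^+)\subseteq G(M)^{++}\cap(-G(M)^{++})=\{0\}$; closure under addition is immediate because $\gamma$ is a homomorphism, and $0=\gamma(0)\in\gamma(M)$ as $M$ carries a neutral element. Hence $G(M)^+$ is again a proper submonoid, and $(G(M),G(M)^+)$ is partially ordered as well.
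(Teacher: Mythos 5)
Your proof is correct, and its heart is the same argument the paper gives: from $g,-g\in G(M)^{++}$ you extract one semigroup identity $a+c+e=b+d+e$ and two inequalities $b\le a$, $d\le c$, and then the sandwich $a+c+e=b+d+e\le b+c+e\le a+c+e$ plus antisymmetry of $\le$ in $M$ yields $a+(c+e)=b+(c+e)$, hence $\gamma(a)=\gamma(b)$. The paper's proof (with witnesses $s,t,u,v,z$ and $w=u+v+z+t$, and modulo evident typos in its displayed inequalities) is exactly this computation, so your detour through the cancellativity of $M_c$ is only a repackaging of the step ``equality in $G(M)$ descends to the congruence on $M$''. What you add beyond the paper is a proof of the inclusion $G(M)^+\subseteq G(M)^{++}$ (via $\gamma(a)=[2a]-[a]$ and $a\le 2a$ in the algebraic order), which the statement merely asserts, together with the routine monoid axioms for the cones.

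One peripheral claim is off: you justify $0\in G(M)^+$ by saying that $M$ carries a neutral element, but $M$ is only assumed to be an abelian semigroup, and in the paper's actual application of this lemma (Proposition \ref{k*struc}) one takes $M=\W(A)_+$, the purely positive classes, which contains no zero; there $0\notin\gamma(M)$ in general. This does not affect the substance of the lemma, which is antisymmetry: your inheritance argument $G(M)^+\cap(-G(M)^+)\subseteq G(M)^{++}\cap(-G(M)^{++})=\{0\}$ stands, and only the reflexivity convention for the cone order (requiring $0$ in the cone versus adjoining equality by hand) is touched by this point.
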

\begin{proof}
Assume that $\gamma(a)-\gamma(b)\in G(M)^{++}\cap(-G(M)^{++})$. Then
there are elements $s$, $t$, $u$, $v$ in $M$ such that
\[
a+z\leq b+z\,,\,\, t+v\leq s+v\,,\,\, a+s+u=b+t+u\,,
\]
so that $\gamma(b)-\gamma(a)=\gamma(s)-\gamma(t)\in G(M)^{++}$. Set
$w=u+v+z+t$ and check that $a+w=b+w$, whence $\gamma(a)=\gamma(b)$.
\end{proof}

\begin{proposition}
\label{k*struc} Let $A$ be a {\rm C}$^*$-algebra with stable rank
one and such that the semigroup $\W(A)_+$ of purely positive
elements is non-empty. Then there exists an ordered group
isomorphism
\[
\alpha\colon (\mathrm{K}_0^*(A), \mathrm{K}_0^*(A)^{++})\to
(G(\W(A)_+),G(\W(A)_+)^+)\,.
\]
%If, furthermore, $A$ is simple and $\Z$-stable, then
%$\alpha([1_A])=([1\otimes z_1])$.
\end{proposition}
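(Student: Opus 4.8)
The plan is to realise $\alpha$ as the inverse of the homomorphism $G(\iota)\colon G(\W(A)_+)\to G(\W(A))=\mathrm{K}_0^*(A)$ induced by the inclusion $\iota\colon\W(A)_+\hookrightarrow\W(A)$, and then to verify that $G(\iota)$ carries the cone $G(\W(A)_+)^+$ exactly onto $\mathrm{K}_0^*(A)^{++}=G(\W(A))^{++}$. Throughout, the one indispensable tool is the absorbing property of Corollary~\ref{varecov}(i): for every $x\in\W(A)$ and every $e\in\W(A)_+$ the sum $x+e$ again lies in $\W(A)_+$. I fix once and for all a class $e\in\W(A)_+$, which exists by hypothesis, and I write $\gamma_+$ for the Grothendieck map of $\W(A)_+$.

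First I would show that $G(\iota)$ is a group isomorphism. Surjectivity is immediate from absorption: an arbitrary generator $[a]-[b]$ of $G(\W(A))$ equals $[a+e]-[b+e]$, and this is $G(\iota)\bigl(\gamma_+(a+e)-\gamma_+(b+e)\bigr)$ since $a+e,b+e\in\W(A)_+$. For injectivity, if $\gamma_+(a)-\gamma_+(b)$ (with $a,b\in\W(A)_+$) is sent to $0$, then $[a]=[b]$ in $G(\W(A))$, so $a+z=b+z$ in $\W(A)$ for some $z$; replacing $z$ by $z+e\in\W(A)_+$ yields the same equality inside $\W(A)_+$, whence $\gamma_+(a)=\gamma_+(b)$. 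Thus $\alpha:=G(\iota)^{-1}$ is a well-defined group isomorphism, and it remains only to match the orders.

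Because $\alpha$ is a group homomorphism, it is an isomorphism of the stated ordered groups exactly when $G(\iota)\bigl(G(\W(A)_+)^+\bigr)=G(\W(A))^{++}$, that is, when $\{[c]:c\in\W(A)_+\}=\{[a]-[b]:a,b\in\W(A),\ b\le a\}$. One inclusion is free: for $c\in\W(A)_+$ we have $0\le c$ in the positively ordered monoid $\W(A)$, so $[c]=[c]-[0]$ already exhibits $[c]$ as an element of $G(\W(A))^{++}$ (the same argument with arbitrary $c\in\W(A)$ re-proves $G(\W(A))^+\subseteq G(\W(A))^{++}$).

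The reverse inclusion is the heart of the matter, and the step I expect to be the main obstacle. Given $b\le a$, i.e. $b\precsim a$, I must produce a \emph{purely positive} $c$ with $[a]-[b]=[c]$, equivalently a $z\in\W(A)$ with $a\oplus z\sim b\oplus c\oplus z$. Adding $e$ to both $a$ and $b$ and using absorption reduces this to the case in which $a$ and $b$ are themselves purely positive (the difference $[a]-[b]$ and the relation $b\precsim a$ are unaffected). What remains is a group-level complementation statement in stable rank one: for $b\precsim a$ there is $c$ with $[a]=[b]+[c]$. One cannot hope for the naive equality $a\sim b\oplus c$ — it already fails in $C([0,1])$, where supports may disagree at a single boundary point — so the slack afforded by $z$ is essential. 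When $b$ is a projection this complement is furnished directly by Proposition~\ref{projcomplement}, after which I would force $c$ into $\W(A)_+$ by one further addition of $e$; the genuinely hard case is purely positive $b$, which I would attack by cutting down with $(b-\varepsilon)_+$ and $(a-\varepsilon)_+$ via Proposition~\ref{basics}, transporting the cut-down pieces into the hereditary algebra $A_a$ using the stable-rank-one unitaries in the last part of Proposition~\ref{basics}, and then absorbing the leftover discrepancies through the cancellation Proposition~\ref{prop:rorwin}. Establishing this complementation cleanly is where the real work sits; granted it, the two cones coincide and $\alpha$ is the desired ordered-group isomorphism.
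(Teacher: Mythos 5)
Your construction of the group isomorphism is correct, and it is, up to taking inverses, exactly the paper's own map: the paper defines $\alpha$ directly by $\alpha(\langle a\rangle)=\gamma(\langle a\rangle)$ on $\W(A)_+$ and $\alpha(\langle p\rangle)=\gamma(\langle p\rangle+c)-\gamma(c)$ on projection classes, which is precisely your $G(\iota)^{-1}$, and both arguments run on the absorption property of Corollary~\ref{varecov}. But your proposal stops where the proposition actually begins: the inclusion $\mathrm{K}_0^*(A)^{++}\subseteq G(\iota)\bigl(G(\W(A)_+)^+\bigr)$ is reduced to a complementation statement (for $b\precsim a$, find purely positive $c$ with $[a]=[b]+[c]$ in the Grothendieck group) which you then assume (``granted it\dots''). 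Since the opposite inclusion is, as you say, free, this complementation is the entire content of the order statement, so what you have written is not yet a proof.

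The more serious news is that the missing step is not merely hard: in the stated generality it is false, so no amount of work with Proposition~\ref{basics} and Proposition~\ref{prop:rorwin} can close the gap. Take $A=C([0,1])$, which has stable rank one and $\W(A)_+\neq\emptyset$, and take $b=0$, $a=1_A$, so that $\gamma(\langle 1_A\rangle)\in \mathrm{K}_0^*(A)^{++}$. If its image lay in $G(\W(A)_+)^+$ there would be a purely positive $c$ and some $w\in M_\infty(A)_+$ with $1_A\oplus w\sim c\oplus w$. Now Cuntz subequivalence in $M_\infty(C(X))$ can only increase pointwise ranks (if $u\precsim v$ then $\mathrm{rank}\,u(x)\leq \mathrm{rank}\,v(x)$ for every $x$, since rank is lower semicontinuous under norm limits), so the displayed equivalence forces $\mathrm{rank}\,c(x)=1$ for every $x\in[0,1]$; writing $c(x)=\|c(x)\|\,q(x)$ with $\|c(\cdot)\|$ continuous and strictly positive, $q$ is a continuous rank-one projection and $c\sim q$, contradicting pure positivity of $c$. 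The same example defeats your projection case: Proposition~\ref{projcomplement} does give $c$ with $p\oplus c\sim a$, but when $c$ is a projection class you still need a purely positive element in the same Grothendieck class (adding $e$ to $c$ changes the class), and that is exactly what does not exist here. To your credit, you have isolated precisely the point the paper's own proof passes over with ``it is easy to check'' and ``evidently'': the inclusion $\alpha(\mathrm{K}_0^*(A)^{++})\subseteq G(\W(A)_+)^+$ genuinely fails for $C([0,1])$, so the proposition requires the hypotheses of its source \cite{perijm} (real rank zero), or else the target must be given the cone $G(\W(A)_+)^{++}$ instead of $G(\W(A)_+)^+$ --- with that cone, your absorption identity $[a]-[b]=[a+e]-[b+e]$, where $b+e\leq a+e$ are purely positive, finishes the order part immediately.
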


\begin{proof}
By Corollary~\ref{varecov} we have that, since $A$ has stable rank
one, $\W(A)=V(A)\sqcup \W(A)_+$. Denote by $\gamma\colon \W(A)_+\to
G(\W(A)_+)$ the Grothendieck map, and choose any element $c\in
\W(A)_+$. Then, define
\[
\alpha\colon \W(A)\to G(\W(A)_+)
\]
by $\alpha(\langle a\rangle)=\gamma(\langle a\rangle)$ if $\langle
a\rangle\in \W(A)_+$, and by $\alpha(\langle
p\rangle)=\gamma(\langle p\rangle+c)-\gamma(c)$ for any projection
in $M_{\infty}(A)$.

Note that $\alpha$ is a well defined semigroup homomorphism. Indeed,
since $A$ has stable rank one, $\langle p\rangle +c\in \W(A)_+$
whenever $c\in \W(A)_+$, and if $c'\in \W(A)_+$ is any other
element, then one has that $\gamma(\langle
p\rangle+c)-\gamma(c)=\gamma(\langle p\rangle+c')-\gamma(c')$.

In order to check that $\alpha$ is a homomorphism, let $p$, $q$ and
$a$ be elements in $M_{\infty}(A)_+$ with $p$ and $q$ projections
and $a$ purely positive. Then,
\begin{eqnarray*}
\alpha(\langle p\rangle+\langle q\rangle) & = & \gamma(\langle
p\oplus
q\rangle+2c)-\gamma(2c) \\
& = & \gamma(\langle p\rangle+c)-\gamma(c)+\gamma(\langle
q\rangle+c)-\gamma(c) \\
& = & \alpha(\langle p\rangle)+\alpha(\langle q\rangle).
\end{eqnarray*}

\noindent Also
\begin{eqnarray*}
\alpha(\langle p\rangle+\langle a\rangle)& = & \gamma(\langle p\oplus a\rangle) \\
& = & \gamma(\langle p\oplus a\rangle+c)-\gamma(c) \\
& = & \gamma(\langle p\rangle+c)-\gamma(c)+\gamma(\langle a\rangle) \\
& = & \alpha(\langle p\rangle)+\alpha(\langle a\rangle).
\end{eqnarray*}

It is easy to check that $\alpha (\W(A))\subseteq G(\W(A)_+)^+$, and
so $\alpha$ extends to an ordered group homomorphism
\[
\alpha\colon \mathrm{K}_0^*(A)=G(\W(A))\to G(\W(A)_+)\,,
\]
given by the rule $\alpha([a]-[b])=\alpha(\langle
a\rangle)-\alpha(\langle b\rangle)$. Evidently, $\alpha$ is
surjective and satisfies
\[\
\alpha (\mathrm{K}_0^*(A)^{++})\subseteq G(\W(A)_+)^+\,
\]

To prove injectivity, assume that $\alpha (\langle
a\rangle)=\alpha(\langle p\rangle)$ for $\langle a\rangle\in
\W(A)_+$ and $p$ a projection. This means that $\gamma(\langle
a\rangle)=\gamma(\langle p\rangle+c)-\gamma(c)$, and hence $\langle
a\rangle+c+c'=\langle p\rangle+c+c'$ for some $c'\in \W(A)$. Thus
$[a]=[b]$ in $\mathrm{K}_0^*(A)$. If, for projections $p$ and $q$,
we have that $\alpha(\langle p\rangle)=\alpha(\langle q\rangle)$,
then $\gamma(\langle p\rangle+c)-\gamma(c)=\gamma(\langle
q\rangle+c)-\gamma(c)$, from which $[p]=[q]$ in $\mathrm{K}_0^*(A)$.
%
%Finally, if $A$ is simple and $\Z$-stable, we may apply
%Proposition~\ref{pureposcomparison} to conclude that
%\[
%(1_A\otimes 1_A)\oplus (1_A\otimes z_1)\sim (1_A\otimes z_1)\oplus
%(1_A\otimes z_1).
%\]
%Thus
%\[
%\alpha([1_A])=\gamma (\langle (1_A\otimes 1_A)\oplus (1_A\otimes
%z_1)\rangle)-\gamma(\langle 1_A\otimes z_1\rangle)=\gamma (\langle
%1_A\otimes z_1\rangle)=\alpha ([1_A\otimes z_1]).
%\]
\end{proof}

\subsection{The Cuntz semigroup and dimension functions}

\begin{definition} {\rm Let $(M,leq)$ be a preordered Abelian semigroup. Recall that a non-zero element $u$ in
$M$ is said to be an \emph{order-unit} provided that for any $x$ in
$M$ there is a natural number $n$ such that $x\leq nu$. A
\emph{state} on a preordered monoid $M$ with order-unit $u$ is an
order preserving morphism $s\colon M\to \mathbb{R}$ such that
$s(u)=1$. We denote the (convex) set of states by
$\mathrm{S}(M,u)$.}
\end{definition}

In the case of a unital C$^*$-algebra $A$, we denote
\[
\mathrm{DF}(A)=\mathrm{S}(\W(A),\langle
1\rangle)=\mathrm{S}(\mathrm{K}_0^*(A),[1])\,.
\]
The states on $\W(A)$ are called \emph{dimension functions} on $A$
(see also~\cite{bh},~\cite{Rfunct},~\cite{perijm}).

A dimension function $s$ is \emph{lower semicontinuous} if
$s(\langle a\rangle)\leq \liminf\limits_{n\to\infty} s(\langle
a_n\rangle)$ whenever $a_n\to a$ in norm. Note that any dimension
function $s$ induces a function $d_s\colon M_{\infty}(A)\to
\mathbb{R}$ given by $d_s(a)=s\langle a^*a\rangle$. With this
notation, lower semicontinuity of $A$ as defined above is equivalent
to lower semicontinuity of the function $d_s$, regarding
$M_{\infty}(A)$ as a local C$^*$-algebra.

The set of all lower semicontinous dimension functions on $A$ is
denoted by $\mathrm{LDF}(A)$. It is pertinent at this stage to
mention that Blackadar and Handelman conjectured in \cite{bh} that
$\mathrm{DF}(A)$ is a Choquet simplex and that $\mathrm{LDF}(A)$ is
always dense in $\mathrm{DF}(A)$. These conjectures are known to
have positive answers for those algebras that have good
representations of their Cuntz semigroups (see \cite{bpt}).

\begin{definition}
A quasitrace on a {\rm C}$^*$-algebra $A$ is a function $\tau\colon
A\to\mathbb{C}$ such that
\begin{enumerate}[{\rm (i)}]
\item $\tau(x^*x)=\tau(xx^*)\geq 0$ for any $x\in A$. \item $\tau$
is linear on commutative $^*$-subalgebras of $A$. \item If $x=a+ib$,
where $a$, $b$ are self-adjoint, then $\tau(x)=\tau(a)+i\tau(b)$.
\item $\tau$ extends to a map on $M_n(A)$ with the same properties.
\end{enumerate}
\end{definition}
Note that a trace is just a linear quasitrace. We will say that a
trace or a quasitrace $\tau$ is \emph{normalized} whenever its norm,
$\Vert \tau\Vert=\sup\{\tau(a)\mid 0\leq a, \Vert a\Vert\leq 1\}$,
equals one (it will always be finite, see below). In the case that
$A$ is unital, then this amounts to the requirement that
$\tau(1)=1$.

We shall denote as usual by $\mathrm{T}(A)$ the simplex of
normalised traces defined on a unital C$^*$-algebra $A$, and by
$\mathrm{QT}(A)$ the simplex of quasitraces (it is not obvious that
both sets are simplices, but it is a true theorem, see \cite{bh} and
the references therein). We have $\mathrm{T}(A)\subseteq
\mathrm{QT}(A)$, and equality holds if $A$ is exact and unital by
the main theorem of~\cite{Ha}.

Observe also that, if $A$ is simple, then quasitraces are faithful,
meaning that $\tau(x^*x)=0$ if and only if $x=0$. This follows from
the fact that, for a quasitrace $\tau$, the set $\{x\in
A\mid\tau(x^*x)=0\}$ is a closed, two-sided ideal of $A$.

\begin{theorem}
\label{blackhand}  {\rm (\cite{bh})} There is an affine bijection $\mathrm{QT}(A)\to
\mathrm{LDF}(A)$ whose inverse is continuous.
\end{theorem}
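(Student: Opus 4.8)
The plan is to construct the bijection explicitly in both directions and verify that these constructions are mutually inverse, affine, and continuous. Given a lower semicontinuous quasitrace $\tau\in\mathrm{QT}(A)$, I would first build an associated lower semicontinuous dimension function. The natural candidate is obtained by integrating $\tau$ against the spectral data of a positive element: for $a\in M_\infty(A)_+$ one sets
\[
d_\tau(a)=\lim_{\eps\to 0^+}\tau\bigl((a-\eps)_+^{0}\bigr)=\lim_{n\to\infty}\tau(a^{1/n})\,,
\]
where the limit runs over the approximate support projections of $a$. The first step is to check that $d_\tau$ descends to a well-defined, order-preserving, additive functional on $\W(A)$, hence a dimension function $s_\tau$; here Proposition~\ref{basics} is the key tool, since $a\precsim b$ is characterized through the cut-downs $(a-\eps)_+$, and the quasitrace axioms guarantee $\tau(v^*bv)\le$ the relevant bound so that the value is monotone under Cuntz subequivalence.

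Second, I would verify that $s_\tau$ is genuinely lower semicontinuous, which is where the defining limit over $\eps$ pays off: if $a_n\to a$ in norm, then for fixed $\eps$ one has $(a-\eps)_+\precsim a_n$ for $n$ large (again by Proposition~\ref{basics}, since $\Vert a-a_n\Vert<\delta$ forces $(a-\eps)_+=d a_n d^*$ via Theorem~\ref{Kirchberg-Rordam}), whence $s(\langle(a-\eps)_+\rangle)\le\liminf_n s(\langle a_n\rangle)$, and letting $\eps\to 0$ gives the inequality $s(\langle a\rangle)\le\liminf_n s(\langle a_n\rangle)$. The normalization $\Vert\tau\Vert=1$ transfers to $s_\tau(\langle 1\rangle)=1$, so $s_\tau\in\mathrm{LDF}(A)$, and the assignment $\tau\mapsto s_\tau$ is manifestly affine in $\tau$.

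Third, I would construct the inverse. Starting from $s\in\mathrm{LDF}(A)$, one recovers a quasitrace by a de la Harpe--Skandalis / Blackadar--Handelman type reconstruction: for a self-adjoint $a$ one integrates the dimension function against the spectral scale, i.e.\ one defines $\tau_s(a)=\int_0^\infty s(\langle(a-t)_+\rangle)\,dt$ on positive elements and extends by linearity on commutative subalgebras and then by the real/imaginary decomposition in axiom (iii) of the quasitrace definition. Lower semicontinuity of $s$ is exactly what makes this integral well-behaved and additive on orthogonal (hence commuting) positive elements, yielding the quasitrace axioms. Checking that $\tau\mapsto s_\tau$ and $s\mapsto\tau_s$ are mutually inverse amounts to the identity $s(\langle a\rangle)=\lim_{n}\tau_s(a^{1/n})$ and the corresponding formula recovering $\tau$ from its dimension function, both of which follow from the functional-calculus relation $((a-\eps_1)_+-\eps_2)_+=(a-(\eps_1+\eps_2))_+$ noted earlier in the excerpt.

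I expect the genuine obstacle to be the \emph{additivity} of the reconstructed quasitrace, i.e.\ proving $\tau_s(a+b)=\tau_s(a)+\tau_s(b)$ for \emph{non-commuting} positive $a,b$, and more generally establishing that $\tau_s$ really satisfies the trace-like identity $\tau_s(x^*x)=\tau_s(xx^*)$ together with linearity on commutative subalgebras in a compatible way. The spectral-integral definition gives additivity for free only on commuting elements, and promoting this to a genuine quasitrace requires the full strength of the Blackadar--Handelman machinery: one must approximate arbitrary positive elements by elements with finite spectrum, control the error using lower semicontinuity, and invoke the orthogonality behavior of Cuntz comparison (Lemma~\ref{Cuntz}) to reduce general sums to orthogonal ones up to arbitrarily small Cuntz-perturbations. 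The continuity of the inverse map is comparatively soft, following from the fact that the defining integral depends continuously (in the weak topology) on $s$; the hard analytic core is entirely in showing the spectral integral defines a bona fide quasitrace.
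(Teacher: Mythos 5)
Your two formulas are exactly the paper's: the forward map $\mathrm{QT}(A)\to\mathrm{LDF}(A)$ is $\tau\mapsto \dt$ with $\dt(a)=\lim_{n\to\infty}\tau(a^{1/n})$, and your spectral integral $\tau_s(a)=\int_0^\infty s(\langle (a-t)_+\rangle)\,dt$ is, via the layer-cake identity, the same as the paper's construction of $\tau_s$ by integrating $a$ against the measure $\mu_s$ that $s$ induces on the $\sigma$-compact open subsets of the spectrum inside each commutative subalgebra (this is where \cite[Proposition II.2.1]{bh} enters). Up to that reformulation, your proposal follows the paper's sketched proof, and your treatment of the forward direction, of lower semicontinuity, and of continuity of the inverse is fine.

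The genuine problem is your last paragraph, which misidentifies what must be proven. A quasitrace is, by the definition used in the paper, only required to be linear on \emph{commutative} $^*$-subalgebras, to satisfy $\tau(x^*x)=\tau(xx^*)\geq 0$, to split real and imaginary parts, and to extend to $M_n(A)$. Additivity on non-commuting positive elements is not an axiom; it is precisely the property separating traces from quasitraces. Worse, it is not provable: if the spectral integral always produced an additive functional, then every quasitrace would be a trace for every C$^*$-algebra $A$, whereas this is Haagerup's theorem for \emph{exact} $A$ (cited in the paper as the reason $\mathrm{T}(A)=\mathrm{QT}(A)$ in the unital exact case) and is expected to fail in general. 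So the step you call ``the hard analytic core'' is a step you must not attempt; pursuing it would sink the proof. The actual work in the Blackadar--Handelman machinery lies elsewhere: (a) lower semicontinuity of $s$ is what makes $U\mapsto\mu_s(U):=s(\langle f\rangle)$ (for $f$ with support $U$) a well-defined measure on the $\sigma$-compact open sets; (b) one must check that the value $\tau_s(a)$ does not depend on the choice of commutative subalgebra containing $a$; (c) the tracial identity $\tau_s(x^*x)=\tau_s(xx^*)$ follows not from any additivity but from the Cuntz equivalences $(x^*x-t)_+\sim(xx^*-t)_+$ for $t>0$ (cf.\ Lemma~\ref{polardec}), which force the two spectral measures to assign equal mass to the relevant open sets, so the layer-cake integrals coincide; and (d) axiom (iv) is obtained by running the same construction over $M_n(A)$ using the restriction of $s$.
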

\begin{proof} (Sketch.)
Given a lower semicontinuous dimension function $d$, define $\tau_d$
as follows.

If $B$ is an abelian subalgebra of $A$, then $B\cong C(X)$, for some
space $X$, whence $d$ induces a probability measure $\mu_d$ on the
algebra generated by the $\sigma$-compact open sets
(see~\cite[Proposition II.2.1]{bh}), given by $\mu_d(U)=d(f)$, if
the support of $f$ equals $U$. Then $\tau_d(f)=\int\limits_X
fd\mu_d$ for any $f$ in $B$.

To define $\tau_d$ on a general element $x$, write $x=a+ib$ where
$a$, $b$ are self-adjoint, and then
\[
\tau_d(x):=\tau_d(a)+i\tau_d(b)\,.
\]

The converse map is given by $\tau\mapsto\dt$, where
\[
\dt(a)= \lim_{n \to \infty} \tau(a^{1/n}).
\]
\end{proof}

That the forward map is not continuous in general follows, e.g. from
the following example:

\begin{example}
Let $A=C(\mathbb{N}^*)$, where $\mathbb{N}^*$ is the one point
compactification of the natural numbers. Note that the elements of
$A$ are functions $f$ with $\lim f(n)=f(*)$.

Let $U=\mathbb{N}$, a $\sigma$-compact open set which is dense
Observe that $x_n=n\stackrel{n\to \infty}{\longrightarrow} *$, but
the point mass measure $\delta_{x_n}$ does not converge to
$\delta_x$, because $\delta_{x_n}(U)=1$ for all $n$, while
$\delta_x(U)=0$. On the other hand, we do have that
\[
f(x_n)=\int_X fd(\delta_{x_n})\to \int_X fd(\delta_x)=f(x)\,.
\]
\end{example}
% \textbf{Write example to show converse map not continuous; maybe
% proof that $\dt$ is indeed a dimension function?}

\begin{proposition}
\label{rordam} Let $d\in\mathrm{DF}(A)$. Then
\[
\overline{d}(\langle a\rangle)=\sup\limits_{\epsilon>0} d(\langle
(a-\epsilon)_+\rangle)
\]
defines a lower semicontinuous dimension function, with
$\overline{d}\leq d$ and equality holds if $d\in\mathrm{LDF}(A)$.
\end{proposition}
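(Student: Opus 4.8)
The plan is to verify in turn that $\overline{d}$ descends to Cuntz classes, that it is a state on $\W(A)$, that it satisfies $\overline d\le d$, that it is lower semicontinuous, and finally that it agrees with $d$ when $d$ is already lower semicontinuous. Throughout, the workhorse is Proposition~\ref{basics}, in particular the equivalence (i)$\Leftrightarrow$(iii): $a\precsim b$ holds iff for every $\epsilon>0$ there is $\delta>0$ with $(a-\epsilon)_+\precsim(b-\delta)_+$. To see that the formula descends to Cuntz classes, regard its right-hand side first as a function of the element $a$ and check monotonicity for $\precsim$: if $a\precsim b$, then for each $\epsilon$ choose $\delta$ with $(a-\epsilon)_+\precsim(b-\delta)_+$, whence $d(\langle(a-\epsilon)_+\rangle)\le d(\langle(b-\delta)_+\rangle)\le\overline d(\langle b\rangle)$, and taking the supremum over $\epsilon$ gives $\overline d(\langle a\rangle)\le\overline d(\langle b\rangle)$. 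Applying this to $a\sim b$ in both directions shows independence of the representative and simultaneously records that $\overline d$ is order preserving.

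Next I would check that $\overline d$ is a state. The key observation is that $(a\oplus b-\epsilon)_+=(a-\epsilon)_+\oplus(b-\epsilon)_+$ by functional calculus on the block-diagonal element, so $d(\langle(a\oplus b-\epsilon)_+\rangle)=d(\langle(a-\epsilon)_+\rangle)+d(\langle(b-\epsilon)_+\rangle)$. Since $\epsilon\mapsto d(\langle(a-\epsilon)_+\rangle)$ is monotone (it increases as $\epsilon\to0^+$, because $(a-\epsilon)_+$ increases and $d$ is order preserving), the supremum of the sum of two such nets splits into the sum of the suprema, giving additivity $\overline d(\langle a\oplus b\rangle)=\overline d(\langle a\rangle)+\overline d(\langle b\rangle)$. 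Normalization is immediate: for $0<\epsilon<1$ one has $(1-\epsilon)_+=(1-\epsilon)1\sim1$, so $d(\langle(1-\epsilon)_+\rangle)=1$ and $\overline d(\langle1\rangle)=1$. The bound $\overline d\le d$ follows at once from $(a-\epsilon)_+\precsim a$ (the corollary to Lemma~\ref{lem:leq}), which gives $d(\langle(a-\epsilon)_+\rangle)\le d(\langle a\rangle)$ for every $\epsilon$.

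The substance of the argument is lower semicontinuity, and this is the step I expect to be the main obstacle, because one must convert a norm estimate into a comparison that lands strictly inside a cut-down $(a_n-\delta)_+$ rather than merely inside $a_n$; settling for $d(\langle a_n\rangle)$ is useless, since that can exceed $\overline d(\langle a_n\rangle)$. Suppose $a_n\to a$ in norm and fix $\epsilon>0$. For $n$ large enough that $\|a-a_n\|<\epsilon/2$, Theorem~\ref{Kirchberg-Rordam} produces a contraction $w$ with $(a-\tfrac{\epsilon}{2})_+=wa_nw^*$, so $(a-\tfrac{\epsilon}{2})_+\precsim a_n$. Feeding this into Proposition~\ref{basics} ((i)$\Rightarrow$(iii)) with cut parameter $\epsilon/2$ yields a $\delta>0$ such that $\big((a-\tfrac{\epsilon}{2})_+-\tfrac{\epsilon}{2}\big)_+\precsim(a_n-\delta)_+$; by the identity $((a-\epsilon_1)_+-\epsilon_2)_+=(a-(\epsilon_1+\epsilon_2))_+$ the left side is exactly $(a-\epsilon)_+$. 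Hence $d(\langle(a-\epsilon)_+\rangle)\le d(\langle(a_n-\delta)_+\rangle)\le\overline d(\langle a_n\rangle)$ for all large $n$, so $d(\langle(a-\epsilon)_+\rangle)\le\liminf_n\overline d(\langle a_n\rangle)$; taking the supremum over $\epsilon>0$ gives $\overline d(\langle a\rangle)\le\liminf_n\overline d(\langle a_n\rangle)$.

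Finally, if $d\in\mathrm{LDF}(A)$ I would obtain the reverse inequality $d\le\overline d$ by applying lower semicontinuity of $d$ to any sequence $\epsilon_n\downarrow0$: since $(a-\epsilon_n)_+\to a$ in norm, $d(\langle a\rangle)\le\liminf_n d(\langle(a-\epsilon_n)_+\rangle)\le\sup_{\epsilon>0}d(\langle(a-\epsilon)_+\rangle)=\overline d(\langle a\rangle)$. Combined with $\overline d\le d$ this yields $\overline d=d$, completing the argument.
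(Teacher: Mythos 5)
Your proof is correct and follows essentially the same route as the paper's: monotonicity of $\overline d$ via Proposition~\ref{basics} (i)$\Leftrightarrow$(iii), the bound $\overline d\le d$ from $(a-\epsilon)_+\precsim a$, lower semicontinuity by first getting $(a-\tfrac{\epsilon}{2})_+\precsim a_n$ for large $n$ and then cutting down to $(a-\epsilon)_+\precsim (a_n-\delta_n)_+$, and equality in the lower semicontinuous case by applying lower semicontinuity of $d$ to $(a-\epsilon)_+\to a$. The only difference is that you spell out the additivity and normalization of $\overline d$ (via $(a\oplus b-\epsilon)_+=(a-\epsilon)_+\oplus(b-\epsilon)_+$ and $(1-\epsilon)_+\sim 1$), which the paper dismisses as "easily follows."
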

\begin{proof}
Let us show that $a\precsim b$ implies $\overline{d}(\langle
a\rangle)\leq \overline{d}(\langle b\rangle)$, from which it easily
follows that $\overline{d}\in\mathrm{DF}(A)$. Given $\epsilon>0$,
choose $\delta>0$ with $(a-\epsilon)_+\precsim (b-\delta)_+$, whence
\[
d(\langle(a-\epsilon)_+\rangle)\leq
d(\langle(b-\delta)_+\rangle)\leq\overline{d}(\langle b\rangle)\,.
\]
Since $\epsilon$ is arbitrary, $\overline{d}(\langle a\rangle)\leq
\overline{d}(\langle b\rangle)$.

Next, if $a_n\to a$ in norm and $\epsilon>0$ is given, then for
$n\geq n_0$ we have $\left(a-\frac{\epsilon}{2}\right)_+\precsim
a_n$, and so there is a sequence $\delta_n>0$ such that
$(a-\epsilon)_+\precsim (a_n-\delta_n)_+$. This implies
\[
d(\langle(a-\epsilon)_+\rangle)\leq d(\langle
(a_n-\delta_n)_+\rangle)\leq \overline{d}(\langle a_n\rangle)\,,
\]
for $n\geq n_0$, whence $\overline{d}\in \mathrm{LDF}(A)$.

That $\overline{d}\leq d$ follows because $(a-\epsilon)_+\leq a$ for
all $\epsilon>0$. Finally, if $d\in\mathrm{LDF}(A)$, then since
$(a-\epsilon)_+\to a$ in norm as $\epsilon\to 0$, we have
\[
d(\langle a\rangle)\leq \liminf d(\langle
(a-\epsilon)_+\rangle)=\sup d(\langle
(a-\epsilon)_+\rangle)=\overline{d}(\langle a\rangle)\,.
\]
\end{proof}
\begin{lemma}
\label{cancelstfinite} Let $A$ be a stably finite, unital {\rm
C}$^*$-algebra. Then, $1\oplus a\not\precsim a$.
\end{lemma}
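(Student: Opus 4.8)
The plan is to assume $1\oplus a\precsim a$ and contradict direct finiteness of a suitable matrix amplification $M_k(A)$. Fix $m$ with $a\in M_m(A)_+$, so $1\oplus a\in M_{m+1}(A)_+$, and write $1^{\oplus n}$ for the unit $1_{M_n(A)}$ regarded as a projection in $M_\infty(A)$. The essential observation is that the hypothesis is ``self-reproducing'': it forces arbitrarily many copies of the unit to be dominated by $a$ in the Cuntz order.

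First I would record that Cuntz subequivalence is additive under orthogonal sums: if $x\precsim y$, then $1\oplus x\precsim 1\oplus y$, since whenever $(v_k)$ implements $x\precsim y$ the block-diagonal elements $\mathrm{diag}(1,v_k)$ implement $1\oplus x\precsim 1\oplus y$. Combining this with transitivity lets me bootstrap the hypothesis by induction on $n$. The case $n=1$ is the hypothesis; and if $1^{\oplus n}\oplus a\precsim a$, then $1^{\oplus(n+1)}\oplus a=1\oplus(1^{\oplus n}\oplus a)\precsim 1\oplus a\precsim a$. Hence $1^{\oplus n}\oplus a\precsim a$ for every $n$, and since $1^{\oplus n}\le 1^{\oplus n}\oplus a$, Lemma~\ref{lem:leq} yields $1^{\oplus n}\precsim a$ for all $n\in\mathbb{N}$.

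Next I would convert this into a Murray--von Neumann statement about projections via Lemma~\ref{tonteria}. Applying it to the projection $p=1^{\oplus(m+1)}$ and the relation $p\precsim a$ produces a projection $q$ with $1^{\oplus(m+1)}\sim q$ and $q\le\lambda a$ for some $\lambda>0$. The crucial consequence of $q\le\lambda a$ is that $q$ lies in the hereditary subalgebra of $M_{m+1}(A)$ generated by $a$, hence in the corner $M_m(A)$, so automatically $q\le 1_{M_m(A)}=1^{\oplus m}\oplus 0$. Thus there is a partial isometry $v\in M_{m+1}(A)$ with $vv^*=1_{M_{m+1}(A)}$ and $v^*v=q\le 1^{\oplus m}\oplus 0$. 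Since $1_{M_{m+1}(A)}-(1^{\oplus m}\oplus 0)=0^{\oplus m}\oplus 1\ne 0$, we get $vv^*=1$ while $v^*v\ne 1$ in $M_{m+1}(A)$, contradicting direct finiteness of $M_{m+1}(A)$ and therefore stable finiteness of $A$.

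The only genuine obstacle is the passage from the positive-element inequality $1^{\oplus(m+1)}\precsim a$ to an honest Murray--von Neumann subequivalence of projections of incompatible sizes; this is precisely what Lemma~\ref{tonteria} delivers, the decisive feature being that the projection $q$ it produces sits below a scalar multiple of $a$ and hence inside $M_m(A)$, where it cannot escape being dominated by $1^{\oplus m}$. Everything else in the argument is formal bookkeeping with the direct-sum structure.
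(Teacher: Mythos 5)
Your argument is correct, and it takes a genuinely different route from the paper's. The paper defines counting functions $s_n(a)=\sup\{s\in\mathbb{N}\cup\{0\}\mid s\langle 1\rangle\leq n\langle a\rangle\}$, proves they are monotone, superadditive, and finite (finiteness being exactly where stable finiteness enters, via an inline projection extraction), and then derives $s_n(1)+s_n(a)\leq s_n(1\oplus a)\leq s_n(a)$ from the hypothesis, forcing $s_n(1)=0$, which is absurd. You instead iterate the hypothesis to accumulate units, getting $1^{\oplus n}\precsim a$ for all $n$, and then apply Lemma~\ref{tonteria} once, with $n=m+1$ and $a\in M_m(A)_+$, to land a projection Murray--von Neumann equivalent to $1^{\oplus(m+1)}$ below $\lambda a$, hence inside the corner $M_m(A)$ and below $1^{\oplus m}$, which produces a non-unitary isometry in $M_{m+1}(A)$. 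Both proofs bottom out in the same pigeonhole fact --- a stably finite algebra cannot contain a projection equivalent to $t+1$ copies of the unit inside $M_t(A)$ --- and your appeal to Lemma~\ref{tonteria} packages exactly the projection construction the paper performs by hand ($p=(a-\delta)_+^{1/2}x^*x(a-\delta)_+^{1/2}$). What your route buys is economy: the induction $1^{\oplus(n+1)}\oplus a=1\oplus(1^{\oplus n}\oplus a)\precsim 1\oplus a\precsim a$ together with Lemma~\ref{lem:leq} replaces all the counting-function bookkeeping, and the contradiction is reached using lemmas already proved. What the paper's route buys is that the hypothesis is used only once rather than iterated, and the quantity $s_n$ is a first step toward the dimension functions of the Cuntz--Handelman theorem that this lemma feeds into. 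The only step you leave implicit is routine: to apply Lemma~\ref{tonteria} inside $M_{m+1}(A)$ one should note that Cuntz subequivalence between elements of the corner $M_{m+1}(A)$ can be implemented by elements of that corner (compress the implementing sequence by the corner's unit, using that $a$ and $1^{\oplus(m+1)}$ are fixed by that compression); this is harmless bookkeeping of the kind the paper itself suppresses.
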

\begin{proof}
For each $n\in\mathbb{N}\cup\{0\}$, and $a\in M_{\infty}(A)_+$
define
\[
s_n(a)=\sup\{s\in\mathbb{N}\cup\{0\}\mid s\langle 1\rangle\leq
n\langle a\rangle\}\,.
\]
Note that if $a\precsim b$, then $s_n(a)\leq s_n(b)$, and also
$s_n(a)+s_n(b)\leq s_n(a\oplus b)$. For the latter assertion, note
that if $s\langle 1\rangle\leq n\langle a\rangle$ and $t\langle
1\rangle\leq n\langle b\rangle$, then
\[
(s+t)\langle 1\rangle\leq n\langle a\rangle+n\langle
b\rangle=n\langle a\oplus b\rangle\,.
\]
Observe that $s_n(a)<\infty$. To see this, suppose that $n\cdot a\in
M_t(A)$ for some $t$. Then $(t+1)\langle 1\rangle\nleq n\langle
a\rangle$. Otherwise, given $0<\epsilon<1$, find $\delta>0$ and
$x\in M_{\infty}(A)$ such that $(t+1)\cdot 1=x(a-\delta)_+x^*$. Then
$p:=(a-\delta)_+^{\frac{1}{2}}x^*x(a-\delta)_+^{\frac{1}{2}}$ is a
projection in $M_t(A)$, equivalent to $(t+1)\cdot 1$. Thus
$(t+1)\cdot 1\precsim t\cdot 1$, which contradicts the stable
finiteness of $A$.

Now, if $1\oplus a\precsim a$, we have
\[
s_n(1)+s_n(a)\leq s_n(1\oplus a)\leq s_n(a)\,,
\]
whence $s_n(1)=0$, which is clearly impossible.
\end{proof}

The following is in \cite{Cu} and \cite{han}.

\begin{theorem}
{\rm (Cuntz, Handelman)} A (unital) stably finite {\rm
C}$^*$-algebra $A$ has a dimension function and, if $A$ is simple,
then the  converse also holds.
\end{theorem}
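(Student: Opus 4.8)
The plan is to prove the two implications separately. The forward direction (stable finiteness yields a dimension function) is the substantial one: I would manufacture a state on $\W(A)$ by passing to the enveloping ordered group $\mathrm{K}_0^*(A)=G(\W(A))$ and invoking the existence of states on ordered abelian groups with order unit, the point being that stable finiteness --- in the precise form of Lemma~\ref{cancelstfinite} --- is exactly what guarantees that the order unit $\langle 1\rangle$ is non-degenerate. The converse (for simple $A$) is a short contradiction argument exploiting that a dimension function is additive and normalised, together with the fact that in a simple unital algebra every non-zero positive element dominates $\langle 1\rangle$ after finitely many copies.

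For the forward direction, first observe that $\langle 1\rangle$ is an order unit for $\W(A)$: if $a\in M_k(A)_+$ then $a\leq \Vert a\Vert\, 1_k$, so by Lemma~\ref{lem:leq} $\langle a\rangle\leq k\langle 1\rangle$. Since Cuntz equivalence makes the Cuntz order antisymmetric, $\W(A)$ is genuinely partially ordered, and hence so is $(\mathrm{K}_0^*(A),G(\W(A))^{++})$ by the Lemma preceding Proposition~\ref{k*struc}; moreover $[1]=\gamma(\langle 1\rangle)$ is an order unit of this group. The crucial input supplied by stable finiteness is that $[1]\neq 0$: indeed $[1]=0$ would mean $\langle 1\rangle+\langle z\rangle=\langle z\rangle$ for some $z$, i.e. $1\oplus z\sim z$, contradicting Lemma~\ref{cancelstfinite}. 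Concretely, the same Lemma yields the non-degeneracy $n[1]\in G(\W(A))^{++}\Rightarrow n\geq 0$: writing $n=-k<0$ and unwinding the definition of $G^{++}$ produces $e\in\W(A)$ with $e+k\langle 1\rangle\leq e$, whence $1\oplus e\precsim e$, which is forbidden. Thus $(\mathrm{K}_0^*(A),[1])$ is a non-zero partially ordered abelian group with order unit, so by the existence theorem for states on such groups (see \cite{bh}, \cite{han}) it carries a state $s$. Composing $s$ with the Grothendieck map $\gamma\colon\W(A)\to\mathrm{K}_0^*(A)$ gives an additive, order-preserving map with $s(\gamma(\langle 1\rangle))=1$, that is, an element of $\mathrm{DF}(A)$.

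For the converse, let $A$ be simple and unital with a dimension function $d\in\mathrm{DF}(A)$, and suppose towards a contradiction that $A$ is not stably finite. Then some $M_n(A)$ fails to be directly finite, so there is $x\in M_n(A)$ with $xx^*=1_n$ and $p:=x^*x\neq 1_n$; here $p$ is a projection Murray--von Neumann equivalent to $1_n$, and $q:=1_n-p$ is a non-zero projection orthogonal to $p$. By Lemma~\ref{Cuntz} and the coincidence of Murray--von Neumann and Cuntz classes on projections,
\[
\langle 1_n\rangle=\langle p\oplus q\rangle=\langle p\rangle+\langle q\rangle=\langle 1_n\rangle+\langle q\rangle\,.
\]
Applying the additive state $d$ and cancelling the finite real number $d(\langle 1_n\rangle)$ forces $d(\langle q\rangle)=0$. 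Now simplicity enters: since $q\neq 0$, there is $m\in\mathbb{N}$ with $\langle 1\rangle\leq m\langle q\rangle$, as in a simple unital C$^*$-algebra every non-zero positive element dominates the unit after finitely many copies. Applying $d$ once more yields $1=d(\langle 1\rangle)\leq m\,d(\langle q\rangle)=0$, a contradiction. Hence $A$ is stably finite.

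The main obstacle is the forward direction, and within it the clean extraction of a state from the ordered group: one must check carefully that stable finiteness forces $[1]$ to be a genuine non-zero order unit (the computation $n[1]\in G^{++}\Rightarrow n\geq 0$ above) before the ordered-group state-existence machinery can be applied. A secondary point needing justification is the comparison fact $\langle 1\rangle\leq m\langle q\rangle$ for non-zero $q$ used in the converse; this is standard but genuinely consumes the hypothesis that $A$ is simple, which is why the converse is stated only in the simple case.
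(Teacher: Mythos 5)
Your proof is correct. The forward direction is essentially the paper's own argument: both reduce the existence of a dimension function to the single fact that $[1]\neq 0$ in the partially ordered group $(\mathrm{K}_0^*(A),G(\W(A))^{++})$, deduce this from Lemma~\ref{cancelstfinite}, and then quote the state-existence theorem for nonzero partially ordered abelian groups with order unit; your extra verification that $n[1]\in G(\W(A))^{++}$ forces $n\geq 0$ is a correct (if strictly redundant) re-derivation of what that theorem handles internally. Where you genuinely diverge is the converse. The paper first regularizes the given dimension function to a lower semicontinuous one (Proposition~\ref{rordam}), invokes the Blackadar--Handelman correspondence (Theorem~\ref{blackhand}) to produce a quasitrace $\tau$, and then uses faithfulness of quasitraces on simple algebras: $xx^*=1$ gives $\tau(1-x^*x)=0$, hence $x^*x=1$. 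Your argument never leaves the Cuntz semigroup: a non-unitary co-isometry yields a nonzero projection $q$ with $\langle 1_n\rangle=\langle 1_n\rangle+\langle q\rangle$, additivity of $d$ forces $d(\langle q\rangle)=0$, and this contradicts the fullness fact $\langle 1\rangle\leq m\langle q\rangle$ valid in simple unital algebras. Your route is more elementary, avoiding the measure-theoretic machinery behind Theorem~\ref{blackhand} entirely, at the cost of invoking the fullness/order-unit property of nonzero classes --- a standard comparison fact which the paper itself uses elsewhere (in the lemma relating almost unperforation to strict comparison) but never proves; the paper's route is heavier but yields strictly more information, namely that the dimension function can be taken to arise from a quasitrace, which is what gets reused later in the text.
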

\begin{proof}
If $A$ is stably finite and unital, in order to check that it has a
dimension function, we only need to see that
$(\mathrm{K}_0^*(A),[1])$ is a non-zero group, since then, being
partially ordered, will have a state (see~\cite[Corollary
4.4]{poag}). Thus it is enough to see that $[1]\neq 0$ in
$\mathrm{K}_0^*(A)$.

If $[1]=0$, then there is $a\in M_{\infty}(A)_+$ such that $1\oplus
a\sim a$, which is impossible by Lemma~\ref{cancelstfinite}.

Now assume that $A$ is simple and has a dimension function. Then, by
Proposition~\ref{rordam}, it has a lower semicontinuous dimension
function, which comes from a quasitrace $\tau$ by
Theorem~\ref{blackhand}. Now, if $x\in M_n(A)$ and $xx^*=1$, then
$\tau(1-x^*x)=n-\tau(x^*x)=0$, so that $x^*x=1$.
\end{proof}

%Let $a,b \in A_+ \backslash \{0\}$. It is clear that if $a\precsim
%b$, then for any dimension function $d$ we have $d(a)\leq d(b)$.

%\begin{openproblem}
%\label{fcq} The question of whether $d(a)<d(b)$ for any lower
%semicontinuous dimension function implies that $a\precsim b$ is
%known as the \emph{Fundamental Comparability Question} for
%positive elements (FCQ+) (see~\cite{blsur},~\cite{perijm}).
%\end{openproblem}

%Villadsen gave the first example of a simple nuclear $C^*$-algebra
%for which (FCQ+) fails (\cite{V1}).  In his example the positive
%elements $a$ and $b$ are projections. (FCQ+) may hold for all
%pairs of projections, yet fail in general (\cite{T2}). If (FCQ+)
%holds in a $C^*$-algebra $A$, we will say that $A$ has
%\emph{strict comparison of positive elements}.

\subsection{Some examples}

The computation of the Cuntz semigroup is in general a very
difficult task. Already in the commutative case it becomes a
monstruous object. In this section we work out examples in which its
structure is determined by the projection semigroup.

\subsubsection{Finite dimensional algebras}

This constitutes the only example where the Cuntz semigroup does not
yield, strictly speaking, any more information than the projection
monoid does. It is based on the following well known result.

\begin{lemma}
Let $A$ be a {\rm C}$^*$-algebra. Then $A$ is infinite dimensional
if and only if there is a purely positive element.
\end{lemma}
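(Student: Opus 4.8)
The plan is to prove the two implications separately, in each case translating the condition ``purely positive'' into the spectral condition that $0$ is a non-isolated point of the spectrum. The guiding principle, already visible in Proposition~\ref{prop:equivproj}, is that $\langle a\rangle$ fails to be the class of a projection exactly when $0$ is a non-isolated point of $\sigma(a)$: one half of this equivalence (the ``support projection'' construction) is elementary and available in full generality, while the other half carries the real content.

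For the easy implication I would argue the contrapositive: if $A$ is finite dimensional, then no purely positive element exists. Any $a\in A_+$ then has finite spectrum, so $0$ is either absent from or isolated in $\sigma(a)$, and the spectral projection $p=\chi_{(0,\infty)}(a)$ belongs to $C^*(a)$ and satisfies $\mathrm{supp}(a)=\mathrm{supp}(p)$ inside the commutative algebra $C^*(a)$. Hence $a\sim p$ by Proposition~\ref{prop:commutative}, so every class in $\W(A)$ is the class of a projection, i.e. $\W(A)_+=\emptyset$. Equivalently, the existence of a purely positive element forces $A$ to be infinite dimensional.

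For the substantive implication I would start from the structural fact that an infinite-dimensional C$^*$-algebra contains an infinite sequence $(a_n)$ of pairwise orthogonal nonzero positive elements; normalising so that $\Vert a_n\Vert=1$ and setting $c=\sum_{n\geq 1}2^{-n}a_n$ (a norm-convergent sum) produces a positive element whose spectrum contains the points $2^{-n}$ and therefore has $0$ as a non-isolated point. It then remains to check that $c$ is purely positive, i.e. that $\langle c\rangle$ is not the class of any projection, and here I would follow the scheme of Proposition~\ref{prop:equivproj}: assuming $c\sim p$ for a projection $p$, Lemma~\ref{tonteria} yields $\delta>0$ and a projection $q\sim p$ with $f_{\delta}(c)q=q$, and Proposition~\ref{basics} lets one transport a cut-down $(c-\delta')_+$ into the hereditary algebra of $q$; comparing $q$ with $f_{\delta}(c)$, which (for a suitable choice of $\delta$, since $0$ is non-isolated) is genuinely not a projection, produces a proper subprojection strictly below but equivalent to $q$, contradicting the relevant finiteness.

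The main obstacle is twofold. First, the structural input---that infinite dimensionality yields an infinite orthogonal family of nonzero positive elements (equivalently, a self-adjoint element of infinite spectrum)---is the real engine of the forward direction; the cleanest route is to note that if no such infinite family existed then every self-adjoint element would have finite spectrum and $A$ would collapse to a finite direct sum of matrix algebras. Second, and more delicate, is ruling out Cuntz equivalence of $c$ to a projection: the support-containment criterion of Proposition~\ref{prop:commutative} is valid only inside $C^*(c)$, whereas Cuntz comparison takes place in all of $A$, so the argument must route through the finiteness-type input underlying Proposition~\ref{prop:equivproj} rather than through the spectrum of $c$ alone.
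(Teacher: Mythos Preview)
Your overall strategy matches the paper's: both reduce the question to the dichotomy between positive elements whose spectrum has $0$ isolated (or absent)---hence Cuntz equivalent to their support projection via Proposition~\ref{prop:commutative}---and positive elements with $0$ as a non-isolated spectral point. For the forward direction the paper's construction is slightly slicker than yours: rather than assembling an orthogonal sum $c=\sum 2^{-n}a_n$, the paper takes any element $a$ with infinite spectrum (whose existence is the standard equivalent of infinite-dimensionality), chooses an accumulation point $x\in\sigma(a)$, and sets $b=f(a)$ for a continuous $f\geq 0$ vanishing exactly at $x$. Both routes manufacture a positive element with $0$ as an accumulation point of its spectrum; the functional-calculus version is shorter and avoids having to first extract an orthogonal family.

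The obstacle you flag in your last paragraph is genuine, and the paper simply glosses over it: having produced such a $b$, the paper asserts without argument that $b$ is purely positive. This is exactly the implication (i)$\Rightarrow$(ii) of Proposition~\ref{prop:equivproj}, whose proof relies on stable rank one (via stable finiteness, to rule out a projection strictly below but equivalent to itself). Without some finiteness hypothesis the step fails, and in fact the lemma is false as stated: in any unital purely infinite simple algebra (say $\mathcal{O}_2$) every nonzero positive element is Cuntz equivalent to $1$ by the Lin--Zhang result quoted just below, so $\W(A)_+=\emptyset$ even though $A$ is infinite-dimensional. Your instinct that ``the argument must route through the finiteness-type input underlying Proposition~\ref{prop:equivproj}'' is therefore exactly right; what you cannot do is complete the proof in the stated generality, because the statement itself needs a hypothesis such as stable finiteness or stable rank one to be true.
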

\begin{proof}
This follows from the fact that $A$ is infinite dimensional if and
only if there is an element with infinite spectrum.

Now, a purely positive element must have infinite spectrum since
otherwise it would be equivalent to some projection (in matrices
over $A$). Conversely, if $a$ has infinite spectrum, choose an
accumulation point $x \in \sigma(a)$.  Let $f$ be a continuous
function on $\sigma(a)$ such that $f(t)$ is nonzero if and only if
$t \neq x$. Then, $f(a)$ is positive and has zero as an accumulation
point of its spectrum, whence $f(a)$ is thus purely positive.
\end{proof}

\begin{proposition}
$A$ is a finite dimensional algebra if and only if $\W(A)=\V(A)$.
\end{proposition}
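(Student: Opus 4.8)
The equality $\W(A)=\V(A)$ is to be read as the assertion that the canonical monoid map $\V(A)\to\W(A)$, $[p]\mapsto\langle p\rangle$, is an isomorphism of ordered semigroups. The plan is to push everything back onto the preceding Lemma, which characterises finite dimensionality by the absence of purely positive elements, after observing that surjectivity of this canonical map is \emph{exactly} the statement $\W(A)_+=\emptyset$: a class $\langle a\rangle$ lies in the image precisely when $a$ is Cuntz equivalent to a projection, i.e. when $\langle a\rangle\notin\W(A)_+$.

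For the implication ``$\W(A)=\V(A)\implies A$ finite dimensional'' I would use only surjectivity. If the canonical map is onto, then every class of $\W(A)$ is the class of a projection, so $\W(A)_+=\emptyset$ and there is no purely positive element in $A$. By the preceding Lemma this forces $A$ to be finite dimensional. For the converse, suppose $A$ is finite dimensional. Then each amplification $M_n(A)$ is again finite dimensional, so applying the preceding Lemma to $M_n(A)$ shows that $M_n(A)$ carries no purely positive element; hence every positive element of $M_\infty(A)$ is Cuntz equivalent to a projection, which is precisely surjectivity of $\V(A)\to\W(A)$. For injectivity I would invoke that a finite dimensional C$^*$-algebra is stably finite---direct finiteness of each $M_k(A)$ is immediate from a rank count---so the earlier injectivity lemma for stably finite algebras applies. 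Combining surjectivity and injectivity yields that the canonical map is an isomorphism, i.e. $\W(A)=\V(A)$.

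There is no genuine obstacle here, since the mathematical content is entirely absorbed by the preceding Lemma; the proof is essentially a contrapositive of it dressed up through the canonical map. The two points that deserve a line of care are: (a) the Lemma must be applied to the matrix amplifications $M_n(A)$, and not merely to $A$ itself, in order to realise \emph{every} class of $\W(A)$ by a projection; and (b) one must record that finite dimensional algebras are stably finite so that the injectivity half of the isomorphism is available. Both are routine, and neither requires the stable rank one hypotheses used elsewhere in this section.
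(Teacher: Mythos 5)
Your proof is correct and takes essentially the same route as the paper: the paper's own argument is just ``the previous lemma plus the decomposition $\W(A)=\V(A)\sqcup\W(A)_+$, with $\W(A)_+=\emptyset$,'' and your write-up merely makes explicit the two details the paper leaves implicit, namely that the lemma must be applied to the amplifications $M_n(A)$ and that injectivity of $\V(A)\to\W(A)$ comes from stable finiteness of finite dimensional algebras.
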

\begin{proof}
This follows from the previous lemma and the fact that, in this
case, $\W(A)=\V(A)\sqcup\W(A)_+$, and $\W(A)_+=\emptyset$.
\end{proof}

\subsubsection{Purely infinite simple algebras}

\begin{definition}
A (unital) simple {\rm C}$^*$-algebra $A$ is termed purely infinite
if $A$ is infinite dimensional and for any non-zero element $a$ in
$A$, there are $x$, and $y$ in $A$ with $xay=1$.
\end{definition}

\begin{theorem}
{\rm (Lin, Zhang, \cite{linz})} $A$ is simple and purely infinite if and only if
$a\precsim b$ for any (non-zero) $a$ and $b$.
\end{theorem}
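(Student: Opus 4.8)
The plan is to prove both implications by reducing everything to the position of the unit $1_A$ in the Cuntz order, reading $a,b$ as non-zero positive elements of $M_\infty(A)$ with $A$ unital. The whole forward direction hinges on the single estimate that, in a purely infinite simple algebra, $1_A \precsim b$ for every non-zero $b \in A_+$. To get this I would apply the defining property to $b$, producing $x,y$ with $xby = 1$; then from $1 = (xby)^*(xby) = y^* b x^* x b y \le \|x\|^2\, y^* b^2 y$ one sees that $y^* b^2 y$ is bounded below, hence invertible, so setting $w = y (y^* b^2 y)^{-1/2}$ gives $w^* b^2 w = 1$, i.e. $bw$ is an isometry. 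Then $v := w^* b^{1/2}$ satisfies $v b v^* = w^* b^2 w = 1$, so $1_A \precsim b$. Since conversely $a \le \|a\|\,1_A$ forces $a \precsim 1_A$ by Lemma~\ref{lem:leq}, the case $a,b \in A_+$ with $b \neq 0$ is already settled: $a \precsim 1_A \precsim b$.

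The remaining work in this direction is the matrix amplification, and this is where the main obstacle lies: one must show $1_A^{\oplus k} \precsim 1_A$, i.e. that $1_A$ is properly infinite. My idea is to manufacture orthogonal isometries using orthogonality of positive elements. As $A$ is infinite-dimensional it contains $b_1 \perp b_2$, both non-zero and positive (take a self-adjoint element of infinite spectrum and two functions of it with disjoint supports). Running the construction above on each $b_i$ gives isometries $s_i = b_i w_i$, and because $b_1 b_2 = 0$ one gets $s_1^* s_2 = w_1^* b_1 b_2 w_2 = 0$; hence $s_1 s_1^*$ and $s_2 s_2^*$ are orthogonal projections, each Cuntz equivalent to $1_A$. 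By Lemma~\ref{Cuntz} this yields $1_A \oplus 1_A \sim s_1 s_1^* \oplus s_2 s_2^* \sim s_1 s_1^* + s_2 s_2^* \le 1_A$, so $1_A \oplus 1_A \precsim 1_A$ and inductively $1_A^{\oplus k} \precsim 1_A$.

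To close the forward direction I would note that any non-zero $b \in M_m(A)_+$ has a non-zero diagonal corner $b_{jj} \in A_+$ (a positive matrix with all diagonal entries zero is zero), so $1_A \precsim b_{jj} \precsim b$; combining with the amplification, any $a \in M_k(A)_+$ satisfies $a \precsim 1_A^{\oplus k} \precsim 1_A \precsim b$, which is what we want.

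For the converse, assume $a \precsim b$ for all non-zero $a,b \in M_\infty(A)_+$. Simplicity is immediate: if $I$ is a non-zero ideal, pick $0 \ne a \in I_+$; then any $0 \ne b \in A_+$ satisfies $b \precsim a$, so $b$ is a norm-limit of elements $v_n a v_n^* \in I$, forcing $b \in I$ and hence $I = A$. For infinite-dimensionality, the hypothesis gives $1_A \oplus 1_A \precsim 1_A$, whereas a finite-dimensional $A$ would be stably finite and Lemma~\ref{cancelstfinite} would yield $1_A \oplus 1_A \not\precsim 1_A$, a contradiction. Finally, for pure infiniteness take $0 \ne a \in A$; then $a^*a \ne 0$ and the hypothesis $1_A \precsim a^*a$ supplies $v$ with $\|v a^* a v^* - 1\| < 1$, so $c := v a^* a v^*$ is positive and invertible, whence $x a y = 1$ with $x = c^{-1} v a^*$ and $y = v^*$. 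Thus $A$ is simple and purely infinite. The only genuinely delicate point in the entire argument is the proper-infiniteness step; everything else is bookkeeping with the elementary lemmas already proved above.
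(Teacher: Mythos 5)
The paper never actually proves this theorem: it is quoted from Lin and Zhang \cite{linz} and used as a black box, so there is no in-paper argument to compare yours against. Your proof is correct, and its value is precisely that it makes this step of the survey self-contained, using only lemmas already established earlier in the text. In the forward direction you correctly isolate the two genuine issues: first, that $1_A\precsim b$ for every non-zero $b\in A_+$, which your construction settles exactly (from $1=y^*bx^*xby\le\|x\|^2\,y^*b^2y$ the element $y^*b^2y$ is invertible, and then $v=w^*b^{1/2}$ with $w=y(y^*b^2y)^{-1/2}$ gives $vbv^*=1$ with no approximation needed); and second, proper infiniteness of the unit, where running the same construction on two orthogonal non-zero positive elements $b_1\perp b_2$ (available because infinite-dimensionality yields a self-adjoint element with infinite spectrum, the same fact invoked in the paper's lemma on finite-dimensional algebras) produces isometries with $s_1^*s_2=0$, so that Lemma \ref{Cuntz} and Lemma \ref{lem:leq} give $1_A\oplus 1_A\sim s_1s_1^*+s_2s_2^*\precsim 1_A$. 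The reduction of arbitrary $a,b\in M_\infty(A)_+$ to the unital case via a non-zero diagonal corner $b_{jj}=r_jbr_j^*$ is also sound. The converse is correct as written: simplicity because closed ideals absorb $\precsim$-limits, infinite-dimensionality via Lemma \ref{cancelstfinite} applied with $a=1_A$, and pure infiniteness by inverting $c=va^*av^*$ once $\|c-1\|<1$. Two points you leave implicit and should record explicitly: $a\precsim\|a\|1_A\sim 1_A$ uses that positive scalar multiples are Cuntz equivalent, and the induction $1_A^{\oplus(k+1)}\precsim 1_A\oplus 1_A\precsim 1_A$ uses that $\precsim$ is compatible with $\oplus$; both are one-line verifications, but they are doing work in your bookkeeping.
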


\begin{proposition}
If $A$ is a purely infinite simple algebra, then
$\W(A)=\{0,\infty\}$, with $\infty+\infty=\infty$.
\end{proposition}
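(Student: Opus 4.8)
The plan is to read off the entire semigroup from the Lin--Zhang characterization stated just above, which says that in a purely infinite simple C$^*$-algebra any two non-zero positive elements are mutually Cuntz subequivalent. The one preliminary point is that $\W(A)$ is built from $M_{\infty}(A)_+$, so I first need this comparison not merely inside $A$ but across all the matrix amplifications $M_n(A)$. I would invoke the standard permanence fact that pure infiniteness and simplicity pass to matrix algebras, so that each $M_n(A)$ is again purely infinite and simple. Granting this, the preceding theorem gives $a\precsim b$ and $b\precsim a$ for any two non-zero $a,b\in M_n(A)_+$, hence $a\sim b$.

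Next I would handle elements living in different matrix sizes. Given non-zero $a\in M_m(A)_+$ and $b\in M_n(A)_+$, both sit, via the usual upper-left corner embeddings, as non-zero positive elements of $M_k(A)$ with $k=\max\{m,n\}$; since subequivalence inside $M_k(A)$ is in particular subequivalence in $M_{\infty}(A)$, we obtain $a\sim b$. Thus every non-zero element of $M_{\infty}(A)_+$ represents one and the same Cuntz class, which I denote $\infty$, while $0$ represents the class $0$. These two classes are genuinely distinct: $a\precsim 0$ means $\Vert v_n 0 v_n^*-a\Vert=\Vert a\Vert\to 0$, forcing $a=0$, so no non-zero element is Cuntz equivalent to $0$. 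Hence $\W(A)=\{0,\infty\}$ as a set, with $0\leq\infty$.

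Finally I would verify the additive structure. For the class $\infty$ represented by some non-zero $a$, the sum $\infty+\infty$ is $\langle a\oplus a\rangle$; since $a\oplus a$ is again a non-zero positive element of $M_{\infty}(A)$, its class is $\infty$, so $\infty+\infty=\infty$ (the relations $0+0=0$ and $0+\infty=\infty$ being automatic). The only step that is not entirely formal is the permanence of pure infiniteness under matrix amplification; everything else is direct bookkeeping of the Lin--Zhang dichotomy. If one prefers to avoid quoting that permanence result, an alternative is to prove the cross-size comparison $a\precsim b$ directly from simplicity and the defining property $xay=1$, but reducing to a common $M_n(A)$ and citing the stated theorem is the cleanest route, and I expect the matrix-stability remark to be the main thing worth spelling out.
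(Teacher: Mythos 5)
Your proof is correct and follows essentially the same route as the paper, which simply cites the Lin--Zhang characterization to conclude that all non-zero elements share a single Cuntz class $\infty$. The only difference is that you carefully justify the passage from $A$ to $M_\infty(A)_+$ (via matrix stability of pure infiniteness and corner embeddings), a point the paper's one-line proof leaves implicit; this is a reasonable detail to spell out but does not constitute a different approach.
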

\begin{proof}
By the above theorem, any two non-zero elements are Cuntz
equivalent, hence they define a single class $\infty$.
\end{proof}

\subsubsection{Algebras with real rank zero and stable rank one}

We first give the original definition, due to Brown and Pedersen
(\cite{bp}), of what is meant for a C$^*$-algebra to have real rank
zero.

\begin{definition}
%{\rm (Brown-Pedersen, \cite{bp})}
A {\rm C}$^*$-algebra $A$ has real rank zero if every self-adjoint
element can be approximated arbitrarily well by self-adjoint,
invertible elements.
\end{definition}

This definition, denoted in symbols as $RR(A)=0$, is the lowest
instance of the so-called real rank and captures the dimension of
the underlying space in the commutative case. In other words,
$RR(C(X))=\dim (X)$. Brown and Pedersen showed that the real rank
zero condition is equivalent to a number of other conditions that
ensure projections on demand.

\begin{theorem}
{\rm (\cite[Theorem 2.6]{bp})} For a {\rm C}$^*$-algebra $A$, the
following are equivalent:
\begin{enumerate}[{\rm (i)}]
\item $RR(A)=0$; \item (FS) Elements with finite spectrum are
dense; \item (HP) Every hereditary subalgebra has an approximate
unit consisting of projections.
\end{enumerate}
\end {theorem}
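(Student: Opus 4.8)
The plan is to prove the cycle of implications (i) $\implies$ (ii) $\implies$ (iii) $\implies$ (i), working in the unitization $\widetilde{A}$ wherever invertibility or spectral projections are needed. The two recurring tools are the continuous functional calculus and the following elementary observation: if a self-adjoint element $x$ satisfies $\lambda \notin \sigma(x)$, then the characteristic function $\chi_{(\lambda,\infty)}$ is continuous on $\sigma(x)$, so the spectral projection $\chi_{(\lambda,\infty)}(x)$ lies in $C^*(x,1)$. I will also record early that property (ii) passes to hereditary subalgebras $B=\overline{cAc}$ (a short functional-calculus argument: a finite-spectrum approximant of an element of $B$ can be compressed back into $B$), since this is what makes (ii) $\implies$ (iii) usable.

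For (i) $\implies$ (ii), fix a self-adjoint $a$ and $\epsilon>0$, and choose a grid $\lambda_0<\lambda_1<\cdots<\lambda_n$ covering $\sigma(a)$ with consecutive gaps below $\epsilon$. Using $RR(A)=0$ I first perturb $a$ so that $a-\lambda_1$ becomes invertible; since the set of invertibles is open, I can then perturb again to make $a-\lambda_2$ invertible while keeping $a-\lambda_1$ invertible, and iterate over the finitely many $\lambda_i$. The outcome is a self-adjoint $b$ with $\Vert a-b\Vert<\epsilon$ and $\lambda_i\notin\sigma(b)$ for all $i$. Setting $q_i=\chi_{(\lambda_{i-1},\lambda_i)}(b)$ and picking $\mu_i\in(\lambda_{i-1},\lambda_i)$, the element $b'=\sum_i \mu_i q_i$ has finite spectrum and satisfies $\Vert b-b'\Vert<\epsilon$, so $b'$ is within $2\epsilon$ of $a$.

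For (ii) $\implies$ (iii), let $B=\overline{cAc}$ be a hereditary subalgebra and take $x\in B_+$ with $\Vert x\Vert\le 1$ and $\delta>0$. By the inheritance of (ii) noted above, choose a finite-spectrum self-adjoint $y\in B$ with $\Vert x-y\Vert<\delta$; replacing $y$ by its positive part only improves the estimate, so write $y=\sum_i\mu_i p_i$ with orthogonal projections $p_i\in B$ and $\mu_i\ge 0$. Put $p=\sum_{\mu_i>\delta}p_i\in B$. Then $\Vert y(1-p)\Vert\le\delta$, whence $\Vert x(1-p)\Vert<2\delta$, and by the same bound applied to $x^{1/2}$ one controls $\Vert x^{1/2}(1-p)\Vert$; since $\Vert x_j(1-p)\Vert^2=\Vert x_j^{1/2}\Vert^2\,\Vert (1-p)x_j(1-p)\Vert\le\Vert x_j^{1/2}\Vert^2\Vert(1-p)x(1-p)\Vert$ for any $0\le x_j\le x$, a single projection serves a finite family of elements simultaneously. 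Assembling such projections over a dense sequence in $B$ produces the desired approximate unit of projections.

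For (iii) $\implies$ (i), fix self-adjoint $a$ and $\epsilon>0$, and let $c=h(a)$ where $0\le h\le 1$, $h=1$ on $[-\epsilon/2,\epsilon/2]$ and $h=0$ outside $(-\epsilon,\epsilon)$. In $B=\overline{cAc}$ choose, by (iii), a projection $p$ with $\Vert c-pc\Vert$ small, so that $\Vert c^{1/2}(1-p)\Vert$ is small; the key geometric point is that because $c$ equals $1$ on $[-\epsilon/2,\epsilon/2]$ and $p$ nearly dominates $c$, the projection $p$ absorbs the part of $\sigma(a)$ near $0$. I then set $b=(1-p)a(1-p)+\lambda p$ for a fixed $0<\lambda<\epsilon$; this is self-adjoint with $\Vert a-b\Vert=O(\epsilon)$, it acts invertibly as $\lambda p$ on the $p$-corner, and it reduces to $(1-p)a(1-p)$ on the complementary corner. \textbf{The main obstacle is precisely showing that $(1-p)a(1-p)$ is invertible in $(1-p)\widetilde{A}(1-p)$}, i.e. bounded below there: one must quantify, via the estimate on $\Vert c^{1/2}(1-p)\Vert$ and the support condition on $h$, that compressing by $1-p$ removes the spectrum of $a$ in $[-\epsilon/2,\epsilon/2]$ and leaves $\vert(1-p)a(1-p)\vert$ bounded below by roughly $\epsilon/2$. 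Once this lower bound is established, $b$ is invertible and self-adjoint and approximates $a$, giving $RR(A)=0$ and closing the cycle.
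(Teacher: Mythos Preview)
The paper does not prove this theorem; it is quoted from Brown--Pedersen as background, so your proposal has to stand on its own. Your (i)$\Rightarrow$(ii) is correct. In (ii)$\Rightarrow$(iii), the claim that (FS) passes to hereditary subalgebras by ``a short functional-calculus argument'' glosses over a real step: a finite-spectrum approximant $y=\sum\mu_iq_i$ of $x\in B_+$ has its spectral projections $q_i$ in $A$, not in $B$, and ``compressing back into $B$'' is not automatic. One fix is to drop the eigenvalues below $2\epsilon$, set $q=\sum q_i$, note that $\|(1-q)x\|$ is small, form $z=x^{1/2}qx^{1/2}\in B$, observe that $\sigma(z)\setminus\{0\}=\sigma(qxq)\setminus\{0\}$ is clustered near the $\mu_i$, and then take spectral projections of $z$ inside $C^*(z)\subseteq B$. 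This is not hard, but it is not a throwaway line either.

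The genuine gap is in (iii)$\Rightarrow$(i): the obstacle you flag cannot be overcome with your setup. Since $p\in\overline{h(a)Ah(a)}$ with $\mathrm{supp}\,h\subseteq(-\epsilon,\epsilon)$, one has $p\le\chi_{(-\epsilon,\epsilon)}(a)$ in $A^{**}$ and hence $\Vert apa\Vert\le\epsilon^2$; on the other hand, from $h\equiv 1$ on $[-\epsilon/2,\epsilon/2]$ and $\Vert(1-p)c\Vert<\delta$ one only gets $(1-p)a^2(1-p)\ge(\epsilon/2)^2(1-\delta)(1-p)$. Using $\bigl((1-p)a(1-p)\bigr)^2=(1-p)a^2(1-p)-(1-p)apa(1-p)$, the resulting lower bound is $(\epsilon/2)^2(1-\delta)-\epsilon^2<0$, and reshaping $h$ does not help because the support radius of $h$ necessarily exceeds its plateau radius. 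The standard route avoids this by applying (HP) instead to the two \emph{orthogonal} hereditary subalgebras $\overline{a_+Aa_+}$ and $\overline{a_-Aa_-}$: approximate-unit projections $p\perp q$ there satisfy $\Vert a_+-pa_+p\Vert,\ \Vert a_--qa_-q\Vert<\delta$, and the block-diagonal element $b=pa_+p-qa_-q+\epsilon(p-q)+\lambda(1-p-q)$ in $\widetilde{A}$ is self-adjoint, invertible on each of the three orthogonal blocks, and within $2\delta+\epsilon+|\lambda|$ of $a$.
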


The class of algebras with real rank zero is huge and has been
studied for a number of years. It includes purely infinite simple
algebras, all AF algebras, irrational rotation algebras and some of
the examples analysed by Goodearl (in \cite{gpubmat}).

The appropriate notion to understand the Cuntz semigroup for such an
algebra is defined below.

\begin{definition}
Let $M$ be an abelian semigroup, ordered algebraically. An interval
in $M$ is a non-empty subset $I$ of $M$ which is order-hereditary
(i.e. $x\leq y$ and $y\in I$ implies that $x\in I$) and upward
directed. We say that an interval $I$ is countably generated if
there is an increasing sequence $(x_n)$ in $I$ such that
\[
I=\{x\in M\mid x\leq x_n\text{ for some }n\}\,.
\]
\end{definition}

Intervals in a semigroup $M$ as above can be added, as follows:
given intervals $I$ and $J$, define
\[
I+J=\{x\in M\mid x\leq y+z\text{ with }y\in I\text{ and }z\in J\}\,,
\]
which then becomes an interval in $M$, and is countably generated if
both $I$ and $J$ are.

The set $\Lambda(M)$ of all intervals in $M$ is a semigroup under
addition, and we denote by $\Lambda_{\sigma}(M)$ the abelian
subsemigroup consisting of those intervals that are countably
generated. Note that this semigroup admits a natural partial
ordering, given by inclusion, that extends the algebraic ordering
but is typically non-algebraic.

For a (separable) C$^*$-algebra $A$, denote by
$D(A)=\{[p]\in\V(A)\mid p\in A\}$, which is an element of
$\Lambda_{\sigma}(\V(A))$. Let us denote by $\Lambda_{\sigma,
D(A)}(\V(A))$ the subsemigroup of $\Lambda_{\sigma}(\V(A))$
consisting of those intervals that are contained in some copies of
$D(A)$. If, furthermore, $A$ has real rank zero, we may define, for
any $a\in M_{\infty}(A)_+$,
\[
I(a)=\{[p]\in \V(A)\mid p\precsim a\}\,.
\]
This is also a countably generated interval in $\V(A)$. In fact, if
$(p_n)$ is an approximate unit for the hereditary algebra generated
by $a$, one can show that
\[
I(a)=\{[p]\in\V(A)\mid p\precsim p_n\text{ for some }n\}\,.
\]
The main result is then (\cite{perijm}):
\begin{theorem}
%{\rm (Perera, \cite{perijm})}
If $A$ is separable and has real rank zero, the correspondence
\[
\W(A)\to\Lambda_{\sigma, D(A)}(\V(A))\,,
\]
given by $\langle a\rangle\mapsto I(a)$, defines an order-embedding,
which is surjective if furthermore $A$ has stable rank one.
\end{theorem}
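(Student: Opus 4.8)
The plan is to show that $\langle a\rangle\mapsto I(a)$ is a well-defined, additive order-embedding, and then, under the extra hypothesis $\mathrm{sr}(A)=1$, that it hits every countably generated interval sitting inside a copy of $D(A)$. The engine of the whole argument is the following consequence of real rank zero, which I would isolate first: if $(p_n)$ is an increasing approximate unit of projections for the hereditary algebra $A_a$ (available by the (HP) form of real rank zero in the Brown--Pedersen theorem), then for every $\epsilon>0$ there is $n$ with
\[
(a-\epsilon)_+\precsim p_n\precsim a .
\]
Indeed $\Vert a-p_nap_n\Vert\to 0$, so choosing $n$ with $\Vert a-p_nap_n\Vert<\epsilon$ and applying Theorem~\ref{Kirchberg-Rordam} gives $(a-\epsilon)_+\precsim p_nap_n$; since $p_nap_n\le\Vert a\Vert p_n$, Lemma~\ref{lem:leq} yields $p_nap_n\precsim p_n$, while $p_n\in A_a$ gives $p_n\precsim a$.

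Granting this, well-definedness and order-preservation are immediate from transitivity of $\precsim$: if $a\precsim b$ then any projection with $p\precsim a$ satisfies $p\precsim b$, so $I(a)\subseteq I(b)$. The reverse implication is where the order-embedding property really lives. Suppose $I(a)\subseteq I(b)$ and fix $\epsilon>0$; using the estimate just established pick $n$ with $(a-\epsilon)_+\precsim p_n\precsim a$. Then $[p_n]\in I(a)\subseteq I(b)$, so $p_n\precsim b$, whence $(a-\epsilon)_+\precsim b$. Letting $\epsilon\to 0$ and invoking Proposition~\ref{basics} gives $a\precsim b$. Thus $I(a)\subseteq I(b)$ if and only if $a\precsim b$, which is exactly the order-embedding assertion; injectivity of the map then follows by antisymmetry.

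For additivity I would use the recorded description $I(a)=\{[p]\mid p\precsim p_n\text{ for some }n\}$. The key observation is that if $(p_n)$, $(q_n)$ are approximate units of projections for $A_a$, $A_b$, then $(p_n\oplus q_n)$ is one for $A_{a\oplus b}$: on the diagonal corners this is clear, and on an off-diagonal element $axb$ one has $\Vert p_n(axb)-axb\Vert\le\Vert p_na-a\Vert\,\Vert x\Vert\,\Vert b\Vert\to 0$, and similarly on the right. Hence $I(a\oplus b)=\{[r]\mid r\precsim p_n\oplus q_n\text{ for some }n\}$. As $p_n\oplus q_n$ is a projection, for a projection $r$ the relation $r\precsim p_n\oplus q_n$ is Murray--von Neumann subequivalence, i.e. $[r]\le[p_n]+[q_n]$; comparing with the definition of the sum of intervals gives $I(a\oplus b)=I(a)+I(b)$. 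Note that this step uses only real rank zero.

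Finally, surjectivity under $\mathrm{sr}(A)=1$. Let $I$ be a countably generated interval contained in a copy of $D(A)$, with increasing generators $[r_1]\le[r_2]\le\cdots$ represented by projections $r_n$ in $A$. The hard part, and the one place I expect to have to be most careful, is replacing the $[r_n]$ by an honest nested sequence of subprojections. Given $s_n\sim r_n$ already built, $s_n$ is Murray--von Neumann subequivalent to $r_{n+1}$, so some projection $t\le r_{n+1}$ has $t\sim s_n$; stable rank one lets me implement $s_n\sim t$ by a unitary $u$ with $us_nu^*=t$, and then $s_{n+1}:=u^*r_{n+1}u$ satisfies $s_n\le s_{n+1}$ and $s_{n+1}\sim r_{n+1}$. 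With the $s_n$ nested I set $a=\sum_{n\ge1}2^{-n}(s_n-s_{n-1})$ (with $s_0=0$), a norm-convergent positive element whose eigenprojections are the orthogonal $s_n-s_{n-1}$. Then $(s_n)$ is an approximate unit of projections for $A_a$, so $I(a)=\{[p]\mid p\precsim s_n\text{ for some }n\}=\{[p]\mid [p]\le[r_n]\text{ for some }n\}=I$. The indispensable use of stable rank one is exactly the unitary nesting step (cancellation), and the containment in a copy of $D(A)$ is what keeps all the $s_n$ inside $A$.
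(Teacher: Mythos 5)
The paper states this theorem without proof, quoting it from \cite{perijm}; the only internal guidance it records is the description $I(a)=\{[p]\in\V(A)\mid p\precsim p_n\ \text{for some}\ n\}$ in terms of an approximate unit of projections $(p_n)$ for $A_a$. Your proposal is a correct, self-contained argument built precisely around that description, and is essentially the original route of \cite{perijm}: your key estimate $(a-\epsilon)_+\precsim p_n\precsim a$ (via Theorem~\ref{Kirchberg-Rordam}, Lemma~\ref{lem:leq}, and the hereditary containment $A_{p_n}\subseteq A_a$) proves the recorded description, and from it well-definedness, the order-embedding property (through Proposition~\ref{basics} and closedness under $\epsilon\to 0$), additivity (via the approximate unit $(p_n\oplus q_n)$ of $A_{a\oplus b}$), and surjectivity (nesting representatives $s_n$ by unitaries and forming $a=\sum_n 2^{-n}(s_n-s_{n-1})$, whose functional calculus recovers each $s_n$ inside $A_a$) all follow. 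Two points should be made explicit to make this airtight. First, Cuntz classes are represented by elements of $M_k(A)_+$, so you need that real rank zero and stable rank one pass to matrix algebras $M_k(A)$; this is true and standard (Brown--Pedersen, Rieffel), but your argument silently works in $A$ throughout. Second, the step ``stable rank one lets me implement $s_n\sim t$ by a unitary'' is the standard fact that cancellation makes Murray--von Neumann equivalent projections unitarily equivalent in $\widetilde{A}$: one needs cancellation to conclude $1-s_n\sim 1-t$, and one needs the corner observation (a partial isometry $z$ with $zz^*$ and $z^*z$ lying in a corner lies in that corner itself, so equivalences a priori implemented in $M_\infty(A)$ restrict to $A$) so that the two partial isometries add up to a unitary in $\widetilde{A}$. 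Both are routine, but they are exactly where the stable rank one hypothesis is consumed, so they deserve a sentence each.
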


It follows from this that
$\W(\mathbb{K})=\mathbb{N}_0\sqcup\{\infty\}$.

Since also $\W(\mathbb{B}/\mathbb{K})=\{0,\infty\}$, we see that
$\W(\mathbb{B})=\mathbb{N}_0\sqcup\{\infty\}\sqcup\{\infty'\}$ with
$\infty+\infty'=\infty'$.

\subsubsection{Irrational rotation algebras}

Let $\theta$ be an irrational number, and let $A_{\theta}^0$ be the
(unital) universal $^*$-algebra generated by two elements $u$ and
$v$ satisfying the relations $u^*u=v^*v=uu^*=vv^*=1$ and $vu=e^{2\pi
i\theta}uv$. By a representation of $A_{\theta}^0$ we mean a Hilbert
space $\mathcal{H}$ and a $^*$-homomorphism
\[
\pi\colon A_{\theta}^0\to \mathbb{B}(\mathcal{H})
\]
such that $\pi (u)$ and $\pi (v)$ are unitary operators. These
representations may be used to define a universal C$^*$-norm on
$A_{\theta}^0$, as follows. Let:
\[
\Vert x\Vert =\sup \{ \Vert \pi (x)\Vert \mid \pi \text{ a
representation of } A_{\theta}^0 \},\quad x\in A_{\theta}^0\,.
\]
Notice that $\Vert x\Vert $ is always finite (because
\[
\Vert \pi \sum {\lambda}_{nm}u^nv^m\Vert \leq \sum
|{\lambda}_{nm}|{\Vert \pi u\Vert }^n{\Vert \pi v\Vert }^m=\sum
|{\lambda}_{nm}|
\]
as $\pi u$ and $\pi v$ are required to be unitaries). Moreover,
since the $\pi $'s are $^*$-morphisms and the operator norms are
C$^*$-norms, $\Vert \text{$\cdot$} \Vert $ is a C$^*$-norm. Define
the (irrational) \emph{rotation algebra} $A_{\theta}$ to be the
completion of $A_{\theta}^0$ with respect to this norm
(see~\cite[Chapter 12]{WO} for a more complete discussion on
$A_{\theta}$).

By~\cite[Theorem 1.5]{BKR}, $RR(A_{\theta})=0$, and
$sr(A_{\theta})=1$. Finally, it is proved in \cite{Ri2} that
$\mathrm{K}_0(A_{\theta})$ is order-isomorphic to $\mathbb{Z}+\theta
\mathbb{Z}$, ordered as a subgroup of $\mathbb{R}$, whence it is
totally ordered and in particular it is lattice-ordered.

It thus follows that
$\V(A_{\theta})=(\mathbb{Z}+{\theta}{\mathbb{Z})}^+$. Assume that
$0<\theta <1$. As before, each bounded interval will be countable,
and loosely speaking it is determined in some sense by its bound.
Geometrically, the monoid $\W(A_{\theta})$ can be studied by
considering the pairs of integers $(x,y)$ satisfying $0\leq y+\theta
x\leq \alpha$ (or $0\leq y+\theta x< \alpha$), for a real number
$\alpha$. It turns out then that each element of the monoid can be
thought of the collection of points of the form $(c,\theta d)$ for
$c,d\in \mathbb{Z}$, lying between the lines $y=-\theta x$ and
$y=-\theta x+\alpha$ in the plane (the points in this last line can
be included or not, depending on whether $\alpha$ is an integer or
an irrational number of the form $a+\theta b$ for $a,b\in
\mathbb{Z}$), being the order of these points determined by the
parallel projection onto the vertical axis along the line $y=-\theta
x$. Thus setting $U=\{ a+b\theta\geq 0\mid a,b\in \mathbb{Z} \}$ we
have in a similar way as before that $\W(A)$ is order-isomorphic to
a disjoint union of $\mathbb{R}^{++}$ and a copy $U'$ of $U$.

\subsubsection{Goodearl algebras}

Let $X$ be a nonempty separable compact Hausdorff space, and choose
elements $x_1$, $x_2,\ldots \in X$ such that $\{ x_n, x_{n+1},\ldots
\}$ is dense in $X$ for each $n$. For all positive integers $n$ and
$k$, let
\[
\delta_n\colon M_k(C(X))\to M_k(\mathbb{C})\subseteq M_k(C(X))
\]
be the C$^*$-homomorphism given by evaluation at $x_n$. Let $\ups
(1), \ups (2),\ldots $ be positive integers such that $\ups (n)\mid
\ups (n+1)$ for all $n$, and set $A_n=M_{\ups (n)}(C(X))$ for each
$n$. We choose maps $\phi_n\colon A_n \to A_{n+1}$ of the following
form:
\[
\phi_n(a)=(a,\ldots ,a,\delta_n(a),\ldots ,\delta_n (a))\,.
\]

Let $\alpha_n$ the number of identity maps involved in the
definition of $\phi_n$, and set
\[
\phi_{s,n}=\phi_{s-1}\phi_{s-2}\ldots \phi_n\colon A_n\to A_s
\]
for $s>n$. Finally, let $A$ be the C$^*$-inductive limit of the
sequence $\{A_n, \phi_n\}$.

The key assumption here is to assume that in each of the maps
$\Phi_n$, at least one identity map and at least one $\delta_n$
occurs, that is $0<\alpha_n< \frac{\ups (n+1)}{\ups (n)}$.

Any C$^*$-algebra constructed as above is a simple unital
C$^*$-algebra with stable rank one \cite[Lemma 1, Theorem
3]{gpubmat}. Defining the {\it weighted identity ratio} for ${\phi
}_{s,n}$ as ${\omega}_{s,n}={\alpha}_n{\alpha}_{n+1}\ldots
{\alpha}_{s-1}\frac{\ups (n)}{\ups(s)}$, it turns out that $A$ has
real rank zero if and only if either $\lim\limits_{t\to \infty}
{\omega}_{t,1}=0$ or $X$ is totally disconnected \cite[Theorem
9]{gpubmat}. The complementary conditions of course ensure that the
real rank is one \cite[Theorem 6]{gpubmat}. Further, all these
algebras have weak unperforation on their Grothendieck groups.

Assume from now on that $X$ is connected. We will compute the monoid
of bounded intervals in $\V(A)$ (in fact, $\V(A)$ satisfies the
Riesz decomposition property if the algebra has either real rank
zero or one, as is observed in \cite[Section 6]{gpubmat}). Let
$U=\cup_{n=1}^{\infty} \frac{1}{\ups (n)}{\mathbb{Z}}\subseteq
\mathbb{Q}$ and $V=\cup_{n=1}^{\infty} \frac{1}{\alpha_1\ldots
\alpha_n}{\mathbb{Z}}\subseteq \mathbb{Q}$. Then it is proved in
\cite[Theorem 13(b)]{gpubmat} that $\V(A)$ is order isomorphic to
\[
W^+=\{(0,0)\} \cup \{ (a,b)\in W\mid a>0\}\,,
\]
where $W=U\oplus (V\otimes {\rm ker} t)$, with $w=(1,0)$ as
order-unit, and $t$ is the unique state on
$(\mathrm{K}_0(C(X)),[1_{C(X)}])$.

Notice that it follows from the key assumption that $\ups (n)$ is a
strictly increasing sequence. For any real number $a$, let us denote
$[0,a)=\{\frac{m}{\ups (n)}\mid m\in {\mathbb{N}},\quad 0\leq
\frac{m}{\ups (n)}< a\}$. Since $A$ is unital, the generating
interval is $D=\{ (a,b)\in W^+\mid 0\leq a< 1\}\cup \{ (1,0)\}$.
Now, every bounded interval (which will be countably generated,
since $U$ is countable) has the form:
\[
\{ (a,b)\in W^+\mid 0\leq a<\alpha \}, \quad \alpha \in
\mathbb{R}^+\,,
\]
or
\[
\{ (a,b')\in W^+\mid (0,0)\leq (a,b')\leq (\beta, b) \}, \quad
\beta \in U^+, b\in V\otimes \ker t\,.
\]
Identifying these intervals with $[0,\alpha), \alpha \in
\mathbb{R}^+$, $[0,(\beta,b)]=[0,\beta )\cup \{(\beta,b)\}, \beta
\in U^+, b\in V\otimes \ker t$, we get an ordered-monoid isomorphism
from $\W(A)$ to the disjoint union $\mathbb{R}^{++}\sqcup {W^+}$,
given by $[0,x)\mapsto x$ for any $x\in \mathbb{R}^{++}$ and
$[0,\beta)\cup \{(\beta, b)\}\mapsto (\beta,b)$, for $\beta \in
U^+,b\in V\otimes kert$. Notice that in this context $\W(A)$ is not
order-cancellative, for $[0,1)+[0,1)=[0,1)+([0,1)\cup \{ (1,0)\})$,
while $[0,1)\not= [0,1)\cup \{(1,0)\} $.

\vskip2cm

%%%%%%%%%%%%%%%%%%%%%%%%%%%%%%%%%%%%%%%%%%%%%%%%%%%%%%%%%%%%%%%%%%%%%%%%%%%%%
%%%%%%%%%%%%%%%%%%%%%%%%%%%%%%%%%%%%%%%%%%%%%%%%%%%%%%%%%%%%%%%%%%%%%%%%%%%%%

\section{Hilbert C*-modules. Kasparov's Theorem}

\vskip1cm

\subsection{Introduction}

This is an overview of some standard facts on Hilbert C*-modules.
Good references for this section are the books by Manuilov and
Troitsky \cite{ManTro} and Lance \cite{LAN}. In particular we have
followed (parts of) Chapters 1 and 2 of the book \cite{ManTro} in
our exposition.

%%%%%%%%%%%%%%%%%%%%%%%%%%%%%%%%%%%%%%%%%%%%%%%%%%%%%%%%%%%%%%%%%%%%%%%%%%%%%
%%%%%%%%%%%%%%%%%%%%%%%%%%%%%%%%%%%%%%%%%%%%%%%%%%%%%%%%%%%%%%%%%%%%%%%%%%%%%

\subsection{Hilbert C$^*$-modules}\label{sect:Hilbert}

Hilbert modules were introduced in the "Appendix" of this book
\cite{ALP}. We recall the basic definition here:

\begin{definition}
\label{def:HilbC*} Let $A$ be a C$^*$-algebra. A (right) Hilbert
$A$-module is a right $A$-module $X$ together with an $A$-valued
inner product $X\times X\rightarrow A$, $(x,y)\mapsto\langle x , y
\rangle$,
enjoying the properties
\smallskip
\begin{itemize}
\item[{\rm (i)}] $\langle x ,\alpha y +\beta z\rangle
                   =\alpha\langle x, y \rangle +\beta\langle x ,z \rangle$,
\item[{\rm (ii)}] $\langle x, ya\rangle=\langle x ,y\rangle a$,
\item[{\rm (iii)}] $\langle y , x \rangle={\langle x, y\rangle}^*$
\item[{\rm (iv)}] $\langle x , x \rangle \ge0\;$ and $\;\langle x ,x \rangle=0$ if and only if
$x=0$.
\end{itemize}
\smallskip\noindent
for all $x,y,z\in X$, $a\in A$, $\alpha,\beta\in\mathbb C$;
moreover, $X$ is complete with respect to the norm given by
$\|x\|^2:=\|\langle x ,x\rangle \|$.
\end{definition}

A basic example of a Hilbert $A$-module is provided by a closed
right ideal $J$ of $A$. In this case, the inner product is given by
$\langle x,y\rangle =x^*y$ for $x,y\in J$. In particular this is the
inner product of the right Hilbert module $A$.

A {\it pre-Hilbert $A$-module} is a right $A$-module $X$ with an
inner product satisfying all the properties of a Hilbert module,
except possibly the completeness condition. The completion
$\overline{X}$ of a pre-Hilbert $A$-module $X$ is a Hilbert
$A$-module in a natural way.

\begin{proposition}
\label{Cauchy-Schw} Let $X$ be a pre-Hilbert $A$-module. Then the
following properties hold:
\begin{itemize}
\item[{\rm (i)}] $\|xa\|\le \|x\| \|a\|$, for all $x\in X$ and $a\in
A$;
\item[{\rm (ii)}] $\langle x, y\rangle\langle y,x\rangle \le\|y\|^2\langle x ,x\rangle $,
for all $x,y\in X$;
\item[{\rm (iii)}] $\| \langle x , y \rangle\|\le \|x\|\,  \| y\|$ for all
$x,y\in X$.
\end{itemize}
\end{proposition}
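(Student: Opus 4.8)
The plan is to dispatch (i) directly from the C$^*$-norm, then prove the Cauchy--Schwarz inequality (ii), which is the real content, and finally deduce (iii) from (ii).

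For (i) I would simply compute $\langle xa, xa\rangle = a^*\langle x,x\rangle a$ and take norms: using submultiplicativity of the norm on $A$ together with $\|a^*\|=\|a\|$ and $\|\langle x,x\rangle\| = \|x\|^2$, this gives $\|xa\|^2 = \|a^*\langle x,x\rangle a\| \le \|a\|^2\|\langle x,x\rangle\| = \|a\|^2\|x\|^2$, and taking square roots yields the claim. No Cauchy--Schwarz is needed here.

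The heart of the matter is (ii). By homogeneity I may rescale and assume $\|y\|\le 1$, equivalently $\langle y,y\rangle \le 1$ in $\widetilde{A}$. For any $a$ in $\widetilde{A}$, positivity of the inner product (axiom (iv)) gives $\langle ya - x,\, ya-x\rangle \ge 0$; expanding via the module axioms and $\langle u,v\rangle=\langle v,u\rangle^*$ this reads $a^*\langle y,y\rangle a - a^*\langle y,x\rangle - \langle x,y\rangle a + \langle x,x\rangle \ge 0$. Since $\langle y,y\rangle\le 1$ we have $a^*\langle y,y\rangle a \le a^*a$, so $a^*a - a^*\langle y,x\rangle - \langle x,y\rangle a + \langle x,x\rangle \ge 0$. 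The decisive step is the substitution $a=\langle y,x\rangle\in A\subseteq\widetilde{A}$: then $a^*=\langle x,y\rangle$ and each of the first three terms collapses to $\langle x,y\rangle\langle y,x\rangle$, leaving $\langle x,x\rangle - \langle x,y\rangle\langle y,x\rangle \ge 0$. This is (ii) in the normalized case; replacing $y$ by $y/\|y\|$ (the case $y=0$ being trivial) recovers $\langle x,y\rangle\langle y,x\rangle \le \|y\|^2\langle x,x\rangle$ in general. I expect this choice of $a$ to be the only genuine obstacle; everything around it is bookkeeping together with the two standard facts that $0\le c\le d$ implies both $a^*ca\le a^*da$ and $\|c\|\le\|d\|$.

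Finally (iii) follows from (ii) by passing to norms: by the C$^*$-identity $\|\langle x,y\rangle\|^2 = \|\langle x,y\rangle\langle x,y\rangle^*\| = \|\langle x,y\rangle\langle y,x\rangle\|$, and since the norm is monotone on positive elements, (ii) gives $\|\langle x,y\rangle\langle y,x\rangle\| \le \|y\|^2\|\langle x,x\rangle\| = \|x\|^2\|y\|^2$. Taking square roots completes the argument. The only subtlety worth flagging is that $\langle y,x\rangle$ genuinely lives in $A$, so the substitution in (ii) is legitimate; the passage to $\widetilde{A}$ is needed solely to make sense of the normalization $\langle y,y\rangle\le 1$.
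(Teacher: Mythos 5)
Your proof is correct, and parts (i) and (iii) coincide with the paper's (direct computation from the C$^*$-identity, then taking norms in (ii)). For the substantive part (ii), however, you take a genuinely different route. The paper reduces to the scalar Cauchy--Schwarz inequality: for each positive linear functional $\varphi$ on $A$, the form $(x,y)\mapsto\varphi(\langle x,y\rangle)$ is a (possibly degenerate) complex inner product, and a bootstrapping estimate gives $\varphi(\langle x,y\rangle\langle y,x\rangle)\le\|y\|^2\varphi(\langle x,x\rangle)$ for every such $\varphi$, whence the operator inequality follows because positivity in $A$ is detected by positive functionals. You instead argue entirely inside the algebra: normalize $\langle y,y\rangle\le 1$ in $\widetilde{A}$, expand $\langle ya-x,ya-x\rangle\ge 0$, dominate $a^*\langle y,y\rangle a$ by $a^*a$, and make the key substitution $a=\langle y,x\rangle$, which collapses three terms and yields $\langle x,y\rangle\langle y,x\rangle\le\langle x,x\rangle$ directly. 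Your approach buys self-containedness: it uses only elementary order facts ($0\le c\le d$ implies $a^*ca\le a^*da$ and $\|c\|\le\|d\|$) and avoids the duality fact that positive functionals determine positivity; the paper's approach buys familiarity, leaning on the classical scalar inequality at the cost of that duality input and a slightly fiddly division-and-squaring manipulation (which silently assumes $\varphi(\langle x,y\rangle\langle y,x\rangle)\neq 0$, the degenerate case being trivial). One cosmetic point: you phrase the expansion for $a\in\widetilde{A}$, which presupposes extending the module action to the unitization; since the only $a$ you ever use is $\langle y,x\rangle\in A$, you could state the expansion for $a\in A$ and keep $\widetilde{A}$ solely for the inequality $\langle y,y\rangle\le 1$, exactly as you flag at the end.
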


\begin{proof}
(i) We have $\| xa\|^2=\| \langle xa,xa\rangle \| =\|a^*\langle
x,x\rangle a\|\le \|a\|^2 \|\langle x,x \rangle \| =\| x\|^2 \|a\|
^2$.

\smallskip

(ii) Let $\varphi$ be a positive linear functional on $A$, and let
$x,y$ be elements in $X$. Then $(x,y)\mapsto \varphi (\langle
x,y\rangle )$ is a (degenerated) complex-valued inner product on
$X$, and so the familiar Cauchy-Schwartz inequality gives
\begin{align*}
\varphi (\langle x,y\rangle \langle y,x\rangle ) & =\varphi (\langle
x,y\langle y,x\rangle\rangle)\\ &  \le \varphi (\langle
x,x\rangle)^{1/2}\varphi (\langle y\langle y,x\rangle ,y\langle
y,x\rangle \rangle )^{1/2}\\
& = \varphi (\langle x,x\rangle )^{1/2}\varphi (\langle x,y \rangle
\langle y,y \rangle \langle y,x\rangle )^{1/2}\\
& \le \varphi (\langle x,x\rangle)^{1/2}\| \langle y,y\rangle
\|^{1/2}
\, \varphi (\langle x,y\rangle \langle y,x\rangle )^{1/2}.
\end{align*}
Multiplying both sides on the right by $\varphi (\langle x,y\rangle
\langle y,x\rangle )^{-1/2}$ and then squaring, we get $\varphi
(\langle x,y\rangle \langle y,x\rangle)\le \|y\|^2 \varphi (\langle
x,x\rangle )$. Since this holds for every $\varphi \in A^*_+$, we
get that $\langle x,y\rangle \langle y,x\rangle \le \|y\| ^2 \langle
x,x\rangle$.

\smallskip

(iii) Take norms in (ii).
\end{proof}

\begin{example}
\label{exam:finitesums}(Finite sums) Let $X_1,\dots ,X_n$ be Hilbert
$A$-modules over a C$^*$-algebra $A$. Then the direct sum $X_1\oplus
\cdots \oplus X_n$ is a Hilbert $A$-module, with the inner product:
$$\langle (x_1,\dots ,x_n),(y_1,\dots ,y_n)\rangle =
\sum _{i=1}^n \langle x_i,y_i \rangle.$$
\end{example}

The verification of the properties (i)-(iv) in Definition
\ref{def:HilbC*} is in this case straightforward. Note that the
Cauchy-Schwartz inequality applied to $\oplus _{i=1}^n X_i$ gives
that \begin{equation}\label{eq:CSfinitesums} \|\sum _{i=1}^n \langle
x_i,y_i\rangle\| \le \|\sum _{i=1}^n \langle x_i,x_i\rangle \|^{1/2}
\| \sum _{i=1}^n \langle y_i,y_i\rangle \|^{1/2}  .
\end{equation}

\begin{example}
\label{exam:infinitesums} (Countable sums) Let $\{ X_i : i\in
\mathbb N \}$ be a countable collection of Hilbert $A$-modules. Then
we define the direct sum $\bigoplus X_i$ as
$$\bigoplus X_i =\{ (x_i)\in \prod _{i\in \mathbb N} X_i : \sum _i \langle x_i,x_i\rangle \text{ is convergent
in } A \}.$$ The inner product in $\bigoplus _i X_i$ is defined by
$$\langle (x_i),(y_i)\rangle = \sum _i \langle x_i,y_i\rangle .$$
Then $\bigoplus _i X_i$ is a Hilbert $A$-module.
\end{example}

In this case the verification of the properties of Hilbert module is
more painful. Let us work out the details. First note that, for
$(x_i), (y_i)\in \bigoplus X_i$, given $\epsilon >0$ there exists
$N$ such that for all $n$ we have
$$\| \sum _{i=N}^{N+n} \langle x_i,x_i\rangle \|<\epsilon \, ,\qquad \|\sum _{i=N}^{N+n}
\langle y_i,y_i\rangle \|<\epsilon \, ,$$
so that by the Cauchy-Schwartz inequality (\ref{eq:CSfinitesums}) we get
$$\| \sum _{i=N}^{N+n} \langle x_i,y_i\rangle \|\le
\| \sum _{i=N}^{N+n} \langle x_i,x_i\rangle \|^{1/2}\| \sum
_{i=N}^{N+n} \langle y_i,y_i\rangle \|^{1/2} <\epsilon . $$

This shows that the inner product is well-defined. Now we will check
the completeness of $\bigoplus X_i$. Let $x^{(n)}=(x^{(n)}_i)\in
\bigoplus X_i$ be a Cauchy sequence. Given $\epsilon >0$ there is
$N_0$ such that for $n,m\ge N_0$ we have $\| x^{(n)}-x^{(m)}\|^2
<\epsilon $. Thus for $n,m\ge N_0$ we get
\begin{equation}
\label{eq:first-c} \| \sum _{i=1}^{\infty} \langle
x_i^{(n)}-x_i^{(m)}, x_i^{(n)}-x_i^{(m)} \rangle \| <\epsilon .
\end{equation}
In particular, given $i$, we have $\|\langle
x_i^{(n)}-x_i^{(m)},x_i^{(n)}-x_i^{(m)}\rangle \| <\epsilon$ for
$n,m\ge N_0$, and we see that there exists $x_i=\lim _n x_i^{(n)}\in
X_i$.

Set $x=(x_1,x_2,\dots )$. We want to see that $x\in \bigoplus _i
X_i$ and that $x=\lim _n x^{(n)}$.

We first check that $x\in \bigoplus _i X_i$. Given $\epsilon >0$, we
choose $N_0$ such that (\ref{eq:first-c}) holds for all $m,n\ge
N_0$. Since $x^{(n)}\in \bigoplus _i X_i$, we have $\sum
_{i=1}^{\infty} \langle x_i^{(n)},x_i^{(n)}\rangle \in A$. Take
$n_0>N_0$. There is $N_1\ge N_0$ such that
\begin{equation}
\label{eq:second-c} \|\sum _{N_1}^{N_1+k} \langle
x_i^{(n_0)},x_i^{(n_0)}\rangle \| <\epsilon
\end{equation}
for all $k\ge 0$. For $m>N_0$ and $k\ge 0$ we have
\begin{align*}
\label{eq:third-c} \| \sum _{N_1}^{N_1+k} \langle
x_i^{(m)},x_i^{(m)}\rangle  \| & \le \| \sum _{N_1}^{N_1+k} \langle
x_i^{(m)}-x_i^{(n_0)}, x_i^{(m)}-x_i^{(n_0)}\rangle \| + \| \sum
_{N_1}^{N_1+k} \langle x_i^{(m)}- x_i^{(n_0)}, x_i^{(n_0)}\rangle \|
\\ & + \| \sum _{N_1}^{N_1+k} \langle
x_i^{(n_0)}, x_i^{(m)}-x_i^{(n_0)}\rangle \| +\| \sum _{N_1}^{N_1+k}
\langle x_i^{(n_0)}, x_i^{(n_0)}\rangle \|\\
& < \epsilon
+\epsilon^{1/2}\epsilon^{1/2}+\epsilon^{1/2}\epsilon^{1/2}+\epsilon
=4\epsilon .
\end{align*}
Hence we get $\| \sum _{N_1}^{N_1+k} \langle
x_i^{(m)},x_i^{(m)}\rangle \| <4\epsilon $. Now for $k\ge 0$ we have
\begin{align*}
\|\sum _{N_1}^{N_1+k} \langle x_i,x_i\rangle \| & \le \| \sum
_{N_1}^{N_1+k} \langle x_i^{(m)},x_i^{(m)} \rangle \| + \| \sum
_{N_1}^{N_1+k} \langle x_i^{(m)},x_i- x_i^{(m)} \rangle \|  \\
& + \| \sum _{N_1}^{N_1+k} \langle x_i- x_i^{(m)},x_i^{(m)} \rangle \|
+\| \sum _{N_1}^{N_1+k} \langle x_i-x_i^{(m)},x_i - x_i^{(m)}
\rangle \| <4\epsilon + \delta _m
\end{align*}
for every $m>N_0$. Since $x_i^{(m)}\to x_i$ for $i=N_1,\dots
,N_1+k$, we get $\delta_m \to 0$. We conclude that $\| \sum
_{N_1}^{N_1+k} \langle x_i,x_i\rangle \| \le 4\epsilon $. Thus $x\in
\bigoplus _i X_i$.

Finally we check that $x=\lim x^{(n)}$. Now for $\epsilon$ and $N_0$
as before, and $m,n>N_0$, we get
\begin{align*}
\| \sum _{i=1}^T \langle x_i-x_i^{(n)},x_i-x_i^{(n)} \rangle \| &
\le \| \sum _{i=1}^T \langle
x_i^{(m)}-x_i^{(n)},x_i^{(m)}-x_i^{(n)}\rangle  \| +\| \sum _{i=1}^T
\langle x_i-x_i^{(m)},x_i-x_i^{(m)} \rangle \| \\
& +\| \sum _{i=1}^T \langle x_i-x_i^{(m)},x_i^{(m)}-x_i^{(n)}\rangle
\| +\| \sum _{i=1}^T \langle x_i^{(m)} -x_i^{(n)},x_i-x_i^{(m)}
\rangle \| \\
& <\epsilon +\delta _m .
\end{align*}
Since $\delta _m\to 0$, we get $\| \sum _{i=1}^T \langle
x_i-x_i^{(n)},x_i-x_i^{(n)} \rangle \|\le \epsilon$. Since this
holds for every $T$ we get $\| \sum _{i=1}^{\infty} \langle
x_i-x_i^{(n)},x_i-x_i^{(n)} \rangle \|\le \epsilon$  for $n>N_0$. We
conclude that $\lim x^{(n)} =x$.

\bigskip

We denote by $\widetilde{A}$ the C$^*$-subalgebra of $M(A)$ generated
by $A$ and $1_{M(A)}$. Note that $\widetilde{A}=A+1_{M(A)}\mathbb
C$. Note that every Hilbert $A$-module is also a Hilbert module over
$\widetilde{A}$. Indeed we can identify Hilbert $A$-modules with
Hilbert $\widetilde{A}$-modules $X$ such that $\langle X,X\rangle
\subseteq A$.

\bigskip

We end this section with a couple of useful technical lemmas.

\bigskip

\begin{lemma}
\label{lem:approxunits} If $(e_{\alpha})$ is an approximate unit in
$A$ and $X$ is a Hilbert $A$-module, then we have that $x=\lim
_{\alpha} xe_{\alpha}$ for every $x\in X$.
\end{lemma}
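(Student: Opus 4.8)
The plan is to estimate $\|x - xe_\alpha\|$ directly through the $A$-valued inner product, reducing everything to the approximate-unit behaviour of the single positive element $\langle x, x\rangle$. First I would expand
$\langle x - xe_\alpha,\, x - xe_\alpha\rangle$ using the module axioms (ii)--(iii) together with the fact that each $e_\alpha$ is self-adjoint (which follows from $0\le e_\alpha\le 1$) and that $\langle x,x\rangle$ is self-adjoint. Writing $a:=\langle x,x\rangle\in A_+$ and carrying out the algebra inside the unitization $\widetilde{A}$, the identities $\langle x, xe_\alpha\rangle=a e_\alpha$ and $\langle xe_\alpha, x\rangle = e_\alpha a$ make the four resulting terms collapse to
\[
\langle x - xe_\alpha,\, x - xe_\alpha\rangle = (1 - e_\alpha)\,a\,(1 - e_\alpha).
\]

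Next I would pass to norms. By definition of the Hilbert-module norm, $\|x - xe_\alpha\|^2 = \|(1-e_\alpha)a(1-e_\alpha)\|$. Setting $b = a^{\frac12}(1-e_\alpha)$ we have $b^*b = (1-e_\alpha)a(1-e_\alpha)$, so the C$^*$-identity gives
\[
\|x - xe_\alpha\|^2 = \|b\|^2 = \|a^{\frac12}(1-e_\alpha)\|^2 = \|a^{\frac12} - a^{\frac12}e_\alpha\|^2.
\]
This is the crux of the argument: the module-theoretic quantity has been converted into an ordinary norm estimate in $A$ for the element $a^{\frac12}$, which lies in $A$ because $A$ is a C$^*$-algebra and $a\in A_+$.

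Finally I would invoke the approximate-unit property for $a^{\frac12}$. The definition as stated only furnishes a \emph{left} approximate unit, so the one genuinely necessary observation is that, since $e_\alpha=e_\alpha^*$ and $a^{\frac12}=(a^{\frac12})^*$, taking adjoints yields $\|a^{\frac12} - a^{\frac12}e_\alpha\| = \|a^{\frac12} - e_\alpha a^{\frac12}\| \to 0$; that is, a self-adjoint left approximate unit is automatically two-sided. Combining this with the previous display gives $\|x - xe_\alpha\|\to 0$, as claimed.

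The main obstacle here is bookkeeping rather than depth. One must be careful to perform the manipulation with $1-e_\alpha$ inside $\widetilde{A}$, since that symbol is meaningless in $A$ itself when $A$ is non-unital, and to justify the passage from the one-sided hypothesis of the definition to the two-sided statement actually used. Once the inner product is simplified to $(1-e_\alpha)\langle x,x\rangle(1-e_\alpha)$, the remainder is an immediate application of the C$^*$-identity and the approximate-unit property.
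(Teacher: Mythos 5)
Your proof is correct and follows essentially the same route as the paper: both compute $\langle x-xe_\alpha,\,x-xe_\alpha\rangle=(1-e_\alpha)\langle x,x\rangle(1-e_\alpha)$ and then conclude via the approximate-unit property. The paper leaves the final convergence step implicit, whereas you justify it carefully (via $a^{1/2}$, the C$^*$-identity, and the adjoint trick turning the left approximate unit into a two-sided one); this is just filling in details, not a different argument.
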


\begin{proof}
For $x\in X$ we have
\begin{align*}
\|x-xe_{\alpha}\|^2 & = \| \langle
x-xe_{\alpha},x-xe_{\alpha}\rangle \| \\
& = \| (1-e_{\alpha})\langle x,x\rangle
(1-e_{\alpha})\|\longrightarrow 0.
\end{align*}
\end{proof}

\begin{lemma}
\label{lem:exact dec} Let $X$ be a Hilbert $A$-module, $0<\alpha
<1/2$, and $x\in X$. Then there is $z\in X$ such that $x=z\langle
x,x\rangle ^{\alpha}$ and $\|z\| \le \|\langle x,x\rangle
\|^{\frac{1}{2}-\alpha}$.
\end{lemma}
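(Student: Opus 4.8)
The plan is to transcribe the proof of Proposition~\ref{Pedersen2} into the Hilbert module setting, letting the inner product $\langle x,x\rangle$ play the role that $a$ (with $x^*x\le a$) played there; here the relation is in fact an equality. Set $a=\langle x,x\rangle\in A_+$. Since $\frac12-\alpha>0$ the element $a^{\frac12-\alpha}$ lies in $C^*(a)\subseteq A$, so $c_n:=\left(\frac1n+a\right)^{-\frac12}a^{\frac12-\alpha}\in A$ and we may define
\[
z_n:=xc_n=x\left(\frac1n+a\right)^{-\frac12}a^{\frac12-\alpha}\in X
\]
using the right $A$-action. The candidate for $z$ is $\lim_n z_n$, and the whole proof is an exercise in rewriting products of the form $w^*w$ as inner products $\langle w,w\rangle$.

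First I would show $(z_n)$ is Cauchy. Writing $e_{nm}=\left(\frac1n+a\right)^{-\frac12}-\left(\frac1m+a\right)^{-\frac12}$, which is self-adjoint and commutes with $a$, one has $z_n-z_m=x\,e_{nm}a^{\frac12-\alpha}$, and by the defining properties of the inner product
\[
\langle z_n-z_m,z_n-z_m\rangle=a^{\frac12-\alpha}e_{nm}\,a\,e_{nm}a^{\frac12-\alpha}=\left(a^{1-\alpha}e_{nm}\right)^2,
\]
so that $\Vert z_n-z_m\Vert=\Vert a^{1-\alpha}e_{nm}\Vert$. The functions $g_n(t)=t^{1-\alpha}\left(\frac1n+t\right)^{-\frac12}$ increase pointwise to the continuous function $t\mapsto t^{\frac12-\alpha}$ on $\sigma(a)$; by Dini's Theorem the convergence is uniform, hence $\left(g_n(a)\right)=\left(a^{1-\alpha}\left(\frac1n+a\right)^{-\frac12}\right)$ is Cauchy and $\Vert a^{1-\alpha}e_{nm}\Vert\to0$. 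Thus $z:=\lim_n z_n$ exists in $X$. For the norm estimate I would use $\left(\frac1n+a\right)^{-\frac12}a\left(\frac1n+a\right)^{-\frac12}=a\left(\frac1n+a\right)^{-1}\le1$ to get $\langle z_n,z_n\rangle=a^{\frac12-\alpha}\,a\left(\frac1n+a\right)^{-1}a^{\frac12-\alpha}\le a^{1-2\alpha}$, whence $\Vert z_n\Vert\le\Vert a\Vert^{\frac12-\alpha}$ and therefore $\Vert z\Vert\le\Vert\langle x,x\rangle\Vert^{\frac12-\alpha}$.

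It remains to identify the limit, i.e. to prove $x=za^\alpha$. Multiplying on the right by $a^\alpha$ collapses $c_na^\alpha=\left(\frac1n+a\right)^{-\frac12}a^{\frac12}$, so with $b_n:=1-\left(\frac1n+a\right)^{-\frac12}a^{\frac12}\in\widetilde A$ (self-adjoint, commuting with $a$) we have $x-z_na^\alpha=xb_n$ and
\[
\Vert x-z_na^\alpha\Vert^2=\Vert\langle xb_n,xb_n\rangle\Vert=\Vert a\,b_n^2\Vert=\Vert a^{\frac12}b_n\Vert^2.
\]
The functions $h_n(t)=t^{\frac12}-t\left(\frac1n+t\right)^{-\frac12}$ are nonnegative and decrease pointwise to $0$ on $\sigma(a)$, so Dini's Theorem again gives $\Vert a^{\frac12}b_n\Vert=\Vert h_n(a)\Vert\to0$; hence $z_na^\alpha\to x$. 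On the other hand $z_na^\alpha\to za^\alpha$ by Proposition~\ref{Cauchy-Schw}(i), and comparing limits yields $x=za^\alpha$.

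The computations are all routine transcriptions of Proposition~\ref{Pedersen2}; the only points requiring care --- and the real content --- are the two Dini arguments, where one must check the right monotonicity (the $g_n$ increase, the $h_n$ decrease) and the continuity of the limit functions at $t=0$, which is exactly where the hypothesis $\alpha<\frac12$ (making the exponents $\frac12-\alpha$ and $\frac12$ positive) is used. One must also keep straight which elements live in $A$, which in $\widetilde A$, and which in $X$, so that the right-module action and Proposition~\ref{Cauchy-Schw}(i) apply as claimed.
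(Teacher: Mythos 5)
Your proof is correct and is exactly what the paper intends: the paper's entire proof of this lemma reads ``The proof is the same as the one of Proposition~\ref{Pedersen2},'' and your argument is a faithful transcription of that proof (and of the Cauchy-sequence argument of Lemma~\ref{Pedersen}) into the Hilbert module setting, with $a=\langle x,x\rangle$ playing the role of the dominating element and inner products replacing products $w^*w$. The two Dini arguments, the norm estimate $\|z\|\le\|\langle x,x\rangle\|^{\frac12-\alpha}$, and the identification $x=za^{\alpha}$ all match the paper's route, so nothing further is needed.
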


\begin{proof}
The proof is the same as the one of Proposition \ref{Pedersen2}.
\end{proof}

\subsection{Kasparov's Theorem}\label{Kasparov}

For a C$^*$-algebra $A$, we will denote by $H_A$ the direct sum of
countably many copies of the Hilbert $A$-module $A$, that is
$H_A=A\oplus A\oplus \cdots $.

\bigskip

\begin{definition}
\label{coungen} A Hilbert $A$-module $X$ is {\it countably
generated} if there is a sequence $(x_i)$ of elements of $X$ such
that $X=\overline{\sum _{i=1}^{\infty} x_i A}$.
\end{definition}

\begin{theorem}
\label{them: Kasparov}{\rm (Kasparov's Theorem)} If $X$ is a
countably generated Hilbert $A$-module then $$X\oplus H_A\cong
H_A.$$ \qed\end{theorem}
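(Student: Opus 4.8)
The plan is to build an explicit Hilbert-module isomorphism $H_A\cong X\oplus H_A$ out of a generating sequence, using the standard ``dilution'' trick. First I would fix a sequence $(\xi_n)_{n\ge 1}$ generating $X$ with $\|\xi_n\|\le 1$ and, crucially, arranged so that each generator occurs infinitely often (pad a given countable generating set by repetition). Using the Cauchy-Schwartz inequality of Proposition~\ref{Cauchy-Schw} to control convergence, I would introduce the adjointable operator $\map{T}{H_A}{X\oplus H_A}$ given by
\[
T\big((a_n)_n\big)=\Big(\ \sum_{n=1}^{\infty}2^{-n}\xi_n a_n\ ,\ (2^{-n}a_n)_n\ \Big)=\big(Sa,\,Da\big),
\]
where $S$ is the generating map and $D=\mathrm{diag}(2^{-n})$. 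A routine computation gives $S^*\xi=(2^{-n}\langle\xi_n,\xi\rangle)_n\in H_A$, so $T$ is adjointable with $T^*T=S^*S+D^2$.

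Next I would record three properties of $T$. Injectivity is immediate, since $Ta=0$ forces $Da=0$ and $D$ is injective. Since $\mathrm{ran}(D)$ contains all finitely supported sequences it is dense, and as $\mathrm{ran}(T^*)\supseteq\mathrm{ran}(D)$ the operator $T^*$ has dense range. The heart of the matter is that $T$ itself has dense range, i.e.\ $\overline{\mathrm{ran}(T)}=X\oplus H_A$. Here the two coordinates of $T$ are coupled, because choosing $a$ to realize a prescribed second coordinate nearly pins down the first; the device that decouples them is precisely the repetition hypothesis. To approximate a target $\xi$ in the $X$-coordinate while keeping the $H_A$-coordinate arbitrarily small, I would write $\xi$ as a finite combination of generators and ``dilute'' each term $\xi_m\gamma$ over $K$ distinct indices all carrying that same generator $\xi_m$ (available by infinite repetition), placing $\gamma/K$ at each: the first coordinate is unchanged, while the contribution to the second coordinate has norm $O(\|\gamma\|^2/K)\to 0$. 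Combining this with the straightforward approximation of the second coordinate shows each coordinate can be approximated independently, giving density. This dilution step is the only genuinely non-formal point, and is where I expect the main difficulty to lie.

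Finally I would upgrade $T$ to a unitary by a polar-type identity. Since $\langle Ta,Ta'\rangle=\langle a,T^*Ta'\rangle=\langle |T|a,|T|a'\rangle$ for $|T|=(T^*T)^{1/2}$, the assignment $|T|a\mapsto Ta$ preserves the $A$-valued inner product and therefore extends by continuity to an isometry $\map{\overline{U}}{\overline{\mathrm{ran}\,|T|}}{\overline{\mathrm{ran}\,T}}$. Invoking the standard Hilbert-module fact $\overline{\mathrm{ran}\,R^*}=\overline{\mathrm{ran}\,R^*R}$ (applied to $R=|T|$, which gives $\overline{\mathrm{ran}\,|T|}=\overline{\mathrm{ran}\,|T|^2}=\overline{\mathrm{ran}\,T^*T}$, and to $R=T$, which gives $\overline{\mathrm{ran}\,T^*}=\overline{\mathrm{ran}\,T^*T}$) together with the density of $\mathrm{ran}(T^*)$ noted above, I obtain
\[
H_A=\overline{\mathrm{ran}\,D}\subseteq\overline{\mathrm{ran}\,T^*}=\overline{\mathrm{ran}\,T^*T}=\overline{\mathrm{ran}\,|T|}\subseteq H_A,
\]
so the domain of $\overline{U}$ is all of $H_A$, while by the density of $\mathrm{ran}(T)$ its range is all of $X\oplus H_A$. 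Thus $\overline{U}$ is a surjective inner-product-preserving map, hence a unitary, a surjective isometry of Hilbert modules being automatically adjointable with adjoint its inverse; this yields $X\oplus H_A\cong H_A$.
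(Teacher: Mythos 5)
Your argument is correct, but note that the paper itself offers no proof of this theorem: it is stated with a reference to \cite[Theorem 1.4.2]{ManTro}, and what you have written is essentially that standard stabilization proof (Kasparov's dilution argument, as presented in Manuilov--Troitsky and in Lance's book): the column operator $T=(S,D)$, density of the range via infinite repetition of the generators, and the upgrade to a unitary via $|T|a\mapsto Ta$ together with $\overline{\mathrm{ran}\,T^*}=\overline{\mathrm{ran}\,T^*T}=\overline{\mathrm{ran}\,|T|}$. Three small remarks. First, your dilution estimate: placing $\gamma/K$ at $K$ indices carrying the same generator gives a second coordinate of norm $\Vert\gamma\Vert/\sqrt{K}$ (your $O(\Vert\gamma\Vert^2/K)$ is the square of the norm); it still tends to zero, so nothing is affected. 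Relatedly, textbook versions weight the two coordinates differently ($2^{-n}$ and $4^{-n}$); your choice of equal weights $2^{-n}$ is immaterial, as your computation shows. Second, the phrase ``straightforward approximation of the second coordinate'' deserves one explicit line: to capture $(0,b)$ one notes that $(Sa,Da)\in\mathrm{ran}\,T$, that $(Sa,0)$ lies in $\overline{\mathrm{ran}\,T}$ by the dilution step (its first coordinate is a finite sum $\sum\xi_m\gamma_m$), and that $\overline{\mathrm{ran}\,T}$ is a closed submodule, hence contains the difference $(0,Da)$; density of $\mathrm{ran}\,D$ then gives $0\oplus H_A\subseteq\overline{\mathrm{ran}\,T}$. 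This subtraction is exactly where the coupling is broken, and it silently uses the first half of your density argument, so it should be said. Third, your proof needs no unitality hypothesis: with the paper's Definition \ref{coungen} of countably generated ($X=\overline{\sum_i x_iA}$), both the dilution targets and all coefficients lie in $A$, so the argument runs verbatim for non-unital $A$, whereas many references first reduce to $\widetilde{A}$; it is worth observing that you get this for free.
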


For a proof of Kasparov's Theorem, see \cite[Theorem 1.4.2]{ManTro}.

\begin{definition}
\label{projHilbert} Let $X$ be a Hilbert $A$-module over a
C$^*$-algebra $A$. We say that $X$ is a {\it finitely generated
projective Hilbert module} if there exists a natural number $n$ and
a Hilbert $\widetilde{A}$-module $Y$ such that $X\oplus Y\cong
L_n(\widetilde{A})$ as Hilbert $\widetilde{A}$-modules.
\end{definition}

\bigskip

Here $L_n(A)$ denotes the Hilbert C*-module $A^n$. We see $L_n(A)$
as a Hilbert submodule of the standard module $H_A$, more precisely
we identify $L_n(A)$ with the submodule $A\oplus \cdots \oplus
A\oplus 0\oplus \cdots $ ($n$ copies of $A$) of $H_A$.

\bigskip

Note that, if $A$ is not unital, then a Hilbert $A$-module is a
finitely generated projective Hilbert module if and only if it is so
as a Hilbert $\widetilde{A}$-module. We will see in Theorem
\ref{thm:fgimpliesproj} that the finitely generated projective
Hilbert modules are exactly the ones that are algebraically finitely
generated.

\bigskip

Recall that an  $A$-module $M$ over a unital ring $A$ is {\it
finitely generated projective} if there is an isomorphism of
$A$-modules $M\oplus N\cong A^n$ for some $n$ and some $A$-module
$N$. Observe that, in this case, there is an idempotent $e\in
M_n(A)$ such that $e(A^n)\cong M$. We can look this idempotent $e$
as an idempotent in $M_{\infty}(A)$. We have that two finitely
generated projective $A$-modules $M$ and $M'$ are isomorphic if and
only if the corresponding idempotents in $M_{\infty}(A)$ are
equivalent. Here the relation of equivalence of idempotents is
defined by $e\sim f$ if and only if there are $x,y\in M_{\infty}(A)$
such that $e=xy$ and $f=yx$.

The relationship of this theory with the theory of Hilbert modules
as well as with the monoid $V(A)$ introduced in \ref{projs} is the
following:

\begin{proposition}
\label{algebraicVA} Let $A$ be a unital C$^*$-algebra.
\begin{itemize}
\item[{\rm (i)}] If $M$ is a finitely generated projective
$A$-module, then there exists a finitely generated projective
Hilbert $A$-module such that $M$ and $X$ are isomorphic $A$-modules.
\item[{\rm (ii)}] If $X$ and $Y$ are finitely generated projective
Hilbert $A$-modules, then $X$ and $Y$ are isomorphic as Hilbert
modules if and only if they are isomorphic as $A$-modules.
\item[{\rm (iii)}] There are monoid isomorphisms $V(A)\cong V'(A)\cong
V''(A)$, where $V(A)$ is the monoid constructed in \ref{projs},
$V'(A)$ is the monoid of isomorphism classes of finitely generated
projective Hilbert $A$-modules, and $V''(A)$ is the monoid of
isomorphism classes of finitely generated projective $A$-modules.
\end{itemize}
\end{proposition}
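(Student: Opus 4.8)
The plan is to route everything through the single model object $pL_n(A)=pA^n$ attached to a projection $p\in M_n(A)$, which is at once a finitely generated projective $A$-module (with algebraic complement $(1-p)A^n$) and a finitely generated projective Hilbert $A$-module (the submodule $pL_n(A)$ of $L_n(A)$ is orthogonally complemented by $(1-p)L_n(A)$, carrying the inner product $\langle p\xi,p\eta\rangle=\xi^*p\eta$ inherited from $L_n(A)$). Two structural inputs will carry most of the weight: the classical fact that every idempotent $e$ in a C$^*$-algebra is similar to a projection $p$, so that $eA^n\cong pA^n$ as $A$-modules; and the fact that for genuine projections, algebraic (idempotent) equivalence coincides with Murray--von Neumann equivalence, so that an $A$-module isomorphism $pA^n\cong qA^m$ forces a partial isometry $v$ with $p=vv^*$, $q=v^*v$, $v=pvq$. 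For (i) I would then start from the description recalled just before the statement: a finitely generated projective $A$-module $M$ satisfies $eA^n\cong M$ for some idempotent $e\in M_n(A)$. Replacing $e$ by a similar projection $p$ gives $M\cong pA^n$ as $A$-modules, and $X:=pL_n(A)$ is the required finitely generated projective Hilbert module (with $Y=(1-p)L_n(A)$, so $X\oplus Y\cong L_n(A)$), isomorphic to $M$ as an $A$-module.

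For (ii), the forward implication is immediate, since a unitary isomorphism of Hilbert modules is in particular an $A$-module isomorphism. For the converse I would first observe that every finitely generated projective Hilbert module has the model form above: from $X\oplus Y\cong L_n(A)$, the orthogonal projection onto the image of $X$ is a self-adjoint idempotent in the adjointable operators $\mathcal{L}(L_n(A))=M_n(A)$, i.e. a projection $p\in M_n(A)$ with $X\cong pL_n(A)$ as Hilbert modules; likewise $Y\cong qL_m(A)$. If $X\cong Y$ as $A$-modules, then $pA^n\cong qA^m$ as $A$-modules, whence $p$ and $q$ are algebraically equivalent idempotents and therefore Murray--von Neumann equivalent; choosing $v$ with $p=vv^*$, $q=v^*v$, the map $\xi\mapsto v\xi$ carries $qL_m(A)$ onto $pL_n(A)$, is $A$-linear, and is inner-product preserving because $\langle v\xi,v\eta\rangle=\xi^*v^*v\eta=\xi^*q\eta=\langle\xi,\eta\rangle$ for $\xi,\eta\in qL_m(A)$. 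Thus $X\cong Y$ as Hilbert modules.

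Finally, for (iii) I would assemble these into monoid maps. The forgetful assignment $\Phi\colon V'(A)\to V''(A)$, $[X]\mapsto[X]$, is a monoid homomorphism (it sends $[X\oplus Y]$ to $[X]+[Y]$), surjective by (i) and injective by (ii), hence an isomorphism. The assignment $\Psi\colon V(A)\to V'(A)$, $[p]\mapsto[pL_n(A)]$, is well defined because Murray--von Neumann equivalent projections yield isomorphic Hilbert modules (the computation in (ii)) and injective because a Hilbert-module isomorphism $pL_n(A)\cong qL_m(A)$ restricts to an $A$-module isomorphism, forcing $p\sim q$ as in (ii); it is surjective by the orthogonal-complement argument above, and a homomorphism since $[p\oplus q]\mapsto[(p\oplus q)L_{n+m}(A)]=[pL_n(A)\oplus qL_m(A)]$. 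Combining, $V(A)\cong V'(A)\cong V''(A)$.

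I expect the technical heart to be the converse in (ii): manufacturing an inner-product-preserving isomorphism out of a bare $A$-module isomorphism. This hinges precisely on the two structural inputs---identifying finitely generated projective Hilbert modules with corners $pL_n(A)$ via $\mathcal{L}(L_n(A))=M_n(A)$, and upgrading algebraic equivalence of projections to a partial isometry implementing Murray--von Neumann equivalence---so verifying or citing those two facts carefully is where the real content lies; the rest of the argument is bookkeeping with the model object $pL_n(A)$.
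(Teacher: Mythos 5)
Your proof is correct and follows essentially the same route as the paper: reduce both kinds of objects to the model $p(A^n)=pL_n(A)$, translate an $A$-module isomorphism into algebraic equivalence of idempotents, and upgrade that to Murray--von Neumann equivalence of projections, which then implements a unitary Hilbert-module isomorphism $\xi\mapsto v\xi$. The only difference is that where you cite the two structural inputs as classical facts, the paper proves them inline --- replacing the idempotent $e$ by the projection $p=ee^*\bigl(1+(e-e^*)(e^*-e)\bigr)^{-1}$ in (i), and constructing the partial isometry $w=x(x^*x)^{-1/2}$ from the algebraic equivalence $p=yx$, $q=xy$ in (ii) --- so your appeals to them are legitimate and your filling in of the model-form reduction and the monoid bookkeeping in (iii) is consistent with what the paper leaves implicit.
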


\begin{proof}
(i) If $M\oplus N\cong A^n$ then there is an idempotent matrix $e$
in $M_n(A)$ such that $e(A^n)\cong M$. There is a projection $p\in
M_n(A)$ such that $eM_n(A)=pM_n(A)$. Indeed set
$z=(1+(e-e^*)(e^*-e))$. Then $z$ is an invertible positive element
in $M_n(A)$ and $ez=ee^*e=ze$. By using this, it is easy to check
that $p:=ee^*z^{-1}$ is a projection, and $ep=p$ and $e=pe$, showing
the equality $eM_n(A)=pM_n(A)$. So $M$ is isomorphic with the
finitely generated projective Hilbert $A$-module $p(A^n)$.

\bigskip

(ii) Assume that $X=p(A^n)$ and $Y=q(A^n)$ are isomorphic as
$A$-modules, where $p$ and $q$ are projections in $M_n(A)$. It
follows that $p$ and $q$ are equivalent just as idempotents, i.e,
there are $y\in pM_n(A)q$  and $x\in qM_n(A)p$ such that $p=yx$ and
$q=xy$. Then $p=(x^*x)(yy^*)=(yy^*)(x^*x)$, so that $x^*x$ is
invertible in  $pM_n(A)p$, with inverse $yy^*$. Now
$w:=x(x^*x)^{-1/2}$ is a partial isometry with $w^*w=p$ and
$ww^*=q$, so that $p$ and $q$ are equivalent as projections. It
follows that $X=p(A^n)$ is isomorphic with $Y=q(A^n)$ as Hilbert
$A$-modules.

\bigskip

(iii) This follows from (i) and (ii).
\end{proof}

Part (iii) of Proposition \ref{algebraicVA} provides us with three
different ``pictures" of $V(A)$.
This characterization will be partially extended later
to the category of countably generated Hilbert modules (Theorem
\ref{CuandW}). Note that the Grothendieck group of $V(A)$ is the
group $K_0(A)$ (in the unital case), and that, by Proposition
\ref{algebraicVA}(iii), the definition of  $K_0(A)$ given in
\cite[Chapter 1]{Rosenberg} (where we look at $A$ as a plain ring)
agrees with the C*-version given in 1.5.3.

\begin{lemma}
\label{lem:approxgen} Let $N$ be an algebraically finitely generated
Hilbert module over a unital C*-algebra $A$. Let $a_1,\dots ,a_s\in
N$ be a family of generators. It follows that there exists $\epsilon
0$ such that, if $a_1',\dots a_s'\in N$ satisfy
$\|a_i-a_i'\|<\epsilon$ for all $i$, then $a_1',\dots ,a_s'$ are
generators of $N$.
\end{lemma}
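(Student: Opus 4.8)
The plan is to translate the condition ``$a_1,\dots,a_s$ generate $N$'' into the surjectivity of a single bounded map and then exploit the fact that surjectivity of a bounded operator between Banach spaces is stable under small perturbations. First I would introduce the bounded right-$A$-module homomorphism
\[
\phi\colon L_s(A)\to N,\qquad \phi(\lambda_1,\dots,\lambda_s)=\sum_{i=1}^s a_i\lambda_i,
\]
which satisfies $\|\phi\|\le\sum_i\|a_i\|$ and, because the $a_i$ generate $N$ algebraically, is onto. Since $N$ is complete by Definition~\ref{def:HilbC*} and $L_s(A)$ is a Banach space, the open mapping theorem applies: $\phi$ is open, so there is a constant $c>0$ such that every $y\in N$ admits a preimage $\xi\in L_s(A)$ with $\phi(\xi)=y$ and $\|\xi\|\le c\|y\|$.

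Next I would control the perturbation. For the $a_i'$ define $\phi'$ by the same formula. The only module-theoretic input needed is the elementary inequality $\|\lambda_i\|\le\|(\lambda_1,\dots,\lambda_s)\|$ in $L_s(A)$, which follows from $\|\lambda_i\|^2=\|\lambda_i^*\lambda_i\|\le\|\sum_j\lambda_j^*\lambda_j\|$. Hence
\[
\|(\phi-\phi')(\xi)\|=\Bigl\|\sum_i(a_i-a_i')\lambda_i\Bigr\|\le\Bigl(\sum_i\|a_i-a_i'\|\Bigr)\|\xi\|,
\]
so $\|\phi-\phi'\|\le s\max_i\|a_i-a_i'\|$. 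I would then fix $\epsilon=\tfrac{1}{2sc}$, so that $\|a_i-a_i'\|<\epsilon$ for all $i$ forces $\|\phi-\phi'\|<\tfrac1{2c}$.

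The core step is to deduce that $\phi'$ is still onto, which is exactly the assertion that $a_1',\dots,a_s'$ generate $N$. Given $y\in N$, I would build a preimage by a geometric iteration: put $y_0=y$, and having $y_n$ choose $\xi_n$ with $\phi(\xi_n)=y_n$ and $\|\xi_n\|\le c\|y_n\|$, then set $y_{n+1}=y_n-\phi'(\xi_n)=(\phi-\phi')(\xi_n)$. Then $\|y_{n+1}\|\le\tfrac12\|y_n\|$, so $\|y_n\|\le 2^{-n}\|y\|$ and $\|\xi_n\|\le c2^{-n}\|y\|$; thus $\xi=\sum_{n\ge0}\xi_n$ converges in $L_s(A)$ and, telescoping, $\phi'(\xi)=\sum_n(y_n-y_{n+1})=y$. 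As $y$ was arbitrary, $\phi'$ is surjective.

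The step I expect to be the main obstacle is precisely this last one, because $\phi$ has a kernel and, a priori, no bounded linear section: were $N$ known to be projective one could split $\phi$ and invert $\phi'\circ(\text{section})$ by a Neumann series, but I deliberately avoid that route, since the equivalence of algebraic finite generation with projectivity is only established later in Theorem~\ref{thm:fgimpliesproj}. The geometric iteration above is the self-contained substitute for a section, and the remaining ingredients (completeness of $N$ and the norm inequality in $L_s(A)$) are routine.
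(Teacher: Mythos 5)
Your proposal is correct and takes essentially the same route as the paper: the same module map $L_s(A)\to N$ sending the standard generators to the $a_i$, the same coordinate estimate giving $\|\phi-\phi'\|\le s\max_i\|a_i-a_i'\|$, and stability of surjectivity under small perturbation of the operator. The only difference is that the paper cites, as a consequence of the open mapping theorem, the general fact that surjective bounded operators between Banach spaces form an open set in $\mathrm{Hom}(E,G)$, whereas you prove exactly that fact inline by the standard geometric iteration, which makes your argument marginally more self-contained but not a different proof.
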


\begin{proof}
Let $f\colon L_s(A)\to N$ be the $A$-module map defined by
$f(e_i)=a_i$ for all $i$. Then $f$ is adjointable, with adjoint
$N\to L_s(A)$ given by $ x\mapsto (\langle a_1,x\rangle ,\dots
,\langle a_s,x\rangle )$. In particular, we see that $f$ is a
surjective continuous map.

It follows from the open mapping Theorem that, for Banach spaces $E$
and $G$, the set of surjective continuous linear maps is open in the
Banach space $\text{Hom}(E,G)$ of all the continuous linear maps
from $E$ to $G$. It follows that there is $\delta >0$ such that, if
$\|f-g\|<\delta $, then $g$ is surjective. Let $g\colon L_s(A)\to N$
be the right $A$-module homomorphism defined by $g(e_i)=a_i'$, where
$\|a_i-a_i'\|<\epsilon $  for all $i$. If $\alpha =(\alpha _1,\dots
,\alpha _s)\in L_s(A)$ satisfies $\|\alpha \|\le 1$, we get
$$\|(g-f)\alpha \|=\| \sum _{i=1}^s (a_i-a_i')\alpha _i\| \le s\epsilon .$$
It follows that, if $\epsilon <\delta /s$, then $g$ is surjective.
\end{proof}

\begin{theorem}
\label{thm:fgimpliesproj} Let $N$ be an algebraically finitely
generated Hilbert $A$-module over a C$^*$-algebra $A$. Then $N$ is a
finitely generated projective Hilbert module.
\end{theorem}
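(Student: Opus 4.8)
The plan is to exhibit $N$ as the image of a projection acting on a free module, i.e.\ as an orthogonal direct summand of some $L_s(\widetilde A)$. First I would reduce to the unital case. Since every Hilbert $A$-module is a Hilbert $\widetilde A$-module, and since $a_1,\dots,a_s$ generate $N$ algebraically over $\widetilde A$ as well (each $a_i\in N$, so $\sum_i a_i\widetilde A=\sum_i a_iA=N$), and since by the remark preceding the theorem $N$ is finitely generated projective over $A$ if and only if it is so over $\widetilde A$, I may assume $A$ is unital and show that $N$ is a direct summand of $L_s(A)$.

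With $A$ unital, consider the $A$-module map $f\colon L_s(A)\to N$ determined by $f(e_i)=a_i$. Algebraic generation means $N=\sum_i a_iA$, so $f$ is surjective; as in the proof of Lemma~\ref{lem:approxgen}, $f$ is adjointable with $f^*(x)=(\langle a_1,x\rangle,\dots,\langle a_s,x\rangle)$. The whole argument hinges on showing that the positive adjointable operator $T:=ff^*$ on $N$ is invertible. To begin, I would apply the open mapping Theorem to the surjection $f$ to obtain $\gamma>0$ such that every $y\in N$ has a preimage $c$ with $\|c\|\le\gamma^{-1}\|y\|$. Then, using the adjoint relation $\langle f(c),y\rangle=\langle c,f^*y\rangle$ together with the Cauchy-Schwartz inequality of Proposition~\ref{Cauchy-Schw}(iii),
\[
\|y\|^2=\|\langle f(c),y\rangle\|=\|\langle c,f^*y\rangle\|\le \|c\|\,\|f^*y\|\le \gamma^{-1}\|y\|\,\|f^*y\|,
\]
whence $\|f^*y\|\ge\gamma\|y\|$ for all $y$. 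Since $\|T^{1/2}y\|^2=\|\langle Ty,y\rangle\|=\|\langle f^*y,f^*y\rangle\|=\|f^*y\|^2$, the same bound gives $\|T^{1/2}y\|\ge\gamma\|y\|$.

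The main obstacle is upgrading this norm bound to genuine invertibility of $T$ in the C$^*$-algebra of adjointable operators on $N$ (being bounded below in norm is strictly weaker than the operator inequality $T\ge\gamma^2$ over a general $A$). I would argue by contradiction via continuous functional calculus applied to the positive operator $T$. Suppose $0\in\sigma(T)$, and choose a continuous $g$ with $0\le g\le1$, $g(0)=1$, supported in $[0,\delta]$ with $\delta<\gamma^2/4$. Because $g(0)=1$ and $0\in\sigma(T)$, one has $\|g(T)\|\ge 1$, so there is a unit vector $y$ with $w:=g(T)y$ satisfying $\|w\|\ge 1/2$; on the other hand $t\mapsto t^{1/2}g(t)$ is bounded by $\delta^{1/2}$, so $\|T^{1/2}w\|\le\|T^{1/2}g(T)\|\,\|y\|\le\delta^{1/2}$. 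This contradicts $\|T^{1/2}w\|\ge\gamma\|w\|\ge\gamma/2$. Hence $0\notin\sigma(T)$ and $T$ is invertible.

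Finally I would split $f$ and read off the projection. With $T$ invertible, set $v:=f^*T^{-1/2}\colon N\to L_s(A)$, which is adjointable. A direct computation gives $v^*v=T^{-1/2}ff^*T^{-1/2}=\mathrm{id}_N$, so $e:=vv^*=f^*T^{-1}f$ is a projection on $L_s(A)$. Thus $v$ is a unitary of $N$ onto $eL_s(A)$, and since $L_s(A)=eL_s(A)\oplus(1-e)L_s(A)$, putting $Y:=(1-e)L_s(A)$ yields $N\oplus Y\cong L_s(A)=L_s(\widetilde A)$. This exhibits $N$ as a finitely generated projective Hilbert module, completing the proof.
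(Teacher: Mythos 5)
Your proof is correct, but it takes a genuinely different route from the paper's. The paper proceeds via Kasparov's stabilization theorem (Theorem~\ref{them: Kasparov}): it writes $N\oplus M=H_A$, perturbs the generators $a_k$ into a finite block $L_{n_0}(A)$ using the stability of generating families (Lemma~\ref{lem:approxgen}), shows that the perturbed module $\overline{N}$ is an \emph{algebraic} complement of $M$ in $H_A$, cuts down by the modular law to get $L_{n_0}(A)=\overline{N}\,\widetilde{\oplus}\,(M\cap L_{n_0}(A))$, and finally invokes Proposition~\ref{algebraicVA} to pass from algebraic finite generation and projectivity back to the Hilbert-module statement. You instead work directly with the surjection $f\colon L_s(A)\to N$, $f(e_i)=a_i$: the open mapping theorem together with Cauchy--Schwarz (Proposition~\ref{Cauchy-Schw}) gives the lower bound $\Vert f^*y\Vert\geq\gamma\Vert y\Vert$, your functional-calculus contradiction argument correctly upgrades this norm bound to invertibility of the positive operator $T=ff^*$ (positivity of $ff^*=(f^*)^*(f^*)$ being guaranteed by Proposition~\ref{prop:positivity}), and then $v=f^*T^{-1/2}$ is an explicit adjointable isometry identifying $N$ with $eL_s(A)$, where $e=f^*T^{-1}f$ is a projection. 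Your approach buys a completely explicit orthogonal splitting $N\oplus(1-e)L_s(\widetilde{A})\cong L_s(\widetilde{A})$ and avoids both Kasparov's theorem and the algebraic-versus-Hilbert comparison of Proposition~\ref{algebraicVA}; it is essentially the statement that a surjective adjointable operator admits an adjointable section. The paper's argument is longer but recycles machinery it has already built and stays closer to the purely algebraic picture of projectivity.

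One minor point in your reduction step: for non-unital $A$ the claimed equality $\sum_i a_i\widetilde{A}=\sum_i a_iA$ need not hold literally, since one may have $a_i\notin a_iA$. What is true, and all you need, is that the $A$-submodule generated by $a_1,\dots,a_s$ (which by definition contains the scalar multiples $\mathbb{C}a_i$) coincides with $\sum_i a_i\widetilde{A}$, so the $a_i$ do generate $N$ as a $\widetilde{A}$-module and the reduction to the unital case goes through unchanged.
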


\begin{proof}
Passing to $\widetilde{A}$, we can assume that $A$ is unital. Let
$a_1, \dots ,a_s$ be a family of generators of $N$. By Lemma
\ref{lem:approxgen}, there exists $\epsilon >0$ such that any family
$a_1',\dots ,a_s'$ of elements of $N$ such that
$\|a_i-a_i'\|<\epsilon$ for all $i$ is a family of generators of
$N$. On the other hand the map $g\colon L_s(A)\to N$ given by
$g(e_i)=a_i$ is open, so there is $\delta >0$ such that every
element $x\in N$ of norm $\le \delta $ can be written as $\sum
a_k\alpha _k$, with $\|(\alpha_1,\dots ,\alpha _k)\|<1$. Choose the
$\epsilon$ above such that, in addition it satisfies the inequality
$\epsilon<\delta/s$.

By Kasparov's Theorem (Theorem \ref{them: Kasparov}), we can assume
that $N\oplus M=H_A$. Let $P\colon H_A\to N$ be the orthogonal
projection of $H_A$ onto $N$. Clearly we have $\|P\|=1$. Let
$Q\colon H_A\to L_{n_0}(A)$ be an orthogonal projection such that
$\|Qa_k-a_k\|<\epsilon $ for all $k=1,\dots ,s$, and write
$\overline{a}_k=Qa_k$. We have an orthogonal decomposition
$\overline{a}_k=a_k'+a_k''$ with $a_k'\in N$ and $a_k''\in M$.
Observe that
$$a_k-a_k'=P(a_k-\overline{a}_k)$$
and so $\|a_k-a_k'\|\le \|a_k-\overline{a}_k\|<\epsilon $ for all
$k$, which implies that $a_1',\dots ,a_k'$ is a family of generators
of $N$.

Let $\overline{N} =\overline{a} _1A+\cdots +
\overline{a}_sA\subseteq L_{n_0}(A)$. Then we have
$H_A=M+\overline{N}$: given $x\in H_A$, we can write
$$x=x_M+\sum a_k'\alpha _k=(x_M-\sum a_k''\alpha _k) +\sum \overline{a}_k\alpha _k\in M+\overline{N} ,$$
where $x_M=(I-P)x$ is the orthogonal projection of $x$ on $M$.

We have continuous surjective maps $Q_1:=Q_{|N}\colon N\to
\overline{N}$ and $P_1:= P_{|\overline{N}}\colon \overline{N}\to N$
such that $P_1Q_1(a_k)=a_k'$ for all $k$. Let $x\in N$ such that
$\|x\|\le 1$. Then there is $(\alpha _1,\dots ,\alpha _s)\in L_s(A)$
such that $\delta x=\sum _{i=1}^s a_k\alpha _k$ and $\|(\alpha
_1,\dots ,\alpha _s)\|< 1$. Then $\|\alpha_i\|< 1$ for all $i$ and
so
$$\| P_1Q_1(x)-x\|=\frac{1}{\delta} \|\sum _{k=1}^s
(a'_k-a_k)\alpha _k\|\le \frac{s\epsilon}{\delta}<1. $$ Hence
$\|P_1Q_1-I\|<1$ in the Banach algebra $\text{Hom}(N,N)$ and thus
$P_1Q_1$ is an isomorphism. It follows that $Q_1$ is injective and
so it is an isomorphism, and so $P_1$ is an isomorphism as well. It
follows that $\sum a_k'\alpha _k =0\implies \sum
\overline{a}_k\alpha _k=0$, and thus $\overline{N}\cap M=\{0\}$.

Hence, we have obtained that $H_A=\overline{N}\widetilde{\oplus } M$
(algebraic direct sum), and since $\overline{N}\subseteq
L_{n_0}(A)$, it follows from the modular law that
$$L_{n_0}(A)=\overline{N}\widetilde{\oplus}(M\cap L_{n_0}(A)),$$
which implies that $\overline{N}$ is a finitely generated projective
$A$-module. Since $N\cong \overline{N}$ as right $A$-modules, it
follows from Proposition \ref{algebraicVA}  that $N$ is a finitely
generated projective Hilbert $A$-module.
\end{proof}

%%%%%%%%%%%%%%%%%%%%%%%%%%%%%%%%%%%%%%%%%%%%%%%%%%%%%%%%%%%%%%%%%%%%%%%%%%%
%%%%%%%%%%%%%%%%%%%%%%%%%%%%%%%%%%%%%%%%%%%%%%%%%%%%%%%%%%%%%%%%%%%%%%%%%%%

\subsection{The algebra of compact operators}\label{subsection:compoprs}

Let $A$ be a C$^*$-algebra. For Hilbert $A$-modules $X$ and $Y$, denote
by $\mathcal L_A(X,Y)$ the set of all {\it adjointable operators}
$T\colon X\to Y$, so that there exists a (unique) operator (the {\it
adjoint} of $T$) $T^*\colon Y\to X$ such that $\langle Tx,y\rangle
=\langle x,T^* y\rangle$ for all $x\in X$ and all $y\in Y$.

\begin{proposition}
\label{prop:contiadjoint} If $T\in \mathcal L _A(X,Y)$, then $T$ is
$A$-linear and continuous.
\end{proposition}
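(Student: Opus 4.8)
The plan is to derive both assertions from the single principle that the inner product is definite: by property~(iv) of Definition~\ref{def:HilbC*}, an element $w$ of a Hilbert module is zero as soon as $\langle w,w\rangle=0$, and hence, testing against $w$ itself, as soon as $\langle w,y\rangle=0$ for all $y$ in the module. Thus to prove an identity $Tu=Tv$ it suffices to check that $\langle Tu,y\rangle=\langle Tv,y\rangle$ for every $y$ in the target module, and each such scalar identity can be transported across $T$ by the defining relation $\langle Tx,y\rangle=\langle x,T^*y\rangle$.

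First I would establish $A$-linearity. Fix $x\in X$, $a\in A$, and $y\in Y$. Using the adjoint relation together with properties (ii) and (iii), one computes
\[
\langle T(xa),y\rangle=\langle xa,T^*y\rangle=a^*\langle x,T^*y\rangle=a^*\langle Tx,y\rangle=\langle (Tx)a,y\rangle,
\]
where the identity $\langle wa,z\rangle=a^*\langle w,z\rangle$ is obtained from conjugate symmetry and property (ii). Since $y$ is arbitrary, the definiteness principle gives $T(xa)=(Tx)a$. The same bracket-and-test argument yields additivity and $\mathbb{C}$-homogeneity, using $\langle x+x',z\rangle=\langle x,z\rangle+\langle x',z\rangle$ and $\langle\lambda x,z\rangle=\bar\lambda\langle x,z\rangle$, so that $T$ is a genuine module homomorphism; if ``operator'' is taken to already include $\mathbb{C}$-linearity, these last points may simply be omitted.

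For continuity I would invoke the Closed Graph Theorem, which applies since $X$ and $Y$ are complete, hence Banach spaces, and $T$ is linear. Suppose $x_n\to x$ in $X$ and $Tx_n\to z$ in $Y$; the goal is $z=Tx$. For each $y\in Y$, the norm estimate $\|\langle u,v\rangle\|\le\|u\|\,\|v\|$ from Proposition~\ref{Cauchy-Schw}(iii) makes the inner product continuous in each variable, so
\[
\langle z,y\rangle=\lim_n\langle Tx_n,y\rangle=\lim_n\langle x_n,T^*y\rangle=\langle x,T^*y\rangle=\langle Tx,y\rangle.
\]
As $y$ is arbitrary, definiteness again forces $z=Tx$; thus the graph of $T$ is closed and $T$ is continuous.

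The argument is essentially routine, and there is no serious obstacle: every step reduces to the definiteness of the inner product and the elementary Cauchy--Schwartz norm bound. The only point requiring a moment's care is that the Closed Graph Theorem presupposes linearity of $T$, so one must be sure that the linearity established in the first half (or assumed in the meaning of ``operator'') is in place before closing the graph.
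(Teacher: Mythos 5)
Your proof is correct and follows essentially the same route as the paper: continuity via the Closed Graph Theorem, using the adjoint relation and the Cauchy--Schwartz estimate to transfer the limit through the inner product and then invoking definiteness to conclude $z=Tx$. The only difference is cosmetic (you test against an arbitrary $y$ and then specialize, while the paper computes $\langle y-Tx,y-Tx\rangle=0$ directly), and you additionally write out the $A$-linearity argument that the paper leaves as an exercise.
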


\begin{proof}
We left $A$-linearity as an exercise for the reader. To show
continuity, we use the closed graph theorem, so assume that
$(x_{\alpha})$ is a net in $X$ converging to $x\in X$ and such that
$Tx_{\alpha}$ converges to $y\in Y$. We have to show that $y=Tx$.

We have

\begin{align*}
0 & = \langle T^*(y-Tx),x_{\alpha}\rangle - \langle
T^*(y-Tx),x_{\alpha}\rangle = \langle y-Tx,Tx_{\alpha}\rangle
-\langle
T^*(y-Tx),x_{\alpha}\rangle \\
& \longrightarrow \langle y-Tx,y\rangle -\langle T^*(y-Tx),x\rangle
=
\langle y-Tx,y-Tx\rangle
\end{align*}
Hence $\langle y-Tx,y-Tx\rangle =0$ and so $y=Tx$ as desired.
\end{proof}

If $X$ is a Hilbert $A$-module then $\mathcal L _A(X)$ is a
C$^*$-algebra with the involution given by $T\mapsto T^*$ and the usual
operator norm. Indeed we have, for $T\in \mathcal L _A(X)$,
$$\|T^*\| \|T\| \ge \| T^*T\|\ge \text{sup}_{\|x\|\le 1}
\|\langle T^*Tx,x\rangle \|=\text{sup}_{\|x\|\le 1}\|\langle Tx,Tx\rangle \|
=\text{sup}_{\|x\|\le 1} \|Tx\|^2 =\| T\| ^2,$$ from which it
follows easily the C*-identity $\|T^*T\|=\|T\|^2$.

\begin{proposition}
\label{prop:positivity} For $T\in \mathcal L _A(X)$, the following
properties are equivalent:
\begin{itemize}
\item[(i)] $T\ge 0$ in $\mathcal L_A(X)$.
\item[(ii)] $\langle T x,x\rangle \ge 0$ for all $x\in X$.
\end{itemize}
\end{proposition}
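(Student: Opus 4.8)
The plan is to treat $\mathcal L_A(X)$ as the C$^*$-algebra it was just shown to be, and to play its order structure off against axiom~(iv) of Definition~\ref{def:HilbC*}. The implication (i)$\,\Rightarrow\,$(ii) should be immediate: if $T\ge 0$ in $\mathcal L_A(X)$, then by the C$^*$-functional calculus $T=S^*S$ for some $S\in\mathcal L_A(X)$ (e.g. $S=T^{1/2}$), and then for every $x\in X$ one has $\langle Tx,x\rangle=\langle S^*Sx,x\rangle=\langle Sx,Sx\rangle$, using the defining adjoint relation together with $(S^*)^*=S$. This last quantity is $\ge 0$ by axiom~(iv), which is exactly (ii).

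For the converse (ii)$\,\Rightarrow\,$(i) I would proceed in two stages. First I would check that $T=T^*$: for each $x$ the element $\langle Tx,x\rangle$ is positive, hence self-adjoint, so $\langle Tx,x\rangle=\langle Tx,x\rangle^*=\langle x,Tx\rangle=\langle T^*x,x\rangle$; writing $S=T-T^*$ this says $\langle Sx,x\rangle=0$ for all $x$. Polarizing, that is, replacing $x$ by $x+y$ and then by $x+iy$ and using that adjointable operators are $\mathbb C$-linear (Proposition~\ref{prop:contiadjoint}, viewing $X$ as a Hilbert $\widetilde A$-module), yields $\langle Sx,y\rangle=0$ for all $x,y$; taking $y=Sx$ forces $Sx=0$, hence $S=0$ and $T=T^*$.

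Second, with $T$ self-adjoint I would invoke the continuous functional calculus in $\mathcal L_A(X)$ to split $T=T_+-T_-$ with $T_\pm\ge 0$ and $T_+T_-=0$, and then show $T_-=0$. The key idea is to feed the hypothesis the test vectors $\eta=T_-\xi$: since $TT_-=-T_-^2$, one gets $\langle T\eta,\eta\rangle=-\langle T_-\eta,\eta\rangle$, while applying the already-proved implication (i)$\,\Rightarrow\,$(ii) to the positive operator $T_-$ gives $\langle T_-\eta,\eta\rangle\ge 0$. Hypothesis (ii) forces $\langle T\eta,\eta\rangle\ge 0$, so $\langle T_-\eta,\eta\rangle=0$; writing $T_-=R^2$ with $R=T_-^{1/2}$ this reads $\langle R\eta,R\eta\rangle=0$, whence $R\eta=0$ and therefore $T_-^{3/2}\xi=RT_-\xi=0$ for all $\xi$. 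Thus $T_-^{3/2}=0$, and positivity of $T_-$ gives $T_-=0$, so $T=T_+\ge 0$.

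The hard part will be the converse direction, and within it the little trick of testing positivity against $\eta=T_-\xi$ to annihilate the negative part; the self-adjointness step and the polarization are routine once one is careful that the inner product is conjugate-linear in the first variable and $A$-linear in the second.
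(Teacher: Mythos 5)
Your proof is correct and follows essentially the same route as the paper: the factorization $T=S^*S$ for (i)$\Rightarrow$(ii), and for the converse a polarization argument to get $T=T^*$, followed by the decomposition $T=T_+-T_-$ with $T_+T_-=0$ and testing positivity against vectors of the form $T_-\xi$ (your $\langle T_-\eta,\eta\rangle$ with $\eta=T_-\xi$ is exactly the paper's $\langle T_-^3x,x\rangle$). The only differences are cosmetic: you polarize $S=T-T^*$ rather than the identity $\langle Tx,x\rangle=\langle x,Tx\rangle$ directly, and you conclude via $T_-^{3/2}=0$ where the paper concludes via $T_-^3=0$.
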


\begin{proof}
(i)$\implies $(ii): If $T=S^*S$, then $\langle Tx,x\rangle =\langle
Sx,Sx\rangle \ge 0$ for all $x\in X$.

(ii)$\implies $(i): We have $\langle Tx,x\rangle =\langle
Tx,x\rangle ^* = \langle x,Tx\rangle $, and so $\phi(x,y)= \langle
Tx,y\rangle $ gives a sesquilinear form on $X$ such that $\phi
(x,y)=\phi (y,x)$ for all $x,y\in X$. By polarization we get
$\langle Tx,y \rangle = \langle x,Ty\rangle $ for all $x,y\in X$.
Thus $T=T^*$ and we can write $T=T_+-T_-$, with $T_+,T_-\ge 0$ and
$T_+T_-=0$. By hypothesis we have
$$\langle T_+x,x\rangle \ge \langle T_-x,x\rangle $$
for all $x\in X$, and in particular
$$\langle T_-^3x,x\rangle =\langle T_-^2x,T_-x\rangle \le \langle T_+T_-x,T_-x\rangle =0.$$
We get $T_-^3=0$ and so $T_-=0$. This gives $T=T_+\ge 0$, as
desired.
\end{proof}

We now define a particular type of operators in $\mathcal L
_A(X,Y)$, the so-called {\it compact operators}. For $y\in Y$ and
$x\in X$, define $\theta_{y,x} \colon X\to Y$ by
$$\theta _{y,x}(z)=y\langle x,z\rangle .$$
We have the following properties:
\begin{itemize}
\item[(i)] $\theta_{y,x}^*=\theta _{x,y}$;
\item[(ii)] $\theta _{x,y}\theta_{u,v}=\theta _{x\langle y,u\rangle , v}
=\theta _{x,v\langle u,y\rangle }$;
\item[(iii)] $T\theta _{y,x}=\theta _{Ty,x}$ for $T\in
\text{Hom}_A(Y,Z)$;
\item[(iv)] $\theta _{y,x}S= \theta _{y,S^*x}$ for $S\in \mathcal
L _A(Z,X)$.
\end{itemize}

\medskip

Let $\mathcal K (X,Y)$ be the closed linear span of the set
$\{\theta_{y,x}: x\in X,y\in Y\}$ in $\mathcal L_A(X,Y)$, and set
$\mathcal K (X):=\mathcal K (X,X)$. Observe that $\mathcal K (X)$ is
a closed essential ideal of the C$^*$-algebra $\mathcal L_A(X)$.

\begin{proposition}
\label{compactcomp} Let $A$ be a C$^*$-algebra. Then we have:
\begin{itemize}
\item[\rm (i)] $\mathcal K (A)\cong A$;
\item[\rm (ii)] $\mathcal K (L_n(A))\cong M_n(A)$;
\item[\rm (iii)] $\mathcal K (H_A)\cong A\otimes \mathcal K$, where $\mathcal K$
is the algebra $\mathcal K_{\mathbb C}(H_{\mathbb C})$ of compact
operators on an infinite-dimensional separable Hilbert space.
\end{itemize}
\end{proposition}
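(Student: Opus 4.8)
The plan is to establish the three isomorphisms in order, each feeding into the next, using the elementary norm estimate $\|\theta_{y,x}\|\le\|x\|\,\|y\|$ throughout. This estimate is immediate from the Cauchy--Schwartz inequality (Proposition~\ref{Cauchy-Schw}(iii)): for $z\in X$ we have $\|\theta_{y,x}(z)\|=\|y\langle x,z\rangle\|\le\|y\|\,\|\langle x,z\rangle\|\le\|y\|\,\|x\|\,\|z\|$.

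For (i), I regard $A$ as a Hilbert module over itself, so that $\langle x,z\rangle=x^*z$ and hence $\theta_{y,x}(z)=yx^*z$; that is, $\theta_{y,x}$ is left multiplication $L_{yx^*}$ by the element $yx^*\in A$. First I would check that $a\mapsto L_a$ is an isometric $*$-homomorphism $A\to\mathcal L_A(A)$: it is multiplicative by associativity, it is $*$-preserving because $\langle L_az,w\rangle=z^*a^*w=\langle z,L_{a^*}w\rangle$, and it is isometric since $\|L_a\|=\sup_{\|z\|\le1}\|az\|=\|a\|$, the value $\|a\|$ being attained in the limit along an approximate unit (Lemma~\ref{lem:approxunits}). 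The closed linear span of $\{yx^*\mid x,y\in A\}$ is all of $A$ (put $x$ in an approximate unit and apply Lemma~\ref{lem:approxunits} again), so the image of this embedding is exactly the closed span of the $L_{yx^*}=\theta_{y,x}$, namely $\mathcal K(A)$. Thus $\mathcal K(A)\cong A$.

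For (ii), write $L_n(A)=A\oplus\cdots\oplus A$ and let $\iota_j\colon A\to L_n(A)$, $\pi_i\colon L_n(A)\to A$ be the canonical inclusions and projections, so that $\pi_i\iota_j=\delta_{ij}\,\mathrm{id}_A$, $\sum_i\iota_i\pi_i=\mathrm{id}$, and $\iota_j^*=\pi_j$. I define $\Phi\colon\mathcal K(L_n(A))\to M_n(\mathcal K(A))$ by $\Phi(T)=(\pi_iT\iota_j)_{ij}$. Using $T\theta_{y,x}=\theta_{Ty,x}$ and $\theta_{y,x}S=\theta_{y,S^*x}$ one computes $\pi_i\theta_{y,x}\iota_j=\theta_{\pi_iy,\pi_jx}$, a rank-one operator on $A$; hence each corner $\pi_iT\iota_j$ genuinely lies in $\mathcal K(A)$ and $\Phi$ is well defined. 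That $\Phi$ is a $*$-homomorphism follows from $\sum_j\iota_j\pi_j=\mathrm{id}$ together with $\iota_j^*=\pi_j$; it is surjective because $\sum_{i,j}\iota_iT_{ij}\pi_j$ is a preimage of $(T_{ij})$, and injective because $T=\sum_{i,j}\iota_i(\pi_iT\iota_j)\pi_j$. Combining with (i) gives $\mathcal K(L_n(A))\cong M_n(\mathcal K(A))\cong M_n(A)$.

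For (iii), let $P_n\colon H_A\to L_n(A)$ be the projection onto the first $n$ coordinates. Since $\sum_i\langle x_i,x_i\rangle$ converges for every $x=(x_i)\in H_A$, the tails vanish and $P_nx\to x$ in norm; writing $\theta_{y,x}-\theta_{P_ny,P_nx}=\theta_{y-P_ny,x}+\theta_{P_ny,x-P_nx}$ and applying the estimate above yields $\|\theta_{y,x}-\theta_{P_ny,P_nx}\|\to0$. Hence every generator of $\mathcal K(H_A)$ is a norm limit of operators supported on finitely many coordinates, so $\mathcal K(H_A)=\overline{\bigcup_n\mathcal K(L_n(A))}$, where $\mathcal K(L_n(A))$ sits inside $\mathcal K(H_A)$ as extension-by-zero, i.e. $S\mapsto\iota S\iota^*$ for the inclusion $\iota\colon L_n(A)\hookrightarrow H_A$. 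Under the isomorphisms of (ii) these become the standard corner inclusions $M_n(A)\hookrightarrow M_{n+1}(A)$, $a\mapsto\mathrm{diag}(a,0)$, whence $\mathcal K(H_A)\cong\varinjlim M_n(A)$; identifying $\mathcal K$ with $\overline{\bigcup_nM_n(\mathbb C)}$ gives $A\otimes\mathcal K=\overline{\bigcup_nM_n(A)}=\varinjlim M_n(A)$, so $\mathcal K(H_A)\cong A\otimes\mathcal K$. The two genuinely non-formal points, and hence the main obstacles, are verifying in (ii) that the corner compressions $\pi_iT\iota_j$ land in $\mathcal K(A)$ rather than merely in $\mathcal L_A(A)$, and in (iii) the density of the finitely-supported operators; once these are in hand the identification with the inductive limit and the tensor product is routine.
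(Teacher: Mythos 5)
Your proof is correct and follows essentially the same route as the paper: the isomorphism in (i) is the inverse of your left-multiplication embedding $a\mapsto L_a$, your corner-compression map in (ii) is exactly the paper's $\theta_{(a_i),(b_i)}\mapsto(a_ib_j^*)_{ij}$ factored through $M_n(\mathcal K(A))$, and (iii) is in both cases the identification of $\mathcal K(H_A)$ and $A\otimes\mathcal K$ as completions of the increasing unions of the $\mathcal K(L_n(A))$'s and $M_n(A)$'s respectively. The only difference is that you supply details the paper's sketch omits, notably the norm estimate giving density of the finitely supported operators in $\mathcal K(H_A)$.
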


\begin{proof}
(i) The map sending $\sum _{i=1}^n \theta_{a_i,b_i}$ to $\sum
_{i=1}^n a_ib_i^*$ provides the desired isomorphisms.

\medskip

(ii) Here the isomorphism is given by the rule
$$\theta _{(a_1,\dots ,a_n),(b_1,\dots ,b_n)}\mapsto \begin{pmatrix}
a_1b_1^* & \cdots & a_1b_n^* \\
    & \cdots   &      \\
a_nb_1^* & \cdots  & a_nb_n^*
\end{pmatrix}.$$

\medskip

(iii) By (ii), there is an isomorphism from $\bigcup
_{i=1}^{\infty}\mathcal K (L_n(A))$ onto $\bigcup _{i=1}^{\infty}
M_n(A)$. This extends to an isomorphism of the completions, which
are $\mathcal K (H_A)$ and $A\otimes \mathcal K$ respectively.
\end{proof}

Note that by the Green-Kasparov's Theorem (Appendix) and part (iii)
of the above proposition we have $$\mathcal L_A(H_A)\cong M(\mathcal
K(H_A))\cong M(A\otimes \mathcal K).$$

If $X$ is a Hilbert $A$-module then it is automatically a Hilbert
$\mathcal K(X)-A$-bimodule. The structure of left Hilbert $\mathcal
K (X)$ is given by $(x,y)=\theta _{x,y}$. In particular we have
$\mathcal K(X)X=X$ by the left analogue of Lemma \ref{lem:exact
dec}.

\begin{definition} Let $A$ be a C$^*$-algebra.
\begin{itemize}
\item[(i)] An {\it strictly positive} element of $A$ is a positive
element $h$ in $A$ such that $\varphi (h)>0$ for every state
$\varphi$ on $A$. Equivalently $A=\overline{hA}$.
\item[(ii)]  $A$ is {\it $\sigma$-unital} in case there is a strictly
positive element $h$ in $A$. Equivalently, there is a countable
approximate unit $(e_n)$ for $A$ (\cite[3.10.5]{bPedersen79}).
\end{itemize}
\end{definition}

\begin{remark}
\label{rem:pApapproxunit} If $A$ is $\sigma$-unital and $p$ is a
projection in $M(A)$, then $pAp$ is also $\sigma$-unital. Indeed, if
$(e_n)$ is a sequential approximate unit for $A$, then $(pe_np)$
is an approximate unit for $pAp$.
\end{remark}

\begin{theorem}
\label{cg=Ksigmaunit} A Hilbert $A$-module $X$ is countably
generated if and only if the C$^*$-algebra $\mathcal K (X)$ is $\sigma
$-unital.
\end{theorem}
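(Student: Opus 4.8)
The plan is to prove both implications by working with the rank-one operators $\theta_{y,x}$, exploiting the composition rules (i)--(iv) recorded for them together with the identity $\mathcal K(X)X=X$. Throughout I would write $B=\mathcal K(X)$ and use freely that $B$ is $\sigma$-unital exactly when it has a strictly positive element, equivalently a countable approximate unit.

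For the forward direction ($X$ countably generated $\Rightarrow$ $B$ $\sigma$-unital), I would start from a generating sequence $(x_i)$ and, after rescaling each $x_i$ by a positive scalar (which by Lemma \ref{lem:approxunits} leaves the submodules $\overline{x_iA}$ unchanged), assume $\|x_i\|\le 1$. Since $\|\theta_{x_i,x_i}\|\le\|x_i\|^2\le 1$, the series $h:=\sum_i 2^{-i}\theta_{x_i,x_i}$ converges in $B_+$, and the goal is to show $h$ is strictly positive, i.e. no state of $B$ annihilates it. So I would take a state $\varphi$ with $\varphi(h)=0$; as $2^{-i}\theta_{x_i,x_i}\le h$, positivity forces $\varphi(\theta_{x_i,x_i})=0$ for all $i$. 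Using rules (i)--(ii) one computes $\theta_{y,x_i}^*\theta_{y,x_i}=\theta_{x_i\langle y,y\rangle,x_i}\le\|y\|^2\theta_{x_i,x_i}$ for every $y\in X$ (the inequality coming from Proposition \ref{prop:positivity} applied to $\|y\|^2-\langle y,y\rangle\ge 0$), so $\varphi(\theta_{y,x_i}^*\theta_{y,x_i})=0$, placing $\theta_{y,x_i}$ in the left kernel of $\varphi$ and hence $\varphi(\theta_{y,x_i})=0$. Then for arbitrary $y,z\in X$ I would approximate $z$ by a finite sum $\sum_k x_{i_k}a_k$; the inner-product rule $\langle xa,w\rangle=a^*\langle x,w\rangle$ gives $\theta_{y,x_{i_k}a_k}=\theta_{ya_k^*,x_{i_k}}$, exhibiting $\theta_{y,z}$ as a norm-limit of finite sums of operators $\theta_{w,x_i}$, each killed by $\varphi$. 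Thus $\varphi$ vanishes on all $\theta_{y,z}$ and so on their dense span, forcing $\varphi=0$, a contradiction.

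For the reverse direction ($B$ $\sigma$-unital $\Rightarrow$ $X$ countably generated), I would fix a countable approximate unit $(u_n)$ with $0\le u_n\le 1$. Each $u_n$ is a norm-limit of finite sums $\sum_k\theta_{y_k,z_k}$; collecting the countably many vectors $y_k,z_k$ used across all $n$ gives a countable set $S\subseteq X$, and I would set $X_0=\overline{\sum_{\xi\in S}\xi A}$, a countably generated closed submodule. To see $X_0=X$, fix $x\in X$: by rule (iii), $\theta_{u_nx-x,x}=(u_n-1)\theta_{x,x}\to 0$, and since $\theta_{u_nx-x,u_nx-x}=(u_n-1)\theta_{x,x}(u_n-1)$ has norm at most $\|(u_n-1)\theta_{x,x}\|$, it follows that $\|u_nx-x\|^2\to 0$, i.e. $u_nx\to x$. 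On the other hand, if $F=\sum_k\theta_{y_k,z_k}$ with $y_k,z_k\in S$ and $\|u_n-F\|<\epsilon$, then $Fx=\sum_k y_k\langle z_k,x\rangle\in X_0$ and $\|u_nx-Fx\|\le\epsilon\|x\|$, so $u_nx\in X_0$; letting $n\to\infty$ gives $x\in X_0$, whence $X=X_0$.

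I expect the main obstacle to be the strict-positivity argument in the forward direction, precisely the step that a state annihilating $h$ must annihilate every $\theta_{y,z}$. This is where the two computational points live: the operator inequality $\theta_{x\langle y,y\rangle,x}\le\|y\|^2\theta_{x,x}$ and the rewriting $\theta_{y,xa}=\theta_{ya^*,x}$, which must be combined with the density of $\sum_i x_iA$ in $X$. The reverse direction should be comparatively routine once $u_nx\to x$ is established; the only care needed there is to check that the finitely many vectors entering the approximants of all the $u_n$ genuinely assemble into a single countable generating family.
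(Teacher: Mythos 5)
Your proof is correct, and one half of it takes a genuinely different route from the paper's. For the implication ``$\mathcal K(X)$ $\sigma$-unital $\Rightarrow$ $X$ countably generated'' your argument is essentially the paper's: both approximate a sequential approximate unit $(u_n)$ in norm by finite sums of operators $\theta_{y,z}$ and observe that the countably many vectors so produced generate $X$; you merely make explicit the convergence $u_nx\to x$ via $\|u_nx-x\|^2=\|\theta_{u_nx-x,\,u_nx-x}\|\le\|(u_n-1)\theta_{x,x}\|$, where the paper instead writes $x=kz$ with $k\in\mathcal K(X)$ using $\mathcal K(X)X=X$. For the converse, however, the paper passes to $\widetilde{A}$, invokes Kasparov's Theorem (Theorem \ref{them: Kasparov}) to realize $X\cong p(H_A)$, identifies $\mathcal K(X)$ with the corner $p\,\mathcal K(H_A)\,p$ of $A\otimes\mathcal K$ (Proposition \ref{compactcomp}), and quotes Remark \ref{rem:pApapproxunit}; you instead exhibit the explicit element $h=\sum_i 2^{-i}\theta_{x_i,x_i}$ built from a normalized generating sequence and verify directly that it is strictly positive: a state killing $h$ kills each $\theta_{x_i,x_i}$, hence each $\theta_{y,x_i}$ (via $\theta_{y,x_i}^*\theta_{y,x_i}=\theta_{x_i\langle y,y\rangle,x_i}\le\|y\|^2\theta_{x_i,x_i}$, which follows from Proposition \ref{prop:positivity}, together with Cauchy--Schwarz), hence---using $\theta_{y,x_ia}=\theta_{ya^*,x_i}$ and the density of $\sum_i x_iA$ in $X$---the whole dense span of rank-one operators, forcing the state to vanish, a contradiction. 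Both computational pivots you flag are valid, so the argument is complete. As to what each approach buys: the paper's proof is a three-line deduction once its machinery is in place (Kasparov's theorem, which it cites without proof, plus $\mathcal K(H_A)\cong A\otimes\mathcal K$ and the corner remark, both needed elsewhere in the paper anyway), whereas yours is self-contained and more elementary, relying only on the rank-one operator calculus and the state-theoretic definition of strict positivity; indeed, elements of precisely the form $\sum_i 2^{-i}\theta_{x_i,x_i}$ are the engine of the standard proof of Kasparov's stabilization theorem itself, so your argument sits logically upstream of the paper's and avoids any reliance on that deep result. (Two trifles: the appeal to Lemma \ref{lem:approxunits} when rescaling generators is unnecessary, since $\overline{(\lambda x_i)A}=\overline{x_iA}$ trivially for a scalar $\lambda>0$; and your conclusion ``$u_nx\in X_0$'' tacitly uses that $X_0$ is closed and $\epsilon$ arbitrary, which is clearly what you intend.)
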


\begin{proof}
Assume first that $\mathcal K (X)$ is $\sigma$-unital. Let $(\theta
_n)$ be a sequential approximate unit for $\mathcal K (X)$. For
$x\in X$ we have $x=kz$ for some $k\in \mathcal K (X)$ and some
$z\in X$, and it follows that $\theta _n(x)\longrightarrow  x$. For
each $n$, we can find $x_i^{(n)},y_i^{(n)}\in X$, $1\le i\le s(n)$,
such that
$$\|\theta _n-\sum_{i=1}^{s(n)} \theta _{x_i,y_i}\|<1/n.$$
It follows easily that the set $\{x_i^{(n)}:1\le i\le s(n), n\in
\mathbb N\}$ is a generating set of $X$.

Conversely assume that $X$ is countably generated. Obviously we can
assume that $A$ is unital (passing to $\widetilde{A}$).
By Kasparov's Theorem (Theorem \ref{them: Kasparov}), there is a
projection $p$ in $\mathcal L(H_A)=M(\mathcal K (H_A))$ such that $p(H_A)\cong
X$. Since $\mathcal K (X)=p\mathcal K(H_A)p$, it suffices by
Remark \ref{rem:pApapproxunit} to show that $\mathcal K (H_A)=A\otimes \mathcal K$
is $\sigma$-unital. But this is clear: the canonical projections
$e_n= 1\otimes (\sum _{i=1}^n\theta _{e_i,e_i})$ form an approximate
unit for $A\otimes \mathcal K$.
\end{proof}

\vskip2cm

%%%%%%%%%%%%%%%%%%%%%%%%%%%%%%%%%%%%%%%%%%%%%%%%%%%%%%%%%%%%%%%%%%%%%%%%%%%%%%%
%%%%%%%%%%%%%%%%%%%%%%%%%%%%%%%%%%%%%%%%%%%%%%%%%%%%%%%%%%%%%%%%%%%%%%%%%%%%%%%
\section{The Category $\mathsf{Cu}$}

\vskip1cm

\subsection{Introduction}
The order on the Cuntz
semigroup is positive ($y \geq 0$ for every $y$), and respects the
semigroup operation ($x \leq y$ and $w \leq z$ implies $x+w \leq
y+z$).  If one views the Cuntz semigroup as a functor into Abelian
semigroups equipped with such an order, then it has a major
shortcoming:  it is not continuous with respect to inductive limits,
i.e., if $(A_i,\phi_i)$ is an inductive sequence of C$^*$-algebras
then
\[
\lim_{i \to \infty} \left( W(A_i),W(\phi_i) \right) \neq W(A)
\]
in general, where the inductive limit is the algebraic one.

In \cite{cei}, Coward, Elliott, and Ivanescu gave a presentation of
the Cuntz semigroup (indeed of a stable version of it, denoted
$\Cu$) which identified some new properties in its order structure.
These properties allowed them to realise the Cuntz semigroup as a
functor into a new, enriched category.  In this setting, the Cuntz
semigroup {\it is} a continuous functor. This section of the course
notes concerns the enriched category of Coward-Elliott-Ivanescu.

After the abstract definition of the category $\mathsf{Cu}$, we
proceed to attach an object of this category to each C$^*$-algebra
$A$, denoted $\Cu$. This construction is based on the consideration
of a suitable relation on the class of countably generated Hilbert
$A$-modules. We will show that this relation nicely simplifies when
$A$ has stable rank one and that $\Cu \cong \W(A\otimes \mathcal
K)$, so that both semigroups coincide if $A$ is a stable
C$^*$-algebra.

After a close study of direct limits in the category $\mathsf{Cu}$,
we show the continuity of the functor $\mathsf{Cu}$ with respect to
inductive limits. We finish this section with a result of Ciuperca,
Robert and Santiago \cite{crs} concerning exactness of the functor
$\mathsf{Cu}$.

Of course, our basic reference for this section is the paper by
Coward, Elliott and Ivanescu \cite{cei}.

\subsection{Definition of $\mathsf{Cu}$}
\begin{definition}  Define a category $\mathsf{Cu}$ as follows.  An object of $\mathsf{Cu}$ is an ordered Abelian semigroup $S$ having the following
properties.
\begin{enumerate}
\item[{\bf (O1)}] $S$ contains a zero element.
\item[{\bf (O2)}] The order on $S$ is compatible with addition, in the sense that $x_1 + x_2 \leq y_1 + y_2$ whenever $x_i \leq y_i$, $i \in \{1,2\}$.
\item[{\bf (O3)}] Every countable upward directed set in $S$ has a supremum.
\item[{\bf (O4)}] For $x,y \in S$ we write $x \ll y$ if whenever $(y_n)$ is an increasing sequence with $\sup_n y_n \geq y$, then there is some
$n$ such that $x \leq y_n$. We say in this setting that $x$ is
\emph{way below} $y$. The set
\[
x^{\ll} = \{ y \in S \ | \ y \ll x \}
\]
is upward directed with respect to both $\leq$ and $\ll$, and
contains a sequence $(x_n)$ such that $x_n \ll x_{n+1}$ for every $n
\in \mathbb{N}$ and $\sup_n x_n = x$.
\item[{\bf (O5)}] The operation of passing to the supremum of a countable upward directed set and the relation $\ll$ are compatible with addition;
if $S_1$ and $S_2$ are countable upward directed sets in $S$ then
$S_1 + S_2$ is upward directed and $\sup (S_1 + S_2) = \sup S_1 +
\sup S_2$;  if $x_i \ll y_i$ for $i \in \{1,2\}$, then $x_1 + x_2
\ll y_1 + y_2$.
\end{enumerate}

\vspace{2mm} \noindent {\bf Note:}  Properties {\bf (O1)}--{\bf
(O5)} were originally introduced by Coward, Elliott, and Ivanescu.
We will add to them the following property, since it is
automatically for all situations of interest:
\begin{enumerate}
\item[{\bf (O6)}] $x \geq 0$ for every $x \in S$.
\end{enumerate}

\noindent The maps of $\mathsf{Cu}$ are semigroup maps preserving
\begin{enumerate}
\item[{\bf (M1)}] the zero element,
\item[{\bf (M2)}] the order,
\item[{\bf (M3)}] suprema of countable upward directed sets,
\item[{\bf (M4)}] and the relation $\ll$.
\end{enumerate}
\end{definition}

Note that if $x \ll y$ and $y \leq z$, then $x \ll z$.  Similarly,
if $x \leq y$ and $y \ll z$, then $x \ll z$. A sequence $x_1 \ll x_2
\ll \cdots$ is said to be {\bf rapidly increasing}.

\subsection{Examples}

\subsubsection{Lower semicontinuous functions.}  Let $X$ be a compact Hausdorff space, and let $L(X)$
denote the set of lower semicontinuous functions on $X$ taking
values in $\mathbb{R}^+ \cup \{\infty\}$.  One can check that $L(X)$
is an object in $\mathsf{Cu}$ under pointwise addition and with the
pointwise order ($f \leq g \Leftrightarrow f(x) \leq g(x), \ \forall
x \in X$).  This remains true if we restrict the possible values of
our functions to $\mathbb{Z}^+ \cup \{\infty\}$.

\subsubsection{Perforation.} Let $S$ be any proper subsemigroup of $\mathbb{Z}^+ \cup \{\infty\}$ which includes $\infty$.  Such an $S$ belongs to $\mathsf{Cu}$.

\subsubsection{The Cuntz semigroup.}  Of course, the notation $\mathsf{Cu}$
comes from the fact that the version of the Cuntz semigroup of a
C$^*$-algebra to be introduced below is an example of an object in
$\mathsf{Cu}$. The first of the examples above occurs in this
manner, while the second {\it cannot} occur in this manner.

\vskip1cm

Throughout this section we make the blanket assumption that all
C$^*$-algebras are $\sigma$-unital and all Hilbert modules are
countably generated, although some of the results stated are valid
in greater generality.

\subsection{Compact containment and Cuntz comparison of Hilbert modules}

The following definitions are fundamental in the sequel.

\begin{definition}
\label{cc} Let $A$ be a {\rm C}$^*$-algebra, and let $X$, $Y$ be
Hilbert $A$-modules. We say that $X$ is \emph{compactly contained
in} $Y$, in symbols $X\CC Y$, provided that there is a self-adjoint
compact operator $a\in\mK(Y)_{sa}$ such that
$a_{|X}=\mathrm{id}_{|X}$.
\end{definition}

\begin{definition}
\label{cuntzsub} Given Hilbert $A$-modules $X$ and $Y$ over a {\rm
C}$^*$-algebra $A$, we say that $X$ is \emph{Cuntz subequivalent} to
$Y$, in symbols $X\precsim Y$, provided that any $X_0\CC X$ is
isomorphic (isometrically) to some $X_0'\CC Y$. We say that $X$ and
$Y$ are \emph{Cuntz equivalent}, in symbols $X\sim Y$, if both
$X\precsim Y$ and $Y\precsim X$.
\end{definition}

\begin{remark}
Observe that if $X\cong Y$, then $X\sim Y$.
\end{remark}

\begin{definition}
\label{cua} Given a {\rm C}$^*$-algebra $A$, consider the set
$\mathcal{H}(A)$ of isomorphism classes of countably generated
Hilbert (right) $A$-modules, and put
\[
\Cu=\mathcal{H}(A)/\sim\,,
\]
where $\sim$ stands for Cuntz equivalence as defined above. We shall
denote the elements of $\Cu$ by $[X]$, where $X$ is a countably
generated Hilbert module. Note that $\Cu$ becomes a partially
ordered set with ordering given by $[X]\leq [Y]$ if $X\precsim Y$.
\end{definition}

The following two lemmas are part of the standard knowledge in
Hilbert module theory. We include proofs as the results will be used
a number of times.
\begin{lemma}
\label{hacont} Let $A$ be a unital {\rm C}$^*$-algebra. If
$H_A\subseteq Y$, then there is an isometric inclusion
$\mK(H_A)\subseteq\mK(Y)$.
\end{lemma}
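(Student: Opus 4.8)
The plan is to build the inclusion directly on rank-one operators and extend by continuity, exploiting that a rank-one operator $\theta_{x,y}$ with $x,y\in H_A$ has ``the same formula'' whether it is read on $H_A$ or on the larger module $Y$. The whole point will be that every such operator, viewed on $Y$, still takes its values inside the closed submodule $H_A$, so that passing between $H_A$ and $Y$ is governed by a restriction map which I will show is isometric.

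First I would set up the finite-rank level. For $x,y\in H_A\subseteq Y$ the operator $\theta^Y_{x,y}\colon Y\to Y$, $\theta^Y_{x,y}(z)=x\langle y,z\rangle$, belongs to $\mK(Y)$, and since $x\in H_A$ and $H_A$ is a closed submodule of $Y$, its range lies in $H_A$; in particular $\theta^Y_{x,y}$ restricts on $H_A$ to exactly $\theta^{H_A}_{x,y}$. Let $\mathcal{F}_0\subseteq\mK(Y)$ be the set of finite sums $\sigma=\sum_i\theta^Y_{x_i,y_i}$ with all $x_i,y_i\in H_A$. Using the composition and adjoint relations for the $\theta$'s listed before Proposition~\ref{compactcomp} (namely $\theta_{x,y}\theta_{u,v}=\theta_{x\langle y,u\rangle,v}$ and $\theta_{x,y}^*=\theta_{y,x}$), one sees that $\mathcal{F}_0$ is a $*$-subalgebra of $\mK(Y)$ all of whose elements carry $Y$ into $H_A$. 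The restriction map $\rho\colon\mathcal{F}_0\to\mathcal{L}_A(H_A)$, $\sigma\mapsto\sigma|_{H_A}$, is then a $*$-homomorphism whose image is precisely the algebra $\mathcal{F}(H_A)$ of finite-rank operators on $H_A$ (finite linear combinations of the $\theta^{H_A}_{x,y}$), whose closure is $\mK(H_A)$.

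The crux is to show that $\rho$ is \emph{isometric}, i.e. $\|\sigma\|_{\mathcal{L}_A(Y)}=\|\sigma|_{H_A}\|_{\mathcal{L}_A(H_A)}$ for $\sigma\in\mathcal{F}_0$; this simultaneously yields injectivity of $\rho$ (hence well-definedness of its inverse) and the desired norm equality. The inequality ``$\ge$'' is immediate since $\sigma$ restricts to $\sigma|_{H_A}$ on the submodule $H_A$. For ``$\le$'', fix $z\in Y$ and put $u=\sigma z$, which lies in $H_A$. As $\sigma$ is adjointable on $Y$ with $\sigma^*=\sum_i\theta^Y_{y_i,x_i}$, and as $u\in H_A$ forces $\sigma^*u=(\sigma|_{H_A})^*u$, I would compute
\[
\langle u,u\rangle=\langle \sigma z,\sigma z\rangle=\langle z,\sigma^*u\rangle=\langle z,(\sigma|_{H_A})^*u\rangle,
\]
and apply the Cauchy--Schwarz inequality of Proposition~\ref{Cauchy-Schw}(iii) to get
\[
\|u\|^2=\|\langle z,(\sigma|_{H_A})^*u\rangle\|\le\|z\|\,\|(\sigma|_{H_A})^*u\|\le\|z\|\,\|\sigma|_{H_A}\|\,\|u\|,
\]
whence $\|\sigma z\|=\|u\|\le\|\sigma|_{H_A}\|\,\|z\|$. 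Taking the supremum over $\|z\|\le1$ gives $\|\sigma\|_{\mathcal{L}_A(Y)}\le\|\sigma|_{H_A}\|_{\mathcal{L}_A(H_A)}$, which completes the isometry.

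Finally I would invert. Since $\rho\colon\mathcal{F}_0\to\mathcal{F}(H_A)$ is a surjective isometric $*$-homomorphism, it is bijective, and its inverse $\Phi\colon\mathcal{F}(H_A)\to\mK(Y)$, $\theta^{H_A}_{x,y}\mapsto\theta^Y_{x,y}$, is an isometric $*$-homomorphism. Because $\mK(H_A)$ is the closure of $\mathcal{F}(H_A)$ and $\mK(Y)$ is complete, $\Phi$ extends uniquely to an isometric $*$-homomorphism $\mK(H_A)\hookrightarrow\mK(Y)$, which is the asserted inclusion. I expect the only genuine subtlety to be the norm estimate above; once one observes that every $\sigma\in\mathcal{F}_0$ maps $Y$ into $H_A$ (so that $u=\sigma z\in H_A$ and the identity $\sigma^*u=(\sigma|_{H_A})^*u$ is available), the Cauchy--Schwarz step closes the argument and everything else is formal.
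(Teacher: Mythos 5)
Your proof is correct, and it takes a genuinely different route from the paper's. The paper exploits unitality of $A$ and the concrete structure of $H_A$: it writes $\mK(H_A)\cong A\otimes\mathbb{K}$ as the closure of $\bigcup_n \mK(L_n(A))$, and observes that each $L_n(A)$ is \emph{orthogonally complemented} in $Y$ (the decomposition $Y=L_n(A)\oplus L_n(A)^{\perp}$ exists precisely because the standard basis of $L_n(A)$ has $\langle e_i,e_j\rangle=\delta_{ij}1$, i.e.\ because $A$ is unital), so that $\theta\in\mK(L_n(A))$ acts on $y=y_1+y_2$ by $\theta(y)=\theta(y_1)$ and the norm is visibly unchanged; the result then follows by passing to closures. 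You instead work with the dense $*$-algebra of finite-rank operators $\sum_i\theta_{x_i,y_i}$ with $x_i,y_i\in H_A$, note that these map $Y$ into $H_A$, and prove that restriction to $H_A$ is isometric via adjointability and Cauchy--Schwarz ($\|u\|^2=\|\langle z,(\sigma|_{H_A})^*u\rangle\|\le\|z\|\,\|\sigma|_{H_A}\|\,\|u\|$ with $u=\sigma z\in H_A$). What the paper's argument buys is brevity and transparency in the situation at hand, where the orthogonal decomposition does all the work. What your argument buys is generality: nowhere do you use unitality, countable generation, or any special feature of $H_A$, so your proof establishes directly the stronger statement of Lemma~\ref{inclofcompacts} (an isometric inclusion $\mK(X)\subseteq\mK(Y)$ for an \emph{arbitrary} closed submodule $X\subseteq Y$), which the paper can only obtain from Lemma~\ref{hacont} by invoking Kasparov's stabilization theorem, and hence only for countably generated modules. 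In particular, your approach would let one bypass Kasparov's theorem at that point of the development.
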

\begin{proof}
We have
\[
\mK(H_A)\cong A\otimes\mathbb{K}\cong
\overline{\cup_{n=1}^{\infty}M_n(A)}=\overline{\cup_{n=1}^{\infty}\mK(L_n(A))}\,.
\]
Since $L_n(A)\subseteq Y$ and in fact $L_n(A)\oplus
L_n(A)^{\perp}=Y$, we see that, if $y=y_1+y_2$, with $y_1\in L_n(A)$
and $y_2\in L_n(A)^{\perp}$, and $\theta\in \mK(L_n(A))$, then
$\theta(y)=\theta(y_1)$, whence
\[
\Vert\theta\Vert_{\mK(Y)}=\Vert\theta\Vert_{\mK(L_n(A))}=
\Vert\theta\Vert_{\mK(H_A)}\, .\]
\end{proof}

\begin{lemma}
\label{inclofcompacts} Let $A$ be a {\rm C}$^*$-algebra, and let
$X\subseteq Y$ be Hilbert $A$-modules. Then there is a natural
inclusion $\mK(X)\subseteq \mK(Y)$.
\end{lemma}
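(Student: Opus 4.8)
The plan is to build the inclusion first on the finite-rank operators and then extend it by continuity, the crux being that it is \emph{isometric}. Recall that $\mK(X)$ is the closed linear span in $\mathcal{L}_A(X)$ of the operators $\theta_{x,x'}\colon z\mapsto x\langle x',z\rangle$ with $x,x'\in X$, and likewise for $Y$. Since the inclusion $X\subseteq Y$ preserves the $A$-valued inner product, for $x,x'\in X$ the very same formula defines an operator on $Y$, which I would denote $\widetilde{\theta}_{x,x'}\in\mK(Y)$; note it actually lands in $X$, as $\widetilde{\theta}_{x,x'}(y)=x\langle x',y\rangle\in X$. First I would define $\phi$ on finite sums by $\sum_i\theta_{x_i,x_i'}\mapsto\sum_i\widetilde{\theta}_{x_i,x_i'}$, observing that on generators $\phi$ respects adjoints ($\theta_{x,x'}^*=\theta_{x',x}$) and products ($\theta_{x,x'}\theta_{u,u'}=\theta_{x\langle x',u\rangle,u'}$), precisely because the inner products $\langle x',u\rangle$ are computed identically in $X$ and in $Y$.

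The heart of the matter is the norm identity $\|\sum_i\widetilde{\theta}_{x_i,x_i'}\|_{\mathcal{L}_A(Y)}=\|\sum_i\theta_{x_i,x_i'}\|_{\mathcal{L}_A(X)}$ for every finite family; this simultaneously shows that $\phi$ is well defined (a zero operator on $X$ maps to a zero operator on $Y$) and isometric. Writing $T=\sum_i\theta_{x_i,x_i'}$ and $\widetilde{T}=\phi(T)$, the inequality $\|\widetilde{T}\|\ge\|T\|$ is immediate, since $\widetilde{T}$ restricts to $T$ on $X\subseteq Y$ and the norms of $X$ and $Y$ agree on $X$. For the reverse inequality I would avoid any appeal to an orthogonal complement of $X$ in $Y$ (which need not exist) and argue directly: for $y\in Y$ with $\|y\|\le1$, using $\widetilde{T}^*\widetilde{T}=\phi(T^*T)$ together with $\widetilde{T}y\in X$,
\[
\|\widetilde{T}y\|^2=\|\langle\widetilde{T}^*\widetilde{T}y,y\rangle\|\le\|\widetilde{T}^*\widetilde{T}y\|=\|T^*(\widetilde{T}y)\|\le\|T\|\,\|\widetilde{T}y\|,
\]
where the first inequality is Cauchy--Schwarz (Proposition~\ref{Cauchy-Schw}(iii)) combined with $\|y\|\le1$, and the middle equality uses that $\widetilde{T}^*=\phi(T^*)$ likewise maps $Y$ into $X$ and restricts there to $T^*$. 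Cancelling $\|\widetilde{T}y\|$ yields $\|\widetilde{T}y\|\le\|T\|$, hence $\|\widetilde{T}\|\le\|T\|$.

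With the isometry established, $\phi$ extends uniquely to an isometric linear map $\mK(X)\to\mK(Y)$ on the completions, and since it is multiplicative and $*$-preserving on the dense subalgebra of finite-rank operators, the extension is an isometric $*$-homomorphism, i.e.\ an embedding of C$^*$-algebras; this is the asserted natural inclusion $\mK(X)\subseteq\mK(Y)$. The step I expect to be the main obstacle is exactly the reverse norm inequality: the earlier special case (Lemma~\ref{hacont}) could exploit that $L_n(A)$ is orthogonally complemented in $Y$, whereas here $X$ is an arbitrary Hilbert submodule and no such projection is at hand, so the Cauchy--Schwarz estimate above is what must replace it.
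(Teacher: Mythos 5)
Your proof is correct, but it takes a genuinely different route from the paper's. The paper reduces the statement to the special case $H_A\subseteq Y$ (Lemma~\ref{hacont}): invoking Kasparov's stabilization theorem (Theorem~\ref{them: Kasparov}), it chooses a countably generated $X'$ with $X\oplus X'\cong H_A$, embeds $H_A\cong X\oplus X'\subseteq Y\oplus X'$, and then transfers norms through the extension-by-zero identifications $\Vert\theta\Vert_{\mK(X)}=\Vert\theta\Vert_{\mK(X\oplus X')}$ and $\Vert\theta\Vert_{\mK(Y\oplus X')}=\Vert\theta\Vert_{\mK(Y)}$, which exploit that $X$ and $Y$ are orthogonally complemented summands of $X\oplus X'$ and $Y\oplus X'$ respectively. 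That argument leans on the section's blanket hypotheses ($X$ countably generated, $A$ $\sigma$-unital, passing to $\widetilde{A}$ for unitality), precisely because Kasparov's theorem is used. Your argument dispenses with stabilization entirely: you define the map on finite-rank operators, where compatibility with adjoints and products is automatic because the $A$-valued inner products are computed identically in $X$ and $Y$, and you obtain isometry from the Cauchy--Schwarz estimate of Proposition~\ref{Cauchy-Schw}, the key point---which you identify correctly---being that both $\widetilde{T}$ and $\widetilde{T}^*$ take values in $X$, so the (generally nonexistent) orthogonal projection of $Y$ onto $X$ is never needed; this also settles well-definedness without circularity, since the norm identity is proved for each finite family of vectors rather than for operator classes. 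What the paper's proof buys is brevity given machinery already in place; what yours buys is elementarity and strictly greater generality: it applies to an arbitrary Hilbert submodule $X\subseteq Y$ over an arbitrary C$^*$-algebra, with no countable generation or $\sigma$-unitality, and it exhibits the inclusion explicitly as an isometric $*$-embedding rather than inheriting it from the $H_A$ case.
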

\begin{proof}
%We have to check that, given
%$\theta\in\mathrm{span}(\{\theta_{x,y}\mid x,y\in X\})$, then
%$\Vert\theta\Vert_{\mL(X)}=\Vert\theta\Vert_{\mL(Y)}$. Recall that
%$\Vert \theta\Vert_{\mL(X)}=\sup\limits_{x\in X\Vert x\Vert\leq
%1}\Vert \theta(x)\Vert$, and similarly for $Y$. We obviously have
%$\Vert\theta\Vert_{\mL(X)}\leq\Vert\theta\Vert_{\mL(Y)}$.
%
%In order to check the converse inequality, it is enough to prove
%that, for $y$ in $Y$, and elements $a_1,\ldots,a_n$,
%$b_1,\ldots,b_n$ in $X$, we have
%\[
%\Vert\sum_{i=1}^n\theta_{a_i,b_i}(y)\Vert\leq\Vert\sum_{i=1}^n\theta_{a_i,b_i}\Vert_{\mL(X)}\,.
%\]
We may assume that $A$ is unital. By Kasparov's theorem (see Theorem
\ref{them: Kasparov}), there exists a countably generated Hilbert
module $X'$ such that $X\oplus X'\cong H_A$.

Since we have $H_A\cong X\oplus X'\subseteq Y\oplus X'$, we may as
well use Lemma~\ref{hacont} to conclude that $\mK(X\oplus
X')\subseteq \mK(Y\oplus X')$. On the other hand, if
$\theta\in\mK(X)$, we have that
\[
\Vert\theta\Vert_{\mK(X)}=\Vert\theta\Vert_{\mK(X\oplus X')}\text{
and }\Vert\theta\Vert_{\mK(Y\oplus X')}=\Vert\theta\Vert_{\mK(Y)}\,,
\]
whence the result follows.
\end{proof}

The notion of equivalence just introduced might appear slightly
unnatural at first sight. The lemma below shows that containment of
Hilbert modules is an instance of Cuntz subequivalence.

\begin{lemma}
\label{cuntzgencont} Let $A$ be a {\rm C}$^*$-algebra and let $X$,
$Y$ be Hilbert $A$-modules. If $X\subseteq Y$, then $X\precsim Y$.
\end{lemma}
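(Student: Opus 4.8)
The plan is to unwind Definition~\ref{cuntzsub} and produce, for every module compactly contained in $X$, an isometric copy that is compactly contained in $Y$. Recall that $X\precsim Y$ means that each $X_0\CC X$ is isometrically isomorphic to some $X_0'\CC Y$. Since $X_0\subseteq X\subseteq Y$, the natural candidate is $X_0'=X_0$ itself, with the identity as the (trivially isometric) isomorphism; the whole content of the lemma therefore reduces to the implication $X_0\CC X\implies X_0\CC Y$.

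So I would fix $X_0\CC X$ and, by Definition~\ref{cc}, choose a self-adjoint compact operator $a\in\mK(X)_{sa}$ with $a|_{X_0}=\mathrm{id}|_{X_0}$. The key tool is Lemma~\ref{inclofcompacts}, which gives a natural isometric inclusion $\mK(X)\subseteq\mK(Y)$. First I would describe this inclusion explicitly on the generators: a rank-one operator $\theta_{x,y}$ with $x,y\in X$ is sent to the operator on $Y$ defined by the same formula $z\mapsto x\langle y,z\rangle$ (now for $z\in Y$). The composition and adjoint formulas for the $\theta$'s show this assignment is a $*$-homomorphism on finite-rank operators, and Lemma~\ref{inclofcompacts} guarantees it is isometric, so it extends by continuity to all of $\mK(X)$. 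Write $\bar a\in\mK(Y)$ for the image of $a$.

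Two observations then close the argument. Since $\theta_{x,y}^*=\theta_{y,x}$ is respected, the inclusion preserves adjoints, so $\bar a$ is again self-adjoint. The crucial point, which I expect to need the most care, is that $\bar a|_X=a$: for a generator $\theta_{x,y}$ and any $z\in X$ the inner product $\langle y,z\rangle$ is the same whether computed in $X$ or in $Y$ (the inner product on $X$ being the restriction of the one on $Y$), so $\bar a$ and $a$ agree on $X$ for generators, hence for all of $\mK(X)$ by continuity. Consequently $\bar a|_{X_0}=a|_{X_0}=\mathrm{id}|_{X_0}$, which says exactly that $X_0\CC Y$ with witness $\bar a$. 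Thus every $X_0\CC X$ is isometrically isomorphic (indeed equal) to a module $X_0\CC Y$, and Definition~\ref{cuntzsub} yields $X\precsim Y$. The only genuine obstacle is verifying the compatibility of the inclusion $\mK(X)\subseteq\mK(Y)$ with restriction to $X$; everything else is immediate from the definitions.
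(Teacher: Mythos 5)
Your proposal is correct and follows essentially the same route as the paper: fix $X_0\CC X$ with witness $a\in\mK(X)$, invoke Lemma~\ref{inclofcompacts} to view $a$ as a compact operator on $Y$ acting as the identity on $X_0$, and take $X_0'=X_0$ itself. The paper states this in two lines; you have merely filled in the (correct) verification that the inclusion $\mK(X)\subseteq\mK(Y)$ is given on generators $\theta_{x,y}$ by the same formula and is therefore compatible with restriction to $X$.
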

\begin{proof}
Let $X_0\CC X$. Then, there is $a\in\mK(X)_+$ such that
$a_{|X_0}=\mathrm{id}_{|X_0}$. Since, by Lemma~\ref{inclofcompacts},
we have an isometric inclusion $\mK(X)\subseteq\mK(Y)$, we may take
$X_0'=X_0\CC Y$.
\end{proof}

\begin{lemma}
\label{useful} If $X$ is a Hilbert $A$-module and if
$a\in\mK(X)_{sa}$ satisfies $a_{|Y}=\mathrm{id}_{|Y}$ (in other
words, $Y\CC X$), then $Y\CC\overline{a(X)}$.
\end{lemma}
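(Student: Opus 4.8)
The plan is to produce, following Definition~\ref{cc}, a self-adjoint compact operator on $\overline{a(X)}$ that restricts to the identity on $Y$; the natural candidate is $a^3$.

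First I would record that $Y\subseteq\overline{a(X)}$: since $a_{|Y}=\mathrm{id}_{|Y}$, every $y\in Y$ satisfies $y=ay\in a(X)\subseteq\overline{a(X)}$. Thus $\overline{a(X)}$ is a closed $A$-submodule of $X$ containing $Y$, and it makes sense to ask whether $Y\CC\overline{a(X)}$. Note also that $a^3$ is self-adjoint, and that $a^3y=a^2(ay)=a(ay)=ay=y$ for all $y\in Y$, so $a^3$ restricts to $\mathrm{id}_{|Y}$ for free. The whole content of the lemma is therefore to check that $a^3$ is a \emph{compact} operator \emph{on the submodule} $\overline{a(X)}$, not merely on $X$.

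The key computation I would carry out is the identity $a\,\theta_{u,v}\,a=\theta_{au,av}$ for all $u,v\in X$: indeed $\theta_{u,v}a=\theta_{u,a^*v}=\theta_{u,av}$ by property~(iv) and self-adjointness of $a$, and then $a\,\theta_{u,av}=\theta_{au,av}$ by property~(iii). Writing $a=\lim_n b_n$ in norm with each $b_n$ a finite sum of rank-one operators $\theta_{u,v}$ (possible since $a\in\mK(X)$), norm-continuity of multiplication gives $a^3=\lim_n ab_na$ because $\|ab_na-a^3\|\le\|a\|^2\|b_n-a\|\to 0$. By the identity above, each $ab_na$ is a finite sum of operators $\theta_{au,av}$ whose defining vectors $au,av$ all lie in $\overline{a(X)}$; hence each $ab_na$ lies in $\mK(\overline{a(X)})$, viewed inside $\mK(X)$ through the isometric inclusion of Lemma~\ref{inclofcompacts}. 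Since that inclusion is isometric, the sequence $(ab_na)$ is Cauchy in $\mK(\overline{a(X)})$, and its limit $a^3$ therefore belongs to $\mK(\overline{a(X)})$.

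Combining the two observations yields $a^3\in\mK(\overline{a(X)})_{sa}$ restricting to $\mathrm{id}_{|Y}$, which is exactly $Y\CC\overline{a(X)}$. I expect the only real obstacle to be the last point — promoting compactness from $X$ to the submodule $\overline{a(X)}$ — and the sandwiching identity $a\,\theta_{u,v}\,a=\theta_{au,av}$ is precisely what forces the approximating rank-one operators to have both legs in $\overline{a(X)}$, so that Lemma~\ref{inclofcompacts} applies. Passing to $a^3$ rather than working with $a$ directly is what makes \emph{both} legs (rather than just one) land in the range of $a$.
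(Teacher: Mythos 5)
Your proof is correct and is essentially the paper's own argument: the paper also passes to $a^3=a\,a\,a^*$, writes $a$ as a norm limit of finite sums of rank-one operators, and uses the sandwiching identity $a\,\theta_{u,v}\,a^*=\theta_{a(u),a(v)}$ to conclude $a^3\in\mK(\overline{a(X)})_{sa}$ with $a^3_{|Y}=\mathrm{id}_{|Y}$. You have merely supplied details the paper leaves implicit (the norm-continuity estimate and the appeal to the isometric inclusion $\mK(\overline{a(X)})\subseteq\mK(X)$ to keep the limit in the subalgebra), which is a faithful completion of the same proof.
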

\begin{proof}
Write down $a=\lim\sum\theta_{x_i,y_i}$, where $x_i$ and $y_i\in X$
(the notation here is admittedly loose). Then
\[
a^3=aaa^*=a(\lim\sum\theta_{x_i,y_i})a^*=\lim\sum\theta_{a(x_i),a(y_i)}\in\mK(\overline{a(X)})\,,
\]
and clearly $a^3_{|Y}=\mathrm{id}_{|Y}$.
\end{proof}

\begin{lemma}
\label{equivalence} Let $X$ be a Hilbert $A$-module, and let
$h\in\mK(X)$. Then, $\overline{hX}\cong\overline{h^*X}$
isometrically, and in particular
$\overline{hh^*X}\cong\overline{h^*hX}$.
\end{lemma}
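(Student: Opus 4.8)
The plan is to avoid invoking a polar decomposition $h=v|h|$ directly: the partial isometry $v$ lives in the enveloping von Neumann algebra (or the multiplier algebra) of $\mK(X)$ and need not preserve $X$, let alone be compact. Instead I would realise the isomorphism through closures of ranges, together with a single inner-product-preserving gluing map. Writing $|h^*|=(hh^*)^{1/2}\in\mK(X)$, the aim is to establish the chain
\[
\overline{h^*X}\;\cong\;\overline{|h^*|X}\;=\;\overline{hh^*X}\;=\;\overline{hX}\,,
\]
of which the first arrow is an isometric isomorphism of Hilbert modules and the last two are literal equalities of submodules of $X$.

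The genuinely module-theoretic ingredient is the first arrow. I would first record that, for any adjointable $h$ and all $\xi,\eta\in X$, one has $\langle h^*\xi,h^*\eta\rangle=\langle\xi,hh^*\eta\rangle=\langle|h^*|\xi,|h^*|\eta\rangle$, using the defining property of the adjoint and the self-adjointness of $|h^*|$. Consequently the assignment $h^*\xi\mapsto|h^*|\xi$ preserves the $A$-valued inner product on the dense submodule $h^*X\subseteq\overline{h^*X}$; it is therefore well defined (a null vector maps to a null vector), $A$-linear, isometric, and has dense range in $\overline{|h^*|X}$, so it extends to an isometric isomorphism $\overline{h^*X}\cong\overline{|h^*|X}$.

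It then remains to prove the two range equalities, both instances of the same functional-calculus principle. For a positive $b\in\mathcal L_A(X)$ one has $\overline{bX}=\overline{b^{1/2}X}$: the inclusion $bX\subseteq b^{1/2}X$ is clear, while $b^{1/2}=\lim_n b\,(b+\tfrac1n)^{-1/2}$ in norm (a uniform estimate on $\sigma(b)\subseteq[0,\infty)$) gives $b^{1/2}\xi\in\overline{bX}$; taking $b=hh^*$ yields $\overline{|h^*|X}=\overline{hh^*X}$. Likewise $\overline{hX}=\overline{hh^*X}$: here $hh^*X\subseteq hX$ is immediate, and conversely $h=\lim_n hh^*(hh^*+\tfrac1n)^{-1}h$ in norm, since $h-hh^*(hh^*+\tfrac1n)^{-1}h=\tfrac1n(hh^*+\tfrac1n)^{-1}h$ has norm-square bounded by $\sup_{t\ge0} t\,n^{-2}(t+\tfrac1n)^{-2}=\tfrac1{4n}\to0$, and each approximant sends $\xi$ into $hh^*X$. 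Chaining the three steps proves $\overline{h^*X}\cong\overline{hX}$; the ``in particular'' clause then follows, since the range equality applied to $h$ and to $h^*$ gives $\overline{hh^*X}=\overline{hX}$ and $\overline{h^*hX}=\overline{h^*X}$, so those two closures are exactly the pair already identified.

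The main obstacle I anticipate is bookkeeping the well-definedness and surjectivity of the gluing map: one must check that $h^*\xi\mapsto|h^*|\xi$ descends to the closure independently of the representative $\xi$ and has dense image in $\overline{|h^*|X}$. Both are immediate from the inner-product identity, but they are precisely the places where a careless ``definition on ranges'' could fail. The resolvent estimates are routine once the correct approximants are written down; the only subtlety there is to keep the computations inside the unital C$^*$-algebra $\mathcal L_A(X)$, where $(hh^*+\tfrac1n)^{-1}$ makes sense, rather than inside the nonunital ideal $\mK(X)$.
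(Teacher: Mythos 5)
Your proof is correct, but it takes a genuinely different route from the paper's. The paper first proves the range equality $\overline{hX}=\overline{hh^*X}$ by viewing $X$ as a left Hilbert $\mK(X)$-module and quoting the standard C$^*$-algebra facts $\overline{h\mK(X)}=\overline{(hh^*)^{1/2}\mK(X)}=\overline{hh^*\mK(X)}$ together with $X=\overline{\mK(X)X}$; it then obtains the isometry $\overline{h^*X}\to\overline{hX}$ as left multiplication by the partial isometry $w$ in the polar decomposition $h=w|h|$, with $w\in\mK(X)^{**}$. You replace both ingredients with intrinsic computations: the isometry is defined directly by $h^*\xi\mapsto|h^*|\xi$ and justified by the inner-product identity $\langle h^*\xi,h^*\eta\rangle=\langle|h^*|\xi,|h^*|\eta\rangle$, and the range equalities come from explicit resolvent approximations inside the unital algebra $\mathcal{L}_A(X)$. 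In fact the two maps coincide: since $h^*=|h|w^*$ and $|h^*|=w|h|w^*$, one has $wh^*\xi=|h^*|\xi$, so your map \emph{is} left multiplication by $w$ --- but you construct it without ever leaving $X$. What your approach buys is exactly the point you flag at the outset: the paper's appeal to $w\in\mK(X)^{**}$ leaves implicit why multiplication by $w$ carries $\overline{h^*X}$ into $X$ at all (the standard repair being that $wf(|h|)\in\mK(X)$ whenever $f$ vanishes near $0$), whereas your argument is self-contained and needs no bidual; the price is a longer, more computational proof. Your estimates are also correct: the bound $\sup_{t\geq 0}\, t\,n^{-2}(t+\tfrac1n)^{-2}=\tfrac{1}{4n}$ gives $h=\lim_n hh^*(hh^*+\tfrac1n)^{-1}h$, and the analogous uniform estimate gives $b^{1/2}=\lim_n b(b+\tfrac1n)^{-1/2}$, so both range equalities hold as you state.
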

\begin{proof}
Observe first that if $X$ is a Hilbert (right) $A$-module, then it
becomes a Hilbert (left) $\mK(X)$-module, with structure given by
$[x,y]=\theta_{x,y}$.

Now we have that, from standard facts in C$^*$-algebra theory
\[
\overline{hX}=\overline{h\mK(X)X}=\overline{(hh^*)^{1/2}\mK(X)X}=\overline{hh^*\mK(X)X}=\overline{hh^*X}\,.
\]
Next, let $h=w|h|$ be the polar decomposition of $h$, with $w$ a
partial isometry that sits in $\mK(X)^{**}$. Left multiplication by
$w$ induces then an $A$-module map $\overline{h^*X}\to
\overline{hX}$, with inverse provided by left multiplication by
$w^*$.
\end{proof}

\begin{proposition}
\label{indlimits} Let $X=\lim\limits_{\longrightarrow} X_n$ be an
inductive limit of Hilbert $A$-modules. If $Y\CC X$, then there
exists $n$ and $Y'\CC X_n$ such that $Y\cong Y'$.
\end{proposition}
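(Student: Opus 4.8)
The plan is to transport $Y$ into a finite stage by approximating, at the level of the $C^*$-algebra $\mK(X)$, the compact operator witnessing $Y\CC X$, and then to combine the Kirchberg--R\o rdam factorization (Theorem~\ref{Kirchberg-Rordam}) with the isometric isomorphism of Lemma~\ref{equivalence} to produce a genuine isometric copy $Y'$ of $Y$ inside some $X_n$.

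First I would fix the bookkeeping. Passing to $\widetilde A$ I may assume $A$ is unital, and taking the connecting maps to be (isometric) inclusions I may assume $X=\overline{\bigcup_n X_n}$ with $X_1\subseteq X_2\subseteq\cdots$. The first substantive point is that $\mK(X)=\overline{\bigcup_n \mK(X_n)}$: by Lemma~\ref{inclofcompacts} each inclusion $X_n\subseteq X$ induces a natural isometric inclusion $\mK(X_n)\subseteq\mK(X)$ sending $\theta_{u,v}\mapsto\theta_{u,v}$, these are compatible with the system, and since $\bigcup_n X_n$ is dense in $X$ the finite-rank operators $\theta_{u,v}$ with $u,v\in\bigcup_n X_n$ are dense in $\mK(X)$. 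I record the consequence used repeatedly: if $c\in\mK(X_n)$ then its range lies in $X_n$ (being a norm-limit of sums $\sum\theta_{u_k,v_k}$ with $u_k,v_k\in X_n$), so $\overline{cX}\subseteq X_n$.

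Now let $a\in\mK(X)_{sa}$ witness $Y\CC X$, i.e. $a_{|Y}=\mathrm{id}_{|Y}$; after applying a continuous function fixing $1$ I may assume $0\le a\le 1$, so $ay=y$ for all $y\in Y$. Fix $0<\epsilon<1$ and choose, by the density above, $b\in\mK(X_n)_+$ with $\|a-b\|<\epsilon$. Theorem~\ref{Kirchberg-Rordam}, applied inside the $C^*$-algebra $\mK(X)$, yields a contraction $d\in\mK(X)$ with $(a-\epsilon)_+=dbd^*$. Setting $h=db^{1/2}\in\mK(X)$ gives $hh^*=(a-\epsilon)_+$ and $h^*h=b^{1/2}d^*d\,b^{1/2}\le b$. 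Lemma~\ref{equivalence} then furnishes an isometric isomorphism $\Psi$ with
\[
\Psi\colon \overline{(a-\epsilon)_+X}=\overline{hh^*X}\cong\overline{h^*hX}\subseteq\overline{bX}\subseteq X_n ,
\]
where the first inclusion uses $h^*h\le b$ together with the Handelman lemma invoked in Lemma~\ref{lem:leq} (giving $\overline{(h^*h)^{1/2}X}\subseteq\overline{b^{1/2}X}$), and the last uses the range remark above. To finish, note that since $ay=y$, the element $(a-\epsilon)_+$ acts as the scalar $(1-\epsilon)$ on $Y$; hence $Y\subseteq\overline{(a-\epsilon)_+X}$, and Lemma~\ref{useful} applied to $\tfrac{1}{1-\epsilon}(a-\epsilon)_+$ gives $Y\CC\overline{(a-\epsilon)_+X}$. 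Putting $Y'=\Psi(Y)\subseteq X_n$, the restriction $\Psi_{|Y}\colon Y\to Y'$ is an isometric isomorphism, so $Y\cong Y'$; conjugating the witness of $Y\CC\overline{(a-\epsilon)_+X}$ by $\Psi$ produces a witness in $\mK(\overline{h^*hX})$, which includes into $\mK(X_n)$ by Lemma~\ref{inclofcompacts} and still restricts to $\mathrm{id}$ on $Y'$, whence $Y'\CC X_n$.

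The main obstacle is not a single estimate but keeping the operator-algebraic and module-theoretic pictures aligned: one must verify that the $C^*$-level data (the factorization of Theorem~\ref{Kirchberg-Rordam} and the domination $h^*h\le b$) translate into the correct containments $\overline{h^*hX}\subseteq\overline{bX}\subseteq X_n$, and that compact containment is genuinely inherited both along submodule inclusions (through Lemma~\ref{inclofcompacts}) and along the isometric isomorphism $\Psi$. A secondary subtlety is the identity $\mK(X)=\overline{\bigcup_n\mK(X_n)}$, since it is precisely what allows the witness $a$ to be approximated by some $b\in\mK(X_n)_+$ in the first place; note in particular that one should not expect $Y$ itself to be finitely generated projective, so the argument must move the whole module $Y$ rather than exploit any finiteness of it.
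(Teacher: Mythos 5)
Your proof is correct and is essentially the paper's own argument: both rest on the density of $\bigcup_n\mK(X_n)$ in $\mK(X)$, the functional-calculus observation that continuous functions of the witness act as scalars on $Y$, Theorem~\ref{Kirchberg-Rordam} to convert the norm approximation into an exact factorization, Lemma~\ref{equivalence} to produce the isometry carrying $\overline{(a-\epsilon)_+X}$ into $X_n$, and Lemma~\ref{useful} (plus the isometric inclusions of Lemma~\ref{inclofcompacts}) to transport the compact containment. The only cosmetic difference is that you approximate the witness $a$ itself by some $b\in\mK(X_n)_+$ and land inside $X_n$ via $h^*h\le b$ (equivalently $h^*=b^{1/2}d^*$), whereas the paper first passes to $b'\in C^*(b)$ with a local unit $c$, approximates $c$ by $c_n\in\mK(X_n)_+$, and applies the same machinery to the compression $c_nb'c_n\in\mK(X_n)_+$.
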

\begin{proof}
If $Y\CC X$, then there is $b\in\mK(X)_{sa}$ such that
$b_{|Y}=\mathrm{id}_{|Y}$. By taking $b^2$ instead of $b$, we may
assume without loss of generality that in fact $b\geq 0$.

Next, if $h(b)$ is any polynomial in $b$ with zero constant term, we
see that $h(b)(y)=h(1)(y)$ for any $y\in Y$ (because $b$ acts as the
identity on $Y$). Thus, if $f(b)\in C^*(b)\cong C_0(\sigma(b))$,
then by putting $f$ as a limit of polynomial functions as above we
obtain
\[
f(b)(y)=f(1)(y)\text{ for any }y\in Y\,,
\]
from which we conclude that in fact $f(b)_{|Y}=\mathrm{id}_{|Y}$ if
and only if $f(1)=1$.

Let $0<\epsilon<1$, and choose $b'\in C^*(b)$ such that
$\left((b'-\epsilon)_+\right)_{|Y}=\mathrm{id}_{|Y}$. To do so, use
the functional calculus together with the previous observation.
Furthermore, choose $c\in  C^*(b)$ which is a unit for $b'$, i.e.
$cb'=b'c=b'$.

Since $\mK(X)=\lim\limits_{\longrightarrow} \mK(X_n)$, we can find
elements $c_n$ in $\mK(X_n)_+$ such that (their images through the
natural maps) converge in norm to $c$, whence
\[
\lim\limits_{n\to\infty} c_nb'c_n=cb'c=b'\,.
\]
Now, there is $n$ such that
\[
\Vert c_nb'c_n-b'\Vert<\epsilon\,,
\]
hence we may apply Theorem \ref{Kirchberg-Rordam} to obtain $d_n$
with $\Vert d_n\Vert\leq 1$ and
\[
d_nc_nb'c_nd_n^*=(b'-\epsilon)_+\,.
\]
We now observe that (the image of) $X_n$ sits inside $X$, from which
it follows that (the image of) $\mK(X_n)$ sits, isometrically,
inside $\mK(X)$ (invoke Lemma~\ref{inclofcompacts}). Thus, if we
have $x$, $y$ in $X$, $c_n\theta_{x,y}c_n=\theta_{c_n(x),c_n(y)}$,
and since $c_n\in\mK(X_n)$, it follows that $c_n(x)$, $c_n(y)\in
X_n$. This implies, altogether, that $c_nb'c_n\in\mK(X_n)_+$.

Put $g_n=(c_nb'c_n)^{\frac{1}{2}}\in\mK(X_n)_+$, so that we now have
the equality
\[
(d_ng_n)(d_ng_n)^*=(b'-\epsilon)_+\,.
\]
By Lemma~\ref{equivalence}, there is an isometry between
$\overline{(b'-\epsilon)_+X}$ and $\overline{(d_ng_n)^*X}$. Since
$Y\subseteq \overline{(b'-\epsilon)_+X}$, this isometry carries $Y$
onto $Y'\subseteq
\overline{(d_ng_n)^*X}=\overline{g_nd_n^*X}\subseteq X_n$.

Since $Y\CC X$, we may apply Lemma~\ref{useful} to conclude that
$Y\CC \overline{(b'-\epsilon)_+X}$. Therefore
$Y'\CC\overline{(d_ng_n)^*X}\subseteq X_n$, whence $Y'\CC X_n$.
\end{proof}

\subsection{Order structure and existence of suprema}

Recall that if $(M,\leq)$ is an ordered set (in this setting, it
will be a partially ordered abelian semigroup), a supremum of a
countable subset is understood to be the least upper bound of such
set. In other words, if $S\subseteq M$, then $x=\sup S$ if $s\leq x$
for every $s\in S$ and whenever $ y\in M$ is such that $s\leq y$ for
any $s\in S$, then $x\leq y$.

\begin{proposition}
\label{suprema1} Let $X=\lim\limits_{\longrightarrow} X_n$ be an
inductive limit of Hilbert $A$-modules. Then
$[X]=\sup\limits_{n}[X_n]$ in $\Cu$.
\end{proposition}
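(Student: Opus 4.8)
The plan is to verify directly the two defining properties of a supremum in $\Cu$: that $[X]$ is an upper bound for the family $\{[X_n]\}_n$, and that it is the least such bound. Observe first that the connecting maps of the inductive system identify each $X_n$ (up to isomorphism) with a Hilbert submodule of $X_{n+1}$, and hence of $X$; consequently the sequence $([X_n])_n$ is increasing, so it is upward directed and its supremum is meaningful in the sense of \textbf{(O3)}.

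For the upper bound I would argue as follows. Each canonical map $X_n\to X$ realises $X_n$, up to isometric isomorphism, as a submodule of $X$. By Lemma~\ref{cuntzgencont}, containment of Hilbert modules is an instance of Cuntz subequivalence, so $X_n\precsim X$ and therefore $[X_n]\leq[X]$ for every $n$. This is the easy half.

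For the least-upper-bound property, suppose $[Y]\in\Cu$ satisfies $[X_n]\leq[Y]$, i.e.\ $X_n\precsim Y$, for all $n$; I must show $X\precsim Y$, which gives $[X]\leq[Y]$. By Definition~\ref{cuntzsub} it suffices to take an arbitrary $X_0\CC X$ and produce a module $X_0''\CC Y$ with $X_0\cong X_0''$. Here Proposition~\ref{indlimits} does the essential work: since $X_0$ is compactly contained in the inductive limit $X$, there exist an index $n$ and a module $X_0'\CC X_n$ with $X_0\cong X_0'$. Now invoking the hypothesis $X_n\precsim Y$ and feeding $X_0'\CC X_n$ into the definition of Cuntz subequivalence yields $X_0''\CC Y$ with $X_0'\cong X_0''$. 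Composing the two isometric isomorphisms gives $X_0\cong X_0''$ with $X_0''\CC Y$, and since $X_0\CC X$ was arbitrary we conclude $X\precsim Y$.

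The main obstacle is entirely encapsulated in Proposition~\ref{indlimits}: the delicate point---approximating the compact operator witnessing $X_0\CC X$ by one coming from a finite stage $X_n$, via the Kirchberg--R\o rdam Theorem~\ref{Kirchberg-Rordam} and the functional-calculus cut-down $(b'-\epsilon)_+$---has already been carried out there. With that proposition in hand the present statement reduces to a clean unwinding of the definition of $\precsim$ for Hilbert modules, so I expect no further technical difficulty; the only care needed is to keep accurate track of which module is compactly contained in which, and of the chain of isometric isomorphisms, at each step.
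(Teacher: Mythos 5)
Your proof is correct and follows essentially the same route as the paper's: the upper bound comes from containment $X_n\subseteq X$ (via Lemma~\ref{cuntzgencont}), and the least-upper-bound property is exactly the paper's argument of taking $Z\CC X$, pulling it back to some $X_n$ via Proposition~\ref{indlimits}, and then pushing it into $Y$ using $[X_n]\leq[Y]$. Your identification of Proposition~\ref{indlimits} as carrying all the technical weight is precisely right.
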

\begin{proof}
Since $X_n\subseteq X$ for all $n$, we have $[X_n]\leq [X]$. Now let
$[Y]\in\Cu$ and suppose that $[X_n]\leq [Y]$ for all $n$. We are to
show that $[X]\leq [Y]$.

So, let $Z\CC X$. By Proposition~\ref{indlimits}, there is $n$ and
$Z'\CC X_n$ such that $Z'\cong Z$. As, in turn, $[X_n]\leq [Y]$,
there is $Z''\CC Y$ with $Z''\cong Z'\,(\cong Z)$. Altogether this
implies that $[X]\leq [Y]$.
\end{proof}

We now introduce two seemingly different order relations in $\Cu$,
closely related to compact containment, which later on will turn out
to be equivalent.

\begin{definition}
\label{neworder} If $X$ and $Y$ are Hilbert modules, write: $[X]\CC
[Y]$ if there is $X'\CC Y$ with $[X]\leq [X']$.
\end{definition}
%\begin{definition}
%\label{neworder2} Given a partially ordered set $(M,\leq)$, say that
%$x\ll y$ if, whenever $y\leq \sup_n y_n$ for an increasing sequence
%$(y_n)$, there is $n$ with $x\leq y_n$. We say in this setting that
%$x$ is \emph{way below} $y$.
%\end{definition}

%It follows very easily from the definitions that if $x\ll y$ then
%$x\leq y$.

\begin{proposition}
\label{suprema2} Given a chain $[X_1]\CC [X_2]\CC [X_3]\CC\cdots$ in
$\Cu$, there is a chain of elements $[X_i']$ with
\begin{enumerate}[{\rm (i)}]
\item $X_1'\CC X_2'\CC\cdots$
\item $[X_i]\leq [X_i']\leq [X_{i+1}]$ for all $i$.
\item $\sup\limits_{n} [X_n']=\sup\limits_{n} [X_n]=[\lim\limits_{\longrightarrow} X_n']$.
\end{enumerate}
\end{proposition}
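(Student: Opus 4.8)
The plan is to unwind the hypothesis into genuine Hilbert submodules, assemble them into an inductive system whose connecting maps are isometric embeddings onto compactly contained submodules, and then read off the three conclusions from Proposition~\ref{suprema1}.

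First I would unfold the relation $\CC$ on $\Cu$. By Definition~\ref{neworder}, each $[X_i]\CC[X_{i+1}]$ provides a Hilbert submodule $W_i\CC X_{i+1}$ with $[X_i]\leq [W_i]$. Since compact containment presupposes $W_i\subseteq X_{i+1}$, Lemma~\ref{cuntzgencont} yields $[W_i]\leq [X_{i+1}]$, so at the level of classes we obtain the interlacing $[X_i]\leq [W_i]\leq [X_{i+1}]$ for every $i$.

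Next I would upgrade the mere inequalities between the $W_i$ to honest compact containments. The bound $[X_{i+1}]\leq [W_{i+1}]$ says $X_{i+1}\precsim W_{i+1}$; applying Definition~\ref{cuntzsub} to the module $W_i\CC X_{i+1}$ produces an isometric isomorphism $\phi_i$ of $W_i$ onto a submodule $W_i'\CC W_{i+1}$. Thus $(W_i,\phi_i)$ is an inductive system of Hilbert modules in which each connecting map is an isometric embedding onto a compactly contained submodule. I would then set $X'=\lim\limits_{\longrightarrow}(W_i,\phi_i)$ with canonical isometric maps $\psi_i\colon W_i\to X'$ and define $X_i'=\psi_i(W_i)$. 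Because $\psi_i=\psi_{i+1}\circ\phi_i$ and $\phi_i(W_i)=W_i'\subseteq W_{i+1}$, the $X_i'$ are nested submodules of $X'$ with $X'=\overline{\bigcup_i X_i'}=\lim\limits_{\longrightarrow} X_i'$. Transporting the compact operator witnessing $W_i'\CC W_{i+1}$ through the isometric isomorphism $\psi_{i+1}$ gives an element of $\mK(X_{i+1}')_{sa}$ acting as the identity on $X_i'$, so $X_i'\CC X_{i+1}'$; this is~(i). Since $X_i'\cong W_i$ we have $[X_i']=[W_i]$, and (ii) is precisely the interlacing found above.

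Finally, (iii) is immediate: Proposition~\ref{suprema1} applied to $X'=\lim\limits_{\longrightarrow} X_i'$ gives $\sup\limits_n[X_n']=[X']=[\lim\limits_{\longrightarrow} X_n']$, while taking suprema in (ii) and using $\sup\limits_n[X_{n+1}]=\sup\limits_n[X_n]$ forces $\sup\limits_n[X_n']=\sup\limits_n[X_n]$. The main obstacle is the middle step: correctly invoking the definition of Cuntz subequivalence of modules to convert the inequality $[W_i]\leq[W_{i+1}]$ (which passes through $X_{i+1}$) into an actual compact containment $W_i'\CC W_{i+1}$ after an isomorphism, and then checking that this compact containment is faithfully carried through the limit maps so that it survives as $X_i'\CC X_{i+1}'$.
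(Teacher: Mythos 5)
Your proof is correct and takes essentially the same route as the paper: unwind $[X_i]\CC[X_{i+1}]$ into submodules $W_i\CC X_{i+1}$ with $[X_i]\leq[W_i]$, use the Cuntz subequivalence $X_{i+1}\precsim W_{i+1}$ to turn each $W_i$ into an isometric copy compactly contained in $W_{i+1}$, assemble the resulting inductive system, and read off (i)--(iii) from Proposition~\ref{suprema1} applied to the limit. You in fact supply more detail than the paper, whose proof ends with ``the conclusions in items (i)-(iii) follow easily''; your transport of the witnessing compact operator through the limit isometries and the interlacing argument for the equality of suprema are exactly the missing verifications.
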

\begin{proof}
Since $[X_i]\CC [X_{i+1}]$, we can find (for each $i$) a Hilbert
module $X_i''$ with $[X_i]\leq [X_i'']$ and $X_i''\CC X_{i+1}$.
Next, since $[X_{i+1}]\leq [X_{i+1}'']$ and $X_i''\CC X_{i+1}$, we
have an isometry $f_i\colon X_i''\to X_{i+1}''$ (onto its image). We
get in this way an inductive system
\[
X_1''\stackrel{f_1}{\to}X_2''\stackrel{f_2}{\to}X_3''\stackrel{f_3}{\to}\cdots\,.
\]
Put $X=\lim\limits_{\longrightarrow} X_n''$ and denote
$\varphi_n\colon X_n''\to X$ the natural maps. Set
$X_n'=\varphi_n(X_n'')$. Then $[X_n']=[X_n'']$ and by
Proposition~\ref{suprema1}, the increasing sequence $[X_i']$ has a
supremum in $\Cu$, which agrees with $[\lim\limits_{\longrightarrow}
X_n']$. The conclusions in items (i)-(iii) follow easily.
\end{proof}

Before proceeding we record a general observation that will come
useful in a number of instances below.

\begin{remark}
\label{rem} If $X$ is a countably generated Hilbert $A$-module, then
we know by Theorem \ref{cg=Ksigmaunit} that $\mK(X)$ is a
$\sigma$-unital {\em C}$^*$-algebra. Let $(u_n)$ be an approximate
unit for $\mK(X)$ such that $u_{n+1}u_n=u_n$, and set
$X_n=\overline{u_nX}$. Then, since $u_{n+1}\in\mK(X)$ and
${u_{n+1}}_{|X_n}=\mathrm{id}_{|X_n}$, it follows that $X_n\CC X$.
Thus, using Lemma~\ref{useful} we see that
\[
X_n\CC\overline{u_{n+1}X}=X_{n+1}\,.
\]
Therefore we have obtained an increasing chain $X_1\CC X_2\CC
X_3\CC\cdots\CC X$, and since $X=\overline{\mK(X)X}$, we have in
fact that $X=\overline{\cup_{n=1}^{\infty}X_n}=
\lim\limits_{\longrightarrow} X_n$. Therefore,
Proposition~\ref{suprema1} tells us that $[X]=\sup\limits_n[X_n]$ in
$\Cu$.
\end{remark}

\begin{proposition}
\label{supremacc} Given an element $x$ in $\Cu$, the set
\[
x^{\CC}:=\{y\in \Cu\mid y\CC x\}
\]
satisfies the following:
\begin{enumerate}[{\rm (i)}]
\item $x^{\CC}$ is upwards directed.
\item There is a sequence $(x_n)$ in $x^{\CC}$ such that $x_1\CC
x_2\CC x_3\CC\cdots$ and such that $\sup\limits_{n}(x_n)=x=\sup
x^{\CC}$. \item For any $y$ in $x^{\CC}$, there is $n$ such that
$y\CC x_n$.
\end{enumerate}
\end{proposition}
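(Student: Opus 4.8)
The plan is to reduce everything to the canonical exhaustion of $X$ furnished by Remark~\ref{rem}. Writing $x=[X]$, fix an approximate unit $(u_n)$ for $\mK(X)$ with $u_{n+1}u_n=u_n$ and set $X_n=\overline{u_nX}$; Remark~\ref{rem} then gives a $\CC$-chain $X_1\CC X_2\CC\cdots\CC X$ with $X=\lim\limits_{\longrightarrow}X_n$ and, via Proposition~\ref{suprema1}, $\sup_n[X_n]=[X]=x$. I will take $x_n:=[X_n]$ as the sequence demanded in (ii). Before anything else I would record two elementary facts about the relation $\CC$ on $\Cu$. First, $y\CC x$ implies $y\leq x$: by Definition~\ref{neworder} there is a Hilbert module $X'$ with $X'\CC X$ and $y\leq[X']$, and since compact containment presupposes the submodule inclusion $X'\subseteq X$ (Definition~\ref{cc}), Lemma~\ref{cuntzgencont} gives $[X']\leq[X]$, whence $y\leq[X']\leq x$. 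Second, the mixed transitivity: $y\CC x'$ together with $x'\leq x$ give $y\CC x$. Indeed, writing $x'=[X'']$, $x=[X]$, the relation $y\CC x'$ provides $Z\CC X''$ with $y\leq[Z]$; since $x'\leq x$ means $X''\precsim X$, the defining property of $\precsim$ (Definition~\ref{cuntzsub}) carries $Z\CC X''$ to an isomorphic $Z'\CC X$, so $y\leq[Z]=[Z']$ with $Z'\CC X$, i.e. $y\CC x$.

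With these in hand I would prove (iii) first, as it drives the rest. Take $y=[Y]\in x^{\CC}$, so there is $Y'\CC X$ with $y\leq[Y']$. Because $X=\lim\limits_{\longrightarrow}X_n$, Proposition~\ref{indlimits} produces an index $n$ and a module $Y''\CC X_n$ with $Y'\cong Y''$; hence $[Y']=[Y'']$, and combining $Y''\CC X_n$ with $y\leq[Y'']$ in Definition~\ref{neworder} yields $y\CC x_n$, which is (iii). Next, each $x_n$ lies in $x^{\CC}$ (take $X_n\CC X$ and $[X_n]\leq[X_n]$) and $x_1\CC x_2\CC\cdots$ (take $X_n\CC X_{n+1}$ and $[X_n]\leq[X_n]$), while $\sup_n x_n=x$ was noted above; this establishes the $\CC$-chain required in (ii). For upward directedness (i), given $y_1,y_2\in x^{\CC}$ I apply (iii) to obtain $n_1,n_2$ with $y_i\CC x_{n_i}$; setting $n=\max(n_1,n_2)$ and using that the $x_m$ increase (since $X_m\subseteq X_{m+1}$ forces $x_m\leq x_{m+1}$), the mixed-transitivity fact gives $y_1\CC x_n$ and $y_2\CC x_n$, so $x_n\in x^{\CC}$ is a common upper bound and directedness holds with respect to both $\leq$ and $\CC$.

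It remains to complete (ii) by checking $\sup x^{\CC}=x$. Every $y\in x^{\CC}$ satisfies $y\leq x$ by the first preliminary fact, so $x$ is an upper bound for $x^{\CC}$; conversely any upper bound of $x^{\CC}$ dominates the subfamily $(x_n)$ and therefore dominates $\sup_n x_n=x$, so $x$ is the least upper bound. The only genuinely substantial step is (iii): all the weight rests on Proposition~\ref{indlimits}, the ability to pull a module compactly contained in the limit back into some finite stage, together with the interpretation of $\precsim$ from Definition~\ref{cuntzsub} used in the mixed-transitivity lemma. Everything else is bookkeeping with the two preliminary observations, so I would expect no obstacle beyond assembling these cited results in the correct order.
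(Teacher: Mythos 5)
Your proof is correct and follows essentially the same route as the paper's: both rest on the canonical exhaustion $X=\lim\limits_{\longrightarrow}X_n$ from Remark~\ref{rem} (with $\sup_n[X_n]=[X]$ via Proposition~\ref{suprema1}) together with Proposition~\ref{indlimits} to pull a compactly contained submodule back into a finite stage. The differences are purely organizational: you prove (iii) first and deduce (i) through an explicit mixed-transitivity lemma, where the paper obtains (i) and (iii) at once using the index $n+m$, and you spell out the verification of $\sup x^{\CC}=x$ that the paper leaves implicit.
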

\begin{proof}
Write $x=[X]$. By Remark \ref{rem}, we may also write
$X=\overline{\cup_{n=1}^{\infty}X_n}$, where $X_i\CC X_{i+1}\CC X$.
It follows from this that $[X]=\sup\limits_n [X_n]$.

If we now put $x_n=[X_n]$, we see that $x_n\CC x_{n+1}$ (just by
definition). Therefore condition (ii) follows.

If now $y_1$, $y_2\in x^{\CC}$, we have $y_i\leq [Y_i']$ with
$Y_i'\CC X$ for each $i=1,2$. Use Proposition \ref{indlimits} to
find $n$, $m$ such that $Y_1'\cong Y_1''\CC X_n$ and $Y_2'\cong
Y_2''\CC X_m$ (for some Hilbert modules $Y_i''$, $i=1$, $2$). Then
$y_1$, $y_2\CC x_{n+m}$, thus verifying conditions (i) and (iii).
\end{proof}

\begin{proposition}
\label{suprema} Let $x_1\leq x_2\leq x_3\cdots$ be an increasing
sequence in $\Cu$. Then, there exists a sequence $(x_n')$ in $\Cu$
with $x_n'\leq x_n$, $x_n'\CC x_{n+1}'$ and also
$\sup\limits_{n}x_n'=\sup\limits_n x_n$. In particular, any
increasing sequence in $\Cu$ has a supremum.
\end{proposition}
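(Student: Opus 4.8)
The plan is to build $(x_n')$ by a diagonal argument driven by three facts about the relation $\CC$ on $\Cu$, all of which I would read off from Definitions \ref{cuntzsub}, \ref{neworder} and Lemma \ref{cuntzgencont}: (1) $y\CC x$ forces $y\leq x$ (a witnessing submodule $X'\CC Y$ satisfies $X'\subseteq Y$, so $[X']\leq[Y]$ by Lemma \ref{cuntzgencont}); (2) $\CC$ interacts with $\leq$ via $a\leq b\CC c\implies a\CC c$ and $a\CC b\leq c\implies a\CC c$ (the latter using that $\precsim$ transports compactly contained submodules, Definition \ref{cuntzsub}); and (3) an \emph{interpolation} property, $y\CC x\implies\exists z:\ y\CC z\CC x$. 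Fact (3) I would extract from Proposition \ref{supremacc}: writing $x=\sup_j a_j$ with $a_1\CC a_2\CC\cdots$ as in (ii), part (iii) gives $j$ with $y\CC a_j$, and $a_j\CC a_{j+1}\leq x$ yields $a_j\CC x$, so $z=a_j$ works.

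Next I would fix, for each $m$, an approximating chain $x_{m,1}\CC x_{m,2}\CC\cdots$ in $x_m^{\CC}$ with $\sup\limits_k x_{m,k}=x_m$ (Proposition \ref{supremacc}(ii)), and fix an enumeration $(m_n,k_n)_{n\geq1}$ of $\mathbb{N}\times\mathbb{N}$ in which every pair occurs and $m_n\leq n$ for all $n$ (such an enumeration plainly exists). I would then construct $x_n'$ by induction, maintaining the invariant $x_n'\CC x_n$ together with $x_{m_n,k_n}\leq x_n'$. For the step, suppose $x_{n-1}'\CC x_{n-1}\leq x_n$; also $x_{m_n,k_n}\CC x_{m_n}\leq x_n$ since $m_n\leq n$. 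Thus both $x_{n-1}'$ and $x_{m_n,k_n}$ lie in $x_n^{\CC}$, so by the directedness of Proposition \ref{supremacc}(i) there is $w\CC x_n$ dominating both, and interpolation (fact (3)) produces $x_n'$ with $w\CC x_n'\CC x_n$. Then $x_{n-1}'\leq w\CC x_n'$ gives $x_{n-1}'\CC x_n'$ by (2), while $x_{m_n,k_n}\leq w\leq x_n'$ and $x_n'\CC x_n$ re-establish the invariant; the base step is identical with $w=x_{1,1}$.

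It then remains to identify the supremum. Since $(x_n')$ is a $\CC$-chain, Proposition \ref{suprema2} guarantees that $s:=\sup\limits_n x_n'$ exists in $\Cu$. I would verify $s=\sup\limits_n x_n$ directly. The enumeration ensures that each $x_{m,k}$ is handled at some stage $n$, where $x_{m,k}\leq x_n'\leq s$; hence $x_{m,k}\leq s$ for all $k$, and since $x_m=\sup\limits_k x_{m,k}$ is the least upper bound we get $x_m\leq s$, so $s$ is an upper bound of $(x_m)$. Conversely any upper bound $y$ of $(x_m)$ dominates every $x_n'\leq x_n\leq y$, whence $s\leq y$. Therefore $s$ is the least upper bound, which proves simultaneously that $\sup\limits_n x_n$ exists and equals $\sup\limits_n x_n'$, with $x_n'\leq x_n$ and $x_n'\CC x_{n+1}'$ by construction.

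The main obstacle I anticipate is the bookkeeping in the induction: one must keep the chain \emph{strictly} $\CC$-increasing, which forbids naive repetition of terms, while still absorbing \emph{every} approximant $x_{m,k}$ so that no part of any $x_m$ is lost in passing to the limit. Interpolation (Proposition \ref{supremacc}(iii)) is precisely the tool that reconciles these two requirements, allowing a fresh $\CC$-step to be inserted above $w$ while remaining $\CC$-below $x_n$.
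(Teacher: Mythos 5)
Your proof is correct and takes essentially the same route as the paper's: both approximate each $x_n$ from below by a rapidly increasing chain supplied by Proposition~\ref{supremacc}, diagonalize across the resulting double array so that every approximant is eventually absorbed by the new chain, and then invoke Proposition~\ref{suprema2} to obtain the supremum of that $\CC$-increasing chain. The only difference is bookkeeping---you run an enumeration of pairs with an inductive invariant, using directedness and an interpolation step, whereas the paper reindexes the approximating sequences to force $[X_{n,m}]\CC [X_{k+1,k+1}]$ for $k=\max\{n,m\}$---which is not a substantive mathematical difference.
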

\begin{proof}
Write $x_n=[X_n]$. Now, for each $n$ choose, as in Remark \ref{rem},
modules $X_{n,m}$ such that
\[
X_{n,1}\CC X_{n,2}\CC\cdots\CC X_n\,,
\]
and $X_n=\overline{\cup_m X_{n,m}}$.

Since we have $[X_{n,m}]\CC [X_n]\leq [X_{n+1}]$, we obtain that
$[X_{n,m}]\CC [X_{n+k}]$ for all $k\geq 0$.

Thus $[X_{n,m}]\in x_{n+k}^{\CC}$.

Given $n$, $m$, apply Proposition~\ref{supremacc} to find $l=l(n,m)$
such that $[X_{n,m}]\CC [X_{n+1, l}]$. By using this observation, we
will construct the sequence $(x_n')$.

Find $l_1$ such that $[X_{1,1}]\CC [X_{2,l_1}]$. Reindexing the
sequence $[X_{2,n}]$ (and throwing out a finite number of the
$X_{2,*}$'s), we may assume that $l_1=2$.
Next, there is
$l_2$ such that
\[
[X_{2,2}]\,,\,[X_{1,2}]\CC [X_{3,l_2}]\,.
\]
Again reindexing the sequence $[X_{3,n}]$ we may assume that
$l_2=3$. In general and after reindexing sequences we may assume
that $[X_{n,m}]$ satisfy $[X_{n,m}]\CC [X_{k+1,k+1}]$, where
$k=\text{max}\{n,m\}$. Put $x_n':=[X_{n,n}]\leq [X_n]=x_n$.

By construction $x_n'\CC x_{n+1}'$, whence the supremum of $(x_n')$
exists (by Proposition~\ref{suprema2}). But now observe that since
$[X_{n,m}]\leq [X_{k+1,k+1}]=x_{k+1}'$,  for all $n$ and $m$, where
$k=\text{max}\{n,m\}$, we have that $\sup\limits_k
x_k'\geq\sup\limits_i x_{n,i}$ for each $n$. This implies that
$x_n\leq \sup x_k'$, from which we conclude that $(x_n)$ also has a
supremum, which agrees with $\sup_k x_k'$, as was to be shown.
\end{proof}

\begin{lemma}
\label{llcc} The relations introduced above, $\ll$ and $\CC$, are
equal.
\end{lemma}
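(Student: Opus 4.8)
The plan is to prove the two implications $x\CC y\implies x\ll y$ and $x\ll y\implies x\CC y$ separately, where throughout I write $x=[X]$ and $y=[Y]$ for countably generated Hilbert $A$-modules. One direction tests a single, concretely chosen sequence and is easy; the other must handle an arbitrary increasing sequence and requires the suprema machinery of Propositions~\ref{suprema1}--\ref{suprema}.

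For $x\ll y\implies x\CC y$, I would exploit the explicit exhausting chain furnished by Remark~\ref{rem}. Applied to $Y$, it produces modules $Y_1\CC Y_2\CC\cdots\CC Y$ with $Y=\lim\limits_{\longrightarrow}Y_n$ and, by Proposition~\ref{suprema1}, $\sup_n[Y_n]=[Y]=y$. Thus $([Y_n])_n$ is an increasing sequence whose supremum dominates $y$, so the hypothesis $x\ll y$ directly yields an index $n$ with $x\le[Y_n]$. Since $Y_n\CC Y$, this is exactly a witness for $x\CC y$ in the sense of Definition~\ref{neworder}. No stable rank or finer Cuntz-comparison input is needed here.

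For the reverse implication $x\CC y\implies x\ll y$, suppose there is $X'\CC Y$ with $x\le[X']$, and let $(y_n)$ be an arbitrary increasing sequence with $\sup_n y_n\ge y$; the goal is an index with $x\le y_n$. First I would regularize: by Proposition~\ref{suprema} there is a sequence $(y_n')$ with $y_n'\le y_n$, $y_n'\CC y_{n+1}'$ and $\sup_n y_n'=\sup_n y_n$. Feeding the chain $y_1'\CC y_2'\CC\cdots$ into Proposition~\ref{suprema2} gives modules $X_n'$ with $X_1'\CC X_2'\CC\cdots$, $y_n'\le[X_n']\le y_{n+1}'$, and $[\lim\limits_{\longrightarrow}X_n']=\sup_n[X_n']=\sup_n y_n'\ge y$. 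Setting $Z=\lim\limits_{\longrightarrow}X_n'$, this reads $Y\precsim Z$. The crucial step is then: $X'\CC Y$ together with $Y\precsim Z$ gives, by the very definition of Cuntz subequivalence (Definition~\ref{cuntzsub}), a module $X''\cong X'$ with $X''\CC Z$; and since $Z$ is an inductive limit, Proposition~\ref{indlimits} supplies $n$ and $W\CC X_n'$ with $W\cong X''\cong X'$. As $W\subseteq X_n'$, Lemma~\ref{cuntzgencont} gives $[X']=[W]\le[X_n']$, whence
\[
x\le[X']\le[X_n']\le y_{n+1}'\le y_{n+1}.
\]
This is the required inequality $x\le y_{n+1}$, so $x\ll y$.

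The main obstacle I anticipate lies entirely in the second direction: arranging that compact containment \emph{survives} the passage up to the inductive limit $Z$ and back down to a finite stage $X_n'$. This hinges on two facts that must be invoked in exactly the right form, namely that Cuntz subequivalence is phrased in terms of compactly contained submodules (so $X'\CC Y\precsim Z$ transports $X'$ isomorphically into $Z$), and that Proposition~\ref{indlimits} pulls a compactly contained submodule of a limit back into a single stage. The regularization via Propositions~\ref{suprema} and~\ref{suprema2} is precisely what turns $Z$ into a genuine inductive limit with stages squeezed between the $y_n'$, and the only real bookkeeping subtlety is the index shift that lands the conclusion at $y_{n+1}$ rather than $y_n$.
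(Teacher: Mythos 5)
Your proof is correct and follows essentially the same route as the paper's: the easy direction uses the exhausting chain of Remark~\ref{rem} together with Proposition~\ref{suprema1}, and the hard direction regularizes the given sequence via Propositions~\ref{suprema} and~\ref{suprema2}, transports the compactly contained witness into the inductive limit by Definition~\ref{cuntzsub}, and pulls it back to a finite stage with Proposition~\ref{indlimits}, landing at $y_{n+1}$ exactly as in the paper. The only cosmetic differences are the order of the two implications and your explicit citation of Lemma~\ref{cuntzgencont} where the paper leaves that step implicit.
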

\begin{proof}
Suppose $x\CC y$ in $\Cu$, and suppose there is an increasing
sequence $(y_n)$ with $y\leq y'=\sup_n y_n$. We may choose by
Proposition~\ref{suprema} a sequence $(y_n')$ such that $y_n'\leq
y_n$, $y_n'\CC y_{n+1}'$ and $\sup_n y_n'=\sup_n y_n=y'$.

Use now Proposition~\ref{suprema2} to find another increasing
sequence $(z_n)$ such that $y_n'\leq z_n\leq y_{n+1}'$ with \[
z_n=[Z_n]\,, \,\, Z_n\CC Z_{n+1}\,,\,\,\sup\limits_n
z_n=\sup\limits_n y_n'=y'=[\lim\limits_{\longrightarrow} Z_n]\,.
\]
Now, $x\CC y\leq y'$ implies that $x\CC y'$, whence (by definition
of $\CC$), there is $X'$ such that $x\leq [X']$ and
$X'\CC\lim\limits_{\longrightarrow} Z_n$. Using
Proposition~\ref{indlimits}, there is $n$ and a Hilbert module $Z$
such that $X'\cong Z\CC Z_n$. Therefore,
\[
x\leq [X']\leq [Z_n]\leq y_{n+1}'\leq y_{n+1}\,,
\]
as wanted.

Conversely, if $x\ll y$, write $y=[Y]=\sup y_n$, with $y_n=[Y_n]$
and $Y_1\CC Y_2\CC Y_3\CC\cdots\CC Y$. Then $x\leq [Y_n]$ for some
$n$ and $Y_n\CC Y$, whencefore $x\CC y$.
\end{proof}

We are now ready for the harvest.

\begin{theorem}
\label{cuntzstructure1} Let $A$ be a {\rm C}$^*$-algebra. Then
\begin{enumerate}[{\rm (i)}]
\item Every countable, upwards directed, subset of $\Cu$ has a supremum in $\Cu$.
\item Given $x\in \Cu$, the set
\[
x^{\ll}:=\{y\in \Cu\mid y\ll x\}
\]
is upwards directed (with respect to $\ll$) and contains an
increasing sequence $x_1\ll x_2\ll x_3\ll\cdots$ with $\sup\limits_n
x_n=x$.
\end{enumerate}
\end{theorem}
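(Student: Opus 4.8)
The plan is to recognize that Theorem \ref{cuntzstructure1} simply records that $\Cu$ satisfies axioms {\bf (O3)} and {\bf (O4)}, and that almost all of the analytic work has already been carried out in Propositions \ref{suprema} and \ref{supremacc} together with Lemma \ref{llcc}. What remains is to assemble these and to upgrade Proposition \ref{suprema} from increasing sequences to arbitrary countable upward directed sets.

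For part (i), first I would reduce a countable upward directed set $D=\{d_1,d_2,\dots\}\subseteq\Cu$ to an increasing sequence. Set $z_1=d_1$ and, using upward directedness, inductively choose $z_{k+1}\in D$ with $z_{k+1}\geq z_k$ and $z_{k+1}\geq d_{k+1}$. Then $(z_k)$ is an increasing sequence in $D$ which is cofinal, since $d_k\leq z_k$ for every $k$. By Proposition \ref{suprema} the sequence $(z_k)$ has a supremum $z=\sup_k z_k$ in $\Cu$. I would then verify that $z=\sup D$: it is an upper bound for $D$ because $d_n\leq z_n\leq z$, and it is the least one because any upper bound $w$ of $D$ dominates each $z_k\in D$, hence dominates $\sup_k z_k=z$.

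For part (ii), I would invoke Lemma \ref{llcc} to replace $\ll$ by $\CC$ throughout, so that $x^{\ll}=x^{\CC}$. Proposition \ref{supremacc}(ii) then gives directly a $\CC$-increasing sequence $x_1\CC x_2\CC\cdots$ in $x^{\CC}$ with $\sup_n x_n=x$, which is the required rapidly increasing sequence once $\CC$ is rewritten as $\ll$. For upward directedness of $x^{\ll}$ with respect to $\ll$, I would take $y_1,y_2\in x^{\ll}$ and apply Proposition \ref{supremacc}(iii) to find $n_1,n_2$ with $y_i\CC x_{n_i}$; putting $N=\max\{n_1,n_2\}$ and using that $(x_n)$ is increasing together with the fact that $y\CC x_{n_i}\leq x_N$ forces $y\CC x_N$, one obtains $y_1,y_2\CC x_N$ with $x_N\in x^{\CC}$. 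Hence $x^{\ll}$ is directed with respect to $\ll$ (and, since $y\ll x_N$ entails $y\leq x_N$, with respect to $\leq$ as well).

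I do not expect a genuine obstacle here, as the substantive content lives in Propositions \ref{indlimits}, \ref{suprema} and \ref{supremacc}. The only points requiring care are purely order-theoretic: confirming that the cofinal sequence extracted in part (i) actually computes the supremum of the whole directed set, and ensuring in part (ii) that directedness is proved in the stronger sense (with respect to $\ll$), for which property (iii) of Proposition \ref{supremacc}—rather than mere $\leq$-directedness—is the essential ingredient.
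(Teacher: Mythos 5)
Your proposal is correct and is essentially the paper's own argument: the paper states this theorem without a separate proof ("we are now ready for the harvest"), precisely because it is the assembly of Proposition \ref{suprema}, Proposition \ref{supremacc} and Lemma \ref{llcc} that you carry out. Your two added order-theoretic steps — extracting a cofinal increasing sequence from a countable directed set before applying Proposition \ref{suprema}, and using Proposition \ref{supremacc}(iii) together with $\ll=\CC$ to get directedness with respect to $\ll$ rather than just $\leq$ — are exactly the routine verifications the paper leaves implicit.
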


%We shall refer to the sequences as in (iii) above as \emph{rapidly increasing sequences}.

\subsection{Semigroup structure in \boldmath{$\Cu$}}

The set $\Cu$ may be equipped with a natural addition operation,
under which it becomes an Abelian semigroup with neutral element
$[0]$. This operation is defined, as one would expect, in terms of
direct sums of countably generated Hilbert modules.

The main result that needs to be proved is the following:

\begin{theorem}
For a {\rm C}$^*$-algebra $A$, the object $\Cu$ belongs to the
category $\mathrm{Cu}$.
\end{theorem}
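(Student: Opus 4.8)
The plan is to equip $\Cu$ with the addition $[X]+[Y]:=[X\oplus Y]$ and then to verify the axioms \textbf{(O1)}--\textbf{(O6)} in turn, leaning on the structural results already established. First I would isolate the one fact about the operation on which everything rests: both $\CC$ and $\precsim$ are compatible with $\oplus$. The easy half is that $W_1\CC Y_1$ and $W_2\CC Y_2$ imply $W_1\oplus W_2\CC Y_1\oplus Y_2$; indeed, if $a_i\in\mK(Y_i)_{sa}$ restricts to the identity on $W_i$, then the block-diagonal operator $a_1\oplus a_2$ lies in $\mK(Y_1\oplus Y_2)_{sa}$ (rank-one operators on the summands are rank-one on the sum) and restricts to the identity on $W_1\oplus W_2$. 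Granting this, once we know $\precsim$ respects $\oplus$, well-definedness of the addition together with commutativity, associativity and the neutral element $[0]$ follow at once from the module isomorphisms $X\oplus Y\cong Y\oplus X$, $(X\oplus Y)\oplus Z\cong X\oplus(Y\oplus Z)$ and $0\oplus X\cong X$, since isomorphic modules are Cuntz equivalent.

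The crux is the claim that $X_1\precsim Y_1$ and $X_2\precsim Y_2$ force $X_1\oplus X_2\precsim Y_1\oplus Y_2$, and this is the step I expect to be the main obstacle, because a module $Z\CC X_1\oplus X_2$ need not split along the direct-sum decomposition. To circumvent this I would invoke Remark~\ref{rem} to write $X_i=\lim_{\longrightarrow}X_{i,n}$ with $X_{i,n}\CC X_{i,n+1}\CC X_i$, so that $X_1\oplus X_2=\lim_{\longrightarrow}(X_{1,n}\oplus X_{2,n})$. Given $Z\CC X_1\oplus X_2$, Proposition~\ref{indlimits} yields an $n$ and a $Z'\CC X_{1,n}\oplus X_{2,n}$ with $Z\cong Z'$. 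Since $X_{i,n}\CC X_i\precsim Y_i$, the definition of $\precsim$ produces $W_i\CC Y_i$ with $X_{i,n}\cong W_i$; by the easy half above $W_1\oplus W_2\CC Y_1\oplus Y_2$, and transporting $Z'$ across the isometry $X_{1,n}\oplus X_{2,n}\cong W_1\oplus W_2$ gives $Z\cong Z''\CC W_1\oplus W_2$. Finally $Z''\CC W_1\oplus W_2\subseteq Y_1\oplus Y_2$ forces $Z''\CC Y_1\oplus Y_2$, using the isometric inclusion of compacts from Lemma~\ref{inclofcompacts}. This establishes $X_1\oplus X_2\precsim Y_1\oplus Y_2$, giving both that $+$ is well defined and axiom \textbf{(O2)}.

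With the ordered semigroup in place the remaining axioms fall into line. \textbf{(O1)} holds with zero element $[0]$, and \textbf{(O6)} holds because $0\subseteq X$ gives $0\precsim X$ by Lemma~\ref{cuntzgencont}, so $[0]\leq[X]$ for every $X$. Axiom \textbf{(O3)} is exactly Theorem~\ref{cuntzstructure1}(i), and \textbf{(O4)} is Theorem~\ref{cuntzstructure1}(ii) combined with Lemma~\ref{llcc} (which identifies $\ll$ with $\CC$); directedness of $x^{\ll}$ with respect to $\leq$ follows from its directedness with respect to $\ll$, since $y\ll x$ implies $y\leq x$ by testing against the constant sequence $y_n=x$.

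For \textbf{(O5)} I would handle the two assertions separately. The statement about $\ll$ is quick: if $x_i\ll y_i$, then by Lemma~\ref{llcc} and Definition~\ref{neworder} there are $X_i'\CC Y_i$ with $x_i\leq[X_i']$, whence $X_1'\oplus X_2'\CC Y_1\oplus Y_2$ by the easy half and $x_1+x_2\leq[X_1'\oplus X_2']$, so $x_1+x_2\CC y_1+y_2$, i.e.\ $x_1+x_2\ll y_1+y_2$. For the supremum statement, given countable upward directed sets $S_1,S_2$ I would first replace each by a cofinal increasing sequence (extractable because the sets are countable and directed), reducing to proving $\sup_n(a_n+b_n)=\sup_n a_n+\sup_n b_n$ for increasing sequences $(a_n),(b_n)$. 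Using Propositions~\ref{suprema} and~\ref{suprema2} I would realise $\sup_n a_n=[X]$ with $X=\lim_{\longrightarrow}X_n$, $X_n\CC X_{n+1}$ and $[X_n]\leq a_n$ (after discarding finitely many terms), and likewise $\sup_n b_n=[Y]$ with $Y=\lim_{\longrightarrow}Y_n$, $[Y_n]\leq b_n$. Then $X\oplus Y=\lim_{\longrightarrow}(X_n\oplus Y_n)$, so Proposition~\ref{suprema1} yields
\[
\sup_n a_n+\sup_n b_n=[X\oplus Y]=\sup_n[X_n\oplus Y_n]=\sup_n([X_n]+[Y_n])\leq\sup_n(a_n+b_n)\leq\sup_n a_n+\sup_n b_n,
\]
forcing equality throughout; upward-directedness of $S_1+S_2$ is immediate from \textbf{(O2)} and the directedness of the $S_i$. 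This completes the verification that $\Cu$ is an object of $\mathsf{Cu}$.
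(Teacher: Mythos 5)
Your proposal is correct and takes essentially the same route as the paper: the paper proves exactly your three key facts as Lemma~\ref{compwithorder} (compatibility of $\precsim$ with $\oplus$, via Remark~\ref{rem} and Proposition~\ref{indlimits}), Lemma~\ref{compwithsuprema} (compatibility of suprema with addition) and Lemma~\ref{compwithll} (compatibility of $\ll$ with addition, via Lemma~\ref{llcc}), and then cites Theorem~\ref{cuntzstructure1} for {\bf (O3)}--{\bf (O4)}. The only difference is cosmetic: you spell out the block-diagonal argument that $W_1\CC Y_1$ and $W_2\CC Y_2$ imply $W_1\oplus W_2\CC Y_1\oplus Y_2$, a step the paper uses implicitly.
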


This follows from the results proved previously (see Theorem
\ref{cuntzstructure1}) and the following:

\begin{proposition}
\label{cuntzstructure2} Let $A$ be a {\rm C}$^*$-algebra. Then $\Cu$
is an abelian semigroup with zero, and the operation is compatible
with the natural order $\leq$, with $\ll$ and with taking suprema of
countable, upward directed sets.
\end{proposition}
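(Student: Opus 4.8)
The plan is to establish the four required compatibilities one at a time, after first fixing the addition on $\Cu$. The operation is defined by $[X]+[Y]=[X\oplus Y]$, so the very first task is to check that this is well-defined on Cuntz-equivalence classes: if $X\precsim X'$ and $Y\precsim Y'$, then $X\oplus Y\precsim X'\oplus Y'$. I would prove this directly from Definition~\ref{cuntzsub}: given $Z\CC X\oplus Y$, I want to split off the two coordinates. The natural approach is to use a compact $a\in\mK(X\oplus Y)_{sa}$ acting as the identity on $Z$, and compress it along the two summands; combined with Lemma~\ref{useful} and Proposition~\ref{indlimits}-style reasoning, one shows any compactly contained submodule of $X\oplus Y$ is subordinate to a direct sum $X_0\oplus Y_0$ with $X_0\CC X$ and $Y_0\CC Y$. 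That commutativity and associativity hold, with $[0]$ neutral, is immediate from $X\oplus Y\cong Y\oplus X$ and $(X\oplus Y)\oplus Z\cong X\oplus(Y\oplus Z)$, together with the Remark that isomorphic modules are Cuntz equivalent.

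Next I would handle compatibility with $\leq$. Since $[X]\leq[Y]$ means $X\precsim Y$, the statement ``$x_1\leq y_1$ and $x_2\leq y_2$ imply $x_1+x_2\leq y_1+y_2$'' is exactly the monotonicity of $\oplus$ under $\precsim$ that was already needed for well-definedness; so this requires no new work beyond the first step. For compatibility with suprema, suppose $(x_n)$ and $(y_n)$ are countable upward directed sets (by Proposition~\ref{suprema} it suffices to treat increasing sequences, and by Remark~\ref{rem} each can be realized by an inductive system of Hilbert modules). Writing $x_n=[X_n]$, $y_n=[Y_n]$ with $X=\varinjlim X_n$ and $Y=\varinjlim Y_n$, the key observation is that $\varinjlim(X_n\oplus Y_n)\cong(\varinjlim X_n)\oplus(\varinjlim Y_n)=X\oplus Y$. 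Then Proposition~\ref{suprema1} gives
\[
\sup_n\big([X_n]+[Y_n]\big)=\sup_n[X_n\oplus Y_n]=[X\oplus Y]=[X]+[Y]=\sup_n[X_n]+\sup_n[Y_n],
\]
which is precisely the first clause of (O5).

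For the last clause, compatibility of $\ll$ with addition, I would use Lemma~\ref{llcc} to replace $\ll$ by $\CC$ throughout, since they coincide. So suppose $x_1\CC y_1$ and $x_2\CC y_2$; unwinding Definition~\ref{neworder}, there are $X_1'\CC Y_1$ and $X_2'\CC Y_2$ with $x_i\leq[X_i']$. The natural claim is that $X_1'\oplus X_2'\CC Y_1\oplus Y_2$: if $a_i\in\mK(Y_i)_{sa}$ implements $X_i'\CC Y_i$, then $a_1\oplus a_2$ is a self-adjoint compact operator on $Y_1\oplus Y_2$ (using the block inclusion $\mK(Y_1)\oplus\mK(Y_2)\hookrightarrow\mK(Y_1\oplus Y_2)$) acting as the identity on $X_1'\oplus X_2'$. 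Then $x_1+x_2\leq[X_1']+[X_2']=[X_1'\oplus X_2']$ with $X_1'\oplus X_2'\CC Y_1\oplus Y_2$, giving $x_1+x_2\CC y_1+y_2$ as required.

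I expect the main obstacle to be the very first step---the monotonicity of $\oplus$ under $\precsim$, equivalently showing that a submodule compactly contained in $X\oplus Y$ can be decomposed (up to Cuntz subequivalence) into pieces compactly contained in $X$ and in $Y$ respectively. Unlike the supremum and $\ll$ compatibilities, which reduce cleanly to earlier propositions, this requires genuinely manipulating compact operators across a direct-sum decomposition and controlling how a single implementing operator $a\in\mK(X\oplus Y)$ interacts with the two orthogonal summands. The tools are all in place---Lemmas~\ref{useful}, \ref{equivalence}, \ref{inclofcompacts} and Proposition~\ref{indlimits}---but assembling them to pass from ``$Z\CC X\oplus Y$'' to a compatible pair of compactly-contained submodules is where the care is needed.
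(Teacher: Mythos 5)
Your proposal is correct and follows the same architecture as the paper's proof, which is organized as three lemmas: monotonicity of $\oplus$ under $\precsim$ (Lemma~\ref{compwithorder}, giving well-definedness and compatibility with $\leq$ simultaneously, as you note), compatibility with suprema (Lemma~\ref{compwithsuprema}), and compatibility with $\ll$ via Lemma~\ref{llcc} and block sums of compact containments (Lemma~\ref{compwithll}); your last two steps are essentially verbatim the paper's arguments.

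The one point worth recording concerns the step you flag as the main obstacle. The paper's execution is simpler than the mechanism you sketch: it never compresses the implementing operator and never splits $Z$ into coordinate pieces. Instead it applies Remark~\ref{rem} to each summand separately, writing $X_1=\overline{\cup_n X_{1,n}}$ and $X_2=\overline{\cup_n X_{2,n}}$ with $X_{i,n}\CC X_{i,n+1}$; then $X_1\oplus X_2=\overline{\cup_n (X_{1,n}\oplus X_{2,n})}$, and block-diagonal operators witness $X_{1,n}\oplus X_{2,n}\CC X_{1,n+1}\oplus X_{2,n+1}$, so $X_1\oplus X_2$ is an inductive limit of the modules $X_{1,n}\oplus X_{2,n}$. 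Proposition~\ref{indlimits} then lands any $Z\CC X_1\oplus X_2$, \emph{intact}, as $Z\cong Z'\CC X_{1,n}\oplus X_{2,n}$ for a single $n$. Only after this are the hypotheses $X_i\precsim Y_i$ invoked, applied to $X_{1,n}$ and $X_{2,n}$ separately (each is compactly contained in its $X_i$), yielding $(X_{i,n})'\CC Y_i$ with $X_{i,n}\cong (X_{i,n})'$ and hence the chain $Z\cong Z'\CC X_{1,n}\oplus X_{2,n}\cong (X_{1,n})'\oplus (X_{2,n})'\CC Y_1\oplus Y_2$. So the ``compatible pair of compactly contained submodules'' comes for free from the approximate-unit decompositions of the two summands, not from any manipulation of the operator implementing $Z\CC X_1\oplus X_2$; the literal compression you first suggest would indeed run into trouble, since $Z$ can sit diagonally in $X_1\oplus X_2$ and the corner compressions of $a$ need not act as the identity on anything related to $Z$. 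One further bookkeeping remark on your suprema step: an increasing sequence $x_n=[X_n]$ in $\Cu$ does not by itself give connecting maps $X_n\to X_{n+1}$, so writing $X=\lim\limits_{\longrightarrow} X_n$ requires first passing to a $\CC$-chain via Propositions~\ref{suprema} and~\ref{suprema2} (or, as the paper does, applying Remark~\ref{rem} to a module representing the supremum, which exists by Theorem~\ref{cuntzstructure1}, and relating back to the directed set through $\ll$); since you cite exactly these results, this is a matter of assembly rather than a gap.
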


To prove Proposition \ref{cuntzstructure2}, we assemble a series of
lemmas below and also use Theorem \ref{cuntzstructure1}.

\begin{lemma}
\label{compwithorder} Let $X_i$, $Y_i$ ($i=1$, $2$) be Hilbert
modules and suppose that $X_i\precsim Y_i$. Then
\[
X_1\oplus X_2\precsim Y_1\oplus Y_2\,.
\]
\end{lemma}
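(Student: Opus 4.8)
The plan is to verify Definition~\ref{cuntzsub} directly: given an arbitrary module $Z\CC X_1\oplus X_2$, I must produce an isometric copy $Z''\cong Z$ with $Z''\CC Y_1\oplus Y_2$. The naive temptation is to split $Z$ as $Z_1\oplus Z_2$ with $Z_i\CC X_i$ and treat each summand via the hypothesis $X_i\precsim Y_i$; this fails because a module compactly contained in $X_1\oplus X_2$ need not respect the direct sum decomposition. So the first task is to replace $Z$ by something to which the coordinatewise hypotheses can be applied.

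To achieve this, I would use Remark~\ref{rem} to write each $X_i$ as an inductive limit $X_i=\lim_m X_{i,m}$ of a chain $X_{i,1}\CC X_{i,2}\CC\cdots\CC X_i$. Taking direct sums, the inclusions $X_{i,n}\subseteq X_{i,n+1}$ give $X_1\oplus X_2=\lim_n(X_{1,n}\oplus X_{2,n})$, a genuine inductive limit of Hilbert modules. Applying Proposition~\ref{indlimits} to $Z\CC X_1\oplus X_2$, there is an index $n$ and a module $Z'\CC X_{1,n}\oplus X_{2,n}$ with $Z\cong Z'$. The gain is that now each summand $X_{i,n}$ is itself compactly contained in the full module $X_i$.

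The point of the reduction is that I can now invoke $X_i\precsim Y_i$ on the honest summands rather than on the non-splitting module $Z$. Since $X_{i,n}\CC X_i$ and $X_i\precsim Y_i$, Definition~\ref{cuntzsub} furnishes isometric isomorphisms $\phi_i\colon X_{i,n}\to X_{i,n}'$ with $X_{i,n}'\CC Y_i$. The direct sum $\phi_1\oplus\phi_2$ is then an isometric isomorphism of $X_{1,n}\oplus X_{2,n}$ onto $X_{1,n}'\oplus X_{2,n}'$, and I set $Z''=(\phi_1\oplus\phi_2)(Z')$. An isometric isomorphism $\psi$ conjugates the compact operator witnessing $Z'\CC X_{1,n}\oplus X_{2,n}$ into one witnessing $Z''\CC X_{1,n}'\oplus X_{2,n}'$, using the identity $\psi\theta_{x,y}\psi^{-1}=\theta_{\psi x,\psi y}$; hence compact containment is preserved and $Z''\cong Z'\cong Z$. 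Finally, since $X_{i,n}'\subseteq Y_i$ gives $X_{1,n}'\oplus X_{2,n}'\subseteq Y_1\oplus Y_2$, Lemma~\ref{inclofcompacts} yields an isometric inclusion $\mK(X_{1,n}'\oplus X_{2,n}')\subseteq\mK(Y_1\oplus Y_2)$, so the witnessing operator is compact in $\mK(Y_1\oplus Y_2)$ and $Z''\CC Y_1\oplus Y_2$. As $Z$ was arbitrary, this gives $X_1\oplus X_2\precsim Y_1\oplus Y_2$.

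The main obstacle, as noted, is precisely that compact containment in a direct sum does not decompose coordinatewise; the inductive-limit reduction of Proposition~\ref{indlimits} is what circumvents it, by letting us apply $\precsim$ to the genuine summands $X_{i,n}$ instead of to $Z$. The remaining verifications—that $X_1\oplus X_2$ really is the inductive limit of the $X_{1,n}\oplus X_{2,n}$, and that isometric isomorphisms transport compact containment—are routine and rest only on the elementary identities for the rank-one operators $\theta_{x,y}$.
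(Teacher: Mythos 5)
Your proof is correct and follows essentially the same route as the paper's: decompose each $X_i$ via Remark~\ref{rem} into an increasing chain $X_{i,n}\CC X_{i,n+1}$, observe $X_1\oplus X_2=\lim_n(X_{1,n}\oplus X_{2,n})$, reduce the arbitrary $Z\CC X_1\oplus X_2$ to some $Z'\CC X_{1,n}\oplus X_{2,n}$ by Proposition~\ref{indlimits}, and then apply the hypotheses $X_i\precsim Y_i$ to the summands $X_{i,n}\CC X_i$. The only difference is that you spell out the transport of the witnessing compact operator under the isometric isomorphism and the final appeal to Lemma~\ref{inclofcompacts}, steps the paper leaves implicit in the chain $Z\cong Z'\CC X_{1,n}\oplus X_{2,n}\cong (X_{1,n})'\oplus(X_{2,n})'\CC Y_1\oplus Y_2$.
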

\begin{proof}
Write $X_1=\overline{\cup_n X_{1,n}}$ and $X_2=\overline{\cup_n
X_{2,n}}$ for Hilbert modules such that $X_{1,n}\CC X_{1,n+1}$ and
$X_{2,n}\CC X_{2,n+1}$ for all $n$ as in Remark \ref{rem}. Then one
checks that
\[
X_1\oplus X_2=\overline{\cup_n (X_{1,n}\oplus X_{2,n})}
\]
with $X_{1,n}\oplus X_{2,n}\CC X_{1,n+1}\oplus X_{2,n+1}$.

Let $Z\CC X_1\oplus X_2$. We then know (by
Proposition~\ref{indlimits}) that $Z\cong Z'$ for some Hilbert
module $Z'$ such that $Z'\CC X_{1,n}\oplus X_{2,n}$ (for some $n$).
Using now that also $X_i\precsim Y_i$, we can find modules
$(X_{i,n})'\CC Y_i$ (for $i=1,2$) such that $X_{i,n}\cong
(X_{i,n})'$.

Altogether this implies that
\[
Z\cong Z'\CC X_{1,n}\oplus X_{2,n}\cong (X_{1,n})'\oplus
(X_{2,n})'\CC Y_1\oplus Y_2\,,
\]
as was to be shown.
\end{proof}

It follows from Lemma~\ref{compwithorder} that the following is a
good definition:
\begin{definition}
Given elements $[X]$ and $[Y]$ in $\Cu$, define
\[
[X]+[Y]=[X\oplus Y]\,,
\]
and
\[
[X]\leq [Y] \text{ if and only if } X\precsim Y\,.
\]
\end{definition}
In this fashion, $\Cu$ becomes an abelian, partially ordered
semigroup, whose neutral element is the class of the zero module.

\begin{lemma}
\label{compwithsuprema} Let $S_1$ and $S_2$ be two countable,
upwards directed subsets of $\Cu$. Then
\[
\sup (S_1+ S_2)=\sup S_1+\sup S_2\,.
\]
\end{lemma}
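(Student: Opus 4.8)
The plan is to establish the two inequalities $\sup(S_1+S_2)\leq s_1+s_2$ and $s_1+s_2\leq\sup(S_1+S_2)$ separately, writing $s_i=\sup S_i$. First I would record that everything in sight has a supremum: $s_1,s_2$ exist by Theorem~\ref{cuntzstructure1}(i), and, using Lemma~\ref{compwithorder} to see that the order on $\Cu$ is compatible with $\oplus$, the set $S_1+S_2$ is again countable and upward directed, so $t:=\sup(S_1+S_2)$ exists as well. The first inequality is immediate: for any $a\in S_1$ and $b\in S_2$ we have $a\leq s_1$ and $b\leq s_2$, hence $a+b\leq s_1+s_2$ by Lemma~\ref{compwithorder}, so $s_1+s_2$ is an upper bound of $S_1+S_2$ and $t\leq s_1+s_2$.

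For the reverse inequality I would produce two different families of approximants. Fixing representatives $s_1=[X]$ and $s_2=[Y]$, I apply Remark~\ref{rem} to write $X=\overline{\cup_m X_m}$ and $Y=\overline{\cup_m Y_m}$ with $X_m\CC X_{m+1}\CC X$ and $Y_m\CC Y_{m+1}\CC Y$; in particular $[X_m]\CC s_1$ and $[Y_m]\CC s_2$, which by Lemma~\ref{llcc} means $[X_m]\ll s_1$ and $[Y_m]\ll s_2$. Since $\cup_m(X_m\oplus Y_m)$ is dense in $X\oplus Y$ we have $X\oplus Y=\lim_{\longrightarrow}(X_m\oplus Y_m)$, so Proposition~\ref{suprema1} yields
\[
s_1+s_2=[X\oplus Y]=\sup_m\big([X_m]+[Y_m]\big).
\]
Independently, because $S_1$ is countable and upward directed, I extract a cofinal increasing sequence $(b_n)\subseteq S_1$ (enumerate $S_1$ and recursively choose upper bounds), so that $(b_n)$ is increasing with $\sup_n b_n=s_1$; likewise I take a cofinal increasing $(c_n)\subseteq S_2$ with $\sup_n c_n=s_2$.

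The heart of the argument is then to bound each $[X_m]+[Y_m]$ by $t$. Since $[X_m]\ll s_1=\sup_n b_n$ and $(b_n)$ is increasing, the definition of $\ll$ in {\bf (O4)} gives an index with $[X_m]\leq b_n$, and similarly $[Y_m]\leq c_{n'}$; setting $N=\max\{n,n'\}$ and using monotonicity gives $[X_m]\leq b_N$ and $[Y_m]\leq c_N$, so that $b_N+c_N\in S_1+S_2$ and, by Lemma~\ref{compwithorder},
\[
[X_m]+[Y_m]\leq b_N+c_N\leq t.
\]
Passing to the supremum over $m$ and using the displayed identity yields $s_1+s_2\leq t$, which together with the first inequality gives $\sup(S_1+S_2)=s_1+s_2$.

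The step I expect to be the main obstacle is exactly the pairing carried out in the last paragraph. One cannot in general split an element lying way below $s_1+s_2$ as a sum of pieces way below $s_1$ and way below $s_2$, so a naive Riesz-type decomposition is unavailable; the device that circumvents this is to use the two sequences for two distinct purposes. The rapidly increasing sequence $\big([X_m]+[Y_m]\big)$ coming from Remark~\ref{rem} is what guarantees the \emph{correct} supremum $s_1+s_2$, while the compact-containment facts $[X_m]\ll s_1$ and $[Y_m]\ll s_2$ are precisely what allow each term to \emph{catch up} to the cofinal sequences $(b_n)$ and $(c_n)$ living inside $S_1$ and $S_2$, and thereby to land below $t$. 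The only remaining care is to justify the existence of $t$ and the density claim $X\oplus Y=\lim_{\longrightarrow}(X_m\oplus Y_m)$, both of which are routine given Lemma~\ref{compwithorder} and Proposition~\ref{suprema1}.
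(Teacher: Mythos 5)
Your proof is correct and follows essentially the same route as the paper's: the easy inequality from compatibility of addition with the order, and the hard one by writing $s_1+s_2=\sup_m\bigl([X_m]+[Y_m]\bigr)$ via Remark~\ref{rem} and Proposition~\ref{suprema1}, then using $[X_m]\ll s_1$, $[Y_m]\ll s_2$ to dominate each term by an element of $S_1+S_2$. The only difference is that you make explicit the extraction of cofinal increasing sequences from the countable directed sets (needed to invoke {\bf (O4)}) and the pairing via $N=\max\{n,n'\}$, steps the paper leaves implicit.
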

\begin{proof} It is obvious that $\sup (S_1+ S_2)\le \sup S_1+\sup
S_2\,.$

Write $S_i=\{s_i^n\}$, $s_i=\sup S_i$ for $i=1,2$, and write
$s_1=[X]$, $s_2=[Y]$ with
\[
X=\overline{\cup_n X_n}\,,\,\, Y=\overline{\cup_n Y_n}\,,
\]
and $X_n\CC X_{n+1}$, $Y_n\CC Y_{n+1}$ for each $n$. Then $X\oplus
Y=\overline{\cup_n (X_n\oplus Y_n)}$.

Now
\[
[X\oplus Y]=[X]+[Y]=s_1+s_2=\sup\{[X_n\oplus
Y_n]\}=\sup_n\{[X_n]+[Y_n]\}\leq \sup (S_1+S_2)\,,
\]
as $[X_n]\leq s_1^{i_n}$ and $[Y_n]\leq s_2^{j_n}$ for some $n$
(because $[X_n]\ll s_1$ and $[Y_n]\ll s_2$).
\end{proof}

\begin{lemma}
\label{compwithll} Suppose that $x_i\ll y_i$ in $\Cu$ (for $i=1,2$).
Then $x_1+ x_2\ll y_1+y_2$.
\end{lemma}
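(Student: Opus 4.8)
The plan is to reduce everything to the relation $\CC$ via Lemma~\ref{llcc}, which tells us that $\ll$ and $\CC$ coincide. Writing $x_i=[X_i]$ and $y_i=[Y_i]$, the hypothesis $x_i\ll y_i$ thus becomes $x_i\CC y_i$, and by Definition~\ref{neworder} this furnishes, for each $i$, a Hilbert module $X_i'$ with $X_i'\CC Y_i$ and $[X_i]\leq[X_i']$. My candidate witness for $x_1+x_2\CC y_1+y_2$ will be the module $X_1'\oplus X_2'$: I will show that $X_1'\oplus X_2'\CC Y_1\oplus Y_2$ and that $[X_1\oplus X_2]\leq[X_1'\oplus X_2']$, which together give exactly $x_1+x_2\CC y_1+y_2$ by Definition~\ref{neworder}, and hence $x_1+x_2\ll y_1+y_2$ again by Lemma~\ref{llcc}.

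The order inequality $[X_1\oplus X_2]\leq[X_1'\oplus X_2']$ is immediate: from $X_i\precsim X_i'$ and Lemma~\ref{compwithorder} we obtain $X_1\oplus X_2\precsim X_1'\oplus X_2'$. The substantive point is the compact containment $X_1'\oplus X_2'\CC Y_1\oplus Y_2$. To establish it, I would choose self-adjoint compact operators $a_i\in\mK(Y_i)_{sa}$ with ${a_i}_{|X_i'}=\mathrm{id}_{|X_i'}$, as provided by Definition~\ref{cc}. Using the natural isometric inclusions $\mK(Y_i)\subseteq\mK(Y_1\oplus Y_2)$ of Lemma~\ref{inclofcompacts} (coming from $Y_i\subseteq Y_1\oplus Y_2$), I regard each $a_i$ as a compact operator on $Y_1\oplus Y_2$ that acts as $a_i$ on the summand $Y_i$ and annihilates the other summand. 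Then $a:=a_1+a_2$ lies in $\mK(Y_1\oplus Y_2)_{sa}$ and sends $(z_1,z_2)\mapsto(a_1z_1,a_2z_2)$, so it restricts to the identity on $X_1'\oplus X_2'$; this is precisely a witness to $X_1'\oplus X_2'\CC Y_1\oplus Y_2$.

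The only step requiring care---and the one I expect to be the main obstacle---is verifying that $a_1+a_2$ really is compact on $Y_1\oplus Y_2$ and acts diagonally; this is exactly where Lemma~\ref{inclofcompacts} does the work, since it guarantees that a compact operator on a submodule extends to a compact operator on the ambient module (acting as zero on an orthogonal complement). Granting this, no further computation is needed: the chain $[X_1\oplus X_2]\leq[X_1'\oplus X_2']$ together with $X_1'\oplus X_2'\CC Y_1\oplus Y_2$ closes the argument and yields $x_1+x_2\ll y_1+y_2$.
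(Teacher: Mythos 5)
Your proof is correct and follows essentially the same route as the paper's: convert $\ll$ to $\CC$ via Lemma~\ref{llcc}, take the direct sum of the witnessing modules $X_i'\CC Y_i$, and convert back. The only difference is that you spell out the verification that $X_1'\oplus X_2'\CC Y_1\oplus Y_2$ (via the block-diagonal compact operator $a_1+a_2$), a step the paper leaves implicit; your verification is sound, since each $\mK(Y_i)$ embeds in $\mK(Y_1\oplus Y_2)$ acting as zero on the complementary summand.
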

\begin{proof}
We know from Lemma~\ref{llcc} that $x_i\ll y_i$ implies $x_i\CC
y_i$. Thus there are modules $X_i'$ and $Y_i$ with $x_i\leq [X_i']$,
$y_i=[Y_i]$ and $X_i'\CC Y_i$.

Therefore $x_1+x_2\leq [X_1'\oplus X_2']$, $X_1'\oplus X_2'\CC
Y_1\oplus Y_2$ and $y_1+y_2=[Y_1\oplus Y_2]$. This shows that
$x_1+x_2\CC y_1+y_2$ and a second usage of Lemma~\ref{llcc} implies
that $x_1+x_2\ll y_1+y_2$.
\end{proof}

\subsection{Representation of $\Cu$ in the stable rank one case}

The aim of this section is to prove the remarkable result that Cuntz
equivalence of Hilbert modules, as defined previously, amounts to
isomorphism whenever the algebra has stable rank one.

We begin recording the following (well-known):
\begin{lemma}
\label{lance} Let $X$, $Y$ be Hilbert modules. Then the unit ball of
$\mK(X,Y)$ is strictly dense in the unit ball of $\mL(X,Y)$, that
is, given an operator $T\in \mL(X,Y)$ with $\Vert T\Vert\leq 1$,
$\epsilon>0$ and finite sets $F\subseteq X$ and $G\subseteq Y$,
there is $S\in\mK(X,Y)$ with $\Vert S\Vert\leq 1$ and
\[
\Vert S(x)-T(x)\Vert<\epsilon\,,\,\,\Vert
S^*(y)-T^*(y)\Vert<\epsilon\,,
\]
for all $x\in F$ and all $y\in G$.
\end{lemma}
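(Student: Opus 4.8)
The plan is to approximate $T$ on the right by an approximate unit of $\mK(X)$. Since our modules are countably generated, Theorem~\ref{cg=Ksigmaunit} ensures that $\mK(X)$ is $\sigma$-unital; I would fix an approximate unit $(u_n)$ for $\mK(X)$ consisting of positive contractions ($0\le u_n\le 1$), and set $S_n:=Tu_n$, viewing $u_n\in\mK(X)$ as an operator $X\to X$ and $T\in\mL(X,Y)$ as an operator $X\to Y$. That each $S_n$ lies in $\mK(X,Y)$ is immediate from the rank-one identity $T\theta_{y,x}=\theta_{Ty,x}$: writing $u_n$ as a norm-limit of finite sums $\sum\theta_{x_i,x_i'}$ (with $x_i,x_i'\in X$) exhibits $Tu_n$ as a norm-limit of finite sums $\sum\theta_{Tx_i,x_i'}\in\mK(X,Y)$. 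Moreover $\Vert S_n\Vert\le\Vert T\Vert\,\Vert u_n\Vert\le 1$, so the $S_n$ stay inside the unit ball, as required.

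The crux is the strict convergence $S_n\to T$, and the key input is that $(u_n)$ acts asymptotically as the identity on the \emph{left} of $X$. This is the one point needing care: the relevant action is the left $\mK(X)$-action on $X$, not the right $A$-action, so I would invoke the nondegeneracy $\mK(X)X=X$ (the left-handed form of Lemma~\ref{lem:exact dec}). Given $x\in X$, write $x=kz$ with $k\in\mK(X)$ and $z\in X$; then $u_nx=u_nkz\to kz=x$, since $u_nk\to k$ in $\mK(X)$. Hence
\[
\Vert S_n(x)-T(x)\Vert=\Vert T(u_nx-x)\Vert\le\Vert u_nx-x\Vert\longrightarrow 0\,.
\]

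For the adjoint, note that since each $u_n$ is self-adjoint we have $S_n^*=u_nT^*$; as $T^*(y)\in X$, the same estimate yields $\Vert S_n^*(y)-T^*(y)\Vert=\Vert u_nT^*(y)-T^*(y)\Vert\to 0$. Finally, for the prescribed $\epsilon>0$ and finite sets $F\subseteq X$, $G\subseteq Y$, it suffices to choose $n$ so large that the finitely many quantities $\Vert S_n(x)-T(x)\Vert$ ($x\in F$) and $\Vert S_n^*(y)-T^*(y)\Vert$ ($y\in G$) all fall below $\epsilon$, and to take $S:=S_n$. Apart from keeping the left and right module structures distinct when deducing $u_nx\to x$, the argument is entirely routine.
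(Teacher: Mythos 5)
Your proof is correct. Note that there is nothing in the paper to compare it against: the lemma is recorded there as ``well-known'' and no proof is given, so your argument fills a genuine gap, and it does so by the standard route (essentially the proof one finds in Lance's book). The two points that actually need care are exactly the ones you address: first, that $S_n:=Tu_n$ lies in $\mK(X,Y)$, which follows from the identity $T\theta_{x,x'}=\theta_{Tx,x'}$ together with norm-continuity of composition with $T$; and second, that $u_nx\to x$ for every $x\in X$, which concerns the \emph{left} $\mK(X)$-action on $X$ and which you correctly reduce, via the factorization $\mK(X)X=X$ (the left analogue of Lemma~\ref{lem:exact dec}, quoted in the paper just before Theorem~\ref{cg=Ksigmaunit}), to the norm convergence $u_nk\to k$ in $\mK(X)$. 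Taking adjoints, $S_n^*=u_nT^*$ because $u_n$ is self-adjoint, and the same convergence applied to $T^*(y)\in X$ handles the second displayed condition. One small remark: countable generation of $X$ (hence $\sigma$-unitality of $\mK(X)$ via Theorem~\ref{cg=Ksigmaunit}) is not really needed here, since any approximate unit of $\mK(X)$ -- a net rather than a sequence -- runs through the identical argument; but under the blanket assumptions of that section your invocation is perfectly legitimate, and it yields the sequential statement at no extra cost.
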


\begin{lemma}
\label{sr1} Let $A$ be a {\rm C}$^*$-algebra with stable rank one.
Then $\mK(X)$ also has stable rank one for any countably generated
Hilbert $A$-module $X$.
\end{lemma}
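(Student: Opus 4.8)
The plan is to realise $\mK(X)$ as a hereditary subalgebra of the stabilisation $A\otimes\mathcal{K}$ and then to exploit the fact that stable rank one behaves well under the two relevant operations: passage to inductive limits and passage to hereditary subalgebras. As a preliminary reduction I would pass to the unital case: viewing $X$ as a Hilbert $\widetilde{A}$-module changes neither the algebra $\mK(X)$ nor the property of being countably generated, and by definition $\mathrm{sr}(A)=1$ means $\mathrm{sr}(\widetilde{A})=1$, so I may replace $A$ by $\widetilde{A}$ and assume $A$ is unital.

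Next, since $X$ is countably generated, Kasparov's Theorem (Theorem~\ref{them: Kasparov}) gives $X\oplus H_A\cong H_A$, so $X$ is isomorphic to a complemented submodule of $H_A$. Hence there is a projection $p\in\mL(H_A)=M(\mK(H_A))$ with $p(H_A)\cong X$, and consequently
\[
\mK(X)\cong p\,\mK(H_A)\,p=p\,(A\otimes\mathcal{K})\,p\,,
\]
using $\mK(H_A)\cong A\otimes\mathcal{K}$ from Proposition~\ref{compactcomp}(iii). A routine positivity argument shows that for a projection $p$ in the multiplier algebra $M(B)$ the corner $pBp$ is a hereditary subalgebra of $B$: if $0\le x\le y$ in $B$ with $y=pyp$, then $(1-p)x(1-p)\le (1-p)y(1-p)=0$ forces $(1-p)x^{\frac12}=0$, so $x=pxp\in pBp$. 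Thus $\mK(X)$ is a hereditary subalgebra of $A\otimes\mathcal{K}$.

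It then remains to establish two facts. First, that $A\otimes\mathcal{K}$ has stable rank one: writing $A\otimes\mathcal{K}=\lim\limits_{\longrightarrow}M_n(A)$ with the usual (injective) connecting maps, each $M_n(A)$ has stable rank one by Rieffel's matrix-algebra formula $\mathrm{sr}(M_n(A))=1+\lceil(\mathrm{sr}(A)-1)/n\rceil$, and stable rank is lower semicontinuous along inductive limits, whence $\mathrm{sr}(A\otimes\mathcal{K})\le\liminf_n\mathrm{sr}(M_n(A))=1$ and so equals $1$. Second, that stable rank one is inherited by hereditary subalgebras. Granting both, $\mK(X)$ is a hereditary subalgebra of a stable rank one C$^*$-algebra and therefore has stable rank one, as claimed.

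The main obstacle is the second input. The matrix formula and the inductive-limit estimate are direct citations to Rieffel~\cite{Ri}, but the assertion that a hereditary subalgebra of a stable rank one algebra is again of stable rank one is the substantive point; I would invoke it as the known result it is (again \cite{Ri}). The alternative to citing it would be a direct argument showing that invertible elements are dense in the unitisation of the corner $p(A\otimes\mathcal{K})p$, approximating each $pbp$ by genuinely invertible elements of the corner, and it is precisely there that the hypothesis $\mathrm{sr}(A\otimes\mathcal{K})=1$ would have to be brought to bear.
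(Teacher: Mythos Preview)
Your proof is correct and follows essentially the same route as the paper: use Kasparov's Theorem to realise $\mK(X)$ as a corner $p(A\otimes\mathcal{K})p$, observe this is a hereditary subalgebra of $A\otimes\mathcal{K}$, and conclude since stable rank one passes to stabilisations and to hereditary subalgebras. The paper's argument is more compressed, simply asserting the last two facts without the justifications you supply (and without the preliminary reduction to the unital case), but the strategy is identical.
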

\begin{proof}
By Kasparov's Theorem (see Theorem \ref{them: Kasparov}), there is a
module $Y'$ such that $Y\oplus Y'\cong H_A$ (the standard Hilbert
module). Identify $\mL(H_A)$ with $\M(A\otimes \mathbb{K})$, which
restricts to an isomorphism $\mK(H_A)\cong A\otimes\mathbb{K}$.

There is a projection $p$ in $\mL(H_A)$ such that $p(H_A)\cong Y$.
It then follows that $\mK(Y)\cong p\mK(H_A)p\cong
p(A\otimes\mathbb{K})p$, and the latter has stable rank one as it is
a hereditary subalgebra of $A\otimes\mathbb{K}$, which also has
stable rank one.
\end{proof}

The following lemma is the key to the proof of the main result in
this section.

\begin{lemma}
\label{lem:key} Let $A$ be a {\rm C}$^*$-algebra with stable rank
one, and let $X$, $Y$, $Z$ be countably generated Hilbert modules
with $X$, $Y\subseteq Z$, and such that there is an isometric
isomomorphism $\varphi\colon X\to Y$. Then, given $1>\epsilon>0$ and
a finite set $F\subseteq (X)_1$ -- the unit ball of $X$ --, there is
a unitary $u$ in $\mK(Z)^{\widetilde{}}$ such that
\[
\Vert u\varphi(x)-x\Vert<\epsilon\,,
\]
for all $x$ in $F$.
\end{lemma}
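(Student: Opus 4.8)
The plan is to realize the partial isometry hidden in $\varphi$ as an honest element of $\mK(Z)$ and then promote it to a unitary using stable rank one. The difficulty throughout is that $X$ and $Y$ need not be orthogonally complemented in $Z$, so $\varphi$ cannot simply be extended by zero to an adjointable partial isometry on $Z$; I would circumvent this by \emph{transporting} a compact operator from $\mK(X)$ to $\mK(Z)$ along $\varphi$.

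First I would reduce to the case $F\subseteq X_0$ for some $X_0\CC X$. By Remark~\ref{rem} we may write $X=\overline{\cup_n X_n}$ with $X_n\CC X$, so each $x\in F$ is approximated within $\epsilon/2$ by some $x'$ lying in a single $X_0:=X_N$; since $\varphi$ and any unitary $u$ are isometric, $\Vert u\varphi(x)-x\Vert\le \Vert u\varphi(x')-x'\Vert+2\Vert x-x'\Vert$, and it suffices to produce $u$ with $u\varphi(x')=x'$ for these finitely many $x'\in X_0$. Thus I assume $F\subseteq X_0\CC X$ and fix, using Definition~\ref{cc} and replacing the given self-adjoint compact by its square, an element $a\in\mK(X)_+$ with $a_{|X_0}=\mathrm{id}$; in particular $ax=x$ for all $x\in F$.

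Next comes the transport. Writing $a=\lim_k a_k$ with $a_k=\sum_i\theta_{p_i^{(k)},q_i^{(k)}}$ of finite rank ($p_i^{(k)},q_i^{(k)}\in X$), I set
\[
h_k=\sum_i\theta_{p_i^{(k)},\varphi(q_i^{(k)})}\in\mK(Z).
\]
Because $\varphi$ preserves inner products, a direct computation gives $h_kh_k^*=\sum_{i,j}\theta_{p_i\langle q_i,q_j\rangle,\,p_j}$, whose legs all lie in $X$; hence $h_kh_k^*$ is the image of $a_ka_k^*$ under the isometric inclusion $\mK(X)\subseteq\mK(Z)$ of Lemma~\ref{inclofcompacts}, and the same computation applied to $a_k-a_l$ shows $\Vert h_k-h_l\Vert=\Vert a_k-a_l\Vert$. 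Therefore $h:=\lim_k h_k\in\mK(Z)$ is well defined, with $hh^*=a^2$, and passing to the limit in the finite-rank identities yields, for every $x\in X$,
\[
h\varphi(x)=ax,\qquad h^*x=\varphi(ax).
\]

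Finally I would invoke stable rank one. From the two identities above one gets by functional calculus $g(h^*h)\varphi(x)=\varphi(g(a^2)x)$ for every continuous $g$ with $g(0)=0$, so $|h|\varphi(x)=\varphi(ax)$; writing $h=v|h|$, this forces $v\varphi(y)=y$ for all $y\in\overline{aX}$. By Lemma~\ref{sr1} the algebra $\mK(Z)$ has stable rank one, so Theorem~\ref{Pedunitary} applies: choosing $f\in C(\sigma(|h|))_+$ that vanishes near $0$ with $f(1)=1$, there is a unitary $u\in U(\mK(Z)^{\widetilde{}})$ with $uf(|h|)=vf(|h|)$. For $x\in F$ we have $ax=x$, hence $f(a)x=x$ and $x\in\overline{aX}$, so
\[
u\varphi(x)=u\varphi(f(a)x)=uf(|h|)\varphi(x)=vf(|h|)\varphi(x)=v\varphi(f(a)x)=v\varphi(x)=x,
\]
which together with the reduction completes the proof. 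The essential point, and the step I expect to be the only real obstacle, is the transport identity $\Vert h_k-h_l\Vert=\Vert a_k-a_l\Vert$: it turns $\varphi$, a priori only a map between non-complemented submodules, into a genuine compact operator $h$ on $Z$ whose polar part can then be corrected to a unitary precisely because of stable rank one.
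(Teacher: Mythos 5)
Your proof is correct, but it follows a genuinely different route from the paper's. The paper approximates $\varphi$ itself: by Lemma \ref{lance} the unit ball of $\mK(X,Y)$ is strictly dense in that of $\mL_A(X,Y)$, so there is a compact contraction $\theta$ with $\Vert\varphi(x)-\theta(x)\Vert<\epsilon^2/9$ on $F$; viewing $\theta$ inside $\mK(Z)$ (Lemma \ref{inclofcompacts}) and using stable rank one of $\mK(Z)$ (Lemma \ref{sr1}) in its primitive form --- density of invertibles --- it replaces $\theta$ by an invertible contraction $\gamma\in\mK(Z)^{\widetilde{}}$ still close to $\varphi$ on $F$, and the desired unitary is $u^*=\gamma|\gamma|^{-1}$, the estimate $\Vert u^*(x)-\varphi(x)\Vert<\epsilon$ coming from the inequality $(1-|\gamma|)^2\leq 1-|\gamma|^2$. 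You never approximate $\varphi$ by a compact operator: you build an exact compact intertwiner $h\in\mK(Z)$ with $h\varphi(x)=ax$ and $h^*x=\varphi(ax)$, and stable rank one enters only through Theorem \ref{Pedunitary} applied to the polar decomposition of $h$ (much as the paper itself uses that theorem in Proposition \ref{basics}), which yields the stronger conclusion $u\varphi(x')=x'$ exactly on the approximating set $F'\subseteq X_0\CC X$. Your route avoids Lemma \ref{lance} entirely (no strict density, no need to regard $\varphi$ as an adjointable operator) and all the $\epsilon$-bookkeeping, at the cost of the preliminary reduction to a compactly contained submodule and the transport computation; the paper's argument is shorter and stays purely at the level of norm estimates. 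One caveat of presentation only: the polar partial isometry $v$ lies in $\mK(Z)^{**}$ and does not literally act on $Z$, so your identities involving $v$ alone should be read through the factorization $f(t)=t\,g(t)$ with $g$ continuous and $g(0)=0$, which gives $vf(|h|)=hg(|h|)\in\mK(Z)$ and hence $vf(|h|)\varphi(x)=h\varphi(g(a)x)=ag(a)x=f(a)x=x$ for $x\in F'$ --- with that reading every step of your final chain of equalities is rigorous.
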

\begin{proof}
Since $\varphi$ is an isometry, we have that
$\Vert\varphi(x)\Vert=\Vert x\Vert\leq 1$ for every element $x\in
F$. By Lemma~\ref{lance}, there is $\theta\in\mathcal{K}_A(X,Y)$
with $\Vert\theta\Vert\leq 1$ such that
\[
\Vert\varphi(x)-\theta(x)\Vert<\frac{\epsilon^2}{9}
\]
for all $x\in F$.

Consider now the isometric inclusion (see
Lemma~\ref{inclofcompacts}) $\mK(X,Y)\to \mK(Z)$. Since $\mK(Z)$ has
stable rank one (by Lemma~\ref{sr1}), we may find an invertible
element $\gamma$ of $\mK(Z)^{\widetilde{}}$ such that $\Vert
\gamma\Vert\leq 1$ approximating $\theta$ as close as we wish. In
fact, we may choose the approximant $\gamma$ so that
\[
\Vert \varphi(x)-\gamma (x)\Vert<\frac{\epsilon^2}{9}
\]
for all $x\in F$.

Observe now that, for any $x\in F$,

\begin{eqnarray*}
\Vert \langle \gamma(x),\gamma (x)\rangle-\langle x,x\rangle\Vert
&\leq &\Vert \langle
\gamma(x)-\varphi(x),\gamma(x)\rangle\Vert+\Vert \langle \varphi(x)
,\gamma (x)-\varphi(x)\rangle\Vert\\ & + &\Vert \langle \varphi(x)
,\varphi(x)\rangle-\langle x, x\rangle\Vert\leq
\frac{\epsilon^2}{9}+\frac{\epsilon^2}{9}+0=2\frac{\epsilon^2}{9}\,.
\end{eqnarray*}

Thus, taking into account that $(1-|\gamma|)^2\leq 1-|\gamma|^2$, we
see that, for $x\in F$,
\begin{eqnarray*}
\langle (1-|\gamma|)(x), (1-|\gamma|)(x) \rangle & =&\langle
x,(1-|\gamma|)^2(x) \rangle\leq\langle x, (1-|\gamma|^2)(x)\rangle\\
& = &\langle x, (1-\gamma^*\gamma)(x)\rangle\\ & = &\langle x,x
\rangle-\langle \gamma(x),\gamma(x) \rangle\,.
\end{eqnarray*}
Therefore
\[
\Vert
(1-|\gamma|)(x)\Vert\leq\sqrt{2\frac{\epsilon^2}{9}}=\sqrt{2}\frac{\epsilon}{3}\,.
\]
Now let $u^*=\gamma |\gamma|^{-1}$. It is an easy exercise to check
that $u^*$ is unitary (in $\mK(Z)^{\widetilde{}}\,$) and
$\gamma=u^*|\gamma|$. Then, for $x\in F$,
\[
\Vert\gamma(x)-u^*(x) \Vert=\Vert u^*|\gamma|(x)-u^*(x) \Vert=\Vert
|\gamma|(x)-x \Vert\leq\sqrt{2}\frac{\epsilon}{3}\,.
\]
Therefore
\[
\Vert u^*(x)-\varphi(x) \Vert\leq \Vert u^*(x)-\gamma(x)
\Vert+\Vert\gamma(x)-\varphi(x)
\Vert<2\frac{\epsilon}{3}+\frac{\epsilon}{3}=\epsilon\,,
\]
for any $x\in F$, whence $\Vert u\varphi(x)-x\Vert<\epsilon$ for any
$x\in F$, as was to be shown.
\end{proof}

\begin{theorem}
Let $A$ be a {\rm C}$^*$-algebra of stable rank one, and let $X$,
$Y$ be countably generated Hilbert $A$-modules. Then
\begin{enumerate}[{\rm (i)}]
\item $[X]\leq [Y]$ in $\Cu$ if and only if $X\cong X'\subseteq
Y$. \item $[X]=[Y]$ if and only if $X\cong Y$.
\end{enumerate}
\end{theorem}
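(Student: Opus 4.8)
The plan is to dispose of the two ``if'' directions immediately and to concentrate all the effort on the ``only if'' directions, since these are what the machinery of this section was built for. For the ``if'' part of (i), note that $X\cong X'$ forces $X\sim X'$ (the remark recorded just after Definition~\ref{cuntzsub}), while $X'\subseteq Y$ gives $X'\precsim Y$ by Lemma~\ref{cuntzgencont}; hence $[X]=[X']\le[Y]$. For (ii), $X\cong Y$ gives $X\sim Y$ at once, so $[X]=[Y]$. Part (ii) ``only if'' will be reduced to the construction in part (i) together with an approximate intertwining, so the genuine work is: \emph{given $X\precsim Y$, produce a closed submodule $X'\subseteq Y$ with $X\cong X'$.}

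For (i) ``only if'', suppose $X\precsim Y$. Using Remark~\ref{rem}, write $X=\overline{\cup_n X_n}$ with $X_n\CC X_{n+1}\CC X$. Since each $X_n\CC X$ and $X\precsim Y$, Definition~\ref{cuntzsub} supplies an isometric embedding $\psi_{n}\colon X_n\to Y$ (onto a submodule compactly contained in $Y$). These embeddings are mutually incompatible, and the point is to correct them into a coherent sequence. First I would fix a countable set $\{\xi_j\}$ dense in the unit ball of $\bigcup_n X_n$ and summable tolerances $\delta_n=2^{-n}$. Proceeding inductively, having an isometric embedding $\phi_n\colon X_n\to Y$, I would apply Lemma~\ref{lem:key} with $Z=Y$, the lemma's two submodules taken to be $\psi_{n+1}(X_n)$ and $\phi_n(X_n)$, and the isometric isomorphism $\varphi=\phi_n\circ(\psi_{n+1}|_{X_n})^{-1}$ between them; evaluated on $\{\psi_{n+1}(\xi_j):j\le n,\ \xi_j\in X_n\}$ this yields a unitary $u\in\mK(Y)^{\widetilde{}}$ with $\|u\phi_n(\xi_j)-\psi_{n+1}(\xi_j)\|<\delta_n$. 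Setting $\phi_{n+1}:=u^*\psi_{n+1}$ produces a new isometric embedding $X_{n+1}\to Y$ with $\|\phi_{n+1}(\xi_j)-\phi_n(\xi_j)\|<\delta_n$ on the prescribed points. As the tolerances are summable and every $\phi_n$ has norm one, the sequence $(\phi_n)$ converges pointwise on the dense set, hence everywhere, to an $A$-linear isometry $\phi\colon X\to Y$; its image $X':=\phi(X)$ is the desired closed submodule, and $X\cong X'$.

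For (ii) ``only if'' I would run the same idea two-sidedly. From $X\precsim Y$ and $Y\precsim X$, Remark~\ref{rem} and Proposition~\ref{indlimits} allow me, after passing to cofinal subsystems of the exhaustions $X=\lim\limits_{\longrightarrow} X_n$ and $Y=\lim\limits_{\longrightarrow} Y_n$, to produce isometric embeddings $\alpha_n\colon X_n\to Y_n$ and $\beta_n\colon Y_n\to X_{n+1}$ with compactly contained images. The crux is to arrange, by correcting $\beta_n$ and $\alpha_{n+1}$ by unitaries of $\mK(X)^{\widetilde{}}$ and $\mK(Y)^{\widetilde{}}$ exactly as in the paragraph above, that the triangles
\[
\beta_n\alpha_n\approx (X_n\hookrightarrow X_{n+1}),\qquad \alpha_{n+1}\beta_n\approx (Y_n\hookrightarrow Y_{n+1})
\]
commute up to summable errors on exhausting finite sets. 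This is precisely Elliott's approximate intertwining: passing to limits yields mutually inverse isometric isomorphisms $X\to Y$ and $Y\to X$, whence $X\cong Y$.

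I expect the \emph{main obstacle} to be the organisation of this last intertwining: reindexing both inductive systems so the maps interleave, and choosing the finite sets and tolerances so that after all the unitary corrections the two families of maps still converge and remain mutually inverse in the limit. The analytic input — replacing an isometric identification of two submodules by a genuine unitary, up to a prescribed error — is entirely furnished by Lemma~\ref{lem:key}; what remains delicate is the purely combinatorial bookkeeping that makes the back-and-forth close up.
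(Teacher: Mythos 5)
Your proposal is correct and takes essentially the same route as the paper: both parts are proved by correcting the isometric embeddings supplied by Cuntz subequivalence with unitaries obtained from Lemma~\ref{lem:key}, so that successive maps agree up to summable errors on prescribed finite sets, and then passing to a pointwise limit (one-sidedly for (i), and as a two-sided approximate intertwining for (ii)). The only cosmetic difference is in (i), where the paper first reindexes so that the embeddings land in the exhaustion pieces $Y_n$ and its unitaries live in $\mK(Y_{n+1})^{\widetilde{}}$, whereas you embed directly into $Y$ and correct with unitaries of $\mK(Y)^{\widetilde{}}$ --- both versions work.
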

\begin{proof}
Write $X=\overline{\cup_{i=1}^{\infty} X_i}$, where the $X_i$'s are
countably generated,
\[
X_1\CC X_2\CC\cdots\CC X\,,
\]
and likewise $Y=\overline{\cup_{i=1}^{\infty} Y_i}$ for countably
generated modules $Y_i$ with
\[
Y_1\CC Y_2\CC\cdots\CC Y\,.
\]
It follows from Proposition~\ref{suprema1} that $[X]=\sup\limits_i
[X_i]$ and $[Y]=\sup\limits_i [Y_i]$.

(i). We only need to show that, if $[X]\leq [Y]$, then $X\cong
X'\subseteq Y$.

Since $X_2\CC X$, we have $[X_2]\ll [X]\leq [Y]=\sup\limits_i
[Y_i]$. Therefore, there is an index $i$ such that $[X_2]\leq
[Y_i]$, and since $X_1\CC X_2$, we have by the order-relation in
$\Cu$ that there is an (isometric) isomorphism of $X_1$ onto a
compactly contained submodule of $Y_i$. Ignoring finitely many terms
from the sequence $Y_i$, we may as well assume that $i=1$, so there
is $\widetilde{\varphi}_1\colon X_1\to Y_1$ with
$X_1\cong\widetilde{\varphi}_1(X_1)\CC Y_1$.

Proceed similarly with $X_2\CC X_3$, so after re-indexing the
sequence $Y_i$ (and ignoring again finitely many terms), there is
$\widetilde{\varphi}_2\colon X_2\to Y_2$ with
$X_2\cong\widetilde{\varphi}_2(X_2)\CC Y_2$. Continue in this way,
and construct $\widetilde{\varphi}_i\colon X_i\to Y_i$, with each
$\widetilde{\varphi}_i$ isometry onto its image.

We thus get a diagramme:
\[\begin{CD} X_1 @>{\CC}>> X_2 @>{\CC}>> X_3 @>{\CC}>> \cdots X\\
@V{\widetilde{\varphi}_1}VV  @VV{\widetilde{\varphi}_2}V @VV{\widetilde{\varphi}_3}V @.\\
Y_1 @>{\CC}>> Y_2 @>{\CC}>> Y_3 @>{\CC}>> \cdots Y
\end{CD}\]

Next, label $\{x_i^{(n)}\mid  i=1,2,\ldots\}$ a set of generators of
$X_n$ (with $\Vert x_i^{(n)}\Vert\leq 1$), and put
\[
F_n=\{x_i^{(j)}\mid 1,\leq i,j,\leq n\}\subseteq X_n\,.
\]
Note that each $F_n$ is a finite set, and the union $\cup_n F_n$ is
a countable set that generates $X$. We are going to modify the maps
$\widetilde{\varphi}_i$ via unitaries constructed from
$\mK(Y_i)^{\widetilde{}}$. Set $\varphi_1=\widetilde{\varphi}_1$.
Suppose that $\varphi_1,\ldots,\varphi_n$ have been constructed.

Apply Lemma~\ref{lem:key} to $\varphi_n(X_n)$,
$\widetilde{\varphi}_{n+1}(X_n)\subseteq Y_{n+1}$, the isometric
isomorphism
\[
\widetilde{\varphi}_{n+1}\circ\varphi_n^{-1}\colon \varphi_n(X_n)\to
\widetilde{\varphi}_{n+1}(X_{n+1})\,,
\]
and the finite set $\varphi_n(F_n)$. There is then a unitary
$u_{n+1}\in\mK(Y_{n+1})^{\widetilde{}}$ such that
\[
\Vert u_{n+1}\widetilde{\varphi}_{n+1}\varphi_n^{-1}(y)-y
\Vert<2^{-n}
\]
for all $y\in\varphi_n(F_n)$, that is,
\[
\Vert u_{n+1}\widetilde{\varphi}_{n+1}(x)-\varphi_n(x)
\Vert<2^{-n}\,,
\]
for all $x\in F_n$. Put
$\varphi_{n+1}=u_{n+1}\widetilde{\varphi}_{n+1}$, so that we have
\[
\Vert \varphi_{n+1}(x)-\varphi_n(x) \Vert<2^{-n}\,,
\]
for all $x\in F_n$.

Let us now define $\varphi\colon X\to Y$. Consider the (dense)
subset of $X$:
\[
S=\{x_1^{(n)}a_1+\cdots+x_m^{(n)}a_m\mid a_i\in A,
n,m\in\mathbb{N}\}
\]
If $x=\sum_i x_i^{(n)}a_i$, define
\[
\varphi(x)=\lim\limits_{k\geq
n,\,k\to\infty}\varphi_k(x)=\lim\limits_{k\to\infty}\left(\varphi_k(x_1^{(n)})a_1+\cdots+\varphi_k(x_m^{(n)})a_m
\right)\,.
\]
Let $K=\sup\limits_{1\leq i\leq m}\Vert a_i\Vert$, and let
$k\geq\max\{n,m\}$. Then:
\[
\Vert\varphi_{k+1}(x)-\varphi_k(x)
\Vert\leq\sum\limits_{i=1}^m\Vert\varphi_{k+1}(x_i^{(n)})-\varphi_k(x_i^{(n)})
\Vert K\leq Km2^{-k}\,,
\]
which implies that the sequence $(\varphi_k(x))$ is Cauchy. Thus
$\varphi$ defines a map on $S$ which is isometric (we just checked
it is well defined and is isometric as each $\varphi_k$ is), and so
it extends to a Hilbert module map $\varphi\colon X\to Y$, isometric
onto its image.

(ii). Assume now that $[X]=[Y]$. Working as in (i), now with the two
inequalities $[X]\leq [Y]$ and $[Y]\leq [X]$, and except maybe
passing to subsequences of the $X_i$'s and the $Y_j$'s, we obtain
maps $\widetilde{\varphi}_n\colon X_n\to Y_n$ and
$\widetilde{\psi}_n\colon Y_n\to X_{n+1}$ (isometries onto their
images)

We shall modify as in (i) the given maps. Put
$\varphi_1=\widetilde{\varphi}_1$. Suppose that $\varphi_1$,
$\psi_1$, $\varphi_2,\ldots, \psi_{n-1}$ and $\varphi_n$ have been
constructed, and let us construct $\psi_n$ and $\varphi_{n+1}$. Let
\[
F_n^X=\{x_j^{(i)}\mid 1\leq i,j\leq
n\}\cup\{\psi_{n-1}(y_j^{(i)})\mid 1\leq i,j\leq n-1\}\subseteq
X_n\,.
\]
Using Lemma~\ref{lem:key} again, we can find a unitary $u_{n+1}$ in
$\mK(X_{n+1})^{\widetilde{}}$ such that
\[
\Vert u_{n+1}\widetilde{\psi}_n\varphi_n(x)-x\Vert<2^{-n}\,,
\]
for all $x\in F_n^X$. Define $\psi_n=u_{n+1}\widetilde{\psi}_n$, so
that
\[
\Vert \psi_n\varphi_n(x)-x\Vert<2^{-n}\,,
\]
for $x\in F_n^X$. In a similar way, put
\[
F_n^Y=\{y_j^{(i)}\mid 1\leq i,j\leq
n\}\cup\{\varphi_n(x_j^{(i)})\mid 1\leq i,j\leq n\}\,,
\]
and we can find a unitary $v_{n+1}$ in $\mK(Y_{n+1})^{\widetilde{}}$
such that
\[
\Vert v_{n+1}\widetilde{\varphi}_{n+1}\psi_n(y)-y\Vert<2^{-n}
\]
for any $y\in F_n^Y$. Define now
$\varphi_{n+1}=v_{n+1}\widetilde{\varphi}_{n+1}$, so
\[
\Vert \varphi_{n+1}\psi_n(y)-y\Vert<2^{-n}
\]
for all $y\in F_n^Y$.

In particular, for $y_{j}^{(i)}$ and $1\leq i,j,\leq n-1$, we have
\[
\Vert\psi_n\varphi_n\psi_{n-1}(y_{j}^{(i)})-\psi_{n-1}(y_j^{(i)})
\Vert<2^{-n}\,,
\]
and
\[
\Vert \varphi_n\psi_{n-1}(y_{j}^{(i)})-y_j^{(i)} \Vert<2^{-n+1}\,.
\]
Therefore
\[
\Vert \psi_n(y_{j}^{(i)})-\psi_{n-1}(y_{j}^{(i)})\Vert\leq \Vert\psi_n(y_{j}^{(i)})-\psi_n\varphi_n\psi_{n-1}(y_{j}^{(i)}) \Vert+\Vert\psi_n\varphi_n\psi_{n-1}(y_{j}^{(i)})-\psi_{n-1}(y_{j}^{(i)}) \Vert<3\cdot 2^{-n}\,,
\]
whence we can define, much in the same way as in (i), a Hilbert
module map $\psi\colon Y\to X$.

Similarly, for any $1\leq i,j\leq n$, we have
\[
\Vert \varphi_n(x_{j}^{(i)})-\varphi_{n+1}(x_{j}^{(i)})\Vert <3\cdot
2^{-n}\,,
\]
which allows us to define a Hilbert module map $\varphi\colon X\to
Y$. Our construction ensures that $\psi$ and $\varphi$ will be
inverses for one another, and so $X\cong Y$ as desired.
\end{proof}

\subsection{The relationship between $\boldmath{\Cu}$ and
$\boldmath{\W(A)}$}

The purpose of this section is to show that the two semigroups we
have introduced, namely $\Cu$ and $\W(A)$, are closely tied up.

It can be shown that the assingment $A\mapsto \Cu$ is functorial.
Let us sketch roughly how this works.

If $X$ is a Hilbert $A$-module and $\varphi\colon A\to B$ is a
$^*$-homomorphism, then consider the algebraic tensor product
$X\widehat{\otimes}_A B$ (with $B$ viewed as an $A$-module via
$\varphi$). Define the following (possibly degenerate) inner product
on $X\widehat{\otimes}_A B$
\[
\langle\sum\xi_i\widehat{\otimes} b_i,\sum\xi_j'\widehat{\otimes}
b_j'\rangle=\sum_{i,j}b_i^*\langle\xi_i,\xi_j'\rangle b_j
\]
and put
\[
X\otimes_A B=\overline{\left(X\widehat{\otimes}_A B/\{z\mid \langle
z,z\rangle =0\}\right)}\,.
\]
One then has a map $\mathrm{Cu}(\varphi)\colon \Cu\to
\mathrm{Cu}(B)$, given by $\mathrm{Cu}(\varphi)([X])=[X\otimes_A
B]$, which actually belongs to the category $\mathrm{Cu}$.

\begin{proposition}
\label{Custable} Let $A$ be a {\rm C}$^*$-algebra and let $p$ be a
projection in $\mathcal{M}(A)$ such that $\overline{ApA}=A$. Then
\[
\mathrm{Cu}(pAp)\cong\Cu\,,
\]
via  $[X]\mapsto [X\otimes_{pAp} A]$.
\end{proposition}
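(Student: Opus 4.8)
The map in the statement is $\mathrm{Cu}(\iota)$ for the inclusion $\iota\colon pAp\hookrightarrow A$, and the plan is to realise it as the equivalence of Hilbert-module pictures implemented by the imprimitivity bimodule $pA$, with inverse implemented by $Ap$. First I would record the two Morita ingredients supplied by the fullness hypothesis. Viewing $pA$ as a right Hilbert $A$-module with $\langle x,y\rangle_A=x^*y$, it carries a left $pAp$-action, and one checks $\mathcal{K}(pA)\cong pAp$ together with $\overline{\langle pA,pA\rangle_A}=\overline{ApA}=A$, so that $pA$ is a $pAp$--$A$ imprimitivity bimodule. Symmetrically, $Ap$ with $\langle x,y\rangle_{pAp}=x^*y\in pAp$ is an $A$--$pAp$ imprimitivity bimodule, using $\overline{pAAp}=pAp$ and $\mathcal{K}(Ap)\cong\overline{ApA}=A$. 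A short length computation shows that the module $X\otimes_{pAp}A$ of the statement coincides with the interior tensor product $X\otimes_{pAp}pA$: since $\langle\xi,\xi'\rangle\in pAp$ one has $\langle\xi\otimes b,\xi'\otimes b'\rangle=b^*\langle\xi,\xi'\rangle b'=(pb)^*\langle\xi,\xi'\rangle(pb')$, so $\xi\otimes(1-p)b$ has length zero and dies on completion. As $A$ is $\sigma$-unital, $pAp$ is $\sigma$-unital by Remark \ref{rem:pApapproxunit} and $pA$ is countably generated, so $F([X]):=[X\otimes_{pAp}pA]$ lands in $\Cu$; I would set dually $G([Y]):=[Y\otimes_A Ap]$ in $\mathrm{Cu}(pAp)$.

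The core of the argument is then the pair of natural isometric isomorphisms $pA\otimes_A Ap\cong pAp$ and $Ap\otimes_{pAp}pA\cong A$, both given by $x\otimes y\mapsto xy$; the first is immediate from $\overline{pAAp}=pAp$, while the second is exactly where the fullness $\overline{ApA}=A$ enters, as the image of the multiplication map is $\overline{ApA}$. Associativity of the interior tensor product then yields $G(F([X]))=[X\otimes_{pAp}(pA\otimes_A Ap)]=[X\otimes_{pAp}pAp]=[X]$ and, likewise, $F(G([Y]))=[Y\otimes_A A]=[Y]$, so that $F$ and $G$ are mutually inverse bijections of isomorphism classes.

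It then remains to check that $F$ is an isomorphism in the category $\mathsf{Cu}$. Tensoring with $pA$ sends submodule inclusions to submodule inclusions and preserves isometric isomorphisms and inductive limits; moreover it identifies the compact operators, $\mathcal{K}(X\otimes_{pAp}pA)\cong\mathcal{K}(X)$, so any $a\in\mathcal{K}(X)_{sa}$ witnessing $X_0\CC X$ is carried to a witness of $F(X_0)\CC F(X)$. Hence $F$ and $G$ preserve $\CC$, and since Cuntz subequivalence, the relation $\CC$ (whence $\ll$, by Lemma \ref{llcc}) and suprema of inductive limits (Proposition \ref{suprema1}) are all defined purely in these terms, $F$ preserves $\leq$, $\ll$ and countable suprema; being a bijection whose inverse $G$ has the same properties, it is the desired $\mathsf{Cu}$-isomorphism.

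The hard part will be the verification of the tensor identity $Ap\otimes_{pAp}pA\cong A$ and of $\mathcal{K}(X\otimes_{pAp}pA)\cong\mathcal{K}(X)$, that is, confirming that the imprimitivity-bimodule machinery goes through verbatim for the full, merely $\sigma$-unital projection $p\in\mathcal{M}(A)$; once these standard Hilbert-module facts are in place (tracking carefully that the relevant spans are dense by fullness), everything else is formal bookkeeping with the two mutually inverse tensor functors.
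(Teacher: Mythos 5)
Your proposal is correct and is essentially the paper's own argument: the paper's inverse map $[Y]\mapsto[Yp]$ is exactly your $[Y]\mapsto[Y\otimes_A Ap]$ in concrete form (right multiplication by $p$ is an adjointable projection, so $Y\otimes_A Ap\cong\overline{YAp}=Yp$), and the paper's displayed identities $(X\otimes_{pAp}A)p\cong X$ and $Xp\otimes_{pAp}A\cong\overline{XpA}=\overline{XA}=X$ are the concrete counterparts of your two bimodule identities $pA\otimes_A Ap\cong pAp$ and $Ap\otimes_{pAp}pA\cong A$, with fullness entering at the same point. The only presentational difference is that the paper checks preservation of compact containment by the direct computation $\theta_{xpa,ypb}=\theta_{xp,\,y(pba^*p)}\in\mathcal{K}_{pAp}(Yp)$, whereas you invoke the standard imprimitivity-bimodule identification $\mathcal{K}(X\otimes_{pAp}pA)\cong\mathcal{K}(X)$ --- the same use of fullness, packaged abstractly.
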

\begin{proof}
The map just given is a map in the category $\mathrm{Cu}$ by
functoriality. The converse is given by $[X]\mapsto [Xp]$.

Note indeed that
\[
(X\otimes_{pAp} A)p=\overline{XpAp}\otimes Ap=X\otimes pAp=X\,,
\]
and also that
\[
Xp\otimes_{pAp}A\cong
\overline{XpA}=\overline{X\overline{ApA}}=\overline{XA}=X\,.
\]

Let us check that the converse map preserves compact containment. If
$X\CC Y$, then there is $a\in\mK(Y)$ with $a_{|X}=\mathrm{id}_{|X}$.
Since $\overline{ApA}=A$, we may write $a$ as a limit of sums of
elements of the form $\theta_{xpa,ypb}$ for $x$, $y\in Y$ and $a$,
$b\in A$. But now observe that:
\[
\theta_{xpa,ypb}=\theta_{xp, y(pba^*p)}\in\mathcal{K}_{pAp}(Yp)\,,
\]
whence it follows that $Xp\CC Yp$.
\end{proof}

\begin{corollary}
\label{CuStable} For a {\em C}$^*$-algebra $A$, we have
$\Cu\cong\mathrm{Cu}(A\otimes\mathbb{K})$.
\end{corollary}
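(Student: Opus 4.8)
The plan is to obtain this as an immediate special case of Proposition~\ref{Custable}, taking the ambient algebra there to be the stabilization $B:=A\otimes\mathbb{K}$ and choosing for $p$ a ``rank-one corner'' projection. Concretely, I would let $e_{11}\in\mathbb{K}$ be a fixed minimal (rank-one) projection and set $p=1_{\mathcal{M}(A)}\otimes e_{11}$. The first thing to check is that this $p$ is a genuine projection in $\mathcal{M}(A\otimes\mathbb{K})$: since $e_{11}$ is compact, left and right multiplication by $1\otimes e_{11}$ maps $A\otimes\mathbb{K}$ into itself and is adjointable, so $p\in\mathcal{M}(A\otimes\mathbb{K})$, and it is clearly self-adjoint and idempotent. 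Equivalently, identifying $A\otimes\mathbb{K}\cong\mathcal{K}(H_A)$ via Proposition~\ref{compactcomp}(iii) and $\mathcal{M}(A\otimes\mathbb{K})\cong\mathcal{L}(H_A)$, the element $p$ is the orthogonal projection of $H_A$ onto its first coordinate.

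Next I would verify the two hypotheses of Proposition~\ref{Custable} for the pair $(B,p)$. For the corner, a direct computation gives
\[
p\,(A\otimes\mathbb{K})\,p=A\otimes e_{11}\mathbb{K}e_{11}=A\otimes\mathbb{C}e_{11}\cong A,
\]
the isomorphism being $a\mapsto a\otimes e_{11}$. For fullness, one uses that $\mathbb{K}$ is simple: the closed linear span of $\mathbb{K}e_{11}\mathbb{K}$ contains every matrix unit $e_{ij}=e_{i1}e_{11}e_{1j}$, hence equals $\mathbb{K}$, so that
\[
\overline{(A\otimes\mathbb{K})\,p\,(A\otimes\mathbb{K})}=A\otimes\overline{\mathbb{K}e_{11}\mathbb{K}}=A\otimes\mathbb{K}=B.
\]
Thus $\overline{BpB}=B$.

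With the hypotheses in hand, Proposition~\ref{Custable} applied to $B=A\otimes\mathbb{K}$ and $p$ yields an isomorphism $\mathrm{Cu}(pBp)\cong\mathrm{Cu}(B)$ in the category $\mathsf{Cu}$. Since $pBp\cong A$, we have $\mathrm{Cu}(pBp)\cong\Cu$, and we obtain $\Cu\cong\mathrm{Cu}(A\otimes\mathbb{K})$, as claimed.

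I do not expect a serious obstacle here: the content is entirely in Proposition~\ref{Custable}, and the corollary only requires recognising $A$ as a full corner of its stabilization. The one point deserving a moment of care is the verification that $p=1\otimes e_{11}$ really is a projection in the multiplier algebra $\mathcal{M}(A\otimes\mathbb{K})$ when $A$ is non-unital (so that $1_{\mathcal{M}(A)}\notin A$); this is handled by the compactness of $e_{11}$ as noted above, or alternatively by passing through the identification $A\otimes\mathbb{K}\cong\mathcal{K}(H_A)$.
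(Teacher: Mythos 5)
Your proposal is correct and follows exactly the paper's route: the paper also deduces the corollary from Proposition~\ref{Custable} by taking the projection $1\otimes e_{11}\in\mathcal{M}(A)\otimes\mathbb{K}\subset\mathcal{M}(A\otimes\mathbb{K})$ with $e_{11}$ a rank-one projection. The only difference is that you spell out the verification of the hypotheses (that $p$ lies in the multiplier algebra, that $p(A\otimes\mathbb{K})p\cong A$, and fullness via simplicity of $\mathbb{K}$), which the paper leaves implicit.
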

\begin{proof}
This follows from Proposition \ref{Custable} by taking $e=1\otimes
e_{11}\in
\mathcal{M}(A)\otimes\mathbb{K}\subset\mathcal{M}(A\otimes\mathbb{K})$,
where $e_1$ is a rank one projection.
\end{proof}

\begin{lemma}
\label{compcont} If $a\in\mK(X)$ and $0<\epsilon<\epsilon'$, then
\[
\overline{(a-\epsilon')_+X}\CC\overline{(a-\epsilon)_+X}\,.
\]
\end{lemma}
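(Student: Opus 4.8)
The plan is to reduce everything to functional calculus in the abelian $C^*$-algebra generated by the positive compact operator $b:=(a-\epsilon)_+\in\mK(X)_+$, and then to exhibit an explicit self-adjoint compact operator witnessing the compact containment. Write $\delta=\epsilon'-\epsilon>0$; by the functional-calculus identity $((a-\epsilon)_+-\delta)_+=(a-\epsilon')_+$ recorded earlier, we have $(a-\epsilon')_+=(b-\delta)_+$. Set $Z=\overline{(a-\epsilon)_+X}=\overline{bX}$ and $Y=\overline{(a-\epsilon')_+X}$. Since $(a-\epsilon')_+=(b-\delta)_+=b\,r(b)$ for a continuous function $r$, we have $(a-\epsilon')_+X\subseteq bX$, so $Y\subseteq Z$; this is the easy inclusion and what remains is to find a suitable $c\in\mK(Z)_{sa}$ with $c_{|Y}=\mathrm{id}_{|Y}$.

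First I would produce the witnessing operator by functional calculus. Choose a continuous function $g\colon[0,\infty)\to[0,1]$ with $g\equiv 0$ on a neighbourhood of $0$ and $g\equiv 1$ on $[\delta,\infty)$. Since $t\mapsto(t-\delta)_+$ is supported in $[\delta,\infty)$, we get $g(b)(b-\delta)_+=(b-\delta)_+$, that is $g(b)(a-\epsilon')_+=(a-\epsilon')_+$; hence $g(b)$ fixes every vector $(a-\epsilon')_+x$ and, by continuity, $g(b)_{|Y}=\mathrm{id}_{|Y}$. As $g$ is real-valued, $g(b)$ is self-adjoint. So the whole content of the lemma is concentrated in one point: that $g(b)$, restricted to $Z$, is a \emph{compact} operator on $Z$, i.e.\ lies in $\mK(Z)_{sa}$ rather than merely in $\mK(X)_{sa}$.

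The hard part will be exactly this compactness-on-the-subobject step, and I would handle it by sandwiching the finite-rank approximants of $b$ between copies of a function vanishing at $0$. Because $g$ vanishes near $0$, factor $g(t)=k(t)\,t\,k(t)$ with $k$ continuous, bounded and vanishing near $0$; writing $k(t)=t\,m(t)$ shows $k(b)x=b(m(b)x)\in\overline{bX}=Z$ for every $x\in X$. Now approximate $b$ in norm by finite-rank operators $T_N=\sum_i\theta_{x_i,y_i}\in\mK(X)$. Using $T\theta_{y,x}=\theta_{Ty,x}$ and $\theta_{y,x}S=\theta_{y,S^*x}$ together with $k(b)=k(b)^*$, one gets $k(b)T_Nk(b)=\sum_i\theta_{k(b)x_i,k(b)y_i}$, all of whose legs lie in $Z$; thus the restriction of $k(b)T_Nk(b)$ to $Z$ is a finite-rank operator of the smaller module, hence in $\mK(Z)$. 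Since
\[
\Vert g(b)-k(b)T_Nk(b)\Vert=\Vert k(b)(b-T_N)k(b)\Vert\le\Vert k(b)\Vert^2\,\Vert b-T_N\Vert\longrightarrow 0
\]
as $N\to\infty$, and $\mK(Z)\subseteq\mK(X)$ isometrically by Lemma~\ref{inclofcompacts}, the Cauchy sequence of restrictions converges in $\mK(Z)$; its limit is $g(b)_{|Z}$ because $g(b)$ maps $X$ into $bX\subseteq Z$. Hence $g(b)_{|Z}\in\mK(Z)_{sa}$.

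Putting these together, $c:=g(b)_{|Z}$ is a self-adjoint compact operator on $Z$ acting as the identity on $Y\subseteq Z$, which is precisely the assertion $Y\CC Z$, i.e.\ $\overline{(a-\epsilon')_+X}\CC\overline{(a-\epsilon)_+X}$. I expect the only genuine obstacle to be the step of the previous paragraph: a priori $g(b)$ is visibly compact only on $X$, and the whole point is the symmetric sandwich by $k(b)$, which forces the approximating rank-one operators to have both legs inside $Z$ so that they become honest compact operators of the submodule $Z$.
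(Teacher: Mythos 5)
Your proof is correct and takes essentially the same route as the paper's: both exhibit a functional-calculus witness (yours $g\bigl((a-\epsilon)_+\bigr)$, the paper's $f(a)$ with $f$ vanishing below $\tfrac{\epsilon+\epsilon'}{2}$ and equal to $1$ above $\epsilon'$) that acts as the identity on $\overline{(a-\epsilon')_+X}$ and factors through $(a-\epsilon)_+$ so as to lie in $\mK\bigl(\overline{(a-\epsilon)_+X}\bigr)_{sa}$. Your finite-rank sandwich argument simply fills in, in detail, the compactness-in-the-submodule claim that the paper asserts in a single line via the factorization $f(a)=(a-\epsilon)_+g(a)$.
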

\begin{proof}
Let $f(t)$ be a function which is $1$ if $t\geq\epsilon'$, is $0$ if
$t<\frac{\epsilon+\epsilon'}{2}$, and is linear otherwise. Then
$f(a)=(a-\epsilon)_+g(a)$, where $g$ is a continuous function, and
also $f(a)(a-\epsilon')_+=(a-\epsilon')_+$ by the functional
calculus. Thus $f(a)\in\mK(\overline{(a-\epsilon)_+X})$ and
$f(a)_{\vert\overline{(a-\epsilon')_+X}}=\mathrm{id}_{\vert\overline{(a-\epsilon')_+X}}$.
\end{proof}

\begin{theorem}
\label{CuandW} Let $A$ be a stable {\rm C}$^*$-algebra. Then
\[
\W(A)\cong \Cu\,.
\]
\end{theorem}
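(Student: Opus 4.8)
The plan is to exhibit an explicit order-isomorphism of ordered semigroups $\Phi\colon\W(A)\to\Cu$ and to verify that it is an additive order-embedding which is surjective. Since $A$ is stable, Proposition~\ref{compactcomp}(iii) gives $\mathcal{K}(H_A)\cong A\otimes\mathbb{K}\cong A$, and the compatible isomorphisms $M_n(A)\cong\mathcal{K}(L_n(A))\subseteq\mathcal{K}(H_A)$ let me regard every $a\in M_\infty(A)_+$ as a positive compact operator on $H_A$ supported on some $L_n(A)$; under this identification I would \emph{define} $\Phi(\langle a\rangle)=[\overline{aH_A}]$. Additivity is immediate because $\overline{(a\oplus b)H_A}\cong\overline{aH_A}\oplus\overline{bH_A}$ and addition in $\Cu$ is direct sum, while padding with zeros shows the definition is consistent along $M_n(A)\hookrightarrow M_{n+1}(A)$. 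Everything then reduces to a single bridge statement: for $a,b\in\mathcal{K}(H_A)_+$ one has $a\precsim b$ (Cuntz comparison of positive elements) if and only if $\overline{aH_A}\precsim\overline{bH_A}$ (Cuntz comparison of Hilbert modules). Well-definedness on $\sim$-classes, order-preservation, and order-reflection (hence injectivity, by antisymmetry) all follow from this.

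For the forward implication I would fix $\epsilon>0$ and use Proposition~\ref{basics} together with Theorem~\ref{Kirchberg-Rordam} to write $(a-\epsilon)_+=y(b-\delta)_+y^*$ for suitable $\delta>0$ and $y$. Setting $w=y(b-\delta)_+^{1/2}$ gives $ww^*=(a-\epsilon)_+$ and $w^*w\in\overline{(b-\delta)_+\mathcal{K}(H_A)(b-\delta)_+}$, so by Lemma~\ref{equivalence} $\overline{(a-\epsilon)_+H_A}\cong\overline{w^*wH_A}\subseteq\overline{(b-\delta)_+H_A}\subseteq\overline{bH_A}$, and Lemma~\ref{cuntzgencont} yields $\overline{(a-\epsilon)_+H_A}\precsim\overline{bH_A}$. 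Because $\overline{aH_A}=\lim_{\longrightarrow}\overline{(a-1/k)_+H_A}$ (Lemma~\ref{compcont} and Proposition~\ref{suprema1}), every $Z\CC\overline{aH_A}$ is, by Proposition~\ref{indlimits}, isomorphic to some $Z'\CC\overline{(a-1/k)_+H_A}$, and the previous subequivalence carries $Z'$ onto a submodule compactly contained in $\overline{bH_A}$; this is exactly $\overline{aH_A}\precsim\overline{bH_A}$.

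The reverse implication is the crux and the step I expect to be the main obstacle. Given $\overline{aH_A}\precsim\overline{bH_A}$, fix $\epsilon>0$; since $\overline{(a-\epsilon)_+H_A}\CC\overline{aH_A}$ (Lemma~\ref{compcont}) I obtain an isometric module isomorphism $\phi$ of $\overline{(a-\epsilon)_+H_A}$ onto some $Z''\CC\overline{(b-\delta)_+H_A}$, using Proposition~\ref{indlimits} to absorb the image into a cut-down of $b$. Writing $h=(a-\epsilon)_+^{1/2}$ and expanding $h=\lim\sum\theta_{u_i,v_i}$ in $\mathcal{K}(H_A)$, the composite $w\colon\xi\mapsto\phi(h\xi)$ satisfies $w=\lim\sum\theta_{\phi(u_i),v_i}$, so that $w\in\mathcal{K}(H_A)$ is a genuine adjointable operator; this identity is the delicate technical point, since it converts the purely module-theoretic embedding $\phi$ back into an honest element of the algebra. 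As $\phi$ is isometric, $\langle w\xi,w\eta\rangle=\langle\xi,(a-\epsilon)_+\eta\rangle$ gives $w^*w=(a-\epsilon)_+$, while $\overline{wH_A}\subseteq\overline{(b-\delta)_+H_A}$ forces $ww^*$ into the hereditary subalgebra generated by $(b-\delta)_+$, whence $ww^*\precsim(b-\delta)_+$. Using $ww^*\sim w^*w$ I conclude $(a-\epsilon)_+\precsim(b-\delta)_+\precsim b$ for every $\epsilon$, so $a\precsim b$ by Proposition~\ref{basics}.

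Finally I would establish surjectivity. Given a countably generated Hilbert module $Y$, Theorem~\ref{cg=Ksigmaunit} makes $\mathcal{K}(Y)$ $\sigma$-unital, so it has a strictly positive element $h$; embedding $Y\subseteq H_A$ via Kasparov's Theorem (Theorem~\ref{them: Kasparov}), so that $\mathcal{K}(Y)\subseteq\mathcal{K}(H_A)$ by Lemma~\ref{inclofcompacts}, I regard $h$ as a positive element of $\mathcal{K}(H_A)\cong A$, whose class lies in $\W(A)$. Since $h$ is strictly positive in $\mathcal{K}(Y)$ and annihilates $Y^{\perp}$, one has $\overline{hH_A}=\overline{hY}=\overline{\mathcal{K}(Y)Y}=Y$, and therefore $\Phi(\langle h\rangle)=[Y]$. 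Thus $\Phi$ is a bijective additive order-embedding, i.e.\ an isomorphism $\W(A)\cong\Cu$ in the relevant category. The heart of the argument remains the compactness identity $w=\lim\sum\theta_{\phi(u_i),v_i}$ of the reverse direction, which is precisely what makes the two notions of Cuntz comparison coincide.
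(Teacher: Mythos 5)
Your proposal follows the paper's proof in all but one step. The assignment $\langle a\rangle\mapsto[\overline{aH_A}]$ is the paper's map $\langle a\rangle\mapsto[\overline{aA}]$ read through the identification $H_A\cong A$ available for stable $A$; your forward implication (Proposition~\ref{basics} and Theorem~\ref{Kirchberg-Rordam} to get $(a-\epsilon)_+=y(b-\delta)_+y^*$, then Lemma~\ref{equivalence} and passage to suprema via Proposition~\ref{suprema1}/\ref{indlimits}) and your surjectivity argument (Kasparov's Theorem plus a strictly positive element of $\mathcal{K}(Y)$) are the paper's arguments almost verbatim. The genuine divergence is the reverse implication. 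There the paper exploits the fact that its modules sit inside the algebra itself: it never converts the isometric isomorphism $\psi\colon\overline{(a-\epsilon)_+A}\to X\subseteq\overline{bA}$ into an operator, but simply evaluates it at the single vector $(a-\epsilon)_+^{1/2}$, obtaining an element $x=\psi\bigl((a-\epsilon)_+^{1/2}\bigr)\in\overline{bA}$ of the algebra with $x^*x=(a-\epsilon)_+$; writing $x=\lim_n b^{1/2}c_n$ then gives $(a-\epsilon)_+=\lim_n c_n^*bc_n\precsim b$ at once. Your operator $w=\phi\circ h$ is exactly left multiplication by this $x$, one abstraction level up, so the two arguments are the same idea -- but the paper's evaluation trick buys for free the adjointability and compactness that you have to work for.

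That extra work is where your one real gap sits: in an arbitrary expansion $h=\lim\sum\theta_{u_i,v_i}$ the vectors $u_i$ need not lie in $\overline{hH_A}=\mathrm{dom}(\phi)$, so $\theta_{\phi(u_i),v_i}$ is not even defined. The repair is routine: approximate $h^{1/2}$ (not $h$) by finite-rank operators $k_n=\sum_i\theta_{u_i,v_i}$, so that $h=\lim_n h^{1/2}k_n=\lim_n\sum_i\theta_{h^{1/2}u_i,\,v_i}$ with $h^{1/2}u_i\in\overline{hH_A}$; since $\phi$ is a contraction, composing gives $w=\phi\circ h=\lim_n\sum_i\theta_{\phi(h^{1/2}u_i),\,v_i}\in\mathcal{K}(H_A)$. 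With that fix the rest of your computation goes through: $w^*w=(a-\epsilon)_+$ (this, like the paper's step $\langle x,x\rangle=(a-\epsilon)_+$, uses the standard fact that a surjective isometric module map preserves inner products), $ww^*$ lies in the hereditary subalgebra generated by $(b-\delta)_+$ because $\overline{ww^*H_A}\subseteq\overline{(b-\delta)_+H_A}$ forces $ww^*\in\mathcal{K}(\overline{ww^*H_A})\subseteq\mathcal{K}(\overline{(b-\delta)_+H_A})$ by Lemmas~\ref{useful} and~\ref{inclofcompacts}, and then $(a-\epsilon)_+=w^*w\sim ww^*\precsim(b-\delta)_+\precsim b$. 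So your proof is correct after this one-line repair, and it is instructive to see that the paper's choice of realizing $\Cu$ with modules inside $A$ is precisely what makes this delicate point evaporate.
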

\begin{proof}
We shall prove that the assignment $\langle a\rangle\mapsto
[\overline{aA}]$ is an ordered semigroup isomorphism.

We first show that, if $a\precsim b$, then $[\overline{aA}]\leq
[\overline{bA}]$.

Indeed, there is a sequence $(c_n)$ such that $c_nbc_n^*\to a$. By
passing to a subsequence if necessary, we may choose a decreasing
sequence $\epsilon_n$ of positive numbers with limit zero such that,
for each $n$,
\[
\Vert a-c_nbc_n^*\Vert<\epsilon_n\,,
\]
so we have contractions $d_n$ with
$d_nc_nbc_n^*d_n^*=(a-\epsilon_n)_+$.

Now, by Lemma~\ref{equivalence} we have
\[
\overline{(a-\epsilon_n)_+A}=\overline{(d_nc_nb^{1/2})(d_nc_nb^{1/2})^*A}\cong
\overline{b^{1/2}c_n^*d_n^*d_nc_nb^{1/2}A}\subseteq\overline{bA}\,.
\]
Therefore, $[\overline{(a-\epsilon_n)_+A}]\leq [\overline{bA}]$ for
each $n$. On the other hand, it is clear that
$\overline{aA}=\overline{\cup_n\overline{(a-\epsilon)_+A}}$, so
Proposition~\ref{suprema1} yields
\[
[\overline{aA}]=\sup\limits_n [\overline{(a-\epsilon_n)_+A}]\,,
\]
and it follows from this that $[\overline{aA}]\leq[\overline{bA}]$.

Conversely, if $[\overline{aA}]\leq [\overline{bA}]$, it then
follows that $a\precsim b$. To see this, let $\epsilon>0$ and write
$f(a)=(a-\epsilon)_+$. By Lemma~\ref{compcont}, we have
$\overline{f(a)A}\CC \overline{aA}$.

By definition of $\leq$ in $\Cu$, we have an isomorphism $\psi\colon
\overline{f(a)A}\stackrel{\cong}{\to} X$, where $X$ is compactly
contained in $\overline{bA}$.

Let $x=\psi(f(a)^{\frac{1}{2}})$. We then have that (being $\psi$ an
isometry):
\[
f(a)=\langle f(a)^{\frac{1}{2}},f(a)^{\frac{1}{2}}\rangle=\langle
x,x\rangle=x^*x\,,
\]
and $\overline{xA}=\overline{xx^*A}=X$ (because $f(a)^{\frac{1}{2}}$
is a generator of $\overline{f(a)A}$).

Now, as $\overline{bA}=\overline{b^{1/2}A}$ and $x\in\overline{bA}$,
there is a sequence $(c_n)$ with $x=\lim\limits_n
b^{\frac{1}{2}}c_n$. Therefore, $f(a)=x^*x=\lim\limits_n c_n^*bc_n$.

This implies that $(a-\epsilon)_+\precsim b$ and as $\epsilon$ is
arbitrary, it follows that $a\precsim b$, as desired.

Next, let us show that an element $a\in (A\otimes M_n)_+$ is (Cuntz)
equivalent to an element of $A$.

Put, by stability, $A=B\otimes\mathbb{K}$. There is an isometry
$v\in\mathcal{M}(\mathbb{K}\otimes
M_n)=\mathbb{B}(\mathcal{H})\otimes M_n$ such that
$v(\mathbb{K}\otimes M_n)v^*=\mathbb{K}\otimes e$ (with $e$ a rank
one projection). (For example, one can construct this by splitting
$\mathcal{H}=\mathcal{H}_1\oplus\cdots\mathcal{H}_n$, pairwise
isomorphic -- and all isomorphic to $\mathcal{H}$, whence
$\mathbb{K}(\mathcal{H})\cong\mathbb{K}(\mathcal{H}_1^n)\cong
M_n(\mathbb{K}(\mathcal{H}))$).

Now $A\otimes M_n=B\otimes(\mathbb{K}\otimes M_n)$ so $1\otimes
v\in\mathcal{M}(B)\otimes\mathcal{M}(\mathbb{K}\otimes M_n)\subseteq
\mathcal{M}(B\otimes(\mathbb{K}\otimes M_n))$.

Then $a\sim (1\otimes v)a(1\otimes v)^*\in
(B\otimes\mathbb{K}\otimes e)_+\cong A_+$.

Finally, if $X$ is a countably generated module, then there is by
Kasparov's Theorem a module $Y$ such that $X\oplus Y\cong H_A$. And,
by stability, $H_A\cong A$. We get that every countably generated
Hilbert $A$-module is isomorphic to $\overline{aA}$ for some $a\in
A_+$.
\end{proof}

\subsection{Continuity with respect to inductive limits}

Before one can even ask whether a functor is continuous with respect
to inductive limits, one must verify that such limits exist in both
the domain and co-domain categories.

\begin{theorem}\label{limexist}
Inductive limits always exist in $\mathsf{Cu}$.
\end{theorem}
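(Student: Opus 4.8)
The plan is to build the limit object $S$ by hand as a space of ``rapidly increasing paths'' through the system $(S_i,\phi_i)$, equip it with componentwise addition and a comparison order phrased through the way-below relation, and then verify that $S$, together with the canonical maps, satisfies the axioms {\bf (O1)}--{\bf (O6)} and the universal property of an inductive limit in $\mathsf{Cu}$. Writing $\phi_{i,j}=\phi_{j-1}\circ\cdots\circ\phi_i$ for the composites, I would let $T$ be the set of all sequences $(x_i)$ with $x_i\in S_i$ and $\phi_i(x_i)\ll x_{i+1}$ for all $i$ (allowing $x_i=0$ for small $i$). On $T$ declare $(x_i)\precsim(y_i)$ if for every $i$ and every $z\ll x_i$ in $S_i$ there is $j\ge i$ with $\phi_{i,j}(z)\le y_j$; this is a preorder, and I set $S=T/\!\sim$, where $(x_i)\sim(y_i)$ when each $\precsim$ the other. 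Addition is componentwise, $[(x_i)]+[(y_i)]=[(x_i+y_i)]$, which lands in $T$ and respects $\precsim$ by {\bf (O2)} and {\bf (O5)} together with the fact that each $\phi_i$ is a semigroup map; the class of the zero path is neutral, giving {\bf (O1)}, {\bf (O2)}, {\bf (O6)}.

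The canonical map $\phi_{i,\infty}\colon S_i\to S$ sends $x\in S_i$ to the class of the path obtained by choosing, via {\bf (O4)} in $S_i$, a rapidly increasing sequence $x^{(1)}\ll x^{(2)}\ll\cdots$ with supremum $x$, and threading its images down the system (with entries below level $i$ set to $0$); since each $\phi_{i,m}$ preserves $\ll$, this lies in $T$. One checks it is independent of the chosen sequence and is a $\mathsf{Cu}$-morphism. The key structural point, which I expect to demand the most care, is {\bf (O3)} and {\bf (O4)}: given a path $(x_i)$ representing $s\in S$, the elements $\phi_{i,\infty}(x_i)$ should form a rapidly increasing sequence in $S$ with supremum $s$, so that $s^{\ll}$ is upward directed and contains such a sequence. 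Verifying this, and that every countable upward directed subset of $S$ has a supremum, will rest on a diagonalization and reindexing argument in the spirit of Proposition~\ref{suprema} and Proposition~\ref{suprema2}: one represents each member of the directed set by a path, interleaves the levels, and passes to the way-below relation inside each $S_i$, using that the $S_i$ are themselves objects of $\mathsf{Cu}$ and hence satisfy {\bf (O3)}, {\bf (O4)}.

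For the universal property, suppose $(T',(\psi_i))$ is any cocone, i.e.\ $\psi_i\colon S_i\to T'$ are $\mathsf{Cu}$-morphisms with $\psi_{i+1}\circ\phi_i=\psi_i$. For a path $(x_i)\in T$ the elements $\psi_i(x_i)$ are increasing in $T'$, since $\psi_i(x_i)=\psi_{i+1}(\phi_i(x_i))\le\psi_{i+1}(x_{i+1})$, so by {\bf (O3)} in $T'$ I define $\psi([(x_i)])=\sup_i\psi_i(x_i)$. That $\psi$ is well defined on $\sim$-classes, additive, order-preserving, and satisfies $\psi\circ\phi_{i,\infty}=\psi_i$ follows from the compatibility of the $\psi_i$ with suprema and with $\ll$; uniqueness is forced because $s=\sup_i\phi_{i,\infty}(x_i)$ and morphisms preserve countable suprema ({\bf (M3)}). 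The delicate clauses are {\bf (M3)} and {\bf (M4)} for $\psi$, i.e.\ that $\psi$ preserves suprema of countable upward directed sets and the relation $\ll$; after the reindexing of the previous paragraph these reduce to the corresponding preservation properties of the individual $\psi_i$.

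The main obstacle, then, is not the formal construction but the verification that the path space $S$ genuinely satisfies {\bf (O4)} — that each element is the supremum of a rapidly increasing sequence drawn from its own way-below set — since the order on $T$ is phrased through $\ll$ precisely to make this succeed, and checking it cleanly requires the interleaving argument above together with careful bookkeeping to ensure the constructed sequences are cofinal. Once {\bf (O3)} and {\bf (O4)} are secured, the remaining axioms and the universal property follow from the routine monotonicity and supremum-compatibility arguments already rehearsed in Propositions~\ref{suprema1}--\ref{suprema2}.
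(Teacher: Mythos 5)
Your construction is essentially the paper's own proof: the paper likewise realizes the limit as equivalence classes of paths $(s_i)$ with $s_i \in S_i$, ordered by declaring $(s_i) \leq (t_i)$ when every $s \ll s_i$ satisfies $\gamma_{i,j}(s) \ll t_j$ for all large $j$, verifies {\bf (O1)}--{\bf (O6)} by the same interleaving/diagonalization arguments, and defines the universal map by $\phi(s) = \sup_i \phi_i(s_i)$ on rapidly increasing representatives. The only cosmetic difference is that you build the condition $\phi_i(x_i) \ll x_{i+1}$ into the path space itself and accordingly must thread rapidly increasing approximations in your canonical maps $\phi_{i,\infty}$, whereas the paper admits merely increasing paths with constant-tail canonical maps $\eta_i(s)=(0,\ldots,0,s,s,\ldots)$ and then proves, as a separate claim, that every increasing path is equivalent to a rapidly increasing one.
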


\begin{proof}
\noindent {\bf 1. Definition of the limit object.}  Let
\[
S_1 \stackrel{\gamma_1}{\longrightarrow} S_2
\stackrel{\gamma_2}{\longrightarrow} S_3
\stackrel{\gamma_3}{\longrightarrow} \cdots
\]
be an inductive sequence in the category $\mathsf{Cu}$.  As in the
setting of C$^*$-algebra inductive sequences, we define
\[
\gamma_{i,j} := \gamma_{j-1} \circ \gamma_{j-2} \circ \cdots \circ
\gamma_i.
\]
Set
\[
S^\circ = \{ (s_i)_{i \in \mathbb{N}} \ | \ s_i \in S_i, \ s_i \leq
s_{i+1} \},
\]
where we understand that the comparison of $s_i$ and $s_{i+1}$ takes
place in $S_{i+1}$, and is in fact comparison between $s_{i+1}$ and
the {\it image} of $s_i$ under $\gamma_i$.  We will frequently
suppress $\gamma_i$ (or $\gamma_{i,j}$) in this manner, to avoid
cumbersome notation. Define an addition operation $(t_i) + (s_i) =
(t_i + s_i)$ on $S^\circ$.  This makes sense because addition in
each $S_i$ is compatible with the order in $S_i$---see property {\bf
(O2)} above---so that $(s_i + t_i)$ is an increasing sequence.  Also
define the following relation:  $(s_i) \leq (t_i)$
if for any $i$ and any $s \in S_i$ such that $s \ll s_i$, we have
$\gamma_{i,j}(s) \ll t_j$ for all sufficiently large $j$.  We will
prove that $\leq$ is a pre-order compatible with addition, and
obtain an equivalence relation $\sim$ on $S^\circ$ by declaring that
$(s_i) \sim (t_i)$ if $(s_i) \leq (t_i)$ and $(t_i) \leq (s_i)$.
Finally, we will prove that the quotient $S := S^\circ / \sim$ with
its inherited order and addition operation is an object in
$\mathsf{Cu}$, and is the inductive limit of the system $(S_i,
\gamma_i)$.

\vspace{2mm} Let us first see why $\leq$ is a pre-order on
$S^\circ$.  If $s \ll s_i$, then $s \ll s_j$ for every $j \geq i$ by
{\bf (M4)}.  This shows that $\leq$ is reflexive.  The transitivity
of $\leq$ follows directly from its definition.

To check the compatibility of $\leq$ with addition, let $(s_i),
(\bar{s}_i), (t_i), (\bar{t}_i) \in S^\circ$ satisfy
\[
(s_i) \leq (\bar{s}_i) \ \ \mathrm{and}  \ \ (t_i) \leq (\bar{t}_i).
\]
If $s \ll s_i + t_i$ for some $i$, then we must show that  $s \ll
\bar{s}_j + \bar{t}_j$ for all $j$ sufficiently large.  By {\bf
(O4)} there are increasing sequences
\[
s_i^1 \leq s_i^2 \leq \cdots \ll s_i \ \ \mathrm{and} \ \ t_i^1 \leq
t_i^2 \leq \cdots \ll t_i
\]
in $S_i$ such that
\[
\sup_j s_i^j = s_i \ \ \mathrm{and} \ \ \sup_j t_i^j = t_i.
\]
By {\bf (O2)} we have
\[
s_i^1 + t_i^1 \leq s_i^2 + t_i^2 \leq \cdots \leq s_i + t_i,
\]
and by {\bf (O5)} we have $\sup_j s_i^j + t_i^j = s_i + t_i$.  We've
assumed that $s \ll s_i + t_i$, so by the definition of the relation
$\ll$ we have that $s \leq s_i^n + t_i^n$ for all $n$ sufficiently
large.  By our assumption that $(s_i) \leq (\bar{s}_i)$ and $(t_i)
\leq (\bar{t}_i)$, we have $s_i^n \ll \bar{s}_j$ and $t_i \ll
\bar{t}_j$ for all $j$ sufficiently large.  Finally, using {\bf
(O5)},
\[
s \leq s_i^n + t_i^n  \ll \bar{s}_j + \bar{t}_j, \ \forall j \
\mathrm{sufficiently \ large}
\]
as desired. It follows that $S$ is an Abelian semigroup satisfying
{\bf (O2)}, that is, compatibility of the order with addition.

\vspace{2mm} \noindent {\bf 2. $S$ is in $\mathsf{Cu}$.}  It is easy
to see that the equivalence class of the sequence $(0,0,0,\ldots)
\in S^{\circ}$ is a zero element for $S = S^\circ / \sim$. This
establishes {\bf (O1)}.  It is also straightforward to see that $0
\ll x$ for each $x \in S_i$, $i \in \mathbb{N}$, whence
$(0,0,0,\ldots) \leq y$ for every $y \in S$.  This establishes {\bf
(O6)}.

\vspace{2mm} To establish {\bf (O2)}--{\bf(O5)} for $S$, we need the
following intermediate result.

\vspace{2mm} {\bf Claim:} Each $(s_i) \in S^\circ$ is equivalent to
$(\bar{s}_i)$ where $\bar{s}_i \ll \bar{s}_{i+1}$ for each $i \in
\mathbb{N}$.

\vspace{1mm} {\it Proof of claim.}  Let $(s_i)$ be given, and find,
using {\bf (O4)}, sequences $(s^j_i)$ for each $i$ such that $s^j_i
\ll s^{j+1}_i$ and $\sup_j s^j_i = s_i$.  Using the same argument as
in Proposition \ref{suprema}, we see that, after suitably modifying
the sequences $(s^j_i)$, we may assume that $s^j_i\ll
s^{k+1}_{k+1}$, where $k=\text{max}\{i,j\}$. We strongly encourage
you to write this down.

Write $\bar{s}_i=s^i_i$ for all $i$. Clearly, we have that
$\bar{s}_i\ll \bar{s}_{i+1}$ and that $(\bar{s}_i)\le (s_i)$. To
show that $(s_i)\le (\bar{s}_i)$, take $s\in S_i$ such that $s\ll
s_i$. Then $s\ll s^j_i$ for some $j$, and since $s^j_i\ll
s^{k+1}_{k+1}$, where $k=\text{max}\{i,j\}$, we get that $s\ll
\bar{s}_n$ for every $n\ge k+1$, showing that $(s_i)\le
(\bar{s}_i)$.

\vspace{2mm} Let us now verify {\bf (O3)}---the property that every
increasing sequence in $S$ has a supremum.  Let $s^1 \leq s^2 \leq
s^3 \leq \cdots$ be such a sequence, where $s_j$ is represented by a
sequence $(s^j_i)_{i=1}^\infty$ with $s^j_i \in S_i$.  Using the
claim above, we may assume that $s_i^j \ll s_{i+1}^j$ for each $i$
and $j$.  Now using the fact that our sequence is increasing, we may
find a sequence $(n_j)_{j=1}^\infty$ of natural numbers, with
$n_1=1$, such that $n_j \geq j$ and
\[
s^j_{n_j} \gg s^k_l, \ \ 1 \leq k \leq j-1, \ \ 1 \leq l \leq
n_{j-1}.
\]
Define a sequence $d:=(d_i)$ as follows:  $d_i = s^1_1$ for each $i
< n_1$, and $d_i = s^j_{n_j}$ for each $ n_j \leq i < n_{j+1}$.  We
claim that $\sup s^j = d$.  Let us first see why $d$ is an upper
bound for our sequence. Fix $j$ and $r \ll s^j_i$.  Find $l \in
\mathbb{N}$ such that $l > j$ and $n_l > i$. It follows that $r \ll
s^l_{n_l}$ by construction, and the latter element is in fact an
element of the increasing sequence $(d_i)$.  This yields $s^j \leq
d$. Now suppose that $s^j \leq t := (t_i)$ for each $j \in
\mathbb{N}$.  Let $r \ll d_i$ for some $i$, so that $r \ll
s^k_{n_k}$ for some $k$.  Since $s^k \leq t$ we have $r \ll t_l$ for
all $l$ sufficiently large.  This gives $d \leq t$, so that $d$ is
the least upper bound of the increasing sequence $s^1 \leq s^2 \leq
s^3 \leq \cdots$.

\vspace{2mm} We now establish {\bf (O4)} for $S$--- for each $s \in
S$ the set
\[
s^{\ll} = \{ y \in S \ | \ y \ll s \}
\]
is upward directed with respect to both $\leq$ and $\ll$, and
contains a sequence $(r_n)$ such that $r_n \ll r_{n+1}$ for every $n
\in \mathbb{N}$ and $\sup_n r_n = s$. We know that an element $s \in
S$ can be represented by a rapidly increasing sequence $(s_i)$ with
$s_i \in S_i$.  It is straightforward to check that the sequence
\[
r_1:=(s_1,s_1,\ldots), \ r_2:=(s_1,s_2,s_2,\ldots), \
r_3:=(s_1,s_2,s_3,s_3,\ldots),\ldots
\]
is rapidly increasing in $S$, and that $s$ dominates each element of
the sequence.  To show that $s$ is in fact the supremum of this
sequence, let $t \in S$ have the property that
\[
t \geq (s_1,s_1,\ldots), \ (s_1,s_2,s_2,\ldots), \
(s_1,s_2,s_3,s_3,\ldots),\ldots;
\]
we must show that $s \leq t$. Let $(t_i)$ be an increasing sequence
representing $t$ (with $t_i \in S_i$).   For each $i$ we have $s_i
\ll s_{i+1}$ in $S_{i+1}$ and
$(s_1,\ldots,s_i,s_{i+1},s_{i+1},\ldots) \leq t$.  It follows from
the definition of $\leq$ that $s_i \ll t_j$ for all $j$ sufficiently
large, whence
\[
(s_1,s_2,s_3,\ldots) = s \leq (t_1,t_2,t_3,\ldots) = t.
\]
Now let $x,y \in s^{\ll}$, and let $r_n$ be as above.  It follows
that $r_n \geq x,y$ for all sufficiently large $n$, whence $r_{n+1}
\gg x,y$ for these same $n$.  This gives the required upward
directedness for $s^{\ll}$.

\vspace{2mm} Next let us verify {\bf (O5)} for $S$---the relation
$\ll$ and the operation of passing to the supremum of an increasing
sequence are all compatible with addition in $S$.  To this end, let
us collect some facts from our work above.
\begin{enumerate}
\item[(i)] Any element of $S$ can be represented by a rapidly increasing sequence $(s_i)$ with $s_i \in S_i$.
\item[(ii)] For any increasing sequence $(s^i)$ in $S$ with supremum $s$ there is a rapidly increasing sequence $(s_i)$
with $s_i \in S_i$, representing $s$, such that
\[
\bar{s}_i := (s_1,\ldots,s_{i-1},s_i,s_i,\ldots) \leq s^i \ \
\mathrm{and} \ \ \sup_i \bar{s}_i \leq \sup_i s^i;
\]
the sequence $\bar{s}_i$ is moreover rapidly increasing in $S$. In
fact, we have $\sup_i \bar{s}_i = s$ for any rapidly increasing
sequence $(s_i)$ representing $s$, where $s_i \in S_i$.
\end{enumerate}
{\bf Warning:}  From here on we use the ``bar'' notation {\it only}
in the sense of (ii) above.

\vspace{2mm} We are now ready to prove the compatibility of suprema
with addition.  Let $(s^i)$ and $(t^i)$ be increasing sequences in
$S$ with suprema $s$ and $t$, respectively.  Choose representing
sequences $(s_i)$ and $(t_i)$ for $s$ and $t$, respectively, as in
(i) and (ii) above.  It follows that $(s_i+t_i)$ is a representing
sequence for $s+t$ which is rapidly increasing and therefore
satisfies $\sup_i \overline{s_i+t_i} =s+t$.  It is also true that
$\overline{s_i + t_i} \leq s^i + t^i$.  To see this, let $r \ll
(\overline{s_i+t_i})_j$ for some $j$.  Using $\overline{s_i+t_i}=
(s_1+t_1,\ldots,s_i+t_i,s_i+t_i,\ldots)$ we see that $r \ll
s_i+t_i$.  Let $(s_j^i)$ and $(t_j^i)$ be the representing sequences
for $s^i$ and $t^i$ which were used to produce $(s_i)$ and $(t_i)$.
(In other words, $s_i = s_i^i$ and $t_i=t^i_i$.) Now $r \ll s_i+t_i
= s_i^i + t_i^i$, so $r \ll s_j^i + t_j^i$ for all sufficiently
large $j$ by the fact that $(s^i_j)$ and $(t^i_j)$ are rapidly
increasing in $j$.  This shows that $\overline{s_i + t_i} \leq s^i +
t^i$.  Now we calculate:
\[
s+t = \sup \overline{s_i+t_i} \leq \sup (s^i + t^i) \leq \sup s^i +
\sup t^i = s+t.
\]
This proves that
\[
\sup(s^i+t^i) = \sup s^i + \sup t^i,
\]
as desired.

\vspace{2mm} Let us now prove that the relation $\leq$ is compatible
with addition. Let $s^1 \leq t^1$ and $s^2 \leq t^2$ in $S$, and
choose rapidly increasing representative sequences $(s^1_i)$,
$(s^2_i)$, $(t^1_i)$, and $(t^2_i)$ for $s^1, s^2, t^1,$ and $t^2$,
respectively, where the $i^{\mathrm{th}}$ element of each sequence
belongs to $S_i$.  It is an easy exercise to show that one may
modify these sequences to arrange that $s^1_i \leq t^1_i$ and $s^2_i
\leq t^2_i$, and the possible expense of their being rapidly
increasing.  Now
\begin{eqnarray*}
s^1 + s^2 & = & \sup \bar{s}^1_i + \sup \bar{s}^2_i \\
& = & \sup \overline{(s^1_i + s^2_i)} \\
& \leq & \sup \overline{(t^1_i + t^2_i)} \\
& = & \sup \bar{t}^1_i + \sup \bar{t}^2_i \\
& = & t^1 + t^2,
\end{eqnarray*}
i.e., $s^1+s^2 \leq t^1 + t^2$, as desired.

\vspace{2mm} To complete the proof that {\bf (O5)} holds in $S$, we
must prove the compatibility of the relation $\ll$ with addition.

\vspace{2mm} Let $s^1 \ll t^1$ and $s^2 \ll t^2$ in $S$ be given,
and choose rapidly increasing representing sequences $(t^1_i)$ and
$(t^2_i)$ for $t^1$ and $t^2$, respectively. It follows that $s^1
\leq \bar{t}_i^1$ and $s^2 \leq \bar{t}_i^2$ for all $i$
sufficiently large (this uses (ii) above). Also, by our claim above,
$\bar{t}^1_i + \bar{t}^2_i \ll \bar{t}^1_{i+1} + \bar{t}^2_{i+1}$.
Thus, for these $i$,
\[
s^1 + s^2 \leq \bar{t}^1_i + \bar{t}^2_i  \ll \bar{t}^1_{i+1} +
\bar{t}^2_{i+1} \leq t^1 + t^2,
\]
i.e., $s^1+s^2 \ll t^1+t^2$, as desired.

\vspace{2mm} \noindent {\bf 3. $S$ is the limit of
$(S_i,\gamma_i)$.} To complete the proof of the theorem, we must
show that $S$ is the inductive limit of the sequence
$(S_i,\gamma_i)$ in the category $\mathsf{Cu}$.  We must show that
for every object $T$ in $\mathsf{Cu}$ and every sequence of maps
$\phi_i:S_i \to T$ satisfying $\phi_{i+1} \circ \gamma_i = \phi_i$,
there exists a unique map $\phi:S \to T$ such that $\phi_i = \phi
\circ \eta_i$, where $\eta_i:S_i \to S$ is given by
\[
s \mapsto (\underbrace{0,\ldots,0}_{i-1 \
\mathrm{times}},s,s,s,\ldots).
\]
(That $\eta_i$ preserves the zero element, the relation $\leq$, and
addition is plain;  that it preserves the relation $\ll$ follows
easily from the way the supremum of an increasing sequence in $S$ is
built. That the $\eta_i$ are compatible with the $\gamma_i$ follows
from the fact that for $j \geq i$,
\[
(\underbrace{0,\ldots,0}_{i-1 \ \mathrm{times}},s,s,s,\ldots) \sim
(\underbrace{0,\ldots,0}_{j-1 \ \mathrm{times}},s,s,s,\ldots)
\]
in $S$.)

\vspace{2mm} To define $\phi$, we view each $s \in S$ as being
represented by a rapidly increasing sequence $(s_i)$ with $i \in
\mathbb{N}$, and set $\phi(s) = \sup \phi_i(s_i)$.  If $(s_i^{'})$
is another rapidly increasing sequence representing $s$, then we may
find strictly increasing sequences $(n_i)$ and $(m_i)$ such that
\[
s_{n_i} \leq s_{m_{i}}^{'} \leq s_{n_{i+1}}, \ \forall i \in
\mathbb{N}.
\]
Since the maps $\phi_i$ preserve the order relation we conclude that
\[
\sup \phi_i(s_i) = \sup \phi_{n_i}(s_{n_i}) = \sup
\phi_{m_i}(s_{m_i}^{'}) = \sup \phi_i(s_i^{'}),
\]
whence $\phi$ does not depend on the choice of rapidly increasing
representing sequence.

\vspace{2mm} Let us check that $\phi \circ \eta_i = \phi_i$, that
$\phi$ is a morphism in the category $\mathsf{Cu}$, and that it is
unique with these properties.

\vspace{2mm} Given $s \in S_i$, we must show that $\phi(\eta_i(s)) =
\phi_i(s)$. To make this calculation we must first represent
\[
\eta_i(s) = (\underbrace{0,\ldots,0}_{i-1 \
\mathrm{times}},s,s,s,\ldots)
\]
as a rapidly increasing sequence $(r_i)$ with $r_i \in S_i$.  Choose
a rapidly increasing sequence $(t_j)$ in $S_i$ with supremum $s$,
and set
\begin{equation}\label{phii}
(r_1,r_2,r_3,\ldots) = (\underbrace{0,\ldots,0}_{i-1 \
\mathrm{times}},t_i,t_{i+1},t_{i+2},\ldots).
\end{equation}
Now
\begin{eqnarray*}
\phi(\eta_i(s)) & = & \sup_j \phi_j(r_j) \\
& = & \sup_{j \geq i} \phi_j \circ \gamma_{i,j}(t_j) \\
& = & \sup_{j \geq i} \phi_i(t_j) \\
& = & \phi_i \left( \sup_{j \geq i} t_j \right) \\
& = & \phi_i(s),
\end{eqnarray*}
as desired.

\vspace{2mm} We now verify properties {\bf (M1)}-{\bf (M4)} for
$\phi$, and show that $\phi$ belongs to $\mathsf{Cu}$ as a map.

\vspace{2mm} First, we show that $\phi$ preserves addition.  Let
$(r_i)$ and $(s_i)$ be two rapidly increasing sequences with
$r_i,s_i \in S_i$. Then,
\begin{eqnarray*}
\phi \left( (r_i) + (s_i) \right) & = & \phi \left( (r_i+s_i) \right) \\
& = & \sup \phi_i(r_i+s_i) \\
& = & \sup \left( \phi_i(r_i) + \phi_i(s_i) \right) \\
& = & \sup \phi_i(r_i) + \sup \phi_i(s_i) \\
& = & \phi \left((r_i) \right) + \phi \left((s_i)\right),
\end{eqnarray*}
as required.

\vspace{2mm} Property {\bf (M1)}---preservation of the zero
element---is trivial, just observe that $(0,0,0,\dots )$ is rapidly
increasing.

\vspace{2mm} For property {\bf (M2)}---preservation of $\leq$---let
$(r_i) \leq (s_i)$ be rapidly increasing sequences with $r_i,s_i \in
S_i$. Thus, for any $i$, we have $r_i \ll s_j$ for all $j$
sufficiently large, and so $\phi_j(\gamma_{i,j}(r_i)) \ll
\phi_j(s_j)$.  It follows that
\[
\phi\left( (r_i) \right) = \sup \phi_i(r_i) \leq \sup \phi_j(s_j) =
\phi \left( (s_j) \right),
\]
as required.

\vspace{2mm} For property {\bf (M3)}---preservation of suprema---let
$r^k$ be an increasing sequence in $S$ such that the representing
sequence $(r^k_i)$ of each $r^k$ is rapidly increasing, the sequence
$(r^k_i)_{i \in \mathbb{N}}$ is rapidly increasing, and for some
sequence $(n_i)$ of natural numbers the corresponding sequence
$(r^i_{n_i})$ is rapidly increasing and represents the supremum of
$r^1 \leq r^2 \leq \cdots$.  (We saw that this could be done
earlier, when proving the existence of suprema in $S$.) With this
preparation $\phi ( \sup r^k ) = \sup \phi_{n_i}(r^i_{n_i})$.  On
the other hand, $\sup \phi(r^k) = \sup_{k,i} \phi_i(r^k_i)$.  For
every $j$ such that $j>k$ and $n_{j} > i$ we have $\phi_i(r^k_i)
\leq \phi_{n_j}(r^j_{n_j})$, and for every $i \geq n_k$ we have
$\phi_{n_k}(r^k_{n_k}) \leq \phi_i(r^k_i)$. It follows that the two
suprema we are considering in $T$ are in fact the same, proving {\bf
(M3)}.

\vspace{2mm} For property {\bf (M4)}---preservation of $\ll$---let
$r= (r_i)$ and $s=(s_i)$ be rapidly increasing sequences with
$r_i,s_i \in S_i$ satisfying $(r_i) \ll (s_i)$.  We must show that
$\phi \left( (r_i) \right)\ll \phi\left( (s_i) \right)$ in $T$.  We
have $s = \sup \bar{s}_i$ and $r \ll s$, whence $r \leq \bar{s}_i$
for some $i$. It follows that $\phi(r) \leq \phi(\bar{s}_i)$.  Since
$s_i \ll s_{i+1}$ in $S_{i+1}$, their images under $\phi_{i+1}$ also
bear the same relationship.  These images are equal to
$\phi(\bar{s}_i)$ and $\phi(\bar{s}_{i+1})$, respectively, so we
have
\[
\phi(r) \leq \phi(\bar{s}_i) \ll \phi(\bar{s}_{i+1}) \leq \phi(s),
\]
as required.
\end{proof}

\subsection{Continuity of the functor $\mathsf{Cu}$}

\begin{theorem}\label{continuity}
The map which assigns to a C$^*$-algebra its Cuntz semigroup is a
functor from the category of C$^*$-algebras into $\mathsf{Cu}$,
which is moreover continuous with respect to inductive limits.
\end{theorem}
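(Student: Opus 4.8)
The plan is to prove two things: that $\mathrm{Cu}(-)$ is a functor into $\mathsf{Cu}$, and that it is continuous. For functoriality, given a $*$-homomorphism $\varphi\colon A\to B$ I would take the assignment $\mathrm{Cu}(\varphi)([X])=[X\otimes_A B]$ described before Proposition~\ref{Custable} and check that it is a morphism in $\mathsf{Cu}$. Preservation of the zero element and of addition (since $\oplus$ passes through the interior tensor product) is immediate, and functoriality on the nose ($\mathrm{Cu}(\psi\circ\varphi)=\mathrm{Cu}(\psi)\circ\mathrm{Cu}(\varphi)$ and $\mathrm{Cu}(\mathrm{id})=\mathrm{id}$) follows from the associativity of the interior tensor product. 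For the order and for $\ll$ I would argue as in the last paragraph of the proof of Proposition~\ref{Custable}: a compact witness for $X\CC Y$ pushes through the tensor product to give $X\otimes_A B\CC Y\otimes_A B$, and since by Lemma~\ref{llcc} the relations $\CC$ and $\ll$ coincide, this yields property {\bf (M4)} and, a fortiori, {\bf (M2)}. Finally, writing $X=\overline{\cup_n X_n}$ as in Remark~\ref{rem} one has $X\otimes_A B=\lim_{\longrightarrow}(X_n\otimes_A B)$, so {\bf (M3)} (preservation of suprema of countable upward directed sets) follows from Proposition~\ref{suprema1}.

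For continuity, let $A=\lim_{\longrightarrow}(A_i,\phi_i)$ with canonical maps $\gamma_i\colon A_i\to A$, and let $S=\lim_{\longrightarrow}(\mathrm{Cu}(A_i),\mathrm{Cu}(\phi_i))$ be the inductive limit in $\mathsf{Cu}$ produced by Theorem~\ref{limexist}. The maps $\mathrm{Cu}(\gamma_i)$ are compatible with the connecting maps, so the universal property established in the last part of the proof of Theorem~\ref{limexist} (taking $T=\mathrm{Cu}(A)$ and $\phi_i=\mathrm{Cu}(\gamma_i)$) yields a canonical $\mathsf{Cu}$-morphism $\alpha\colon S\to\mathrm{Cu}(A)$, sending the class of a rapidly increasing sequence $(s_i)$ to $\sup_i\mathrm{Cu}(\gamma_i)(s_i)$. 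Everything then reduces to showing that $\alpha$ is an isomorphism. Since tensoring with $\mathbb K$ commutes with inductive limits we have $A\otimes\mathbb K=\lim_{\longrightarrow}(A_i\otimes\mathbb K)$, and by Corollary~\ref{CuStable} together with Theorem~\ref{CuandW} we may replace every $\mathrm{Cu}(A_i)$ by $\W(A_i\otimes\mathbb K)$ and $\mathrm{Cu}(A)$ by $\W(A\otimes\mathbb K)$; thus I would work throughout with classes $\langle a\rangle$ of positive elements in the stable algebras $B_i:=A_i\otimes\mathbb K$ and $B:=A\otimes\mathbb K=\lim_{\longrightarrow}B_i$.

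The argument rests on two lemmas. First, a lifting statement: every $\langle a\rangle\in\W(B)$ is a supremum $\sup_n\mathrm{Cu}(\gamma_{i_n})(x_n)$ of images of elements coming from finite stages. Indeed $\langle a\rangle=\sup_n\langle(a-\epsilon_n)_+\rangle$ (as in the proof of Theorem~\ref{CuandW}, via Proposition~\ref{suprema1}), and since $\bigcup_i\gamma_i(B_i)$ is dense in $B$ one can approximate $a$ in norm by some $\gamma_i(b)$; Theorem~\ref{Kirchberg-Rordam} then gives $(a-\epsilon)_+\precsim\gamma_i(b)$, so each $\langle(a-\epsilon_n)_+\rangle$ lies below the image $\mathrm{Cu}(\gamma_i)(\langle b\rangle)$ of a finite-stage class. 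Second, a reflection statement for comparison: if $b,c\in B_i$ satisfy $\gamma_i(b)\precsim\gamma_i(c)$ in $B$, then for every $\epsilon>0$ there is $j\ge i$ with $(\phi_{i,j}(b)-\epsilon)_+\precsim\phi_{i,j}(c)$ in $B_j$. This follows by choosing $v$ with $\|v\gamma_i(c)v^*-\gamma_i(b)\|<\epsilon$, approximating $v$ by the image of some $w\in B_j$, using that the norm in $B$ is the limit of the norms $\|\phi_{j,k}(\cdot)\|$ to pass to a stage $k$ where $\|\phi_{j,k}(w)\phi_{i,k}(c)\phi_{j,k}(w)^*-\phi_{i,k}(b)\|<\epsilon$, and then applying Theorem~\ref{Kirchberg-Rordam}.

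With these in hand, surjectivity of $\alpha$ follows from the lifting lemma: the lifted increasing sequence has a supremum in $S$ by axiom {\bf (O3)}, and $\alpha$ preserves that supremum, hitting $\langle a\rangle$. That $\alpha$ reflects the order—and is therefore injective—is where the reflection lemma does the work and is the main obstacle: given $\alpha(x)\le\alpha(y)$, one represents $x,y$ by rapidly increasing sequences and must verify the finite-stage condition defining $\le$ in $S$, namely that each $s\ll x_i$ satisfies $\gamma_{i,j}(s)\ll y_j$ for all large $j$. Translating $\alpha(x)\le\alpha(y)$ into a Cuntz comparison in $B$ and feeding it through the reflection lemma (together with Lemma~\ref{llcc} to move between $\ll$ and $\CC$, and Proposition~\ref{basics} to absorb the $\epsilon$-cutdowns) produces exactly these relations. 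Since an order isomorphism automatically respects suprema and the purely order-theoretic relation $\ll$, a surjective, order-reflecting $\mathsf{Cu}$-morphism is an isomorphism in $\mathsf{Cu}$, so $\alpha$ is the desired identification and the functor is continuous. The delicate bookkeeping lies entirely in matching the definition of $\le$ in the abstract limit $S$ with the finite-stage comparisons supplied by the reflection lemma.
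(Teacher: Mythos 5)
Your continuity argument follows the paper's own proof in its essentials: the same reduction to stable algebras via Corollary~\ref{CuStable} and Theorem~\ref{CuandW}, and the same two pillars, a lifting statement (classes in $\W(A\otimes\mathbb{K})$ come from finite stages) and a reflection statement (comparisons in the limit descend, after an $\epsilon$-cutdown, to finite stages). Two genuine differences are worth noting. Your reflection lemma, proved via the norm formula $\Vert\gamma_j(x)\Vert=\lim_k\Vert\phi_{j,k}(x)\Vert$, is actually \emph{more} general than the paper's version, which assumes the connecting maps injective (the paper flags this as a simplifying assumption); and your packaging through the universal property of Theorem~\ref{limexist} is a legitimate alternative to the paper's direct identification. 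On functoriality you take a different route (Hilbert modules, $[X]\mapsto[X\otimes_AB]$) from the paper (positive elements, $\langle a\rangle\mapsto\langle\phi(a)\rangle$, where order preservation is a one-line check on the sequence $(v_n)$); your route works, but the logical order is backwards: \textbf{(M2)} is not ``a fortiori'' from \textbf{(M4)}. One gets \textbf{(M2)} by writing $[X]=\sup_n[X_n]$ with $X_n\CC X$, embedding each $X_n$ into $Y$, pushing through the tensor product, and using preservation of suprema; and it is \textbf{(M4)} that then needs \textbf{(M2)}, since $[X]\CC[Y]$ involves an inequality $[X]\leq[X']$.

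The genuine gap is in surjectivity of $\alpha$. Your lifting lemma, as proved, gives only one-sided comparisons: from $\Vert a-\gamma_i(b)\Vert<\epsilon$, Theorem~\ref{Kirchberg-Rordam} yields $(a-\epsilon)_+\precsim\gamma_i(b)$, hence $\langle(a-\epsilon_n)_+\rangle\leq\mathrm{Cu}(\gamma_{i_n})(\langle b_n\rangle)$ for each $n$. This does not give $\langle a\rangle=\sup_n\mathrm{Cu}(\gamma_{i_n})(\langle b_n\rangle)$: the right-hand classes are not shown to be increasing (so the supremum need not even exist), and norm closeness never gives $\langle\gamma_i(b)\rangle\leq\langle a\rangle$ (only $\gamma_i((b-\epsilon)_+)\precsim a$; think of $a=0$ and $b$ of norm $\epsilon/2$), so one must pass to the cutdowns $\langle(b_n-\epsilon_n)_+\rangle$ using the other half of Kirchberg--R\o rdam. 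More seriously, to invoke \textbf{(O3)} in $S$ and ``$\alpha$ preserves that supremum,'' the lifted classes must form an increasing sequence \emph{in $S$}, and the order in $S$ is defined by finite-stage conditions: you need $\mathrm{Cu}(\phi_{i_n,i_{n+1}})(x_n)\leq x_{n+1}$ in $\mathsf{Cu}(A_{i_{n+1}})$, whereas your lemma produces no finite-stage relations at all. This is precisely where the paper works hardest: it chooses a subsequence with $3\epsilon_{i_{k+1}}<\epsilon_{i_k}$, estimates $\Vert\phi_{i_ki_{k+1}}(a_{i_k})-(a_{i_{k+1}}-2\epsilon_{i_{k+1}})_+\Vert<2\epsilon_{i_k}$, and applies Theorem~\ref{Kirchberg-Rordam} \emph{inside} $A_{i_{k+1}}$ to obtain $(\phi_{i_ki_{k+1}}(a_{i_k})-2\epsilon_{i_k})_+\precsim(a_{i_{k+1}}-2\epsilon_{i_{k+1}})_+$. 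Your reflection lemma could substitute here, converting each limit comparison into a finite-stage one at the cost of a further cutdown, but assembling those into a single element of $S$ is exactly the nested-cutdown bookkeeping you defer as ``delicate''; it is the core of the proof rather than a routine verification, and without it surjectivity---and hence continuity---is not established.
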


\begin{lemma}\label{epsiloncompact}
Let $a$ be a positive element in a C$^*$-algebra $A$, and let
$\epsilon>0$ be given.  It follows that $\langle (a-\epsilon)_+
\rangle \ll \langle a \rangle$.  We also have $\langle a \rangle =
\sup \langle (a-\epsilon_n)_+ \rangle$ for any decreasing sequence
$(\epsilon_n)$ converging to zero.
\end{lemma}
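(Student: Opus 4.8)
The plan is to transport the statement into the Hilbert-module picture of $\Cu$, where both assertions become transparent consequences of the compact-containment lemmas proved above. First I would reduce to the stable case: by Corollary~\ref{CuStable} the semigroup $\Cu$ is unchanged upon replacing $A$ by $A\otimes\mathbb{K}$, and this replacement sends $a$ to $a\otimes e_{11}$ and $(a-\epsilon)_+$ to $(a-\epsilon)_+\otimes e_{11}$ without altering their classes or the relations $\ll$ and $\sup$ computed in $\Cu$. So I may assume $A$ is stable, and then Theorem~\ref{CuandW} lets me identify $\langle a\rangle$ with $[\overline{aA}]$ and $\langle(a-\epsilon)_+\rangle$ with $[\overline{(a-\epsilon)_+A}]$ in $\Cu$. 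Both claims thereby become statements about the modules $\overline{(a-\epsilon)_+A}$ sitting inside $\overline{aA}$.

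For the supremum I would argue exactly as in the proof of Theorem~\ref{CuandW}. Given a decreasing sequence $\epsilon_n\to 0$, Lemma~\ref{compcont} (applied with $X=A$, where $a$ is viewed as a compact operator on the module $A$ via $\mK(A)\cong A$, and with the cut-off points $\epsilon_{n+1}<\epsilon_n$) yields $\overline{(a-\epsilon_n)_+A}\CC\overline{(a-\epsilon_{n+1})_+A}$, so these modules form an increasing chain inside $\overline{aA}$. Since $(a-\epsilon_n)_+\to a$ in norm, one has $\overline{aA}=\overline{\cup_n\overline{(a-\epsilon_n)_+A}}=\lim\limits_{\longrightarrow}\overline{(a-\epsilon_n)_+A}$, whence Proposition~\ref{suprema1} gives $[\overline{aA}]=\sup_n[\overline{(a-\epsilon_n)_+A}]$, that is, $\langle a\rangle=\sup_n\langle(a-\epsilon_n)_+\rangle$.

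For the way-below relation I would produce a module that is compactly contained in $\overline{aA}$ and dominates $\overline{(a-\epsilon)_+A}$. Fixing $0<\delta<\epsilon$, Lemma~\ref{compcont} (again with $X=A$) gives $\overline{(a-\epsilon)_+A}\CC\overline{(a-\delta)_+A}$, and since $\overline{(a-\delta)_+A}\subseteq\overline{aA}$, the inclusion of compacts $\mK(\overline{(a-\delta)_+A})\subseteq\mK(\overline{aA})$ furnished by Lemma~\ref{inclofcompacts} shows that the self-adjoint compact witnessing $\overline{(a-\epsilon)_+A}\CC\overline{(a-\delta)_+A}$ already lives in $\mK(\overline{aA})$ and still restricts to the identity on $\overline{(a-\epsilon)_+A}$. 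Hence $\overline{(a-\epsilon)_+A}\CC\overline{aA}$ as modules, so $[\overline{(a-\epsilon)_+A}]\CC[\overline{aA}]$ by Definition~\ref{neworder}, and Lemma~\ref{llcc} (equality of $\CC$ and $\ll$) finally gives $\langle(a-\epsilon)_+\rangle\ll\langle a\rangle$.

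The only genuinely delicate points here are bookkeeping rather than analysis: legitimizing the identification $\langle a\rangle\leftrightarrow[\overline{aA}]$ (handled by the stabilization step and Theorem~\ref{CuandW}), and observing that compact containment in a submodule is inherited by the ambient module through Lemma~\ref{inclofcompacts}. All the analytic substance — that a functional-calculus cut-off of $a$ acts as a compact operator restricting to the identity on a smaller cut-off — is already isolated in Lemma~\ref{compcont}, so I expect no new estimates to be required.
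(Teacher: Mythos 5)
Your proof is correct. For the first assertion you take precisely the route the paper itself recommends (it leaves the lemma as an exercise but notes that the way-below relation ``is most easily proved with the Hilbert module picture''), and your details are sound: the interpolation with an auxiliary $\delta<\epsilon$ neatly circumvents the fact that Lemma~\ref{compcont} requires both cut-offs to be strictly positive, and promoting the witnessing compact operator from $\mK(\overline{(a-\delta)_+A})$ to $\mK(\overline{aA})$ via Lemma~\ref{inclofcompacts} is exactly the mechanism of Lemma~\ref{cuntzgencont}; Definition~\ref{neworder} and Lemma~\ref{llcc} then finish it. Where you genuinely diverge from the paper is the second assertion. The paper's intended argument is purely order-theoretic and stays in the positive-element picture: $\langle a\rangle$ dominates every $\langle (a-\epsilon_n)_+\rangle$, and conversely any upper bound $\langle b\rangle$ of the sequence satisfies $(a-\epsilon)_+\precsim b$ for every $\epsilon>0$ (choose $\epsilon_n<\epsilon$), whence $a\precsim b$ by Proposition~\ref{basics} (equivalently Lemma~\ref{Blackaddar}); this exhibits $\langle a\rangle$ directly as the least upper bound, with no Hilbert modules and no stabilization bookkeeping. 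You instead realize $\overline{aA}$ as the closure of the increasing union of the submodules $\overline{(a-\epsilon_n)_+A}$ and invoke Proposition~\ref{suprema1}. Both are valid: your version is uniform with the first assertion and makes the supremum visible as an inductive limit of modules, while the paper's version is lighter for that half of the lemma, avoiding the detour through Corollary~\ref{CuStable} and Theorem~\ref{CuandW}.
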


We leave the details of the proof of Lemma \ref{epsiloncompact} as
an exercise, but note that the first assertion is most easily proved
with the Hilbert module picture of the relation $\ll$.  The second
assertion follows from the fact that the supremum in question
dominates $\langle (a-\epsilon)_+ \rangle$ for any $\epsilon>0$, and
therefore dominates $\langle a \rangle$.

\begin{proof} (Theorem \ref{continuity}.)
\noindent {\bf 1. Induced morphisms.}  To discuss continuity of the
functor in question---and even to prove that it is a functor in the
first place---we must explain how a homomorphism between
C$^*$-algebras $\phi:A \to B$ induces a morphism
$\mathsf{Cu}(\phi):\mathsf{Cu}(A) \to \mathsf{Cu}(B)$ in the
category $\mathsf{Cu}$. To this end we will revert to our original
view of the Cuntz semigroup:  that it is a semigroup consisting of
equivalence classes of positive elements from $A \otimes
\mathcal{K}$, i.e. $\mathsf{Cu}(A)\cong \W(A\otimes \mathcal K)$,
see Corollary \ref{CuStable} and Theorem \ref{CuandW}. As a set map,
we define $\mathsf{Cu}(\phi)(\langle a \rangle) = \langle (\phi
\otimes \mathrm{id})(a) \rangle$ for each positive $a \in A \otimes
\mathcal{K}$.  We will assume from here on that $A$ is stable, and
replace $\phi \otimes \mathrm{id}$ with $\phi$. It is
straightforward to verify that $\mathsf{Cu}(\phi)$ is a semigroup
homomorphism (check this!). If $a \precsim b$, where $a,b \in A
\otimes \mathcal{K}$ are positive, then $\phi(a) \precsim \phi(b)$,
for if $(v_n)$ is a sequence drawn from $A \otimes \mathcal{K}$ with
the property that $v_n b v_n^* \to a$, then $\phi(v_n) \phi(b)
\phi(v_n)^* \to \phi(a)$.  It follows that $\mathsf{Cu}(\phi)$
preserves order, and so enjoys properties {\bf (M1)} and {\bf (M2)}
above.

\vspace{2mm} Let us show that $\mathsf{Cu}(\phi)$ preserves suprema
of increasing sequences, and so enjoys property {\bf (M3)}.  Let
$\langle a \rangle = \sup \langle a_n \rangle$, where $\langle a_n
\rangle$ is increasing.  Since $\mathsf{Cu}(\phi)$ preserves order,
we have that $\mathsf{Cu}(\phi)( \langle a \rangle) \geq \sup
\mathsf{Cu}(\phi)(\langle a_n \rangle)$.  For every $\epsilon>0$
there is a natural number $n(\epsilon)$ with the property that
$\langle a_n \rangle \geq \langle (a-\epsilon)_+ \rangle$ for every
$n \geq n(\epsilon)$ (see Lemma \ref{epsiloncompact} above). For
these same $n$ we have
\[
\mathsf{Cu}(\phi)(\langle a_n \rangle) \geq
\mathsf{Cu}(\phi)(\langle (a-\epsilon)_+ \rangle )= \langle (\phi(a)
- \epsilon)_+ \rangle.
\]
It follows that $\sup \mathsf{Cu}(\phi)(\langle a_n \rangle) \geq
\langle (\phi(a)-\epsilon)_+ \rangle$ for every $\epsilon > 0$,
whence
\[
\mathsf{Cu}(\phi)(\langle a \rangle) \geq \sup
\mathsf{Cu}(\phi)(\langle a_n \rangle) \geq
\mathsf{Cu}(\phi)(\langle a \rangle),
\]
as required.

\vspace{2mm} Now we show that $\mathsf{Cu}(\phi)$ preserves the
relation $\ll$. Suppose that $\langle a \rangle \ll \langle b
\rangle$ in $\mathsf{Cu}(A)$. It follows that $\langle a \rangle
\leq \langle (b-\epsilon)_+ \rangle$ for some $\epsilon > 0$ (this
uses the second assertion of Lemma \ref{epsiloncompact}).  Applying
$\mathsf{Cu}(\phi)$ yields
\[
\mathsf{Cu}(\phi)(\langle a \rangle) \leq \mathsf{Cu}(\phi)(\langle
(b-\epsilon)_+ \rangle) = \langle (\phi(b) - \epsilon)_+ \rangle \ll
\langle \phi(b) \rangle = \mathsf{Cu}(\phi)(\langle b \rangle),
\]
establishing {\bf (M4)}.

\vspace{2mm} Thus $\mathsf{Cu}(\phi)$ belongs, as a map, to the
category $\mathsf{Cu}$. It is also clear that the association $\phi
\mapsto \mathsf{Cu}(\phi)$ respects composition of maps, whence the
assignation described in the statement of the theorem is indeed a
functor.

\vspace{2mm} \noindent {\bf 2. Preservation of inductive limits.}
Next, we will prove that the functor $\mathsf{Cu}$ respects
sequential inductive limits.  Let $A_1
\stackrel{\phi_1}{\longrightarrow} A_2
\stackrel{\phi_2}{\longrightarrow} \cdots$ be an inductive sequence
of C$^*$-algebras with limit $A$, and let $\phi_{i\infty}:A_i \to A$
denote the canonical homomorphism.

\vspace{2mm} {\bf Warning:}  We will prove the theorem under the
additional assumption that the $\phi_i$ are all injective;  the
proof of the general case is similar, but introduces some
technicalities which obscure the general idea of the proof.

\vspace{2mm} We will first show that for any $\langle a \rangle \in
\mathsf{Cu}(A)$ there is a sequence $(a_i)_{i \in \mathbb{N}}$ with
the following properties:
\begin{enumerate}
\item[(i)] $a_i$ is a positive element of $A_i \otimes \mathcal{K}$;
\item[(ii)] $(\phi_i \otimes \mathrm{id}_{\mathcal{K}})(a_i) \precsim a_{i+1}$ in $\mathsf{Cu}(A_{i+1})$;
\item[(iii)] $\langle a \rangle = \sup \langle (\phi_{i\infty} \otimes \mathrm{id}_{\mathcal{K}})(a_i) \rangle$.
\end{enumerate}
Let $a \in A \otimes \mathcal{K}$ be positive.  Since $A \otimes
\mathcal{K}$ is the limit of the inductive sequence $(A_i \otimes
\mathcal{K}, \phi_i \otimes \mathrm{id}_{\mathcal{K}})$, we assume
henceforth that $A$ and all of the $A_i$ are stable, and use
$\phi_i$ and $\phi_{i\infty}$ in place of $\phi_i \otimes
\mathrm{id}_{\mathcal{K}}$ and $\phi_{i\infty} \otimes
\mathrm{id}_{\mathcal{K}}$, respectively. We can find $a_i \in A_i$
such that $\phi_{i\infty}(a_i) \to a$.  By passing to the positive
part of $(a_i+a_i^*)/2$, we may assume that each $a_i$ is positive.
It remains to arrange for (ii) and (iii) to hold.

\vspace{2mm} Set $\epsilon_i = \Vert \phi_{i\infty}(a_i) - a \Vert$.
Find an increasing sequence of natural numbers
$(i_k)_{k=1}^{\infty}$ with the property that $3 \epsilon_{i_{k+1}}
< \epsilon_{i_k}$ and $i_1=1$.  Now, by the injectivity of the
$\phi_i$,
\begin{eqnarray*}
\Vert \phi_{i_{k}i_{k+1}}(a_{i_k}) -
(a_{i_{k+1}}-2\epsilon_{i_{k+1}})_+ \Vert & = & \Vert \phi_{i_{k}
\infty}(a_{i_k}) -
\phi_{i_{k+1}\infty}((a_{i_{k+1}}-2\epsilon_{i_{k+1}})_+) \Vert \\
& \le & \Vert \phi_{i_{k} \infty}(a_{i_k})-a \Vert + \Vert a-\phi_{i_{k+1}\infty}(a_{i_{k+1}}) \Vert + 2\epsilon_{i_{k+1}}\\
& = & \epsilon_{i_k} + 3 \epsilon_{i_{k+1}} < 2 \epsilon_{i_k}.
\end{eqnarray*}
It follows from Theorem \ref{Kirchberg-Rordam} that
$(\phi_{i_{k}i_{k+1}}(a_{i_k})-2\epsilon_{i_k})_+ \precsim
(a_{i_{k+1}}-2\epsilon_{i_{k+1}})_+$. In fact note that since the
inequality above is strict we even have $\langle
(\phi_{i_{k}i_{k+1}}(a_{i_k})-2\epsilon_{i_k})_+ \rangle \ll \langle
(a_{i_{k+1}}-2\epsilon_{i_{k+1}})_+\rangle $.  For $j =
\{i_k,\ldots,i_{k+1}-1\}$, replace $a_j$ with $\phi_{i_k j}(
(a_{i_k}-2\epsilon_{i_k})_+)$. With this modification we have $a_j
\precsim a_{j+1}$ (and even $\langle \phi_j(a_j)\rangle \ll \langle
a_{j+1}\rangle $ if we slightly perturb the steps where equality
holds). We also have $\phi_{j \infty}(a_j) \precsim a$ and
$\phi_{j\infty}(a_j) \to a$ as $j \to \infty$.  Let $b \in A$ be a
positive element such that $\langle b \rangle = \sup \langle \phi_{j
\infty}(a_j) \rangle$. Given $\epsilon > 0$, there is some $j_0$
such that for all $j \geq j_0$, $\Vert a - \phi_{j \infty}(a_j)
\Vert < \epsilon$. In particular, using again Theorem
\ref{Kirchberg-Rordam}, we get $(a-\epsilon)_+ \precsim \phi_{j
\infty}(a_j) \precsim b$. Since $\epsilon$ was arbitrary, we
conclude that $a \precsim b$.  On the other hand, $\phi_{j
\infty}(a_j) \precsim a$ for each $j$ by construction, so $a = \sup
\langle \phi_{j \infty}(a_j) \rangle$, as desired.

\vspace{2mm} By functoriality we have a sequence
\[
\mathsf{Cu}(A_1) \stackrel{\mathsf{Cu}(\phi_1)}{\longrightarrow}
\mathsf{Cu}(A_2) \stackrel{\mathsf{Cu}(\phi_2)}{\longrightarrow}
\cdots \mathsf{Cu}(A).
\]
Let us show that $\mathsf{Cu}(A)$ is indeed the inductive limit of
the sequence $(\mathsf{Cu}(A_i),\mathsf{Cu}(\phi_i))$ in the
category $\mathsf{Cu}$.  By the construction of $\lim_{i \to \infty}
(\mathsf{Cu}(A_i),\mathsf{Cu}(\phi_i))$ in Theorem \ref{limexist}
and what we have proved above, it will suffice to show that if $x_1
\ll x_2 \ll \cdots$ and $y_1 \ll y_2 \ll \cdots$ with $x_i, y_i \in
\mathsf{Cu}(A_i)$, then
\[
\sup \mathsf{Cu}(\phi_{i \infty})(x_i) \leq \sup \mathsf{Cu}(\phi_{i
\infty})(y_i)
\]
if and only if whenever $z \ll x_i$ in $\mathsf{Cu}(A_i)$ for some
$i$ then $z \ll y_j$ in $\mathsf{Cu}(A_j)$ for some $j \geq i$. This
will establish an isomorphism from $\lim_{i \to \infty}
(\mathsf{Cu}(A_i),\mathsf{Cu}(\phi_i))$ onto a sub ordered semigroup
of $\mathsf{Cu}(A)$, and this subsemigroup was shown above to be all
of $\mathsf{Cu}(A)$.

\vspace{2mm} Suppose that $\sup x_i \leq \sup y_j$. We may find a
sequence of positive elements $(a_i)$ satisfying $a_i \in A_i$ and
$\langle a_i \rangle = x_i$, and a similar sequence $(b_j)$
corresponding to $y_j$. Suppose that $\sup x_i \leq \sup y_i$, and
that $\langle d \rangle =z \ll x_i = \langle a_i \rangle$ in
$\mathsf{Cu}(A_i)$ for some $i$. Since $\mathsf{Cu}(\phi_{i
\infty})$ preserves the relations $\leq$ and $\ll$, we have
\[
x_i \ll x_{i+1} \leq \sup x_i \leq \sup y_j
\]
in $\mathsf{Cu}(A)$.  It follows that $x_i \leq y_j$ for some $j
\geq i$ (equivalently, $\langle \phi_{i \infty}(a_i) \rangle \leq
\langle \phi_{j \infty}(b_j) \rangle$).  Since $z = \langle d
\rangle \ll \langle a_i \rangle$, there is a $\gamma>0$ such that $d
\precsim (a_i-2\gamma)_+$.  Also, for any $k \geq i$, we have
$\phi_{ik}(d) \precsim \phi_{ik}((a_i-2\gamma)_+) = (\phi_{ik}(a_i)
- 2\gamma)_+$.  To prove that $\langle d \rangle =z \ll y_j$ in
$\mathsf{Cu}(A_j)$ for all $j$ sufficiently large, it will suffice
to prove that for some $k \geq i$, $(\phi_{ik}(a_i) - \gamma)_+
\precsim y_k = \langle b_k \rangle$, for then
\[
z \leq \langle (\phi_{ik}(a_i) - 2\gamma)_+ \rangle \ll \langle
(\phi_{ik}(a_i)-\gamma)_+ \rangle \leq y_k
\]
in $\mathsf{Cu}(A_k)$. Since $\langle \phi_{i \infty}(a_i) \rangle
\leq \langle \phi_{j \infty}(b_j) \rangle$, and since
$\cup_{j=1}^\infty \phi_{j \infty}(A_j)$ is dense in $A$, we may
find a sequence $(v_k)$ such that $v_k \in A_k$ and
\[
\Vert \phi_{i \infty}(a_i) - \phi_{k \infty}(v_k) \phi_{j
\infty}(b_j) \phi_{k \infty}(v_k)^* \Vert \stackrel{k \to
\infty}{\longrightarrow} 0.
\]
By the injectivity of the $\phi_i$, $\phi_{k \infty}$ is an isometry
for every $k$, so for $k$ sufficiently large we have
\[
\Vert \phi_{ik}(a_i) - v_k \phi_{jk}(b_j) v_k^* \Vert < \gamma.
\]
It follows that $(\phi_{ik}(a_i)-\gamma)_+ \precsim v_k
\phi_{jk}(b_j) v_k^* \precsim \phi_{jk}(b_j) \precsim b_k$, as
required.

\vspace{2mm} Now suppose, conversely, that whenever $z \ll x_i$ in
$\mathsf{Cu}(A_i)$, then $z \ll y_j$ in $\mathsf{Cu}(A_j)$ for all
$j$ sufficiently large.  Let us show that $\sup x_j \leq \sup y_j$.
We must show that $x_i \leq \sup y_j$ in $\mathsf{Cu}(A)$ for every
$i$.  We know that $x_i$ is the supremum of a rapidly increasing
sequence $z_n$ in $\mathsf{Cu}(A_i)$.  It follows from our
hypothesis that for every $n$ there is a $j \geq i$ such that $z_n
\ll y_j$ in $\mathsf{Cu}(A_j)$.  By functoriality we also have $z_n
\ll y_j$ in $\mathsf{Cu}(A)$.  It follows that $\sup y_j$ dominates
each $z_n$ in $\mathsf{Cu}(A)$, and so dominates their supremum,
namely, $x_i$.

\end{proof}

\subsection{Exactness of $\mathsf{Cu}$}

In this section we will examine the relationship between the functor
$\mathsf{Cu}$ and short exact sequences
\[
0 \longrightarrow I \stackrel{\iota}{\longrightarrow} A
\stackrel{\pi}{\longrightarrow} B \longrightarrow 0.
\]
Specifically, we will see (but not prove in full detail) that
$\mathsf{Cu}$ is exact, i.e., that the sequence
\[
0 \longrightarrow \mathsf{Cu}(I)
\stackrel{\mathsf{Cu}(\iota)}{\longrightarrow} \mathsf{Cu}(A)
\stackrel{\mathsf{Cu}(\pi)}{\longrightarrow} \mathsf{Cu}(B)
\longrightarrow 0.
\]
is exact.  The reader is referred to \cite{crs} for full proofs.

\vspace{2mm} Let $A$ be a C$^*$-algebra and $I \subseteq A$ a
$\sigma$ unital ideal.  If $M$ is a countably generated right
Hilbert module over $A$, then $MI$ is a countably generated right
Hilbert module over $I$.  Suppose that $N$ is another countably
generated Hilbert module over $A$, and that $[M] \leq [N]$ in
$\mathsf{Cu}(A)$.  It follows that $[MI] \leq [NI]$.  Indeed,
suppose that $F$ is a compactly contained submodule of $MI$.  Then
$F$ is isomorphic to a compactly contained submodule $F^{'}$ of $N$.
Since $F = FI$, we must have $F^{'}I = F^{'}$, so that $F^{'}
\subseteq NI$.  Thus, $[F]=[F^{'}]\leq[NI]$.  Taking the supremum
over all such $F$, we get $[MI] \leq [NI]$.  In particular, if $M$
and $N$ are Cuntz equivalent, then so are $MI$ and $NI$.  Thus
justifies the notation $[MI] := [M]I$.  The map $[M] \mapsto [M]I$
is order preserving, and, since $(M \oplus N)I = MI \oplus NI$, it
is also additive. Notice that $M$ is a Hilbert $I$-module if and
only if $[M]I=[M]$.

\vspace{2mm} In the sequence of Cuntz semigroups above we have
\[
\mathsf{Cu}(\iota)([M]) = [M] \ \ \mathrm{and} \ \
\mathsf{Cu}(\pi)([M]) = [M/MI].
\]
Our exactness results will follow from the next theorem.

\begin{theorem}[Ciuperca-Robert-Santiago \cite{crs}]\label{crs1}
Let $I$ be a $\sigma$ unital, closed, two-sided ideal of the
C$^*$-algebra $A$ and let $\pi:A \to A/I$ denote the quotient
homomorphism.  Let $M$ and $N$ be countably generated right Hilbert
C$^*$-modules over $A$.  It follows that $\mathsf{Cu}(\pi)([M]) \leq
\mathsf{Cu}(\pi)([N])$ if and only if
\[
[M] + [N]I \leq [N] + [M]I.
\]
\end{theorem}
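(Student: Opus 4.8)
The plan is to prove both implications after reducing to positive elements. By Corollary~\ref{CuStable} and Theorem~\ref{CuandW} I may assume that $A$, and hence the ideal $I$, is stable, and write $[M]=\langle a\rangle$, $[N]=\langle b\rangle$ with $a,b\in A_+$; then $\mathsf{Cu}(\pi)([M])=\langle\pi(a)\rangle$ is the class of $\pi(a)$ in $A/I$, while $[M]I=\langle a\rangle I=[\overline{aI}]$. The structural fact I will lean on throughout is that $x\mapsto xI$ is additive and order preserving, that $\langle c\rangle I=\langle c\rangle$ exactly when $\overline{cA}$ is a Hilbert $I$-module, and consequently that whenever $c\in I_+$ satisfies $c\precsim a$ one has $\langle c\rangle=\langle c\rangle I\le\langle a\rangle I$ (apply $\cdot I$ to $\langle c\rangle\le\langle a\rangle$ and use $\langle c\rangle I=\langle c\rangle$).

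The implication from $[M]+[N]I\le[N]+[M]I$ to $\mathsf{Cu}(\pi)([M])\le\mathsf{Cu}(\pi)([N])$ is immediate: since $MI$ is a Hilbert $I$-module, Lemma~\ref{lem:approxunits} gives $(MI)I=MI$, so $\mathsf{Cu}(\pi)([M]I)=[MI/(MI)I]=[0]=0$, and likewise for $N$. Applying the $\mathsf{Cu}$-morphism $\mathsf{Cu}(\pi)$ (which is additive and order preserving) therefore annihilates the two $\cdot I$ terms and leaves exactly $\mathsf{Cu}(\pi)([M])\le\mathsf{Cu}(\pi)([N])$.

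For the converse, assume $\pi(a)\precsim\pi(b)$. Fix $\epsilon>0$; by Proposition~\ref{basics} in $A/I$ together with Theorem~\ref{Kirchberg-Rordam} I can write $(\pi(a)-\epsilon)_+=d\,\pi(b)\,d^*$ for a contraction $d$, lift $d$ to a contraction $\tilde d\in A$, and set $i_0:=(a-\epsilon)_+-\tilde d b\tilde d^*$. As $\pi((a-\epsilon)_+)=(\pi(a)-\epsilon)_+=\pi(\tilde d b\tilde d^*)$, the self-adjoint element $i_0$ lies in $I$; write $i_0=i_0^+-i_0^-$. Since $i_0^+\le(a-\epsilon)_+\le a$ and $i_0^-\le\tilde d b\tilde d^*\precsim b$ with $i_0^\pm\in I_+$, the structural fact yields $\langle i_0^+\rangle\le\langle a\rangle I$ and $\langle i_0^-\rangle\le\langle b\rangle I$. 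From $(a-\epsilon)_+\le\tilde d b\tilde d^*+i_0^+$, Lemmas~\ref{lem:leq} and~\ref{Cuntz} give $\langle(a-\epsilon)_+\rangle\le\langle b\rangle+\langle a\rangle I$, and letting $\epsilon\to0$ via Lemma~\ref{epsiloncompact} produces $\langle a\rangle\le\langle b\rangle+\langle a\rangle I$.

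This last bound is only the one-sided statement $[M]\le[N]+[M]I$; since $\mathsf{Cu}$ is not cancellative, one cannot simply add $\langle b\rangle I$ to both sides and cancel to reach the desired symmetric inequality, and this is the main obstacle. To overcome it I would argue at the level of Hilbert modules, where genuinely orthogonal summands replace the non-orthogonal sums above: it suffices to show that every $W\CC M\oplus NI$ embeds isometrically as some $W'\CC N\oplus MI$. Using Proposition~\ref{indlimits} I reduce to $W=M_0\oplus F$ with $M_0\CC M$ and $F\CC NI$; the piece $F\CC NI\subseteq N$ is placed directly into the $N$-summand (this is what the extra $[N]I$ on the left supplies), while the image of $M_0$ in $M/MI$ is compactly contained there and, by $\mathsf{Cu}(\pi)([M])\le\mathsf{Cu}(\pi)([N])$, embeds into $N/NI$. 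The crux --- and the step I expect to be hardest --- is to lift this quotient embedding back to an honest isometry of $M_0$ into $N$ whose $I$-valued defect is absorbed into the $MI$-summand; this is carried out by an approximate-intertwining and perturbation argument in the spirit of Lemma~\ref{lem:key}, using Kasparov stabilisation (Theorem~\ref{them: Kasparov}) and the $\sigma$-unitality of $I$ to keep all modules countably generated.
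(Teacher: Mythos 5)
You should first note that the paper itself offers no proof of Theorem \ref{crs1}: it is stated as a quoted result and the reader is referred to \cite{crs} for the proof, so your proposal has to stand entirely on its own. Judged that way, your first implication is correct: since $(MI)I=MI$, the morphism $\mathsf{Cu}(\pi)$ annihilates $[M]I$ and $[N]I$, and applying it to $[M]+[N]I\le[N]+[M]I$ gives $\mathsf{Cu}(\pi)([M])\le\mathsf{Cu}(\pi)([N])$. Your lifting argument for the converse is sound in outline, modulo two small repairs: combining Proposition \ref{basics} with Theorem \ref{Kirchberg-Rordam} in $A/I$ produces $(\pi(a)-\epsilon)_+=c\,\pi(b)\,c^*$ for an element $c$ that need not be a contraction (harmless, since any lift of $c$ will do), and the operator inequality $i_0^+\le(a-\epsilon)_+$ is false in general, because taking positive parts is not operator monotone; what is true, and what you actually need, is that $i_0\le(a-\epsilon)_+$ forces $i_0^+\precsim(a-\epsilon)_+$, whence $\langle i_0^+\rangle=\langle i_0^+\rangle I\le\langle a\rangle I$. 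With these repairs your argument establishes the inequality $[M]\le[N]+[M]I$.

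The genuine gap is the one you yourself flag, and it is fatal to the proposal as written: the theorem asserts the symmetric inequality $[M]+[N]I\le[N]+[M]I$, and your argument does not produce it. The obvious order-theoretic attempts fail: adding $[N]I$ to both sides of $[M]\le[N]+[M]I$ leaves an extra summand $[N]I$ on the right that cannot be absorbed, since $[N]+[N]I\neq[N]$ in general (take $A=C([0,1])$, $I=C_0([0,1))$, $N=A$), and $\mathsf{Cu}$ has no cancellation. Your Hilbert-module sketch correctly reformulates the goal (every $W\CC M\oplus NI$ must embed compactly into $N\oplus MI$) and correctly reduces, via Proposition \ref{indlimits}, to modules dominated by $M_0\oplus F$ with $M_0\CC M$ and $F\CC NI$; but the essential step --- lifting the embedding of $M_0/M_0I$ into $N/NI$ to an isometry of $M_0$ into $N\oplus MI$ whose image is orthogonal to the copy of $F$ you place inside $N$, with the $I$-valued defect pushed into the $MI$-summand --- is precisely the content of the Ciuperca--Robert--Santiago theorem, and you leave it as ``the step I expect to be hardest.'' So the proposal proves the easy implication and a strictly weaker form of the hard one; the core of the hard implication is missing. (It is worth observing that your weaker inequality, applied symmetrically in $M$ and $N$ and stabilized by adding $[H_I]$, would already yield the paper's exactness corollary; but it does not prove Theorem \ref{crs1} as stated.)
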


From the theorem we see that $\mathsf{Cu}(\pi)([M]) =
\mathsf{Cu}(\pi)([N])$ if and only if $[M]+[N]I = [N]+[M]I$.  Adding
$[H_I]$ to both sides and using Kasparov's Stabilization Theorem we
get
\[
\mathsf{Cu}(\pi)([M]) = \mathsf{Cu}(\pi)([N]) \Leftrightarrow [M]
+[H_I] = [N]+[H_I].
\]
Alternatively, one can say that $M/MI$ and $N/NI$ are Cuntz
equivalent as $A/I$ modules if and only if $M \oplus H_I$ and $N
\oplus H_I$ are Cuntz equivalent as $A$ modules.

\begin{corollary}
The map $\mathsf{Cu}(\pi)$, restricted to $\mathsf{Cu}(A) + [H_I]$,
is an isomorphism onto $\mathsf{Cu}(A/I)$.
\end{corollary}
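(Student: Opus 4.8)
The plan is to read the corollary off from Theorem \ref{crs1} together with two elementary observations about how $\mathsf{Cu}(\pi)$ interacts with the element $[H_I]$. Throughout I would identify $[M]I$ with $[MI]$ as in the discussion preceding Theorem \ref{crs1}, and write elements of $\mathsf{Cu}(A)$ as classes $[M]$ of countably generated Hilbert $A$-modules. The first thing to record is that $\mathsf{Cu}(\pi)$ annihilates $[H_I]$: since $H_I$ is a Hilbert $I$-module we have $H_I\cdot I=H_I$, so $H_I/H_I I=0$ and hence $\mathsf{Cu}(\pi)([H_I])=[0]$. Because $\mathsf{Cu}(\pi)$ is additive, this gives $\mathsf{Cu}(\pi)(x+[H_I])=\mathsf{Cu}(\pi)(x)$ for every $x\in\mathsf{Cu}(A)$, so the image of $\mathsf{Cu}(A)+[H_I]$ coincides with the image of all of $\mathsf{Cu}(A)$. (It is also worth noting that $H_I\oplus H_I\cong H_I$, so $[H_I]$ is idempotent and $\mathsf{Cu}(A)+[H_I]$ is genuinely a subsemigroup on which $[H_I]$ acts as neutral element.)

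The second observation I would record is the absorption identity
\[
[M]I+[H_I]=[H_I]\qquad\text{for every }[M]\in\mathsf{Cu}(A).
\]
Indeed $MI$ is a countably generated Hilbert $I$-module, so Kasparov's Theorem (Theorem \ref{them: Kasparov}) applied over $I$ yields $MI\oplus H_I\cong H_I$, and this isomorphism of Hilbert $I$-modules is in particular an isomorphism of Hilbert $A$-modules. With these two facts, the order-embedding property follows formally from Theorem \ref{crs1}: given $[M],[N]$ with $\mathsf{Cu}(\pi)([M])\leq\mathsf{Cu}(\pi)([N])$, the theorem gives $[M]+[N]I\leq[N]+[M]I$, and adding $[H_I]$ to both sides, using the absorption identity twice together with compatibility of the order with addition, produces $[M]+[H_I]\leq[N]+[H_I]$. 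Running the converse inequality simultaneously yields the corresponding equality, so on $\mathsf{Cu}(A)+[H_I]$ the map $\mathsf{Cu}(\pi)$ is an order-embedding, hence injective.

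It then remains to prove surjectivity onto $\mathsf{Cu}(A/I)$, for which I would pass to the positive-element picture $\mathsf{Cu}(A)\cong\W(A\otimes\mathcal K)$ supplied by Corollary \ref{CuStable} and Theorem \ref{CuandW}, under which $\mathsf{Cu}(\pi)$ sends $\langle a\rangle$ to $\langle(\pi\otimes\mathrm{id})(a)\rangle$. Since $\pi\otimes\mathrm{id}\colon A\otimes\mathcal K\to(A/I)\otimes\mathcal K$ is a surjective $^*$-homomorphism, every positive $\bar b\in(A/I)\otimes\mathcal K$ lifts to a positive $a\in A\otimes\mathcal K$, whence $\mathsf{Cu}(\pi)(\langle a\rangle)=\langle\bar b\rangle$; combined with the first paragraph this shows the restricted map is onto $\mathsf{Cu}(A/I)$. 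Assembling the pieces, $\mathsf{Cu}(\pi)$ restricted to $\mathsf{Cu}(A)+[H_I]$ is a bijective, additive, order-preserving map with order-preserving inverse, i.e.\ an order-isomorphism; since suprema of increasing sequences and the relation $\ll$ are defined purely order-theoretically, they are automatically preserved in both directions, so the map is an isomorphism in the category $\mathsf{Cu}$.

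The only genuine subtlety in this argument is the $\Leftarrow$ content of Theorem \ref{crs1}, which is exactly what converts the inequality $[M]+[N]I\leq[N]+[M]I$ back into $\mathsf{Cu}(\pi)([M])\leq\mathsf{Cu}(\pi)([N])$ and thereby makes the inverse order-preserving. Since that theorem is quoted from \cite{crs}, the main obstacle is already absorbed into a cited result, and the remainder of the corollary is formal bookkeeping with the two absorption facts above.
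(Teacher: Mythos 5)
Your proof is correct. The injectivity half is exactly the paper's argument: apply Theorem \ref{crs1}, add $[H_I]$ to both sides, and collapse $[M]I+[H_I]$ and $[N]I+[H_I]$ down to $[H_I]$ via Kasparov's theorem over $I$ (your absorption identity). You genuinely diverge in the surjectivity half: the paper stays in the Hilbert-module picture, embedding a given countably generated $A/I$-module into $H_{A/I}$ and taking its pre-image under the quotient map $H_A\to H_{A/I}$, whereas you pass to the positive-element picture $\W(A\otimes\mathcal K)$ and lift positive elements through the surjection $\pi\otimes\mathrm{id}$. Both are sound; the paper's route needs no lifting result and no exactness of $-\otimes\mathcal K$, while yours avoids having to check that the pre-image module is countably generated and maps to the right class. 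You also go beyond the paper in upgrading injectivity to an order-embedding (running Theorem \ref{crs1} at the level of $\leq$ rather than $=$) and in observing that a bijective, additive order-embedding automatically transports suprema and the relation $\ll$, so that one really obtains an isomorphism in the category $\mathsf{Cu}$ rather than a mere semigroup bijection; the paper's two-line proof leaves this categorical point implicit, so your version is in that respect more complete. One small correction to your closing remark: the direction of Theorem \ref{crs1} your argument actually leans on is the forward one, $\mathsf{Cu}(\pi)([M])\leq\mathsf{Cu}(\pi)([N])\implies [M]+[N]I\leq[N]+[M]I$; the reverse implication, which you single out as the subtlety, is in fact the easy direction, since it follows from monotonicity of $\mathsf{Cu}(\pi)$ together with the fact that $\mathsf{Cu}(\pi)$ annihilates classes of $I$-modules.
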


\begin{proof}
Injectivity follows from the discussion above (it is a consequence
of Theorem \ref{crs1}).  $\mathsf{Cu}(\pi)$ is surjective since
every $A/I$ module can be embedded in $H_{A/I}$, and then have its
pre-image taken by the quotient map $H_A \to H_{A/I}$.
$\mathsf{Cu}(\pi)$ is also surjective when restricted to
$\mathsf{Cu}(A) + [H_I]$, since adding $H_I$ does not change the
image in $\mathsf{Cu}(A/I)$.
\end{proof}

Collecting the results above, it is straightforward to check that
\[
0 \longrightarrow \mathsf{Cu}(I)
\stackrel{\mathsf{Cu}(\iota)}{\longrightarrow} \mathsf{Cu}(A)
\stackrel{\mathsf{Cu}(\pi)}{\longrightarrow} \mathsf{Cu}(B)
\longrightarrow 0
\]
is exact.

\vskip2cm

%%%%%%%%%%%%%%%%%%%%%%%%%%%%%%%%%%%%%%%%%%%%%%%%%%%%%%%%%%%%%%%%%%%%%%%%%%%%%%%%%%%%%%%%%%%%%%%%%%
%%%%%%%%%%%%%%%%%%%%%%%%%%%%%%%%%%%%%%%%%%%%%%%%%%%%%%%%%%%%%%%%%%%%%%%%%%%%%%%%%%%%%%%%%%%%%%%%%%%%%%%%%

\section{Classification of C$^*$-algebras}

\vskip1cm
\subsection{Introduction} The purpose of the current section is to provide an explicit link between the Elliott invariant (roughly consisting of the K-groups and traces) and the Cuntz semigroup. This is done under abstract hypotheses that are satisfied by a wide class of algebras. This includes the (agreeably) largest class for which the Elliott conjecture in its classical form can be expected to hold: those C$^*$-algebras that absorb the Jiang-Su algebra tensorially. The connection is given by proving representation theorems for the Cuntz semigroup, both for unital and stable algebras (discovered in the papers \cite{bpt}, \cite{bt}). Some of the proofs we recover have different angles than in their original format. The rest of the section, largely expository, focuses on three regularity properties that a C$^*$-algebra may enjoy that have become intimately related to the classification programme. The connections among them are mentioned and sketches of proofs in some instances are given. 
We close by relating the Cuntz semigroup to the classification results and gathering some evidence in favour of its potential use in the future.

\subsection{The Elliott Conjecture}

The Elliott conjecture for C$^*$-algebras can be thought of, at the heuristic level, the assertion that separable and nuclear C$^*$-algebras can be classified (up to $^*$-isomorphism) by means of $\mathrm{K}$-theoretic invariants. At the more down-to-earth level, it consists of a collection of concrete conjectures, where the invariant used depends on the class of algebras in question.

For example, for stable Kirchberg algebras (simple, nuclear, purely infinite algebras that satisfy the Universal Coefficients Theorem), the correct invariant is the graded Abelian
group $\mathrm{K}_0 \oplus \mathrm{K}_1$ (\cite{K}, \cite{P}). In the unital, stably finite, separable, and
nuclear case, consider the invariant
\[
\mathrm{Ell}(A) := \left(
(\mathrm{K}_0(A),\mathrm{K}_0(A)^+,[1_A]),\mathrm{K}_1(A),\mathrm{T}(A),
r_A \right)\,,
\]
where we consider topological $\mathrm{K}$-theory, $\mathrm{T}(A)$ is the Choquet simplex of tracial states,
and $r_A: \mathrm{T}(A) \times \mathrm{K}_0(A) \to
\mathbb{R}$ is the pairing between $\mathrm{K}_0$ and traces given by evaluating a trace at a $\mathrm{K}_0$-class. This is known as the Elliott invariant, and has been very
successful in confirming Elliott's conjecture for simple algebras.

In its most general form, the Elliott conjecture may be stated as follows:
\begin{blank}
[Elliott, c. 1989]\label{ec}
There is a $\mathrm{K}$-theoretic functor $F$ from the category of separable and nuclear
C$^*$-algebras such that if $A$ and $B$ are separable and nuclear, and there is an
isomorphism
\[
\phi\colon F(A) \to F(B),
\]
then there is a $^*$-isomorphism
\[
\Phi\colon A \to B
\]
such that $F(\Phi) = \phi$.
\end{blank}
We will let (EC) denote the conjecture above with the Elliott invariant $\mathrm{Ell}(\bullet)$
substituted for $F(\bullet)$, and with the class of algebras under consideration
restricted to those which are simple, unital, and stably finite.  (EC) has been shown
to hold in many situations, but counterexamples have also been found recently. The literature originated by the classification programme would make any attempt to collect the results an almost impossible task, although we will review some of them later on, in Section 6. As an authoritative source for its treatment of the subject, we still keep R\o rdam's monograph (\cite{R3}) on our bookshelf.

%long, but some of the most important include \cite{El1}, \cite{El3}, \cite{EG}, \cite{EGL}, and \cite{Li1}.
%We refer the reader to R{\o}rdam's book (\cite{R3}) for a comprehensive
%overview of Elliott's classification programme. See also the next section for some more description, far from complete.

%Recent examples due to R{\o}rdam and the second named author have shown the
%currently proposed invariants (i.e., the proposed values of $F$ in Conjecture \ref{ec}) to be
%insufficient for the classification of all simple, separable, and nuclear
%$C^*$-algebras (\cite{R2}, \cite{T1}, \cite{T2}).  In particular, (EC) does
%not hold.  There are two options:  enlarge the
%proposed invariants, or restrict the class of algebras considered.  In the sequel we make
%progress on both fronts through an analysis of the Cuntz semigroup.

We shall also consider the following:

\begin{blank}[WEC]
Let $A$ and $B$ be simple, separable, unital, nuclear, and stably finite
C$^*$-algebras.  If there is an isomorphism
\[
\phi\colon \left(\W(A), \langle 1_A \rangle, \mathrm{Ell}(A)\right) \to
\left(\W(B), \langle 1_B \rangle, \mathrm{Ell}(B)\right),
\]
then there is a $^*$-isomorphism $\Phi\colon A \to B$ which induces $\phi$.
\end{blank}
\noindent
In contrast to the Elliott Conjecture mentioned above, there appears that no known counterexamples exist to (WEC).  But asking for the Cuntz semigroup as part of the invariant seems
strong indeed, given its sensitivity and the fact that (EC) alone is so often true. We will in the sequel see that (WEC) and (EC) are reconciled upon restriction to the largest class of C$^*$-algebras for which
(EC) may be expected to hold.  In this light, (WEC) appears as an appropriate
specification of the Elliott conjecture for simple, separable, unital, nuclear,
and stably finite C$^*$-algebras.

%%%%%%%%%%%%%%%%%%%%%%%%%%%%%

%%%%%%%%%%%%%%%%%%%%%%%%%%%%%%%%

\subsection{Representation Theorems for the Cuntz semigroup: the
unital case}

We continue to assume here that all
C$^*$-algebras are separable, and in this section they will moreover be unital and exact (the latter assumption is quite standard and allows us to consider traces rather than quasi-traces, see \cite{Ha}). The results of this section are mainly taken from \cite{bpt}.

\subsubsection{An order-embedding}

We begin with some notation.  For a compact convex set $K$, denote
by $\aff(K)^+$ the semigroup of all positive, affine, continuous,
and real-valued functions on $K$; $\laff (K)^+ \supseteq
\aff(K)^+$ is the subsemigroup of lower semicontinuous functions,
and $\laff_b(K)^+ \subseteq \laff (K)^+$ is the subsemigroup
consisting of those functions which are bounded above.  The use of
an additional ``+'' superscript (e.g., $\aff(K)^{++}$) indicates
that we are considering only strictly positive functions. Unless otherwise noted, the order on
these semigroups will be pointwise. Thus $\aff (K)^+$ is algebraically
ordered with this ordering, but $\laff (K)^+$, in general, is not
(unless $K$ is, for example, finite dimensional).

\begin{definition}
\label{orderembedding}
We say that a homomorphism $\varphi\colon M\to N$ between two partially ordered semigroups $M$ and $N$ is an \emph{order-embedding} provided that $\varphi(x)\leq \varphi(y)$
if and only if $x\leq y$. A surjective order-embedding will be
called an \emph{order-isomorphism}.
\end{definition}

The object of study in this subsection and also the next one is the following semigroup:

\begin{definition}\label{wtilde}
Let $A$ be a unital {\rm C}$^*$-algebra. Define a semigroup structure on the set
\[
\widetilde{\W}(A):= \V(A) \sqcup \laff_b(\mathrm{T}(A))^{++}
\]
by extending the natural semigroup operations in both $\V(A)$ and $\laff(\mathrm{T}(A))^{++}$, and setting $[p]+f=\widehat{p}+f$, where $\widehat{p}(\tau)=\tau(p)$.
Define an order $\leq$ on $\widetilde{\W}(A)$ such that:
\begin{enumerate}[{\rm (i)}]
\item $\leq$ agrees with the usual order on $\V(A)\,;$
\item $f \leq g$ for $f$, $g$ in $\laff(\mathrm{T}(A))^{++}$ if and only if
\[
f(\tau) \leq g(\tau) \ \text{for all } \tau \in \mathrm{T}(A)\,;
\]
\item $f \leq [p]$ for
$[p] \in \V(A)$ and $f$ in $\laff (\mathrm{T}(A))^{++}$ if and only if
\[
f(\tau) \leq \tau(p) \ \text{for all } \tau \in \mathrm{T}(A)\,;
\]
\item $[p] \leq f$ for $f$, $[p]$ as in (iii) whenever
\[
\tau(p) < f(\tau) \ \text{for all } \tau \in \mathrm{T}(A)\,.
\]
\end{enumerate}
\end{definition}

In the results below, and also in the next section, we shall use repeatedly the fact that $A$ has strict comparison of positive elements. Taking into account that we are after a representation of the Cuntz semigroup that has the flavour of representing on the state space, this ought to be no surprise -- after all, this is what strict comparison tells us: we can recover order from a special type of states.

This condition is equivalent to a condition on the semigroup $\W(A)$, known as \emph{almost unperforation}.
\begin{definition}
A positively, partially ordered semigroup $M$ is almost unperforated if $(n+1)x\leq ny$ implies $x\leq y$.
\end{definition}

This has been proved by M. R\o rdam (in \cite{Rfunct}):

\begin{lemma}
Let $M$ be a partially ordered abelian semigroup which is almost unperforated. If $t$ and $t'\in M$ and $t'$ is an order-unit, and if $d(t')<d(t)$ for all states on $M$, then $t'\leq t$.
\end{lemma}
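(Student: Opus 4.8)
The statement is the key consequence of almost unperforation due to R\o rdam, so let me reconstruct the standard argument.

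The plan is to exploit the hypothesis $d(t') < d(t)$ for all states together with a compactness argument to upgrade the strict inequality into a uniform one, and then feed this into the almost unperforation property. First I would recall that $t'$ being an order-unit means the state space $S(M,t')$ (states normalized at $t'$) is a nonempty compact convex set in the product topology, and that for each fixed element the evaluation map $d \mapsto d(t)$ is continuous and affine. The function $d \mapsto d(t) - d(t')$ is then continuous and strictly positive on a compact set, so it attains a positive minimum; hence there is some rational $m/n > 1$ (equivalently $n, m$ with $m > n$) such that $m \, d(t') \leq n \, d(t)$, or better $n\, d(t') < m\, d(t)$ uniformly, for all states $d$.

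The heart of the argument is then the following: I would invoke the characterization (again due to R\o rdam, and part of the standard toolkit around almost unperforation) that for an order-unit $t'$, one has $k t' \leq \ell t$ whenever $k\, d(t') < \ell\, d(t)$ for all states $d$, provided one can pass from the strict state-inequality to the actual semigroup inequality. This is precisely the place where the order-unit hypothesis is used in an essential way — it guarantees enough states to detect order, and it is what makes $S(M,t')$ a genuine compact base. From the uniform estimate above I would obtain $(n+1) t' \leq n t$ in $M$ for a suitable choice of integers (after possibly enlarging to arrange the successor form $(n+1)$ versus $n$).

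With $(n+1) t' \leq n t$ established, the conclusion is immediate: almost unperforation says exactly that $(n+1) x \leq n y \implies x \leq y$, so applying it with $x = t'$ and $y = t$ yields $t' \leq t$, as desired.

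I expect the main obstacle to be the step that converts the uniform strict inequality among states into an honest inequality $(n+1)t' \leq nt$ in the semigroup $M$. Pure order-unit theory gives inequalities up to states, but recovering a semigroup inequality from a state-inequality is not automatic for a general positively ordered abelian semigroup; it relies on a separation/representation result for such monoids (of the type where an element dominated by another at every state is actually dominated, once an order-unit is present). I would cite the relevant representation lemma from R\o rdam's paper \cite{Rfunct} to bridge this gap rather than reprove it, since the surrounding text has already signalled that this Lemma is quoted from \cite{Rfunct}.
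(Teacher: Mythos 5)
The paper itself offers no proof of this lemma: it is stated and attributed to R{\o}rdam's paper \cite{Rfunct}, so your attempt can only be measured against the standard argument. Your overall architecture is the right one. Compactness of the state space $S(M,t')$ (in the topology of pointwise convergence, using the order unit to bound each coordinate) does give a uniform gap $d(t)\ge 1+\varepsilon$ after normalizing $d(t')=1$, and your final move --- applying almost unperforation to an inequality of the form $(n+1)t'\le nt$ to conclude $t'\le t$ --- is exactly how the hypothesis is meant to be used.

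The genuine gap is the middle step, and it cannot be repaired by the citation you propose. The ``characterization'' you want to quote --- that $kt'\le \ell t$ holds in $M$ whenever $k\,d(t')<\ell\,d(t)$ for all states $d$ --- is false for general partially ordered abelian semigroups with order unit. Take $M=\{0,2,3,4,\dots\}$ with $a\le b$ if and only if $a=b$ or $b-a\ge 2$, and put $t'=2$ (an order unit) and $t=3$. The unique state is $d(x)=x/2$, so $d(t')=1<\tfrac{3}{2}=d(t)$, yet $t'\not\le t$; with $k=\ell=1$ your cited principle would assert precisely $t'\le t$. (This $M$ is not almost unperforated, since $3t'\le 2t$ while $t'\not\le t$, so it does not contradict the Lemma itself --- but your intermediate principle is invoked \emph{before} almost unperforation enters, and if it were true with the same coefficients it would make the almost-unperforation hypothesis superfluous.) What is actually available, via Goodearl--Handelman duality and the state-extension machinery for ordered semigroups, is weaker: the uniform gap yields \emph{some} pair of integers $p>q$ with $pt'\le qt$, and a priori only in the Grothendieck group of $M$, so that one must additionally absorb a correction term $e$ (from a relation $pt'+e\le qt+e$, with $e\le Nt'$) using the order-unit property before almost unperforation can be applied to descend to $t'\le t$. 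Carrying out this duality-plus-absorption argument \emph{is} the content of R{\o}rdam's lemma; deferring it to ``the relevant representation lemma from \cite{Rfunct}'' is circular, since the result you are asked to prove is that lemma.
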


Assuming the lemma, the result below follows:

\begin{lemma}
If $A$ is unital and simple, then $\W(A)$ is almost unperforated if and only if $A$ has strict comparison of positive elements.
\end{lemma}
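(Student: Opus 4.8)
The plan is to prove the two implications separately, using the abstract lemma of R\o rdam quoted just above as the engine for the harder direction. Throughout I would use three facts made available by simplicity and by the earlier results: a state on $\W(A)$ (with order unit $\langle 1\rangle$) is exactly a dimension function, its lower semicontinuous instances being the $d_\tau$ for normalized quasitraces $\tau$ (Theorem \ref{blackhand}); every nonzero element of $\W(A)$ is an order unit; and every quasitrace on $A$ is faithful. I would also record that every $\langle b\rangle$ with $b\in M_\infty(A)_+$ is bounded by a multiple of $\langle 1\rangle$, so dimension functions take finite values on it.

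First, the easy direction, strict comparison $\Rightarrow$ almost unperforation. Suppose $(n+1)\langle a\rangle\le n\langle b\rangle$. If $b=0$, then directly from the fact that the order on $\W(A)$ extends the algebraic order one gets $\langle a\rangle\le (n+1)\langle a\rangle\le n\langle 0\rangle=\langle b\rangle$, and we are done. If $b\neq 0$, apply an arbitrary $d_\tau$ with $\tau\in\mathrm{QT}(A)$: additivity and monotonicity give $(n+1)d_\tau(a)\le n\,d_\tau(b)$, and since $d_\tau(b)$ is finite and strictly positive (faithfulness), $d_\tau(a)\le \frac{n}{n+1}d_\tau(b)<d_\tau(b)$. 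As this holds for every quasitrace, strict comparison yields $a\precsim b$, i.e. $\langle a\rangle\le\langle b\rangle$.

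Next, the direction I expect to be the crux, almost unperforation $\Rightarrow$ strict comparison. Assume $\W(A)$ is almost unperforated and $d_\tau(a)<d_\tau(b)$ for every quasitrace $\tau$. Fix $\epsilon>0$ and set $t'=\langle(a-\epsilon)_+\rangle$, $t=\langle b\rangle$; if $(a-\epsilon)_+=0$ there is nothing to prove, so I assume it nonzero, whence $t'$ is an order unit. The key claim is that $d(t')<d(t)$ for \emph{every} state $d$ on $\W(A)$, not just the lower semicontinuous ones. Granting this, R\o rdam's lemma applies and gives $t'\le t$, that is $(a-\epsilon)_+\precsim b$; letting $\epsilon\to 0$ and invoking Proposition \ref{basics}(ii) then yields $a\precsim b$. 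To prove the claim I would regularize via Proposition \ref{rordam}: given a state $d$, form $\overline{d}(\langle x\rangle)=\sup_{\delta>0}d(\langle(x-\delta)_+\rangle)$, a lower semicontinuous dimension function with $\overline{d}\le d$, and since $(1-\delta)_+\sim 1$ for $0<\delta<1$ it satisfies $\overline{d}(\langle 1\rangle)=1$, so it is a state. Hence $\overline{d}=d_\tau$ for some normalized $\tau$ (Theorem \ref{blackhand}), and the hypothesis gives $\overline{d}(\langle a\rangle)<\overline{d}(\langle b\rangle)$. Combining,
\[
d(t')=d(\langle(a-\epsilon)_+\rangle)\le\overline{d}(\langle a\rangle)<\overline{d}(\langle b\rangle)\le d(\langle b\rangle)=d(t),
\]
where the first inequality holds because $\overline{d}(\langle a\rangle)$ is a supremum over $\delta$ that includes $\delta=\epsilon$, and the last uses $\overline{d}\le d$. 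This establishes the claim.

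The main obstacle is precisely this last point: R\o rdam's lemma demands the strict inequality against every state of $\W(A)$, whereas strict comparison only supplies it against the lower semicontinuous (quasitracial) ones. The device $\overline{d}\le d$ bridges exactly this gap, squeezing $d(t')$ beneath the regularized value $\overline{d}(\langle a\rangle)$ and $\overline{d}(\langle b\rangle)$ beneath $d(t)$. Everything else is routine: the order-unit hypothesis is automatic from simplicity, the finiteness of dimension functions from boundedness of $\langle b\rangle$, and the passage $\epsilon\to 0$ from Proposition \ref{basics}.
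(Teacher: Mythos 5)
Your proposal is correct and takes essentially the same route as the paper's proof: the hard direction rests on the same regularization $\overline{d}\leq d$ from Proposition~\ref{rordam}, the same chain $d(\langle (a-\epsilon)_+\rangle)\leq\overline{d}(\langle a\rangle)<\overline{d}(\langle b\rangle)\leq d(\langle b\rangle)$, R\o rdam's lemma, and the limit $\epsilon\to 0$ via Proposition~\ref{basics}. Your write-up is in fact a bit more careful than the paper's, since you verify that $\overline{d}$ is normalized (hence a genuine state, so Theorem~\ref{blackhand} applies) and you apply R\o rdam's lemma with the order unit $\langle(a-\epsilon)_+\rangle$ exactly as the quoted statement demands, besides spelling out the easy direction.
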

\begin{proof}
It is clear that strict comparison implies almost unperforation. For the converse, suppose that $d(a)<d(b)$ for all $d\in\mathrm{LDF}(A)$ (for $b\neq 0$). Let $d$ be any dimension function (i.e. a state on $\W(A)$), and construct a lower semicontinuous dimension function $\overline{d}$ as in Proposition \ref{rordam} (i.e. $\overline{d}(\langle a\rangle)=\sup_{\epsilon>0}d(\langle (a-\epsilon)_+\rangle)$. Then, for $\epsilon>0$,
\[
d(\langle (a-\epsilon)_+\rangle)\leq\overline{d}(\langle a\rangle)<\overline{d}(\langle b\rangle)\leq d(\langle b\rangle)\,.
\]
Now use that $A$ is simple and unital to conclude that $\langle b\rangle$ is an order unit for $\W(A)$, whence the assumption and the previous lemma yield $(a-\epsilon)_+\precsim b$. Since $\epsilon$ is arbitrary the result follows.
\end{proof}

Recall that any trace $\tau$ defines a lower semicontinuous function by $\dt(a)=\lim\limits_{n\to\infty} \tau(a^{1/n})$.

\begin{lemma}
\label{comparisonlemma} Let $A$ be a {\rm C}$^*$-algebra, and let
$a\in A_{+}$. For any faithful trace $\tau$, and
$\epsilon<\delta$ where $\epsilon, \delta\in\sigma(a)$, we have that $\dt ((a-\delta)_+)<\dt
((a-\epsilon)_+)$.
\end{lemma}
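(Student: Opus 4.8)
The plan is to reduce everything to the commutative C$^*$-algebra $C^*(a)\cong C_0(\sigma(a))$ and to realise $\dt$ as the $\mu$-measure of an open set, where $\mu$ is the measure induced by $\tau$. First I would restrict $\tau$ to $C^*(a)$; since this is a positive functional on a commutative C$^*$-algebra, the Riesz representation theorem yields a finite positive Borel measure $\mu$ on $\sigma(a)$ with $\tau(f(a))=\int_{\sigma(a)} f\,d\mu$ for every $f\in C_0(\sigma(a))$. Faithfulness of $\tau$ translates into the statement that $\mu$ charges every nonempty relatively open subset of $\sigma(a)$: if $V\subseteq\sigma(a)$ is open and nonempty, pick $0\ne f\in C_0(\sigma(a))_+$ with $\{f>0\}\subseteq V$; then $f(a)\ne 0$, so (taking $x=f(a)^{1/2}$ in the definition of faithfulness) $\tau(f(a))>0$ and hence $\mu(V)\ge\int f\,d\mu>0$.

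Next I would compute $\dt((a-s)_+)$ for a cutoff $s$. Writing $(a-s)_+=g_s(a)$ with $g_s(t)=\max\{t-s,0\}$, we have $((a-s)_+)^{1/n}=g_s(a)^{1/n}$, and $g_s^{1/n}\to \mathbbm{1}_{\{t>s\}}$ pointwise on the compact set $\sigma(a)$, uniformly bounded. Dominated convergence then gives
\[
\dt((a-s)_+)=\lim_{n\to\infty}\tau\big(g_s(a)^{1/n}\big)=\int_{\sigma(a)}\mathbbm{1}_{\{t>s\}}\,d\mu=\mu\big(\sigma(a)\cap(s,\infty)\big).
\]
Applying this with $s=\epsilon$ and $s=\delta$ and subtracting gives the monotone (non-strict) inequality at once, together with the exact defect
\[
\dt((a-\epsilon)_+)-\dt((a-\delta)_+)=\mu\big(\sigma(a)\cap(\epsilon,\delta]\big).
\]

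It remains to see that this measure is strictly positive, and this is the heart of the matter. Here I would use $\delta\in\sigma(a)$ together with faithfulness: the goal is to exhibit a nonzero $h\in C_0(\sigma(a))_+$ with $0\le h\le 1$ and $\{h>0\}\subseteq(\epsilon,\delta]$, for then $h(a)\ne 0$ forces $\tau(h(a))=\int h\,d\mu>0$, while $h\le\mathbbm{1}_{(\epsilon,\delta]}$ yields $\int h\,d\mu\le\mu(\sigma(a)\cap(\epsilon,\delta])$, whence the claimed strict inequality. The main obstacle is precisely the construction of such an $h$, that is, locating spectrum inside $(\epsilon,\delta]$ that $\mu$ cannot ignore: if $\sigma(a)\cap(\epsilon,\delta)\ne\emptyset$ one takes a bump supported in the open set $\sigma(a)\cap(\epsilon,\delta)$, and if $\delta$ is isolated from below in $\sigma(a)$ one uses that $\{\delta\}$ is then relatively open and takes $h$ peaked at $\delta$. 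This is exactly where the hypotheses $\epsilon,\delta\in\sigma(a)$ and $\epsilon<\delta$ are used, ensuring that raising the cutoff from $\epsilon$ to $\delta$ genuinely sweeps across a piece of spectrum; I would flag the borderline configuration in which $\delta$ is an accumulation point of $\sigma(a)$ from above while $(\epsilon,\delta)$ misses the spectrum as the one requiring the most care, since there the continuous bump cannot be made to vanish just past $\delta$ and the defect is carried solely by the atom $\mu(\{\delta\})$.
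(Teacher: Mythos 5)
Your strategy is the same as the paper's: restrict $\tau$ to $C^*(a)$, represent it by a measure $\mu$ on $\sigma(a)$, identify $d_\tau((a-s)_+)$ with $\mu\bigl(\sigma(a)\cap(s,\infty)\bigr)$, and then try to use faithfulness to show that the defect $\mu\bigl(\sigma(a)\cap(\epsilon,\delta]\bigr)$ is strictly positive. Everything up to and including that reduction is correct. But the borderline configuration you flagged is not merely the delicate case: it is a genuine gap, and it cannot be closed, because the lemma as stated is \emph{false} there. Take $X=\{1\}\cup[2,3]$, $A=C(X)$, $a\in A_+$ the identity function $a(t)=t$, and the faithful tracial state
\[
\tau(f)=\tfrac12\Bigl(f(1)+\int_2^3 f(t)\,dt\Bigr),\qquad \mu=\tfrac12\bigl(\delta_1+\lambda|_{[2,3]}\bigr).
\]
With $\epsilon=1$ and $\delta=2$ we have $\epsilon,\delta\in\sigma(a)$ and $\epsilon<\delta$, yet
\[
d_\tau\bigl((a-1)_+\bigr)=\mu\bigl([2,3]\bigr)=\tfrac12=\mu\bigl((2,3]\bigr)=d_\tau\bigl((a-2)_+\bigr),
\]
since the defect is $\mu(\{2\})=0$. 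Faithfulness only forces $\mu$ to charge nonempty \emph{relatively open} subsets of $\sigma(a)$, and $\{2\}$ is not relatively open because $\sigma(a)$ accumulates at $2$ from above; note also that your parenthetical claim that isolation from below makes $\{\delta\}$ relatively open is precisely what fails here.

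You should also know that the paper's own proof silently commits the same error: it asserts that, since the defect set $U_{\delta,\epsilon}$ is nonempty, ``there is a non-zero $b$ in $C^*(a)$ such that $\mathrm{Coz}(b)\subseteq U_{\delta,\epsilon}$'', and this is exactly the bump function whose existence fails in the example above (any continuous $b$ with $b(\delta)>0$ is strictly positive at points of $\sigma(a)$ just above $\delta$). The correct statement carries the extra hypothesis $\sigma(a)\cap(\epsilon,\delta)\neq\emptyset$, under which your first bump construction finishes the argument, and this repaired version suffices for the paper: the lemma is only invoked in Proposition~\ref{pureposcomparison}, with $\epsilon=0$ and $\delta=\epsilon_n$ for a sequence $(\epsilon_n)$ in $\sigma(a)$ strictly decreasing to $0$, so that $\sigma(a)\cap(0,\epsilon_n)\neq\emptyset$ and the problematic configuration never arises. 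In short: your proof is incomplete exactly where you said it was, but the fault lies with the statement of the lemma, not with your method.
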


\begin{proof}
Since $(a-\epsilon)_+$ and $(a-\delta)_+$ belong to the
C$^*$-algebra $C^*(a)$ generated by $a$, we may assume that
$A=C^*(a)$ (which is commutative). Then $\tau$, being
a positive functional, corresponds to a probability measure
$\mu_\tau$ defined on $\sigma (a)$ and by~\cite[Proposition
I.2.1]{bh} we have $\dt (b)=\mu_\tau(\mathrm{Coz}(b))$, where
$\mathrm{Coz}(b)$ is the cozero set of a function $b$ in $C^*(a)$
(using the functional calculus).

Now write
$\mathrm{Coz}((a-\delta)_+)=U_{\delta,\epsilon}\sqcup\mathrm{Coz}((a-\epsilon)_+)$,
where $\sqcup$ stands for disjoint set union and
$U_{\delta,\epsilon}=\{t\in (\delta,\epsilon]\mid
(a-\delta)_+(t)>0\}$. Since $\epsilon\in U_{\delta,\epsilon}$ and
there is a non-zero $b$ in $C^*(a)$ such that
$\mathrm{Coz}(b)\subseteq U_{\delta,\epsilon}$, we have (using the
faithfulness of $\tau$) that $\mu_\tau(U_{\delta,\epsilon})\geq
\mu_\tau(\mathrm{Coz}(b))=\dt(b)>0$.

Finally,
\[
\dt((a-\delta)_+)-\dt((a-\epsilon)_+)=\mu_\tau(\mathrm{Coz}((a-\delta)_+))-\mu_\tau(\mathrm{Coz}((a-\epsilon)_+))=\mu_\tau(U_{\delta,\epsilon})>0\,.
\]
\end{proof}

Recall that, if $A$ is a C$^*$-algebra, we denote by $A_{++}$ the set of purely positive elements, that is, those positive elements that are not (Cuntz) equivalent to a projection. We also denote by $\W(A)_+$ the subset of $\W(A)$ consisting of those classes that are not classes of projections. We know that if $A$ is simple and stably finite or of stable rank one, then $\W(A)_+$ is a subsemigroup, and $\W(A)=\V(A)\sqcup\W(A)_+$.

\begin{proposition}\label{pureposcomparison}
Let $A$ be a simple $C^*$-algebra with strict comparison of
positive elements. Let $a \in A_{++}$ and $b \in A_+$ satisfy
$\dt(a) \leq \dt(b)$ for every $\tau \in \mathrm{QT}(A)$. Then, $a
\precsim b$.
\end{proposition}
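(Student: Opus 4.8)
The plan is to reduce the claim to a family of subequivalences and then to feed a \emph{strict} comparison of dimension functions into the hypothesis of strict comparison. By Proposition~\ref{basics}, $a\precsim b$ holds as soon as $(a-\epsilon)_+\precsim b$ for every $\epsilon>0$, so first I would fix $\epsilon>0$ and aim to establish that single subequivalence. The whole point is that the non-strict estimate $\dt(a)\le\dt(b)$ is not directly usable by strict comparison, so I need to produce a strict inequality, and this is where the purely positive hypothesis on $a$ together with the simplicity of $A$ must be exploited.

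The key step is to show that $\dt((a-\epsilon)_+)<\dt(b)$ for every $\tau\in\mathrm{QT}(A)$. Since $A$ is simple, every quasitrace is faithful, and its restriction to the commutative algebra $C^*(a)$ is a positive (hence measure-inducing) functional, so the proof of Lemma~\ref{comparisonlemma} applies verbatim with quasitraces in place of traces. Because $a$ is purely positive, $0$ is a non-isolated point of $\sigma(a)$ (otherwise the functional calculus would make $a$ Cuntz equivalent to its support projection), so $0\in\sigma(a)$ and I may choose $\delta$ with $0<\delta<\epsilon$ and $\delta\in\sigma(a)$. Applying Lemma~\ref{comparisonlemma} to the pair $0<\delta$ in $\sigma(a)$ gives $\dt((a-\delta)_+)<\dt((a-0)_+)=\dt(a)$, and combining this with $(a-\epsilon)_+\le(a-\delta)_+$ and the hypothesis yields
\[
\dt((a-\epsilon)_+)\le\dt((a-\delta)_+)<\dt(a)\le\dt(b)
\]
for every $\tau\in\mathrm{QT}(A)$.

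With the strict inequality secured, I would finish by invoking strict comparison of positive elements. By Theorem~\ref{blackhand}, the assignment $\tau\mapsto\dt$ identifies $\mathrm{QT}(A)$ with $\mathrm{LDF}(A)$, so the displayed inequality says $d(\langle(a-\epsilon)_+\rangle)<d(\langle b\rangle)$ for all lower semicontinuous dimension functions $d$. Moreover $b\neq 0$: any faithful quasitrace gives $\dt(b)\ge\dt(a)>0$, as $a$ is purely positive and hence nonzero. Since $A$ is simple, $\langle b\rangle$ is then an order-unit of $\W(A)$, so strict comparison applies and yields $(a-\epsilon)_+\precsim b$. As $\epsilon>0$ was arbitrary, Proposition~\ref{basics} gives $a\precsim b$, as desired.

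The main obstacle is precisely the passage from the non-strict hypothesis $\dt(a)\le\dt(b)$ to a strict inequality: strict comparison cannot be triggered without some genuine drop, and it is exactly the purely positive assumption, which forces $0$ to be an accumulation point of $\sigma(a)$, together with the faithfulness of quasitraces on the simple algebra $A$, that manufactures this drop. Everything else—the reduction via $(a-\epsilon)_+$ and the translation between quasitraces and lower semicontinuous dimension functions—is routine bookkeeping with the functional calculus.
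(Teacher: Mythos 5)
Your argument reproduces, in essence, the paper's own proof: the reduction to the cut-downs $(a-\epsilon)_+$ (the paper uses the norm-closedness of $\{x\in A_+\mid x\precsim b\}$ together with $(a-\epsilon_n)_+\to a$, you use Proposition~\ref{basics}; these are the same device), the use of the purely positive hypothesis to produce points of $\sigma(a)$ accumulating at $0$, the application of Lemma~\ref{comparisonlemma} with the pair $0<\delta$ to manufacture the strict drop $\dt((a-\epsilon)_+)<\dt(a)\le\dt(b)$, and the final appeal to strict comparison with $\langle b\rangle$ as order unit. On one point you are actually more careful than the paper: you observe that Lemma~\ref{comparisonlemma}, though stated for traces, holds verbatim for quasitraces because a quasitrace is linear on the commutative algebra $C^*(a)$. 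The paper's proof works only with $\mathrm{T}(A)$, which is harmless under the section's standing exactness assumption (so $\mathrm{QT}(A)=\mathrm{T}(A)$ by Haagerup's theorem), but your version matches the statement as written and does not need exactness.

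There is, however, one case the paper handles and you do not: $\mathrm{QT}(A)=\emptyset$. Your proof uses a quasitrace twice --- to show $b\neq 0$ (hence that $\langle b\rangle$ is an order unit of $\W(A)$), and to make the strict inequality fed into strict comparison non-vacuous --- so as written it silently assumes $\mathrm{QT}(A)\neq\emptyset$. The paper disposes of the traceless case in its first sentence: a simple C$^*$-algebra with strict comparison and no (quasi)traces is purely infinite, and the conclusion then follows from \cite[Proposition 5.4]{KR}; in fact, in a purely infinite simple algebra every nonzero positive element is Cuntz equivalent to a projection, so $A_{++}=\emptyset$ and the statement is vacuous there. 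Adding that one sentence closes the gap; everything else in your proof is sound.
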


\begin{proof}
If $A$ has no trace, then it is purely infinite and the
conclusion follows from~\cite[Proposition 5.4]{KR}.

Suppose that $\mathrm{T}(A)$ is nonempty.  Since $A$ is simple, each trace is
faithful. Since $a \in A_{++}$, we have that
$a\neq 0$ and we know there is a sequence sequence
$\epsilon_n$ of positive reals in $\sigma(a)$ strictly decreasing to zero.
We also know by~\cite[Section 6]{blsur} (see
also~\cite[Proposition 2.6]{KR}) that the set $\{x\in A_+\mid
x\precsim b\}$ is closed, and since $(a-\epsilon_n)_+\to a$ in
norm it suffices to prove that $(a-\epsilon_n)_+ \precsim b$ for
every $n \in \mathbb{N}$.

Let $\tau\in \mathrm{T}(A)$ be given, and apply Lemma
\ref{comparisonlemma} with $\epsilon = 0$ and $\delta =
\epsilon_n$ to see that
\[
\dt((a-\epsilon_n)_+)< \dt(a) \leq \dt(b).
\]
Using strict comparison on $A$ we conclude that
$(a-\epsilon_n)_+\precsim b$ for all $n$, as desired.
\end{proof}

\begin{proposition}\label{projpureposcomp}
Let $A$ be a simple {\rm C}$^*$-algebra with strict comparison of positive elements. Let $p$ be a projection in $A$, and
let $a \in A_{++}$.  Then, $\langle p \rangle \leq \langle a \rangle$ if and only if
$\dt(p) < \dt(a)$ for each $\tau \in \mathrm{QT}(A)$.
\end{proposition}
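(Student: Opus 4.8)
The plan is to treat the two implications separately. The forward (``if'') direction is essentially a direct invocation of strict comparison, while the real content lies in the ``only if'' direction, where the bare subequivalence $p\precsim a$ must be upgraded to a \emph{strict} inequality of dimension functions; this is exactly where the purely positive hypothesis on $a$ becomes indispensable.

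For the ``if'' direction, suppose $\dt(p)<\dt(a)$ for every $\tau\in\mathrm{QT}(A)$. Since $p$ is a projection, $p^{1/n}=p$ and hence $\dt(p)=\lim_n\tau(p^{1/n})=\tau(p)$, so the hypothesis is precisely the condition ``$d(\langle p\rangle)<d(\langle a\rangle)$ for every lower semicontinuous dimension function'' after identifying $\mathrm{QT}(A)$ with $\mathrm{LDF}(A)$ via Theorem~\ref{blackhand}. As $a\in A_{++}$ is nonzero, the strict comparison property of $A$ yields at once $p\precsim a$, that is $\langle p\rangle\leq\langle a\rangle$. I would flag explicitly that Proposition~\ref{pureposcomparison} is \emph{not} applicable here, since that result requires the dominated element to be purely positive, whereas here the dominated element is the projection $p$; so one must appeal to strict comparison in its raw form rather than to that proposition.

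For the ``only if'' direction, assume $\langle p\rangle\leq\langle a\rangle$. Monotonicity of dimension functions immediately gives $\dt(p)\leq\dt(a)$, so the whole task is to make this inequality strict. First I would fix $\tau\in\mathrm{QT}(A)$; since $A$ is simple, $\tau$ is faithful, and it restricts to a faithful trace on the commutative algebra $C^*(a)$ (the quasitrace axioms force linearity there). Applying Proposition~\ref{basics} to $p\precsim a$ with any $0<\epsilon<1$ produces a $\delta>0$ with $p\sim(p-\epsilon)_+\precsim(a-\delta)_+$. The key structural input is that $a\in A_{++}$ forces $0$ to be a non-isolated point of $\sigma(a)$: otherwise the function taking value $1$ off $\{0\}$ would be continuous on $\sigma(a)$, producing via the functional calculus a projection Cuntz equivalent to $a$ (by Proposition~\ref{prop:commutative}), contradicting pure positivity. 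Thus I may choose $\delta_0\in\sigma(a)$ with $0<\delta_0\leq\delta$, so that $(a-\delta)_+\precsim(a-\delta_0)_+$, hence $p\precsim(a-\delta_0)_+$ and $\dt(p)\leq\dt((a-\delta_0)_+)$.

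The decisive and most delicate step is then the strict drop $\dt((a-\delta_0)_+)<\dt(a)$, which I would extract from Lemma~\ref{comparisonlemma} applied inside $C^*(a)$ with $\epsilon=0$ and $\delta=\delta_0$: faithfulness of $\tau$ ensures the spectral sliver $(0,\delta_0]$ carries strictly positive $\mu_\tau$-mass, precisely because $0$ is non-isolated. Combining the estimates gives $\dt(p)\leq\dt((a-\delta_0)_+)<\dt(a)$ for every $\tau\in\mathrm{QT}(A)$, as required. I expect the main obstacle to be the bookkeeping surrounding this strict inequality: ensuring that the $\delta$ delivered by Proposition~\ref{basics} can be traded for a genuine spectral value $\delta_0$ to which Lemma~\ref{comparisonlemma} applies, and confirming that the lemma, although stated for traces, transfers verbatim to quasitraces upon restriction to the commutative subalgebra $C^*(a)$.
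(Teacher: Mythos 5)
Your proof is correct and follows essentially the same route as the paper's: the forward direction rests on strict comparison (the paper's citation of Proposition~\ref{pureposcomparison} there is indeed loose, exactly as you flag), and the converse rests on Proposition~\ref{basics} to get $p\sim(p-\epsilon)_+\precsim(a-\delta)_+$ together with the strict spectral drop of Lemma~\ref{comparisonlemma}, which is also what the paper's contradiction argument uses. The only differences are cosmetic and in your favour: you argue the converse directly rather than by contradiction, and you spell out steps the paper leaves implicit (trading $\delta$ for a genuine spectral value $\delta_0\in\sigma(a)$, deducing non-isolation of $0$ in $\sigma(a)$ from pure positivity, and transferring Lemma~\ref{comparisonlemma} from traces to quasitraces via linearity on $C^*(a)$).
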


\begin{proof}
In light of Proposition~\ref{pureposcomparison} it will suffice to prove that if
$\dt(a)\leq \dt(p)$ for some $\tau \in \mathrm{QT}(A)$, then $p$ cannot be subequivalent
to $a$.  Suppose such a $\tau$ exists.  Let $\epsilon>0$ be given. By~\cite[Proposition 2.4]{Rfunct} there exists a
$\delta > 0$ such that
\[
(p-\epsilon)_+ \precsim (a-\delta)_+.
\]
This implies that
\[
\dt((p-\epsilon)_+) \leq \dt((a-\delta)_+).
\]
But $p$ is a projection, so for $\epsilon < 1$ we have $(p-\epsilon)_+\sim p$, so
\[
\dt((p-\epsilon)_+) = \dt(p).
\]
On the other hand,
\[
\dt((a-\delta)_+) < \dt(a) \leq \dt(p) = \dt((p-\epsilon)_+).
\]
This contradiction proves the proposition.
\end{proof}

\begin{proposition}\label{iotadef}
Let $A$ be a simple {\rm C}$^*$-algebra of stable rank one. Then, the map
\[
\iota\colon \W(A)_+\to \laff_b(\mathrm{T}(A))^{++}
\]
given by $\iota(\langle a\rangle)(\tau)=\dt(a)$ is a homomorphism.  If
$A$ has strict comparison of positive elements, then $\iota$ is an
order embedding.
\end{proposition}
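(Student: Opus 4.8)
The plan is to assemble the statement from the dimension-function technology developed earlier, the decisive input being Proposition~\ref{pureposcomparison}. First I would pin down that $\iota$ is well defined and takes values in $\laff_b(\mathrm{T}(A))^{++}$. By Theorem~\ref{blackhand} each $\tau\in\mathrm{T}(A)$ produces the lower semicontinuous dimension function $\dt$, which is a state on $\W(A)$; in particular $\dt$ is additive, order preserving, and depends only on the Cuntz class $\langle a\rangle$, which already gives well-definedness of $\langle a\rangle\mapsto\dt(a)$. Viewing $\dt(a)$ as a function of $\tau$, I would rescale so that $\|a\|\le 1$ (legitimate, since $a\sim a/\|a\|$ and $\dt$ is a Cuntz invariant) and write $\dt(a)=\sup_n(\tau\mapsto\tau(a^{1/n}))$; each $\tau\mapsto\tau(a^{1/n})$ is affine and weak$^*$-continuous, so this increasing supremum is affine and lower semicontinuous. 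Boundedness above follows from $a^{1/n}\le\|a\|^{1/n}1_m$ for $a\in M_m(A)_+$, giving $\dt(a)\le m$; strict positivity follows because $A$ is simple, hence every trace is faithful, so $\dt(a)\ge\tau(a)>0$ for the (nonzero, purely positive) element $a$.

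Next I would verify the homomorphism property. Since $A$ is unital, simple and of stable rank one, Corollary~\ref{varecov}(i) guarantees that $\W(A)_+$ is a semigroup, so the operation is internal. For $\langle a\rangle,\langle b\rangle\in\W(A)_+$ one has $(a\oplus b)^{1/n}=a^{1/n}\oplus b^{1/n}$, whence $\tau(a^{1/n}\oplus b^{1/n})=\tau(a^{1/n})+\tau(b^{1/n})$; letting $n\to\infty$ yields $\dt(a\oplus b)=\dt(a)+\dt(b)$, i.e. $\iota(\langle a\rangle+\langle b\rangle)=\iota(\langle a\rangle)+\iota(\langle b\rangle)$.

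Finally, the order-embedding claim under strict comparison. The forward implication is free: $a\precsim b$ forces $\dt(a)\le\dt(b)$ for every $\tau$ because $\dt$ is order preserving, so $\iota(\langle a\rangle)\le\iota(\langle b\rangle)$ pointwise. For the converse, suppose $\iota(\langle a\rangle)\le\iota(\langle b\rangle)$, that is $\dt(a)\le\dt(b)$ for all $\tau\in\mathrm{T}(A)$; since $A$ is exact, $\mathrm{T}(A)=\mathrm{QT}(A)$, so the inequality holds for all quasitraces. As $\langle a\rangle\in\W(A)_+$ we have $a\in A_{++}$ and $b\in A_+$, so Proposition~\ref{pureposcomparison} applies verbatim and delivers $a\precsim b$, i.e. $\langle a\rangle\le\langle b\rangle$. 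For classes represented in $M_m(A)$ rather than $A$, one runs the same argument in $M_m(A)$, which is again unital, simple, exact, of stable rank one and with strict comparison, and whose trace simplex is canonically identified with $\mathrm{T}(A)$.

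The only substantive step is the converse of the last part, and there the work is entirely absorbed by Proposition~\ref{pureposcomparison}, which in turn rests on Lemma~\ref{comparisonlemma} and strict comparison. Consequently the main obstacle is not analytic but organizational: ensuring that the hypotheses needed to invoke that proposition are genuinely in force, namely pure positivity of $a$, the identification $\mathrm{T}(A)=\mathrm{QT}(A)$ coming from exactness, and the reduction from $M_\infty(A)$ to a single amplification $M_m(A)$.
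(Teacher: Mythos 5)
Your proof is correct and follows essentially the same route as the paper: strict positivity comes from faithfulness of traces on the simple algebra $A$, the homomorphism property from the fact that $\W(A)_+$ is a semigroup (Corollary~\ref{varecov}), and the order-embedding claim is delegated entirely to Proposition~\ref{pureposcomparison}. You merely spell out details the paper leaves implicit, notably the verification that $\iota$ lands in $\laff_b(\mathrm{T}(A))^{++}$ and the use of exactness to pass from $\mathrm{T}(A)$ to $\mathrm{QT}(A)$ before invoking Proposition~\ref{pureposcomparison}, both of which are legitimate and in fact tighten the exposition.
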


\begin{proof}
Since $A$ is simple, every trace on $A$ is faithful and hence
$\iota(\langle a \rangle)$ is strictly positive.
We also know that the set $W(A)_+$ of purely positive elements is a semigroup so it is easily checked that $\iota$ is a homomorphism.

If $A$ has strict comparison of positive elements, then $\iota$ is an
order embedding by Proposition \ref{pureposcomparison}.
\end{proof}

\begin{theorem}\label{embedding}
Let $A$ be a simple, exact, unital {\rm C}$^*$-algebra with stable rank one. If $A$ has strict comparison of positive elements, then there is an order embedding
\[
\phi\colon \W(A) \to \widetilde{\W}(A)
\]
such that $\phi|_{\V(A)} = \mathrm{id}_{\V(A)}$ and $\phi|_{\W(A)_+} = \iota$.
\end{theorem}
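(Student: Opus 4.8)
The plan is to define $\phi$ separately on the two pieces of the decomposition $\W(A)=\V(A)\sqcup\W(A)_+$, which is available because $A$ has stable rank one (Corollary~\ref{varecov}). Set $\phi(\langle p\rangle)=[p]$ for a projection $p$ and $\phi(\langle a\rangle)=\iota(\langle a\rangle)$ for $a\in A_{++}$. Since this decomposition is a genuine disjoint union and the two target pieces $\V(A)$ and $\laff_b(\mathrm{T}(A))^{++}$ are exactly the two pieces of $\widetilde{\W}(A)$, the map $\phi$ is well defined and by construction satisfies $\phi|_{\V(A)}=\mathrm{id}_{\V(A)}$ and $\phi|_{\W(A)_+}=\iota$. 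Throughout I will use that, since $A$ is exact and unital, $\mathrm{QT}(A)=\mathrm{T}(A)$ (the main theorem of~\cite{Ha}), so that the quasitrace hypotheses in Propositions~\ref{pureposcomparison} and~\ref{projpureposcomp} coincide with the trace conditions appearing in the order on $\widetilde{\W}(A)$, and that $\dt(p)=\tau(p)=\widehat{p}(\tau)$ for every projection $p$.

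First I would check that $\phi$ is a semigroup homomorphism by examining the three possible types of sums. When both summands lie in $\V(A)$ this is the additivity of $\V(A)\hookrightarrow\W(A)$; when both lie in $\W(A)_+$ it is the additivity of $\iota$ (Proposition~\ref{iotadef}), using that $\W(A)_+$ is a subsemigroup. The only case requiring the structure of $\widetilde{\W}(A)$ is the mixed one: for a projection $p$ and $a\in A_{++}$, the class $\langle p\oplus a\rangle$ lies in $\W(A)_+$ because $\W(A)_+$ is absorbing (Corollary~\ref{varecov}), so $\phi(\langle p\rangle+\langle a\rangle)=\iota(\langle p\oplus a\rangle)$ sends $\tau$ to $\dt(p)+\dt(a)=\tau(p)+\dt(a)$; on the other side the defining rule $[p]+f=\widehat{p}+f$ gives $\phi(\langle p\rangle)+\phi(\langle a\rangle)=\widehat{p}+\iota(\langle a\rangle)$, which agrees.

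The core of the argument is the order-embedding property $\phi(x)\leq\phi(y)\Leftrightarrow x\leq y$, which I would split into four cases according to which pieces $x$ and $y$ belong to. For $x,y$ both projections this is the fact that $\V(A)\to\W(A)$ is an order embedding on projections (Murray--von Neumann subequivalence agrees with Cuntz subequivalence under stable finiteness), matched with clause (i) of the order on $\widetilde{\W}(A)$. For $x,y$ both purely positive it is precisely that $\iota$ is an order embedding (Proposition~\ref{iotadef}), matched with clause (ii). The two mixed cases are where strict comparison does the work: with $x=\langle p\rangle$ and $y=\langle a\rangle\in\W(A)_+$, clause (iv) reads $[p]\leq\iota(\langle a\rangle)$ iff $\tau(p)<\dt(a)$ for all $\tau$, which is exactly the equivalence of Proposition~\ref{projpureposcomp}; with $x=\langle a\rangle\in\W(A)_+$ and $y=\langle p\rangle$, clause (iii) reads $\iota(\langle a\rangle)\leq[p]$ iff $\dt(a)\leq\tau(p)$ for all $\tau$, where the forward implication $a\precsim p\Rightarrow\dt(a)\leq\tau(p)$ is monotonicity of dimension functions and the converse is Proposition~\ref{pureposcomparison} applied with $b=p$.

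The main obstacle is really bookkeeping rather than a single hard step: one must make sure that the strict versus non-strict inequalities in clauses (iii) and (iv) line up exactly with the conclusions of Propositions~\ref{pureposcomparison} and~\ref{projpureposcomp}, and that the passage from quasitraces to traces (via exactness) is invoked so that the state conditions are the ones appearing in $\widetilde{\W}(A)$. Once the four cases and the three additivity checks are assembled, the conclusion follows, and no new estimates beyond those already established are needed.
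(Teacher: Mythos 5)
Your proposal is correct and follows essentially the same route as the paper: decompose $\W(A)=\V(A)\sqcup\W(A)_+$ via stable rank one, define $\phi$ piecewise, and verify clauses (i)--(iv) of Definition~\ref{wtilde} using Propositions~\ref{pureposcomparison} and~\ref{projpureposcomp} (together with Proposition~\ref{iotadef}). The only difference is that you spell out the additivity checks and the quasitrace-to-trace identification via exactness, which the paper's proof leaves implicit behind the phrase ``the map $\phi$ is well-defined.''
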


\begin{proof}
As we have observed already, the Cuntz semigroup of a
C$^*$-algebra $A$ of stable rank one is always the disjoint union
of the monoid $\V(A)$ and $\W(A)_+$.

The map $\phi$ is well-defined, so it will suffice to prove that
it is an order embedding.  We verify conditions (i)-(iv) from
Definition \ref{wtilde}:
the image of $\phi|_{V(A)}$ is $V(A)$, with the same order, so (i)
is satisfied;  (ii) and (iii) follow from Proposition~\ref{pureposcomparison};
(iv) is Proposition~\ref{projpureposcomp}.
\end{proof}

%%%%%%%%%%%%%%%%%%%%%%%%%%%%%%%%%%%%

\subsubsection{Surjectivity of the representation}

\subsubsection{A particular description of suprema}

The purpose of this section is to show that, under the assumption of stable rank one, suprema in the Cuntz semigroup $\W(A)$ have a particular description. This will lead to the fact that, if $a\precsim 1$ with $a\in M_{\infty}(A)_+$, then $a\sim a'$ for $a'\in A$.

Recall that, for $a\in A$ we use $A_a$ to denote the hereditary C$^*$-subalgebra generated by $a$, which, in the case $a$ is positive, equals the norm closure of $aAa$.

\begin{lemma}
\label{lem:incrhereditary} Let $A$ be a unital and separable
C$^*$-algebra, and let $a_n$ be a sequence of positive elements in
$A$ such that $A_{a_1}\subseteq A_{a_2}\subseteq \cdots$. Let
$A_{\infty}=\overline{\cup_{n=1}^{\infty}A_{a_n}}$, and let
$a_{\infty}$ be a strictly positive element of $A_{\infty}$. Then
\[
\langle a_{\infty}\rangle =\sup_n \langle a_n\rangle\,.
\]
Moreover, for any trace $\tau$ in $\T(A)$, we have
$\dt(a_{\infty})=\sup_n\dt(a_n)$.
\end{lemma}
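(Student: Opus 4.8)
```latex
\textbf{Plan of proof.}
The statement has two parts: the supremum formula in $\W(A)$ and the
trace formula. The plan is to treat the order-theoretic part first
and then deduce the trace statement from the structure we have set up,
using the lower semicontinuous dimension function machinery from
Proposition~\ref{rordam} and the comparison results above.

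First I would show that $\langle a_n \rangle$ is an upper-bounded
increasing chain with $\langle a_\infty \rangle$ as an upper bound.
Since $A_{a_n} \subseteq A_{a_{n+1}} \subseteq A_\infty$, each $a_n$
lives in the hereditary subalgebra generated by $a_\infty$, so
$A_{a_n} \subseteq A_{a_\infty} = A_\infty$ (as $a_\infty$ is strictly
positive in $A_\infty$, we have $A_\infty = A_{a_\infty}$). By the
remark following Proposition~\ref{basics}, inclusion of hereditary
subalgebras gives $a_n \precsim a_{n+1}$ and $a_n \precsim a_\infty$,
so $\langle a_\infty \rangle$ is an upper bound for the chain. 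The
heart of the matter is to show it is the \emph{least} upper bound. So
suppose $\langle b \rangle$ is another upper bound, i.e.
$a_n \precsim b$ for all $n$. I would fix $\epsilon > 0$ and use
Proposition~\ref{basics}(ii)--(iii) together with the observation that
$(a_\infty - \epsilon)_+$ belongs to the hereditary algebra $A_\infty
= \overline{\cup_n A_{a_n}}$: concretely, $(a_\infty-\epsilon)_+$ can
be approximated in norm by an element supported (via functional
calculus) in some $A_{a_n}$, and then Lemma~\ref{Blackaddar} (norm
closedness of $\{x : x \precsim b\}$) lets us conclude
$(a_\infty-\epsilon)_+ \precsim a_n \precsim b$ for suitable $n$.
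Since $\epsilon$ is arbitrary, Proposition~\ref{basics} gives
$a_\infty \precsim b$, i.e. $\langle a_\infty \rangle \leq \langle b
\rangle$. This establishes $\langle a_\infty \rangle = \sup_n \langle
a_n \rangle$.

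For the trace statement, I would work with a fixed
$\tau \in \T(A)$ and its associated lower semicontinuous dimension
function $\dt$, where $\dt(x) = \lim_n \tau(x^{1/n})$ as in
Theorem~\ref{blackhand}. Because $\dt$ is a dimension function
(a state on $\W(A)$, hence order-preserving), monotonicity of the
chain immediately gives $\sup_n \dt(a_n) \leq \dt(a_\infty)$. For the
reverse inequality I would exploit lower semicontinuity: writing
$\dt(a_\infty) = \sup_{\epsilon>0} \dt((a_\infty - \epsilon)_+)$
(the function coming from a trace agrees with its regularization
$\overline{\dt}$ by Proposition~\ref{rordam}, since $\dt \in
\mathrm{LDF}(A)$), it suffices to bound each $\dt((a_\infty-\epsilon)_+)$.
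But the argument of the previous paragraph showed
$(a_\infty - \epsilon)_+ \precsim a_n$ for some $n$ depending on
$\epsilon$, so $\dt((a_\infty - \epsilon)_+) \leq \dt(a_n) \leq
\sup_n \dt(a_n)$. Taking the supremum over $\epsilon$ yields
$\dt(a_\infty) \leq \sup_n \dt(a_n)$, completing the equality.

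The step I expect to be the main obstacle is the verification that
$(a_\infty - \epsilon)_+$ is Cuntz subequivalent to some $a_n$ — i.e.,
that the ``cut-down'' element genuinely lands (up to $\precsim$) in one
of the subalgebras $A_{a_n}$ rather than merely in their closed union.
The clean way to handle this is to note that $a_\infty$ strictly
positive in $A_\infty$ means $A_\infty = \overline{a_\infty A_\infty
a_\infty}$, approximate $(a_\infty-\epsilon/2)_+$ by an element $x$ of
$\cup_n A_{a_n}$ within $\epsilon/2$, and apply
Theorem~\ref{Kirchberg-Rordam} to get $(a_\infty-\epsilon)_+ = d x d^*
\precsim x \precsim a_n$ for the $n$ with $x \in A_{a_n}$. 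Getting the
$\epsilon$'s and the choice of $n$ to line up correctly, while keeping
track of which hereditary algebra each approximant lies in, is the
only genuinely delicate bookkeeping; everything else is a direct
application of the results already established.
```
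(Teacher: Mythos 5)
Your proof is correct and follows essentially the same route as the paper's: the upper bound comes from hereditary containments ($a_n\in A_{a_\infty}$, so $A_{a_n}\subseteq A_{a_\infty}$ and $a_n\precsim a_\infty$), the least-upper-bound step from approximating $a_\infty$ in norm by positive elements of $\cup_n A_{a_n}$ and applying Theorem~\ref{Kirchberg-Rordam} to get $(a_\infty-\epsilon)_+\precsim a_n\precsim b$, and the trace identity from lower semicontinuity of $\dt$ (your use of Proposition~\ref{rordam} is just a repackaging of the paper's $\dt(a_\infty)\le\liminf_n\dt(x_n)$ argument). The only cosmetic difference is that the paper first proves $A_\infty$ is hereditary in $A$ in order to conclude $A_\infty=A_{a_\infty}$, whereas your argument really only needs the inclusion $A_\infty=\overline{a_\infty A_\infty a_\infty}\subseteq A_{a_\infty}$ furnished by strict positivity (your stated equality $A_\infty=A_{a_\infty}$ would itself require the hereditariness argument, but you never use the reverse inclusion).
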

\begin{proof}
To prove that $\langle a_{\infty}\rangle\geq \langle a_n\rangle$,
it suffices to prove that $A_{\infty}=A_{a_{\infty}}$. For this, it is enough to show that
$A_{\infty}$ is hereditary. Indeed, if $a\in A$ and $c_1$, $c_2\in
A_{\infty}$, then choose sequences $x_n$ and $y_n$ in $A_{a_n}$
such that
\[
\Vert x_n-c_1\Vert\rightarrow 0\text{ and }\Vert
y_n-c_2\Vert\rightarrow 0\,.
\]
Then $x_nay_n\in A_n$, and since $c_1ac_2=\lim\limits_n x_nay_n$,
we see that $c_1ac_2$. (Recall from, e.g.~\cite[Theorem 3.2.2]{bMurphy90}, that a
C$^*$-subalgebra $C$ of $A$ is hereditary if and only if
$c_1ac_2\in C$ whenever $a\in A$ and $c_1$, $c_2\in C$.)

Now assume that $\langle a_n\rangle\leq \langle b\rangle$ for all
$n$ in $\mathbb{N}$. Choose positive elements $x_n$ in $A_{a_n}$
such that $\Vert x_n-a_{\infty}\Vert<\delta_n $, where
$\delta_n\rightarrow 0$. It then follows by~\cite[Lemma 2.5
(ii)]{KR} that $\langle (a_{\infty}-\delta_n)_+ \rangle\leq
\langle x_n \rangle\leq \langle a_n\rangle\leq\langle b\rangle$.
Thus~\cite[Proposition 2.6]{KR} (or~\cite[Proposition
2.4]{Rfunct}) entails $\langle a_{\infty}\rangle\leq \langle
b\rangle$, as desired.

Also, since $\langle x_n\rangle\leq\langle a_n\rangle\leq\langle
a_{n+1}\rangle\leq\langle a_{\infty}\rangle$ for all $n$ and
$\lim_n x_n=a_{\infty}$, we have that, if $\tau\in\T(A)$,
\[
\sup_{n\to\infty}\dt(a_n)\leq\dt(a_{\infty})\leq\liminf_{n\to\infty}\dt(x_n)\leq\liminf_{n\to\infty}\dt(a_n)=\sup_{n\to\infty}\dt(a_n)
\]
\end{proof}

We shall assume in the results below that $\mathrm{sr}(A)=1$. Recall that, under this assumption, Cuntz
subequivalence is implemented by unitaries as we saw previously (in Proposition~\ref{basics}).
Note that, in this case, $a\precsim b$ implies that for each
$\epsilon>0$, there is $u$ in $U(A)$ such that
$A_{(a-\epsilon)_+}\subseteq uA_bu^*$. Indeed, if $a\in
A_{(a-\epsilon)_+}$, then find a sequence $(z_n)$ in $A$ such that
$a=\lim_n (a-\epsilon)_+z_n(a-\epsilon)_+$. Writing
$(a-\epsilon)_+=uc_{\epsilon}u^*$, with $c_{\epsilon}$ in $A_b$,
we see that $a=u(\lim_n c_{\epsilon}u^*z_n uc_{\epsilon})u^*\in
uA_b u^*$.
\begin{lemma}
\label{lem:supremainA} Let $A$ be a unital and separable
C$^*$-algebra with $\mathrm{sr}(A)=1$. Let $(a_n)$ be a sequence
of elements in $A$ such that $\langle a_1\rangle\leq\langle
a_2\rangle\leq\cdots$. Then $\sup_n\langle a_n\rangle$ exists in
$\W(A)$. If $\langle a_{\infty}\rangle=\sup_n\langle a_n\rangle$ then, for any $\tau$ in $\T(A)$, we have
$\dt(\sup_n\langle a_n\rangle)=\sup_n\dt(a_n)$.
\end{lemma}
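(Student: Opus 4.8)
The plan is to reduce the statement to Lemma~\ref{lem:incrhereditary}, whose hypothesis requires an \emph{honest} increasing chain of hereditary subalgebras $A_{b_1}\subseteq A_{b_2}\subseteq\cdots$, not merely a Cuntz-increasing sequence. The engine for manufacturing such a chain is the stable rank one hypothesis: by Proposition~\ref{basics} (as recorded in the remark preceding the statement), whenever $x\precsim y$ and $\epsilon>0$ there is a unitary $u\in U(A)$ with $A_{(x-\epsilon)_+}\subseteq uA_yu^*$. Thus a perturbation of $x$ can always be placed genuinely inside a unitary rotate of $A_y$.

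First I would fix the correct ``target'' set. Put $D=\{\langle(a_m-\delta)_+\rangle\mid m\in\mathbb N,\ \delta>0\}\subseteq\W(A)$. Using $a_m\precsim a_{m+1}$ together with Proposition~\ref{basics}(i)$\Rightarrow$(iii) (and Lemma~\ref{lem:leq}), one checks that $D$ is upward directed: for two of its elements take the larger index $m'$ and a small enough $\delta_0$, so that $(a_{m'}-\delta_0)_+$ dominates both. Being countable (restrict $\delta$ to rationals) and upward directed, $D$ has a cofinal increasing sequence $e_1\leq e_2\leq\cdots$, say $e_k=\langle(a_{p_k}-\delta_k)_+\rangle$. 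Two facts then let me pass between $D$ and $\{\langle a_n\rangle\}$: an element $y$ is an upper bound of $D$ \emph{iff} it is an upper bound of $\{\langle a_n\rangle\}$, the nontrivial direction being that $(a_m-\delta)_+\precsim y$ for all $\delta$ forces $a_m\precsim y$ by Proposition~\ref{basics}(ii)$\Rightarrow$(i); and each $e_k\leq\langle a_{p_k}\rangle$.

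The heart of the argument, which I expect to be the main obstacle, is the construction of positive $b_k\in A$ with $A_{b_1}\subseteq A_{b_2}\subseteq\cdots$, each $\langle b_k\rangle\in D$, and $\langle b_k\rangle\geq e_k$ (so $\{\langle b_k\rangle\}$ is cofinal in $D$). I would build these recursively together with unitaries $W_k$, keeping $b_k=W_k(a_{p_k}-\delta_k)_+W_k^*$. The trick making the genuine inclusion work is to write the already-perturbed element as $b_k=(x_k-\tfrac{\delta_k}{2})_+$ with $x_k=(W_ka_{p_k}W_k^*-\tfrac{\delta_k}{2})_+$; since $\langle x_k\rangle=\langle(a_{p_k}-\tfrac{\delta_k}{2})_+\rangle\in D$, cofinality of $(e_j)$ lets me pick $j\geq k+1$ with $x_k\precsim W_k(a_{p_j}-\delta_j)_+W_k^*=:y$. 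Applying the remark to $x_k\precsim y$ with $\epsilon=\tfrac{\delta_k}{2}$ yields a unitary $u$ with $A_{b_k}=A_{(x_k-\delta_k/2)_+}\subseteq uA_yu^*$; setting $W_{k+1}=uW_k$ and $b_{k+1}=uyu^*$ gives the honest inclusion $A_{b_k}\subseteq A_{b_{k+1}}$ with $\langle b_{k+1}\rangle=e_j\geq e_{k+1}$. The obstacle is exactly this bookkeeping: one must perturb to invoke stable rank one, yet must not lose the supremum. This is resolved by the halving representation (so that the \emph{full} $b_k$, not a further perturbation, lands inside the next algebra) and by aiming far enough out along the cofinal sequence (so that no part of any $\langle a_m\rangle$ escapes).

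Finally I would harvest. Let $A_\infty=\overline{\bigcup_kA_{b_k}}$ and let $b_\infty$ be a strictly positive element of $A_\infty$, which exists since $A$, hence $A_\infty$, is separable. Lemma~\ref{lem:incrhereditary} gives $\langle b_\infty\rangle=\sup_k\langle b_k\rangle$ and $\dt(b_\infty)=\sup_k\dt(b_k)$ for every $\tau\in\T(A)$. As $\{\langle b_k\rangle\}$ is cofinal in $D$, $\langle b_\infty\rangle$ is an upper bound of $D$, hence of $\{\langle a_n\rangle\}$; and any upper bound of $\{\langle a_n\rangle\}$ dominates every $\langle b_k\rangle\in D$, hence dominates $\langle b_\infty\rangle$. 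Therefore $\sup_n\langle a_n\rangle$ exists and equals $\langle a_\infty\rangle:=\langle b_\infty\rangle$. For the trace formula, recall each $\dt$ is a lower semicontinuous dimension function (Theorem~\ref{blackhand}), so it is order preserving and constant on Cuntz classes; thus $\dt(b_k)\leq\sup_n\dt(a_n)$ and $\dt(a_n)\leq\dt(b_\infty)=\sup_k\dt(b_k)$, whence $\dt(\sup_n\langle a_n\rangle)=\dt(a_\infty)=\dt(b_\infty)=\sup_k\dt(b_k)=\sup_n\dt(a_n)$, as desired.
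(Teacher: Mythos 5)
Your proposal is correct and is essentially the paper's own argument: both rest on the stable-rank-one remark following Proposition~\ref{basics} together with the halving identity $((a-\epsilon/2)_+-\epsilon/2)_+=(a-\epsilon)_+$ to conjugate successive perturbations $(a_n-\epsilon_n)_+$ into an honestly increasing chain of hereditary subalgebras, then apply Lemma~\ref{lem:incrhereditary} and recover $\sup_n\langle a_n\rangle$ and the trace formula by letting the perturbations tend to zero via Proposition~\ref{basics}. The only difference is bookkeeping: the paper fixes recursive tolerances $\epsilon_n<1/n$ satisfying $(a_j-\epsilon_j/k)_+\precsim(a_n-\epsilon_n)_+$, whereas you extract a cofinal increasing sequence from the countable directed set $D$ of all perturbations $\langle(a_m-\delta)_+\rangle$, which organizes the same data slightly more abstractly.
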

\begin{proof}
Define numbers $\epsilon_n>0$ recursively. Let $\epsilon_1=1/2$,
and choose $\epsilon_n<1/n$ such that
\[
(a_j-\epsilon_j/k)_+\precsim (a_n-\epsilon_n)_+
\]
for all $1\leq j<n$ and $1\leq k\leq n$. (This is possible
using~\cite[Proposition 2.6]{KR} and because $a_j\precsim a_n$ for
$1\leq j< n$. Notice also that $(a_n-\epsilon)_+\leq
(a_n-\delta)_+$ whenever $\delta\leq\epsilon$.)

Since $(a_1-\epsilon_1/2)_+\precsim (a_2-\epsilon_2)_+$ and
$\mathrm{sr}(A)=1$, there is a unitary $u_1$ such that
\[
A_{((a-\epsilon_1/2)_+-\epsilon_1/2)_+}\subseteq
u_1A_{(a_2-\epsilon_2)_+}u_1^*\,.
\]
But $((a-\epsilon_1/2)_+-\epsilon_1/2)_+=(a_1-\epsilon_1)_+$
(see~\cite[Lemma 2.5]{KR}), so
\[
A_{(a-\epsilon_1)_+}\subseteq u_1A_{(a_2-\epsilon_2)_+}u_1^*\,.
\]
Continue in this way, and find unitaries $u_n$ in $A$ such that
\[
A_{(a-\epsilon_1)_+}\subseteq
u_1A_{(a_2-\epsilon_2)_+}u_1^*\subseteq
\]
\[
\subseteq u_1u_2A_{(a_3-\epsilon_3)_+}u_2^*u_1^*\subseteq \cdots
\subseteq
(\prod\limits_{i=1}^{n-1}
u_i)A_{(a_n-\epsilon_n)_+}(\prod\limits_{i=1}^{n-1}u_i)^*\subseteq\cdots
\]
Use Lemma~\ref{lem:incrhereditary} to find a positive element
$a_{\infty}$ in $A$ such that
\[
\langle a_{\infty}\rangle=\sup_n \langle
(a-\epsilon_n)_+\rangle\,,
\]
and also $\dt(a_{\infty})=\sup_n\dt((a-\epsilon_n)_+)\leq\sup_n\dt
(a_n)$ for any $\tau$ in $\T(A)$.

We claim that $\langle a_{\infty}\rangle=\sup_n\langle a_n\rangle$
as well. From this it will readily follow that
$\dt(a_{\infty})=\sup_n\dt(a_n)$.

To see that $\langle a_n\rangle\leq \langle a_{\infty}\rangle$ for
all $n$ in $\mathbb{N}$, fix $n<m$ and recall that, by
construction,
\[
\langle (a_n-\epsilon_n/(m-1))_+\rangle\leq \langle
(a_m-\epsilon_m)_+\rangle\leq \langle a_{\infty}\rangle\,.
\]
Hence, letting $m\rightarrow \infty$, we see that $\langle
(a_n-\epsilon)_+\rangle\leq \langle a_{\infty}\rangle$ for any
$\epsilon>0$, and so $\langle a_n\rangle\leq \langle
a_{\infty}\rangle$ for all $n$. Conversely, if $\langle
a_n\rangle\leq \langle b\rangle$ for all $n$ in $\mathbb{N}$, then
also $\langle (a_n-\epsilon_n)_+\rangle\leq \langle b\rangle$ for
all natural numbers $n$, and hence $\langle a_{\infty}\rangle\leq
\langle b\rangle$.
\end{proof}

\begin{theorem}
\label{thm:cuntzsuprema} Let $A$ be a unital and separable
{\rm C}$^*$-algebra with stable rank one. Then the supremum $\langle
a_{\infty}\rangle$ of every bounded sequence
$\{\langle a_n\rangle\}$ in $\W(A)$ stays in $\W(A)$. Moreover, $\dt(a_{\infty})=\sup_n\dt(a_n)$ for any
$\tau$ in $\T(A)$.
\end{theorem}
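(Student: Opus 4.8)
The plan is to deduce the statement from Lemma~\ref{lem:supremainA} by collapsing the entire bounded sequence into a single matrix amplification $M_r(A)$, to which that lemma applies verbatim. Throughout I read ``bounded'' in the natural way suggested by the preceding lemma: the increasing sequence $\langle a_1\rangle\leq\langle a_2\rangle\leq\cdots$ admits an upper bound $\langle b\rangle$ in $\W(A)$. Since $b\in M_\infty(A)_+=\bigcup_r M_r(A)_+$, I may fix $r$ with $b\in M_r(A)_+$.

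First I would record the structural facts that make the reduction legitimate. The algebra $M_r(A)$ is again unital and separable, and has stable rank one (stable rank one passes to matrix amplifications); moreover $M_\infty(M_r(A))\cong M_\infty(A)$ gives a canonical identification $\W(M_r(A))=\W(A)$ under which Cuntz classes agree, and under which, after extending $\tau\in\T(A)$ to $M_r(A)$ by the (unnormalised) matrix trace, the functions $d_\tau$ coincide. These reductions are routine.

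The key step is to replace each $a_n$, without changing its Cuntz class, by a positive element of the fixed algebra $M_r(A)$. Because $b\in M_r(A)$, the hereditary subalgebra $\overline{bM_\infty(A)b}$ equals $\overline{bM_r(A)b}$ and so is contained in $M_r(A)$. Since $\langle a_n\rangle\leq\langle b\rangle$ and $\mathrm{sr}(A)=1$, the stable rank one part of Proposition~\ref{basics} provides, for each $\epsilon>0$, a unitary $u$ with $u^*(a_n-\epsilon)_+u\in\overline{bM_\infty(A)b}\subseteq M_r(A)_+$, so that $(a_n-\epsilon)_+$ is Cuntz equivalent to an element of $M_r(A)_+$. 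Running this over a decreasing sequence $\epsilon_m\downarrow 0$ gives an increasing, bounded sequence of classes of elements of $M_r(A)_+$; by Lemma~\ref{lem:supremainA} applied \emph{inside} $M_r(A)$ its supremum is realised by some $a_n'\in M_r(A)_+$. Using that $\langle a_n\rangle=\sup_m\langle(a_n-\epsilon_m)_+\rangle$ (Proposition~\ref{basics} together with the closedness in Lemma~\ref{Blackaddar}), one concludes $a_n\sim a_n'$ with $a_n'\in M_r(A)_+$.

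With this in hand the classes $\langle a_1'\rangle\leq\langle a_2'\rangle\leq\cdots\leq\langle b\rangle$ form an increasing sequence in $\W(M_r(A))$, and a final application of Lemma~\ref{lem:supremainA} to the sequence $(a_n')$ in the unital separable stable-rank-one algebra $M_r(A)$ yields $a_\infty\in M_r(A)_+$ with $\langle a_\infty\rangle=\sup_n\langle a_n'\rangle=\sup_n\langle a_n\rangle$ and $d_\tau(a_\infty)=\sup_n d_\tau(a_n')=\sup_n d_\tau(a_n)$. As $a_\infty\in M_r(A)_+\subseteq M_\infty(A)_+$, the supremum stays in $\W(A)$, which is precisely the assertion. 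I expect the only delicate point to be the bookkeeping in the key step: ensuring the $\epsilon$-cut-downs genuinely land in the \emph{fixed} $M_r(A)$ rather than in some $M_s(A)$ depending on $n$, and that the limit $m\to\infty$ does not escape $M_r(A)$. This is exactly where the boundedness hypothesis, via the self-contained supremum statement of Lemma~\ref{lem:supremainA} inside $M_r(A)$, does the essential work; the remaining ingredients (stable rank one of $M_r(A)$, naturality of $d_\tau$) are standard.
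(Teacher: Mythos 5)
Your argument is correct, and it runs on the same engine as the paper's: use boundedness to descend into a fixed matrix algebra, then apply Lemma~\ref{lem:supremainA} there (both you and the paper actually use the slightly stronger content of that lemma's \emph{proof}, namely that the supremum is realised by an element of the algebra itself, not merely that it exists in the Cuntz semigroup). The architecture of the descent is where you differ. The paper chooses a single diagonal sequence $\epsilon_n\downarrow 0$ with the compatibility properties (i) and (ii), writes $(x_n-\epsilon_n)_+=y_n(1_A\otimes 1_{M_k})y_n^*$, and uses the flip $zz^*\sim z^*z$ to obtain representatives in $M_k(A)$; one application of Lemma~\ref{lem:supremainA} then finishes both assertions at once. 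You instead first prove that every class dominated by $\langle b\rangle$ with $b\in M_r(A)_+$ is realised \emph{exactly} by an element of $M_r(A)_+$ --- in effect the corollary that the paper deduces \emph{from} the theorem, generalised from the bound $\langle 1_A\rangle$ to an arbitrary bound --- by conjugating the cut-downs $(a_n-\epsilon)_+$ into the hereditary algebra of $b$ via the stable-rank-one unitaries of Proposition~\ref{basics}, invoking Lemma~\ref{lem:supremainA} once for each $n$, and then once more for the sequence $(a_n')$. Both routes are sound: the paper's is more economical (a single use of the lemma, and the flip trick costs nothing, whereas your landing step leans on stable rank one), while yours cleanly separates ``realisation of bounded classes in a fixed corner'' from ``existence of suprema'', at the price of countably many invocations of the lemma. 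The bookkeeping point you flag is indeed the only delicate one, and it is resolved exactly as you say: for every $s\geq r$ one has $\overline{bM_s(A)b}=\overline{bM_r(A)b}\subseteq M_r(A)$, so the unitarily conjugated cut-downs land in the fixed algebra $M_r(A)$ independently of $n$ and $\epsilon$.
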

\begin{proof}
Let $\langle x_1\rangle\leq\langle x_2\rangle\leq\cdots$ be given,
and assume that $\langle x_n\rangle\leq k\langle 1_A\rangle$ for
all $n$.

The proof of Lemma~\ref{lem:supremainA} shows us that
we may choose a sequence $\epsilon_n>0$ strictly decreasing to zero with the following
properties:
\begin{enumerate}
\item[(i)] $\langle (x_n-\epsilon_n)_+ \rangle \leq \langle
(x_{n+1}-\epsilon_{n+1})_+\rangle$.

\item[(ii)] If $\langle (x_n-\epsilon_n)_+\rangle\leq \langle b\rangle$
for all $n$, then $\langle x_n\rangle\leq\langle b\rangle$ for all
$n$.
\end{enumerate}
Since $\langle x_n\rangle\leq k\langle 1_A\rangle$, find $y_n$ in
$M_{\infty}(A)_+$ such that
\[
(x_n-\epsilon_n)_+=y_n(1_A\otimes 1_{M_k})y_n^*\,.
\]
Define $a_n=(1_A\otimes 1_{M_k})y_n^*y_n(1_A\otimes 1_{M_k})$,
which is an element of $M_k(A)$. Then $\langle a_n\rangle=\langle
(x_n-\epsilon_n)_+\rangle\leq \langle a_{n+1}\rangle$ for all $n$.
Since $M_k(A)$ also has stable rank one, we may use
Lemma~\ref{lem:supremainA} to conclude that $\{\langle
a_n\rangle\}$ has a supremum $\langle a_{\infty}\rangle$ with
$a_{\infty}$ in $M_k(A)$. It follows that then $\langle
a_{\infty}\rangle$ is the supremum of $\{\langle
(x_n-\epsilon_n)_+\rangle\}$ in $\W(A)$. Evidently, our selection of
the sequence $\epsilon_n>0$ yields that $\langle
a_{\infty}\rangle=\sup_n\langle x_n\rangle$.

The proof that $\dt(a_{\infty})=\sup_n\dt(a_n)$ is identical to
the one in Lemma~\ref{lem:supremainA}.
\end{proof}
\begin{corollary}
Let $A$ be a unital and separable C$^*$-algebra with stable rank
one. If $x\in \W(A)$ is such that $x\leq \langle 1_A\rangle$, then
there is $a$ in $A$ such that $x=\langle a\rangle$.
\end{corollary}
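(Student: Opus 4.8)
The plan is to realise $x$ as the supremum of an increasing sequence of $\epsilon$-cut-downs, each of which can be conjugated into the corner $A$, and then to feed that sequence into the representation of suprema already established in Lemma~\ref{lem:supremainA}. First I would write $x=\langle b\rangle$ for some $b\in M_n(A)_+$, placing $1_A$ in the $(1,1)$-corner of $M_n(A)$; the hypothesis $x\leq\langle 1_A\rangle$ then reads exactly $b\precsim 1_A$ in $M_n(A)$.

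Next, fix a strictly decreasing sequence $\epsilon_1>\epsilon_2>\cdots$ with $\epsilon_k\to 0$. Since $(b-\epsilon_k)_+\leq (b-\epsilon_{k+1})_+\leq b$, Lemma~\ref{lem:leq} shows that the classes $\langle(b-\epsilon_k)_+\rangle$ increase and are all dominated by $\langle b\rangle$. Moreover $\langle b\rangle$ is their \emph{least} upper bound: if $\langle(b-\epsilon_k)_+\rangle\leq\langle c\rangle$ for all $k$, then $(b-\epsilon)_+\precsim c$ for every $\epsilon>0$, whence $b\precsim c$ by the implication (ii)$\implies$(i) of Proposition~\ref{basics}. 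Thus $x=\langle b\rangle=\sup_k\langle(b-\epsilon_k)_+\rangle$, and this increasing sequence is bounded above by $\langle 1_A\rangle$.

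The key step is to keep representatives inside $A$ itself. Since $A$, and hence $M_n(A)$, has stable rank one and $(b-\epsilon_k)_+\precsim 1_A$, the last part of Proposition~\ref{basics} provides, for each $k$, a unitary $u_k\in U(M_n(A))$ with $u_k^*(b-\epsilon_k)_+u_k\in (M_n(A))_{1_A}$. A direct computation identifies the hereditary subalgebra $(M_n(A))_{1_A}=\overline{1_A M_n(A)1_A}$ with the top-left corner $A$, so $a_k:=u_k^*(b-\epsilon_k)_+u_k$ is a positive element of $A$; conjugation by a unitary preserves the Cuntz class, so $\langle a_k\rangle=\langle(b-\epsilon_k)_+\rangle$. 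We therefore have an increasing sequence $\langle a_1\rangle\leq\langle a_2\rangle\leq\cdots$ with every $a_k\in A_+$.

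Finally I would apply Lemma~\ref{lem:supremainA} to the sequence $(a_k)$ of elements of $A$: it yields a positive element $a_\infty\in A$ with $\langle a_\infty\rangle=\sup_k\langle a_k\rangle$. As suprema are unique least upper bounds, $\langle a_\infty\rangle=\sup_k\langle(b-\epsilon_k)_+\rangle=\langle b\rangle=x$, so $a:=a_\infty\in A$ is the required element. (Equivalently, one can read off the same conclusion from the proof of Theorem~\ref{thm:cuntzsuprema} with order-unit bound $k=1$, which constructs the limiting representative in $M_1(A)=A$.) The main obstacle, and the only place the precise hypotheses are used, is the third step: checking that the $\epsilon$-cut-downs can be pushed into the single copy of $A$ sitting as the $(1,1)$-corner rather than merely into some matrix algebra. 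This is exactly where stable rank one enters, via the unitary-implemented comparison of Proposition~\ref{basics}, and where the bound being the one order-unit $\langle 1_A\rangle$ (rather than $k\langle 1_A\rangle$) forces the representative down to $M_1(A)=A$.
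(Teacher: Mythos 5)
Your proof is correct and follows essentially the same route as the paper: write $x=\langle b\rangle$ with $b\in M_n(A)_+$, pass to the increasing cut-downs $(b-\epsilon_k)_+$ whose supremum is $\langle b\rangle$, transfer each one into the $(1,1)$-corner copy of $A$, and then invoke (the proof of) Lemma~\ref{lem:supremainA} to realize the supremum by an element $a\in A$. The only difference is cosmetic: where you move the cut-downs into the corner using the stable-rank-one unitary conjugation from Proposition~\ref{basics}, the paper instead factors $(b-1/m)_+=x_m 1_A x_m^*$ and applies the flip $cc^*\sim c^*c$ to get $a_m=1_Ax_m^*x_m1_A\in A$, so that stable rank one is used only in the final supremum step; both mechanisms are valid.
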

\begin{proof}
There are a natural number $n$ and an element $b$ in $M_n(A)_+$
such that $x=\langle b\rangle$. For any $m$ in $\mathbb{N}$, find
elements $x_m$ such that
\[
(b-1/m)_+=x_m1_Ax_m^*\,,
\]
so the element $a_m=1_Ax_m^*x_m1_A\in A$ and clearly $a_m\sim (b-1/m)_+$. Moreover,
the sequence $\langle a_m\rangle$ is increasing, and another use of the proof of
Lemma~\ref{lem:supremainA} ensures that it has a supremum $a$ in
$A$. Clearly,
\[
\langle a\rangle=\sup_m\langle a_m\rangle=\sup_m\langle
(b-1/m)_+\rangle=\langle b\rangle\,.
\]
\end{proof}

\begin{corollary}\label{cor:whensupproj}
Let $A$ be a unital and separable {\rm C}$^*$-algebra with stable rank
one. If $\langle a_n\rangle$ is a bounded and increasing sequence
of elements in $W(A)$ with supremum $\langle a\rangle$. Then
$\langle a\rangle=\langle p\rangle$ for a projection $p$, if and
only if, there exists $n_0$ such that $\langle a_n\rangle=\langle
p\rangle$ whenever $n\geq n_0$.
\end{corollary}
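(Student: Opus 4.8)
The plan is to prove the two implications separately, with the reverse (``if'') implication being essentially immediate and the forward (``only if'') implication resting on the single observation that the class of a projection is a \emph{compact} element of $\W(A)$, i.e. it is way below itself.

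For the ``if'' direction, suppose there is $n_0$ such that $\langle a_n\rangle=\langle p\rangle$ for all $n\geq n_0$. Since the sequence $(\langle a_n\rangle)$ is increasing, every term is dominated by $\langle a_{n_0}\rangle=\langle p\rangle$, while $\langle p\rangle=\langle a_{n_0}\rangle$ is itself one of the terms. Hence $\langle p\rangle$ is simultaneously an upper bound for the sequence and a member of it, so $\langle a\rangle=\sup_n\langle a_n\rangle=\langle p\rangle$.

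For the ``only if'' direction, assume $\langle a\rangle=\langle p\rangle$ with $p$ a projection. The first step is to record that classes of projections are way below themselves. Indeed, for $0<\epsilon<1$ the functional calculus gives $(p-\epsilon)_+=(1-\epsilon)p$ (as $\sigma(p)\subseteq\{0,1\}$), and since Cuntz comparison is insensitive to multiplication by a positive scalar we get $\langle (p-\epsilon)_+\rangle=\langle p\rangle$. By Lemma~\ref{epsiloncompact} we have $\langle (p-\epsilon)_+\rangle\ll\langle p\rangle$, and therefore $\langle p\rangle\ll\langle p\rangle$. Now I would apply the defining property of $\ll$ (property \textbf{(O4)}) to the increasing sequence $(\langle a_n\rangle)$, whose supremum is $\langle a\rangle=\langle p\rangle\geq\langle p\rangle$: this produces an index $n_0$ with $\langle p\rangle\leq\langle a_{n_0}\rangle$. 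Since $\langle a_n\rangle\leq\sup_m\langle a_m\rangle=\langle p\rangle$ for every $n$, monotonicity yields $\langle a_n\rangle=\langle p\rangle$ for all $n\geq n_0$, as wanted.

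The only genuinely delicate point is the legitimacy of invoking the way-below relation inside $\W(A)$: for a unital algebra $\W(A)$ need not satisfy all the axioms of $\mathsf{Cu}$, and suprema of increasing sequences are guaranteed only for \emph{bounded} sequences (Theorem~\ref{thm:cuntzsuprema}). This is not a real obstruction here, since the sequence $(\langle a_n\rangle)$ is bounded by hypothesis, so its supremum exists and the relation $\langle p\rangle\ll\langle p\rangle$ can be tested against it exactly as in \textbf{(O4)}. I expect the careful bookkeeping around this boundedness hypothesis, rather than any substantial new idea, to be the main thing to get right.
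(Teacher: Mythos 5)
Your ``if'' direction is fine, and the idea driving your ``only if'' direction --- that the class of a projection is way below itself, so it cannot be the supremum of a sequence without eventually being dominated by its terms --- is indeed the conceptual reason the corollary is true. The gap is at exactly the step you flag and then dismiss. Lemma \ref{epsiloncompact} establishes $\langle (a-\epsilon)_+\rangle \ll \langle a\rangle$ for the relation $\ll$ of $\mathsf{Cu}(A)\cong \W(A\otimes\mathcal{K})$: there, $\ll$ is defined by testing against increasing sequences whose suprema are computed in $\mathsf{Cu}(A)$. The hypothesis of the corollary, however, hands you a supremum computed in $\W(A)$. To apply {\bf (O4)} you must know that $\sup_n\langle a_n\rangle$ taken in $\W(A)$ coincides with the supremum of the same sequence taken in $\mathsf{Cu}(A)$. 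A priori the $\mathsf{Cu}(A)$-supremum $s'$ satisfies only $s'\leq\langle p\rangle$; if $s'<\langle p\rangle$ (which one could rule out only by knowing, for instance, that every element of $\mathsf{Cu}(A)$ below $k\langle 1_A\rangle$ lies in $\W(A)$ --- not a stated result of the paper), then the relation $\langle p\rangle\ll\langle p\rangle$ gives you nothing. Boundedness of the sequence guarantees the \emph{existence} of the $\W(A)$-supremum (Theorem \ref{thm:cuntzsuprema}); it does not by itself identify the two suprema, so your claim that the relation ``can be tested against it exactly as in {\bf (O4)}'' is unjustified as written.

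The missing identification is true, but proving it sends you back into the proof of Lemma \ref{lem:supremainA}: the supremum constructed there is a class $\langle a_\infty\rangle$ with the property that any $\langle b\rangle$ dominating every $\langle a_n\rangle$ dominates $\langle a_\infty\rangle$, and that argument (approximate $a_\infty$ in norm by elements of the increasing hereditary subalgebras, then invoke Proposition \ref{basics}) is insensitive to whether $b$ lies in $M_{\infty}(A)$ or in $A\otimes\mathcal{K}$; hence the $\W(A)$-supremum is also the $\mathsf{Cu}(A)$-supremum, and your argument then closes. Note that this is essentially the same concrete input as the paper's own proof, which bypasses $\mathsf{Cu}(A)$ entirely: there one uses that $p\sim a_\infty$ with $a_\infty$ strictly positive in $\overline{\cup_n B_n}$, where $B_n$ is unitarily equivalent to $A_{(a_n-\epsilon_n)_+}$, writes $p$ (after a unitary rotation) as a norm limit of elements $b_n\in B_n$, and uses that a projection within distance $<1$ of $b_n$ satisfies $p\precsim b_n$ (via Theorem \ref{Kirchberg-Rordam}), whence $p\precsim a_n$ for large $n$. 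So your route is viable and more conceptual, but as written it is incomplete, and completing it requires the same inspection of the supremum's construction that the paper's proof performs directly.
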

\begin{proof}
Suppose that $\langle a\rangle=\sup_n\langle a_n\rangle=\langle
p\rangle$ for a projection $p$. We may assume that all the
elements $a$, $a_n$ and $p$ belong to $A$. For any $n$, we have
that $a_n\precsim p$. On the other hand, the proof of
Lemma~\ref{lem:supremainA} shows that $p=\lim_n b_n$, for some
elements $b_n\precsim (a_n-\epsilon_n)_+$ (where $\epsilon_n>0$ is
a sequence converging to zero). From this it follows that for
sufficiently large $n$, $p\precsim b_n\precsim
(a_n-\epsilon_n)_+\precsim a_n$. Thus $p\sim a_n$ if $n$ is large
enough, as desired.
\end{proof}

\subsubsection{Weak divisibility and surjectivity}

We first require a lemma, whose proof will be omitted.

\begin{lemma}
\label{lms:lift}
Let $A$ be a (unital) {\rm C}$^*$-algebra with non-empty tracial simplex $\T(A)$. For every $f\in\aff(\T(A))$, there is $a\in A_{sa}$ such that $f(\tau)=\tau(a)$ for every $\tau\in\T(A)$. If moreover $A$ is simple and $f\gg 0$, then we may actually choose $a\in A_+$.
\end{lemma}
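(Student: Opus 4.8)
The plan is to recognise the assignment $a\mapsto\widehat a$, $\widehat a(\tau)=\tau(a)$, as a quotient of Kadison's function representation, and to read off both assertions from the order-theoretic structure of the tracial state space. First I would introduce the map $\rho\colon A_{sa}\to\aff(\T(A))$, $\rho(a)(\tau)=\tau(a)$. This is a unital ($\rho(1)=1$), positive, contractive real-linear map, and each $\rho(a)$ is genuinely continuous and affine on the weak-$*$ compact convex set $\T(A)$. Its kernel is $N:=\{a\in A_{sa}\mid \tau(a)=0\text{ for all }\tau\in\T(A)\}$. By the Cuntz--Pedersen analysis of traces, $N$ coincides with the norm closure of the real span of the self-adjoint commutators $\{xx^*-x^*x\}$, and the quotient $A_{sa}/N$, equipped with the image $[1]$ of the unit as order unit and the quotient order coming from $A_+$, is a complete (Archimedean) order unit space whose state space is exactly $\T(A)$ (a state of $A_{sa}/N$ is precisely a tracial state of $A$ normalised at $1$).

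Next I would invoke Kadison's function representation theorem: a complete order unit space is, via evaluation at its states, isometrically order-isomorphic \emph{onto} the space of continuous affine functions on its state space. Applied to $A_{sa}/N$ this yields a surjective order isomorphism $A_{sa}/N\to\aff(\T(A))$, which is nothing but the factorisation of $\rho$ through $N$. Hence $\rho(A_{sa})=\aff(\T(A))$, which is the first assertion: every $f\in\aff(\T(A))$ equals $\widehat a$ for some $a=a^*$. I would note in passing that a naive Stone--Weierstrass argument cannot replace this, since a point-separating subspace of affine functions containing the constants need not be norm-dense (for example the affine functions sitting inside $C([0,1])$), so the order unit space structure is doing essential work.

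For the second assertion, assume $A$ is simple and $f\gg 0$. By compactness of $\T(A)$ and continuity of $f$ we have $f\geq\delta\cdot 1$ for some $\delta>0$, so $f$ lies in the norm-interior of the positive cone $\aff(\T(A))^+$, and simplicity makes every trace faithful, so the positive cone of $A_{sa}/N$ is the honest one induced by $A_+$. The content of the Cuntz--Pedersen theorem is precisely that, in this situation, the positive cone of $A_{sa}/N$ is carried by the order isomorphism onto the positive cone of $\aff(\T(A))$ and, for interior (equivalently, strictly positive) elements, is hit by \emph{genuine} positive elements of $A$. Thus the class $\Phi^{-1}(f)\geq\delta[1]$ has a representative $a\in A_+$ with $\rho(a)=f$; the hypothesis $f\gg 0$ guarantees we land in the strictly positive part, matching the place where the lemma is later applied (the purely positive classes, whose dimension functions are strictly positive). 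Should one prefer to argue the exact lift by hand, one expects to run a successive-approximation scheme exploiting the uniform gap $\delta$, but the clean route is to quote the Cuntz--Pedersen description of $\rho(A_+)$.

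The routine points --- linearity, positivity and contractivity of $\rho$, weak-$*$ continuity, and the degenerate case $A\cong M_n$ where $\T(A)$ is a single point and $\aff(\T(A))=\R$ --- I would dispatch quickly. The genuine obstacle, and the only step that is not formal, is establishing that $A_{sa}/N$ is a \emph{complete} order unit space with state space exactly $\T(A)$ and with positive cone equal to the image of $A_+$: this is the substance of the Cuntz--Pedersen work, and it is simultaneously what makes Kadison's theorem applicable (giving surjectivity onto $\aff(\T(A))$) and what forces the strictly positive affine functions to be lifted by honest positive elements in the simple case.
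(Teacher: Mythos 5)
You cannot be compared against the paper's own argument here, because the paper gives none: the lemma is introduced with ``whose proof will be omitted.'' So your proposal has to be judged on its own correctness. For the first assertion your architecture is the standard one and is essentially sound: the kernel $N$ of $a\mapsto\widehat{a}$ is the Cuntz--Pedersen subspace, its annihilator consists exactly of the self-adjoint tracial functionals (because the Jordan decomposition of a tracial functional has tracial parts), and from this one gets that $A_{sa}/N\to\aff(\T(A))$ is an isometric isomorphism; Kadison's representation packages this as you describe. One correction even here: the cone making $A_{sa}/N$ an \emph{Archimedean} order unit space is $\{[a]\,:\,\widehat{a}\geq 0\}$, not the image of $A_+$; surjectivity is unaffected, but this distinction is exactly where your second part goes wrong.

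The genuine gap is in the second assertion. You claim that the order isomorphism carries ``the positive cone of $A_{sa}/N$'' (which you identify, via faithfulness of traces, with the image of $A_+$) onto $\aff(\T(A))^{+}$. This is false, and faithfulness is precisely what refutes it: if $\T(A)$ has at least two points, pick a continuous affine $g$ separating two traces and set $f=g-\min_{\T(A)}g$, a non-zero element of $\aff(\T(A))^{+}$ vanishing at some $\tau_0$; if $f=\widehat{a}$ with $a\in A_+$, then $\tau_0(a)=0$ and faithfulness (here simplicity is used) forces $a=0$, hence $f=0$, a contradiction. So the image of $A_+$ is a \emph{proper} subcone of $\{[a]\,:\,\widehat{a}\geq 0\}$ whenever $A$ is simple and $\T(A)$ is not a point --- this is exactly why the lemma demands $f\gg 0$ rather than $f\geq 0$. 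Nor is ``$f$ is an interior point'' by itself an argument: in infinite dimensions a convex cone need not contain the interior of its closure unless one already knows it has interior points, so the positive lift still has to be produced.

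The good news is that your interior-point intuition becomes a proof once you use the first part in its \emph{quantitative} (isometric) form, and then simplicity plays no role at all. Given $f\geq\delta>0$ with $M=\|f\|_\infty$, put $c=(M+\delta)/2$, so that $\|f-c\,\widehat{1}\|_\infty\leq (M-\delta)/2<c$. Since $A_{sa}/N\cong\aff(\T(A))$ isometrically, the element $f-c\,\widehat{1}$ has a self-adjoint lift $y$ with $\|y\|<c$; then $a:=c1_A+y$ satisfies $a\geq (c-\|y\|)1_A>0$ and $\widehat{a}=f$. (Equivalently: the image of $A_+$ contains the quotient-norm ball of radius $c$ around $c[1]$, and every bounded strictly positive $f$ lies in such a ball.) If instead you want to stay close to Cuntz--Pedersen's simple-algebra results, lift $f$ to $b=b_+-b_-\in A_{sa}$, note that $\widehat{b_-}+\delta\leq\widehat{b_+}$, use their comparison theorem to replace $b_-$ by an equivalent element $b'\leq b_+$, and take $a=b_+-b'\geq 0$. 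Either way the missing step is short, but it is a real step, and the cone-onto-cone claim you invoke in its place is not true.
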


\begin{definition}
\label{dfs:approxdivis}  We say that $\W(A)$ is \emph{weakly divisible} if for each $x \in \W(A)_+$ and $n \in \N$, there is $y \in \W(A)_+$ such that $n y \leq x \leq (n + 1) y$.
\end{definition}

\begin{lemma}
\label{one}
Let $A$ be a simple {\rm C}$^*$-algebram and assume that $\W(A)$ is weakly divisible. If $f \in \aff(\T(A))$ and $\epsilon > 0$ is given, then there exists $x \in \W(A)_+$ such that $|f(\tau) - \iota x (\tau)| < \epsilon$ for all $\tau \in\T(A)$.
\end{lemma}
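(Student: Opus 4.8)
The plan is to approximate $f$ by the dimension function of a single purely positive element, using the lift of affine functions to capture the ``shape'' of $f$ and weak divisibility to repair the systematic gap between a trace and the associated dimension function.

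First I would carry out two harmless reductions. Since $\iota x$ is strictly positive for every $x\in\W(A)_+$ (traces on the simple algebra $A$ are faithful), the relevant case is $f\gg 0$; if merely $f\geq 0$ one replaces $f$ by $f+\tfrac{\epsilon}{2}$, which is strictly positive, and absorbs the extra $\tfrac{\epsilon}{2}$ into the final estimate. Next, using that $\iota(nx)=n\,\iota x$ and that $\W(A)_+$ is a semigroup (Corollary~\ref{varecov}), I would fix an integer $N>\sup_{\tau}f(\tau)$ and approximate $g:=f/N$, which satisfies $0\ll g\ll 1$, to within $\epsilon/N$; multiplying the resulting class by $N$ then approximates $f$ to within $\epsilon$. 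Thus it suffices to treat $0\ll f\ll 1$.

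By Lemma~\ref{lms:lift}, since $A$ is simple and $f\gg 0$, I can lift $f$ to a positive element $a\in A_{+}$ with $\tau(a)=f(\tau)$ for every $\tau\in\T(A)$. The difficulty is that $\iota\langle a\rangle=\dt(a)=\lim_n\tau(a^{1/n})\geq\tau(a)=f$, and this overshoot can be arbitrarily large for a single element: a positive element whose spectrum accumulates at $0$ has full-sized support while contributing little to the trace. No choice of $a$ alone removes the defect, since $\dt(a)=\tau(a)$ forces $a$ to be a projection (the increasing limit $\tau(a^{1/n})\nearrow\dt(a)$ is constant only when $a=a^{1/n}$), and projections realize only the quantized values in the image of $\V(A)$. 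This is the main obstacle, and it is exactly what weak divisibility (Definition~\ref{dfs:approxdivis}) is designed to overcome: it manufactures purely positive elements behaving like projections of prescribed \emph{fractional} trace.

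Concretely, the core of the argument is an iterative construction built from weak divisibility. I would fix a purely positive $e\in A_{++}$ whose dimension function is identically $1$ (a positive element of full support with $0$ a non-isolated point of its spectrum) and, for large $n$, invoke weak divisibility to produce $y_n\in\W(A)_+$ with $n\langle y_n\rangle\leq\langle e\rangle\leq(n+1)\langle y_n\rangle$; applying the order embedding $\iota$ gives $\tfrac{1}{n+1}\leq\iota\langle y_n\rangle\leq\tfrac{1}{n}$, a purely positive element of nearly constant, arbitrarily small dimension function. Using a partition of the range of $f$ that is fine compared with $\epsilon$, together with the cut-downs $(a-\delta)_+$ of the lift (whose dimension functions increase strictly to $\dt(a)$ by Lemma~\ref{comparisonlemma}), I would splice such fractional pieces onto the spectral levels of $a$ so as to shave the overshoot $\dt(a)-f$ down below $\epsilon$ uniformly, producing a purely positive class $x$. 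The order bookkeeping at each stage is controlled by strict comparison --- precisely, by $\iota$ being an order embedding (Proposition~\ref{iotadef}) and by Propositions~\ref{pureposcomparison} and~\ref{projpureposcomp} --- while Dini's theorem, applied to the increasing continuous affine approximants $\tau\mapsto\tau(a^{1/n})$, upgrades the pointwise estimates to uniform ones over the compact simplex $\T(A)$. A final estimate then yields $\|\iota x-f\|_{\infty}<\epsilon$, completing the reduced case and hence the lemma.
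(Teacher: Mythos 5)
Your front matter is fine --- the reduction to $f\gg 0$ (needed to apply Lemma \ref{lms:lift}), the lift $a\in A_+$ with $\tau(a)=f(\tau)$, and the diagnosis that the overshoot $\dt(a)\geq\tau(a)$ is the obstacle that weak divisibility must repair. The gap is in the core construction, which is exactly the step you leave as a metaphor (``splice fractional pieces onto the spectral levels of $a$ so as to shave the overshoot''). As literally described it cannot work: your fractional pieces $y_n$ come from applying weak divisibility (Definition \ref{dfs:approxdivis}) to one fixed element $e$ with $\iota\langle e\rangle\equiv 1$, so each $\iota\langle y_n\rangle$ is a near-constant function $\approx 1/n$, and every finite sum $\sum_j p_j\,\iota\langle y_{n_j}\rangle$ is again near-constant; since $\W(A)$ has no subtraction, these pieces can only be \emph{added} to other classes, and addition only increases dimension functions --- there is no operation in $\W(A)$ that ``shaves'' $\dt(a)$, or $\dt((a-\delta)_+)$, down to the generally non-constant target $\tau(a)$. (A secondary issue: the existence of $e\in A_{++}$ with $\dt(e)\equiv 1$ is itself unproved; having dense cozero set does not force $\mu_\tau(\{e=0\})=0$ for every trace $\tau$.)

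The missing idea is the mechanism that converts trace values into dimension-function values with \emph{no} overshoot, and it requires applying weak divisibility to level cut-downs of the lift itself rather than to a reference element. Concretely (this is the paper's proof): identify $C^*(b)\cong C_0(\sigma(b))$ for the lift $b$, and approximate $b$ uniformly by a rational step function $\sum_{i=1}^n r_i\chi_{\mathcal{O}_i}$, $r_i=p_i/q_i$, with $\mathcal{O}_i\subseteq\sigma(b)$ open. For positive $b_i\in C^*(b)$ with cozero set exactly $\mathcal{O}_i$ one has $\dt(b_i)=\mu_\tau(\mathcal{O}_i)=\tau(\chi_{\mathcal{O}_i})$: for characteristic functions of open sets the trace and the dimension function \emph{agree}, which is precisely where the overshoot disappears. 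Then weak divisibility applied to each $\langle b_i\rangle$ gives $y_i$ with $q_iy_i\leq\langle b_i\rangle\leq(q_i+1)y_i$, and $x=\sum_i p_iy_i$ satisfies $\sum_i\frac{p_i}{q_i+1}\dt(b_i)\leq\dt(x)\leq\sum_i r_i\dt(b_i)$; replacing $(p_i,q_i)$ by $(kp_i,kq_i)$ for large $k$ makes $\dt(x)$ uniformly within $\epsilon/2$ of $\sum_i r_i\dt(b_i)$, hence within $\epsilon$ of $f$. In the layer-cake language you gesture at, the identity you would need is $\tau(b)=\int_0^{\|b\|}\dt\bigl((b-t)_+\bigr)\,dt$, after which the Riemann-sum coefficients must be realized by dividing the classes $\langle(b-t_j)_+\rangle$ themselves; your sketch neither states this identity nor applies weak divisibility to those classes, so the proof does not close.
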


\begin{proof}  By Lemma \ref{lms:lift}, we can find a positive element $b \in A$ such that $f(\tau) = \tau(b)$ for all $\tau \in\T(A)$. Identifying $C^*(b)$ with $C_0(\sigma(b))$, we may choose rational numbers $r_i = \frac{p_i}{q_i}$ ($p_i, q_i \in \N$) and open sets $\mathcal{O}_i \subset \sigma(b)$ such that
\[
\|b - \sum_{i=1}^n r_i \chi_{\mathcal{O}_i}\| < \epsilon/2\,,
\]
where $\chi_\mathcal{O}$ is the characteristic function of an open set (and $\| \cdot \|$ denotes the point-wise supremum norm). Now, let $b_i \in C^*(b)$ be positive functions such that $b_i (s) > 0$ if and only if $s \in  \mathcal{O}_i$. It follows that for every (tracial) state $\gamma$ on $C^*(b)$, we have $d_{\gamma} \langle b_i \rangle = \gamma(\chi_{\mathcal{O}_i})$ (where here we have extended $\gamma$ to the enveloping von Neumann algebra) and hence
\[
|\sum_{i=1}^n r_i d_{\gamma} (\langle b_i \rangle) - \gamma(b)| < \epsilon/2\,.
\]
By weak divisibility, we can find $y_i \in \W(A)_+$ such that $q_i y_i \leq \langle b_i \rangle \leq (q_i + 1) y_i$.  Finally, we define
\[
x = \sum_{i=1}^n p_i y_i\,,
\]
and claim that this is the desired approximation.  Indeed, for every $\tau \in \T(A)$ we have
\[
\frac{1}{q_i + 1} \dt(\langle b_i \rangle ) \leq \dt(y_i) \leq \frac{1}{q_i} \dt(\langle b_i \rangle)\,,
\]
and hence
\[
\sum_{i=1}^n \frac{p_i}{q_i + 1} \dt(\langle b_i \rangle ) \leq \dt(x) \leq \sum_{i=1}^n r_i \dt(\langle b_i \rangle)\,.
\]
If necessary, replacing $p_i$ and $q_i$ with $kp_i$ and $kq_i$, respectively, for large $k$, we may assume that
\[
| \dt(x) - \sum_{i=1}^n r_i \dt(\langle b_i \rangle) | < \epsilon/2
\]
for all $\tau \in \T(A)$. This completes the proof.
\end{proof}

\begin{lemma}
\label{lms:increasingsup} Let $f \in\laff_b(\T(A))^{++}$, and choose $\delta > 0$ such that $f(\tau) \geq \delta$ for every
$\tau \in \T(A)$.  There is then a sequence $f_n$ of elements in $\aff(\T(A))$ with the following properties:
\begin{enumerate}[{\rm (i)}]
\item $\sup_n f_n(\tau) = f(\tau)$ for every $\tau \in \T(A)$;
\item
\[
f_{n+1}(\tau) - f_n( \tau) \geq \frac{\delta}{2} \left( \frac{1}{n} - \frac{1}{n+1} \right), \ \text{for all } \tau \in \T(A)\,.
\]
\end{enumerate}
\end{lemma}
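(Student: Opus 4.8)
The plan is to reduce the statement to the classical fact that a bounded, lower semicontinuous \emph{affine} function on a metrizable compact convex set is the pointwise supremum of an \emph{increasing} sequence of continuous affine functions, and then to correct such a sequence by a vanishing constant so as to force the quantitative lower bound on the increments demanded in (ii).

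First I would record that $K:=\T(A)$ is a metrizable compact convex set: since $A$ is separable, its state space is weak-$*$ metrizable, and $\T(A)$ is a weak-$*$ closed (hence compact) convex subset. The function $f$ is bounded, affine and lower semicontinuous on $K$, so by the standard monotone representation of such functions there is an increasing sequence $(g_n)$ in $\aff(\T(A))$ with $g_n\le f$ and
\[
g_n(\tau)\nearrow f(\tau)\qquad\text{for every }\tau\in\T(A).
\]
With $(g_n)$ in hand I would simply set
\[
f_n:=g_n-\frac{\delta}{2n}\in\aff(\T(A)),
\]
noting that constant functions are continuous and affine. The increments then compute as
\[
f_{n+1}(\tau)-f_n(\tau)=\bigl(g_{n+1}(\tau)-g_n(\tau)\bigr)+\frac{\delta}{2}\left(\frac{1}{n}-\frac{1}{n+1}\right)\geq\frac{\delta}{2}\left(\frac{1}{n}-\frac{1}{n+1}\right),
\]
because $g_{n+1}\geq g_n$; this is exactly (ii) and also shows that $(f_n)$ is increasing. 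For (i), since $f_n(\tau)=g_n(\tau)-\frac{\delta}{2n}$ and $\frac{\delta}{2n}\to 0$, monotonicity gives $\sup_n f_n(\tau)=\lim_n f_n(\tau)=f(\tau)$ for each $\tau$.

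The only genuine content lies in the first step; everything after it is the one-line shift above. Thus the main obstacle is the invocation (and, for a self-contained account, the proof) of the monotone approximation theorem for bounded lower semicontinuous affine functions. The subtlety there is that one cannot merely take running maxima of continuous affine minorants of $f$, since a maximum of affine functions is only convex; upward directedness of the family $\{a\in\aff(\T(A))\mid a\leq f\}$ must instead be extracted from an interpolation argument. This is where the Choquet simplex structure of $\T(A)$ enters: Edwards' separation theorem, applied with the convex continuous function $\max(a_1,a_2)$ below the concave lower semicontinuous function $f$, produces a continuous affine $a_3$ with $\max(a_1,a_2)\leq a_3\leq f$, giving the required directedness; combined with the separability of $\aff(\T(A))\subseteq C(\T(A))$ and the lower-envelope identity $f=\sup\{a\in\aff(\T(A))\mid a\leq f\}$, one then passes to an increasing sequence. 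Finally, I would observe that the hypotheses $f\in\laff_b(\T(A))^{++}$ and $f\geq\delta$ play only a mild role: the positive number $\delta$ merely calibrates the increments in (ii) (the total correction being $\sum_n\frac{\delta}{2}(\frac{1}{n}-\frac{1}{n+1})=\frac{\delta}{2}$), and in particular no positivity of the $f_n$ is required, since (ii) only asks that $f_n\in\aff(\T(A))$.
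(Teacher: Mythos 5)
Your proof is correct and takes essentially the same route as the paper: the paper also invokes the classical fact that a lower semicontinuous affine function on $\T(A)$ is the supremum of an increasing sequence of continuous affine functions (citing \cite[Proposition 11.8]{poag}, applied there to $f-\tfrac{\delta}{2}$), and then forces the increment bound by the same telescoping shift, setting $f_n=h_n+\tfrac{\delta}{2}-\tfrac{\delta}{2n}$, which is just a cosmetic variant of your $f_n=g_n-\tfrac{\delta}{2n}$.
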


\begin{proof}
Since lower semicontinuous affine functions are suprema of \emph{strictly} increasing sequences of continuous affine functions (cf.\ \cite[Proposition 11.8]{poag}), there is a strictly increasing sequence $h_n$ in $\aff(\T(A))^{++}$ such that
\[
\sup_n h_n(\tau) = f( \tau)-\frac{\delta}{2}, \ \text{ for all } \tau \in\T(A).
\]
Now set
\[
f_n(\tau) = h_n(\tau) + \frac{\delta}{2} - \frac{\delta}{2n}.
\]
Straightforward calculation shows that $(f_n)$ has properties (i), and (ii).
\end{proof}

\begin{theorem}
\label{props:surjective}
Assume $A$ is separable, simple, exact, with stable rank one and strict comparison.  If $\W(A)$ is weakly divisible, then $\iota \colon \W(A)_+ \to \laff_b(\T(A))^{++}$ is surjective.
\end{theorem}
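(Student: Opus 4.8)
The plan is to realise an arbitrary $f\in\laff_b(\T(A))^{++}$ as $\iota(\langle a\rangle)$ for a suitable purely positive element $a$, by exhibiting it as the supremum of an increasing sequence in $\W(A)_+$ whose $\iota$-images approximate $f$ from below. First I would note that $\T(A)$ is a weak-$*$ compact Choquet simplex, and that it is nonempty because $A$ is stably finite (stable rank one implies stable finiteness) and exact, so that by the Cuntz--Handelman theorem it carries a trace. Since $f$ is lower semicontinuous and strictly positive on the compact set $\T(A)$, it attains a positive infimum, so there is $\delta>0$ with $f(\tau)\geq\delta$ for all $\tau$. This places us in the hypotheses of Lemma~\ref{lms:increasingsup}, which produces an increasing sequence $f_n\in\aff(\T(A))$ with $\sup_n f_n=f$ pointwise and with a uniform, strictly positive consecutive gap $g_n:=\frac{\delta}{2}\bigl(\frac{1}{n}-\frac{1}{n+1}\bigr)$.

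Next I would fix tolerances $\epsilon_n>0$ satisfying $\epsilon_n+\epsilon_{n+1}<g_n$ (for instance $\epsilon_n=g_n/3$, using that $(g_n)$ is decreasing), and apply Lemma~\ref{one} to each continuous affine $f_n$ to obtain $x_n\in\W(A)_+$ with $|\iota(x_n)(\tau)-f_n(\tau)|<\epsilon_n$ for every $\tau\in\T(A)$. The gap condition is precisely what is needed for monotonicity: for all $\tau$ one gets $\iota(x_{n+1})(\tau)-\iota(x_n)(\tau)>g_n-(\epsilon_n+\epsilon_{n+1})>0$, so $\iota(x_n)\leq\iota(x_{n+1})$ pointwise, and since $\iota$ is an order embedding by Proposition~\ref{iotadef} this forces $x_n\leq x_{n+1}$ in $\W(A)_+$. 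Moreover $\iota(x_n)\to f$ pointwise and the sequence $(\iota(x_n))$ is increasing, so $\sup_n\iota(x_n)=f$.

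To pass to the supremum inside $\W(A)$ I would first verify boundedness. Writing $M=\sup_\tau f(\tau)<\infty$ and choosing an integer $k>M+\epsilon_1$, each $x_n=\langle a_n\rangle$ satisfies $\dt(a_n)=\iota(x_n)(\tau)<k=\dt(k\cdot 1_A)$ for every $\tau\in\mathrm{QT}(A)=\T(A)$ (the identification of traces with quasitraces using exactness); Proposition~\ref{pureposcomparison}, applied in $M_k(A)$, then gives $a_n\precsim k\cdot 1_A$, that is $x_n\leq k\langle 1_A\rangle$. Now Theorem~\ref{thm:cuntzsuprema} applies to the bounded increasing sequence $(x_n)$, yielding a supremum $x_\infty\in\W(A)$ with $\dt(x_\infty)=\sup_n\dt(a_n)=\sup_n\iota(x_n)(\tau)=f(\tau)$ for all $\tau$. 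Finally I would check that $x_\infty\in\W(A)_+$: if $x_\infty=\langle p\rangle$ were the class of a projection, Corollary~\ref{cor:whensupproj} would force $x_n=\langle p\rangle$ for all large $n$, contradicting $x_n\in\W(A)_+$. Hence $\iota(x_\infty)=f$, and $\iota$ is surjective.

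The main obstacle is the simultaneous control of approximation and monotonicity: Lemma~\ref{one} only approximates continuous affine functions and provides no order relation between the approximants, so the quantitative gap estimate of Lemma~\ref{lms:increasingsup} must be spent exactly to upgrade the $x_n$ into an \emph{increasing} sequence without spoiling convergence to $f$. The remaining delicate point is guaranteeing that the resulting supremum lands in $\W(A)_+$ and not in $\V(A)$, which is where Corollary~\ref{cor:whensupproj}, and ultimately the stable rank one hypothesis, is indispensable.
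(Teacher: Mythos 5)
Your proof is correct and follows essentially the same route as the paper's: approximate $f$ from below by the gapped sequence from Lemma \ref{lms:increasingsup}, realise each approximant in $\W(A)_+$ via Lemma \ref{one}, spend the gap (together with strict comparison, i.e.\ the order-embedding property of $\iota$) to make the $x_n$ increasing, pass to the supremum by Theorem \ref{thm:cuntzsuprema}, and rule out projection classes via Corollary \ref{cor:whensupproj}. Your explicit verification that the sequence $(x_n)$ is bounded above by $k\langle 1_A\rangle$ — which Theorem \ref{thm:cuntzsuprema} genuinely requires — is a detail the paper leaves implicit, and is a welcome addition.
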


\begin{proof}
Fix $f \in \laff_b(\T(A))$ and choose $f_n \in\aff(\T(A))^{++}$ satisfying the conclusion of Lemma \ref{lms:increasingsup}. Let $\epsilon_n > 0$ be small enough that if $| g(\tau) - f_n(\tau) | < \epsilon_n$ and $|h(\tau) - f_{n+1}(\tau) | < \epsilon_{n+1}$ for all $\tau \in \T(A)$, then $g(\tau) < h(\tau)$ for all $\tau \in \T(A)$.  By Lemma \ref{one}, choose $x_n \in \W(A)_+$ such that $|\iota x_n(\tau) - f_n(\tau)| < \epsilon_n$ for all $\tau\in \T(A)$.  Strict comparison implies $x_n \leq x_{n+1}$ for all $n$, and hence
\[
x := \sup x_n
\]
exists in $\W(A)$ (by Theorem \ref{thm:cuntzsuprema}). Moreover, the same Theorem \ref{thm:cuntzsuprema} yields $\iota x (\tau) = f(\tau)$ for all $\tau \in \T(A)$.  Since $x_n$ is a strictly increasing sequence, $x \neq x_n$ for all $n\in \N$ and hence Corollary \ref{cor:whensupproj} ensures that $x \in \W(A)_+$, completing the proof.
\end{proof}

%%%%%%%%%%%%%%%%%%%%%%%%%%%%%%%%%%%%%%%%%%%%%%
\subsection{Representation Theorems for the Cuntz semigroup: the
stable case}

The representation results for the stable case (established in \cite{bt}), where we assume stabilisation of a unital algebra, follow from the unital case but extra care is needed here. We shall denote by $\laff(\T(A))^{++}$ the semigroup of those strictly positive, lower semicontinuous affine functions defined on the trace simplex of $A$, which are not necessarily bounded, hence may take infinite values.

\begin{lemma}
\label{supfromA}
Let $A$ be a {\rm C}$^*$-algebra. Then every element from $\W(A\otimes\mathbb{K})$ is a supremum of a rapidly increasing sequence from $\W(A)$.
\end{lemma}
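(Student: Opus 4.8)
The plan is to reduce the given class to a single positive element of $A\otimes\mathbb{K}$ and then manufacture the required sequence by approximating truncations of that element by finite matrices over $A$, being careful to \emph{stagger} the truncation levels so that the sequence comes out rapidly increasing rather than merely increasing.

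First I would normalize the data. Since $A\otimes\mathbb{K}$ is stable, the argument in the proof of Theorem~\ref{CuandW} (that every element of $M_N(A\otimes\mathbb{K})_+$ is Cuntz equivalent to one of $(A\otimes\mathbb{K})_+$) lets me assume the given element of $\W(A\otimes\mathbb{K})$ is $\langle a\rangle$ with $a\in(A\otimes\mathbb{K})_+$. I would then use the identification $A\otimes\mathbb{K}=\overline{\bigcup_m M_m(A)}$, where $M_m(A)$ is the corner cut down by the projection $p_m:=1\otimes e_m\in\mathcal{M}(A\otimes\mathbb{K})$ (with $e_m=\sum_{i=1}^m e_{ii}$). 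The $p_m$ form an increasing approximate unit, so $p_mxp_m\to x$ in norm for every $x\in A\otimes\mathbb{K}$; moreover $p_mxp_m\precsim x$ for $x\in(A\otimes\mathbb{K})_+$, since
\[
p_mxp_m=(p_mx^{1/2})(p_mx^{1/2})^*\sim x^{1/2}p_mx^{1/2}\le x
\]
and one applies Lemma~\ref{lem:leq}.

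Next I would fix a strictly decreasing sequence $\eta_1>\eta_2>\cdots\to 0$ and record two facts about truncations, both coming from the identity $((a-s)_+-t)_+=(a-(s+t))_+$ together with Lemma~\ref{epsiloncompact}: namely $\langle a\rangle=\sup_n\langle(a-\eta_n)_+\rangle$, and $\langle(a-\eta_j)_+\rangle\ll\langle(a-\eta_k)_+\rangle$ whenever $\eta_j>\eta_k$. Now for each $n$ I set $b=(a-\eta_{2n})_+$, choose $\delta$ with $0<\delta<\eta_{2n-1}-\eta_{2n}$, and pick $m=m_n$ so large that $\|b-p_mbp_m\|<\delta$. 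Define $c_n:=p_mbp_m\in M_{m_n}(A)_+$, so that $\langle c_n\rangle$ lies in the image of $\W(A)$. The upper bound $\langle c_n\rangle\le\langle(a-\eta_{2n})_+\rangle$ is the compression inequality above. For the lower bound, Theorem~\ref{Kirchberg-Rordam} gives $(b-\delta)_+\precsim c_n$ because $\|b-c_n\|<\delta$, and since $\eta_{2n}+\delta<\eta_{2n-1}$ we have $(a-\eta_{2n-1})_+\precsim(a-(\eta_{2n}+\delta))_+=(b-\delta)_+$. Altogether
\[
\langle(a-\eta_{2n-1})_+\rangle\le\langle c_n\rangle\le\langle(a-\eta_{2n})_+\rangle .
\]

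Finally I would harvest both conclusions from this staggered sandwich. Squeezing and using $\sup_n\langle(a-\eta_{2n-1})_+\rangle=\langle a\rangle$ yields $\sup_n\langle c_n\rangle=\langle a\rangle$. For rapid increase I chain
\[
\langle c_n\rangle\le\langle(a-\eta_{2n})_+\rangle\ll\langle(a-\eta_{2n+1})_+\rangle\le\langle c_{n+1}\rangle ,
\]
where the middle $\ll$ is the second recorded fact (as $\eta_{2n}>\eta_{2n+1}$); the remarks ``$x\le y\ll z\Rightarrow x\ll z$'' and ``$x\ll y\le z\Rightarrow x\ll z$'' then give $\langle c_n\rangle\ll\langle c_{n+1}\rangle$. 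Thus $(\langle c_n\rangle)$ is a rapidly increasing sequence from $\W(A)$ with supremum $\langle a\rangle$. The one genuinely delicate point is precisely the staggering: a sandwich between \emph{consecutive} truncations $(a-\eta_n)_+$ and $(a-\eta_{n+1})_+$ would only give an increasing sequence, and it is the insertion of the intermediate truncation level $\eta_{2n+1}$ between the upper bound for $c_n$ and the lower bound for $c_{n+1}$ that upgrades ``$\le$'' to ``$\ll$''.
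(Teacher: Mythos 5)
Your proof is correct. It draws on the same toolkit as the paper's proof --- density of $M_{\infty}(A)$ in $A\otimes\mathbb{K}$, Theorem~\ref{Kirchberg-Rordam}, and the truncation facts of Lemma~\ref{epsiloncompact} --- but it is organized differently, and the difference is instructive. The paper takes the rapidly increasing sequence to be the truncation classes $\langle (a-\tfrac1n)_+\rangle$ themselves and shows that each of them \emph{already lies} in the image of $\W(A)$: choosing any $a_n\in M_{\infty}(A)_+$ with $\Vert a-a_n\Vert<\tfrac1n$, Theorem~\ref{Kirchberg-Rordam} gives $(a-\tfrac1n)_+=d_na_nd_n^*$ for a contraction $d_n$, and the flip $xx^*\sim x^*x$ then yields $(a-\tfrac1n)_+\sim a_n^{1/2}d_n^*d_na_n^{1/2}\in M_{\infty}(A)_+$; with that, rapid increase and the identification of the supremum are immediate from Lemma~\ref{epsiloncompact}, and no sandwich is needed. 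You instead take the compressions $c_n=p_{m_n}(a-\eta_{2n})_+p_{m_n}$ as the sequence --- these are manifestly in $M_{\infty}(A)_+$ --- and pay for this with the two-sided sandwich and the staggered cut levels; note that you still use the same flip trick, only earlier, to get $p_mxp_m\precsim x$. In particular, the step you single out as ``genuinely delicate'' (the staggering) is real for your route but avoidable: applying Theorem~\ref{Kirchberg-Rordam} to $b=(a-\eta_{2n})_+$ versus its compression $c_n$ and then flipping shows that $\langle (b-\delta)_+\rangle=\langle (a-(\eta_{2n}+\delta))_+\rangle$ is itself a class coming from $M_{m_n}(A)_+$, so the truncation classes of $a$ form the desired rapidly increasing sequence directly --- which is exactly the paper's shorter argument. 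What your version buys in exchange is that the sequence members are concrete compressions of (truncations of) $a$, rather than Cuntz representatives produced through an equivalence, which can be convenient if one wants to keep track of where the representatives live.
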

\begin{proof}
Let $x\in \W(A\otimes\mathbb{K})$ and assume $a\in A\otimes\mathbb{K}_+$ is such that $x=\langle a\rangle$ Then, for each $n$, we can find $a_n\in M_{\infty}(A)_+$ such that
\[
\Vert a-a_n\Vert<\frac{1}{n}\,,
\]
whence $(a-\frac{1}{n})_+=d_na_nd_n^*$ for some contraction $d_n$. Then $(a- \frac{1}{n})_+\sim a_n^{1/2}d_n^*d_na_n^{1/2}\in M_{\infty}(A)_+$, which gives a rapidly increasing sequence with supremum $a$.
\end{proof}

Next, for  unital C$^*$-algebra $A$ of stable rank one (so that we can split the Cuntz semigroup into the projection part and the purely positive part), define
\[
\iota\colon\W(A\otimes\mathbb{K})_+\to\laff(\T(A))^{++}\,,
\]
by $\iota(x)=\sup\limits_n\iota(x_n)$, where $(x_n)$ is any rapidly increasing sequence from $\W(A)$ with supremum $x$ (shown above to exist), and abusing the language to mean that $\iota(x_n)$ is defined as in the previous section.

\begin{lemma}
$\iota$ is well defined.
\end{lemma}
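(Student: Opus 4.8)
The plan is to verify the two assertions hidden in ``well defined'': that for a rapidly increasing sequence $(x_n)$ in $\W(A)$ with $\sup_n x_n=x$ the pointwise supremum $\sup_n\iota(x_n)$ is genuinely an element of $\laff(\T(A))^{++}$, and that the resulting function does not depend on the chosen sequence (which exists by Lemma~\ref{supfromA}).

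First I would check that each $\iota(x_n)$ lies in $\laff_b(\T(A))^{++}$ and that the sequence $(\iota(x_n))$ is increasing. Writing $x_n=\langle a_n\rangle$ with $a_n\in M_\infty(A)_+$, the function $\tau\mapsto\dt(a_n)=\sup_k\tau(a_n^{1/k})$ is an increasing supremum of weak-$*$ continuous affine functions, hence lower semicontinuous and affine; it is bounded (by the matrix size) and strictly positive by faithfulness of the traces on the simple algebra $A$, exactly as in Proposition~\ref{iotadef}. Since $x_n\ll x_{n+1}$ forces $x_n\leq x_{n+1}$, and $\dt$ is monotone under Cuntz subequivalence, the sequence $(\iota(x_n))$ increases. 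An increasing pointwise supremum of affine lower semicontinuous functions is again affine and lower semicontinuous (for increasing $a_n\uparrow a$ and $b_n\uparrow b$ one has $\lambda a_n+(1-\lambda)b_n\uparrow\lambda a+(1-\lambda)b$), and it stays strictly positive; it may be unbounded, which is precisely why the codomain is enlarged to $\laff(\T(A))^{++}$ rather than $\laff_b(\T(A))^{++}$. Thus $\sup_n\iota(x_n)\in\laff(\T(A))^{++}$.

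The substantive point is independence of the representing sequence. Let $(x_n)$ and $(y_m)$ both be rapidly increasing in $\W(A)$ with $\sup_n x_n=x=\sup_m y_m$ in $\W(A\otimes\mathbb{K})$. For each $n$ we have $x_n\ll x_{n+1}\leq x=\sup_m y_m$, hence $x_n\ll x$; applying the defining property of the way-below relation to the increasing sequence $(y_m)$ yields an index $m$ with $x_n\leq y_m$. Monotonicity of $\iota$ (which holds because $a\precsim b$ implies $\dt(a)\leq\dt(b)$ for every $\tau$, and needs no comparison hypothesis) then gives $\iota(x_n)\leq\iota(y_m)\leq\sup_m\iota(y_m)$. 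Taking the supremum over $n$ gives $\sup_n\iota(x_n)\leq\sup_m\iota(y_m)$, and the reverse inequality follows by interchanging the two sequences. Hence the suprema coincide and $\iota(x)$ is well defined.

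The only step demanding care is the bookkeeping of $\ll$ inside $\W(A\otimes\mathbb{K})$: one must compare the $x_n$ and the $y_m$ as elements of the ambient semigroup, where suprema of increasing sequences exist and $\ll$ behaves as in the category $\mathsf{Cu}$ (Theorem~\ref{cuntzstructure1}). I do not expect a genuine obstacle here, since the two ingredients needed---monotonicity of $\iota$ and the interpolation $x_n\leq y_m$ produced by $x_n\ll\sup_m y_m$---are both immediate; the remainder is the routine verification that increasing suprema preserve affineness, lower semicontinuity and strict positivity.
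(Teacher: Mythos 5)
Your proof is correct and follows essentially the same route as the paper: both arguments interpolate the two rapidly increasing sequences via the way-below relation (from $x_n\ll x_{n+1}\leq\sup_m y_m$ obtain $x_n\leq y_m$ for some $m$), then apply monotonicity of $\iota$ and take suprema, with symmetry giving the reverse inequality. Your additional verification that the pointwise supremum genuinely lands in $\laff(\T(A))^{++}$ is a welcome piece of extra care that the paper leaves implicit, but it does not change the substance of the argument.
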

\begin{proof}
We need to show this is not dependant on the rapidly increasing sequence we choose. Indeed, if $(x_n)$ and $(y_n)$ are rapidly increasing sequences in $\W(A)$ with the same supremum, then by the very definition we have that for each $n$ there is $m$ such that $x_n\leq y_m$ and we can also find $k$ such that $y_m\leq x_k$. Altogether this implies that $\iota(x_n)\leq\iota(y_m)\leq\iota(x_k)$. so that
\[
\sup_n\iota(x_n)=\sup_n\iota(y_n),,
\]
as desired.
\end{proof}
\begin{lemma}
$\iota$ preserves order and suprema.
\end{lemma}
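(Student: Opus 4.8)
The plan is to reduce everything to two properties of the already-constructed map $\iota$ on $\W(A)$: that it is \emph{monotone} (if $\langle a\rangle\leq\langle b\rangle$ in $\W(A)$ then $\dt(a)\leq\dt(b)$ for every $\tau\in\T(A)$, since each $\dt$ is an order-preserving dimension function), and that it sends suprema of bounded increasing sequences in $\W(A)$ to suprema, which is exactly the content of Theorem~\ref{thm:cuntzsuprema}. Throughout I will use that a rapidly increasing sequence $(x_n)$ with $\sup_n x_n=x$ satisfies $x_n\ll x$ (because $x_n\ll x_{n+1}\leq x$), together with the defining property of $\ll$.

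First I would treat order preservation. Let $x\leq y$ in $\W(A\otimes\mathbb{K})_+$ and choose, via Lemma~\ref{supfromA}, rapidly increasing sequences $(x_n)$ and $(y_m)$ from $\W(A)$ with suprema $x$ and $y$, so that by definition $\iota(x)=\sup_n\iota(x_n)$ and $\iota(y)=\sup_m\iota(y_m)$. Fixing $n$, from $x_n\ll x\leq y=\sup_m y_m$ the definition of $\ll$ furnishes an $m$ with $x_n\leq y_m$; monotonicity of $\iota$ on $\W(A)$ then gives $\iota(x_n)\leq\iota(y_m)\leq\iota(y)$. Taking the supremum over $n$ yields $\iota(x)\leq\iota(y)$.

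The technical heart is a comparison step bridging the old $\iota$ (on $\W(A)$) and the new $\iota$ (on $\W(A\otimes\mathbb{K})_+$): if $w\in\W(A)$ and $z\in\W(A\otimes\mathbb{K})_+$ with $w\leq z$, then $\iota(w)\leq\iota(z)$. To prove this I would write $z=\sup_m z_m$ for a rapidly increasing sequence $(z_m)$ in $\W(A)$, and choose a rapidly increasing sequence $(w_j)$ in $\W(A)$ with $\sup_j w_j=w$ computed inside $\W(A)$ — for instance $w_j=\langle (a-\epsilon_j)_+\rangle$ when $w=\langle a\rangle$, invoking Lemma~\ref{epsiloncompact}. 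For each $j$, $w_j\ll w\leq z=\sup_m z_m$ gives an $m$ with $w_j\leq z_m$, so $\iota(w_j)\leq\iota(z_m)\leq\iota(z)$ by monotonicity on $\W(A)$; since $\iota(w)=\sup_j\iota(w_j)$ by Theorem~\ref{thm:cuntzsuprema}, we conclude $\iota(w)\leq\iota(z)$. Applied with $z=w$ this also shows the two definitions of $\iota$ cohere on $\W(A)$.

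Finally I would prove suprema preservation. Given an increasing sequence $(x^{(k)})$ in $\W(A\otimes\mathbb{K})_+$ with supremum $x$, the order-preservation step already gives $\sup_k\iota(x^{(k)})\leq\iota(x)$. For the reverse inequality, pick a rapidly increasing $(x_n)$ from $\W(A)$ with $\sup_n x_n=x$, so $\iota(x)=\sup_n\iota(x_n)$; for each $n$, the relation $x_n\ll x=\sup_k x^{(k)}$ yields a $k$ with $x_n\leq x^{(k)}$, and the comparison step gives $\iota(x_n)\leq\iota(x^{(k)})\leq\sup_k\iota(x^{(k)})$. Taking the supremum over $n$ produces $\iota(x)\leq\sup_k\iota(x^{(k)})$, hence equality. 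The only genuine obstacle I anticipate is precisely the comparison step: the element $w\in\W(A)$ sits merely below $z$ rather than way below it, so one cannot directly apply the definition of $\ll$; the remedy is to interpose a rapidly increasing sequence beneath $w$ and transport it through $z$, the passage to the limit being legitimised by Theorem~\ref{thm:cuntzsuprema}.
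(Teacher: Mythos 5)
Your proof is correct and follows essentially the same route as the paper: both arguments use rapidly increasing sequences from $\W(A)$, the defining property of $\ll$ to compare them against the given increasing sequence, and monotonicity of the functions $\dt$, yielding the two inequalities for suprema. The only difference is that you spell out the ``comparison step'' between the map on $\W(A)$ and the map on $\W(A\otimes\mathbb{K})_+$ (via Lemma~\ref{epsiloncompact} and Theorem~\ref{thm:cuntzsuprema}), which the paper subsumes under ``preservation of order follows from the definition and previous results.''
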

\begin{proof}
That $\iota$ preserves order follows from its definition and the results we have established previously.

To see it preserves suprema, let $(x_n)$ be an increasing sequence in $\W(A\otimes\mathbb{K})$ and let $x=\sup x_n$. Write $x$ as a supremum of a rapidly increasing sequence $(y_n)$ coming from $\W(A)$. Then, by definition, $\iota(x)=\sup_n\iota(y_n) $. On the other hand, since $(y_n)$ is rapidly increasing and yields the same supremum as $(x_n)$, we have
that for each $n$, there is $m$ with $y_n\leq x_m$, whence $\iota (y_n)\leq\iota(x_m)\leq\sup\iota(x_i)$. Therefore
\[
\iota(x)=\sup\iota(y_n)\leq\sup_n\iota(x_n)\,,
\]
and the result follows.
\end{proof}

\begin{proposition}
\label{prop:themaphi}
Let $A$ be a simple, unital, exact {\rm C}$^*$-algebra of stable rank one. Then, the map
\[
\phi\colon\W(A\otimes\mathbb{K})\to \V(A)\sqcup\laff(\T(A))^{++}\,,
\]
defined as $\phi_{|\V(A)}=\mathrm{id}_{|\V(A)}$ and $\phi_{|\W(A\otimes\mathbb{K})_+}=\iota_{\W(A\otimes\mathbb{K})_+}$ is an order-embedding when $A$ has strict comparison.
\end{proposition}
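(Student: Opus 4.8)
The plan is to show that $\phi$ is a homomorphism that both preserves and reflects the order on $\V(A)\sqcup\laff(\T(A))^{++}$ determined by conditions (i)--(iv) of Definition~\ref{wtilde} (now with unbounded functions allowed). That $\phi$ is a well-defined semigroup morphism is essentially done: the two preceding lemmas give that $\iota$ on $\W(A\otimes\mathbb{K})_+$ is well defined and preserves order and suprema, and additivity follows by representing $x,x'$ by rapidly increasing sequences $(x_n),(x_n')$ from $\W(A)$, noting $(x_n+x_n')$ is rapidly increasing with supremum $x+x'$ (compatibility of $\ll$ and suprema with addition, Proposition~\ref{cuntzstructure2}), and taking suprema of $\iota(x_n)+\iota(x_n')=\iota(x_n+x_n')$. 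Throughout I use that $\T(A)=\mathrm{QT}(A)$ (exactness), that stable rank one, simplicity and strict comparison pass to every $M_K(A)$ and to $A\otimes\mathbb{K}$, and that by stable rank one $\W(A\otimes\mathbb{K})=\V(A)\sqcup\W(A\otimes\mathbb{K})_+$. The forward implication ($x\le y\Rightarrow\phi(x)\le\phi(y)$) is the easy half: when both classes are purely positive or both lie in $\V(A)$ it is the lemma that $\iota$ preserves order (resp.\ the identity on $\V(A)$); for the mixed relation $[p]\le\langle a\rangle$ one uses that $[p]\ll[p]$, so $[p]\le\langle(a-\epsilon)_+\rangle$ for some $\epsilon>0$, whence $\tau(p)\le\dt((a-\epsilon)_+)<\dt(a)$ for all $\tau$ by Lemma~\ref{comparisonlemma}, which is exactly condition (iv); condition (iii) is immediate from monotonicity of $\dt$.

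The substance is the reflecting direction, and the governing difficulty is that strict comparison (via the R\o rdam lemma) requires an \emph{order unit} as the smaller element, and $\W(A\otimes\mathbb{K})$ has none. Hence I will always reduce a comparison to a unital matrix corner $M_K(A)$, where Propositions~\ref{pureposcomparison} and \ref{projpureposcomp} apply verbatim. Consider first the case $x=\langle a\rangle$, $y=\langle b\rangle$ purely positive, with $\iota(x)\le\iota(y)$, i.e.\ $\dt(a)(\tau)\le\dt(b)(\tau)$ for all $\tau\in\T(A)$. Since $\{c\mid c\precsim b\}$ is norm-closed (Lemma~\ref{Blackaddar}) and $(a-\epsilon)_+\to a$, it suffices to prove $(a-\epsilon)_+\precsim b$ for every $\epsilon>0$. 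Write $y=\sup_m y_m$ with $y_m=\langle b_m\rangle$ coming from $\W(A)$ (Lemma~\ref{supfromA}), so that $\dt(b)=\sup_m\dt(b_m)$. The key device to bypass the mere \emph{lower} semicontinuity of the dimension functions is to insert a \emph{continuous affine} function: choosing $0<\epsilon'<\epsilon''<\epsilon$ and a continuous ramp $g$ with $g\equiv 0$ on $[0,\epsilon']$ and $g\equiv 1$ on $[\epsilon'',\infty)$, the function $h(\tau):=\tau(g(a))$ is continuous affine on $\T(A)$ and satisfies
\[
\dt((a-\epsilon)_+)(\tau)\ \le\ h(\tau)\ \le\ \dt((a-\epsilon')_+)(\tau)\ <\ \dt(a)(\tau)\ \le\ \dt(b)(\tau)=\sup_m\dt(b_m)(\tau),
\]
the strict inequality coming from Lemma~\ref{comparisonlemma}. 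Because $h$ is continuous and each $\dt(b_m)$ is lower semicontinuous, the sets $\{\tau\mid \dt(b_m)(\tau)>h(\tau)\}$ are \emph{open}, increase with $m$, and cover the compact simplex $\T(A)$; hence a single $m_0$ works, giving $\dt((a-\epsilon)_+)\le h<\dt(b_{m_0})$ pointwise. Embedding representatives of $(a-\epsilon)_+$ and $b_{m_0}$ into a common unital corner $M_K(A)$ and applying Proposition~\ref{pureposcomparison} there yields $(a-\epsilon)_+\precsim b_{m_0}\precsim b$, as needed.

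The mixed cases are handled along the same lines and are in fact lighter, because the relevant function attached to a projection, $\widehat{p}(\tau)=\tau(p)$, is already continuous, so the lower-semicontinuity obstruction disappears. If $x=[p]$ and $y=\langle b\rangle$ with condition (iv) $\tau(p)<\dt(b)(\tau)=\sup_m\dt(b_m)(\tau)$ for all $\tau$, the open-cover/compactness argument applied to $\{\dt(b_m)>\widehat{p}\}$ produces $m_0$ with $\tau(p)<\dt(b_{m_0})(\tau)$ for all $\tau$, and Proposition~\ref{projpureposcomp} in $M_K(A)$ gives $[p]\le\langle b_{m_0}\rangle\le y$. If $x=\langle a\rangle$ and $y=[q]$ with condition (iii) $\dt(a)\le\widehat{q}$, then already $\dt((a-\epsilon)_+)\le\dt(a)\le\dt(q)$ and Proposition~\ref{pureposcomparison} applied directly in the corner containing $q$ gives $(a-\epsilon)_+\precsim q$ for all $\epsilon$, hence $a\precsim q$; and the projection-projection case is just the agreement of the $\V(A)$-order with the Cuntz order.

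The step I expect to be the main obstacle is exactly the one addressed in the second paragraph: transferring the trace-level inequality $\iota(x)\le\iota(y)$ into an honest Cuntz comparison. Two features conspire against a naive argument, namely that dimension functions $\dt(\,\cdot\,)$ are only lower semicontinuous in $\tau$ (so the sub-level sets of a difference of two of them need not be open) and that the stable algebra has no order unit to feed into strict comparison. Both are resolved simultaneously by the functional-calculus insertion of the continuous affine $h=\tau(g(a))$ together with compactness of $\T(A)$ to collapse the supremum to a single matrix stage $b_{m_0}$, after which the unital representation results do the remaining work. The routine verifications I would still have to supply are the consistency $\dt(b)=\sup_m\dt(b_m)$ for $y=\sup_m y_m$ (Lemma~\ref{lem:supremainA}) and the inheritance of stable rank one, simplicity and strict comparison by the corners $M_K(A)$.
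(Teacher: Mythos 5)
The paper gives no argument for Proposition~\ref{prop:themaphi} at all (its ``proof'' is literally left as an exercise), so the only benchmark is the surrounding remark that the stable case ``follows from the unital case but extra care is needed.'' Your proof supplies exactly that, and its architecture is the right one: since $A\otimes\mathbb{K}$ carries no normalized traces and $\W(A\otimes\mathbb{K})$ has no order unit, every comparison must be pushed into a unital corner $M_K(A)$, and the supremum $\dt(b)=\sup_m\dt(b_m)$ must be collapsed to a single stage $b_{m_0}$ by interposing a \emph{continuous} affine function and using compactness of $\T(A)$ (lower semicontinuity of the $\dt(b_m)$ alone would not make the sets $\{\dt(b_m)>\dt((a-\epsilon)_+)\}$ open). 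That interpolation-plus-compactness device is the genuine content here, and your case analysis against conditions (i)--(iv) of Definition~\ref{wtilde} is sound. Note also that your phrase ``embedding representatives of $(a-\epsilon)_+$ \ldots into a common unital corner'' is legitimate and is in fact already contained in the computation inside Lemma~\ref{supfromA}: if $a'\in M_\infty(A)_+$ satisfies $\Vert a-a'\Vert<\epsilon$, then $(a-\epsilon)_+=da'd^*\sim a'^{1/2}d^*d\,a'^{1/2}\in M_\infty(A)_+$, and since this is a Cuntz \emph{equivalence} the dimension-function bounds you need survive the transfer intact.

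Two gaps remain to be closed. First, you assert that $h(\tau)=\tau(g(a))$ is continuous on $\T(A)$. This is automatic when the element lies in some $M_K(A)$, but $g(a)\in A\otimes\mathbb{K}$, and for a general positive $c\in A\otimes\mathbb{K}$ the map $\tau\mapsto\tau(c)$ is only lower semicontinuous. The claim is true because $g$ vanishes on a neighbourhood of $0$: writing $g(a)=k(a)g(a)k(a)$ with $k$ a ramp vanishing near $0$, one checks $\sup_{\tau\in\T(A)}\tau(k(a)^2)<\infty$ (via Theorem~\ref{Kirchberg-Rordam} against a matrix approximant of $a$) and then approximates $g(a)$ by matrix elements \emph{uniformly} in $\tau$; but this needs to be proved, not asserted. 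Alternatively you can sidestep it entirely by first replacing $(a-\epsilon/2)_+$ with a Cuntz-equivalent $c\in M_K(A)_+$ (Lemma~\ref{supfromA} again, plus Proposition~\ref{basics} to get $(a-\epsilon)_+\precsim (c-\delta)_+$) and applying your ramp to $c$, so that $h(\tau)=\tau(g(c))$ is manifestly continuous, while the one strict inequality $\dt(c)=\dt((a-\epsilon/2)_+)<\dt(a)\leq\dt(b)$ still comes from Lemma~\ref{comparisonlemma} applied to the purely positive $a$. Second, Propositions~\ref{pureposcomparison} and~\ref{projpureposcomp} are stated for \emph{purely positive} elements, and your corner representatives $c\sim(a-\epsilon)_+$ and the elements $b_{m_0}$ need not be purely positive. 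This is cosmetic rather than fatal: in every case you have (or can arrange, in your case (iii), by taking the cut-down parameter in $\sigma(a)$ and invoking Lemma~\ref{comparisonlemma}) a \emph{strict} inequality $\dt(\cdot)<\dt(\cdot)$ valid for all $\tau$, so strict comparison in $M_K(A)$ --- equivalently, almost unperforation of $\W(M_K(A))\cong\W(A)$ together with R\o rdam's lemma, $\langle b_{m_0}\rangle$ being an order unit by simplicity --- yields the subequivalence with no positivity-type hypothesis on either element. With these two repairs your argument is complete.
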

\begin{proof}
The proof is left as an exercise.
\end{proof}

Using our previous observations and pretty much in the same way as in the unital case, surjectivity is established in the presence of weak divisibility.

\begin{theorem}
Let $A$ be a simple, unital, exact {\rm C}$^*$-algebra of stable rank one. If $A$ has strict comparison and is $\W(A)$ is weakly divisible, the map $\phi$ in Proposition \ref{prop:themaphi} is an order-isomorphism.
\end{theorem}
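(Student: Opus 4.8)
The plan is to prove that $\phi$ is surjective; since $\phi$ is already an order-embedding by Proposition~\ref{prop:themaphi}, surjectivity upgrades it to an order-isomorphism in the sense of Definition~\ref{orderembedding}. Because $A$ has stable rank one we have the splitting $\W(A\otimes\mathbb{K})=\V(A\otimes\mathbb{K})\sqcup\W(A\otimes\mathbb{K})_+$ with $\V(A\otimes\mathbb{K})=\V(A)$, and $\phi$ restricts to the identity on the $\V(A)$-summand. So the only thing left is to show that $\iota\colon\W(A\otimes\mathbb{K})_+\to\laff(\T(A))^{++}$ is onto.

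First I would fix $f\in\laff(\T(A))^{++}$ and approximate it from below. Since $f$ is lower semicontinuous, affine, and strictly positive, it attains a strictly positive infimum $\delta>0$ on the compact simplex $\T(A)$; the argument of Lemma~\ref{lms:increasingsup}, which only invokes \cite[Proposition~11.8]{poag}, then produces a strictly increasing sequence $(f_n)$ in $\aff(\T(A))^{++}$ with $\sup_n f_n=f$. This works verbatim even when $f$ is unbounded, the only change being that the supremum may be $+\infty$. Each $f_n$ is bounded, so by the unital surjectivity result Theorem~\ref{props:surjective} we may write $f_n=\iota(x_n)$ for some $x_n\in\W(A)_+$. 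Because $\iota$ is an order-embedding on $\W(A)_+$ (Proposition~\ref{iotadef}), the strict increase of $(f_n)$ forces $(x_n)$ to be a strictly increasing sequence in $\W(A)_+\subseteq\W(A\otimes\mathbb{K})_+$.

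Next I would pass to the limit. Identifying $\W(A\otimes\mathbb{K})$ with $\Cu$ via Corollary~\ref{CuStable} and Theorem~\ref{CuandW}, axiom {\bf (O3)} (Theorem~\ref{cuntzstructure1}) guarantees that $(x_n)$ has a supremum $x:=\sup_n x_n$ in $\W(A\otimes\mathbb{K})$. Since $\iota$ preserves suprema of increasing sequences (the lemma established just before Proposition~\ref{prop:themaphi}), and since its value on elements of $\W(A)_+$ agrees with the $\dt$-definition of the unital section (take $x_n=\langle(a-\epsilon_n)_+\rangle$ and use lower semicontinuity), I obtain
\[
\iota(x)=\sup_n\iota(x_n)=\sup_n f_n=f .
\]
It then remains to check that $x$ genuinely lies in the purely positive part $\W(A\otimes\mathbb{K})_+$, for otherwise $\phi(x)$ would sit in the $\V(A)$-summand and the computation above would be meaningless. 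Here I would argue that a strictly increasing sequence cannot stabilise, so by the stable-algebra analogue of Corollary~\ref{cor:whensupproj} the class $x$ is not the class of a projection; equivalently, when $f$ is unbounded one sees directly that $x$ cannot be a projection class, since every projection $p\in A\otimes\mathbb{K}$ is equivalent to one in some $M_n(A)$ and hence satisfies $\widehat p\leq n$. Granting this, $\phi(x)=\iota(x)=f$, which proves surjectivity.

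The hard part will be the last point—showing the supremum remains purely positive in the \emph{stable} setting—because Corollary~\ref{cor:whensupproj} is phrased for \emph{bounded} increasing sequences in the unital algebra and must be adapted to an (a priori unbounded) sequence whose supremum is computed in $\W(A\otimes\mathbb{K})$. The approximation step is routine once one trusts the Goodearl representation of lower semicontinuous affine functions, and the use of the order-embedding property together with suprema-preservation is formal, so essentially all the genuine work lies in controlling the projection-versus-purely-positive dichotomy at the limit.
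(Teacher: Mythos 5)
Your overall strategy is the right one and is exactly what the paper intends: the paper gives no separate proof of this theorem, saying only that surjectivity follows ``pretty much in the same way as in the unital case'', and your reduction --- approximate $f$ from below by a strictly increasing sequence of continuous affine $f_n$, realise each $f_n$ \emph{exactly} via the unital surjectivity Theorem~\ref{props:surjective} (a mild streamlining of the paper's unital argument, which instead re-runs the approximation Lemma~\ref{one} and strict comparison), pass to the supremum in $\W(A\otimes\mathbb{K})\cong\Cu$, and use sup-preservation of $\iota$ --- is faithful to that plan. The one place where your proposal is not yet a proof is precisely the step you yourself ``grant'': that the supremum $x=\sup_n x_n$ is purely positive. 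Your argument (b) does settle the unbounded case, but for bounded $f$ you only gesture at a ``stable-algebra analogue of Corollary~\ref{cor:whensupproj}'' without supplying it, and as written this is a genuine gap: Corollary~\ref{cor:whensupproj} concerns suprema computed in $\W(A)$, and even after bounding the sequence (which strict comparison does give, since $\dt(x_n)\le\sup_\tau f(\tau)<\infty$ for all $\tau$ forces $x_n\le k\langle 1_A\rangle$ for a fixed integer $k$ via Proposition~\ref{pureposcomparison}), one must still check that the supremum taken in $\W(A)$ coincides with the supremum taken in the larger semigroup $\W(A\otimes\mathbb{K})$ --- order-embeddings do not preserve suprema for free.

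There are two clean ways to close it. First, when $f$ is bounded you do not need the limiting construction at all: Theorem~\ref{props:surjective} already produces $x\in\W(A)_+$ with $\iota(x)=f$, and $x$ remains purely positive when viewed in $\W(A\otimes\mathbb{K})$ because the dichotomy of Proposition~\ref{prop:equivproj} is spectral ($0$ isolated in $\sigma(a)$, or not in $\sigma(a)$), hence independent of whether the representative is regarded as an element of $M_\infty(A)$ or of $A\otimes\mathbb{K}$; your sequence argument is then only needed for unbounded $f$, where your argument (b) applies. Alternatively, and uniformly in $f$: every projection class is compact for the relation $\ll$, i.e.\ $\langle p\rangle\ll\langle p\rangle$, since $(p-\tfrac12)_+=\tfrac12\,p\sim p$ and Lemma~\ref{epsiloncompact} gives $\langle(p-\tfrac12)_+\rangle\ll\langle p\rangle$; so if $x=\sup_n x_n$ were a projection class, then $x\ll x$ would yield $x\le x_n$ for some $n$, whence $x_n\le x_{n+1}\le x=x_n$, contradicting strict increase of the sequence. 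Either patch completes your proof, and with it your argument is correct and essentially the one the paper has in mind.
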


%%%%%%%%%%%%%%%%%%%%%%%%%%%%%%%%
%%%%%%%%%%%%%%%%%%%%%%%%%%%%%%%%
\subsection{Regularity properties}

\subsubsection{$\Z$-stability}

The Jiang-Su algebra $\Z$ is one of the most prominent examples of simple, separable, amenable and infinite dimensional C$^*$-algebras. It was discovered in \cite{jiangsu}, and it has the same Elliott invariant as the complex numbers.

We briefly describe the Jiang-Su algebra below.

Given natural numbers $p$ and $q$, we define the \emph{dimension drop} algebra as
\[
Z_{p,q}=\{f\in C([0,1],M_p\otimes M_q)\mid f(0)\in M_p\otimes\mathbb{C},\,\, f(1)\in \mathbb{C}\otimes M_q\}\,,
\]
which is called a \emph{prime} dimension drop algebra if $p$ and $q$ are relatively prime. In this case, it is known that $Z_{p,q}$ has no non-trivial projections and $\mathrm{K}_0(Z_{p,q})\cong\mathbb{Z}$, $\mathrm{K}_1(Z_{p,q})=0$.

\begin{theorem} {\rm (Jiang-Su, \cite{jiangsu})}
Any inductive limit of prime dimension drop {\rm C}$^*$-algebras with unital maps which is simple and has a unique tracial state is isomorphic to the Jiang-Su algebra $\Z$.
\end{theorem}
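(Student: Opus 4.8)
The plan is to prove this uniqueness statement by Elliott's approximate intertwining argument, for which the work splits into three parts: computing the invariant of an arbitrary such limit, an \emph{existence} theorem producing (approximate) morphisms between building blocks that realise prescribed behaviour on traces and $\mathrm{K}$-theory, and a \emph{uniqueness} theorem asserting that two unital morphisms out of a prime dimension drop algebra which agree on the invariant are approximately unitarily equivalent. Write $A=\lim_n(A_n,\phi_n)$ with $A_n=Z_{p_n,q_n}$ prime dimension drop algebras and the $\phi_n$ unital; by hypothesis $A$ is simple with a unique tracial state $\tau$.

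First I would pin down the Elliott invariant of $A$ and check that it is independent of the particular system. Since $\gcd(p_n,q_n)=1$, one has $\mathrm{K}_0(A_n)\cong\mathbb{Z}$ generated by $[1_{A_n}]$ and $\mathrm{K}_1(A_n)=0$; a unital map sends $[1]\mapsto[1]$, so every connecting map is the identity on $\mathrm{K}_0\cong\mathbb{Z}$. By continuity of $\mathrm{K}$-theory we get $\mathrm{K}_0(A)\cong\mathbb{Z}$ with $[1_A]$ the generator, and $\mathrm{K}_1(A)=0$. As $A$ carries a trace (faithful, by simplicity) it is stably finite, and the order on $\mathrm{K}_0$ is the usual one with positive cone $\mathbb{N}\cup\{0\}$, while the pairing with the unique trace is forced. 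Hence the invariant of \emph{any} such $A$ is $(\mathbb{Z},\mathbb{N}\cup\{0\},1)$ together with a single trace and the canonical pairing --- exactly the invariant of $\Z$, which is itself a limit of this form.

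Next I would establish the two comparison theorems at the level of the building blocks. For existence, given $f\in\aff(\T(A_n))^{++}$ (equivalently, a prescribed continuous eigenvalue distribution along $[0,1]$) one constructs a unital homomorphism from $A_n$ into a later $A_m$ whose induced map on traces approximates $f$; this uses that $Z_{p,q}$ embeds in $C([0,1])\otimes M_{pq}$ and that point evaluations allow one to prescribe spectra, the coprimality guaranteeing that the boundary conditions at $0$ and $1$ can be met. For uniqueness, the statement is that two unital $^*$-homomorphisms $\psi_0,\psi_1\colon A_n\to B$ (with $B$ a building block or the limit) inducing the same map on $\mathrm{K}_0$ and the same trace are approximately unitarily equivalent. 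This I would prove by a spectral-variation estimate: the distance between $\psi_0$ and $\psi_1$ up to unitary conjugation is controlled by the distance between their eigenvalue patterns viewed as functions on $[0,1]$, and matching trace data together with the large matrix sizes available further down the system lets one make these patterns uniformly close after a measure-preserving rearrangement.

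Finally I would feed the existence and uniqueness theorems into Elliott's two-sided approximate intertwining. Starting from the isomorphism of invariants between $A$ and $\Z$ furnished by the first step, one inductively builds approximately commuting diagrams of unital maps $A_n\to \Z_{m}\to A_{n'}\to\cdots$ that are approximately compatible on prescribed finite sets and tolerances, and passes to the limit to obtain a $^*$-isomorphism $A\cong\Z$. The main obstacle is unquestionably the uniqueness theorem: controlling the spectral behaviour of unital homomorphisms of dimension drop algebras and showing that equality of the trace forces approximate unitary equivalence is the delicate, genuinely analytic heart of the argument, whereas the invariant computation and the intertwining bookkeeping are comparatively formal.
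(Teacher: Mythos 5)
First, a point of reference: the paper does not prove this theorem at all --- it is quoted directly from Jiang and Su's original article \cite{jiangsu} --- so your proposal can only be measured against the original argument, not against anything in these notes. Measured that way, your outline reproduces the architecture of the actual proof faithfully: the computation of the Elliott invariant of any such limit (this part of your argument is essentially complete and correct, including the observation that unital connecting maps act as the identity on $\mathrm{K}_0\cong\mathbb{Z}$ and that a faithful trace pins down the order and the pairing), an existence theorem and a uniqueness theorem at the level of dimension drop building blocks, and Elliott's two-sided approximate intertwining to assemble the isomorphism.

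The genuine gap is that the two pillars --- existence and uniqueness --- are left as black boxes, and they are not routine; they \emph{are} the theorem. Concretely: (a) your uniqueness statement, ``same $\mathrm{K}_0$ and same trace implies approximate unitary equivalence,'' is only a legitimate reduction because all unital homomorphisms out of a prime dimension drop algebra automatically agree at the level of $KK$: since $\mathrm{K}_0(Z_{p,q})=\mathbb{Z}[1]$ is free and $\mathrm{K}_1(Z_{p,q})=0$, the UCT shows the class of a unital map is determined by $[1_B]$ alone. Without this observation, dropping all $\mathrm{K}$-theoretic hypotheses in favour of tracial data is unjustified. Moreover, the ``measure-preserving rearrangement'' of eigenvalue patterns is not available verbatim: the irreducible representations of $Z_{p,q}$ have dimensions $pq$, $p$ and $q$ (interior point, left endpoint, right endpoint), so matching two patterns is constrained by divisibility at the endpoints, and the spectral-variation estimate only converts trace-closeness into approximate unitary equivalence when the eigenvalue patterns are sufficiently dense in $[0,1]$. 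That density is exactly where the hypothesis that the limit is \emph{simple} must enter (simplicity forces the connecting maps to spread spectra), yet your sketch never invokes simplicity beyond recording it as a hypothesis. (b) The existence theorem requires number-theoretic bookkeeping with the coprimality of $p$ and $q$ to realise prescribed tracial data while meeting the boundary conditions of the target building block. Since all of the analytic and combinatorial content of the Jiang--Su theorem lives in (a) and (b), what you have written is a correct plan --- indeed the plan of the original paper --- rather than a proof.
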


Jiang and Su also proved in their seminal paper that $\Z\otimes \Z\cong \Z$ and that this also holds for infinitely many copies of $\Z$.

The following definition is very important:
\begin{definition}
A {\rm C}$^*$-algebra $A$ is $\Z$-stable (or absorbs $\Z$ tensorially) provided that $A\otimes\Z\cong A$.
\end{definition}

Notice that, since $\Z$ itself is $\Z$-stable, given any C$^*$-algebra $A$, it follows that $A\otimes\Z$ is $\Z$-stable. Jiang and Su proved that separable, simple, unital AF algebras are $\Z$-stable (so long they are infinite dimensional). This is also the case for purely infinite simple, amenable algebras.

The property of $\Z$-stability is highly relevant for the classification programme. Indeed, we have the following (that can be found in \cite{GJSu}):

\begin{theorem}
{\rm (Gong, Jiang, Su)} If $A$ is simple, unital with weakly unperforated $\mathrm{K}_0$-group, then the Elliott invariant of $A$ and that of $A\otimes\Z$ are isomorphic.
\end{theorem}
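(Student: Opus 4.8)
The plan is to show that the canonical unital inclusion $\iota\colon A\to A\otimes\Z$, $a\mapsto a\otimes 1_\Z$, induces an isomorphism of Elliott invariants, verifying the claim component by component for $\mathrm{Ell}(\bullet)$. The argument rests on three structural facts about $\Z$ due to Jiang and Su: that $\Z$ is $\mathrm{KK}$-equivalent to $\mathbb{C}$ via the unital embedding $\mathbb{C}\to\Z$; that $\Z$ has a unique tracial state $\tau_\Z$; and that $\mathrm{K}_0(A\otimes\Z)$ is always weakly unperforated, with its positive cone obtained from that of $\mathrm{K}_0(A)$ by passing to the weak-unperforation closure. All three will be quoted from \cite{jiangsu}.

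First I would treat the $K$-groups. Tensoring the $\mathrm{KK}$-equivalence $\mathbb{C}\to\Z$ with $\mathrm{id}_A$ (using nuclearity of $\Z$, so that the exterior product in $\mathrm{KK}$ behaves well) shows that $\iota$ is itself a $\mathrm{KK}$-equivalence $A\cong A\otimes\mathbb{C}\to A\otimes\Z$. Hence $\iota_*$ induces group isomorphisms $\mathrm{K}_0(A)\cong\mathrm{K}_0(A\otimes\Z)$ and $\mathrm{K}_1(A)\cong\mathrm{K}_1(A\otimes\Z)$. Since $\iota$ is unital, $\iota_*[1_A]=[1_{A\otimes\Z}]$, so the distinguished order unit is preserved.

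Next, for the trace simplices I would use that $\Z$ has a unique trace: the assignment $\tau\mapsto\tau\otimes\tau_\Z$ defines an affine continuous map $\T(A)\to\T(A\otimes\Z)$ whose inverse is restriction along $\iota$, namely $\sigma\mapsto\sigma\circ\iota$. Surjectivity---that every trace on $A\otimes\Z$ splits as $\tau\otimes\tau_\Z$---follows from uniqueness of the trace on the second tensor factor together with nuclearity of $\Z$. Compatibility with the pairing is then routine: for $\tau\in\T(A)$ and $x\in\mathrm{K}_0(A)$ one has $(\tau\otimes\tau_\Z)\circ\iota=\tau$, so that $r_{A\otimes\Z}(\tau\otimes\tau_\Z,\iota_*x)=r_A(\tau,x)$, and $r_A$ is intertwined with $r_{A\otimes\Z}$ under the identifications above.

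The main obstacle is the order on $\mathrm{K}_0$, and this is exactly where the hypothesis of weak unperforation is used. Since $\iota$ is a unital positive map, $\iota_*(\mathrm{K}_0(A)^+)\subseteq\mathrm{K}_0(A\otimes\Z)^+$, so the content lies in the reverse inclusion. I would invoke the Jiang-Su computation that $\mathrm{K}_0(A\otimes\Z)^+$ equals the weak-unperforation closure of $\iota_*(\mathrm{K}_0(A)^+)$, that is, the set of $y$ with $ny\in\iota_*(\mathrm{K}_0(A)^+)\setminus\{0\}$ for some $n\geq 1$, together with $0$. Because $\mathrm{K}_0(A)$ is assumed weakly unperforated and $\iota_*$ is a group isomorphism, the cone $\iota_*(\mathrm{K}_0(A)^+)$ is already weakly unperforated and hence equal to its own closure; therefore $\mathrm{K}_0(A\otimes\Z)^+=\iota_*(\mathrm{K}_0(A)^+)$ and $\iota_*$ is an order isomorphism. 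Assembling the four verified components then yields $\mathrm{Ell}(A)\cong\mathrm{Ell}(A\otimes\Z)$. It is worth emphasizing that the weak unperforation hypothesis is genuinely needed only at this last step: without it, tensoring by $\Z$ strictly enlarges the positive cone of $\mathrm{K}_0$ and the invariant does change.
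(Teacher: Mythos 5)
First, a point of reference: the paper does not prove this theorem at all --- it is quoted directly from Gong--Jiang--Su \cite{GJSu} --- so your proposal can only be measured against the literature, not against an internal argument. On its routine components your proposal is correct: the $\mathrm{KK}$-equivalence $\mathbb{C}\to\Z$ established in \cite{jiangsu} does, after tensoring with $\mathrm{id}_A$ (legitimate here, since all algebras are separable and $\Z$ is nuclear), give group isomorphisms $\mathrm{K}_i(A)\cong \mathrm{K}_i(A\otimes\Z)$ carrying $[1_A]$ to $[1_{A\otimes\Z}]$; the unique trace $\tau_\Z$ does give the affine homeomorphism $\T(A)\cong\T(A\otimes\Z)$, and your splitting argument is the standard one (for $a\in A_+$ the functional $b\mapsto\sigma(a\otimes b)$ is tracial on $\Z$, hence a multiple of $\tau_\Z$); and the compatibility of the pairing $r_A$ with $r_{A\otimes\Z}$ is computed correctly.

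The genuine gap is your fourth step, which is the only place the weak unperforation hypothesis does any work --- and it is exactly the step you do not prove. The fact you invoke, namely that $\mathrm{K}_0(A\otimes\Z)$ is always weakly unperforated and that $\mathrm{K}_0(A\otimes\Z)^+$ is the weak-unperforation closure of $\iota_*(\mathrm{K}_0(A)^+)$, is \emph{not} among the structural facts proved in \cite{jiangsu}: it is the main theorem of \cite{GJSu}, the very paper from which the statement you are proving is the immediate corollary (via precisely the two-line deduction you give). So, read as a blind proof, your argument is circular at the decisive point; note that everything you actually verify (K-groups, traces, pairing) holds for an arbitrary simple unital $A$, with or without weak unperforation, which is a sign that the real content has been assumed rather than established. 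To close the gap you would have to prove the cone computation itself, in outline: write $\Z=\lim_n Z_{p_n,q_n}$, so that $A\otimes\Z=\lim_n\left(A\otimes Z_{p_n,q_n}\right)$ and every class in $\mathrm{K}_0(A\otimes\Z)^+$ is represented by a projection $f$ over some $A\otimes Z_{p,q}$; evaluation of $f$ at the two endpoints of $[0,1]$ essentially forces the corresponding class $x\in\mathrm{K}_0(A)$ to satisfy $px,\,qx\in\mathrm{K}_0(A)^+$, hence $mx\in\mathrm{K}_0(A)^+$ for all large $m$ by coprimality of $p$ and $q$; conversely, given $nx\in\mathrm{K}_0(A)^+\setminus\{0\}$, one must construct a projection over a suitable $A\otimes Z_{p,q}$ with class $x$, using simplicity of $A$ (so that nonzero positive classes are order units). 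That analysis of projections over dimension drop algebras is the substance of \cite{GJSu}; without it, your proposal reduces to citing the theorem being proved.
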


This result would make it agreeable that the largest restricted class of algebras for which (EC) can hold consists of those algebras that are $\Z$-stable. This is what the conjecture predicts for the ones that have weakly unperforated K-Theory.

\subsubsection{Uniqueness of $\Z$}

The ad hoc description of $\mathcal{Z}$ asks for some profound reason that singles out this algebra among all unital, separable,
nuclear, simple C$^*$-algebras without finite-dimensional representations. This can take the form of a universal property
that hopefully identifies $\mathcal{Z}$. We briefly examine two such properties, the first of which was proposed by M.
R\o rdam, and the second by A. S. Toms. For a class $\mathcal{C}$ of separable, unital and nuclear C$^*$-algebras, consider the following two pairs of
conditions that might be satisfied by $A\in\mathcal{C}$:

\begin{enumerate}[{\rm (i)}]
\item  every unital endomorphism of $A$ is approximately inner, and
\item $A$ embeds unitally in every $B\in\mathcal{C}$.
\end{enumerate}

and

\begin{enumerate}[{\rm (i)'}]
\item $A^{\otimes\infty}\cong A$, and
\item $B^{\otimes\infty}\otimes A\cong B^{\otimes\infty}$ for every $B\in\mathcal{C}$
\end{enumerate}

Any C$^*$-algebra satisfying either pair of conditions is unique
up to isomorphism. Concentrating on the second pair, we check that if $A_1$ and $A_2\in\mathcal{C}$, then
\[
A_1\stackrel{(i)'}{\cong} A_1^{\otimes\infty}\stackrel{(ii)'}{\cong}A_1^{\otimes\infty}\otimes A_2\stackrel{(i)'}{\cong}A_1\otimes A_2^{\otimes\infty}\stackrel{(ii)'}{\cong}A_2^{\otimes\infty}\stackrel{(i)'}{\cong} A_2\,.
\]
It has been shown recently, by Dadarlat and Toms (\cite{datoms}), that  $\Z$ satisfies the second of the abovementioned universal properties with $\mathcal{C}$ the class of unital C$^*$-algebras that
contain unitally a subhomogeneous algebra without characters. This class is huge and contains a wide range of examples.

As for the first universal property, it has also been shown in recent work by Dadarlat, Hirshberg, Toms and Winter that there is a unital simple AH algebra that does not admit a unital embedding of
$\mathcal{Z}$.

\subsubsection{$\Z$-stability versus strict comparison}

The relationship between $\Z$-stability and strict comparison is somewhat mysterious, and it is not know yet whether they are equivalent, but in light of examples it might well be that they are, at least in the stably finite case.

What we do know is the following, proved by R\o rdam in \cite[Theorem 4.5]{roijm}. For its proof we will use a lemma:
\begin{lemma}
\label{tensor}
If $A$ and $B$ are {\rm C}$^*$-algebras and $n\langle a\rangle\leq m\langle a'\rangle$ in $\W(A)$, then $n\langle a\otimes b\rangle\leq m\langle a'\otimes b\rangle$ in $\W(A\otimes B)$. By symmetry, if $n\langle b\rangle\leq m\langle b'\rangle$ in $\W(B)$, then $n\langle a\otimes b\rangle\leq m\langle a\otimes b'\rangle$ in $\W(A\otimes B)$.
\end{lemma}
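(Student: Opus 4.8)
The plan is to reduce the statement to the definition of Cuntz subequivalence and push the relevant approximating sequences through the minimal tensor product. Recall that $n\langle a\rangle$ means $\langle a^{\oplus n}\rangle$, i.e. the class of the block-diagonal element $a\otimes 1_n$ (or $\mathrm{diag}(a,\dots,a)$) in $M_n(A)=A\otimes M_n$. So the hypothesis $n\langle a\rangle\leq m\langle a'\rangle$ unwinds, by the definition of $\precsim$, to the existence of a sequence $(v_k)$ in $M_{m,n}(A)$ (rectangular matrices, viewed inside $M_\infty(A)$) with
\[
\Vert v_k\,(a')^{\oplus m}\,v_k^*-a^{\oplus n}\Vert\stackrel{k\to\infty}{\longrightarrow}0\,.
\]

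First I would fix an element $b\in B_+$ and form the elements $a^{\oplus n}\otimes b$ and $(a')^{\oplus m}\otimes b$ inside $A\otimes M_\infty\otimes B$. Identifying $M_n(A)\otimes B\cong M_n(A\otimes B)$ in the obvious way, the element $a^{\oplus n}\otimes b$ is precisely the representative of $n\langle a\otimes b\rangle$ in $\W(A\otimes B)$, and similarly $(a')^{\oplus m}\otimes b$ represents $m\langle a'\otimes b\rangle$. Then I would take the tensored multipliers $w_k:=v_k\otimes b^{1/2}$ and compute
\[
w_k\,\bigl((a')^{\oplus m}\otimes b\bigr)\,w_k^*=\bigl(v_k(a')^{\oplus m}v_k^*\bigr)\otimes b^{2}\,.
\]
This is not quite $a^{\oplus n}\otimes b$, so the clean choice is instead to absorb the $b$ more carefully: since $a\precsim a$-type statements only require convergence, I would argue that $v_k(a')^{\oplus m}v_k^*\to a^{\oplus n}$ in $A\otimes M_\infty$ implies, upon tensoring with the (fixed, bounded) element $b$ and using continuity of $\otimes$ together with submultiplicativity of the minimal tensor norm, that $\bigl(v_k(a')^{\oplus m}v_k^*\bigr)\otimes b\to a^{\oplus n}\otimes b$. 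The point is that for a fixed $b$ the map $x\mapsto x\otimes b$ is norm-continuous with $\Vert x\otimes b\Vert=\Vert x\Vert\,\Vert b\Vert$ on the minimal tensor product, so the approximation survives the tensoring.

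The remaining step is to realize $\bigl(v_k(a')^{\oplus m}v_k^*\bigr)\otimes b$ as $\widetilde{v}_k\,\bigl((a')^{\oplus m}\otimes b\bigr)\,\widetilde{v}_k^*$ for suitable multipliers $\widetilde{v}_k$ in $M_\infty(A\otimes B)$. The natural candidate is $\widetilde{v}_k=v_k\otimes 1_B$ if $B$ is unital; then $\widetilde{v}_k\bigl((a')^{\oplus m}\otimes b\bigr)\widetilde{v}_k^*=\bigl(v_k(a')^{\oplus m}v_k^*\bigr)\otimes b$ exactly, and combined with the convergence above this gives $a^{\oplus n}\otimes b\precsim (a')^{\oplus m}\otimes b$, i.e. $n\langle a\otimes b\rangle\leq m\langle a\otimes b\rangle$ as required. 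The hard part, and the main obstacle, is the non-unital case: when $B$ lacks a unit, $v_k\otimes 1_B$ does not live in $A\otimes B$. The standard fix is to pass to an approximate unit $(e_\lambda)$ of $B$ and use $\widetilde{v}_k=v_k\otimes e_\lambda^{1/2}$, noting that $x\otimes e_\lambda\to x\otimes b$-type estimates combined with Lemma~\ref{lem:leq} and the fact (Lemma~\ref{lem:approxunits}-style) that multiplying by an approximate unit converges; alternatively one works in $\widetilde{B}$ and observes that the final Cuntz relation only involves elements of $A\otimes B$. I would handle this by choosing, for each $k$, a single $\lambda(k)$ so that the double limit in $k$ and $\lambda$ collapses to a single approximating sequence, thereby obtaining the desired subequivalence entirely within $A\otimes B$. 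The symmetric statement then follows by interchanging the roles of the two tensor factors.
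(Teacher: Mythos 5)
Your proof is correct, but there is nothing in the paper to compare it against: Lemma \ref{tensor} is stated without proof, as a prerequisite for Theorem \ref{walmunp}, with the surrounding text deferring to R{\o}rdam \cite{roijm}. Your argument is the natural one and fills this gap correctly. The identification of $n\langle a\otimes b\rangle$ with the class of $a^{\oplus n}\otimes b$ under $M_n(A)\otimes B\cong M_n(A\otimes B)$ is valid, the estimate $\Vert x\otimes b\Vert\leq\Vert x\Vert\,\Vert b\Vert$ holds for any C$^*$-tensor norm, and in the unital case $\widetilde{v}_k=v_k\otimes 1_B$ finishes the proof. Your diagonal scheme for the non-unital case also checks out, although the phrase ``$x\otimes e_\lambda\to x\otimes b$-type estimates'' is garbled as written: what you actually need is that $e_\lambda^{1/2}be_\lambda^{1/2}\to b$, which holds because $(e_\lambda^{1/2})$ is again an approximate unit, so that with $w_k=v_k\otimes e_{\lambda(k)}^{1/2}$ one gets
\[
\Vert w_k\bigl((a')^{\oplus m}\otimes b\bigr)w_k^*-a^{\oplus n}\otimes b\Vert\leq \Vert b\Vert\,\Vert v_k(a')^{\oplus m}v_k^*-a^{\oplus n}\Vert+\Vert a^{\oplus n}\Vert\,\Vert e_{\lambda(k)}^{1/2}be_{\lambda(k)}^{1/2}-b\Vert\,,
\]
which tends to zero for a suitable choice of $\lambda(k)$; Lemma \ref{lem:leq} plays no real role in this step. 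Two cosmetic slips: the conclusion of your unital paragraph should read $m\langle a'\otimes b\rangle$, not $m\langle a\otimes b\rangle$; and your discarded first attempt with $w_k=v_k\otimes b^{1/2}$, which produces $b^2$ in the second leg, could in fact be salvaged (one direction of $x\otimes b^2\sim x\otimes b$ is immediate from $x\otimes b^2\leq\Vert b\Vert\,x\otimes b$ and Lemma \ref{lem:leq}, the other needs the same approximate-unit trick), but your pivot to the cleaner route is sensible since the salvage costs about the same effort.
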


\begin{theorem}
\label{walmunp}
Let $A$ be a $\Z$-stable  {\rm C}$^*$-algebra. Then $\W(A)$ is almost unperforated.
\end{theorem}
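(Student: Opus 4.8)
The plan is to exploit the hypothesis $A\cong A\otimes\Z$ in order to ``divide'' by $\Z$. Fix $n\in\mathbb{N}$ and suppose $(n+1)\langle a\rangle\le n\langle b\rangle$ in $\W(A)$, with $a,b\in M_\infty(A)_+$; this is exactly the statement $a^{\oplus(n+1)}\precsim b^{\oplus n}$, and the goal is $\langle a\rangle\le\langle b\rangle$. First I would produce a single positive element $z\in\Z_+$ witnessing the weak divisibility of the unit, namely
\[
n\langle z\rangle\le\langle 1_\Z\rangle\le(n+1)\langle z\rangle\quad\text{in }\W(\Z).
\]
Granting such a $z$, the heart of the argument is the chain of inequalities
\[
\langle a\otimes 1\rangle\le(n+1)\langle a\otimes z\rangle=\langle a^{\oplus(n+1)}\otimes z\rangle\le\langle b^{\oplus n}\otimes z\rangle=n\langle b\otimes z\rangle\le\langle b\otimes 1\rangle
\]
in $\W(A\otimes\Z)$, obtained by repeated use of Lemma~\ref{tensor}: the outer two inequalities tensor the $\Z$-relations $\langle 1_\Z\rangle\le(n+1)\langle z\rangle$ and $n\langle z\rangle\le\langle 1_\Z\rangle$ against $a$ and $b$ respectively, while the middle inequality tensors the hypothesis $a^{\oplus(n+1)}\precsim b^{\oplus n}$ against $z$. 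Thus $a\otimes 1\precsim b\otimes 1$, and it remains only to transport this back to $\W(A)$.

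The construction of $z$ is the main obstacle, and the place where one must be careful to avoid circularity: since $\Z$ is simple, almost unperforation of $\W(\Z)$ is equivalent to strict comparison for $\Z$ and is itself a special case of the present theorem, so I cannot simply quote strict comparison of $\Z$ to manufacture $z$. Instead I would build $z$ by hand inside a unital copy of the prime dimension-drop algebra $Z_{n,n+1}\subseteq\Z$ (consecutive integers being coprime). Recall that $Z_{n,n+1}$ consists of functions $f\in C([0,1],M_n\otimes M_{n+1})$ with $f(0)\in M_n\otimes\mathbb{C}$ and $f(1)\in\mathbb{C}\otimes M_{n+1}$, so the pointwise rank at $0$ is a multiple of $n+1$ and at $1$ a multiple of $n$. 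In such algebras Cuntz subequivalence of positive elements is governed by the pointwise rank functions, so the two desired relations reduce to the bound $n\le\operatorname{rank}_t(z)\le n+1$ for every $t\in[0,1]$. I would then exhibit a positive $z$ of rank $n+1$ at $t=0$, rank $n$ at $t=1$, and rank in $\{n,n+1\}$ in between (letting a single eigenvalue vanish as $t\to1$ to meet the boundary condition), which satisfies both bounds. Since the connecting maps defining $\Z$ are unital, these Cuntz relations, being witnessed by elements of $Z_{n,n+1}$, persist in $\Z$.

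The final step is to descend from $a\otimes 1\precsim b\otimes 1$ in $\W(A\otimes\Z)$ to $\langle a\rangle\le\langle b\rangle$ in $\W(A)$. For this I would invoke that $\Z$ is strongly self-absorbing, so that for the $\Z$-stable algebra $A$ the first-factor embedding $\iota_0\colon A\to A\otimes\Z$, $x\mapsto x\otimes 1$, is approximately unitarily equivalent to an isomorphism. Because Cuntz classes are preserved under approximate unitary equivalence -- if $u_\lambda\psi(x)u_\lambda^*\to\phi(x)$ then $\langle\phi(x)\rangle=\langle\psi(x)\rangle$, using the norm-closedness recorded in Lemma~\ref{Blackaddar} -- the induced map $\W(\iota_0)\colon\W(A)\to\W(A\otimes\Z)$ agrees with $\W$ of an isomorphism, hence is an order-isomorphism and in particular order-reflecting. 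Applying this to $\W(\iota_0)(\langle a\rangle)=\langle a\otimes 1\rangle\le\langle b\otimes 1\rangle=\W(\iota_0)(\langle b\rangle)$ yields $\langle a\rangle\le\langle b\rangle$, completing the argument.

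The two external inputs I expect to lean on are the rank criterion for Cuntz comparison in dimension-drop algebras (used to produce $z$) and the approximate unitary equivalence of $\iota_0$ with an isomorphism (used for the descent); everything else is a bookkeeping exercise with Lemma~\ref{tensor}. Of these, engineering the element $z$ with the prescribed boundary ranks is the genuinely delicate point, since it is precisely the fractional divisibility built into $\Z$ that drives the whole phenomenon.
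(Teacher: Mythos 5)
Your argument is correct and follows essentially the same route as the paper's proof: the paper likewise tensors the hypothesis against elements $e_n\in\Z$ satisfying $n\langle e_n\rangle\le\langle 1_\Z\rangle\le(n+1)\langle e_n\rangle$ (quoting \cite[Lemma 4.2]{roijm}, whose proof is precisely the dimension-drop rank construction you sketch) and then runs the same chain of inequalities in $\W(A\otimes\Z)$ before descending to $\W(A)$. The only difference is cosmetic, in the descent step: the paper uses a sequence of isomorphisms $\sigma_k\colon A\otimes\Z\to A$ with $\Vert\sigma_k(x\otimes 1_\Z)-x\Vert\to 0$, obtained from $\Z\cong\Z^{\otimes\infty}$, together with norm-closedness of the subequivalence relation, which is the same fact you package as approximate unitary equivalence of the first-factor embedding with an isomorphism.
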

\begin{proof}
We sketch the main argument, skipping some technical details.

One can construct, working within $\Z$, a sequence of elements $(e_n)$ such that $n\langle e_n\rangle\leq\langle 1_{\Z}\rangle\leq (n+1)\langle e_n\rangle$ (this is in \cite[Lemma 4.2]{roijm}).

Now, assume that $(n+1)\langle a\rangle\leq n\langle a'\rangle$ for $a$, $a'\in A$. Then we have
\[
\langle a\otimes 1_{\Z}\rangle\leq (n+1)\langle a\otimes e_n\rangle\leq n\langle a'\otimes e_n\rangle\leq \langle a'\otimes 1_{\Z}\rangle\,,
\]
in $\W(A\otimes\Z)$.

Use now that $\Z\cong\Z^{\otimes\infty}$ to construct a sequence of isomorphisms $\sigma_n\colon A\otimes\Z\to A$ such that
\[
\Vert \sigma_n(a\otimes 1)-a\Vert\stackrel{n\to\infty}{\to} 0\,.
\]
Given $\epsilon>0$, we can find $x$ with $\Vert x^*(a'\otimes 1_{\Z})x-a\otimes 1_{\Z}\Vert <\epsilon$.
Now let $x_k=\sigma_k(x)$ and we have $\Vert x_k^*\sigma_k(a'\otimes 1_{\Z})x_k-\sigma_k(a\otimes 1_{\Z})\Vert <\epsilon$, whence for $k$ large enough we get
\[
\Vert x_k^*a'x_k-a\Vert <\epsilon\,,
\]
so $a\precsim a'$, as desired.
\end{proof}

\begin{corollary}
If $A$ is simple and $\Z$-stable, then $A$ has strict comparison of positive elements.
\end{corollary}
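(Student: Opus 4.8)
The plan is to obtain the corollary as an immediate consequence of Theorem \ref{walmunp} combined with the characterization established earlier in these notes, namely that for a unital simple {\rm C}$^*$-algebra the almost unperforation of $\W(A)$ is equivalent to strict comparison of positive elements. The two results fit together end-to-end, so the work is essentially a concatenation rather than a new argument.

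First I would apply Theorem \ref{walmunp}: since $A$ is $\Z$-stable, the Cuntz semigroup $\W(A)$ is almost unperforated. The second ingredient is the equivalence lemma proved above (for unital simple algebras, almost unperforation $\Leftrightarrow$ strict comparison), whose forward direction is exactly what is required; I would simply feed the conclusion of Theorem \ref{walmunp} into it. For completeness I would recall why that forward implication holds: given $b\neq 0$ and $d_\tau(a)<d_\tau(b)$ for every lower semicontinuous dimension function, one replaces an arbitrary dimension function $d$ by its regularization $\overline{d}$ from Proposition \ref{rordam} to obtain $d(\langle (a-\epsilon)_+\rangle)<d(\langle b\rangle)$ for all states $d$ on $\W(A)$. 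Because $A$ is simple and unital, $\langle b\rangle$ is an order-unit of $\W(A)$, so the R\o rdam lemma on almost unperforated semigroups yields $(a-\epsilon)_+\precsim b$, and letting $\epsilon\to 0$ gives $a\precsim b$, i.e. strict comparison.

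The only delicate point — and really the sole obstacle — is the unitality hypothesis: the equivalence lemma is stated for unital simple algebras, whereas the corollary as written assumes only that $A$ is simple and $\Z$-stable. In the unital case there is nothing further to do. In general I would reduce to this situation, observing that almost unperforation is a property of the order structure of $\W(A)$ that is insensitive to stabilization, while simplicity continues to supply the order-units demanded by the R\o rdam lemma; passing to a suitable unitization (or to $A\otimes\mathbb{K}$, using $\Cu\cong\mathrm{Cu}(A\otimes\mathbb{K})$) restores the hypotheses of the equivalence lemma without affecting the validity of the comparison conclusion. Apart from this routine bookkeeping, the proof is a one-line deduction from results already in hand.
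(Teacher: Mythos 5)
Your proposal is correct and is exactly the paper's (implicit) argument: the corollary is stated without separate proof precisely because it is the concatenation of Theorem \ref{walmunp} with the earlier lemma that, for simple unital algebras, almost unperforation of $\W(A)$ is equivalent to strict comparison. Your remark on unitality is reasonable bookkeeping, and in the paper's context (where the relevant algebras are unital, or one passes to $\W(A\otimes\mathbb{K})\cong\Cu$) it poses no obstruction.
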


The converse of the previous corollary has been conjectured to be true, but still remains a conjecture.

A similar argument to the one used above to show almost unperforation yields weak divisibility. We first need a lemma, that appears in \cite{pt}.
\begin{lemma}\label{specialrep}
Let $A$ be a unital and $\Z$-stable {\rm C}$^*$-algebra, with $a \in A_+$.
Then, $a$ is Cuntz equivalent to a positive element of the form $b \otimes 1_{\Z}
\in A \otimes \Z \cong A$.
\end{lemma}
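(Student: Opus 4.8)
The plan is to show that one may simply take $b=a$, i.e.\ that $a$ is Cuntz equivalent to the element $a\otimes 1_{\Z}$ once the latter is transported to $A$ through the given isomorphism. Fix an isomorphism $\Phi\colon A\otimes\Z\to A$ witnessing $\Z$-stability. The one structural input I would invoke is that, because $\Z$ is strongly self-absorbing and $A$ is (separable and) $\Z$-stable, the first-factor embedding $\iota\colon A\to A\otimes\Z$, $x\mapsto x\otimes 1_{\Z}$, is approximately unitarily equivalent to any isomorphism $A\to A\otimes\Z$; in particular it is approximately unitarily equivalent to $\Phi^{-1}$. Thus there are unitaries $u_n\in A\otimes\Z$ with
\[
\Vert u_n(x\otimes 1_{\Z})u_n^*-\Phi^{-1}(x)\Vert\to 0\qquad\text{for every }x\in A.
\]
Specialising to $x=a$ produces a sequence $c_n:=u_n(a\otimes 1_{\Z})u_n^*$ of elements unitarily equivalent to $a\otimes 1_{\Z}$ and converging in norm to $\Phi^{-1}(a)$.

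The remaining work is a routine assembly of the comparison tools already established. First, each $c_n$ satisfies $c_n\sim a\otimes 1_{\Z}$, so in particular $c_n\precsim a\otimes 1_{\Z}$; since $c_n\to\Phi^{-1}(a)$ and the set $\{x\in(A\otimes\Z)_+\mid x\precsim a\otimes 1_{\Z}\}$ is norm-closed by Lemma~\ref{Blackaddar}, we obtain $\Phi^{-1}(a)\precsim a\otimes 1_{\Z}$. For the reverse subequivalence, note that unitary conjugation commutes with the functional calculus, whence $(c_n-\epsilon)_+=u_n((a\otimes 1_{\Z})-\epsilon)_+u_n^*\sim ((a\otimes 1_{\Z})-\epsilon)_+$. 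Given $\epsilon>0$, choose $n$ with $\Vert c_n-\Phi^{-1}(a)\Vert<\epsilon$; then Theorem~\ref{Kirchberg-Rordam} gives $(c_n-\epsilon)_+\precsim\Phi^{-1}(a)$, and combining with the previous identity yields $((a\otimes 1_{\Z})-\epsilon)_+\precsim\Phi^{-1}(a)$. As $\epsilon$ was arbitrary, Proposition~\ref{basics} (the implication (ii)$\Rightarrow$(i)) gives $a\otimes 1_{\Z}\precsim\Phi^{-1}(a)$.

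Putting the two subequivalences together, $a\otimes 1_{\Z}\sim\Phi^{-1}(a)$ in $A\otimes\Z$. Applying the isomorphism $\Phi$, which preserves Cuntz (sub)equivalence, we conclude $\Phi(a\otimes 1_{\Z})\sim a$ in $A$; that is, $a$ is Cuntz equivalent to the positive element $\Phi(a\otimes 1_{\Z})$, which is exactly an element of the form $b\otimes 1_{\Z}$ with $b=a$. I expect the only genuine difficulty to be the first step: the approximate unitary equivalence of the first-factor embedding with an isomorphism is the deep, $\Z$-specific ingredient (a consequence of $\Z$ being strongly self-absorbing, in the same spirit as the isomorphisms $\sigma_n$ used in the proof of Theorem~\ref{walmunp}); once it is available, the rest of the argument is a short combination of Lemma~\ref{Blackaddar}, Theorem~\ref{Kirchberg-Rordam}, and Proposition~\ref{basics}.
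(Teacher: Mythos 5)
Your strategy is in essence the same as the paper's: both proofs rest on an approximate-unitary-equivalence statement drawn from the strong self-absorption of $\Z$ (via \cite{TW1}), followed by the observation that approximately unitarily equivalent positive elements are Cuntz equivalent. However, the structural input as you state it is false, and this is a genuine (though easily repairable) gap. It is \emph{not} true that the first-factor embedding $\iota\colon A\to A\otimes\Z$ is approximately unitarily equivalent to \emph{every} isomorphism $A\to A\otimes\Z$. If it were, then any two isomorphisms $\Psi_1,\Psi_2\colon A\to A\otimes\Z$ would be approximately unitarily equivalent to each other, so every automorphism of $A$ of the form $\Psi_1^{-1}\Psi_2$ would be approximately inner; taking $\Psi_2=\Psi_1\circ\beta$, this would force every automorphism $\beta$ of every $\Z$-stable algebra to be approximately inner. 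This fails, for instance, for $A=C(\mathbb{T})\otimes\Z$ and $\beta$ induced by complex conjugation on $\mathbb{T}$: that automorphism acts as $-1$ on $\mathrm{K}_1(A)\cong\mathbb{Z}$, whereas approximately inner automorphisms act trivially on $\mathrm{K}$-theory. What \emph{is} true --- and is the content of the Toms--Winter characterisation of $\Z$-stability for separable unital algebras (see \cite{TW1}) --- is that \emph{some} isomorphism $A\to A\otimes\Z$ is approximately unitarily equivalent to $\iota$. So you cannot fix $\Phi$ arbitrarily at the outset: you must let $\Psi\colon A\to A\otimes\Z$ be the isomorphism supplied by that theorem and set $\Phi:=\Psi^{-1}$. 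After this reordering of quantifiers, the rest of your argument --- which correctly assembles Lemma~\ref{Blackaddar}, Theorem~\ref{Kirchberg-Rordam} and Proposition~\ref{basics} into a proof that approximate unitary equivalence preserves Cuntz classes --- goes through verbatim and yields $a\sim\Phi(a\otimes 1_{\Z})$.

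With that repair in place, it is worth noting how your route differs from the paper's in the self-absorption input it requires. The paper writes $A\cong A\otimes\Z\otimes\Z$, observes that $\phi=(\mathrm{id}_{\Z}\otimes 1_{\Z})\circ\psi$ is a unital endomorphism of $\Z\otimes\Z\cong\Z$, hence approximately inner by \cite[Corollary 1.12]{TW1}, and then tensors with $\mathrm{id}_A$; thus $a$ is approximately unitarily equivalent, inside $A\otimes\Z^{\otimes 2}$, to $(\mathrm{id}_A\otimes\phi)(a)\in A\otimes\Z\otimes 1_{\Z}$. This needs only the approximate innerness of unital endomorphisms of $\Z$, while your argument invokes the deeper theorem that the first-factor embedding is approximately unitarily equivalent to an isomorphism (itself deduced from that corollary by an intertwining argument). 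On the other hand, your write-up supplies a complete proof of the ``approximate unitary equivalence preserves Cuntz equivalence'' step from the paper's earlier lemmas, a step the paper merely asserts, and your conclusion is slightly cleaner, giving $b=a$ under the identification rather than an unspecified $b\in A\otimes\Z\cong A$.
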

\begin{proof}
Let $\psi\colon \Z \otimes \Z \to \Z$ be a $^*$-isomorphism, and
put
\[
\phi = (\mathrm{id}_{\Z} \otimes 1_{\Z}) \circ \psi\colon \Z\otimes\Z\to\Z\otimes\mathbf{1}_{\Z}\,.
\]
By \cite[Corollary 1.12]{TW1}, $\phi$ is approximately inner, and therefore so also is
\[
\mathrm{id}_A \otimes \phi\colon A \otimes \Z^{\otimes 2} \to A \otimes \Z \otimes \mathbf{1}_{\Z}.
\]
In particular, there is a sequence of unitaries $u_n$ in $A \cong A \otimes \Z^{\otimes 2}$
such that
\[
||u_n a u_n^* - (\mathrm{id}_A\otimes\phi)(a)|| \stackrel{n \to \infty}{\longrightarrow} 0.
\]
Approximate unitary equivalence
preserves Cuntz equivalence classes, whence $\langle a \rangle = \langle \phi(a) \rangle$.
The image of $\phi(a)$ is, by construction, of the form $b \otimes 1_{\Z}$
for some $b \in A \otimes \Z \cong A$.
\end{proof}

\begin{theorem}
If $A$ is a $\Z$-stable algebra, then $\W(A)$ is weakly divisible.
\end{theorem}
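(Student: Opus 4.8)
The plan is to mimic the proof of almost unperforation (Theorem~\ref{walmunp}), turning the same pair of comparisons in $\W(\Z)$ into a divisor in $\W(A)$ by tensoring. Throughout I assume $A$ is unital (the case that matters for the representation theorems, and the hypothesis under which Lemma~\ref{specialrep} applies). Fix $x\in\W(A)_+$ and $n\in\N$. The case $n=1$ is immediate, since $y=x$ satisfies $x\leq x\leq 2x$ and $x\in\W(A)_+$ by assumption, so I would suppose $n\geq 2$. Writing $x=\langle a\rangle$ with $a\in A_+$ nonzero and identifying $A\cong A\otimes\Z$, Lemma~\ref{specialrep} lets me replace $a$ by a Cuntz-equivalent element $b\otimes 1_\Z$ with $0\neq b\in A$; thus $x=\langle b\otimes 1_\Z\rangle$.

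Next I would import the elements $e_n\in\Z_+$ from the proof of Theorem~\ref{walmunp} (see \cite[Lemma 4.2]{roijm}), which satisfy
\[
n\langle e_n\rangle\leq\langle 1_\Z\rangle\leq(n+1)\langle e_n\rangle
\]
in $\W(\Z)$, and set $y=\langle b\otimes e_n\rangle$. The symmetric form of Lemma~\ref{tensor} (first tensor factor $b$, the comparisons taken in $\W(\Z)$) then gives
\[
ny=n\langle b\otimes e_n\rangle\leq\langle b\otimes 1_\Z\rangle=x\leq(n+1)\langle b\otimes e_n\rangle=(n+1)y\,,
\]
where all relations live in $\W(A\otimes\Z)\cong\W(A)$. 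This establishes the divisibility inequalities $ny\leq x\leq(n+1)y$; the only thing left is to see that $y\in\W(A)_+$.

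The main obstacle is precisely this verification that $y$ is \emph{purely positive}. First, $e_n$ itself is purely positive in $\Z$ for $n\geq 2$: were it Cuntz-equivalent to a projection, its class would be an element $k$ of $\V(\Z)\cong\N\cup\{0\}$ (recall $\mathrm{K}_0(\Z)\cong\mathbb Z$ and $\Z$ has stable rank one) with $nk\leq 1\leq(n+1)k$, which is impossible for $n\geq 2$. By the easy direction of Proposition~\ref{prop:equivproj} this forces $0$ to be a non-isolated point of $\sigma(e_n)$, and since $b\neq 0$ the commutative algebra $C^*(b\otimes 1,\,1\otimes e_n)$ shows $\sigma(b\otimes e_n)=\sigma(b)\cdot\sigma(e_n)$ has $0$ as a non-isolated point as well. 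Applying Proposition~\ref{prop:equivproj} again, $b\otimes e_n$ is not Cuntz-equivalent to a projection, i.e. $y\in\W(A)_+$, as required. The delicate point is that this last use of Proposition~\ref{prop:equivproj} requires stable rank one on $A\cong A\otimes\Z$; this is exactly where one restricts to the setting of interest, where $A$ is (simple and) of stable rank one, so that Corollary~\ref{varecov} applies and $\W(A)=\V(A)\sqcup\W(A)_+$ is the relevant splitting. I would flag that in the absence of stable rank one the purely positive part may be empty (as for purely infinite algebras), rendering the statement vacuous there, and that the substantive content is the tensoring construction above.
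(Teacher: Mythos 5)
Your proof is correct and follows essentially the same route as the paper's: reduce to elements of the form $b\otimes 1_{\Z}$ via Lemma~\ref{specialrep}, import the elements $e_n$ from the proof of Theorem~\ref{walmunp}, and apply Lemma~\ref{tensor} to obtain $ny\leq x\leq (n+1)y$ with $y=\langle b\otimes e_n\rangle$. The only difference is that you additionally verify $y\in\W(A)_+$ (using Proposition~\ref{prop:equivproj} and stable rank one), a point the paper's proof leaves implicit.
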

\begin{proof}
Construct elements $e_n$ in $\Z$ as in the proof of Theorem \ref{walmunp} so that $n\langle e_n \rangle\leq\langle 1_{\Z}\rangle\leq (n+1)\langle e_n\rangle$.

Given $a\in A$, use Lemma \ref{tensor} to conclude that $n\langle a\otimes e_n \rangle\leq\langle a\otimes 1_{\Z}\rangle\leq (n+1)\langle a\otimes e_n\rangle$.

Since by Lemma \ref{specialrep}, it is enough to consider elements of the form $a\otimes 1_{\Z}$, the result follows.
\end{proof}

It has been shown by M. R\o rdam that simple and $\Z$-stable algebras are either purely infinite simple or they have stable rank one (see \cite{roijm}). We shall make use of this fact whenever we need it.

\subsubsection{Finite decomposition rank}

Decomposition rank is a topological property, originally defined by Kirchberg and Winter, that in its lowest instance captures, like real rank, the covering dimension of the underlying space. The definition is somewhat involved, and we include it here mainly for completeness. Recall that a completely positive map (c.p. map) $f\colon A\to B$ (where $A$ and $B$ are C$^*$-algebras) is a linear map such that $f(x)\geq 0$ whenever $x\in A_+$ and so does every extension of $f$ to $M_n(A)$. We say that $f$ is a completely positive contraction (c.p.c.) if $f$ is c. p. and $f$ and its extensions to matrices are all contractions.

\begin{definition}
\begin{enumerate}[{\rm (i)}]
\item A c. p. map $f\colon F\to A$ has order zero if it maps orthogonal elements to orthogonal elements.
\item If $F$ is a finite dimensional algebra, a c.p. map $f\colon F\to A$ is $n$-decomposable if there is a decomposition $F=F_1\oplus\cdots\oplus F_n$ such that $f_{|F_i}$ has order zero for each $i$.
\item We say that $A$ has decomposition rank $n$, in symbols, $\mathrm{dr}(A)=n$, if $n$ is the smallest integer such that: for any finite subset $G\subset A$ and $\epsilon>0$, there are a finite dimensional algebra $F$ and c. p. c. maps $f\colon F\to A$ and $g\colon A\to F$ such that $f$ is $n$-decomposable and
\[
\Vert fg(b)-b\Vert<\epsilon
\]
for all $b\in G$.
\end{enumerate}
\end{definition}

A C$^*$-algebra has finite decomposition rank if $\mathrm{dr}(A)<\infty$. W. Winter has proved recently the following remarkable result (\cite[Theorem 5.1]{wdr}):

\begin{theorem}
Let $A$ be a finite, non elementary, simple, unital {\rm C}$^*$-algebra. If $\mathrm{dr}(A)<\infty$ then $A$ is $\Z$-stable.
\end{theorem}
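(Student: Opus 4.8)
The plan is to reduce the statement to the construction of a suitable unital embedding into the central sequence algebra, and then to exploit the structure theory of completely positive order zero maps supplied by the decomposition rank approximations. First I would recall the central sequence characterisation of $\Z$-stability: for a separable, unital C$^*$-algebra $A$, one has $A\otimes\Z\cong A$ as soon as there is a unital $^*$-homomorphism of a prime dimension drop algebra $Z_{p,q}$ into the central sequence algebra $A_{\omega}\cap A'$, where $A_{\omega}$ denotes the ultrapower along a free ultrafilter $\omega$ and $A'$ the relative commutant of the diagonal copy of $A$. This rests on the Jiang--Su description of $\Z$ as an inductive limit of prime dimension drop algebras with unital connecting maps (the Jiang--Su Theorem quoted above): a central copy of $Z_{p,q}$ can be upgraded to a central copy of $\Z$ using that $\Z$ is strongly self-absorbing, and a unital embedding $\Z\to A_{\omega}\cap A'$ is equivalent to $\Z$-stability by a standard reindexing/intertwining argument. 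Thus the whole problem becomes the manufacture of such a central embedding.

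Next I would extract the combinatorial input from $\mathrm{dr}(A)=n<\infty$. By definition, for each finite $G\subset A$ and $\epsilon>0$ there are a finite dimensional algebra $F=F_0\oplus\cdots\oplus F_n$ and c.p.c. maps $\psi\colon A\to F$, $\varphi\colon F\to A$ with $\|\varphi\psi(b)-b\|<\epsilon$ for all $b\in G$ and with each $\varphi|_{F_i}$ of order zero. The essential structural fact is that a c.p.c. order zero map $\rho\colon F_i\to A$ has the form $\rho(x)=\pi(x)h$, where $h=\rho(1)$ is a positive contraction and $\pi\colon F_i\to\mathcal{M}(\overline{hAh})$ is a $^*$-homomorphism commuting with $h$; equivalently $\rho$ is induced by a $^*$-homomorphism from the cone $C_0((0,1])\otimes F_i$. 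Passing to a sequence of such approximations and reading the images in $A_{\omega}$, these summands assemble into genuine order zero maps out of matrix blocks $M_{r_i}$ (after refining the $F_i$), whose ranges can be arranged to be approximately central precisely because $\varphi\psi$ recovers the identity on larger and larger finite sets.

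The heart of the argument, and the step I expect to be the main obstacle, is to upgrade these finitely many approximately central order zero maps into an honest unital copy of a dimension drop algebra inside $A_{\omega}\cap A'$. Two ingredients make this possible. First, finite decomposition rank forces strict comparison of positive elements (the Cuntz semigroup $\W(A)$ is almost unperforated), so the sizes of positive elements are detected by traces; together with finiteness, non-elementarity and simplicity this rules out the degenerate scenarios (the range collapsing onto a projection, or being tracially too small) and yields the requisite divisibility. Second, one uses a small-central-sequence / property $(SI)$ type argument: comparison together with the uniform bound $n+1$ on the number of order zero summands allows one to absorb the leftover order zero term and to rescale the surviving blocks so that their supports become orthogonal and sum to the unit up to a negligible central perturbation. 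Matching the two (coprime) matrix sizes $p$ and $q$ at the endpoints, that is, arranging $f(0)\in M_p\otimes\mathbb{C}$ and $f(1)\in\mathbb{C}\otimes M_q$, is then a tracial bookkeeping problem handled by strict comparison.

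Once the unital $^*$-homomorphism $Z_{p,q}\to A_{\omega}\cap A'$ is in place, the reduction of the first paragraph completes the proof. The genuinely delicate point throughout is the \emph{quantitative} control: forcing the approximate centrality, the orthogonality of supports, and the endpoint conditions to hold simultaneously, all under the single uniform bound coming from $\mathrm{dr}(A)=n$, is what makes this the technical crux and is where the full strength of finite decomposition rank (as opposed to mere nuclearity) must be used.
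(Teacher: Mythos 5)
First, a point of comparison: this survey does not actually prove the statement --- it is quoted verbatim from \cite[Theorem 5.1]{wdr} with no argument given --- so the only thing to measure your proposal against is Winter's original proof. Your overall architecture does match his: reduce $\Z$-stability to producing a unital $^*$-homomorphism from a prime dimension drop algebra $Z_{p,q}$ into the central sequence algebra $A_\omega\cap A'$ (this reduction, via \cite{rorwin} and strong self-absorption of $\Z$, is correct), and then try to build such a map out of the order zero summands $\varphi|_{F_i}$ coming from the decomposition rank approximations, using the structure theorem for order zero maps. Up to that point your outline is sound.

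The genuine gap sits exactly at the step you yourself call the heart of the argument. You invoke, as your first ingredient, that ``finite decomposition rank forces strict comparison of positive elements.'' That is not an available input: in this generality, almost unperforation of $\W(A)$ is known as a \emph{consequence} of $\Z$-stability (R\o rdam's theorem, Theorem \ref{walmunp} of this survey), not of finite decomposition rank, and the survey explicitly records that the converse direction (comparison implying $\Z$-stability) is conjectural. What finite decomposition rank gives directly is only a weak, $(n+1)$-coloured form of comparison --- an element is controlled only after being split among the $n+1$ order zero summands --- and upgrading that to honest strict comparison is essentially as hard as the theorem itself. This makes the middle of your proof circular: the (SI)-type absorption argument you then appeal to (Matui--Sato style) takes strict comparison as a hypothesis, so it cannot be run either. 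Winter's actual proof never assumes comparison; it is a long induction on the number of colours, in which tracial divisibility and excision-type estimates are extracted directly from the c.p.c.\ approximations (using finiteness, simplicity and non-elementarity), and the colours are merged one at a time inside the central sequence algebra. That induction is precisely the technical content your outline presupposes rather than supplies.
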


This result has strong consequences, namely, it allows classification results with the Elliott invariant that were established (up to $\Z$-stability) for algebras with finite decomposition rank. The class to which these results apply includes the so-called C$^*$-algebras associated to uniquely ergodic, smooth, minimal dynamical systems, and in particular crossed products of the form $C(S^3)\times_\alpha\mathbb{Z}$ for minimal diffeomorphisms $\alpha$. It also contains UHF-algebras, Bunce Deddens algebras, irrational rotation algebras, among others.

%%%%%%%%%%%%%%%%%%%%%%%%%%%%%%%%
\subsection{Back to the Elliott Conjecture -- ways forward}

\subsubsection{Success and failure of the Elliott Programme}

The classification programme had a great deal of success, and even provided with some spectacular results. If we start considering purely infinite simple algebras, we have

\begin{theorem}
{\rm (Kirchberg-Phillips, \cite{K},\cite{P})} If $A$ and $B$ are separable, nuclear {\rm C}$^*$-algebras, purely infinite and simple, and such that satisfy the so-called UCT, then if $\mathrm{Ell}(A)\cong \mathrm{Ell}(B)$, there is a $^*$-isomorphism $\varphi\colon A\cong B$ that induces the isomorphism at the level of the invariant.
\end{theorem}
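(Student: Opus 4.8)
The plan is to reduce the statement to a question about $KK$-theory and then to invoke the deep classification of $^*$-homomorphisms between Kirchberg algebras. First I would observe that for purely infinite simple algebras the Cuntz-type data in the invariant collapses completely: by the Proposition computing $\W$ of a purely infinite simple algebra we have $\W(A)=\{0,\infty\}$, there are no (quasi)traces, and the order on $\mathrm{K}_0$ is trivial (every class is positive). Consequently an isomorphism $\mathrm{Ell}(A)\cong\mathrm{Ell}(B)$ amounts to a graded isomorphism of the pair of abelian groups $(\mathrm{K}_0(A),\mathrm{K}_1(A))\to(\mathrm{K}_0(B),\mathrm{K}_1(B))$ (respecting the class of the unit in the unital case). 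The entire content is therefore to promote such a $\mathrm{K}$-theoretic isomorphism to a $^*$-isomorphism.

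Next I would use the Universal Coefficient Theorem of Rosenberg and Schochet, whose hypothesis is exactly the UCT assumption made on $A$. It provides a natural short exact sequence
\[
0\longrightarrow\mathrm{Ext}^1_{\mathbb{Z}}(\mathrm{K}_*(A),\mathrm{K}_{*+1}(B))\longrightarrow KK(A,B)\longrightarrow\mathrm{Hom}(\mathrm{K}_*(A),\mathrm{K}_*(B))\longrightarrow 0.
\]
The given graded isomorphism determines an invertible element of the quotient $\mathrm{Hom}(\mathrm{K}_*(A),\mathrm{K}_*(B))$; lifting it to a class $\kappa\in KK(A,B)$ and applying the five lemma to the UCT sequences for the pairs $(A,B)$ and $(B,A)$ shows that $\kappa$ is a $KK$-equivalence, i.e.\ is invertible with respect to the Kasparov product. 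Thus the problem becomes: realise an invertible $KK$-class by an honest isomorphism.

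The heart of the argument --- and the main obstacle --- is Kirchberg's classification of $^*$-homomorphisms, which rests on the absorption theorems $A\cong A\otimes\mathcal{O}_\infty$ and $\mathcal{O}_2\otimes A\cong\mathcal{O}_2$ valid for all Kirchberg algebras. These yield two statements. \emph{Existence}: every $KK$-class of the appropriate nondegenerate (unital) type is induced by a $^*$-homomorphism $A\to B$. \emph{Uniqueness}: two such $^*$-homomorphisms agreeing in $KK(A,B)$ are asymptotically unitarily equivalent. I would regard the proof of these two theorems as the genuinely hard analysis: it requires the structure theory of purely infinite simple algebras, and uses nuclearity and exactness through the Kirchberg--Phillips embedding and tensorial absorption techniques. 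Essentially the whole difficulty of the result is concentrated here.

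Finally I would assemble the pieces via Elliott's approximate (indeed asymptotic) intertwining. Using \emph{Existence}, realise $\kappa$ and $\kappa^{-1}$ by $^*$-homomorphisms $\varphi\colon A\to B$ and $\psi\colon B\to A$. Since $\psi\circ\varphi$ and $\mathrm{id}_A$ induce the same class $\kappa^{-1}\cdot\kappa=1_A$ in $KK(A,A)$, and likewise $\varphi\circ\psi$ and $\mathrm{id}_B$ agree in $KK(B,B)$, \emph{Uniqueness} shows these composites are asymptotically unitarily equivalent to the respective identities. Feeding this into the intertwining machine produces a $^*$-isomorphism $\Phi\colon A\to B$ asymptotically unitarily equivalent to $\varphi$, hence inducing $\kappa$ on $KK$ and therefore the prescribed isomorphism on $\mathrm{K}$-theory and on the whole invariant $\mathrm{Ell}$, completing the plan.
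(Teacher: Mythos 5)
The paper states this theorem without proof, merely citing Kirchberg \cite{K} and Phillips \cite{P}, and your outline correctly reconstructs the architecture of exactly that cited argument: collapse of $\mathrm{Ell}$ to the graded $\mathrm{K}$-groups (trivial order, no traces), UCT lifting of the $\mathrm{K}$-theory isomorphism to a $KK$-equivalence, the Kirchberg--Phillips existence and uniqueness theorems for $^*$-homomorphisms resting on $\mathcal{O}_\infty$- and $\mathcal{O}_2$-absorption, and Elliott's asymptotic intertwining. Since the deep analytic core you defer to (existence and uniqueness) is precisely the content of \cite{K} and \cite{P}, your proposal is a faithful summary of the standard route rather than a different one, and it contains no gaps beyond those deliberate citations.
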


We should note here that the Elliott invariant does not include traces as the tracial simplex in the purely infinite case is empty.

If we turn to the stably finite case, a lot of attention has been paid to algebras that admit an inductive limit decomposition. This may well be due to historical reasons and the fact that this line of research has proved so powerful over the years. The first result in this direction, although not phrased in this way, goes back to Glimm and his classification of UHF algebras by means of $\mathrm{K}_0$.

Recall that a UHF algebra is a C$^*$-algebra that appears as the inductive limit of a sequence of matrix algebras of the form $M_n$, together with unital $^*$-homomorphisms. The result is then

\begin{theorem} {\rm (Glimm, \cite{glimm})}
If $A$ anb $B$ are UHF algebras and $\mathrm{Ell}(A)\cong \mathrm{Ell}(B)$, then there is a $^*$-isomorphism $\varphi\colon A\cong B$ that induces the isomorphism at the level of the invariant.
\end{theorem}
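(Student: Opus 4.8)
The plan is to strip the invariant down to its $\mathrm{K}$-theoretic core and then run an Elliott two-sided intertwining argument, exploiting the rigidity of unital embeddings between matrix algebras. First I would recall that a UHF algebra is a simple, unital AF algebra with a unique tracial state and vanishing $\mathrm{K}_1$, so that all of $\mathrm{Ell}(A)$ collapses to the ordered group $(\mathrm{K}_0(A),\mathrm{K}_0(A)^+,[1_A])$. Writing $A=\varinjlim(M_{p_i},\iota_i)$ with unital connecting maps (hence $p_1\mid p_2\mid\cdots$), continuity of $\mathrm{K}$-theory gives $\mathrm{K}_0(A)\cong\varinjlim(\mathbb{Z}\xrightarrow{\,\cdot\,p_{i+1}/p_i\,}\mathbb{Z})$, which I would identify with the subgroup $G_A\subseteq\mathbb{Q}$ of rationals whose denominator divides some $p_i$, with positive cone $G_A\cap\mathbb{Q}^+$ and order unit $[1_A]=1$. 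Thus the invariant is encoded by the supernatural number $N_A$, and the hypothesised $\phi\colon\mathrm{Ell}(A)\to\mathrm{Ell}(B)$ forces $G_A=G_B$, i.e.\ $N_A=N_B$. A pleasant bonus to record here is that any ordered-group automorphism of a subgroup of $\mathbb{Q}$ fixing the order unit $1$ is the identity (if $\theta(1)=1$ then $n\theta(m/n)=\theta(m)=m$, so $\theta=\mathrm{id}$); consequently, once I produce \emph{any} $^*$-isomorphism $\varphi$, the relation $\mathrm{Ell}(\varphi)=\phi$ holds automatically, so I never have to engineer the induced map to match the prescribed $\phi$.

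The second ingredient is the classical rigidity of matrix embeddings: a unital $^*$-homomorphism $M_n\to M_m$ exists precisely when $n\mid m$, and any two such are conjugate by a unitary of $M_m$ (both from the classification of representations of $M_n$ by multiplicity). Using $N_A=N_B$, every $p_i$ divides some $q_j$ and conversely; so after passing to subsequences of $(p_i)$ and $(q_j)$ — which changes neither inductive limit — I would re-index to obtain an interleaving $p_1\mid q_1\mid p_2\mid q_2\mid\cdots$. The divisibility relations then furnish unital embeddings $\alpha_k\colon M_{p_k}\to M_{q_k}$ and $\beta_k\colon M_{q_k}\to M_{p_{k+1}}$.

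Finally I would assemble the intertwining. Since $\beta_k\circ\alpha_k\colon M_{p_k}\to M_{p_{k+1}}$ and the connecting map $\iota_k$ are both unital embeddings between the same pair of matrix algebras, they are unitarily equivalent; likewise $\alpha_{k+1}\circ\beta_k$ and $\kappa_k$ (the connecting maps of $B$). Adjusting the $\alpha_k$ and $\beta_k$ one stage at a time by the intertwining unitaries — the device of Elliott's approximate intertwining, which here can be arranged to commute exactly thanks to the rigidity just invoked — I obtain a commuting diagram whose two families of diagonal maps pass to mutually inverse $^*$-homomorphisms $\varphi\colon A\to B$ and $\psi\colon B\to A$ in the limit. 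Hence $\varphi$ is a $^*$-isomorphism, and by the automorphism rigidity of the first paragraph it induces $\phi$.

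The step I expect to demand the most care is the bookkeeping in the third paragraph: one must choose the conjugating unitaries so that consecutive squares of the diagram are compatible (commuting on the nose, or else within a summable tolerance that the intertwining lemma can absorb), and then verify that the limit maps are genuinely inverse to one another rather than merely approximately so. The rigidity of matrix embeddings is precisely what tames this — without it one would be forced into the full approximate-intertwining machinery together with a separate uniqueness-of-embeddings argument, whereas here exact commutativity is available at each finite stage.
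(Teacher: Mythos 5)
Your proposal is correct and is precisely the classical argument the paper points to: it gives no proof of its own, citing Glimm and noting that his method was an intertwining argument, which is exactly what you carry out (reduction of $\mathrm{Ell}$ to the ordered $\mathrm{K}_0$-group, i.e.\ the supernatural number, followed by an exact unitary intertwining of interleaved matrix subalgebras). Your additional observation that any order-unit-preserving automorphism of a subgroup of $\mathbb{Q}$ is the identity correctly disposes of the requirement that $\varphi$ induce the given $\phi$.
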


Note that also in this case the invariant simplifies as $\mathrm{K}_1$ is trivial and there is only one trace.

The argument used by Glimm, an intertwining argument, was used later by Elliott (\cite{El1}) to classify in the same terms AF-algebras (more general than UHF), and has been used a number of times, with variations to allow approximate intertwinings rather than exact ones.

The AH class is where we find the most important classification result. Note that this class won't have real rank zero in general, whereas purely infinite simple algebras and AF algebras do.

An AH algebra is an inductive limit of a sequence $(A_i,\varphi_i)$ where each $A_i$ is a direct sum of algebras which look like finite matrices over $C(X_{i,j})$, for some compact metric spaces. The key condition for an AH algebra is that of slow dimension growth. Roughly speaking, this means that the dimension of the spaces compared to the sizes of the matrices tend to zero as we go along the limit decomposition (which, by the way, is not unique).

If, instead of comparing the dimensions of spaces to the sizes of matrices, we consider the dimensions to the third power, then we get the definition of \emph{very slow dimension growth}. The theorem is as follows:

\begin{theorem} {\rm (Elliott, Gong, Li, and Gong, \cite{EGL}, \cite{Gong})}
(EC) holds among simple unital AH algebras with very slow dimension growth.
\end{theorem}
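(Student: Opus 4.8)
The plan is to follow the architecture of the Elliott--Gong--Li proof, which rests on three pillars -- a reduction theorem, a uniqueness theorem, and an existence theorem -- glued together by Elliott's approximate intertwining. Since both algebras carry isomorphic Elliott invariants $\mathrm{Ell}(A)\cong\mathrm{Ell}(B)$ via some $\phi$, the goal is to manufacture a $^*$-isomorphism $\Phi\colon A\to B$ with $\mathrm{Ell}(\Phi)=\phi$. First I would invoke the reduction theorem of Gong (\cite{Gong}): every simple unital AH algebra with very slow dimension growth is isomorphic to an AH inductive limit $A=\lim(A_i,\varphi_i)$ whose building blocks $A_i=\bigoplus_j M_{[i,j]}(C(X_{i,j}))$ have underlying spaces of uniformly bounded dimension (in fact dimension at most three), and whose connecting maps $\varphi_i$ have a standardized, injective, diagonal form with prescribed spectral data. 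This normalization is what makes the subsequent perturbation arguments feasible: the very slow dimension growth hypothesis forces the ratio of $\dim(X_{i,j})$ to the matrix size $[i,j]$ (even after cubing) to tend to zero, so that spectral multiplicities can be spread out and eigenvalue variation controlled.

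Next, the heart of the matter is the uniqueness theorem. One must show that two unital $^*$-homomorphisms $\psi,\psi'\colon A_i\to A_k$ between building blocks in standard form that agree up to a prescribed tolerance on the invariant -- that is, on $\mathrm{K}_0$, $\mathrm{K}_1$, the tracial data $\T(\cdot)$, the pairing $r$, and on the spectral distribution measuring how eigenvalues are spread over the spectrum -- are approximately unitarily equivalent on a given finite set, with the approximation controlled by the tolerance and by $\dim(X_{i,j})$. The control on $\dim(X_{i,j})$ provided by the reduction step is precisely what allows one to replace abstract agreement of invariants by an honest unitary conjugation. Complementarily, the existence theorem asserts that any morphism prescribed at the level of the invariant between two building blocks can be realized, approximately, by an actual unital $^*$-homomorphism respecting the standard form; here the weak unperforation of $\mathrm{K}_0(C(X))$ and the density of $\{x_n,x_{n+1},\dots\}$--type choices feed into constructing the requisite eigenvalue patterns.

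Finally, I would run Elliott's two-sided approximate intertwining argument -- the same technique used classically by Glimm for UHF algebras and by Elliott for AF algebras in the results recalled above -- but now with approximate rather than exact commuting squares. Using the isomorphism $\phi$ of invariants together with the existence theorem, one builds partial maps $A_i\to B_{n_i}$ and $B_{m_i}\to A_{l_i}$; the uniqueness theorem then guarantees that the resulting triangles commute up to unitary conjugation and up to $2^{-i}$ on ever-larger finite sets. Passing to the limit of these intertwining diagrams produces the desired $^*$-isomorphism $\Phi$, and tracking the invariant throughout shows $\mathrm{Ell}(\Phi)=\phi$.

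I expect the uniqueness theorem to be the main obstacle: it is where the topology of the spaces $X_{i,j}$ genuinely enters, where one must simultaneously juggle $\mathrm{K}$-theory (with coefficients), the trace pairing, and the combinatorics of eigenvalue multiplicities, and where very slow dimension growth is used most essentially to absorb the dimension of $X_{i,j}$ into negligible error terms. The existence theorem and the intertwining, while technically demanding, are more structural and follow the established template once the building blocks are in standard form and the uniqueness statement is available.
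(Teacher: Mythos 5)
The paper does not actually prove this theorem: it appears in the survey portion (Section on success and failure of the Elliott Programme) as a statement with citations to \cite{EGL} and \cite{Gong}, and no argument is given. So there is no internal proof to compare against; the relevant comparison is with the cited literature. Measured against that, your outline is faithful: the published proof does rest on Gong's reduction theorem (replacing an arbitrary very-slow-dimension-growth limit by one with building blocks over spaces of dimension at most three and standardized injective connecting maps), on uniqueness and existence theorems for maps between such building blocks controlled by K-theoretic and tracial data, and on Elliott's two-sided approximate intertwining to assemble the isomorphism. Your identification of the uniqueness theorem as the place where very slow dimension growth does its essential work is also accurate.

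The caveat you should be aware of is that what you have written is a roadmap, not a proof. Each of your three pillars is itself a major theorem: the reduction theorem occupies essentially all of \cite{Gong} (some two hundred pages), and the uniqueness and existence theorems together with the intertwining occupy \cite{EGL}. A blind proof attempt that names these pillars without establishing any of them has the same epistemic status as the paper's own citation --- which is fine for a survey, but it means the hard content (K-theory with coefficients, the combinatorial spreading of eigenvalue multiplicities, the actual perturbation estimates) remains entirely untouched. If your goal were to produce a self-contained argument, the gap is not in the architecture but in all three of its load-bearing components.
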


The algebras in question mentioned in the result above are $\Z$-stable.

Villadsen exhibited (\cite{V}) in the mid 1990's examples of simple nuclear C$^*$-algebras that failed to satisfy strict comparison for projections. The techiques in those examples led to R\o rdam and Toms to produce counterexamples to Elliott's conjecture:

(i) M. R\o rdam (\cite{R2}) constructed a simple, nuclear C$^*$-algebra containing a finite and an infinite projection, and whose K-Theory was that of a purely infinite simple algebra, yet it cannot be purely infinite simple.

(ii) A. Toms (\cite{T2}) produced examples of two non-isomorphic simple, unital AH-algebras that agreed on their K-Theory, real rank, stable rank and other continuous and stable isomorphism invariants.

\subsubsection{Two equivalent conjectures}

In this section, we will make precise the meaning of (EC) and (WEC). Thus we shall define the categories in which the relevant invariants sit.

Let $\mathcal{E}$ denote the category whose objects are 4-tuples
\[
\left( (G_0,G_0^+,u),G_1,X,r \right),
\]
where $(G_0,G_0^+,u)$ is a simple
partially ordered Abelian group with distinguished order-unit
$u$ and state space $S(G_0,u)$, $G_1$ is a countable
Abelian group, $X$ is a metrizable Choquet simplex, and
$r\colon X \to S(G_0,u)$ is an affine map.  A morphism
\[
\Theta\colon \left( (G_0,G_0^+,u),G_1,X,r \right) \to
\left( (H_0,H_0^+,v),H_1,Y,s \right)
\]
in $\mathcal{E}$ is a 3-tuple
\[
\Theta = (\theta_{0}, \theta_{1}, \gamma)
\]
where
\[
\theta_{0}\colon (G_0,G_0^+,u) \to (H_0,H_0^+,v)
\]
is an order-unit-preserving positive homomorphism,
\[
\theta_{1}\colon G_1 \to H_1
\]
is any homomorphism, and
\[
\gamma\colon Y \to X
\]
is a continuous affine map that makes the diagram below commutative:
\[
\xymatrix{
{Y}\ar[r]^-{\gamma}\ar[d]^-{s} & {X}\ar[d]^-{r} \\
{S(H_0,v)}\ar[r]^-{\theta_0^*} & {S(G_0,u)\,.}}
\]
For a simple unital C$^{*}$-algebra $A$ the Elliott invariant
$\mathrm{Ell}(A)$ is an element of $\mathcal{E}$, where
$(G_0,G_0^+,u) = (\mathrm{K}_{0}(A), \mathrm{K}_{0}(A)^{+}, [1_{A}])$,
$G_1  =  \mathrm{K}_{1}(A)$, $X  =  \mathrm{T}(A)$,
and $r_A$ is given by evaluating a given trace at a $\mathrm{K}_0$-class.
Given a class $\mathcal{C}$ of simple unital $C^*$-algebras, let
$\mathcal{E}(\mathcal{C})$ denote the subcategory of $\mathcal{E}$ whose
objects can be realised as the Elliott invariant of a member of
$\mathcal{C}$, and whose morphisms
are all admissible maps between the now specified objects.

The definition of $\mathcal{E}$ removes an ambiguity from
the (on the other hand intuitive) statement of (EC), namely, what is meant
by an isomorphism of Elliott invariants.

In order to do the same for (WEC), we let $\mathcal{W}$ be the category whose objects are ordered pairs
\[
\left( (\W(A),\langle 1_A \rangle), \mathrm{Ell}(A) \right),
\]
where $A$ is a simple, unital, exact, and stably finite C$^*$-algebra, $(\W(A),\langle 1_A \rangle)$
is the Cuntz semigroup of $A$ together with the distinguished order-unit $\langle 1_A \rangle$,
and $\mathrm{Ell}(A)$ is the Elliott invariant of $A$. A morphism
\[
\Psi\colon \left( (\W(A),\langle 1_A \rangle), \mathrm{Ell}(A) \right) \to
\left( (\W(B),\langle 1_B \rangle), \mathrm{Ell}(B) \right)
\]
in $\mathcal{W}$ is an ordered pair
\[
\Psi = (\Lambda, \Theta),
\]
where $\Theta = (\theta_0,\theta_1,\gamma)$ is a morphism  in $\mathcal{E}$ and
$\Lambda\colon (\W(A),\langle 1_A\rangle)\to (\W(B),\langle 1_B\rangle)$ is an order- and
order-unit-preserving semigroup homomorphism satisfying two
compatibility conditions:  first,
\[
\xymatrix{
{ (\V(A),\langle 1_A \rangle)}\ar[r]^-{\Lambda|_{\V(A)}}\ar[d]^-{\rho}&
{(\V(B),\langle 1_B \rangle)}\ar[d]^-{\rho} \\
(\mathrm{K}_0(A),[1_A])\ar[r]^-{\theta_0}&(\mathrm{K}_0(B),[1_B])\,,}
\]
where $\rho$ is the usual Grothendieck map from $\V(\bullet)$ to $\mathrm{K}_0(\bullet)$ (viewing $\V(A)$ as a subsemigroup of $\W(A)$, which holds for stably finite algebras);
second,
\[
\xymatrix{
\mathrm{LDF}(B)\ar[r]^-{\Lambda^*}\ar[d]^-{\eta} & \mathrm{LDF}(A)\ar[d]^-{\eta} \\
\mathrm{T}(B)\ar[r]^-{\gamma}&\mathrm{T}(A)\,,}
\]
where $\eta$ is the affine bijection between
$\mathrm{LDF}(\bullet)$ and $\mathrm{T}(\bullet)$ given by
$\eta(\dt) = \tau$ (see~\cite[Theorem II.2.2]{bh}).  These
compatibility conditions are automatically satisfied if $\Psi$ is induced by
a $*$-homomorphism $\psi: A \to B$.

Recall the definition of $\widetilde{\W}(A)$ as an ordered semigroup (see \ref{wtilde}).

Let $\mathcal{\widetilde{\W}}$ be the category whose objects are
of the form $(\widetilde{\W}(A),[1_A])$ for some exact, unital, and stable rank
one C$^*$-algebra $A$, and whose morphisms are positive order-unit-preserving
homomorphisms
\[
\Gamma\colon (\widetilde{\W}(A),[1_A]) \to (\widetilde{\W}(B),[1_B])
\]
such that
\[
\Gamma(\V(A)) \subseteq \V(B)
\]
and
\[
\Gamma|_{\laff_b(\mathrm{T}(A))^{++}}\colon \laff_b(\mathrm{T}(A))^{++} \to \laff_b(\mathrm{T}(B))^{++}
\]
is induced by a continuous affine map from $\mathrm{T}(B)$ to $\mathrm{T}(A)$.

For the next definition, we remind the reader that $\V(A) \cong\mathrm{K}_0(A)^+$
for a C$^*$-algebra of stable rank one.

\begin{definition}\label{recovfunc} Let $\mathcal{C}$ denote the class of simple, unital, exact, and
stable rank one {\rm C}$^*$-algebras.
Let
\[
F\colon\mathbf{Obj}(\mathcal{E}(\mathcal{C})) \to \mathbf{Obj}(\mathcal{\widetilde{W}})
\]
be given by
\[
F\left( (\mathrm{K}_0(A),\mathrm{K}_0(A)^+,[1_A]),\mathrm{K}_1(A),\mathrm{T}(A),r_A \right) =
(\widetilde{\W}(A),[1_A]).
\]
Define
\[
F\colon\mathbf{Mor}(\mathcal{E}(\mathcal{C})) \to \mathbf{Mor}(\mathcal{\widetilde{W}})
\]
by sending $\Theta = (\theta_0,\theta_1,\gamma)$ to the morphism
\[
\Gamma\colon(\widetilde{\W}(A),[1_A]) \to (\widetilde{\W}(B),[1_B])
\]
given by $\theta_0$ on $\mathrm{K}_0(A)^+ = \V(A)$ and induced by $\gamma$ on $\laff_b(\mathrm{T}(A))^{++}$.
\end{definition}

The next proposition holds by definition.

\begin{proposition} With $\mathcal{C}$ as in Definition \ref{recovfunc}, the map
$F\colon\mathcal{E}(\mathcal{C}) \to \mathcal{\widetilde{W}}$ is a functor.
\end{proposition}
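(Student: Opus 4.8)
The plan is to verify the three things that make $F$ a functor: that it is well defined on objects, that it carries a morphism of $\mathcal{E}(\mathcal{C})$ to a genuine morphism of $\mathcal{\widetilde{W}}$, and that it respects identities and composition. The first and last are essentially formal; the only point requiring an actual computation is that the map $\Gamma$ attached to a morphism really lands in $\mathcal{\widetilde{W}}$, and this is governed entirely by the commuting triangle built into the definition of a morphism of $\mathcal{E}$.

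First I would observe that $F$ is well defined on objects because $\widetilde{\W}(A)$ is determined, as an ordered semigroup, by the data of $\mathrm{Ell}(A)$ alone. Indeed, for $A$ of stable rank one we have $\V(A)\cong\mathrm{K}_0(A)^+$, the summand $\laff_b(\T(A))^{++}$ depends only on the Choquet simplex $\T(A)$, and every piece of structure in Definition \ref{wtilde} --- the addition $[p]+f=\widehat{p}+f$ and the four order relations (i)--(iv) --- is expressed solely through the pairing $\tau(p)$, which is precisely the datum $r_A$. Hence $(\widetilde{\W}(A),[1_A])$ depends only on the isomorphism class of $\mathrm{Ell}(A)$ in $\mathcal{E}(\mathcal{C})$.

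Next, given a morphism $\Theta=(\theta_0,\theta_1,\gamma)$, I would define $\Gamma$ piecewise as in Definition \ref{recovfunc} and check membership in $\mathcal{\widetilde{W}}$. On $\V(A)=\mathrm{K}_0(A)^+$ the positive order-unit-preserving homomorphism $\theta_0$ restricts to a map into $\mathrm{K}_0(B)^+=\V(B)$, giving $\Gamma(\V(A))\subseteq\V(B)$ and $\Gamma([1_A])=[1_B]$. On $\laff_b(\T(A))^{++}$ the continuous affine map $\gamma\colon\T(B)\to\T(A)$ induces $f\mapsto f\circ\gamma$; precomposition with a continuous affine map preserves lower semicontinuity, affineness, boundedness, and (since $\gamma$ takes values in $\T(A)$) strict positivity, so this lands in $\laff_b(\T(B))^{++}$ and is by construction induced by an affine map $\T(B)\to\T(A)$.

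The crux is to see that these two prescriptions fit together into an order- and order-unit-preserving semigroup homomorphism, and here the commutativity $r_A\circ\gamma=\theta_0^{*}\circ r_B$ from the $\mathcal{E}$-morphism diagram does all the work. Evaluated at a class $[p]\in\V(A)$ it reads $\gamma(\tau')(p)=\tau'(\theta_0(p))$ for every $\tau'\in\T(B)$, that is, $\widehat{p}\circ\gamma=\widehat{\theta_0(p)}$ as functions on $\T(B)$. This single identity simultaneously shows that $\Gamma(\widehat{p}+f)=\widehat{\theta_0(p)}+f\circ\gamma=\Gamma([p])+\Gamma(f)$, handling the cross term of the semigroup operation, and that conditions (iii) and (iv) of Definition \ref{wtilde} for $\Gamma([p])$ and $\Gamma(f)$ reduce to the corresponding conditions for $[p]$ and $f$ evaluated at $\gamma(\tau')$, so that the mixed order relations are preserved (conditions (i) and (ii) being immediate from positivity of $\theta_0$ and monotonicity of precomposition). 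Finally, functoriality is immediate: the identity of $\mathrm{Ell}(A)$ maps to $\mathrm{id}$ on both summands, and for composable $\Theta,\Theta'$ one gets $\theta_0'\theta_0$ on $\mathrm{K}_0^+$ and $(\gamma\circ\gamma')^{*}=\gamma'^{*}\circ\gamma^{*}$ on the $\laff_b$ part, matching $F(\Theta')\circ F(\Theta)$. The only obstacle worth flagging is the bookkeeping of the variance of $\gamma$ together with the confirmation that the $\mathcal{E}$-diagram is exactly the compatibility needed at the projection/affine-function interface; beyond that everything is routine, which is why the statement is recorded as holding essentially by definition.
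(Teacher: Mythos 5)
Your proposal is correct: the paper itself offers no argument, recording the proposition as holding ``by definition,'' and your verification is precisely the routine unwinding of Definitions \ref{wtilde} and \ref{recovfunc} that the paper leaves implicit. In particular, you correctly isolate the one non-vacuous point, namely that the commuting diagram in the definition of an $\mathcal{E}$-morphism yields $\widehat{p}\circ\gamma=\widehat{\theta_0(p)}$ on $\mathrm{T}(B)$, which is exactly what makes $\Gamma$ additive across the $\V$/$\laff_b$ interface and order-preserving for the mixed relations (iii) and (iv).
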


\begin{theorem}
(EC) implies (WEC) for the class of simple, unital, separable, and nuclear
{\rm C}$^*$-algebras with strict comparison of positive elements and $\mathrm{sr}
\in \{1,\infty\}$.
\end{theorem}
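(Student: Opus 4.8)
The plan is to decompose the given isomorphism in the category $\mathcal{W}$ into its Elliott part and its Cuntz part, feed the Elliott part into the hypothesis (EC), and then use the representation results of the previous sections to show that the $^*$-isomorphism produced by (EC) automatically recovers the Cuntz part. Concretely, write the isomorphism as $\phi=\Psi=(\Lambda,\Theta)$, where $\Theta=(\theta_0,\theta_1,\gamma)$ is an isomorphism $\mathrm{Ell}(A)\to\mathrm{Ell}(B)$ in $\mathcal{E}$ and $\Lambda\colon(\W(A),\langle 1_A\rangle)\to(\W(B),\langle 1_B\rangle)$ is the order-semigroup isomorphism. In the case $\mathrm{sr}=1$ the algebras $A$, $B$ are simple, unital, separable, nuclear and stably finite, so (EC) applies to the isomorphism $\Theta$ and yields a $^*$-isomorphism $\Phi\colon A\to B$ with $\mathrm{Ell}(\Phi)=\Theta$. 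It then remains to show that this same $\Phi$ induces $\Lambda$, i.e.\ that $\W(\Phi)=\Lambda$; once this is done, $\Phi$ induces $\Psi=\phi$, as required.

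The key device is the order-embedding $\beta_A\colon\W(A)\hookrightarrow\widetilde{\W}(A)$ of Theorem \ref{embedding} (available because $A$ is simple, exact, unital, of stable rank one and has strict comparison), which is the identity on $\V(A)$ and equals $\iota$ on $\W(A)_+$, together with its analogue $\beta_B$ for $B$. I would establish two equalities of maps $\W(A)\to\widetilde{\W}(B)$, namely
\[
\beta_B\circ\W(\Phi)=F(\Theta)\circ\beta_A \qquad\text{and}\qquad \beta_B\circ\Lambda=F(\Theta)\circ\beta_A,
\]
where $F$ is the functor of Definition \ref{recovfunc}. The first is the naturality of the representation with respect to $\Phi$: on $\V(A)\cong\mathrm{K}_0(A)^+$ both sides are $\theta_0=\mathrm{K}_0(\Phi)^+$, while on $\W(A)_+$ one uses, for $\tau\in\mathrm{T}(B)$, that $\iota(\langle\Phi(a)\rangle)(\tau)=\dt(\Phi(a))=d_{\tau\circ\Phi}(a)=\iota(\langle a\rangle)(\gamma(\tau))$, since $\gamma=\mathrm{T}(\Phi)$ sends $\tau$ to $\tau\circ\Phi$ --- which is exactly how $F(\Theta)$ acts on $\laff_b(\mathrm{T}(A))^{++}$. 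The second equality is precisely the content of the two compatibility conditions in the definition of a morphism of $\mathcal{W}$: the square involving the Grothendieck map $\rho$ (an inclusion here, as $\V(A)\cong\mathrm{K}_0(A)^+$) forces $\Lambda|_{\V(A)}=\theta_0|_{\mathrm{K}_0(A)^+}$, and the square involving the bijection $\eta\colon\mathrm{LDF}\cong\mathrm{T}$, combined with $\iota(\langle a\rangle)(\tau)=\dt(a)$, forces $\Lambda|_{\W(A)_+}$, read through $\iota$, to be composition with $\gamma$. Since $\beta_B$ is injective, the two displayed equalities are left-cancellable and give $\W(\Phi)=\Lambda$.

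For this piecewise comparison to be legitimate one must note that both maps respect the decomposition $\W=\V\sqcup\W_+$. Being an order-isomorphism, $\Lambda$ preserves the order-theoretic characterisation of $\V(A)$ from Corollary \ref{varecov}(ii), so $\Lambda(\V(A))\subseteq\V(B)$ and $\Lambda(\W(A)_+)\subseteq\W(B)_+$; and $\W(\Phi)$ does the same because a $^*$-isomorphism carries projections to projections and (by Proposition \ref{prop:equivproj}) purely positive elements to purely positive elements. The remaining case $\mathrm{sr}=\infty$ is immediate: such simple algebras are purely infinite, whence $\W(A)=\W(B)=\{0,\infty\}$ carries no information beyond $\langle 1\rangle$, the Cuntz part of $\phi$ is forced, and the conclusion reduces to (EC) (Kirchberg--Phillips) for these algebras.

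The main obstacle I expect is the careful verification of the first (naturality) square, in particular checking that $\iota$ intertwines the functorial action of $\Phi$ on $\W$ with the $\gamma$-pullback on $\laff_b(\mathrm{T}(A))^{++}$ uniformly across the two pieces $\V(A)$ and $\W(A)_+$. One must keep track of the identification $\V(A)\cong\mathrm{K}_0(A)^+$, the sum rule $[p]+f=\widehat{p}+f$ in $\widetilde{\W}(A)$, and the identity $d_\tau(\W(\Phi)(x))=d_{\tau\circ\Phi}(x)$ of dimension functions. Granting strict comparison --- which is exactly what makes $\beta_A$ and $\beta_B$ genuine order-embeddings --- this step is bookkeeping rather than a new idea, but it is where the hypotheses of the theorem are actually consumed.
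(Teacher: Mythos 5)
Your proposal is correct and takes essentially the same route as the paper's proof: split into the purely infinite ($\mathrm{sr}=\infty$) case, where the Cuntz semigroup is degenerate, and the stable rank one case, where one applies (EC) to the Elliott part of the isomorphism and then uses the order-embedding of Theorem \ref{embedding} together with the two compatibility squares in the definition of $\mathcal{W}$-morphisms to conclude that the resulting $^*$-isomorphism automatically induces the Cuntz part. Your two displayed identities $\beta_B\circ\W(\Phi)=F(\Theta)\circ\beta_A$ and $\beta_B\circ\Lambda=F(\Theta)\circ\beta_A$ are just a careful spelling-out of the paper's one-line assertion that the compatibility conditions and Theorem \ref{embedding} ensure $\phi|_{\W(A)}$ is determined by $\phi|_{\V(A)}$ and $\phi^{\sharp}\colon\mathrm{T}(B)\to\mathrm{T}(A)$.
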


\begin{proof}
Algebras in the class under consideration are either purely infinite
or stably finite.
The theorem is trivial for the subclass of purely infinite algebras,
due to the degenerate nature of the Cuntz semigroup in this setting.
The remaining case is that of stable rank one.

Let $A$ and $B$ be simple, separable, unital, nuclear, and stably finite
C$^*$-algebras with strict comparison of positive elements, and suppose
that (EC) holds.  Let there be given an isomorphism
\[
\phi\colon \left(\W(A), \langle 1_A \rangle, \mathrm{Ell}(A)\right) \to
\left(\W(B), \langle 1_B \rangle, \mathrm{Ell}(B)\right).
\]
Then by restricting $\phi$ we have an isomorphism
\[
\phi|_{\mathrm{Ell}(A)}\colon \mathrm{Ell}(A) \to \mathrm{Ell}(B),
\]
and we may conclude by (EC) that there is a $*$-isomorphism $\Phi\colon A \to B$
such that $I(\Phi) = \phi|_{\mathrm{Ell}(A)}$.  Since $\Phi$ is unital it preserves the Cuntz class
of the unit.  The compatibility conditions imposed on $\phi$ together with Theorem
\ref{embedding} ensure that $\phi|_{\W(A)}$
is determined by $\phi|_{\V(A)}$ and $\phi^{\sharp}: \mathrm{T}(B) \to \mathrm{T}(A)$.
Thus, $\Phi$ induces $\phi$, and (WEC) holds.
\end{proof}

Note that the semigroup homomorphism $\phi$ in Theorem~\ref{embedding} is an isomorphism if and only if
$\iota$ is surjective.

Let $\mathrm{(EC)}^{'}$ and $\mathrm{(WEC)}^{'}$ denote the conjectures (EC) and (WEC),
respectively, but expanded to apply to all simple, unital, exact, and stably finite $C^*$-algebras.
Collecting the results of this section we have:

\begin{theorem}\label{ecwec}
Let $\mathcal{C}$ be the class of simple, unital, exact, finite, and $\mathcal{Z}$-stable
C$^*$-algebras.  Then, $(EC)^{'}$ and $(WEC)^{'}$ are equivalent in $\mathcal{C}$.  Moreover, there is a functor
$G\colon \mathcal{E}(\mathcal{C}) \to \mathcal{W}$ such that
\[
G(\mathrm{Ell}(A)) \stackrel{\mathrm{def}}{=} \left( F(\mathrm{Ell}(A)),\mathrm{Ell}(A) \right) = ((\W(A),\langle 1_A \rangle),\mathrm{Ell}(A))
\cong ((\widetilde{\W}(A),[1_A]),\mathrm{Ell}(A)).
\]
\end{theorem}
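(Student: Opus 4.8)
First I would verify that every $A\in\mathcal{C}$ meets the hypotheses of the two representation theorems established above. Being simple, finite and $\Z$-stable, $A$ has stable rank one by R\o rdam's dichotomy (\cite{roijm}), while $\Z$-stability forces $\W(A)$ to be almost unperforated (Theorem \ref{walmunp}) and weakly divisible (the final theorem of the previous subsection); since $A$ is simple and unital, almost unperforation is the same as strict comparison. Consequently Theorem \ref{embedding} supplies an order-embedding $\phi\colon \W(A)\to\widetilde{\W}(A)$ with $\phi|_{\V(A)}=\mathrm{id}_{\V(A)}$ and $\phi|_{\W(A)_+}=\iota$, and Theorem \ref{props:surjective} shows that $\iota$ maps onto $\laff_b(\T(A))^{++}$. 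Hence $\phi$ is a surjective order-embedding, i.e.\ an order-isomorphism carrying $\langle 1_A\rangle$ to $[1_A]$, so that
\[
(\W(A),\langle 1_A\rangle)\cong(\widetilde{\W}(A),[1_A])
\]
as pointed ordered semigroups.

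With this in hand I would build $G$ out of the functor $F$ of Definition \ref{recovfunc}. On objects I set $G(\mathrm{Ell}(A))=(F(\mathrm{Ell}(A)),\mathrm{Ell}(A))=((\widetilde{\W}(A),[1_A]),\mathrm{Ell}(A))$; the order-isomorphism of the previous paragraph identifies this with the genuine $\mathcal{W}$-object $((\W(A),\langle 1_A\rangle),\mathrm{Ell}(A))$, which is precisely the asserted isomorphism. On a morphism $\Theta=(\theta_0,\theta_1,\gamma)$ I would set $G(\Theta)=(F(\Theta),\Theta)$, where $F(\Theta)$ acts by $\theta_0$ on $\V(A)=\mathrm{K}_0(A)^+$ and is induced by $\gamma$ on $\laff_b(\T(A))^{++}$. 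Functoriality of $G$ is inherited from that of $F$ together with the naturality of $\phi$, which holds because $\iota$ is defined through the dimension functions $\dt$ and these transform correctly under morphisms. The points requiring attention are the two compatibility squares in the definition of a $\mathcal{W}$-morphism: the square relating $\V$ to $\mathrm{K}_0$ commutes because $\V(A)=\mathrm{K}_0(A)^+$ under stable rank one and $G(\Theta)$ is literally $\theta_0$ there, while the square relating $\mathrm{LDF}$ to $\T$ commutes because the purely positive part of $\W(A)$ is represented on $\laff_b(\T(A))^{++}$ via $\iota$ and $G(\Theta)$ is induced by $\gamma$ there.

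It then remains to deduce the equivalence of $(EC)'$ and $(WEC)'$ in $\mathcal{C}$. The implication $(EC)'\Rightarrow(WEC)'$ proceeds exactly as in the earlier theorem asserting that (EC) implies (WEC): given a $\mathcal{W}$-isomorphism $\Psi=(\Lambda,\Theta)$ between objects coming from $A,B\in\mathcal{C}$, the component $\Theta$ is an isomorphism of Elliott invariants, so $(EC)'$ yields a $*$-isomorphism $\Phi\colon A\to B$ with $\mathrm{Ell}(\Phi)=\Theta$; because $\phi$ is an order-isomorphism, the induced map $\W(\Phi)$ is completely determined on $\V(A)$ and on $\laff_b(\T(A))^{++}$ by the Elliott data, so it must coincide with $\Lambda$, whence $\Phi$ induces $\Psi$. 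For the converse $(WEC)'\Rightarrow(EC)'$, given an isomorphism $\Theta\colon\mathrm{Ell}(A)\to\mathrm{Ell}(B)$ I would apply $G$ to obtain an isomorphism $G(\Theta)$ of the associated $\mathcal{W}$-objects; then $(WEC)'$ produces a $*$-isomorphism inducing $G(\Theta)$, and in particular inducing $\Theta$, which is $(EC)'$.

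The main obstacle is the claim, used in the forward implication, that a $*$-isomorphism realizing only the Elliott part of $\Psi$ automatically realizes the Cuntz part $\Lambda$. This is exactly where both consequences of $\Z$-stability are needed: strict comparison makes $\iota$ injective, so the value of any order-unit-preserving morphism on a purely positive class is pinned down by its effect on traces, and weak divisibility makes $\iota$ surjective, so the representation $\W(A)\cong\widetilde{\W}(A)$ leaves no room for a second, incompatible morphism. Verifying that $\W(\Phi)$ agrees with $\Lambda$ on $\W(A)_+$ ultimately uses the functoriality of the Cuntz semigroup and the fact that a $*$-homomorphism intertwines the dimension functions $\dt$ with their pushforwards; this is the one routine calculation I would carry out in full.
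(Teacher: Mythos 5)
Your proposal is correct and follows essentially the same route as the paper, which itself states this theorem as a collection of the section's results: R\o rdam's dichotomy giving stable rank one, $\Z$-stability giving strict comparison (Theorem \ref{walmunp}) and weak divisibility, the representation theorems (Theorems \ref{embedding} and \ref{props:surjective}) yielding the order-isomorphism $\W(A)\cong\widetilde{\W}(A)$, and the functor $F$ of Definition \ref{recovfunc} upgraded to $G$. Your added care about where injectivity (for pinning down $\W(\Phi)=\Lambda$ in the forward direction) versus surjectivity (for transporting $F(\Theta)$ to a genuine $\mathcal{W}$-morphism in the converse) of $\iota$ is used is exactly the content the paper leaves implicit.
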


Even in situations where (EC) holds, there is
no inverse functor which reconstructs C$^*$-algebras from Elliott invariants.
Contrast this with Theorem \ref{ecwec}, where $G$ reconstructs the finer invariant
from the coarser one.

\subsubsection{Revising the invariant}

Since the Elliott invariant is incomplete, generally speaking, one tends to enlarge it. How to do this is not as straitghforward as one may think, as it depends on the amount of new information we throw in. The counterexamples to the Elliott Conjecture already mentioned before, in their most dramatic form (see \cite{T2}), are only distinguished by their Cuntz semigroups, but agree on the Elliott invariant and quite a few enlargements of it that include topologically non-commutative invariants (such as the real rank and the stable rank).

We have shown in the previous section that, for $\Z$-stable algebras, the addition of the Cuntz semigroup to the Elliott invariant is not an addition (see Theorem \ref{ecwec}). This adds philosophical grounds to the explanation of the somewhat mysterious success of the classification, using solely the Elliott invariant, of algebras that do not have real rank zero (hence they do not have a huge supply of projections), so traces and functions defined on them account for purely positive elements in that setting.

Although we have not mentioned it, the (functorial) recovery of the Cuntz semigroup from the Elliott invariant holds for AH-algebras with slow dimension growth. This class is not known to be $\Z$-stable yet, but there is (positive) progress in this direction (as shown recently in \cite{dpt}). We haven't included the proof as it is rather involved.

One may also hope that the Cuntz semigroup is a helpful tool when attempting a classification for non-simple algebras. This has been accomplished by Ciuperca and Elliott, to classify approximate interval algebras using their Cuntz semigroups.

Another argument in favour of adding only the Cuntz semigroup to the current invariant relies on the possibility that $\Z$-stability is equivalent to strict comparison (in the simple, nuclear case). There has also been progress on this front, mainly due to Toms and Winter (see \cite{tw}).

%%%%%%%%%%%%%%%%%%%%%%%%%%%%%%%%%%%%%%%%%%%%%%%%%%%%%%%%%%%%%%%%%%%%%%%%%%%%%%%
%%
%%                                  References
%%
%%%%%%%%%%%%%%%%%%%%%%%%%%%%%%%%%%%%%%%%%%%%%%%%%%%%%%%%%%%%%%%%%%%%%%%%%%%%%%%

\providecommand{\bysame}{\leavevmode\hbox
to3em{\hrulefill}\thinspace}

%\bibliographystyle{amsplai1}
%%%%\bibliographystyle{amsplain}
%\bibliography{qft}

\end{document}